\newcommand{\G}{G}
\newcommand{\per}{\text{per}}
\newcommand\CL{{\mathcal L}}
\newcommand\CM{{\mathcal M}}
\newcommand\CR{{\mathcal R}}
\newcommand{\CG}{\mathcal{G}}
\newcommand\fA{{\mathfrak A}}
\newcommand\fB{{\mathfrak B}}
\newcommand\fm{{\mathfrak m}}
\newcommand\BBN{{\mathbb N}}
\newcommand\BBQ{{\mathbb Q}}
\newcommand\BBR{{\mathbb R}}
\newcommand\BBZ{{\mathbb Z}}
\newcommand{\N}{\BBN}
\newcommand{\Z}{\BBZ}
\newcommand{\Q}{\BBQ}
\newcommand{\R}{\BBR}
\newcommand{\sms}{\setminus}
\newcommand{\ssc}{A} 
\newcommand{\ran}{\text{\rm range}}
\newcommand{\res}{\restriction}
\newcommand{\lp}{\Pi}
\newcommand{\gnpq}{\Gamma_{n,p,q}}
\newcommand{\Gnpq}{G_{n,p,q}}
\newcommand{\gopq}{\Gamma_{1,p,q}}
\newcommand{\dom}{\text{dom}}
\newcommand{\ocl}[1]{\overline{#1}}
\newcommand{\conc}{\mathbin{\raisebox{3pt}{$\scriptstyle{\smallfrown}$}}}
\newcommand{\acts}{\curvearrowright}
\newcommand{\fzs}{F(2^{\Z^2})}
\newcommand{\fzn}{F(2^{\Z^n})}
\newcommand{\bsft}{\mathfrak{b}}
\newcommand{\Fof}[1]{F(2^{#1})}
\newcommand{\FofZZ}{\Fof{\Z^2}}
\newcommand{\ElemTo}[1]{\stackrel{#1}{\longrightarrow} }
\newcommand{\Tcaca}{T_{ca=ac}}
\newcommand{\Tdada}{T_{da=ad}}
\newcommand{\Tcbcb}{T_{cb=bc}}
\newcommand{\Tdbdb}{T_{db=bd}}
\newcommand{\Tdcadca}{T_{dca=acd}}
\newcommand{\Tcdacda}{T_{cda=adc}}
\newcommand{\Tcbacba}{T_{cba=abc}}
\newcommand{\Tcabcab}{T_{cab=bac}}
\newcommand{\Tcqadpa}{T_{c^qa=ad^p}}
\newcommand{\Tdpacqa}{T_{d^pa=ac^q}}
\newcommand{\Tcbpcaq}{T_{cb^p=a^qc}}
\newcommand{\Tcaqcbp}{T_{ca^q=b^pc}}
\newcommand{\Gcaca}{G_{ca=ac}}
\newcommand{\Gdada}{G_{da=ad}}
\newcommand{\Gcbcb}{G_{cb=bc}}
\newcommand{\Gdbdb}{G_{db=bd}}
\newcommand{\Gdcadca}{G_{dca=acd}}
\newcommand{\Gcdacda}{G_{cda=adc}}
\newcommand{\Gcbacba}{G_{cba=abc}}
\newcommand{\Gcabcab}{G_{cab=bac}}
\newcommand{\Gcqadpa}{G_{c^qa=ad^p}}
\newcommand{\Gdpacqa}{G_{d^pa=ac^q}}
\newcommand{\Gcbpcaq}{G_{cb^p=a^qc}}
\newcommand{\Gcaqcbp}{G_{ca^q=b^pc}}
\newcommand{\restrict}[1]{\!\!\upharpoonright_{#1}}
\newcommand{\signatureSep}{\colon}
\newcommand{\labelEdge}[1]{\stackrel{#1}{\longrightarrow}}
\newcommand{\todo}[1]{{\color{red}[[#1]]}}
\renewcommand{\todo}[1]{}
\numberwithin{section}{chapter}
\theoremstyle{plain}
\newtheorem{thmn}{Theorem}
\numberwithin{thmn}{section}
\newtheorem{lemn}[thmn]{Lemma}
\newtheorem{propn}[thmn]{Proposition}
\newtheorem{corn}[thmn]{Corollary}
\newtheorem{probn}{Problem}
\newtheorem*{ex}{Example}
\theoremstyle{definition}
\newtheorem*{fact}{Fact}
\newtheorem*{defn}{Definition}
\newtheorem*{rem}{Remark}
\newtheorem*{prob}{Problem}
\newtheorem{exn}[thmn]{Example}
\newtheorem{defnn}[thmn]{Definition}
\newtheorem{quesn}[thmn]{Question}
\newtheorem{remn}[thmn]{Remark}
\newtheorem{fctn}[thmn]{Fact}
\def\clap#1{\hbox to 0pt{\hss#1\hss}}
\def\mathclap{\mathpalette\mathclapinternal}
\def\mathclapinternal#1#2{%
\clap{$\mathsurround=0pt#1{#2}$}}
\newcommand{\floorfrac}[2]{\left\lfloor\frac{#1}{#2}\right\rfloor}
\newcommand{\binmod}{\mathbin{\mathrm{mod}}}
\tikzset{
  BdyNode/.style         ={circle, fill, inner sep=0pt, minimum size=3pt},
  nodeStyle/.style       ={circle, draw, fill=white, inner sep=0pt, minimum size=7pt},
  edgeStrongStyle/.style ={line width=3pt, line cap=rect},
  BoxLabelStyle/.style   ={scale=1.2},
  BoxStyle/.style        ={scale=1, anchor=north west, rectangle, draw},
  XBoxStyle/.style       ={BoxStyle, minimum width=1cm,   minimum height=1cm},
  ABoxStyle/.style       ={BoxStyle, minimum width=1cm,   minimum height=0.8cm},
  BBoxStyle/.style       ={BoxStyle, minimum width=1cm,   minimum height=1.2cm},
  CBoxStyle/.style       ={BoxStyle, minimum width=0.8cm, minimum height=1cm},
  DBoxStyle/.style       ={BoxStyle, minimum width=1.2cm, minimum height=1cm}
}
\tikzset{->-/.style={decoration={
  markings,
  mark=at position #1 with {\arrow[scale=2]{stealth}}},postaction={decorate}}}
\pgfmathsetmacro{\xboxsize}{0.03 cm}
\pgfmathsetmacro{\pnum}{3}
\pgfmathsetmacro{\qnum}{7}
\pgfmathsetmacro{\pqscale}{0.1cm}
\pgfmathsetmacro{\plen}{\pnum*\pqscale pt}
\pgfmathsetmacro{\qlen}{\qnum*\pqscale pt}
\tikzset{
    pqscaling/.style={x=\pqscale,y=\pqscale},
    xFrame/.pic={
        \draw[pqscaling] (0,0) rectangle (\xboxsize pt,\xboxsize pt);
    },
    aFrame/.pic={
        \draw[pqscaling] (0,\xboxsize pt) rectangle (\xboxsize pt, \pnum);
    },
    bFrame/.pic={
        \draw[pqscaling] (0,\xboxsize pt) rectangle (\xboxsize pt, \qnum);
    },
    cFrame/.pic={
        \pic[pqscaling,xscale=-1,rotate=90]{aFrame};
    },
    dFrame/.pic={
        \pic[pqscaling,xscale=-1,rotate=90]{bFrame};
    },
    Tcaca/.pic={
        \begin{scope}[pqscaling]
            \pic at (0,0) {xFrame};
            \pic at (0,0) {aFrame};
            \pic at (0,0) {cFrame};
        \end{scope}
    },
    TcacaLabel/.pic={
        \begin{scope}[pqscaling]
            \pic at (0,0) {Tcaca};
            \node[shift={(\xboxsize/2 pt,\xboxsize/2 pt)}] at (\pnum/2,\pnum/2) {$\Tcaca$};
        \end{scope}
    },
    Tdada/.pic={
        \begin{scope}[pqscaling]
            \pic at (0,0) {xFrame};
            \pic at (0,0) {aFrame};
            \pic at (0,0) {dFrame};
        \end{scope}
    },
    TdadaLabel/.pic={
        \begin{scope}[pqscaling]
            \pic at (0,0) {Tdada};
            \node[shift={(\xboxsize/2 pt,\xboxsize/2 pt)}] at (\qnum/2,\pnum/2) {$\Tdada$};
        \end{scope}
    },
    Tcbcb/.pic={
        \begin{scope}[pqscaling]
            \pic at (0,0) {xFrame};
            \pic at (0,0) {bFrame};
            \pic at (0,0) {cFrame};
        \end{scope}
    },
    Tdbdb/.pic={
        \begin{scope}[pqscaling]
            \pic at (0,0) {xFrame};
            \pic at (0,0) {bFrame};
            \pic at (0,0) {dFrame};
        \end{scope}
    },
    Tcdacda/.pic={
        \begin{scope}[pqscaling]
            \pic at (0,0) {xFrame};
            \pic at (0,0) {aFrame};
            \pic at (\qnum,0) {xFrame};
            \pic at (\qnum,0) {cFrame};
            \pic at (0,0) {dFrame};
        \end{scope}
    },
    Tdcadca/.pic={
        \begin{scope}[pqscaling]
            \pic at (0,0) {xFrame};
            \pic at (0,0) {aFrame};
            \pic at (\pnum,0) {xFrame};
            \pic at (\pnum,0) {dFrame};
            \pic at (0,0) {cFrame};
        \end{scope}
    },
    TdcadcaLabel/.pic={
        \begin{scope}[pqscaling]
            \pic at (0,0) {Tdcadca};
            \node[shift={(\xboxsize/2 pt,\xboxsize/2 pt)}] at (\pnum/2 + \qnum/2,\pnum/2) {$\Tdcadca$};
        \end{scope}
    },
    Tcqadpa/.pic={
        \begin{scope}[pqscaling]
            \pgfmathparse{\pnum-1}
            \foreach \i in {0,...,\pgfmathresult}{
                \pic at (\i*\qnum,0) {xFrame};
                \pic at (\i*\qnum,0) {dFrame};
            }
            \pic at (0,0) {aFrame};
        \end{scope}
    },
    Tdpacqa/.pic={
        \begin{scope}[pqscaling]
            \pgfmathparse{\qnum-1}
            \foreach \i in {0,...,\pgfmathresult}{
                \pic at (\i*\pnum,0) {xFrame};
                \pic at (\i*\pnum,0) {cFrame};
            }
            \pic at (0,0) {aFrame};
        \end{scope}
    }
}
\begin{document}

\title{Continuous Combinatorics of \\ Abelian Group Actions}

\author{Su Gao}
\address{School of Mathematical Sciences and LPMC, Nankai University, Tianjin 300071, P.R. China}
\email{sgao@nankai.edu.cn}

\author{Steve Jackson}
\address{Department of Mathematics, University of North Texas, 1155 Union Circle \#311430, Denton, TX 76203, U.S.A.}
\email{jackson@unt.edu}

\author{Edward Krohne}
\email{krohneew@gmail.com}

\author{Brandon Seward}
\address{Department of Mathematics, University of California San Diego, San Diego, CA 92093, U.S.A.}
\email{b.m.seward@gmail.com}

\begin{abstract}
This paper develops techniques which are used to answer a number of questions in the
theory of equivalence relations generated by continuous actions of abelian groups.
The methods center around the construction of certain specialized hyper-aperiodic elements,
which produce compact, free subflows with useful properties.
For example, we show that there is no continuous proper $3$-coloring of the Schreier graph on
$F(2^{\Z^2})$, the free part of the shift action of $\Z^2$ on $2^{\Z^2}$.
With earlier work of \cite{gao_countable_2015} this computes the continuous chromatic
number of $F(2^{\Z^2})$ to be exactly $4$. Combined with marker arguments for the positive directions,
our methods allow us to analyze
continuous homomorphisms into graphs, and more generally equivariant maps  into
subshifts of finite type. We present a general construction of a finite set of tiles
for  $2^{\Z^n}$ (there are $12$ for $n=2$) such that questions about the existence of continuous
homomorphisms into various structures reduce to finitary combinatorial questions about the tiles.
This tile analysis is used to deduce a number of results about $F(2^{\Z^n})$.
\end{abstract}

\subjclass[2010]{Primary 54H05, 05C15; Secondary 03E15, 05C99}
\keywords{equivalence relation, hyperfinite, Cayley graph, Schreier graph, chromatic number, graph homomorphism, tiling, subshift of finite type, hyper-aperiodic}

\thanks{Research of the first author was partially supported by the U.S. NSF grants DMS-1201290 and DMS-1800323, and NSFC grants  12250710128 and 12271263. Research of the second author was partially supported by the NSF grants DMS-1201290 and DMS-1800323.  Some results in this paper were included in the third author's Ph.D. dissertation at the University of North Texas, with the first and second authors serving as co-advisors, and the research was partially supported by the NSF grant DMS-0943870. The authors thank the anonymous referee for two rounds of careful reading of earlier versions of the manuscript and for useful comments which resulted in significant improvements.}

\maketitle

\tableofcontents

\chapter*{Introduction}
\label{sec:intro}

The last several decades have seen much development of {\em invariant
  descriptive set theory}, or the theory of definable equivalence
relations $E$ on standard Borel spaces $X$.  Most often the term
``definable'' is taken to mean ``Borel,'' that is, $E$ is a Borel
subset of $X\times X$.  This is a broad field of investigation,
overlapping with much of topological dynamics, functional analysis,
ergodic theory, and model theory, and interacting with computability
theory, geometric group theory, and in general algebra and
combinatorics (see \cite{gao_book} for an introduction). Understanding
of these definable equivalence relations is often accomplished by
imposing and investigating additional structures on the equivalence
classes.  In recent years the study of combinatorial problems relating
to such structures on definable equivalence relations has emerged as a
field of its own interest, which is known as the field of {\em Borel
  combinatorics}. The recent survey papers of Kechris
\cite{kechris_cbel} and Kechris, Marks \cite{kechrismarks} provide an
excellent account of this subject. The results and methods of the
present paper can be viewed as a contribution to this emerging field.

This paper is chiefly concerned with {\em countable} Borel equivalence
relations, that is, Borel equivalence relations each of whose
equivalence classes is countable. By a fundamental theorem of
Feldman--Moore \cite{feldman_ergodic_1977}, every countable Borel
equivalence relation $E$ is the {\em orbit equivalence relation} of a
Borel action by a countable group $\G$, that is, there is an action
$\cdot: \G \acts X$ such that $x E y$ if and only if $\exists g \in \G
\ (g\cdot x=y)$. In view of this theorem, it is natural to study
countable Borel equivalence relations ``group by group,'' that is, to
begin with the simplest groups and work our way up to progressively
more complicated groups.  The simplest infinite group is $\G=\Z$, and
its actions are the subject of study in the classical theory of
dynamical systems. This paper deals with actions of the next group
$\G=\Z^2$ and, to some extent, the other finitely generated abelian
groups $\G=\Z^n$ (the general finitely generated case $\G=\Z^n \times
A$, where $A$ is finite abelian, is not essentially different). Along
these lines, the next steps would concern general abelian groups,
nilpotent groups, solvable groups, amenable groups, etc.\
In \cite{gao_countable_2015} it was shown that Borel actions of countable
abelian groups are hyperfinite, in \cite{SchneiderSeward} this was
extended to countable locally nilpotent groups, and in \cite{polycyclic}
this was extended further to all countable polycyclic groups (which
includes some groups of exponential growth). In a related direction, several
authors have considered the Borel combinatorics of actions of groups of
subexponential growth \cite{ConleyTamuz, CGMPT, Thornton}.

This approach should not strike anyone as very ambitious, but we will
demonstrate that there are reasons to slow down and examine thoroughly
the properties of actions of seemingly simpler groups before we
consider the more complicated ones. Even for actions of simpler groups
such as $\Z^n$, many fundamental questions remain open, not to mention
the significant new difficulties when we attempt to generalize known
results for simpler groups to more complicated ones.  Two important
examples are the {\em hyperfiniteness problem} and the {\em union
  problem}.  A countable Borel equivalence relation $E$ is {\em
  hyperfinite} if it can be written as the increasing union $E=\bigcup
E_n$ of finite subequivalence relations $E_n$ (by ``finite'' we mean
each equivalence class of $E_n$ is finite). Kechris and Weiss asked
for which groups $\G$ must all their Borel actions be hyperfinite.
Specifically, they asked whether this is true for all amenable groups
$\G$. The union problem asks whether the increasing union of
hyperfinite equivalence relations is hyperfinite. In view of recent
progress, both of these questions are now understood to be closely
related to the question of what types of {\em marker structures} can
be put on equivalence relations generated by actions of $\G$.  By a
``marker structure'' we mean a Borel set $M\subseteq X$ which is a
{\em complete section} (i.e., $M$ meets every every $E$-class) and
often also {\em co-complete} (i.e., $X-M$ is also a complete section).
Usually we are interested in $M$ having some geometrical or
combinatorial structure which can be used to get useful results about
$E$. We thus see that several of the fundamental problems in the
theory of countable Borel equivalence relations are closely connected
with the Borel combinatorics of these equivalence relations.

The problems considered in the present paper are motivated by the observation that the most straightforward constructions of marker structures on the equivalence relations often yield {\em relatively clopen} structures instead of more complex Borel structures. However, as we impose more requirements on the marker structures, whether this phenomenon will continue becomes a question. In this paper we will concentrate on questions concerning {\em continuous combinatorics}, which is about relatively clopen structures and the existence of continuous mappings, as opposed to Borel structures and the existence of Borel mappings considered in Borel combinatorics. The decision to focus on this set of questions is largely driven by the richness of methods and results that are already present in this sub-area. Our results about the general Borel combinatorics of abelian group actions will be presented elsewhere (the forthcoming \cite{borcon} and \cite{GJKS_forcing_2015}).

We explain in some more detail the connections and differences of considerations in Borel combinatorics and continuous combinatorics of countable abelian group actions.
A good example of this concerns the currently known results on the hyperfiniteness problem.
Slaman and Steel \cite{Slaman1988DefinableFO} and independently Weiss \cite{weiss_book}
proved the basic result that $E$ is hyperfinite if $E$ can be generated by a Borel
action of $\Z$. To do this they defined a sequence $M_1 \supseteq M_2 \supseteq \cdots$ of
Borel marker sets which are decreasing and $\bigcap_n M_n=\varnothing$. We call such a sequence a
{\em Slaman--Steel} marker sequence. Some years later, Weiss showed that every Borel action
of $\Z^n$ is hyperfinite by showing that every free Borel action of $\Z^n$ admits a
marker set with a bounded geometry. More precisely,
we have the following basic marker lemma (a proof can be found in \cite{gao_countable_2015}).

In the lemma below, for a free action $\Z^n \acts X$ we define an extended metric
$\rho \colon X \times X \rightarrow [0, +\infty]$ by the rule: 
$\rho(x,y)=\infty$ if $[x]\neq [y]$, and otherwise if $y=g\cdot x$, where $g=(a_1,\dots,a_n)$,
then $\rho(x,y)=\| g\|= \max \{|a_1|, \dots, |a_n| \}$. We will also occasionally refer to the extended
metric $\rho_1$ on $X\times X$ where we use instead $\| g\|_1= \sum_{i=1}^n |a_i|$.
The following lemma holds (by the same proof) for both metrics $\rho$  and $\rho_1$. 

\begin{lemn} \label{bml}
For every integer $d>1$ there is a relatively clopen
set $M_d \subseteq F(2^{\Z^n})$ (where $F(2^{\Z^n})$ is the free part of the shift action, see \S\ref{sec:notation} for the precise definition) satisfying:
\begin{enumerate}[label={\rm (\arabic*)}]
\item
For all $x\neq y$ in $M_d$, $\rho(x,y)>d$.
\item
For all $x \in X$, there is a $y\in M_d$ with $\rho(x,y)\leq d$.
\end{enumerate}
\end{lemn}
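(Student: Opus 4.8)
The plan is to realize $M_d$ as a canonically chosen \emph{maximal} $d$-discrete subset of $F(2^{\Z^n})$, built by a greedy selection rule, and then to check that aperiodicity forces this set to be relatively clopen. Two preliminaries. First, since the shift action of $\Z^n$ on $F(2^{\Z^n})$ is free, for $x\in F(2^{\Z^n})$ and $g\in\Z^n$ the Cayley graph distance is $\rho(x,g\cdot x)=\|g\|_1$, the word length of $g$ in the standard generators (and $\rho(x,y)=\infty$ for $y$ outside the orbit of $x$); thus condition~(1) asserts exactly that if $x\in M_d$ and $0\neq g$ with $\|g\|_1\leq d$ then $g\cdot x\notin M_d$, and condition~(2) (with $y$ ranging over $M_d$) asserts that $M_d$ is a $d$-net. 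Second, $F(2^{\Z^n})$ is a zero-dimensional Polish space on which the action is continuous, so for every finite pattern $p$ the set $\{x:x\restriction[-k,k]^n=p\}$ is relatively clopen.

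The next step is to extract from aperiodicity a clopen-valued ``resolution'' function. For $x\in F(2^{\Z^n})$ let $R(x)$ be the least $R\geq d$ such that for every $g$ with $0<\|g\|_1\leq 2d$ the patterns $x\restriction[-R,R]^n$ and $(g\cdot x)\restriction[-R,R]^n$ differ. Since no nonzero $g$ fixes $x$, each of these finitely many inequalities is witnessed in some finite window, so $R(x)<\infty$, and $\{x:R(x)\leq R\}$ depends only on $x\restriction[-(R+2d),R+2d]^n$ and hence is clopen. Fix an injection $\nu$ of the set of pairs $(R,p)$ with $R\geq d$, $p\in 2^{[-R,R]^n}$ into $\N$ which is \emph{monotone}, meaning $\nu(R,p)\leq\nu(R',p')$ implies $R\leq R'$ (enumerate pairs in order of increasing $R$), and set $v(x):=\nu\bigl(R(x),\ x\restriction[-R(x),R(x)]^n\bigr)\in\N$. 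By the defining property of $R$, if $x'=g\cdot x$ with $0<\|g\|_1\leq d$ then the $R(x)$-windows of $x$ and $x'$ differ, so $v(x)\neq v(x')$; hence $v$ is injective on every $\rho$-ball of radius $d$. Finally, monotonicity of $\nu$ provides, for each $m$, a radius $\beta(m)<\infty$ such that whether $v(x)\leq m$ (and if so its exact value) is determined by $x\restriction[-\beta(m),\beta(m)]^n$; in particular each $\{x:v(x)=m\}$ is clopen, and $F(2^{\Z^n})=\bigsqcup_{m\in\N}\{x:v(x)=m\}$.

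Now build $M_d$ by greedy selection against $v$: call $y$ \emph{selected} iff there is no selected point $z$ in the orbit of $y$ with $v(z)<v(y)$ and $\rho(y,z)\leq d$, and put $M_d=\{x:x\text{ is selected}\}$. This is a legitimate definition by recursion on $v(y)\in\N$ (only finitely many orbit points lie within distance $d$ of $y$), it is orbit-intrinsic, and so $M_d$ is a well-defined subset of $F(2^{\Z^n})$. Condition~(1) holds: if $x,x'\in M_d$ with $0<\rho(x,x')\leq d$ then $v(x)\neq v(x')$ by injectivity on $d$-balls, say $v(x)<v(x')$; but then the selected point $x$ blocks the selection of $x'$, a contradiction. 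Condition~(2) holds automatically from the greedy rule: if $x\notin M_d$ then by definition some selected $z$ satisfies $\rho(x,z)\leq d$, i.e.\ $z\in M_d$ lies within distance $d$ of $x$. It remains to show that $M_d$ is relatively clopen, and here is the crux. A ``light cone'' estimate shows that whether $x$ is selected depends only on the values and selection statuses of the orbit points $z$ with $v(z)<v(x)$ and $\rho(x,z)\leq d\cdot v(x)$ (each step of the recursion strictly decreases $v$ and moves a $\rho$-distance at most $d$); and by monotonicity of $\nu$, for each such $z$ both $v(z)$ and the fact that a nearby point has value $\geq v(x)$ and is therefore irrelevant are determined by the pattern of $x$ on a fixed box. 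Altogether ``$x$ is selected'' is decided by $x\restriction[-\Lambda(m),\Lambda(m)]^n$ with $m=v(x)$ and $\Lambda(m)=d\cdot m+\beta(m)+2d$; on the clopen set $\{v=m\}$ this is a fixed-window condition, so $M_d\cap\{v=m\}$ is clopen, and summing over $m$ shows that $M_d$ and its complement are open.

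The step I expect to be the real obstacle is precisely this last one, the relative clopenness. The resolution function $R(x)$ is genuinely unbounded on $F(2^{\Z^n})$ — the free part is not compact — so no single finite window decides membership in $M_d$, and the work lies in arranging $v$ and the coding $\nu$ so that on each clopen slice $\{v=m\}$ only a bounded window is ever consulted, together with the light-cone bound making this precise. Everything else — the distance formula (from freeness), finiteness of $R(x)$ (from aperiodicity), and the separation and covering properties (from the greedy rule) — is then routine.
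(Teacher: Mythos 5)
The paper does not prove this lemma; it cites \cite{gao_countable_2015} for the argument, so there is no in-paper proof to compare against. Your argument is a correct and self-contained proof, and it follows the same overall route as the standard one: a greedy selection against a countable clopen partition that is already sufficiently discrete on orbits. What you add is an explicit construction of that partition as the level sets $\{v=m\}$ of an aperiodicity-driven coding, together with the light-cone and window bookkeeping ($\beta(m)$, $\Lambda(m)$) that makes the clopenness of each $M_d\cap\{v=m\}$ transparent rather than implicit. All the pieces check out: $R(x)$ is finite because the action is free and $\rho$-balls are finite; each $\{v=m\}$ is clopen because both $R(x)$ and the associated window are determined by a finite pattern of $x$; $v$ is injective on $d$-balls by the defining property of $R$; the recursion on $v$-value is well-founded on $\N$; conditions (1) and (2) follow as you say; and the light-cone radius $d\cdot v(x)$ combined with the monotonicity of $\nu$ gives the required fixed-window criterion on each slice $\{v=m\}$, from which clopenness of $M_d$ and its complement follows by taking countable disjoint unions.

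Two small points. You write $\rho(x,g\cdot x)=\|g\|_1$, but the paper fixes $\rho(x,g\cdot x)=\|g\|=\max\{|a_1|,\dots,|a_n|\}$, the $\ell_\infty$ norm. Nothing in your argument is sensitive to this choice (both norms have finite balls and satisfy the triangle inequality), but the lemma is stated with the paper's $\rho$, so the proof should be run with $\|\cdot\|_\infty$, or you should remark that the two norms are comparable so a marker set for one yields one for the other after rescaling $d$. Also, the $2d$ appearing in the definition of $R(x)$ is stronger than what you use: your verification of condition (1) only invokes injectivity of $v$ on $d$-balls, so radius $d$ would suffice there; the extra margin is harmless but not needed.
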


Extending this hyperfiniteness result to the general (infinitely generated) abelian group case
runs into extra difficulties as this type of marker structure does not seem as useful for studying these group actions, and this case of the
hyperfiniteness problem remained open for some time.
In \cite{gao_countable_2015} it was shown that the Borel action of any
countable abelian group is hyperfinite.
The key ingredient in the argument is a stronger type of marker structure on the
equivalence relation, which we refer to as an {\em orthogonal marker structure}.
First, the ``marker regions'' (finite subequivalence classes associated to each
marker point) are essentially rectangles (the actual construction first puts down
rectangles and then modifies them in finitely many steps to fractal-like regions).
Second, the rectangular regions for different levels $M_{d_1}$ and $M_{d_2}$
must intersect in ``general position'', that is, no close parallel faces
(this is where the terminology ``orthogonal'' comes from). The fact that the marker
regions have a more regular geometric shape is important in the arguments.
Thus, the question of what types of regular marker structures can be put on
equivalence relations generated by actions of $\G$ has deep connections with the
theory of these equivalence relations.

Many interesting combinatorial questions about the equivalence relation $E$
also end up being connected with delicate questions about marker structures;
indeed this is one of the main themes of this paper. The orthogonal marker
method mentioned above is one recent technique which can be used to
produce intricate Borel structures on $E$. There is one important consideration,
however, which applies to constructions that rely upon a sequence of marker sets (such as the orthogonal marker method). Although the basic
marker construction for producing $M_d$ satisfying (1) and (2) above produces
{\em clopen} sets $M_d$, in passing to the ``limit'' the final construction
generally only produces Borel sets. Thus, this method often cannot be used to
produce clopen or continuous structures on $E$. In fact, it was realized some time ago that
there are limitations on what continuous structures can be attained.
A simple example of this occurs in the case of Slaman--Steel markers on actions of $\Z$.
We noted above that a Borel sequence $M_1\supseteq M_2 \supseteq \cdots$ in $F(2^\Z)$
with $\bigcap_n M_n=\varnothing$
can always be found, but in \cite{gao_countable_2015} it was shown that
there cannot be a sequence of (relatively) clopen marker sets $M_d \subseteq F(2^\Z)$
with this property. The basic method is that of a
{\em hyper-aperiodic element} (also called {\em $2$-coloring} in earlier papers),
which will be defined in \S\ref{sec:notation}.
In \cite{gao_coloring_2009} and \cite{GJS_coloring_2016} it was shown that hyper-aperiodic
elements exist for any countable group $\G$,
and a number of consequences of this are deduced. However, for many of the more delicate
questions, the mere existence of hyper-aperiodic elements is not sufficient. One of the main themes
of this paper is that by constructing hyper-aperiodic elements on $\G$ with
certain specialized properties, one can extend the method greatly to answer
many of these more delicate questions. We emphasize that these hyper-aperiodic element methods
are used to rule out continuous structures on $E$, and thus are complementary to
the marker construction methods which are used to produce clopen or Borel
structures on $E$. Alternatively, we may view the hyper-aperiodic element methods as showing
that particular orbits with specified properties must exist given any continuous
assignment of structure (e.g., a proper $3$-coloring of the Schreier graph) to the $E$ classes.
In \cite{GJKS_forcing_2015} it was shown that the hyper-aperiodic element method can be extended in
some cases to the Borel context via a forcing construction. However, not all of the
``negative'' (that is, non-structure) results at the continuous level obtained by
the hyper-aperiodic element arguments can be extended to the Borel level, as the
orthogonal marker methods can be used to obtain ``positive'' Borel constructions.
An example of this is the chromatic number problem for $F(2^{\Z^n})$
where we seek to determine both the continuous chromatic number $\chi_c(F(2^{\Z^n}))$
and the Borel chromatic number $\chi_b(F(2^{\Z^n}))$ for $F(2^{\Z^n})$
(these are the minimum number of colors needed to properly color the
Schreier graphs of $F(2^{\Z^n})$ with continuous, respectively Borel, functions).
For $n>1$ we show in this paper that there is no continuous proper $3$-coloring
of $F(2^{\Z^n})$, while recently the authors have shown using the orthogonal marker
methods that there is a Borel proper $3$-coloring of $F(2^{\Z^n})$
(the Borel result will appear in \cite{borcon}). Thus, for $n>1$
we have:
\[
3= \chi_b(F(2^{\Z^n})) < \chi_c(F(2^{\Z^n})) =4.
\]
This result shows a remarkable difference between the continuous and Borel combinatorics
of actions of $\Z^n$.

For a number of results, it is still not known whether the negative results given by
hyper-aperiodic element methods can be extended to the Borel context. There is a clear tension between
attempting to do so (say by forcing arguments) and the orthogonal marker method.
Delineating the exact boundaries of these methods remains an important problem.

As surveyed in \cite{kechris_cbel}, there is a rich theory of {\em descriptive graph combinatorics} developed by many authors which deals with the Borel combinatorics of graph structures on equivalence relations. The first systematic study of this subject was done by Kechris, Solecki, and Todorcevic \cite{KST}, in which they studied problems such as Borel and measurable chromatic numbers, edge chromatic numbers, perfect matchings, etc. For example, it is implied by results of \cite{KST} that the Borel chromatic number of $F(2^{\Z^n})$ is no more than $2n+1$ and the Borel edge chromatic number of $F(2^{\Z^n})$ is no more than $4n-1$. As mentioned above, we will show in this paper that the continuous chromatic number of $F(2^{\Z^n})$ is $4$. We will also show that the continuous edge chromatic number of $F(2^{\Z^2})$ is $5$ and that there is no continuous perfect matching of $F(2^{\Z^n})$ for any $n\geq 1$. With these results, the present paper can be viewed as a contribution to the theory of descriptive graph combinatorics.

The rest of the paper is organized as follows.
We give some background and motivations for the problems studied in this paper in the Introduction. Then in Chapter~\ref{sec:notation} we present the basic concepts, methods, and results in the continuous combinatorics of $\Z^n$ actions. Specifically, in \S\ref{section:1.1} and \S\ref{section:1.2} we recall basic notation and terminology
including the notion of hyper-aperiodic element on a general countable group $G$, and for the sake of completeness give
simple proofs that these exist for $\G=\Z^n$, the main case of interest
for this paper. In \S\ref{section:1.3} we give a quick application of the hyper-aperiodic elements in a problem about a marker structure called {\em toast}. In \S\ref{hyper} and \S\ref{section:1.5} we present the first main application of our method to the continuous chromatic number problem for $F(2^{\Z^n})$. Here we give a self-contained proof that the continuous chromatic number of $F(2^{\Z^n})$ is at least $4$. Later in the paper this result will be proved twice more, and each proof is shorter since it is based on more powerful general theorems. In \S\ref{section:1.6} and \S\ref{section:1.7} we give some more applications of the hyper-aperiodic elements in problems involving {\em clopen line sections} and {\em continuous cocyles}. In \S\ref{sec:md} we prove some results about {\em marker distortion}
for marker sets $M\subseteq F(2^\Z)$. Aside from being of independent interest,
these results serve as a warm-up and motivation for the results in the rest of the paper.
The notion of marker distortion measures the deviation from the marker set being
regularly spaced. We show that the best possible marker distortion differs
for clopen versus Borel marker sets (the Borel proof will be given in \cite{borcon}).
The clopen marker distortion result uses a specialized hyper-aperiodic element
which incorporates the notion of a ``tile.''  This notion plays an important role
in Chapter \ref{sec:twodtiles}, which analyzes the existence of
continuous equivariant maps from $F(2^{\Z^n})$ into subshifts of finite type in terms of
the existence of such maps on a certain set of finite tiles. The main result of the paper, known as the Twelve Tiles Theorem, is presented in this chapter. In \S\ref{section:2.1} we fix some notation on subshifts of finite type.
In \S\ref{sec:onedimtile} we present a much simpler case of the tile
analysis for dimension $n=1$. In \S\ref{subsec:tilethm} we state the Twelve Tiles Theorem, which is the tile analysis for dimension $n=2$. The theorem is proved in \S\ref{subsec:negtile} and \S\ref{subsec:postile}. In \S\ref{sec:continuous_structuring} we give a restatement of the Twelve Tiles Theorem in terms of {\em continuous structurings}. To some, this version of the theorem might be easier to apply as it does not involve the notion of subshifts of finite type. Our tile analysis techniques can be extended to higher dimensions. However, a general theorem for arbitrary dimensions is hard to state precisely, and we do not have any specific application for such a result at this time. So in \S\ref{subsec:hd} we state and prove a weak version of the tile theorem for higher dimensions, which we will use in Chapter 3.

Chapters \ref{chapter3} and \ref{sec:otherapp} are devoted to various applications of the Twelve Tiles Theorem.
In \S\ref{sec:tileapp} we deduce
several basic corollaries of the Twelve Tiles Theorem, such as the non-existence
of a continuous perfect matching on $F(2^{\Z^n})$ and the computation of the edge chromatic number of $F(2^{\Z^2})$. In \S\ref{section:3.2} we prove the fact that
there is no continuous proper free  subaction of $\Z^2$
on $F(2^{\Z^2})$. This can be used to answer in the negative a question raised in \cite{gao_countable_2015}.
In \S\ref{sec:undirected-graphs}, \S\ref{sec:nc} and \S\ref{sec:pc}
we delve more specifically in the question of characterizing graphs $\Gamma$ for which
there exists a continuous homomorphism from $F(2^{\Z^2})$ to $\Gamma$.
The result that $\chi_c(F(2^{\Z^2})) >3$ is the special case of this analysis
for which $\Gamma=K_3$, the complete graph on $3$ vertices. The Twelve Tiles Theorem gives,
in principle, an answer to the continuous graph homomorphism problem,
but not necessarily in a way which is easy to apply.
So, we prove  ``positive'' and ``negative'' conditions for the
existence or non-existence of such continuous homomorphisms in \S\ref{sec:pc} and \S\ref{sec:nc}, respectively. We illustrate these conditions
with a number of specific examples.  In Chapter \ref{sec:otherapp}
we consider the decidability of three basic problems concerning
the continuous combinatorics of $F(2^{\Z^n})$: the subshift, graph homomorphism,
and tiling problems. It follows from the Twelve Tiles Theorem that all of these problems
are semi-decidable, that is, they correspond to $\Sigma^0_1$ sets
(coding the various objects in question, e.g., finite graphs, by integers).
The graph homomorphism and tiling problems are special instances of the more general
subshift problem. These problem are stated in \S\ref{sec:problems}. We show in \S\ref{sec:ssonedim} 
that the general subshift
problem in one dimension, that is $n=1$, is computable (i.e., corresponds
to a $\Delta^0_1$ set of integers). Thus, there is an algorithm which when
given an integer coding a subshift of finite type $Y\subseteq 2^\Z$ will decide
if there is a continuous, equivariant map from $F(2^\Z)$ to $Y$. It follows immediately that the
graph homomorphism and tiling problems are also decidable  for $n=1$.
In contrast, we show in \S\ref{sec:sstwodim} that the subshift problem
for $n \geq 2$ is $\Sigma^0_1$-complete, and in particular not decidable. In \S\ref{sec:gh} we show that
for $n \geq 2$ even the continuous graph homomorphism problem is $\Sigma^0_1$-complete. Finally, in \S\ref{sec:tiling} we discuss the continuous
tiling problem for $n =2$. Here we do not yet know whether the problem
is decidable. In fact, we do not know the answer to the tiling question
even for some very specific simple sets of tiles. We mention some computer experiments
which may shed some light on this question.

For the convenience of the reader we include below a diagram of logical dependence and references between sections. The arrows illustrate logical dependence between sections, and the dash arrows illustrate references and analogs.
\vspace{20pt}

\begin{figure}[h!]
\begin{tikzpicture}[scale=0.045]

\draw (-100,0) rectangle (-80,-14);
\node at (-90, -7) {\S 1.1};

\draw (-50, 0) rectangle (-30, -14);
\node at (-40, -7) {\S 1.2};
\draw[->] (-50,-7) -- (-80,-7);

\draw (0, 14) rectangle (20, 0);
\node at (10, 7) {\S 1.3};
\draw[->] (0,7) -- (-30,-3);

\draw (0, -14) rectangle (20,-28);
\node at (10, -21) {\S 1.4};
\draw[->] (0,-21) -- (-30, -11);

\draw (50, -14) rectangle (70,-28);
\node at (60, -21) {\S 1.5};
\draw[->] (50, -21) -- (20,-21);

\draw (50, -35) rectangle (70, -49);
\node at (60, -42) {\S 1.6};
\draw[->] (50, -42) -- (20, -25);

\draw (100, 0) rectangle (120, -14);
\node at (110, -7) {\S 1.7};
\draw[->] (100,-7) -- (-30,-7);
\draw[dashed, ->] (100,-11) -- (70, -21);

\draw (0,-35) rectangle (20, -49);
\node at (10,-42) {\S 1.8};
\draw[->] (0,-42) -- (-33,-14);

\draw (-30, -90) rectangle (-10, -104);
\node at (-20, -97) {\S 2.3};
\draw[->] (-30,-97) -- (-80,-97);
\draw[dashed, ->] (-30,-93) -- (-45,-76);

\draw (100, -118) rectangle (120, -132);
\node at (110, -125) {\S 2.7};
\draw[->] (100,-123) -- (-10,-101);
\draw[->] (110,-118) -- (110, -14);

\draw (-100,-90) rectangle (-80,-104);
\node at (-90, -97) {\S 2.1};

\draw(-65,-69) rectangle (-45, -83);
\node at (-55, -76) {\S 2.2};
\draw[->] (-65,-76) -- (-80, -93);
\draw[->] (-55, -69) -- (-47,-14);

\draw (30,-90) rectangle (50, -104);
\node at (40, -97) {\S 2.5};
\draw[->] (30,-97) -- (-10, -97);

\draw (75,-90) rectangle (95,-104);
\node at (85,-97) {\S 2.6};
\draw[->] (75,-97) -- (50,-97);
\draw[->] (75,-94) -- (20,-76);

\draw (0,-69) rectangle (20,-83);
\node at (10, -76) {\S 2.4};
\draw[->] (0,-79) -- (-10,-94);
\draw[->] (0,-76) -- (-40, -14);

\draw (-18,-180) rectangle (2,-194);
\node at (-8, -187) {\S 3.1};
\draw[->] (-12,-180) -- (-20, -104);
\draw[->] (-8,-180) -- (37, -104);
\draw[->] (-4, -180) -- (100, -127);

\draw (17, -180) rectangle (37, -194);
\node at (27, -187) {\S 3.2};
\draw[->] (23, -180) -- (-16.5,-104);
\draw[->] (27, -180) -- (10,-83);
\draw[->] (31, -180) -- (43, -104);

\draw (52, -180) rectangle (72, -194);
\node at (62, -187) {\S 3.3};
\draw[->] (62, -180) -- (-13, -104);

\draw (100, -170) rectangle (120, -184);
\node at (110, -177) {\S 3.4};
\draw[->] (100, -177) -- (72, -184);

\draw (100,-190) rectangle (120, -204);
\node at (110, -197) {\S 3.5};
\draw[->] (100, -197) -- (72, -190);

\draw (-100, -270) rectangle (-80,-285);
\node at (-90, -277.5) {\S 4.1};
\draw[->] (-90,-270) -- (-27, -104);

\draw(-65,-250) rectangle (-45,-264);
\node at (-55,-257) {\S 4.2};
\draw[->] (-65,-259) -- (-80,-273);
\draw[->] (-55,-250) -- (-55, -83);

\draw (-30, -259) rectangle (-10, -273);
\node at (-20, -266) {\S 4.3};
\draw[->] (-30,-266) -- (-80, -276);
\draw[->] (-23.5, -259) -- (-23.5, -104);
\draw[->] (-20, -259) -- (-8,-194);

\draw(17, -272) rectangle (37,-286);
\node at (27, -279) {\S 4.4};
\draw[->] (17,-279) -- (-80, -279);
\draw[->] (27,-272) -- (27, -194);

\draw(-30, -285) rectangle (-10, -299);
\node at (-20, -292) {\S 4.5};
\draw[->] (-30,-292) -- (-80, -282);

\end{tikzpicture}
\end{figure}

\chapter{Basic Definitions and Results} \label{sec:notation}

\section{Bernoulli shift actions}\label{section:1.1}

If $\G$ is a group with identity $1_G$ and $X$ is a set, a (left) {\em action} of $\G$ on $X$
is a map $(g,x)\mapsto g\cdot x$ from $\G \times X$ to $X$ satisfying:
\begin{enumerate}
\item
$g\cdot (h \cdot x)=(gh)\cdot x$ for all $g,h \in \G$, $x \in X$.
\item
$1_G \cdot x=x$ for all $x \in X$.
\end{enumerate}

We also write $\G \acts X$ to denote that $\G$ acts on $X$.
When there is more than one action of $\G$ on $X$ being considered, we
employ notation such as $g \cdot x$, $g \bullet x$, $g*x$ to distinguish them.

When $G$ acts on $X$ and $x \in X$, we write $[x]$ for the orbit equivalence class
of $x$, that is, $[x]=\{ y\in X \colon \exists g \in G\, (g\cdot x=y)\}$. Of course, there is a danger of confusion with this notation as it does not specify the action. When there are different actions considered concurrently, we will introduce appropriate ad hoc notations to distinguish them.

If $\G$ acts on both $X$ and $Y$, we say a map $\pi \colon X\to Y$ is
{\em equivariant} if $\pi(g\cdot x)=g \cdot \pi(x)$ for all $ g\in \G$ and $x \in X$.
We say $\pi$ is an {\em equivariant embedding} if $\pi$ is also injective.

Throughout the rest of the paper, $X$ will henceforth denote a {\em Polish space}, i.e., a separable, completely metrizable topological space.
When only the Borel structure of $X$ is relevant, we refer to $X$ as a
{\em standard Borel space}. When $\G$ is also a topological group, we say the action
is {\em continuous} or {\em Borel} if the map $(g,x)\mapsto g \cdot x$ from $\G \times X$
to $X$ is continuous or Borel. When $\G$ is a countable discrete group
(which is the case for this paper), the action is continuous or Borel if and only if
for each $g \in \G$ the map $x \mapsto g\cdot x$ from $X$ to $X$ is
continuous or Borel.

For $\G$ a countable group, the set $2^\G$ is a compact zero-dimensional
Polish space
with the usual product of the discrete topology on $2=\{ 0,1\}$.
There are two  natural (left) actions of $\G$ on $2^\G$, the {\em left-shift},
and the {\em right-shift} (also called the {\em Bernoulli shift} actions).
The left-shift action is given by

\[
g \cdot x (h)= x(g^{-1}h),
\]
for $g,h \in \G$ and $x \in 2^\G$. The  right-shift action is given by
\[
g \cdot x(h)=x(hg).
\]
It is more common in the literature to use the left-shift action,
and either one could be used throughout this paper. In fact, the left-shift
and right-shift actions on $2^G$ are isomorphic $G$-actions under the map
$\pi \colon 2^G\to 2^G$ given by $\pi(x)(h)=x(h^{-1})$. However,
we prefer to use the right-shift action throughout.

An action of $\G$ on $X$ is {\em free} if $g\cdot x \neq x$ for all
$g\neq 1_\G$ in $\G$ and all $x \in X$. If $\G$ acts on $X$, then
the {\em free part} $F(X)$ is $\{ x \in X\colon  \forall g \neq 1_\G \ (g\cdot x \neq x)\}$.
$F(X)$ is the largest  invariant subset of $X$ on which $\G$ acts freely.
When $\G$ is a countable group acting continuously, $F(X)$ is
also a $G_\delta$ subset of $X$, and thus a Polish space with the
subspace topology. Except in trivial cases, however, $F(X)$ will not be
compact.

The shift action of a countable group $\G$ is of particular significance
as it is essentially universal for all actions of $\G$ according
to the following theorem of Becker--Kechris \cite{bk}
and Dougherty--Jackson--Kechris \cite{djk}.

\begin{thmn} \label{thm:shiftuniv}
The shift action of $\G$ on $(2^\omega)^\G\cong 2^{\G\times \omega}$
given by $(g \cdot x) (h,n)= x(hg,n)$ is a universal Borel $\G$-action.
That is, if $G \acts X$ is a Borel action,
then there is a Borel equivariant
embedding $\pi \colon X \to (2^\omega)^G$. Furthermore, if the action of $\G$ on $X$
is continuous and $X$ is zero-dimensional, then $\pi$ can be chosen to be continuous.
\end{thmn}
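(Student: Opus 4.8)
The plan is to realize an arbitrary Borel $G$-space $X$ inside $(2^\omega)^G$ by recording each point via the values of a fixed Borel coding along its orbit. Since $X$ is standard Borel it carries a Polish topology, which is second countable, so I would fix a countable family $(B_n)_{n\in\omega}$ of Borel subsets of $X$ that separates points and define $f\colon X\to 2^\omega$ by $f(x)(n)=\chi_{B_n}(x)$; then $f$ is Borel and injective. Now set
\[
\pi(x)(h)=f(h\cdot x)\qquad(h\in G),
\]
which under the identification $(2^\omega)^G\cong 2^{G\times\omega}$ is $\pi(x)(h,n)=\chi_{B_n}(h\cdot x)$.

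Next I would verify the three required properties. For \emph{equivariance}, for $g\in G$ and $h\in G$ one computes
\[
\pi(g\cdot x)(h)=f\big(h\cdot(g\cdot x)\big)=f\big((hg)\cdot x\big)=\pi(x)(hg)=\big(g\cdot\pi(x)\big)(h),
\]
where the last equality is exactly the defining relation $(g\cdot y)(h)=y(hg)$ of the shift action in the statement. For \emph{injectivity}, if $\pi(x)=\pi(x')$ then evaluating at $h=1_G$ gives $f(x)=f(x')$, hence $x=x'$. For \emph{Borelness}, since $G$ is countable it suffices that each coordinate map $x\mapsto\pi(x)(h)=f(h\cdot x)$ is Borel, and this is a composition of the Borel map $x\mapsto h\cdot x$ with the Borel map $f$.

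For the ``furthermore'' clause, suppose the action is continuous and $X$ is zero-dimensional. Then $X$ has a countable basis of clopen sets, and I would take the $B_n$ to be such a basis; clopen sets still separate points, so $f$ remains injective, and now each $\chi_{B_n}$ is continuous, hence $f\colon X\to 2^\omega$ is continuous. Each coordinate $x\mapsto f(h\cdot x)$ of $\pi$ is then a composition of continuous maps, so $\pi$ is continuous, completing the argument.

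I do not expect a genuine obstacle: the construction is essentially forced. The only things requiring care are bookkeeping with the direction of the shift — the choice $\pi(x)(h)=f(h\cdot x)$, rather than $f(h^{-1}\cdot x)$, is precisely what the right-shift convention $(g\cdot y)(h)=y(hg)$ demands — and the routine observation that a standard Borel space (resp.\ a zero-dimensional Polish space) admits a countable point-separating family of Borel (resp.\ clopen) sets, which follows from second countability.
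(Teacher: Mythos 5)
The paper states this theorem as a cited result of Becker--Kechris and Dougherty--Jackson--Kechris and does not give a proof, so there is no argument in the paper to compare against. Your construction is the standard one: encode a point via the values along its orbit of a fixed countable point-separating family, and the equivariance computation you give is exactly the one that matches the paper's right-shift convention $(g\cdot y)(h)=y(hg)$.

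One small point worth making explicit: you correctly reduce both Borelness and injectivity of $\pi$ to properties of $f$, and in the Borel category an injective Borel map between standard Borel spaces is automatically a Borel isomorphism onto its image (Lusin--Souslin), so ``embedding'' is indeed free. In the continuous case, if one also wants $\pi$ to be a topological embedding (not merely continuous and injective), this too follows from your choice, because $x\mapsto(\chi_{B_n}(x))_n$ over a clopen basis is a topological embedding and $\pi(x)(1_G)=f(x)$ recovers it as one coordinate of $\pi$; convergence of $\pi(x_k)$ forces convergence of $f(x_k)$, hence of $x_k$. This is worth a sentence if you want the proof to be airtight, but the argument is otherwise complete and correct.
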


If $S$ is a generating set of $G$, the {\em Cayley graph} $\Gamma=\Gamma(G, S)$ is a labeled directed graph with the vertex set $V(\Gamma)=G$ and the edge set $E(\Gamma)$ consisting of pairs of the form $(g, sg)$ for $g\in G$ and $s\in S$, where the edge $(g, sg)$ is assigned the label $s$. In addition, if $G$ acts on a space $X$, the {\em Schreier graph} $\Gamma=\Gamma(X, G, S)$ is a labeled directed graph with the vertex set $V(\Gamma)=X$ and the edge set $E(\Gamma)$ consisting of pairs of the form $(x, s\cdot x)$ for $x\in X$ and $s\in S$, where the edge $(x, s\cdot x)$ is assigned the label $s$. In this sense the Cayley graph on $G$ is a special case of a Schreier graph, where the action of $G$ on itself is by left multiplication. When connectedness is considered, we consider the unlabeled and undirected Schreier graphs. Thus, since $S$ is always a generating set of $G$, the Cayley graph of $G$ is always connected, and each connected component of the Schreier graph is exactly an orbit equivalence class of the action. In the case that the action of $G$ on $X$ is free, there is a natural graph isomorphism between the Cayley graph on $G$ and each connected component of the Schreier graph. Specifically, if the action is free and $x\in X$, then the map $g\mapsto g\cdot x$ induces a graph isomorphism from $G$ to $[x]$.

In this paper we only deal with finitely generated groups, and the generating set in our study will always be finite and minimal (in the sense that no proper subset is a generating set). \label{con:S} In particular, a generating set does not contain the identity. In addition, for all the Schreier graphs $\Gamma$ we consider in this paper, the vertex set $V(\Gamma)=X$ will be a topological space, and the edge set $E(\Gamma)=\{(x, s\cdot x)\in X\times X\,:\ s\in S\}$ will be a closed subset of $X\times X$, and thus a topological space in its own right. In this sense, we refer to these Schreier graphs as {\em topological graphs}.

For a graph $\Gamma$ (which could be labeled, or directed, or both) and a set $K$ of colors, a {\em proper ($K$-)coloring} is a map $\kappa: V(\Gamma)\to K$ such that if $(x, y)\in E(\Gamma)$, then $\kappa(x)\neq \kappa (y)$. The {\em chromatic number} of $\Gamma$, denoted by $\chi(\Gamma)$, is the least cardinality of a set $K$ such that there exists a proper $K$-coloring for $\Gamma$. When the graph $\Gamma$ is a topological graph, we may consider {\em continuous} proper colorings and {\em Borel} proper colorings, and correspondingly its {\em continuous chromatic number}, denoted as $\chi_c(\Gamma)$, and, respectively, {\em Borel chromatic number}, denoted as $\chi_B(\Gamma)$.

The main case of interest for this paper is $\G=\Z^n$ and $X=F(2^{\Z^n})$,
the free part of the shift action of $\Z^n$ on $2^{\Z^n}$. We work with the standard generating set $\{ e_1, \dots, e_n\}$, where $e_i=(0, \cdots, 0, 1, 0, \cdots, 0)=(a_1, \dots, a_n)$ with $a_i=1$ and $a_j=0$ for all $j\neq i$. The Cayley graph on $\Z^n$ is therefore an $n$-dimensional grid, which is isomorphic to the Schreier graph on each orbit equivalence class.

Recall from the Introduction
that for $x, y \in F(2^{\Z^n})$, the ``distance'' $\rho(x,y)$
between them is the $\ell_\infty$ norm of the unique $g \in \Z^n$ such that
$g \cdot x=y$. Formally, $\rho(x,y)=\infty$ if $[x]\neq [y]$, and if $y=g\cdot x$ with $g=(a_1, \dots, a_n)\in \Z^n$, then $\rho(x,y)=\|g\|=\max\{|a_1|,\dots, |a_n|\}$. Occasionally we will use an alternative distance function $\rho_1$ on $X\times X$ where instead of the $\ell_\infty$ norm $\|g\|$ we use the $\ell_1$ norm $\| g\|_1= \sum_{i=1}^n |a_i|$.

\section{Hyper-aperiodic elements}\label{section:1.2}

We next recall the definition of a hyper-aperiodic element
from \cite{GJS_coloring_2016}.

\begin{defnn} \label{def:twocol}
Let $\G$ be a countable group. A point $x \in 2^\G$ is {\em hyper-aperiodic} if the closure of its orbit is contained in the free part, i.e. if $\overline{[x]} \subseteq F(2^\G)$.
\end{defnn}

The significance of this notion is that the closure of the orbit of a hyper-aperiodic element provides an invariant compact subset of $F(2^\G)$ which allows us to use topological arguments more fully when analyzing continuous marker structures on $F(2^\G)$. Hyper-aperiodicity can be tested via the following combinatorial characterization (see \cite{GJS_coloring_2016}).

\begin{lemn}\label{lem:hap} Let $\G$ act on $2^\G$ by right-shifts. Then $x \in 2^\G$ is hyper-aperiodic if and only if for all $s \neq 1_\G$ in $\G$ there is a finite set $T \subseteq G$ such that
\[
\forall g \in \G\ \exists t \in T\ x(t g)\neq x(t s g).
\]
\end{lemn}

The finite set $T$ in the above lemma is said to {\em witness} the hyper-aperiodicity of $x$ for $s$.

\begin{remn}
When using the left-shift action, the combinatorial
condition of the previous lemma becomes $\forall g \in \G \ \exists t \in T\  x(g t) \neq x(g s t)$.
\end{remn}

A related notion is that of $x,y \in 2^\G$ being orthogonal.

\begin{defnn} \label{def:orthogonal}
Let $x, y\in 2^\G$. We say that $x$ and $y$ are {\em orthogonal},
denoted $x\,\bot\, y$, if there exists a finite $T\subseteq \G$ such that for any
$g, h\in\G$ there is $t\in T$ with $x(tg)\neq y(th)$.
\end{defnn}

The topological significance of orthogonality is the following lemma from \cite{GJS_coloring_2016}.

\begin{lemn} Let $x, y\in 2^\G$. Then $x\bot y$ if and only if
$\overline{[x]}\cap \overline{[y]}=\varnothing$.
\end{lemn}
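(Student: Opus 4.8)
The plan is to prove both directions by unpacking the definition of orthogonality and translating between the combinatorial statement and the topological one, using the compactness of $2^\G$ and the basic clopen structure of its product topology. Recall that a basic clopen neighborhood of a point $z \in 2^\G$ is determined by fixing the values of $z$ on a finite set $F \subseteq \G$; two such neighborhoods, around $z$ and $z'$ with the \emph{same} finite index set $F$, are disjoint precisely when $z$ and $z'$ differ somewhere on $F$. The translation engine is that $[x] = \{ g \cdot x : g \in \G\}$, and under the right-shift action $(g\cdot x)(h) = x(hg)$, so the value of $g\cdot x$ on a finite set $T$ is the tuple $(x(tg))_{t\in T}$.

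For the forward direction ($x \bot y \implies \overline{[x]} \cap \overline{[y]} = \varnothing$): suppose $T \subseteq \G$ is the finite set witnessing orthogonality, so for all $g,h$ there is $t \in T$ with $x(tg) \neq y(th)$. I claim no point of $\overline{[x]}$ can lie in $\overline{[y]}$. Take any $w \in \overline{[x]}$; I want a clopen neighborhood of $w$ disjoint from $[y]$, hence (being clopen) from $\overline{[y]}$. Since $w \in \overline{[x]}$, there is $g$ with $(g\cdot x)\restriction T = w \restriction T$, i.e. $x(tg) = w(t)$ for all $t \in T$ — wait, more carefully: pick $g$ so that $g\cdot x$ agrees with $w$ on $T$; such $g$ exists because $w$ is in the closure of $[x]$ and agreement on the finite set $T$ is an open condition. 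Then for every $h$, the orthogonality witness gives $t\in T$ with $w(t) = x(tg) \neq y(th) = (h\cdot y)(t)$, so $h\cdot y$ disagrees with $w$ on $T$; thus the clopen set $\{z : z\restriction T = w\restriction T\}$ contains $w$ and misses every $h\cdot y$, hence misses $[y]$ and its closure. Since $w$ was arbitrary in $\overline{[x]}$, the two closures are disjoint. (Symmetrically one could swap the roles; a small check confirms the asymmetry in the definition causes no trouble since we quantified over both $g$ and $h$.)

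For the reverse direction ($\overline{[x]} \cap \overline{[y]} = \varnothing \implies x \bot y$): the two closures are disjoint compact subsets of the compact metrizable space $2^\G$, so they are separated by some basic clopen partition — more precisely, there is a finite set $T \subseteq \G$ such that no point of $\overline{[x]}$ and point of $\overline{[y]}$ agree on all of $T$. To see this, note that for each pair $(u,v) \in \overline{[x]}\times\overline{[y]}$ disjointness gives a coordinate $s_{u,v}$ with $u(s_{u,v}) \neq v(s_{u,v})$; the sets $\{(u',v') : u'(s_{u,v}) \neq v'(s_{u,v})\}$ are open and cover the compact product $\overline{[x]}\times\overline{[y]}$, so finitely many suffice, and $T$ is the (finite) set of the corresponding coordinates. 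Now specialize $u = g\cdot x$, $v = h\cdot y$: for every $g,h$ there is $t \in T$ with $(g\cdot x)(t) \neq (h\cdot y)(t)$, i.e. $x(tg) \neq y(th)$. That is exactly the witness required for $x\bot y$.

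\textbf{Expected main obstacle.} The only genuinely delicate point is bookkeeping the direction of the shift and the placement of $g,h$ inside the index expressions $x(tg)$ versus $y(th)$, together with the fact that the definition of orthogonality is not manifestly symmetric in $x$ and $y$ (it quantifies $g,h$ independently but uses a single $T$); one must check that the compactness-separation argument really does produce a single finite $T$ that works for all $g$ and all $h$ simultaneously, which it does because the covering argument takes place on the product $\overline{[x]}\times\overline{[y]}$ before specializing to orbit points. Everything else is routine manipulation of basic clopen sets in $2^\G$.
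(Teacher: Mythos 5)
Your proof is correct and is exactly the natural compactness argument one expects; the paper itself gives no proof of this lemma (it is cited to \cite{GJS_coloring_2016}), so there is nothing in the source to compare against, but both directions — the clopen-neighborhood argument using a single $g$ matching $w$ on $T$, and the finite-subcover argument on $\overline{[x]}\times\overline{[y]}$ — are precisely what is needed.

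One small point worth flagging: you worked with the orthogonality condition in the form $x(tg)\neq y(th)$, i.e.\ $(g\cdot x)(t)\neq (h\cdot y)(t)$ under the right-shift convention, which is the form the proof actually needs. The paper's Definition of orthogonality as literally printed reads $x(gt)\neq y(ht)$, which is the left-shift form (it matches the ``left-shift'' remark the paper makes about hyper-aperiodicity, and the reference \cite{GJS_coloring_2016} uses left-shift). For the abelian groups that dominate this paper this is harmless, and you anticipated exactly this bookkeeping pitfall in your closing paragraph; but if one takes the paper's definition literally in a non-abelian setting, the verbatim statement and the right-shift action are out of sync, and your version is the corrected one. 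Your proof is the right proof of the right statement.
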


In \cite{GJS_coloring_2016} the existence of hyper-aperiodic elements, in fact that of large families
of pairwise orthogonal hyper-aperiodic elements, is shown for any countable
group $\G$.

\begin{thmn} [\cite{GJS_coloring_2016}] \label{thm:genhyp}
Let $\G$ be a countable group. Then there is a hyper-aperiodic element $x \in 2^\G$.
In fact there is a perfect set $P\subseteq 2^\G$ of pairwise orthogonal
hyper-aperiodic elements.
\end{thmn}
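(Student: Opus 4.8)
The plan is to construct the perfect set $P$ of pairwise orthogonal hyper-aperiodic elements directly, using a "coding" or "tree" argument. Since the existence of a single hyper-aperiodic element follows from the general scheme, the real content is to produce a continuum of them, pairwise orthogonal. First I would fix an enumeration $\G = \{g_0 = 1_\G, g_1, g_2, \dots\}$ and work with the combinatorial characterizations recalled above: $x$ is hyper-aperiodic iff for every $s \neq 1_\G$ there is a finite $T \subseteq \G$ with $\forall g\,\exists t \in T\ x(tg) \neq x(tsg)$, and $x \bot y$ iff there is a finite $T$ with $\forall g,h\,\exists t \in T\ x(gt) \neq y(ht)$. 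The strategy is to build, for each $\sigma \in 2^{<\omega}$, a finite partial function $p_\sigma \colon \G \rightharpoonup 2$ (a finite condition) so that $\sigma \subseteq \tau$ implies $p_\sigma \subseteq p_\tau$, and so that the branches of the tree yield the desired family: for $\alpha \in 2^\omega$, set $x_\alpha = \bigcup_n p_{\alpha\restriction n}$, after arranging that this union is total on $\G$.

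The construction proceeds by recursion on the length of $\sigma$, interleaving three kinds of requirements. (i) \emph{Totality:} at stage $n$, extend every current condition so that $g_n$ is in its domain; this guarantees each $x_\alpha \in 2^\G$. (ii) \emph{Hyper-aperiodicity:} for each $s = g_k \neq 1_\G$, at an appropriate stage I would append to every condition $p_\sigma$ (for $\sigma$ of the relevant length) a fresh block of values on a large finite "witnessing pattern" that forces the existence of a finite $T = T_s$, uniform across all branches, realizing $\forall g\,\exists t \in T_s\ x(tg)\neq x(tsg)$. The point is that one can hard-code a finite aperiodic-type pattern (exactly as in the proof of Theorem~\ref{thm:genhyp} for a single element) into every extension simultaneously. (iii) \emph{Orthogonality:} when the tree first branches at a node $\sigma$ into $\sigma 0$ and $\sigma 1$, I would immediately commit $p_{\sigma 0}$ and $p_{\sigma 1}$ to \emph{disagree on a full translate of a large finite window}: choose a finite $F \subseteq \G$ and set $p_{\sigma i}$ on a suitable coset-like copy of $F$ so that the two conditions differ on all of $F \cdot g$ for the relevant $g$'s; propagating such disagreements along the subtree below each child then yields, for any two distinct branches $\alpha \neq \beta$ (which split at some node $\sigma$), a finite $T$ witnessing $x_\alpha \bot x_\beta$. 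Continuing to branch at every level and diagonalizing over all requirements produces a perfect subtree, hence a perfect set $P = \{x_\alpha : \alpha \in 2^\omega\}$.

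The main obstacle — and the step I would spend the most care on — is reconciling requirement (ii) with requirements (i) and (iii): hyper-aperiodicity demands that \emph{every} branch look "sufficiently generic / patterned" on cofinally many windows, while orthogonality demands that branches be \emph{forced apart} on other windows, and both must be arranged by finite conditions without one clobbering the other. The resolution is the standard one: work on disjoint regions of $\G$ (e.g., along a sequence of disjoint "scales" or nested finite sets $F_0 \subseteq F_1 \subseteq \cdots$ exhausting $\G$), devoting some scales to stamping in the aperiodicity patterns common to all branches and other scales to recording the branch-separating disagreements, so the two never interfere. One must also check that the finite witnessing set $T_s$ produced for hyper-aperiodicity can be taken independent of the branch — this is what makes $\overline{[x_\alpha]} \subseteq F(2^\G)$ hold uniformly — which follows because the stamped-in pattern and its translation-invariance properties are identical on every branch by construction. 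A clean alternative, if the single-element construction in Theorem~\ref{thm:genhyp} already yields a map $\G \to 2$ built from finitely many "free bits," is to simply let those free bits range over $2^\omega$, read off pairwise orthogonality from the explicit combinatorics, and verify perfectness; I would present whichever of these is shorter given the details of the cited construction.
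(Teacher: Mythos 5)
The paper does not prove this theorem; it is cited from \cite{GJS_coloring_2016}, and the text explicitly flags that "the proof of Theorem~\ref{thm:genhyp} is nontrivial." The paper only gives a self-contained argument for the special case $\G=\Z^n$ (Lemmas~\ref{lem:goodtwocol}, \ref{lem:twocolprod}, \ref{hypzn}), so there is no proof in this source to compare against. Evaluating your proposal on its own: the general shape (branch a tree, force orthogonality at splits, propagate hyper-aperiodicity uniformly) is indeed the right superstructure for getting a perfect set once a single hyper-aperiodic element is in hand, and your closing "clean alternative" --- take a master construction with free bits and let them vary --- is close to how such arguments actually proceed.

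However, there is a genuine gap, and it sits exactly at requirement (ii). You invoke a "witnessing pattern ... exactly as in the proof of Theorem~\ref{thm:genhyp} for a single element," but the single-element existence is part of the statement being proven; for an arbitrary countable group $\G$ no such pattern is available for free, and producing it is the nontrivial content of the theorem (the $\Z^n$ case leans on the additive structure and base-$2$ digit sums, which have no analogue in general). There is also a structural obstacle to your finite-conditions approach that you acknowledge but do not resolve: for a fixed $s$, hyper-aperiodicity demands $\forall g\,\exists t\in T\ x(tg)\neq x(tsg)$, a constraint that runs over \emph{all} of $\G$, since the translates $Tg\cup Tsg$ exhaust the group as $g$ varies. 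No finite block "stamped in" at a stage can secure this; every later extension (including the ones you introduce for totality and for orthogonality disagreements) touches new $g$'s and could in principle ruin the requirement for the fixed $T$. Splitting $\G$ into "aperiodicity scales" and "disagreement scales" does not fix this, because $g$ ranges over both, and you must still find a $t\in T$ working for the $g$'s landing in the disagreement scales --- which is precisely where you have deliberately surrendered control. What actually resolves this in the cited paper is a globally planned layered marker/blueprint structure on $\G$, built all at once rather than by independent finite extensions, with the orthogonality codings inserted into slots that the blueprint reserves in advance so they cannot interfere with the aperiodicity witnesses. Without that global construction, the recursion as sketched does not close.
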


The proof of Theorem~\ref{thm:genhyp} is nontrivial, as it gives hyper-aperiodic elements $x \in 2^\G$ for an arbitrary countable
group $\G$. For specific groups like $\G=\Z^n$ these elements are actually easy to construct
directly. Since we will be constructing specialized hyper-aperiodic elements
for $\G=\Z^n$ throughout the paper, for the sake of completeness we give a new
simple argument that they exist in this case. This argument will
be similar to others used later in the paper.

\begin{lemn} \label{lem:goodtwocol}
Let $G$ be a countable group and $x \in 2^\G$ be hyper-aperiodic.
Then for any $s\neq 1_\G$ in $\G$ there is a finite
$T\subseteq \G$ such that
\[
\forall g \in \G\ [ (\exists t_1 \in T \ x(t_1g)\neq x(t_1sg)) \wedge
(\exists t_2 \in T \ x(t_2g)= x(t_2sg))].
\]
\end{lemn}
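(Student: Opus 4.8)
The plan is to split the two conjuncts and take $T$ to be a union of two finite sets. The ``disagreement'' conjunct --- a finite set meeting, for every $g$, a place where $x(gt_1)\neq x(gst_1)$ --- is exactly the combinatorial characterization of hyper-aperiodicity (the Lemma above) applied to the element $s$, which hands us a finite $T_1$ that already settles the first conjunct. So the content of the lemma is the ``agreement'' conjunct, and the idea is to feed the \emph{square} $s^2$ into the same characterization.

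Concretely, apply the combinatorial characterization of hyper-aperiodicity to $s^2$ --- legitimate provided $s^2\neq 1_\G$, which holds whenever $s$ has infinite order, and in particular for $\G=\Z^n$ --- to obtain a finite $T'\subseteq\G$ such that for every $g$ there is $t\in T'$ with $x(gt)\neq x(gs^2t)$. Fix such a $g$ and $t$ and consider the three values $x(gt),\,x(gst),\,x(gs^2t)$ in $\{0,1\}$; since the two outer ones differ, the middle value $x(gst)$ equals exactly one of them. If $x(gst)=x(gt)$, then $t$ is an agreement witness. If $x(gst)=x(gs^2t)$, then $t_2=st$ is an agreement witness, since $x(gt_2)=x(gst)=x(gs^2t)=x(gst_2)$. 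Hence $T_2:=T'\cup sT'$ is finite and, for every $g$, contains some $t_2$ with $x(gt_2)=x(gst_2)$. Taking $T:=T_1\cup T_2$ finishes the proof.

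The one point that requires care --- and the only real obstacle --- is the passage through $s^2$, i.e.\ the hypothesis $s^2\neq 1_\G$. This is genuinely needed: for $\G=\Z/2\Z$ with $s$ the nonidentity element and $x$ the nonconstant point, $x$ is hyper-aperiodic yet $x(g)\neq x(gs)$ for every $g$, so the ``agreement'' conjunct fails outright; thus for an involution $s$ the statement is false as written and should be read with $s^2\neq 1_\G$ understood --- which is automatic in the case $\G=\Z^n$ that drives the paper. Apart from this, the proof is a few lines of finite bookkeeping; no compactness or subflow argument is required. (If one prefers, the ``agreement'' conjunct can instead be obtained by contradiction: a failure would let one extract from translates of $x$ a limit $y\in\overline{[x]}\subseteq F(2^\G)$ with $y(t)\neq y(st)$ for all $t$, forcing $2s$ to fix $y$ and contradicting aperiodicity --- but this route carries the same restriction that $2s\neq 1_\G$.)
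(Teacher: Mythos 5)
Your proof is correct and takes essentially the same route as the paper: the paper also applies the hyper-aperiodicity characterization to $s^2$ to get a finite $T'$, and takes $T'' = T \cup T' \cup sT'$; the only (inessential) difference is that the paper first checks whether $x(g)=x(gs)$ (using $1_G \in T$ as the agreement witness when they agree) before falling back on the $s^2$ argument, while you go directly to $T' \cup sT'$ in all cases, which is a touch cleaner. Your remark about $s^2 \neq 1_\G$ is well taken: the paper's proof silently invokes ``hyper-aperiodicity for $s^2$,'' which is meaningless when $s$ is an involution, and indeed the statement of the lemma is false in that case (your $\Z/2\Z$ example is a valid counterexample); the paper leaves this hypothesis implicit since its only use of the lemma is in $\Z^n$ where it is automatic.
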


\begin{proof}
Let $x \in 2^\G$ be a hyper-aperiodic element. Let $s \neq 1_\G$. Let $T \subseteq \G$
witness the hyper-aperiodicity of $x$ for $s$ as in Lemma~\ref{lem:hap}, and we may assume $1_G \in T$.
Let $T'$ witness the hyper-aperiodicity for $s^2$.
Let $T''=T\cup T' \cup T's$. Then we claim that $T''$ witnesses the desired property for $s$.
For suppose $g \in \G$. If $x(g)=x(sg)$,
then since there is a $t \in T$ with $x(tg)\neq x(tsg)$, we can take
$t_1=t$, $t_2=1_\G$. Suppose $x(g)\neq x(sg)$. Let $t \in T'$ be such that
$x(tg)\neq x(ts^2g)$. Consider $x(tg)$, $x(tsg)$, and $x(ts^2g)$, which are elements of $\{0,1\}$. Either
$x(tg)=x(tsg)$ or $x(tsg)=x(ts^2g)$. So we can take $t_1=1_G$ and either
$t_2=t$ or $t_2=ts$.
\end{proof}

If $G$, $H$ are countable groups and $x \in 2^\G$, $y \in 2^H$,
let $x \oplus y \in 2^{\G\times H}$ be given by
$(x \oplus y)\, (g,h)=x(g)+y(h)\!\!\mod 2$.

\begin{lemn} \label{lem:twocolprod}
Let $x \in 2^{\G}$ and $y \in 2^{H}$ be hyper-aperiodic elements.
Then $x \oplus y \in 2^{\G\times H}$ is a hyper-aperiodic element.
\end{lemn}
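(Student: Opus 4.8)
The plan is to verify the combinatorial characterization of hyper-aperiodicity for the element $x \oplus y \in 2^{\G \times H}$ directly, using the combinatorial characterization lemma stated above: $z \in 2^K$ is hyper-aperiodic iff for every $s \neq 1_K$ there is a finite $T \subseteq K$ with $\forall g\, \exists t \in T\ z(tg) \neq z(tsg)$. So fix $s = (s_1, s_2) \neq 1_{\G \times H}$, meaning $s_1 \neq 1_\G$ or $s_2 \neq 1_H$. By symmetry assume $s_1 \neq 1_\G$ (the other case is identical with the roles of $\G$ and $H$ swapped).

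First I would record how $x \oplus y$ behaves under shifts: for $(g,h) \in \G \times H$ we have $(x\oplus y)((t_1,t_2)(g,h)) = x(t_1 g) + y(t_2 h) \bmod 2$. So the inequality $(x\oplus y)(tsg') \neq (x\oplus y)(tg')$ (writing $g' = (g,h)$, $t = (t_1,t_2)$) says $x(t_1 s_1 g) + y(t_2 s_2 h) \neq x(t_1 g) + y(t_2 h) \bmod 2$, i.e. $x(t_1 s_1 g) - x(t_1 g) \neq y(t_2 s_2 h) - y(t_2 h) \bmod 2$. The natural choice is to force $y(t_2 s_2 h) = y(t_2 h)$ while $x(t_1 s_1 g) \neq x(t_1 g)$; then the two sides differ. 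If $s_2 = 1_H$ this is automatic for every $t_2$, and we simply take a finite $T_x \subseteq \G$ witnessing hyper-aperiodicity of $x$ for $s_1$ and set $T = T_x \times \{1_H\}$: for any $(g,h)$ pick $t_1 \in T_x$ with $x(t_1 s_1 g) \neq x(t_1 g)$, and then $t = (t_1, 1_H)$ works.

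The remaining case is $s_2 \neq 1_H$. Here I would invoke Lemma~\ref{lem:goodtwocol} (applied to $y$ and $s_2$): there is a finite $T_y \subseteq H$ such that for every $h \in H$ there exist $t_2, t_2' \in T_y$ with $y(t_2 s_2 h) \neq y(t_2 h)$ \emph{and} $y(t_2' s_2 h) = y(t_2' h)$. We only need the second clause: for every $h$ there is some $t_2 \in T_y$ with $y(t_2 s_2 h) = y(t_2 h)$. Let $T_x \subseteq \G$ witness hyper-aperiodicity of $x$ for $s_1$, and set $T = T_x \times T_y$, a finite subset of $\G \times H$. Given any $(g,h)$, choose $t_1 \in T_x$ with $x(t_1 s_1 g) \neq x(t_1 g)$ and $t_2 \in T_y$ with $y(t_2 s_2 h) = y(t_2 h)$; then with $t = (t_1,t_2)$ we get $(x\oplus y)(ts(g,h)) = x(t_1 s_1 g) + y(t_2 s_2 h)$ and $(x\oplus y)(t(g,h)) = x(t_1 g) + y(t_2 h)$, and since the $y$-terms are equal mod $2$ while the $x$-terms differ, the two values differ mod $2$. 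This establishes the combinatorial condition for all $s \neq 1_{\G \times H}$, so $x \oplus y$ is hyper-aperiodic.

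There is no serious obstacle here; the only point requiring slight care is that plain hyper-aperiodicity of $y$ gives, for each $h$, some $t$ with $y(t s_2 h) \neq y(t h)$, which is the \emph{wrong} direction for making the mod-$2$ sum differ. The fix is precisely Lemma~\ref{lem:goodtwocol}, which upgrades this to also guarantee, uniformly in $h$ over a single finite set, a $t$ achieving \emph{equality} $y(t s_2 h) = y(t h)$ — that is the step the authors evidently set up Lemma~\ref{lem:goodtwocol} to supply. One should also remember to handle $s_1 = 1_\G$ (forcing $s_2 \neq 1_H$) by the symmetric argument, and to note $1_\G \in T_x$ or $1_H \in T_y$ is not actually needed in this proof.
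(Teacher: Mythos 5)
Your proof is correct and follows essentially the same strategy as the paper's: case-split on which of $s_1, s_2$ is nontrivial, handle the one-sided cases with plain hyper-aperiodicity, and use Lemma~\ref{lem:goodtwocol} to obtain the equality $y(t_2 s_2 h) = y(t_2 h)$ in the case both are nontrivial. The only (harmless) difference is that you invoke Lemma~\ref{lem:goodtwocol} only for $y$, correctly observing that plain hyper-aperiodicity of $x$ suffices for the inequality, whereas the paper invokes it for both $x$ and $y$ and then uses only one clause from each.
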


\begin{proof}
Let $s=(s_1,s_2) \neq (1_G,1_H)$. Suppose first that $s_1 \neq 1_H$ and $s_2=1_H$.
Let $T$ witness the hyper-aperiodicity of $x$ for $s_1$.
Let $(g,h) \in \G \times H$. Then there is a $t \in T$ with $x(tg)\neq x(ts_1g)$,
and thus
$$\begin{array}{l}
 (x \oplus y)\, [(t,1_H)(s_1,s_2)(g,h)] = x(ts_1g)+y(h)\!\!\!\mod 2 \\
\neq  x(tg)+y(h)\!\!\!\mod 2= (x \oplus y)\, [(t,1_H)(g,h)].
\end{array}$$ So, $T\times \{ 1_H\}$
witnesses the hyper-aperiodicity of $x \oplus y$ for $s$ in this case.
The case $s_1=1_\G$, $s_2 \neq 1_H$ is similar.

Suppose that $s_1\neq 1_\G$ and $s_2 \neq 1_H$. Let $T_1$, $T_2$ witness the
enhanced hyper-aperiodicity in Lemma~\ref{lem:goodtwocol} respectively for $x$ for $s_1$ and for $y$ for $s_2$.
Let $t_1 \in T_1$ be such that $x(t_1g)\neq x(t_1sg)$, and let $t_2 \in T_2$ be
such that $y(t_2h)=y(t_2s_2 h)$. Then
$$\begin{array}{l} (x \oplus y)\, [(t_1,t_2)(g,h)]=
x(t_1g)+y(t_2h)\!\!\!\mod 2 \\
\neq x(t_1 s_1g)+y(t_2s_2h)\!\!\!\mod 2 = (x \oplus y)\, [(t_1,t_2)(s_1,s_2)(g,h)].\end{array}$$
Thus $T_1 \times T_2$ witnesses the hyper-aperiodicity of $x\oplus y$ for $s$.
\end{proof}

\begin{lemn} \label{hypzn}
For each $n \geq 1$, there is a hyper-aperiodic element on $\Z^n$.
\end{lemn}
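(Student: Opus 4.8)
The plan is to induct on $n$ and reduce everything to the case $\G = \Z$. The inductive step is handled entirely by Lemma~\ref{lem:twocolprod}: writing $\Z^n = \Z^{n-1}\times\Z$, if $z \in 2^{\Z^{n-1}}$ is hyper-aperiodic (by the induction hypothesis) and $y \in 2^{\Z}$ is hyper-aperiodic, then $z \oplus y \in 2^{\Z^{n-1}\times\Z} = 2^{\Z^n}$ is hyper-aperiodic. So the whole content of the lemma is the base case $n=1$: producing a single hyper-aperiodic element on $\Z$.

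For that, by the combinatorial characterization of hyper-aperiodicity following Definition~\ref{def:twocol} (with the right-shift action), it suffices to find $x \in 2^\Z$ such that for every $s \geq 1$ the difference set $D_s = \{ m \in \Z : x(m) \neq x(m+s) \}$ is syndetic, i.e. admits a uniform finite bound $L_s$ on the gaps between consecutive elements; then $T = \{0,1,\dots,L_s\}$ witnesses hyper-aperiodicity for both $s$ and $-s$ (the latter because $D_{-s}$ is a translate of $D_s$). I would take $x$ to be a two-sided Thue--Morse word: let $w_0 = 0$ and $w_{k+1} = w_k\,\overline{w_k}$, the concatenation of the length-$2^k$ word $w_k$ with its bitwise complement; let $w_\infty \in 2^{\N}$ be the common extension of the nested words $w_k$; and set $x(m) = w_\infty(m)$ for $m \geq 0$ and $x(m) = w_\infty(-1-m)$ for $m < 0$.

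The verification rests on the classical overlap-freeness of $w_\infty$, namely that it contains no factor of the form $a v a v a$ with $a$ a letter and $v$ a word: if $w_\infty$ agreed with its $s$-shift on more than $s$ consecutive positions, the resulting factor of length $2s+1$ would be $s$-periodic, hence of the form $a v a v a$ with $|av| = s$, a contradiction. Thus the gaps of $D_s$ on $\N$ are at most $s$; the same bound holds on the negative axis (the reverse and the complement of an overlap-free word are overlap-free), and the seam near $0$ contributes at most one further gap of bounded size, so $D_s$ is syndetic on all of $\Z$ and $x$ is hyper-aperiodic. The only external ingredient is overlap-freeness of Thue--Morse, which is the main (and quite mild) obstacle: it can be proved in a few lines by induction on $|v|$, or one can instead build $x$ on $\Z$ by an explicit stagewise recursion that plants, at scale $2^k$, boundary markers rigid enough that any nonzero shift by $s \leq k$ is detected within a bounded window, at the cost of slightly more bookkeeping of the same elementary kind used elsewhere in the paper.
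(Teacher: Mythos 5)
Your argument is correct and takes essentially the same route as the paper: both reduce to the base case $n=1$ via Lemma~\ref{lem:twocolprod}, and both use the Thue--Morse sequence (the paper writes it as $x(a)=\beta(|a|)\bmod 2$ with $\beta$ the binary digit-sum, which is the same word as your $w_\infty$, just reflected about $0$ rather than about $-1/2$). The only divergence is in verifying hyper-aperiodicity on $\Z$: you appeal to the classical overlap-freeness of Thue--Morse, whereas the paper gives a self-contained digit-manipulation argument (adjusting $a$ by a bounded $n$ so that the $\ell$th and $(\ell+1)$st binary digits become $(0,0)$ or $(1,0)$), sidestepping the need to prove or cite overlap-freeness.
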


\begin{proof}
From Lemma~\ref{lem:twocolprod} it is enough to show that there is a hyper-aperiodic element on $\Z$.
This can be done in many ways. For example, let
$x(a)= \beta(a) \mod 2$, where $\beta(a)$ is the number of $1$'s in the base $2$ expansion
of $|a|$. Then $x \in 2^\Z$ is hyper-aperiodic. For suppose
$s \neq 0$ and $|s|$ has highest non-zero base $2$ digit in the $\ell$th position counting from the right
(i.e., $\ell=\lfloor \log_2(|s|) \rfloor$), then we may define
$T=\{ n\in\Z\, \colon\, |n| \leq 4 \cdot 2^\ell \}$.
Now for any $a \in \Z$ we can find $n \in T$ such that the binary expansion of $a+n$ has $0$s
in positions lowers than the $\ell$th digit, and that the $\ell$th and $\ell+1$st
digits of $a+n$ can be taken to be either
$(0,0)$ or $(1,0)$. In the first case, adding $s$ changes the parity of
the number of $1$s by the parity of $\beta(s)$, and in the second case by $1+\beta(s)$.
Thus there is an $n\in T$ with $x(a+n)\neq x(a+n+s)$.
\end{proof}

We also note that once we have a hyper-aperiodic element on $\Z^n$, then we may easily
construct orthogonal such elements.

\begin{lemn} \label{getorthog}
There are $y_0,y_1 \in 2^{\Z^n}$ which are hyper-aperiodic and with $y_0 \perp y_1$.
\end{lemn}

\begin{proof}
Let $y \in 2^{\Z^n}$ be hyper-aperiodic. Let $p_1,p_2,q_1,q_2$ be distinct primes and
let $m\geq 2 \max \{p_1,p_2,q_1,q_2\}$. Let $M=2m+1$, and let $y_0$ be obtained from $y$
by replacing each each point $(a_1,\dots,a_n)\in \Z^n$  with a copy of $[0,2M]^n$, and if
$y(a_1,\dots,a_n)=0$ we make $y_0$ have period $(p_1,0,\dots,0)$ on the corresponding copy
of $[0,2M]^n$, and if $y_0(a_1,\dots, a_n)=1$ we make $y_0$ have period $(p_2,0,\dots,0)$ on this
copy. For example, we could use
\[
y_0(a_1M+r_1,\dots,a_nM+r_n)= 1 \text{ iff } p_{y(\vec a)} | r_1,
\]
where $0 \leq r_j <M$.
Likewise, $y_1$ is defined from $y$ using $q_1$, $q_2$. It is straightforward to check that $y_0,y_1$ are
hyper-aperiodic and that $y_0 \perp y_1$.

\end{proof}

\section{Toast, an application of hyper-aperiodicity}\label{section:1.3}
We give here a quick application of hyper-aperiodic elements, which only requires
Lemma~\ref{hypzn}. We show that there does not exist a continuous {\em toast structure}
on $F(2^{\Z^n})$. The concept of toast was introduced by B.\ Miller, and has proved
to be a useful concept (e.g., for the construction of Borel proper $3$-colorings
of $F(2^{\Z^n})$, a result which the authors will present in \cite{borcon}).

This easy argument shows how hyper-aperiodic elements are used to show certain
continuous structurings of shift actions cannot exist. By a {\em structuring} we mean a way to assign a
certain kind of structure on $F(2^{\Z^n})$.
A precise general definition will be given later in Definition~\ref{def:lstruc} (the
formal definition is not necessary for this section, though toast is an example of a structuring
with an infinitary language).
For example, a {\em linear order structuring} would be a way to assign a
linear order to each class of $F(2^{\Z^n})$.

Here we define directly what a toast structuring is, and what is means for it to be continuous.
In \S\ref{sec:continuous_structuring} we give the general definition of a structuring and what it
means for the structuring to be continuous. In the terminology below, we will sometimes 
for simplicity drop the word ``structuring,'' for example, we will refer to an unlayered toast structuring
as just an unlayered toast. 


\begin{rem}
The proof Theorem~\ref{thm:nct} below uses only the existence of a hyperaperiodic element
in $F(2^{\Z^d})$.
For most of the results of the current paper,
however, the mere existence of hyper-aperiodic elements is not enough. For these results, including
our main ``12 tiles'' theorem (Theorems~\ref{thm:tilethm} and  \ref{thm:tilethmb}) certain
specialized hyper-aperiodic elements must be constructed. Indeed, the construction of specialized
hyper-aperiodic elements, and their use in studying continuous structurings on shift actions,
is one of the main themes of this paper.
Nevertheless, the toast result will serve as motivation for the later results.
\end{rem}

Toast comes in two flavors, layered and unlayered (general) toast.
The following definition of toast and accompanying figure is taken from
\cite{GJKS_forcing_2015}. 

Below, for a set $C \subseteq F(2^{\Z^d})$ we write
$\partial C$ for the set of boundary points of $C$, meaning those points in $C$ that are
adjacent to a point not in $C$:
$$\partial C = \{x \in C : \exists y \in [x] \setminus C \ \rho_1(x, y) = 1\}.$$

\begin{defnn} \label{def:toast}
Let $\{T_n\}$ be a sequence of subequivalence relations of
the shift equivalence relation on $F(2^{\Z^d})$
on some subsets $\dom(T_n)\subseteq F(2^{\Z^d})$ with each
$T_n$-equivalence class finite. Assume $\bigcup_n \dom(T_n)=
F(2^{\Z^d})$. We say $\{ T_n\}$ is an {\em (unlayered) toast} if:
\begin{enumerate}
\item[(1)] \label{toast1} For each $T_n$-equivalence class $C$, and
  each $T_m$-equivalence class $C'$ where $m>n$, if $C \cap C' \neq
  \varnothing$ then $C \subseteq C'$.

\item[(2)] \label{toast2} For each $T_n$-equivalence class $C$ there
  is $m>n$ and a $T_m$-equivalence class $C'$ such that $C \subseteq
  C'\setminus \partial C'$.
\end{enumerate}

We say $\{ T_n\}$ is a {\em layered toast} if, instead of (2) above,
we have
\begin{enumerate}
\item[(2')] For each $T_n$-equivalence class $C$ there is a
  $T_{n+1}$-equivalence class $C'$ such that $C \subseteq C'\setminus
  \partial C'$.
\end{enumerate}

We call the toast {\em continuous} if for every $n$ and for every $g \in \Z^d$
the set $\{x \in F(2^{\Z^d}) \colon (x, g \cdot x) \in T_n\}$ is relatively clopen in $F(2^{\Z^d})$.
Similarly, we call the toast {\em open} if for every $g \in \Z^d$
this set is relatively open in $F(2^{\Z^d})$. 
\end{defnn}

Figure~\ref{fig:toast} illustrates the definitions of layered and
unlayered toast. The term `toast' is simply motivated by the the visual
shapes of these regions in Figure~\ref{fig:toast} together with how the
regions are stacked (specifically condition (1) above).

\begin{figure}[ht]
\centering
\begin{tikzpicture}[scale=0.035]

\draw (-150,-80) rectangle (-10,80);
\draw (10,-80) rectangle (150,80);

\draw (-80, 40) ellipse [x radius=60, y radius=30];
\draw (-80, -40) ellipse [x radius=60, y radius=30];
\draw[rotate around={45:(-110,40)}]   (-110,40) ellipse [x radius=20, y radius=10];
\draw[rotate around={45:(-80,40)}]   (-80,40) ellipse [x radius=20, y radius=10];
\draw[rotate around={45:(-50,40)}]   (-50,40) ellipse [x radius=20, y radius=10];
\draw (-100,50) circle (4);
\draw (-105,40) circle (4);
\draw (-117,30) circle (4);
\draw (-90,30) circle (4);
\draw (-75,40) circle (4);
\draw (-50,40) circle (4);
\draw[rotate around={45:(-105,-40)}]   (-105,-40) ellipse [x radius=20, y radius=10];
\draw[rotate around={45:(-55,-40)}]   (-55,-40) ellipse [x radius=20, y radius=10];
\draw (-110,-45) circle (4);
\draw (-98,-33) circle (4);
\draw (-63,-50) circle (4);
\draw (-60,-40) circle (4);
\draw (-45,-30) circle (4);

\draw (80, 40) ellipse [x radius=60, y radius=30];
\draw (80, -40) ellipse [x radius=60, y radius=30];
\draw[rotate around={45:(110,40)}]   (110,40) ellipse [x radius=20, y radius=10];
\draw[rotate around={45:(50,40)}]   (50,40) ellipse [x radius=20, y radius=10];
\draw (100,30) circle (4);
\draw (115,45) circle (4);
\draw (45,35) circle (4);
\draw (75,60) circle (4);
\draw (85,40) circle (4);
\draw (70,20) circle (4);
\draw[rotate around={45:(55,-40)}]   (55,-40) ellipse [x radius=20, y radius=10];
\draw (60,-35) circle (4);
\draw (74,-55) circle (4);
\draw (105,-45) circle (4);
\draw (100,-30) circle (4);

\end{tikzpicture}
\caption{(a) layered toast \hspace{50pt} (b) general toast} \label{fig:toast}
\end{figure}

In \cite{GJKS_forcing_2015} a forcing argument was used to show that a Borel layered
toast does not exist for $F(2^{\Z^d})$. The following result shows that
a continuous (unlayered) toast does not exist for $F(2^{\Z^d})$. Thus, the only
remaining possibility for $F(2^{\Z^d})$ is a Borel unlayered toast, which in fact
does exist for $F(2^{\Z^d})$ (a result of the authors to be presented in \cite{borcon}).

\begin{thmn} \label{thm:nct}
There does not exist an open toast structure on $F(2^{\Z^d})$
for any $d \geq 1$.
\end{thmn}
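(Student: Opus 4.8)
The plan is to argue by contradiction using a hyper-aperiodic element and compactness, in the spirit of the known non-existence results for vanishing clopen marker sequences on $F(2^\Z)$. Suppose $\{T_n\}$ is a continuous toast structure on $F(2^{\Z^d})$. By Lemma~\ref{hypzn} fix a hyper-aperiodic $x\in 2^{\Z^d}$ and set $K=\overline{[x]}$, a compact $\Z^d$-invariant subset of $F(2^{\Z^d})$. Replacing $K$ by a minimal subflow if necessary (whose points are again hyper-aperiodic, since their orbit closures lie in the free part), we may assume $K$ is minimal, so that every nonempty relatively clopen $U\subseteq K$ is \emph{syndetic}: there is an $L_U$ with $B_{L_U}(y)\cap U\neq\varnothing$ for all $y\in K$, where $B_r(y)=\{z:\rho(y,z)\le r\}$ inside an orbit.

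Next I would extract uniform bounds. Continuity of the toast makes each $\dom(T_n)$ and each $T_n$ relatively clopen, hence also the set $\partial_n$ of points lying on the boundary of their own $T_n$-class. Since $\{\dom(T_n)\}_n$ is an open cover of the compact set $K$, finitely many of them cover $K$; let $N$ bound the indices used, so that \emph{every point of $K$ lies in a toast piece of level at most $N$}. Moreover, for each $n\le N$ the relatively clopen sets $\{y\in\dom(T_n)\cap K:|[y]_{T_n}|=m\}$, $m\in\N$, partition the compact set $\dom(T_n)\cap K$, so all but finitely many are empty; hence a single $D$ bounds the size of every level-at-most-$N$ toast piece meeting $K$. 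Consequently the map sending $y\in K$ to the largest toast piece of level at most $N$ containing it is relatively clopen with finite range, and the associated relation $R$ is a relatively clopen subequivalence relation of $K$, defined on all of $K$, with every class of size $\le D$.

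Now I would confront this boundedness with the engulfing clause of Definition~\ref{def:toast}. Iterating that clause from the $R$-class $Q$ of a point $y\in K$ yields toast pieces $Q\subseteq\Int C_1\subseteq\Int C_2\subseteq\cdots$ inside $[y]$ with $B_{i-1}(y)\subseteq C_i$ (induction, using interior-engulfing at each step); so $\bigcup_i C_i=[y]$, the $C_i$ grow without bound, and $\partial C_i\cap B_{i-1}(y)=\varnothing$. Each $C_i$ necessarily has level $>N$ (otherwise it would be a level-at-most-$N$ piece properly containing the maximal such piece $Q$ through $y$), and by the nesting clause $C_i$ is a union of $R$-classes, among which $Q$ is interior while those meeting $\partial C_i$ are not. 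On the other hand, by minimality the relatively clopen boundary set of $R$ is $L$-syndetic for a fixed $L$; so for $i$ large enough that $C_i\supseteq B_{2L}(y)$, there is an $R$-boundary point $w$ with $\rho(y,w)\le L$, lying strictly inside $C_i$, whose $R$-class has a Cayley neighbour in a \emph{different} $R$-class also inside $C_i$ — all of this occurring within a bounded distance of $y$ while $\partial C_i$ has receded to distance $>2L$. Playing the $L$-syndetic recurrence of the clopen sets $R$ and $\partial_n$ ($n\le N$) against the unbounded interior-depth forced by the engulfing clause is then intended to produce the contradiction.

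I expect the genuine obstacle to be exactly this last point: converting the \emph{qualitative} fact that toast pieces nest ``with room to spare'' into a \emph{quantitative} clash with the uniform constants $N$, $D$, $L$. The preliminary moves — passing to a compact minimal subflow, deriving the uniform bounds from compactness, and producing orbit-exhausting chains from the engulfing clause — are routine; the combinatorial heart, where the \emph{interior} clause of the engulfing condition is set against the recurrence of clopen patterns in a minimal flow, carries the content. This is also precisely where the argument cannot be transferred to the Borel setting, consistently with the fact that Borel unlayered toast does exist on $F(2^{\Z^d})$.
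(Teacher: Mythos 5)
Your proposal does not actually finish the proof, and you say so yourself: the decisive step — playing syndeticity of the clopen boundary set against the unbounded interior depth forced by the engulfing clause — is left with the phrase ``is then intended to produce the contradiction,'' and you flag it as ``the combinatorial heart\dots [that] carries the content.'' That is the gap. The apparatus you assemble on the way (passing to a minimal subflow, syndeticity constants $L$, the uniform size bound $D$, the relation $R$ of maximal level-$\le N$ pieces, and the iterated engulfing chain $C_1\subseteq C_2\subseteq\cdots$) does not close that gap, and none of it appears in the paper's argument.

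The paper's proof hinges on one much simpler observation that you do not isolate. For $y\in K=\overline{[x]}$, let $f(y)$ be the \emph{least} $n$ for which $y\in C\setminus\partial C$ for some $T_n$-class $C$; such an $n$ exists for every $y$ by clause~(2) of Definition~\ref{def:toast}, $f$ is continuous because the toast is continuous, and hence $f$ is bounded on the compact set $K$, say by $n_0$. But then the $T_{n_0}$-classes tile the orbit $[x]$ by \emph{finite} sets, so some $y\in[x]$ lies on the boundary $\partial C$ of its $T_{n_0}$-class $C$ — contradicting that $y$ is interior to a $T_{n_0}$-class, which could only be $C$ itself. Your compactness step (finitely many $\dom(T_n)$ cover $K$, giving $N$) only bounds the level at which points are \emph{covered}; the proof needs, and clause~(2) supplies, a bound on the level at which every point is \emph{interior}. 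With that single bound in hand, the contradiction is immediate — no minimality, no syndeticity, and no size control on the pieces is required.
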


\begin{proof}
Suppose $\{ T_n\}$ were an open toast structure on $F(2^{\Z^d})$.
Let $x \in F(2^{\Z^d})$ be a hyper-aperiodic element. Let $K=\ocl{[x]}$,
so $K\subseteq F(2^{\Z^d})$ is compact. For each $m \in \N$, consider the
open set $U_m$ consisting of those points $y \in K$ having the property
that there is $n \leq m$ and a $T_n$-class $C$ with $y \in C \setminus \partial C$.
Then the $U_m$'s are increasing, open, and cover $K$. So there is a least $m$
with $U_m = K$. Since $m$ is least, $T_m \neq \varnothing$. Consider a $T_m$-class
$C'$. By definition of toast, $C'$ is finite. So we can find a point
$y \in \partial C'$. Since $y \in U_m$, there is $n \leq m$ and a $T_n$-class
$C$ with $y \in C \setminus \partial C$. Moreover, since $C \neq C'$ and $T_m$ is
an equivalence relation, we must have $n < m$. However, this implies that $C' \cap C \neq \varnothing$
but $C \not\subseteq C'$, contradicting Definition \ref{def:toast}.(1).
\end{proof}



\section{A hyper-aperiodic element with partial periodicity\label{hyper}}

We will construct a hyper-aperiodic element on $\Z^2$ that possesses some partial vertical periodicity.
We will use heavily the definitions of hyper-aperiodic and orthogonal given
in Definitions~\ref{def:twocol} and \ref{def:orthogonal}.

The following lemma is a warm-up.

\begin{lemn} \label{warmup} Let $x, y_0, y_1$ be hyper-aperiodic elements of $2^\Z$ with $y_0\,\bot\, y_1$. Then the element $z\in 2^{\Z^2}$ defined by
$$ z(u, v)=y_{x(u)}(v) $$
is hyper-aperiodic.
\end{lemn}

\begin{proof} Let $s=(a, b)$ be a non-identity element in $\Z^2$. First assume $a=0$.
Then $b\neq 0$. Let $T_0$ and $T_1$ be finite sets witnessing the hyper-aperiodicity of $y_0$ and $y_1$,
respectively, for $b$. Let $T=\{0\}\times (T_0\cup T_1)$. We claim that $T$ witnesses the hyper-aperiodicity of $z$ for $s$.
To see this, let $g=(u, v)\in \Z^2$ be arbitrary. If $x(u)=0$, then there is $t\in T_0$ such that
$$ z(u, v+t)=y_{x(u)}(v+t)=y_0(v+t)\neq
  y_0(v+b+t)=y_{x(u)}(v+b+t)=z(u, v+b+t), $$ or if we let
  $\tau=(0,t)\in T$,
$$ z(g+\tau)\neq z(g+s+\tau). $$ If $x(u)=1$, then similarly there is
  $t\in T_1$ such that $z(g+\tau)\neq z(g+s+\tau)$, where $\tau=(0,
  t)\in T$. This proves the lemma in the case $a=0$.

Now assume $a\neq 0$. Let $S$ be the finite set witnessing the hyper-aperiodicity of $x$ for $a$. Let $R$ be a symmetric (i.e. $r\in R$ implies $-r\in R$) finite set witnessing the orthogonality of $y_0$ with $y_1$. Note that $R$ also witnesses the orthogonality of $y_1$ with $y_0$. Let $T=S\times R$. We claim that $T$ witnesses the hyper-aperiodicity of $z$ for $s$. To see this, let $g=(u, v)\in \Z^2$ be arbitrary. Let $t\in S$ be such that $x(u+t)\neq x(u+a+t)$. Let $r\in R$ be such that $y_0(v+r)\neq y_1(v+b+r)$ (if $x(u+t)=0$) or $y_1(v+r)\neq y_0(v+b+r)$ (if $x(u+t)=1$). Letting $\tau=(t,r)$, we have
$$ z(u+t,v+r)=y_{x(u+t)}(v+r)\neq y_{x(u+a+t)}(v+b+r)=z(u+a+t, v+b+r) $$
or $z(g+\tau)\neq z(g+s+\tau)$. This proves the lemma in the case $a\neq 0$.
\end{proof}

Next we modify the construction of the hyper-aperiodic element $z$ by allowing it to possess some 
partial vertical periodicity. The partial vertical periodicity is governed by a 
function $f\colon \Z\to \Z^+$ in the sense that for each $u\in\Z$, $f(u)$ is a vertical period for $z(u, \cdot)$.

Let $\Lambda$ be an infinite set of prime numbers. Let $f:\Z\to \Z^+$ be a function such that
\begin{enumerate}
\item[(i)] For all $u\in\Z$ there are $p\in\Lambda$ and $n\in\Z^+$ such that $f(u)=p^n$;
\item[(ii)] For all $p\in\Lambda$ and $m\in\N$, there are $a\in \Z$ and $k\in \Z^+$ such that $f(i)=p^k$ for all $i\in[a, a+m]$;
\item[(iii)] $f(u)$ is monotone increasing for $u>0$, monotone decreasing for $u<0$, and $f(u)\to \infty$ as $|u|\to \infty$.
\end{enumerate}
Such functions $f$ can be constructed easily by diagonalization.

\begin{lemn} \label{main} Let $x, y_0, y_1$ be hyper-aperiodic elements of $2^\Z$ with $y_0\,\bot\, y_1$. Let $\Lambda$ be an infinite set of prime numbers and $f\colon \Z\to\Z^+$ be a function as above. 
Then the element $z\in 2^{\Z^2}$ defined by
$$ z(u, v)= y_{x(u)}(v\!\!\!\!\mod\! f(u)) $$
is hyper-aperiodic.
\end{lemn}

\begin{proof} Let $s=(a, b)$ be a non-identity element in $\Z^2$.
First assume $a=0$. Then $b\neq 0$. Let $T_0$ and $T_1$ be finite sets witnessing
the hyper-aperiodicity of $y_0$ and $y_1$, respectively, for $b$. Let $B\in \Z^+$
be greater than $|b|$ or $|t|$ for all $t\in T_0\cup T_1$. Let $A\in\Z^+$
be such that for all $|u|\geq A$, $f(u)>3B$. Let $T=[-A, A]\times [-3B, 3B]$.
We claim that $T$ witnesses the hyper-aperiodicity of $z$ for $s$.
To see this, let $g=(u, v)\in \Z^2$ be arbitrary.
First, there is $c\in [-A, A]$ such that $f(u+c)>3B$.
Without loss of generality assume $x(u+c)=0$.  Next, there is $d\in [-2B, 2B]$ such that
\begin{itemize}
\item the difference between $v+d$ and any multiple of $f(u+c)$ is
  greater than $B$, and
\item the difference between $v+b+d$ and any multiple of $f(u+c)$ is
  greater than $B$.
\end{itemize}
Finally, there is $t\in T_0$ such that
$$\begin{array}{rl} & z(u+c, v+d+t)=y_{x(u+c)}(v+d+t)=y_0(v+d+t)
  \\ \\ \neq & y_0(v+d+b+t)=y_{x(u+c)}(v+d+b+t)=z(u+c, v+d+b+t).
\end{array}$$
If we let $\tau=(c,d+t)\in T$, we have
$$ z(g+\tau)\neq z(g+s+\tau). $$ This proves the lemma in the case
$a=0$. The case $a\neq 0$ is similarly handled by a combination of the
above shifting technique and the proof of the corresponding case of
Lemma~\ref{warmup}.
\end{proof}

Note that in the proof of Lemma~\ref{main} only condition (iii) of the function $f$ is used to guarantee that $z$ is a hyper-aperiodic element. The other two conditions will be used in the proof of the following theorems.

\section{The continuous chromatic number of $F(2^{\Z^2})$}\label{section:1.5}

In this section we give a self-contained proof for the fact that there
are no continuous proper $3$-colorings on $F(2^{\Z^2})$. It has
been proved in \cite{gao_countable_2015} that there is a continuous proper
$4$-coloring on $F(2^{\Z^2})$. Combining these results, we conclude
that the continuous chromatic number for $F(2^{\Z^2})$ is exactly $4$.

\begin{thmn} \label{threecoloring} There is no continuous proper $3$-coloring of $F(2^{\Z^2})$.
Consequently, the continuous chromatic number of $F(2^{\Z^2})$ is $4$.
\end{thmn}

As an immediate corollary, we conclude that for any $n\geq 2$, the continuous chromatic number for $F(2^{\Z^n})$ is exactly $4$.

\begin{corn} \label{cor:threecol}
For any $n\geq 2$ the continuous chromatic number of $F(2^{\Z^n})$ is $4$.
\end{corn}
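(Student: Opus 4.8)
The plan is to obtain the general case $n\ge 2$ from the case $n=2$, which is Theorem~\ref{threecoloring}. Only the lower bound $\chi_c(F(2^{\Z^n}))>3$ is genuinely new here; the matching upper bound is the existence of a continuous $4$-coloring of $F(2^{\Z^n})$, which for $n=2$ is \cite[Thm.~4.2]{gao_countable_2015} and in general is supplied by the positive constructions of Theorems~\ref{thm:pcn} and~\ref{thm:pcn2} (generalizing \cite{gao_countable_2015}). So the task reduces to showing that for $n>2$ there is no continuous proper $3$-coloring of the Cayley graph on $F(2^{\Z^n})$.

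To do this I would build a continuous reduction from $F(2^{\Z^2})$ to $F(2^{\Z^n})$ that respects the first two generators. Fix a hyper-aperiodic element $w\in 2^{\Z^{n-2}}$, which exists by Lemma~\ref{hypzn}, and define $\Psi\colon F(2^{\Z^2})\to 2^{\Z^n}$ by
\[
\Psi(x)(u,v,\vec t)=x(u,v)+w(\vec t)\ (\mathrm{mod}\ 2),\qquad u,v\in\Z,\ \vec t\in\Z^{n-2}.
\]
Then I would verify: (a) $\Psi$ is continuous, since each coordinate of $\Psi(x)$ depends on a single coordinate of $x$; (b) $\Psi$ is equivariant for the copy of $\Z^2=\Z^2\times\{0\}^{n-2}$ sitting inside $\Z^n$ as the first two coordinates, i.e.\ $\Psi(g\cdot x)=g\cdot\Psi(x)$ for $g\in\Z^2$, which is a one-line computation from the right-shift formula; and (c) $\Psi$ actually takes values in the free part $F(2^{\Z^n})$. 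For (c): if $g=(g_1,g_2,\vec g_3)\in\Z^n$ fixes $\Psi(x)$, then evaluating the fixed-point identity at $(0,0,\vec t)$ shows $w(\vec t+\vec g_3)-w(\vec t)$ is independent of $\vec t$; aperiodicity of $w$ then forces $\vec g_3=0$ (if the constant is $0$ then $\vec g_3$ is a period of $w$; if it is $1$ then $2\vec g_3$ is one, and $2\vec g_3\neq 0$ when $\vec g_3\neq 0$ since $\Z^{n-2}$ is torsion-free). With $\vec g_3=0$ the identity collapses to $x(u+g_1,v+g_2)=x(u,v)$ for all $(u,v)$, so $(g_1,g_2)=0$ because $x\in F(2^{\Z^2})$; hence $g=0$ and $\Psi(x)$ is aperiodic.

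Now suppose, for contradiction, that $c\colon F(2^{\Z^n})\to 3$ is a continuous proper $3$-coloring. Since $e_1,e_2$ are standard generators of both $\Z^2$ and $\Z^n$, the map $\Psi$ sends every $e_1$-edge and every $e_2$-edge of the Cayley graph on $F(2^{\Z^2})$ to an edge of the Cayley graph on $F(2^{\Z^n})$. Consequently $c\circ\Psi\colon F(2^{\Z^2})\to 3$ is continuous and satisfies $c(\Psi(x))\neq c(\Psi(e_i\cdot x))$ for $i=1,2$, i.e.\ it is a continuous proper $3$-coloring of $F(2^{\Z^2})$, contradicting Theorem~\ref{threecoloring}. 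Hence $\chi_c(F(2^{\Z^n}))>3$, and together with the continuous $4$-coloring this gives $\chi_c(F(2^{\Z^n}))=4$.

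The only step needing a moment's care is (c) — checking that the twisted product $\Psi(x)$ is genuinely aperiodic for \emph{every} aperiodic $x$, not merely for hyper-aperiodic ones; everything else is routine bookkeeping with the definitions. One could instead run the argument on the orbit closure of a single hyper-aperiodic element of $2^{\Z^n}$, mirroring the proof of Theorem~\ref{threecoloring}, but the global map $\Psi$ above makes transparent that this reduction is entirely soft.
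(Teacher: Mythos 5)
Your reduction is the same in spirit as the paper's: build a continuous, $\Z^2$-equivariant map from $F(2^{\Z^2})$ into $F(2^{\Z^n})$ that sends $e_1$- and $e_2$-edges to $e_1$- and $e_2$-edges, and pull back a hypothetical $3$-coloring to contradict Theorem~\ref{threecoloring}. The only difference is the embedding itself. The paper uses the simpler map
\[
\varphi(x)(a_1,\dots,a_n)=
\begin{cases}
x(a_1,a_2) & \text{if } a_3=\dots=a_n=0,\\
0 & \text{otherwise,}
\end{cases}
\]
for which freeness of $\varphi(x)$ is immediate: a translate by $g=(g_1,g_2,\vec g_3)$ with $\vec g_3\neq 0$ shifts the support plane off itself, forcing $x\equiv 0$, which is impossible for aperiodic $x$. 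Your twisted product $\Psi(x)=x\oplus w$ also works, and the aperiodicity check you give is correct, but it is a little heavier than necessary: you invoke a \emph{hyper-aperiodic} $w$ (Lemma~\ref{hypzn}) when mere aperiodicity of $w$ is all the argument uses, and the paper's choice avoids needing a $w$ at all. A small side inaccuracy in the framing: the matching upper bound for $n>2$ is not "supplied by Theorems~\ref{thm:pcn} and~\ref{thm:pcn2}" — those results (a negative statement about long tiles and a $4$-coloring of $\Gamma_{1,2,3}$, respectively) only bear directly on the two-dimensional case — but it \emph{is} supplied by \cite{gao_countable_2015}, which you also cite, so the conclusion stands. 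Finally, your construction only makes literal sense for $n\geq 3$ (for $n=2$ the factor $2^{\Z^{n-2}}$ degenerates), but that case is exactly Theorem~\ref{threecoloring}, so there is no gap.
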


\begin{proof}
It has been proved in \cite{gao_countable_2015} that there is a
continuous proper $4$-coloring on $F(2^{\Z^n})$ for all $n\geq
2$. To see that the continuous chromatic number of $F(2^{\Z^n})$ is
not $3$, it suffices to notice that $F(2^{\Z^2})$ can be naturally identified
with a $\Z^2$-invariant subset of $F(2^{\Z^n})$, meaning there is a continuous
map $\varphi\colon F(2^{\Z^2})\to F(2^{\Z^n})$ such that whenever $(a_1, a_2)\cdot
x=y$ we have $(a_1, a_2, 0, \dots, 0)\cdot
\varphi(x)=\varphi(y)$. Granting such a $\varphi$, if $c\colon
F(2^{\Z^n})\to 3$ were a proper $3$-coloring, then $\varphi\circ c\colon
F(2^{\Z^2})\to 3$ would also be a proper $3$-coloring. The identification
$\varphi$ can be defined by sending $x\in F(2^{\Z^2})$ to the function $\varphi(x)$ satisfying
$$ \varphi(x)(a_1, a_2, \dots, a_n)=\left\{\begin{array}{ll} x(a_1,
  a_2), & \mbox{ if $a_3=\dots=a_n=0$,} \\ 0, & \mbox{
    otherwise.} \end{array}\right. $$ Then if $x\in F(2^{\Z^2})$ one
  can easily check $\varphi(x)\in F(2^{\Z^n})$. Obviously $\varphi$
  also satisfies the equivariant requirement.
\end{proof}

The rest of this subsection is devoted to a proof of Theorem~\ref{threecoloring}. 
Toward a contradiction, assume $c: F(2^{\Z^2})\to \{0,1,2\}$ is a continuous proper $3$-coloring of $F(2^{\Z^2})$.

In the following we denote $e_1=(1,0)$ and $e_2=(0,1)$. We say $x, y \in F(2^{\Z^2})$ 
are {\em adjacent} if there is $i = 1, 2$ with $x = e_i \cdot y$ or $y = e_i \cdot x$ (i.e., $x$ 
and $y$ are adjacent in the Schreier graph $\Gamma(F(2^{\Z^2}))$). For adjacent $x$ and $y$, let
$$ \theta(x,y)=\left\{\begin{array}{ll}
+1, & \mbox{ if $c(y)=c(x)+1\!\!\!\mod 3$,} \\
-1, & \mbox{ if $c(y)=c(x)-1\!\!\!\mod3$.}
\end{array}\right.
$$
We define a potential function $P(x,y)$ for $x, y\in F(2^{\Z^2})$ with $[x] = [y]$.
Let $x_0=x, x_1, \dots, x_{k+1}=y$ be a {\em walk} in the Schreier graph $\Gamma(F(2^{\Z^2}))$ 
from $x$ to $y$, i.e., $x_i$ and $x_{i+1}$ are adjacent for all $i=0, \dots, k$ (we do not 
require the $x_i$'s to be distinct). Then let (where the following sum in taken in $\Z$, not $\Z/3\Z$)
$$ P(x,y)=\sum^k_{i=0} \theta(x_i, x_{i+1}). $$
We first verify that $P(x,y)$ is well-defined.

\begin{lemn} $P(x,y)$ does not depend on the path chosen from $x$ to $y$.
\end{lemn}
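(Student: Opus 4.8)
The natural approach is to prove the stronger statement that the $\rho$-sum around any closed path in $[x]$ vanishes, and then deduce path-independence. Concretely, given two paths $x_0=x,\dots,x_{k+1}=y$ and $x_0'=x,\dots,x_{m+1}'=y$, concatenate the first with the reverse of the second to obtain a closed path based at $x$; since $\rho$ is antisymmetric ($\rho(u,v)=-\rho(v,u)$, which is immediate from the definition because $c(v)\equiv c(u)\pm 1\pmod 3$), the $\rho$-sum around this closed path equals $P$ computed along the first path minus $P$ computed along the second. So it suffices to show every closed path has $\rho$-sum $0$. Note also that each $\rho(u,v)$ is well-defined precisely because $c$ is a proper coloring, so $c(v)-c(u)\not\equiv 0\pmod 3$, hence $c(v)-c(u)\equiv\pm 1\pmod 3$.

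First I would fix a closed walk $x=z_0,z_1,\dots,z_N=x$ in $[x]$. Since the $\Z^2$-action is free, $[x]$ is (isomorphic as a $\Z^2$-graph to) the standard Cayley graph of $\Z^2$, so this is a closed walk in the unit grid, realized as a loop of unit edges in $\R^2$. The key reduction is the combinatorial form of simple connectivity of the plane: any such closed walk can be transformed to the constant walk at $x$ by finitely many elementary moves of two types — (i) deleting (or inserting) a backtrack $\dots,u,v,u,\dots$; and (ii) replacing a segment $\dots,a,b,d,\dots$ by $\dots,a,c,d,\dots$, where $a,b,d,c$ are the four corners of a unit square with $a\xrightarrow{e_1}b$, $b\xrightarrow{e_2}d$, $a\xrightarrow{e_2}c$, $c\xrightarrow{e_1}d$ (equivalently: the cycle space of the grid graph is generated by the unit $4$-cycles). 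Move (i) does not change the $\rho$-sum because $\rho(u,v)+\rho(v,u)=0$, and move (ii) does not change it provided the $\rho$-sum around every unit $4$-cycle is $0$: the net change under (ii) is $\rho(a,c)+\rho(c,d)-\rho(a,b)-\rho(b,d)$, which equals the negative of $\rho(a,b)+\rho(b,d)+\rho(d,c)+\rho(c,a)$ after using antisymmetry.

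The remaining, and only computational, step is to verify that for a unit square as above, $\rho(a,b)+\rho(b,d)+\rho(d,c)+\rho(c,a)=0$. Using $c(v)\equiv c(u)+\rho(u,v)\pmod 3$ along the $4$-cycle $a\to b\to d\to c\to a$ and summing the four congruences, everything on the color side telescopes, giving $\rho(a,b)+\rho(b,d)+\rho(d,c)+\rho(c,a)\equiv 0\pmod 3$. But this integer is a sum of four values each equal to $\pm 1$, hence it is even and lies in $[-4,4]$; the only such multiple of $3$ is $0$. This completes the argument. I do not expect a real obstacle here; the only points requiring care are invoking the elementary-move reduction of step two cleanly (standard, but worth stating precisely) and keeping the orientation/sign bookkeeping in move (ii) consistent with the antisymmetry of $\rho$.
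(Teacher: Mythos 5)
Your proof is correct and follows essentially the same route as the paper: reduce path-independence to the vanishing of the $\rho$-sum around closed walks, dispose of backtracks by antisymmetry, reduce to unit $4$-cycles via simple connectivity of the grid, and verify the $4$-cycle identity. The only (cosmetic) difference is in that last step — you verify $\rho(a,b)+\rho(b,d)+\rho(d,c)+\rho(c,a)=0$ by a direct parity-and-size argument, while the paper observes that $c$ is a graph homomorphism to $K_3$, which has no non-backtracking $4$-cycles, so the image of the square is null-homotopic; both are valid.
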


\begin{proof}
For a walk $\pi = (x_0, \ldots, x_{k+1})$ in the Schreier graph
$\Gamma(F(2^{\Z^2}))$, set $P(\pi) = \sum_{i=0}^k \theta(x_i,
x_{i+1})$. We write $\pi^{-1} = (x_{k+1}, \ldots, x_0)$ for the
reversal of $\pi$. If $\pi$ ends at $y$ and $\sigma$ begins at $y$,
then we write $\pi \sigma$ for the concatenated path. Note that
$P(\pi^{-1}) = - P(\pi)$ and that $P(\pi \sigma) = P(\pi) +
P(\sigma)$. We say that $\pi$ is backtracking if there is $i$ with
$x_i = x_{i+2}$. Note that two paths $\pi$ and $\pi'$ are homotopy
equivalent if and only if they become equal after removing all
instances of back-tracking (this can be taken as a definition if
desired), and that $P(\cdot)$ is invariant under homotopy equivalence.
Let $G$ be the homotopy group of the Schreier graph at the point $x$. 

Write $K_3$ for the triangle graph having vertex set $\{0, 1,
2\}$. Note that the coloring $c$ provides a graph homomorphism from
$F(2^{\Z^2})$ to $K_3$. Write $\sigma_z$ for the path $(z, (1,
0) \cdot z, (1, 1) \cdot z, (0, 1) \cdot z, z)$. Since $K_3$ has
no $4$-cycles, the image of $\sigma_z$ under $c$ must be trivial up to
homotopy equivalence.  It follows $P(\sigma_z) = 0$. Similarly, if
$\ell_{w,z}$ is a walk from $w$ to $z$ then $P(\ell_{w,z} \sigma_z
\ell_{w,z}^{-1}) = P(\ell_{w,z}) + 0 - P(\ell_{w,z}) = 0$.

Now suppose that $\pi_1$ and $\pi_2$ are paths from $x$ to $y$. We
claim that $\ell=\pi_1 \pi_2^{-1}$ is homotopy equivalent to a
concatenation of paths of the form $\ell_{x,z} \sigma_z^{\pm 1}
\ell_{x,z}^{-1}$. Let $N$ be the normal subgroup of $G$ 
consisting of products of elements of the form $\ell_{x,z} \sigma_z^{\pm 1}
\ell_{x,z}^{-1}$. We show that every path $\ell$ in $G$ is homotopy
equivalent to an element of $N$. 

View $x$ as the origin in $\Z^2$, and identify $\ell$ 
as a sequence of directed edges in the Cayley graph of $\Z^2$. 
Let $z=(a,b)$ be a vertex of the path $\ell$ which as a point in $\Z^2$
is of maximal distance from the origin. If $z=(0,0)$, then $\ell$
is the trivial path and we are done. Otherwise, let $u$ be the vertex of $\ell$
immediately preceding $z$ on the path, and let $v$ be the vertex immediately
following $z$. One of the coordinates of $u$ is strictly less in absolute value 
than the corresponding coordinate of $z$, and the other coordinate of $u$
is equal to the corresponding coordinate of $z$, and likewise for $v$. 
If $u=v$, then we may delete the part of the path  $uzv$ from $\ell$ 
to a shorter path $\ell'$ which is homotopy equivalent to $\ell$. By induction on the length of $\ell$,
the result holds for $\ell'$ and we are done. 
If $u \neq v$, then say (the other cases being similar) $u=(a,b-1)$, $v=(a-1,b)$. 
Then we may replace $uzv$ with $u z'v$ where $z'=(a-1,b-1)$. 
The new path $\ell'$ is equal to $\ell$ modulo an element of $N$, has the same length as $\ell$, and 
either has a strictly smaller value for the maximum distance of a vertex from $(0,0)$
or else has the same maximum and a strictly smaller number of vertices attaining this maximum. 
By induction we may assume the result holds for $\ell'$ and we are done.

\end{proof}

Let $z$ be the hyper-aperiodic element on $\Z^2$ as constructed in Lemma~\ref{main}. Then $\overline{[z]}\subseteq F(2^{\Z^2})$ and $c\!\upharpoonright\!\overline{[z]}: \overline{[z]}\to \{0,1,2\}$ is a continuous function on a compact space. Thus there is $N\in\Z^+$ such that for all $x\in\overline{[z]}$, $x\!\upharpoonright\![-N, N]^2$ completely determines $c(x)$.

Fix two distinct prime numbers $p, q\in\Lambda$ where $p, q\neq 2, 3$. By condition (ii) of the function $f$, there are $a, b\in\Z$ and $n, k\in \Z^+$ such that $f(i)=p^n$ for all $i\in[a, a+2N+1]$ and $f(i)=q^k$ for all $i\in[b, b+2N+1]$. Let $x=(a+N+1, 0)\cdot z$ and $y=(b+N+1, 0)\cdot z$. By the construction of $z$, we have that for all $i\in[-N, N]$, $i\cdot x(0, \cdot)$ is vertically periodic with a period $p^n$ and $i\cdot y(0, \cdot)$ is vertically periodic with a period $q^k$. By the choice of $N$, we conclude that $c( (0,t)\cdot x)$ is periodic with respect to $t$ with a period $p^n$, i.e.,
$$ c((0, t)\cdot x)=c((0, t+p^n)\cdot x) $$
for all $t\in\Z$; similarly, $c((0,t)\cdot y)$ is periodic with respect to $t$ with a period $q^k$.

Consider the horizontal path from $x$ to $y$. Its length is $|a-b|$. Thus $|P(x,y)|\leq |a-b|$. Set $D=p^nq^k$ and let $m\in \Z^+$ be arbitrary. Let $x'=(0, mD)\cdot x$ and $y'=(0, mD)\cdot y$. Then the horizontal path from $x'$ to $y'$ is still of length $|a-b|$, and we also have $|P(x', y')|\leq |a-b|$.

Define
$$ r=\displaystyle\frac{P(x, (0,p^n)\cdot x)}{p^n} \mbox{ and } s=\displaystyle\frac{P(y, (0, q^k)\cdot y)}{q^k}. $$
Intuitively, $r$ is slope of the change of the potential from $x$ to $(0, p^n)\cdot x$, and $s$ is the slope of the change of the potential from $y$ to $(0, q^k)\cdot y$. We claim that $|r|<1$ and $r\neq 0$. Similarly for $s$. To see the claim, note first that if $r=1$ then $P(x,(0, p^n)\cdot x)=p^n$. Consider the vertical path $x_0, \dots, x_{p^n}$ from $x$ to $(0, p^n)\cdot x$. Its length is $p^n$, which implies that for all $i=0, \dots, p^n$, $\rho(x_i, x_{i+1})=+1$. It follows that $c(x_i) = c(x_j)$ if and only if $i \equiv j \mod 3$. Since $p \neq 3$ is prime, this implies $c((0,p^n) \cdot x) = c(x_{p^n}) \neq c(x_0) = c(x)$, contradicting the fact that by design $t \mapsto c((0, t) \cdot x)$ is $p^n$-periodic. A similar argument rules out the possibility that $r=-1$. To see that $r\neq 0$, just note that in order for $r=0$ we must have $P(x, (0, p^n)\cdot x)=0$, and this cannot happen unless $p^n$ is even. Since $p$ is an odd prime, we are done.

Finally, since $p\neq q$, we conclude in addition that $r\neq s$.

By periodicity, we have
$$ P(x, x')=mD r \mbox{ and } P(y, y')=mD s. $$
Since
$$ P(x, x')+P(x',y')=P(x,y)+P(y,y'), $$
we have
$$ P(x',y')-P(x,y)=P(y,y')-P(x,x')=mD(s-r). $$
Since $P(x,y), P(x',y')\in [-|a-b|, |a-b|]$ and $r\neq s$, this is a contradiction when $m$ is large enough.

We remark that in the above proof we did not use condition (i) of the function $f$. This condition will be used in the proofs of  the theorems in the next two sections.

\section{CBS line sections and single line sections}\label{section:1.6}

A {\em line section} is a symmetric Borel relation $S$, which we think
of as a graph, which is a subset of the Schreier graph
$\Gamma(F(2^{\Z^2}))$ which is acyclic and and which satisfies $\deg_S(x) \in \{0, 2\}$ for
all $x \in F(2^{\Z^2})$. We write $V(S) = \{x \in F(2^{\Z^2}) \colon
\deg_S(x) = 2\}$. Intuitively, the intersection of a line section $S$
with an orbit is a collection of lines in the graph of
$F(2^{\Z^2})$. We call these lines {\em $S$-lines}. We say that $S$ is
{\em clopen} if for $i = 1, 2$ the set $\{x \in F(2^{\Z^2}) \colon (x, e_i
\cdot x) \in S\}$ is clopen, where $e_1=(1,0)$ and $e_2=(0,1)$. We say that $S$ is a {\em complete
  section} if $[x] \cap V(S) \neq \varnothing$ for all $x$, and we
call $S$ {\em co-complete} if $[x] \cap (F(2^{\Z^2}) \setminus V(S))
\neq \varnothing$ for all $x$. If $x$ and $y$ are on the same
$S$-line, then the {\em $S$-path-distance} between $x$ and $y$,
denoted as $\rho_S(x,y)$, is the length of the shortest $S$-path
between $x$ and $y$. Thus $\rho_S(x,y)=1$ if and only if $x$ and $y$
are adjacent elements of an $S$-line.

For example, if $S$ is defined as
$$ (x, y)\in S \iff (1,0)\cdot x=y \mbox{ or } (-1,0)\cdot x=y, $$
then $S$ is a clopen complete line section, but not co-complete.
In other words, the entire space can be viewed as a collection of horizontal $S$-lines.
In general, we call a line section $S$ a {\em horizontal line section} 
if every edge in $S$ is a horizontal ($e_1$) edge.
We remark that a simple category argument gives the following result.

\begin{thmn} There does not exist any Borel horizontal line section of $F(2^{\Z^2})$ which is both complete and co-complete.
\end{thmn}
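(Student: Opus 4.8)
I would argue by contradiction, exploiting that a horizontal line section is essentially a union of full rows in each orbit. First I would record the structural observation: if $S$ is a horizontal line section then $V(S)$ is $e_1$-invariant and Borel. Indeed, since every $S$-edge is an $e_1$-edge and $\deg_S(x)\in\{0,2\}$, a point $x$ lies in $V(S)$ iff both $(x,e_1\cdot x)$ and $(x,e_1^{-1}\cdot x)$ lie in $S$; and if $(x,e_1\cdot x)\in S$ then $e_1\cdot x$ has $S$-degree $\geq 1$, hence exactly $2$, so $e_1\cdot x\in V(S)$ as well. Iterating in both directions, $V(S)\cap[x]$ is a union of full horizontal lines of $[x]$; dually $x\notin V(S)$ puts its whole row outside $V(S)$. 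Thus $V(S)$ is $e_1$-invariant, it is Borel because $x\mapsto\deg_S(x)$ is a Borel function, and the hypothesis that $S$ is complete and co-complete says precisely that every orbit contains a row inside $V(S)$ and a row outside $V(S)$.

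The crux — and the role of the ``simple Cohen forcing argument'' — is the claim that \emph{any $e_1$-invariant Borel set $A\subseteq F(2^{\Z^2})$ is meager or comeager}, i.e.\ the generic ergodicity of the horizontal shift on $F(2^{\Z^2})$. Since $A$ has the Baire property, there is a nonempty basic open set $[p]$, given by a finite partial function $p\colon E_p\to 2$ with $E_p\subseteq\Z^2$ finite, in which $A$ is comeager or meager; I treat the comeager case (the meager case is symmetric and yields that $A$ is meager globally). By $e_1$-invariance, $A$ is comeager in every horizontal translate $[\sigma_j(p)]$, where $\sigma_j(p)$ is $p$ shifted by $(j,0)$. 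Given an arbitrary nonempty basic open $[q]$, $q\colon E_q\to 2$, choose $j$ with $(E_p+(j,0))\cap E_q=\varnothing$; then $q$ and $\sigma_j(p)$ are compatible, $[q]\cap[\sigma_j(p)]=[q\cup\sigma_j(p)]$ is a nonempty basic open subset of $[q]$, and $A$ is comeager in it. So $A$ is comeager in a nonempty open subset of every nonempty open set, and a set with the Baire property having this property is comeager. (In forcing language: if a finite condition forces the Cohen generic of $2^{\Z^2}$ into $A$, then so does every horizontal translate of it, and such translates are compatible with any prescribed finite condition, so membership of the generic in $A$ is forced by the trivial condition.)

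Finally I would assemble the contradiction. Replacing $V(S)$ by $F(2^{\Z^2})\setminus V(S)$ if necessary — which is again Borel and $e_1$-invariant and which interchanges the hypotheses ``complete'' and ``co-complete'' — I may assume $V(S)$ is comeager. For each $k\in\Z$ the translation $\phi_k\colon x\mapsto(0,k)\cdot x$ is a homeomorphism of $F(2^{\Z^2})$, so $\phi_k^{-1}(V(S))$ is comeager, and hence $\bigcap_{k\in\Z}\phi_k^{-1}(V(S))$ is comeager, in particular nonempty. Any $x$ in this intersection satisfies $(0,k)\cdot x\in V(S)$ for all $k$, and then by $e_1$-invariance $(j,k)\cdot x=(j,0)\cdot\big((0,k)\cdot x\big)\in V(S)$ for all $(j,k)\in\Z^2$, i.e.\ $[x]\subseteq V(S)$. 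Since $x\in F(2^{\Z^2})$, this contradicts co-completeness of $S$; in the complementary case the same step produces an orbit disjoint from $V(S)$, contradicting completeness. The only subtle point in the whole argument is the meager/comeager dichotomy of the second paragraph; the rest is bookkeeping about rows and homeomorphisms.
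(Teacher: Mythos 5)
Your proof is correct and follows essentially the same route as the paper's: both arguments hinge on the observation that $V(S)$ is $e_1$-invariant and Borel, together with the generic ergodicity of the horizontal shift on $F(2^{\Z^2})$, exploited either via Cohen forcing (the paper, directly on the generic point) or via the equivalent Baire-category formulation (your meager/comeager dichotomy plus intersecting countably many vertical translates). Your packaging of the forcing step as a standalone generic-ergodicity lemma is cleaner exposition but the same underlying idea; the only cosmetic wrinkle is your phrase about "replacing $V(S)$ by its complement," which risks suggesting the complement is again of the form $V(S')$ — it is not, though as you note at the end this is immaterial since the meager case is handled symmetrically against completeness rather than co-completeness.
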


\begin{proof} Assume $S$ is a Borel complete and co-complete horizontal line section of $F(2^{\Z^2})$. 
Note that $F(2^{\Z^2})\sms S$ is also such a set. By considering vertical translations of $S$, 
we note that countably many homeomorphic images of $S$ cover the entire $F(2^{\Z^2})$. 
It follows that $S$ is non-meager, and likewise for $F(2^{\Z^2})\sms S$. On the other hand, 
by considering only the horizontal translations, which is a continuous action of $\Z$ on $F(2^{\Z^2})$, 
we have that $S$ is a Borel invariant subset of $F(2^{\Z^2})$. 
It follows from the First Topological 0-1 Law (\cite{Kechris_book} Theorem 8.46) 
that $S$ must be either meager or comeager, a contradiction.
\end{proof}

In other words, the only complete Borel horizontal line section is the above trivial one that we demonstrated.
In the following we consider line sections with bounded slopes from a large-scale perspective, which we call {\em line sections with coarsely bounded slopes} or {\em CBS line sections}.

\begin{defnn}\label{def:acute} A line section $S$ of $F(2^{\Z^2})$ has {\em coarsely bounded slopes} 
or is a {\em CBS line section}
if for any $x\in F(2^{\Z^2})$ there exist $D,M\in\Z^+$ such that for any $x_1, x_2\in [x]\cap V(S)$
lying on the same $S$-line, if $\rho_S(x_1, x_2)>D$ and $(u, v) \cdot x_1 = x_2$, then $|v|\leq M |u|$.
\end{defnn}

Thus, on each equivalence class there is a bound $M$ on the slopes of the lines in $S$, at least for
pairs of points which are at least $D$ apart with respect to $\rho_S$.
We have the following negative result about CBS line sections,
whose proof is very similar to that of Theorem~\ref{threecoloring}.

\begin{thmn} \label{ahline} There does not exist any clopen CBS
line section of $F(2^{\Z^2})$ which is both complete and co-complete.
\end{thmn}

\begin{proof}
Toward a contradiction, assume $S$ is a clopen CBS line
section of $F(2^{\Z^2})$ which is both complete and co-complete. Let
$z$ be the hyper-aperiodic element on $\Z^2$ constructed in
Lemma~\ref{main}. Let $D, M$ be given by the definition of CBS
line sections with $x=z$. By assumption the sets
$\{x \in \overline{[z]} \colon (x, e_i \cdot x) \in S\}$, $i = 1, 2$, are
clopen. It follows from compactness of $\overline{[z]}$ that there is
$N \in \Z^+$ such that for every $x \in \overline{[z]}$, $x\!
\upharpoonright\! [-N,N]^2$ determines whether $(x, e_1 \cdot x)$ and
$(x, e_2 \cdot x)$ are in $S$.

Note that by the boundedness property of $S$, every $S$-line meets every vertical
line in $[z]$, i.e., for every $a \in \Z$, the vertex set of each $S$-line
has a non-empty intersection with the set $\{(a, v)\cdot z\,|\, v\in \Z\}$. Indeed, if
$(a_n, b_n) \cdot z$ enumerates a bi-infinite path in $S$ then
$|a_{n+1}-a_n| \leq 1$ for every $n$ and the quantities $|a_{n+D+1} -
a_n|$ and $|a_{n+D+2}-a_n|$ are never $0$, so the signs of $a_{n+D+1}
- a_n$ and $a_{n+D+2}-a_n$ never change and thus $\{a_n : n \in \Z\} =
\Z$.

Fix two distinct prime numbers $p, q\in\Lambda$. By condition (ii) of
the function $f$ from \S\ref{hyper}, there are $a, b\in\Z$ with $|b-a|\geq D$ and $n,
k\in \Z^+$ such that $f(i)=p^n$ for all $i\in[a, a+2N+2D+1]$ and
$f(i)=q^k$ for all $i\in[b, b+2N+2D+1]$. We assume $a < b$, as the
other case is essentially identical. Let $x=(a+N+D+1, 0)\cdot z$ and
$y=(b+N+D+1, 0)\cdot z$. By the construction of $z$, we have that for
all $i\in[-N-D, N+D]$, $(i,0)\cdot x(0, \cdot)$ is vertically periodic
with a period $p^n$ and $(i,0)\cdot y(0, \cdot)$ is vertically periodic
with a period $q^k$. By the choice of $N$, we conclude that $S
\upharpoonright \{(i,t) \cdot x \colon |i| \leq D, \ t \in \Z\}$ is
vertically periodic with period $p^n$; similarly $S \upharpoonright
\{(i,t) \cdot y \colon |i| \leq D, \ t \in \Z\}$ is vertically periodic
with period $q^k$.  Note that for all $v,v'\in\Z$,  if $(0, v) \cdot x$ and $(0, v') \cdot
x$ lie on the same $S$-line then their $S$-path-distance must be at
most $D$, and hence the $S$-path connecting them is contained in the
vertical strip $\{(i,t) \cdot x \colon |i| \leq D, \ t \in \Z\}$. Writing
$[S]$ for the equivalence relation given by the connected components
of $S$, it follows that the restriction of $[S]$ to the vertical line
extending from $x$ is $p^n$-periodic and the restriction of $[S]$ to
the vertical line extending from $y$ is $q^k$-periodic.

Define $\ell$ to be the number of equivalence classes of the
restriction of $[S]$ to the vertical line extending from $x$
over some/any interval of size $p^n$
(that is the number of distinct $S$-classes in a set of the form
$(0,s)\cdot x, \dots, (0,(s+p^n-1))\cdot x$) divided by $p^n$.
Intuitively, $\ell$ is the density of $S$-lines along the
vertical line above $x$. Similarly define $r$ to be the
number of equivalence classes of the restriction of $[S]$ to the
vertical line extending from $y$ in some/any interval of size $q^k$
divided by $q^k$. So, $r$ is the density of the $S$-lines above $y$.
We note that $\ell, r\neq 0$ by the above
observation that every vertical line meets all the $S$-lines. Since
$S$ is co-complete, we also have that $\ell, r\neq 1$. Otherwise, if
$\ell = 1$, every point in the vertical line extending from $x$ would
belong to a distinct $S$-line, hence $S \cap [z]$ would necessarily
consist of all horizontal lines in $[z]$ (if all of the distinct points on
the vertical line through $x$ belong to distinct $S$-classes then, 
by disjointness of the distinct $S$-lines, 
for every point $(0,i)\cdot x$, the $S$-line through $(0,i)\cdot x$
must move horizontally, that is, the point $(1,i)\cdot x$ 
is on the same $S$-line). This contradicts that $S$ is
co-complete. Again, we conclude that $\ell \neq r$ since $p\neq q$.

Let $m \in \Z^+$ be an arbitrary multiple of all $f(i)$ for $i\in[a,b]$.
Let $x'=(0, m)\cdot x$ and $y'=(0, m)\cdot y$.  Let $L$ be the
segment from $x$ to $x'$ and $R$ be the segment from $y$ to $y'$.  The
restriction of $S$ to the vertical strip $H = \{(u, v)
\cdot x \colon 0 \leq u \leq b - a, \ v \in \Z\}$ is periodic with period
dividing $m$. Let $t$ be a positive integer, and consider the vertical strip 
$H_t=\{ (u,v)\cdot x\colon 0 \leq u \leq b-a, 0 \leq v \leq tm\}$. 
Let $L_t$ be the left edge of this strip and let $R_t$ be the right edge. 
By periodicity, the number of distinct $S$-classes meeting $L_i$ is 
$\ell tm$, and the number meeting the $R_t$ is $rtm$. So, on the one hand we have that 
the difference between the number of distinct $S$-lines intersecting the left-edge $L_t$
of $H_t$ and the right-edge $R_t$ of $H_t$ would be $|\ell -r| tm$.
On the other hand,  this difference is bounded by a constant
independent of $t$ using the fact that the lines in $S$
have a slope bounded by $M$. This is a contradiction for $t$ large enough. 

\end{proof}

In view of Theorem~\ref{ahline} it is natural to ask to what extent we
can remove the requirement that the line section of $F(2^{\Z^2})$
consists of CBS lines as defined in Definition~\ref{def:acute}. 
In the forthcoming \cite{borcon} we will prove that there does exist a 
Borel line section of $F(2^{\Z^2})$ such that every class is covered by a single line. 
Such a line section is called a {\em lining} of the whole space.
This result has also been obtained independently by 
N.\ Chandgotia and S.\ Unger \cite{ChandgotiaUnger} by a different argument.

In the following, we rule out a
clopen line section $S$ of $F(2^{\Z^2})$ with the property that each
class $[x]$ contains exactly one $S$-line.  We call such sections {\em
single line sections}.

\begin{thmn} \label{sline}
There does not exist any clopen single line section of $F(2^{\Z^2})$. In particular,
there does not exist any clopen lining of $F(2^{\Z^2})$.
\end{thmn}

\begin{proof}
Let $S$ be a clopen single line section. Let $x \in F(2^{\Z^2})$ be a hyper-aperiodic element (for this argument, no other
special properties of $x$ are required).

Let $K=\overline{[x]}$, so $K\subseteq F(2^{\Z^2})$
is compact. The compactness of $K$ and the clopenness of $S$ give that there is a $d_0\in \Z^+$
such that for any $y \in K$, $y\res [-d_0,d_0]^2$ determines the membership of $y$ in $V(S)$ and in $\{w : (w, e_i \cdot w) \in S\}$, $i = 1, 2$. The
compactness of $K$ and the fact that $S$ is a complete section on $F(2^{\Z^2})$ also give a
$d_1$ such that every $y \in K$ is within $\rho$ distance $d_1$ of a point in $V(S)$.
Finally, the compactness of $K$ and the fact that $S$ is a single line on each class
give that there is a $d_2$ such that for all $y \in K$ and all points $z,w$
within $\rho$ distance $5d_1$ of $y$, if $z,w \in V(S)$ then $ \rho_S(z,w)\leq d_2$.
This is because the quantity $\rho_S(z, w)$ is continuously determined from $y$
(if $\rho_S(z, w) = d$ then the $S$-path of length $d$ joining them is contained in a ball
of bounded radius around $y$, and the behavior of $S$ in that ball is determined by the
restriction of $y$ to some larger, but still finite, ball).

Fix $x_0\in [x] \cap V(S)$. Consider the horizontal line $L$ through $x_0$, that is, the points of the form
$(a,0)\cdot x_0$ for all $a\in\Z$. From the definition of $d_1$, for every $n \in \Z$ we
can fix a point $x_n \in V(S)$ with $\rho(x_n, (3n d_1, 0) \cdot x_0) \leq d_1$.
Observe that $x_{n+1}=(a,b)\cdot x_n$ with $d_1 \leq a \leq 5d_1$ and $x_n=(a',b')\cdot x_0$
with $|b'|\leq d_1$. That is, the points $x_n$ are horizontally spaced with sequential distance
between $d_1$ and $5d_1$, and are within vertical distance $d_1$ of $L$.
This is illustrated in Figure~\ref{fig:sline}. From the definition of $d_2$,
the path along $S$ between consecutive points $x_n$ and $x_{n+1}$
has length at most $d_2$. In particular, all of these paths stay within the
horizontal strip centered about $L$ of width $2(d_2+d_1)$ (see Figure~\ref{fig:sline}).

\begin{figure}[h]
\centering
\begin{tikzpicture}[scale=0.05]

\path[fill=lightgray] (-100,-10) rectangle (100,10);
\draw(-100,0) to (100,0);
\draw[blue] (-100,25) to (100,25);
\draw[blue] (-100,-25) to (100,-25);
\draw[blue] (-100,10) to (100,10);
\draw[blue] (-100,-10) to (100,-10);

\draw[red,fill=red] (0,0) circle (1);

\draw[red,fill=red] (25,2.5) circle (1);
\draw[red,fill=red] (65,2.5) circle (1);
\draw[red,fill=red] (90,-7.5) circle (1);

\node[below left]  at (5,0)  {$x_0$};

\draw[red] (0,0) to (0,5) to (-5,10) to (0, 12.5) to (5,5) to (10,2.5) to (15,7.5);
\draw[red] (15,7.5) to (10,15) to (15, 22.5) to (20,15) to (25,2.5);
\draw[red] (25,2.5) to (30, 5) to (35, -2.5) to (40, -7.5) to (45,-2.5);
\draw[red] (45,-2.5) to (50,-7.5) to (55,-15) to (60, -5) to (65, 2.5);
\draw[red] (65,2.5) to (70, 12.5) to (65, 20) to (70,22.5) to (75, 15) to (73, 5) to (90,-7.5);

\draw[red,fill=red] (-33,7.5) circle (1);
\draw[red,fill=red] (-65,-7.5) circle (1);

\draw[red] (0,0) to (-5,5) to (-10, 17.5) to (-15,10) to (-20,5);
\draw[red] (-20,5) to (-25,7.5) to (-30,15) to (-35, 20) to (-40,12.5) to (-33,7.5);
\draw[red] (-33, 7.5) to (-40, -5) to (-38, -17.5) to (-47, -20) to (-50,-12.5) to (-65,-7.5);

\node[left] at (-100,10) {$d_1$};
\node[left] at (-100,25) {$d_1+d_2$};

\end{tikzpicture}
\caption{Analyzing $S\cap [x]$} \label{fig:sline}
\end{figure}Let $S'\subseteq S\upharpoonright [x]$ be the union of these paths connecting $x_n$ and $x_{n+1}$ for all $n\in\Z$.
Arbitrarily fix an orientation of $V(S)\cap [x]$ as a line. This orientation gives rise to a linear order $<_S$ on $V(S) \cap [x]$. If $z,w \in V(S) \cap [x]$, then $z$ and $w$ lie on the same $S$-line; define $t(z,w)=\rho_S(z,w)$ if $z<_Sw$, and $t(z,w)=-\rho_S(z,w)$ if $w<_S z$.
Let $T=\{ t \in \Z \colon \exists y \in V(S')\ t(x_0,y)=t\}$. This is the set of ``times''
$t$ for which, starting at the point $x_0$, the element on the $S$-line that is $t$ steps away from $x_0$ is in the set $V(S')$. Since $S'$
contains the union of connected subsets of $S$ connecting $x_n$ to $x_{n+1}$,
it follows that $T$ is an infinite interval in $\Z$. We claim that $T=\Z$. Suppose,
for example, $T=[a,+\infty)$. Then obviously $a<0$ since $0\in T$. Consider $x_{a-1}$ and $x_{|a|+1}$. 
By our construction, we have both $\rho(x_{a-1}, x_0)\geq |a-1|=|a|+1$ and $\rho(x_{|a|+1}, x_0)\geq |a|+1$, 
and therefore $|t(x_{a-1}, x_0)|, |t(x_{|a|+1},x_0)|\geq |a|+1$. 
By construction the $S$-line segment between $x_{a-1}$ and $x_{|a|+1}$ passes 
through $x_0$, thus $t(x_{a-1},x_0)t(x_{|a|+1},x_0)<0$. 
This implies that one of $t(x_{a-1}, x_0)$ and $t(x_{|a|+1},x_0)$ must be smaller than $a$, a contradiction.

Thus, $T=\Z$ and so $S'=S$. Thus, $S$ is confined to the horizontal strip
of width $2(d_2+d_1)$ centered about $L$. This contradicts the fact that
every point of $[x]$ is within $d_1$ of a point in $V(S)$.
\end{proof}

The following questions remain unsolved.

\begin{quesn}
Does there exist a clopen line section of $F(2^{\Z^2})$ that is both complete and co-complete?
\end{quesn}

\begin{quesn}
Does there exist a clopen line section $S$ of $F(2^{\Z^2})$ such that there are finitely many $S$-lines on each class?
\end{quesn}

\section{Continuous cocycles}\label{section:1.7}

The potential function $P(x, y)$ we defined in the proof of Theorem~\ref{threecoloring} is in fact a 
cocycle from $F(2^{\Z^2})$ to $\Z$. The essential part of the proof was an analysis of how this cocycle works. 
In this subsection we give a more general analysis for any continuous cocycle from $F(2^{\Z^2})$ to $\Z$. 
The main result is that they can be approximated by homomorphisms. 
We recall the definition of a cocycle in a general context.

\begin{defnn}
For groups $\G$ and $H$ and an action $\G \acts X$, recall that a {\em cocycle} 
is a map $\sigma \colon \G \times X \rightarrow H$ satisfying the following cycle identity: 
for all $g_1, g_2 \in \G$ and $x \in X$ we have
$$\sigma(g_1 g_2, x) = \sigma(g_1, g_2 \cdot x) \cdot \sigma(g_2, x).$$
\end{defnn}

In fact, we show this cocycle analysis in the more general 
context of residually finite groups, which is stated in Theorem~\ref{rfcocycle}. 
As an immediate consequence we obtain the analysis of $\Z$-cocycles for $\Z^2$
actions, which we state as Theorem~\ref{Zcocycle}.
Recall that a countable group $\G$ is {\em residually finite} 
if it admits a decreasing sequence of finite-index normal subgroups $\G_n$ with $\bigcap_n \G_n = \{1_\G\}$. 
Finitely generated abelian groups are residually finite.

\begin{lemn} \label{proaction}
Let $\G$ be a countable residually finite group, and let $(\G_n)_{n \in \N}$ be a 
decreasing sequence of finite-index normal subgroups with $\bigcap_n \G_n = \{1_\G\}$. 
Then there is a hyper-aperiodic element $x \in F(2^\G)$ with the following property. 
If $\mathcal{P}$ is any clopen partition of $F(2^\G)$ then there is a finite set $A \subseteq \G$ 
so that for every $n \in \N$ the restriction of $\mathcal{P}$ to $(\G \setminus A \G_n) \cdot x$ is $\G_n$-invariant.
\end{lemn}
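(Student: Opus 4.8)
The plan is to isolate a purely algebraic requirement on $x$, reduce the lemma to it, and then meet it by a one-line formula.

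\emph{Reduction.} For $x\in 2^\G$ and $n\in\N$ write $\mathrm{Per}_n(x)=\{g\in\G:\ x(\gamma g)=x(g)\ \text{for all }\gamma\in\G_n\}$; since $\G_n$ is normal this set, and also $A\G_n$ for any $A\subseteq\G$, is invariant under left multiplication by $\G_n$. I claim it is enough to produce a hyper-aperiodic $x$ with $\mathrm{Per}_n(x)\supseteq\G\setminus\G_n$ for every $n$. Indeed, given a clopen partition $\mathcal P$, compactness of $\overline{[x]}$ together with clopenness forces $\mathcal P\!\restriction\!\overline{[x]}$ to be determined by finitely many coordinates, so there is a finite $T\subseteq\G$ with the property that the $\mathcal P$-class of $g\cdot x$ depends only on $(x(tg))_{t\in T}$. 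Put $A=T^{-1}$. If $g\notin A\G_n$ then $Tg\cap\G_n=\varnothing$, hence $Tg\subseteq\G\setminus\G_n\subseteq\mathrm{Per}_n(x)$; so for all $t\in T$ and $\gamma\in\G_n$, writing $\gamma'=t\gamma t^{-1}\in\G_n$, we get $x(t\gamma g)=x(\gamma'(tg))=x(tg)$. Thus $g\cdot x$ and $\gamma g\cdot x$ have identical $T$-patterns and lie in the same piece of $\mathcal P$; since $A\G_n$ is $\G_n$-invariant this says precisely that $\mathcal P$ restricted to $(\G\setminus A\G_n)\cdot x$ is $\G_n$-invariant, for every $n$.

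\emph{The element.} I would take $x(g):=n'(g)\bmod 2$, where $n'(g)=\min\{n\in\N:\ g\notin\G_n\}$ (finite because $\bigcap_n\G_n=\{1_\G\}$), together with $x(1_\G)=0$; and I may assume the $\G_n$ are strictly decreasing, since equal consecutive terms give identical conclusions and a nonempty step $\G_\nu\setminus\G_{\nu+1}$ will be needed below. Coset-stability is immediate: if $g\notin\G_n$ and $\gamma\in\G_n$, then for every $m\le n$ we have $\gamma g\in\G_m\iff g\in\G_m$ (as $\gamma\in\G_n\subseteq\G_m$), and since $n'(g)\le n$ this yields $n'(\gamma g)=n'(g)$, so $x(\gamma g)=x(g)$; hence $\mathrm{Per}_n(x)\supseteq\G\setminus\G_n$.

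\emph{Hyper-aperiodicity.} This is the one step that needs an idea, and I would verify it through the combinatorial characterization of hyper-aperiodicity stated earlier. Fix $s\neq 1_\G$, let $\nu=n'(s)$ (so $s\in\G_{\nu-1}\setminus\G_\nu$, convention $\G_{-1}=\G$), and let $T_s$ be a finite transversal of $\G_{\nu+1}$ in $\G$. Given any $g$, pick $t\in T_s$ so that $tg$ lands in a prescribed nonidentity coset of $\G_\nu/\G_{\nu+1}$, i.e. $n'(tg)=\nu+1$; then $x(tg)=(\nu+1)\bmod 2$. Now $tsg=(tst^{-1})(tg)$, where $tst^{-1}\in\G_{\nu-1}\setminus\G_\nu$ because each $\G_m$ is normal, while $tg\in\G_\nu$; these two facts force $tsg\notin\G_\nu$ but $tsg\in\G_m$ for all $m<\nu$, so $n'(tsg)=\nu$ and $x(tsg)=\nu\bmod 2\neq x(tg)$. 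Hence $T_s$ witnesses hyper-aperiodicity of $x$, and in particular $x\in F(2^\G)$.

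\emph{Main obstacle.} The difficulty is the tension built into the reduced statement: coset-stability makes $x$ factor through the finite quotients $\G/\G_n$ off a thin set, so all of the aperiodicity must be produced by the way the finitely many values attached to cosets of successive quotients interlock — and it must work no matter how small the steps $\G_{n-1}/\G_n$ are (e.g. if some are of order $2$, two-color ``patterns'' on a single level cannot separate anything). What rescues the naive coloring is the computation in the previous paragraph: $x(tsg)$ collapses to depend only on $\nu=n'(s)$ and not on $g$ — the ``level $\nu+1$ information'' carried by $tg$ becomes trivial modulo $\G_\nu$ — so it suffices that the parity of the level $n'(\cdot)$ change by one when passing from level $\nu+1$ to level $\nu$, which is automatic.
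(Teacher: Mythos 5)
Your proposal is correct and follows essentially the same route as the paper: the same element $x(g)=\min\{n:g\notin\G_n\}\bmod 2$, the same observation that $x$ is constant on $\G_n$-cosets outside $\G_n$ (your $\mathrm{Per}_n(x)\supseteq\G\setminus\G_n$), the same normality-plus-compactness reduction with $A=T^{-1}$, and the same hyper-aperiodicity computation via $tsg=(tst^{-1})(tg)$. The only cosmetic difference is your choice of $T_s$ as a transversal of $\G_{\nu+1}$ rather than the paper's set $T$ with $T^{-1}(\G_k\setminus\G_{k+1})=\G$, which serves the identical purpose.
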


\begin{proof}
This is trivial if $\G$ is finite, so we assume $\G$ is infinite. By passing to a subsequence, we may assume $\G_{n+1}$ is a proper subgroup of $\G_n$ for every $n$. Define $x \in 2^\G$ by setting $x(1_\G) = 0$ and for $g \neq 1_\G$ setting
$$x(g) = \min \{n \in \N : g \not\in \G_n\} \mod 2.$$
We check that $x$ is hyper-aperiodic. Fix $1_\G \neq s \in \G$ and let $k$ be least with $s \not\in \G_k$. Let $T$ be a finite set satisfying $T^{-1} (\G_k \setminus \G_{k+1}) = \G$. Such a $T$ exists since $\G_k \setminus \G_{k+1}$ contains a coset of $\G_{k+1}$ and $\G_{k+1}$ has finite index in $\G$. Now fix $g \in G$. Pick $t \in T$ with $t g \in \G_k \setminus \G_{k+1}$. Then $x(t g) = k + 1 \mod 2$ while $x(t s g) = k \mod 2$ since, by normality of the $\G_n$'s, $t s g = (t s t^{-1}) t g \in (\G_{k-1} \setminus \G_k) \G_k = \G_{k-1} \setminus \G_k$. This proves $x$ is hyper-aperiodic.

We claim that $x$ is constant on $\G_n h$ when $h \not\in \G_n$. Indeed, if $g \in \G_n$ and $k < n$ satisfies $h \in \G_k \setminus G_{k+1}$ then $g h \in \G_k \setminus \G_{k+1}$ as well and thus both $x(h)$ and $x(g h)$ are equal to $k+1 \mod 2$.

Now let $\mathcal{P}$ be a clopen partition of $F(2^\G)$. Since $\overline{[x]}$ is compact there is a finite $W \subseteq G$ such that for every $y \in \overline{[x]}$, the piece of $\mathcal{P}$ to which $y$ belongs is determined by $y \upharpoonright W$. We will check the lemma is satisfied with $A = W^{-1}$. So fix $n \in \N$, and for non-triviality assume $\G\setminus W^{-1}\G_n\neq\varnothing$. Fix $y \in (\G \setminus W^{-1} \G_n) \cdot x$, say $y = s \cdot x$ with $s \not\in W^{-1} \G_n$. Note that $w s \not\in \G_n$ for every $w \in W$ and so the claim of the previous paragraph applies. Using normality of $\G_n$, we have that for every $g \in \G_n$ and $w \in W$
$$(g \cdot y)(w) = (g s \cdot x)(w) = x(w g s) = x((w g w^{-1}) w s) = x(w s) = (s \cdot x)(w) = y(w).$$
By letting $w \in W$ vary, we find that $g \cdot y$ and $y$ belong to the same piece of $\mathcal{P}$. Since $g \in \G_n$ was arbitrary we conclude that the restriction of $\mathcal{P}$ to $(\G \setminus W^{-1} \G_n) \cdot x$ is $\G_n$-invariant.
\end{proof}

Recall that a finitely generated group $\G$ is \emph{one-ended} if for every finite set 
$A \subseteq \G$ the graph on $\G \setminus A$ induced from the Cayley graph of $\G$ 
has the property that precisely one of its connected components is infinite 
(this property does not depend upon the choice of finite generating set for $\G$ 
used in constructing the Cayley graph).

\begin{thmn} \label{rfcocycle}
Let $\G$ be a finitely-generated, one-ended, residually finite group, 
let $H$ be a countable group, and let $\delta \colon \G \times F(2^\G) \rightarrow H$ be a continuous cocycle. 
Let $(\G_n)_{n \in \N}$ be a decreasing sequence of finite-index normal subgroups of $\G$ with 
$\bigcap_n \G_n = \{1_\G\}$. Then there are $x \in F(2^\G)$, $n \in \N$, a 
group homomorphism $\phi \colon \G_n \rightarrow H$, and finite sets $T \subseteq G$, $W \subseteq H$ such that
\begin{enumerate}
\item[\rm (i)] $\delta(g, x) = \phi(g)$ for all $g \in \G_n$,
\item[\rm (ii)] $T \G_n = \G$ and if $g \in \G_n$ and $t \in T$ then $\delta(t g, x) \in W \cdot \phi(g)$.
\end{enumerate}
\end{thmn}

\begin{proof}
Let $y \in F(2^\G)$ be the hyper-aperiodic element constructed in Lemma \ref{proaction}. 
Fix a finite generating set $S$ for $\G$. The map $z \in F(2^\G) 
\mapsto (\delta(s, z))_{s \in S \cup S^{-1}} \in H^{S \cup S^{-1}}$ is continuous by assumption and 
thus produces a clopen partition $\mathcal{P}$ of $F(2^\G)$. 
Thus $z_1$ and $z_2$ belong to the same set in the partition $\mathcal{P}$ 
if and only if for all $s\in S\cup S^{-1}$, $\delta(s,z_1)=\delta(s,z_2)$. 
Let $A \subseteq \G$ be the finite set given by Lemma \ref{proaction}. By one-endedness, 
removing the vertices $A$ from $\Gamma_{\G,S}$ leaves only one infinite connected component 
and possibly a finite number of finite connected components. By enlarging $A$ if necessary, 
we may assume that $\Gamma_{\G,S} \setminus A$ is connected. 
For each pair of elements $g_1, g_2 \in \G \setminus A$ which are adjacent to $A$, 
fix a path in $\Gamma_{\G,S} \setminus A$ from $g_1$ to $g_2$. Let $B$ 
be a finite set containing all such paths with $B\cap A=\emptyset$. Now pick $t \in \G$ and $n \in \N$ 
so that $\G_n$ is disjoint with $A t^{-1} \cup ((t A^{-1} A t^{-1} \cup t B^{-1} B t^{-1}) \setminus \{1_\G\})$. 
It follows from this that $\G_n\cap A t^{-1} \G_n=\varnothing$
 and that $A t^{-1} g_1 \cap A t^{-1} g_2 = B t^{-1} g_1 \cap B t^{-1} g_2 = \varnothing$ 
for all $g_1 \neq g_2 \in \G_n$.

Fix a finite symmetric generating set $V$ for $\G_n$ (this exists since $\G_n$ is finite-index in $\G$). 
Fix $v \in V$ and consider a word $w$ in $S \cup S^{-1}$ representing a path from $1_\G$ to $v$ in $\Gamma_{\G,S}$. 
If this path does not intersect $A t^{-1} \G_n$ then set $w(v) = w$. 
Otherwise we will modify the path $w(v)$, without changing its endpoints, 
so that it does not intersect $A t^{-1} \G_n$. We can do this by applying 
the definition of $B$: if $w$ meets $A t^{-1} g$ then by considering the vertex 
it first enters $A t^{-1} g$ from and the last vertex it exits $A t^{-1} g$ to, 
we can replace the path in $w$ between those two vertices by a path in $B t^{-1} g$. 
Since both families of sets $A t^{-1} g$ and $B t^{-1} g$, $g \in \G_n$, are pairwise disjoint 
as $g \in \G_n$ varies, we can make all such changes to $w$ in a single step. We let $w(v)$ 
be the newly obtained path. In summary, for each generator $v \in V$ of $\G_n$, 
we have a path $w(v)$ in $\Gamma_{\G,S}$ from $1_\G$ to $v$ which does not intersect $A t^{-1} \G_n$. 
From this it follows that, for every $g \in \G_n$ and $v \in V$, $w(v)$ provides a path 
in $\Gamma_{\G, S}$ from $g$ to $v g$ which does not intersect $A t^{-1} \G_n$.

Set $x = t \cdot y$. By Lemma \ref{proaction} the restriction of $\mathcal{P}$ 
to $(\G \setminus A t^{-1} \G_n) \cdot x = (\G \setminus A \G_n) \cdot y$ is $\G_n$-invariant. 
This means that if $h \not\in A t^{-1} \G_n$ and $g \in \G_n$ then 
$\delta(s, h \cdot x) = \delta(s, (h g h^{-1}) h \cdot x) = \delta(s, h g \cdot x)$ 
for all $s \in S \cup S^{-1}$. Now consider $g \in \G_n$ and $v \in V$. 
Write the word $w(v)$ as $s_{\ell(v)}(v) \cdots s_1(v)$ where each $s_i(v) \in S \cup S^{-1}$, 
and set $w_i(v) = s_i(v) \cdots s_1(v)$. Then
\begin{align*}
\delta(v, g \cdot x) & = \delta(s_{\ell(v)}(v), w_{\ell(v)-1}(v) g \cdot x) \cdots \delta(s_1(v), g \cdot x)\\
 & = \delta(s_{\ell(v)}(v), w_{\ell(v)-1}(v) \cdot x) \cdots \delta(s_1(v), x)\\
 & = \delta(v, x).
\end{align*}
Define $\phi : \G_n \rightarrow H$ by $\phi(g) = \delta(g, x)$. 
The above computation shows that for $g \in \G_n$ and $v \in V$
$$\phi(v g) = \delta(v g, x) = \delta(v, g \cdot x) \delta(g, x) = \delta(v, x) \delta(g, x) = \phi(v) \phi(g).$$
Thus $\phi$ is a group homomorphism.

Finally, let $T$ be a finite set satisfying $T \G_n = \G$. Since $\overline{[x]}$ is 
compact and $\delta$ is continuous, there is a finite set $W \subseteq H$ with $\delta(t, z) \in W$ 
for all $t \in T$ and all $z \in \overline{[x]}$. Then for $t \in T$ and $g \in \G_n$ we have
\begin{equation*}
\delta(t g, x) = \delta(t, g \cdot x) \delta(g, x) \in W \phi(g).\qedhere
\end{equation*}
\end{proof}

\begin{thmn} \label{Zcocycle}
Let $\delta\colon \Z^2 \times F(2^{\Z^2}) \to \Z$ be a continuous cocycle. 
Then there are $x\in F(2^{\Z^2})$, $\alpha, \beta\in \Q$, and $C\in\Z^+$ such that for any $a, b\in\Z$,
$$ \delta((a, b), x)=\alpha a+\beta b+\epsilon, $$
where $|\epsilon|\leq C$.
\end{thmn}

\begin{proof}
Apply Theorem \ref{rfcocycle} with $\G_n = (3^n \Z) \times (3^n \Z)$ to obtain $x$, a 
homomorphism $\phi : (3^n \Z) \times (3^n \Z) \rightarrow \Z$, and finite sets 
$T \subseteq \Z^2$, $W \subseteq \Z$. Define $\alpha = \phi((3^n,0)) / 3^n$ and 
$\beta = \phi((0, 3^n)) / 3^n$. Let $C$ be the maximum value of 
$|-t_1 \alpha - t_2 \beta + w|$ for $(t_1, t_2) \in T$ and $w \in W$. 
Now fix $(a, b) \in \Z^2$ and pick $(p, q) \in \Z^2$ and $(t_1, t_2) \in T$ 
with $(a, b) = (t_1, t_2) + (p 3^n, q 3^n)$. Then there is $w \in W$ with
\begin{align*}
\delta((a, b), x) & = \delta((t_1, t_2) + (p 3^n, q 3^n), x)\\
 & = w + p 3^n \alpha + q 3^n \beta \\
 & = (a - t_1) \alpha + (b - t_2) \beta + w\\
 & = a \alpha + b \beta - t_1 \alpha - t_2 \beta + w.
\end{align*}
This completes the proof since $|-t_1 \alpha - t_2 \beta + w| \leq C$.
\end{proof}

\section{Marker distortion} \label{sec:md}

In this section we present a result which serves to illustrate some of the methods we will be using, but in
a much simpler setting. Namely, we will present a result concerning  continuous actions of $\Z$.
In this one-dimensional case, the arguments reduce to simpler, but not trivial, arguments.
In particular, the proof of Theorem~\ref{thm:mdb} will use a one-dimensional version of a ``tile''
and will serve as a warm-up for the more general tile construction given later. Specifically,
in Theorem~\ref{thm:twotilesthm} we will present a general ``tiles theorem'' for $F(2^{\Z})$
and in Theorem~\ref{thm:tilethm} a corresponding result for $2^{\Z^2}$.

The result we present has to do with the notion of {\em marker distortion} which we now introduce.
We first recall the following well-known fact (see \cite{gao_countable_2015}).

\begin{fctn} \label{fact:bf}
Let $X$ be a zero-dimensional Polish space, and let $\Z\acts X$ be a continuous, free action of $\Z$ on
$X$. Then for any integer $d>1$ there is a clopen complete section  $M_d \subseteq X$
such that for any $x \in M_d$, the first point of $M_d$ to the right (or left) of
$x$ has distance $d$ or $d+1$ from $x$.
\end{fctn}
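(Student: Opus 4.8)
The plan is to prove Fact~\ref{fact:bf} by a straightforward clopen marker argument, using zero-dimensionality and compactness-type considerations adapted to the (possibly non-compact) Polish space $X$. First I would fix $d>1$ and recall the elementary marker lemma for continuous free $\Z$-actions: by zero-dimensionality there is a clopen complete section $N \subseteq X$ such that no two distinct points of $N$ are within distance $d$ of each other along an orbit, and every point is within distance $d$ of a point of $N$ (this is the standard construction: enumerate a countable clopen basis $\{U_n\}$, greedily carve out clopen ``marker'' pieces, using that $x \mapsto g\cdot x$ is a homeomorphism for each $g \in \Z$, so the constraints ``$\rho(x, g\cdot y) \neq j$'' are clopen conditions). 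So the starting point $N$ satisfies a $d$-separation and a $d$-syndeticity property, but the gaps between consecutive points of $N$ can be anything in the range $[d+1, 2d]$ (or similar), not just $d$ or $d+1$.

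The key step is then to \emph{refine} $N$ into $M_d$ whose consecutive gaps are exactly $d$ or $d+1$. The idea: given the marker set $N$ with all gaps between $d+1$ and $2d$ (I would first arrange this precise bound, shrinking $d$-separation to get an upper bound $\leq 2d$ on gaps by a second pass, or just take $d$ large enough relative to the original separation), each gap of length $\ell$ with $d+1 \le \ell \le 2d$ can be uniquely written as a sum of pieces each of length $d$ or $d+1$ — indeed, since $\ell \le 2d \le 2(d+1)-2 < 2(d+1)$ wait, I need $\ell$ to be expressible; actually any integer $\ell$ with $d+1 \le \ell \le 2d$ is \emph{itself} a single allowed gap, and any $\ell \ge d+1$ can be partitioned into parts from $\{d, d+1\}$ (this is a standard numerical fact: $\{d, d+1\}$ generates all integers $\ge d \cdot (d-1)$ or so, but more simply every $\ell \ge d+1$ except possibly small exceptional values works — I would just take the original separation large enough, say gaps in $[d^2, 2d^2]$, and then partition). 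So within each gap I insert new marker points at the prescribed partition points. The crucial point for \emph{clopenness}: the partition of a gap of length $\ell$ depends only on $\ell$ (say, via a fixed rule: greedily use $d+1$'s then finish with $d$'s), so the new marker set $M_d$ is obtained from $N$ by a rule ``$y \in M_d$ iff $y \in N$, or $y$ lies in a gap of $N$ and the distance from $y$ to the left endpoint of that gap is one of the prescribed partition values.'' Since the left endpoint of the gap containing a given point, and the length of that gap, are locally constant clopen data (the nearest point of $N$ to the left, and to the right, both vary continuously because $N$ is clopen and the action is continuous), $M_d$ is clopen.

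The main obstacle I expect is handling the \emph{non-compactness} of $X$ and making sure the ``gap length'' function is genuinely continuous (clopen-valued) and bounded — a priori a point $x$ far from any marker could have an unbounded gap. This is exactly why one uses the complete-section property of $N$ with the $d$-syndeticity bound to guarantee every gap has length $\le 2d$ (or whatever bound), which I must establish carefully in the initial construction rather than taking it for granted; the greedy clopen construction does give this, since after placing markers we continue placing them within distance $d$ of existing ones. A secondary subtlety is the edge case where the original separation is too small for the numerical partition lemma to apply ($\{d,d+1\}$ does not represent, e.g., $d+1 < \ell < 2d$ — wait, it does, $\ell$ is a single part; the genuinely problematic lengths are none since every $\ell$ in $[d+1,2d]$ is a valid single gap and every larger $\ell$ decomposes); so I would simply run the initial marker construction at scale $d$ to get separation $>d$ and syndeticity $\le d$, yielding gaps in $[d+1, 2d+1]$, and check by hand that each such length lies in $\{d, d+1\} \cup \{$sums of two such$\}$, which holds since $d+1 \le \ell \le 2d+1 \le (d+1)+(d+1)$... hmm $2d+1 = d + (d+1)$, good, so every $\ell$ with $d+1 \le \ell \le 2d+1$ is either a single allowed gap or a sum $d + \ell'$ with $\ell' \in \{d, d+1\}$ wait for $\ell = 2d+1$ that's $d + (d+1)$, for $\ell = 2d$ that's $d+d$, for $d+1 \le \ell \le d+1$ trivial, and for $d+2 \le \ell \le 2d-1$ it is a single valid gap — so in all cases we are fine. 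That completes the sketch; the write-up is then just making the ``nearest marker to the left/right'' functions explicitly clopen and verifying the separation/syndeticity bounds survive the refinement.
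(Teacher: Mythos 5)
The paper does not actually prove this fact; it only cites \cite{gao_countable_2015}. So your argument stands on its own, and the overall strategy is the right one: build a clopen complete section $N$ with bounded orbit-gaps via the standard zero-dimensional/greedy marker construction, subdivide each gap into intervals of length $d$ or $d+1$ by a rule depending only on the gap length, and observe that ``nearest point of $N$ to the left'' and ``nearest point of $N$ to the right'' are bounded, locally constant, clopen functions, so the subdivision defines a clopen set. You correctly identify and dispose of the clopenness issue.

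However, your final paragraph commits a genuine error that, as written, breaks the proof. You ultimately decide to build $N$ at scale $d$, giving gap lengths $\ell \in [d+1, 2d+1]$, and claim each such $\ell$ lies in $\{d, d+1\} \cup \{\text{sums of two elements of } \{d,d+1\}\}$. That set is $\{d, d+1, 2d, 2d+1, 2d+2\}$, which omits every $\ell$ with $d+2 \le \ell \le 2d-1$; as soon as $d \geq 3$ such lengths exist (e.g.\ $\ell = 5$ when $d=3$), and they are too large to be one part and too small to be two parts from $\{d,d+1\}$. The root of the confusion is your repeated assertion that ``every $\ell \in [d+1, 2d]$ is a valid single gap'' --- it is not: the statement requires consecutive points of $M_d$ to be at distance \emph{exactly} $d$ or $d+1$, nothing in between. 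The correct move is the one you floated in the middle of the sketch and then abandoned: build $N$ at a coarser scale $D$ with $D \geq d^2$ (more precisely, $D+1 \geq d^2 - d$, since $d^2-d-1$ is the Frobenius number of $\{d, d+1\}$), so that every gap length $\ell \in [D+1, 2D+1]$ is a nonnegative integer combination $ad + b(d+1)$; then fix a canonical such decomposition for each $\ell$, insert markers accordingly, and your clopenness argument finishes the job.
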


In the above, by ``to the right of $x$'' we mean that $y=n\cdot x$ where $n \in \Z^+$, and
similarly for ``left''. By ``distance'' from $x$ to $y$ we mean $\rho(x,y)=|n|$
where $n\in \Z$ is the unique integer such that $n\cdot x=y$.

A natural question, which occurs in many contexts, is how ``regular'' of a marker structure
can we put on an equivalence relation. The notion of regular can be measured in different
ways, such as size, geometric shape, etc.\ In the one-dimensional case, since we are able
to use marker distances of $d$ and $d+1$, we might ask how
close we can keep the average marker distance to $d + \frac{1}{2}$.
We make this precise in the following definition.

\begin{defnn} \label{defn:marker_distortion}
Let $d>1$ be a positive real number, and $M$  a complete section for the action of $\Z$ on the
Polish space $X$. The {\em $d$-distortion function} for $M$
is the function $f_d \colon X \times \Z^+ \to \R$ is defined by:
\[
f_d(x,n)=  \rho(x,x^n_M)- nd ,
\]
where $x^n_M$ is the $n$th point of $M$ to the right of $x$
(if this is not defined we leave $f_d(x,n)$ undefined).
When $d$ is understood we simply write $f(x,n)$.
\end{defnn}

Even more generally, we can allow $d$ to depend on the equivalence class of $x$.
If $d \colon X\to \R$ is an invariant Borel function (that is, $f(x)=f(y)$ whenever
$[x]=[y]$), then we write $f_{d(x)}(x,n)$ for $f_d(x,n)$ where $d=d(x)$.  This measures
the marker distortion with respect to the ``expected'' average distance $d(x)$
between marker points in $[x]$. One could use a prescribed function $d(x)$, or given
the  complete section $M$, use the function $\Delta(x)$ associated to $M$ by:
$\Delta(x)= \limsup_n \frac{\rho(x,x^n_M)}{n}$.


A particular case of this is when $M=M_d$ is a complete section such that
the $\rho$ distance between consecutive points of $M_d$ is in $\{ d, d+1\}$.
In this case, we can interpret the distortion function as follows.
As we move from a point to the next marker point
to the right, we record a charge of $+\frac{1}{2}$ if the distance is $d+1$, and a charge of $-\frac{1}{2}$ if
the distance is $d$. The distortion function $f_{d+\frac{1}{2}}(x,n)$ then measures the total charge accumulated
as we move from $x$ to the $n$th marker point to the right.

It is natural to ask if we can keep the distortion function bounded, or how slow-growing can we keep it?
Can we do better if we allow Borel marker sets $M_d$ instead of clopen ones?
We next state three theorems which answer some of these questions.

The first result says that bounded distortion is impossible to attain for $F(2^\Z)$.

\begin{thmn} \label{thm:mda}
For any invariant Borel function $d\colon F(2^\Z)\to\R$ with $d(x)>1$ for all $x$, 
there does not exist a Borel complete section $M \subseteq F(2^\Z)$
with bounded $d$-distortion, that is, having the property that for some $B\in \R$, $|f_{d(x)}(x,n)| \leq B$
 for all $x \in F(2^\Z)$ and all $n \in \omega$.
\end{thmn}

The second result says that using clopen marker sets in $F(2^\Z)$ we cannot do better than
linear distortion. Note that the basic Fact~\ref{fact:bf} shows that linear distortion
is possible with clopen markers, in fact we can get $f_d(x,n)\leq n$.

\begin{thmn} \label{thm:mdb}
Suppose $d \in \R$ with $d>1$ and $M$ is a clopen complete section in $F(2^\Z)$.
Then there are $C>0$ and  $x \in F(2^\Z)$ such that for all $g\in\Z$ and all large enough $n$
we have $|f_d(g\cdot x,n)| > Cn$.
\end{thmn}

The third result says that that using Borel marker sets, we can improve the distortion from
linear growth to arbitrarily slow growth.

\begin{thmn} \label{thm:mdc}
Let $X$ be a Polish space and $\Z\acts X$ be a free, Borel action. Let $d>1$ be an integer.
Let $f\colon \Z^+ \to \Z^+$ be monotonically increasing  with $\lim_{n} f(n)=+\infty$ and
$f(1)>4(2d+1)$. Then there is a Borel complete section $M_d \subseteq X$
such that for all $x \in X$ and $n \in \Z^+$ we have $|f_d(x,n)| \leq f(n)$.
\end{thmn}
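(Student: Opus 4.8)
The plan is to build $M_d$ by threading a fine marker set through a rapidly growing hierarchy of Borel marker sets, placing consecutive markers at distance $d-1$, $d$ or $d+1$ and using a left-to-right ``balancing'' procedure at each level of the hierarchy to keep the accumulated distortion from drifting. Everything happens orbit by orbit: since the action is free, each orbit is a (non-canonical) copy of $\Z$, so a complete section is just a Borel-coherent choice on each orbit of a bi-infinite increasing set of marker positions. I will arrange that every gap between consecutive $M_d$-markers lies in $\{d-1,d,d+1\}$. Then, writing $T$ for the sum of the quantities $(\text{gap}-d)$ over any block of $n-1$ consecutive $M_d$-gaps, a short computation gives $|f_d(x,n)|\le (d-1)+|T|$, so it suffices to keep $|T|$ small over every window of consecutive $M_d$-gaps.

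\emph{The construction.} Fix a rapidly increasing sequence $1\ll\ell_1<\ell_2<\cdots$ of multiples of $d$ (its growth rate is chosen only at the very end) and, by the standard marker machinery (see \cite{gao_countable_2015}), take nested Borel complete sections $N_1\supseteq N_2\supseteq\cdots$ of $X$ with $\bigcap_k N_k=\varnothing$ and every gap of $N_k$ in $[\ell_k,2\ell_k]$. Call the piece of an orbit between two consecutive $N_k$-markers, endpoints included, a \emph{level-$k$ block}, so each level-$k$ block is the concatenation of its level-$(k-1)$ sub-blocks; I arrange $N_k\subseteq M_d$ for all $k$, so no $M_d$-gap crosses an $N_k$-marker. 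Define $M_d$ by recursion on the level. A level-$1$ block of length $L$ is cut into $m\in\{\lfloor L/d\rfloor,\lceil L/d\rceil\}$ pieces of sizes in $\{d-1,d,d+1\}$ with the atypical pieces spread evenly; for either $m$ the accumulated distortion stays within $c_1:=d$ of its starting value, and the total distortion $e:=L-md$ contributed by the block equals $L\bmod d$ or $(L\bmod d)-d$ at our choice. For $k\ge 2$, subdividing a level-$k$ block means choosing, for each level-$(k-1)$ sub-block, one of its two available contributions; I sweep from the (well-defined) left endpoint of the block, greedily picking each contribution from $\{r,\,r-d\}$ ($r$ = sub-block length mod $d$) so as to keep the running distortion closest to $0$. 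Since these two values straddle $0$ and differ by exactly $d$, the running distortion stays in $(-d,d)$ throughout, which with the inductive bound inside sub-blocks yields: the distortion accumulated inside any level-$k$ block is at most $c_k$, where $c_{k+1}\le 2c_k+2d$, hence $c_k\le 3\cdot 2^{k-1}d$. Crucially, $c_k$ depends only on $d$ and $k$, not on the $\ell_j$.

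\emph{Borelness --- the main obstacle.} The difficulty, and the heart of the proof, is to make this recursion Borel. Purely level-by-level choices would leave a linear drift across block boundaries, so the greedy sweep at level $k+1$ must also impose a bounded target on each level-$k$ block's total contribution --- a target itself set one level higher, giving a recursion with no base case. This is resolved by passing down only a single bit (which of its two canonical piece-counts a level-$k$ block uses) and always spending the one unit of correction on a designated sub-block, say the last one. Then a level-$k$ block's bit equals its canonical value unless the block shares its right endpoint with the level-$(k+1)$ block containing it; and because $\bigcap_k N_k=\varnothing$, for fixed $p$ the right endpoints $b_k(p)$ of the level-$k$ blocks through $p$ form a nondecreasing sequence that is not eventually constant (an eventual common value would lie in $\bigcap_k N_k$). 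Hence the recursion determining whether $p\in M_d$ terminates at the least $k$ with $b_k(p)<b_{k+1}(p)$, a Borel function of $p$ depending on only finitely much of the orbit; so $M_d$ is Borel, with all gaps in $\{d-1,d,d+1\}$.

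\emph{Tuning and the base case.} Given a window $W$ of $n-1$ consecutive $M_d$-gaps, so $|W|\le (n-1)(d+1)$, let $k^\ast$ be least with $\ell_{k^\ast}\ge(n-1)(d+1)$; then $W$ meets at most two level-$k^\ast$ blocks, and splitting $W$ at their boundary gives $|T|\le 2c_{k^\ast}+2d$, hence $|f_d(x,n)|\le (d-1)+2c_{k^\ast}+2d\le 3d+3\cdot 2^{k^\ast}d$. As this bound involves only $k^\ast$ and $d$, and as $f(n)\to\infty$, I finally choose the $\ell_j$ recursively, fast enough that $f\!\left(\lceil\ell_{k-1}/(d+1)\rceil\right)\ge 3d+3\cdot 2^{k}d$ for all $k\ge 2$ (legitimate precisely because $c_k$ was independent of the $\ell_j$), keeping also $\ell_1\ge 100d^2$ so the level-$1$ cutting is possible. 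For $n$ with $(n-1)(d+1)<\ell_1$ the window meets at most two level-$1$ blocks, so $|f_d(x,n)|\le (d-1)+2c_1\le 3d-1<4(2d+1)<f(1)\le f(n)$, which is exactly where the hypothesis $f(1)>4(2d+1)$ enters; for all larger $n$ the choice of the $\ell_j$ together with the monotonicity of $f$ gives $|f_d(x,n)|\le f(n)$, completing the construction.
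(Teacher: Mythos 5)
The paper defers its own proof of this theorem to the companion paper \cite{borcon}, stating only that it uses the method of orthogonal markers; so there is no proof in this paper to compare against directly. Your construction takes a genuinely different route: rather than placing rectangular marker regions at different scales in ``general position,'' you build an aligned (nested) Slaman--Steel-style hierarchy $N_1\supseteq N_2\supseteq\cdots$ of bounded-gap complete sections and do a greedy sign-balancing of gap errors at each level, with a single-bit cascade to propagate targets downward. For a one-dimensional action this is a natural and, I believe, essentially correct alternative; the per-level running-distortion bound $c_k$ depends only on $d$ and $k$ (not on the $\ell_j$), which is what lets you tune the $\ell_j$ against $f$ after the fact. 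The greedy sweep keeps inter-sub-block distortion in $(-d,d)$, the cascade terminates at the least $k$ with $b_k(p)<b_{k+1}(p)$, and the final bound $|f_d(x,n)|\le (d-1)+|T|$ with $|T|\lesssim 2c_{k^\ast}+d$ closes the loop. Your bound $c_{k+1}\le 2c_k+2d$ is actually looser than necessary ($c_{k+1}\le c_k+d$ suffices), but that does not affect anything.

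Two places are passed over a bit quickly and deserve a sentence each. First, the opening claim that ``standard marker machinery'' gives \emph{nested} Borel complete sections with gaps in $[\ell_k,2\ell_k]$ \emph{and} $\bigcap_k N_k=\varnothing$ is not a direct citation. Weiss-type markers give bounded gaps at a single level and Slaman--Steel gives a decreasing sequence with empty intersection but unbounded gaps; the naive nested construction (iteratively taking maximal $\ell_k$-separated Borel subsets of $N_{k-1}$) need not have $\bigcap_k N_k=\varnothing$. The fix is short and should be stated: $Z:=\bigcap_k N_k$ is a Borel partial transversal (at most one point per orbit, since the gaps grow), so the action restricted to the invariant set $\mathrm{sat}(Z)$ is smooth; there one can define $M_d$ outright by placing markers at $d\Z$ relative to the transversal, which gives $|f_d(x,n)|\le d-1<f(1)\le f(n)$; and on the complement $\bigcap_k N_k=\varnothing$ holds, so your cascade terminates. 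Since you crucially use $\bigcap_k N_k=\varnothing$ to conclude $M_d$ is well-defined and Borel, this point should not be waved through. Second, a small technicality in the level-$1$ rule: when $d\mid L$ the two available piece-counts are $L/d$ and $L/d+1$, not $\lfloor L/d\rfloor$ and $\lceil L/d\rceil$ (which coincide); as written the contribution $-d$ would be unavailable exactly when $r=0$, yet your greedy sweep may call for it. Both piece-counts are feasible once $\ell_1\ge 100d^2$, so it is only a matter of phrasing the rule correctly, but as stated it is a genuine inconsistency with the greedy step.
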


The proof of Theorem~\ref{thm:mda} is a simple category argument which we give below for the sake of
completeness. The proof of Theorem~\ref{thm:mdb} given below will be a special version of the more general ``tile''
argument for $F(2^{\Z})$ to be given later in Theorem~\ref{thm:twotilesthm}.
The proof of Theorem~\ref{thm:mdc} uses different techniques, specifically 
the method of orthogonal markers, and will be presented in \cite{borcon}.

\begin{proof}[Proof of Theorem~\ref{thm:mda}]
Suppose $M \subseteq F(2^\Z)$ is a Borel complete section with
$d$-distortion bounded by $B$.  The orbit equivalence relation on
$F(2^\Z)$ is generically ergodic (see \cite{gao_book} Proposition 6.1.9), which implies (by \cite{gao_book} Proposition 10.1.2) that the function $d$ is constant on an
invariant comeager subset of $F(2^\Z)$. Denote this constant value by
$d_0$. Let $C\subseteq F(2^\Z)$ be an invariant comeager set such that $d(x)=d_0$ for all
$x \in C$ and such that $M\cap C$ is relatively clopen in $C$ (which we can obtain
as $M$ is Borel so has the Baire property). Let $\{ D_n\}_{n \in \omega}$
be dense open sets in $2^\Z$ with $\bigcap_n D_n \subseteq C$.


We construct a particular point $x\in C$.
Write $K$ for the set of
integers $k \in \Z$ satisfying $\inf_{n \in \Z} |k - n d_0| \geq 1/3$.
Note that $K$ is infinite since $d_0 > 1$. 
Let $\{(I_n,J_n)\}$ enumerate all pairs of intervals in $\Z$ with $I_n \subseteq J_n$,
such that each possible pair $(I,J)$ occurs infinitely often in the enumeration.
We construct a decreasing sequence of basic open sets $U_0\supseteq U_1\supseteq \cdots$
in $2^\Z$. In the following argument, if $U\subseteq 2^\Z$ is a basic open set,
then by $\dom(U)$ we mean the interval $[i,j]\subseteq \Z$ such that for some
$p\in 2^{[i,j]}$ we have $U=\{ y \in 2^\Z\colon y \res [i,j]\in p\}$. We say
that $U$ is the basic open set determined by $[i,j]$ and $p$.

Let $U_0=2^\Z$.
Given $U_n$, first get a basic open $U'_n \subseteq U_n$
with $U'_n \subseteq  D_n$.
If $J_n \nsubseteq \dom(U'_n)$, then set $U_{n+1}=U'_n$. If
$J_n \subseteq \dom(U'_n)$, then we take $U_{n+1} \subseteq U'_n$
such that $\dom(U_{n+1})$ contains a disjoint copy $J'_n$
(that is, $J'_n$ has the same
length as $J_n$ and $J_n \cap J'_n=\varnothing$) and such that
if $a$ is the left endpoint of $I_n$, and $a'$ the left endpoint of $I'_n$ (the
corresponding subinterval of $J'_n$),
then $a'>a$ and  $a'- a \in K$,
and finally if $U_{n+1}$ is determined by the interval $\dom(U_{n+1})=[i,j]\subseteq \Z$ and
$p\in 2^{[i,j]}$, then $p\res J_n\cong p\res J'_n$ (meaning precisely that if
$J_n=[c,d]$, $J'_n=[c',d']$, then for all $0\leq i<d-c$ we have $p(c+i)=p(c'+i)$).
Let $\dom(U_n)=[i_n,j_n]$, and we may assume that $\lim_{n\to \infty} i_n=-\infty$,
$\lim_{n\to \infty} j_n=\infty$.
Let $\{x\}=\bigcap_n U_n\subseteq C$.  See Figure~\ref{fig:xcon} for an illustration
of the construction of $x$.

\begin{figure}[h]
\begin{tikzpicture}[scale=0.05]

\draw (-100,0) to (100,0);

\draw[fill=lightgray] (-90,-3) rectangle (-20,3);
\draw[fill=gray] (-70,-3) rectangle (-40,3);
\draw[fill=lightgray] (20,-3) rectangle (90,3);
\draw[fill=gray] (40,-3) rectangle (70,3);

\draw (-90,-3) to (-90,3);
\draw (-70,-3) to (-70,3);
\draw (-40,-3) to (-40,3);
\draw (-20,-3) to (-20,3);

\draw (90,-3) to (90,3);
\draw (70,-3) to (70,3);
\draw (40,-3) to (40,3);
\draw (20,-3) to (20,3);

\draw[decorate, decoration={brace, amplitude=5pt, mirror}] (-70,-10) to (-40,-10);
\node at (-70,-7) {$a$};
\node at (-40,-7) {$b$};
\node at (-55,-19) {$I_n$};

\draw[decorate, decoration={brace, amplitude=5pt,mirror}] (40,-11) to (70,-11);
\node at (40,-7) {$a'$};
\node at (70,-7) {$b'$};
\node at (55,-20) {$I'_n$};

\draw[decorate, decoration={brace, amplitude=5pt}] (-90,5) to (-20,5);
\node at (-55,13) {$J_n$};

\draw[decorate, decoration={brace, amplitude=5pt}] (20,5) to (90,5);
\node at (55,13) {$J'_n$};
\end{tikzpicture}
\caption{The construction of $U_{n+1}$. 
We have $x \res J_n\cong x\res J'_n$ for any $x \in U_{n+1}$.}
\end{figure}

Since $x \in C$, we have for all $m,n >0$ that $|f_{d_0}(m \cdot x,n)|\leq B$.
Let $\alpha\in \R$ be the infimum of the values of $f_{d_0}(m\cdot x,n)$ as $m$ ranges
over all integers such that $m \cdot x \in M$ and $n$ ranges over all
positive integers. Note that, while $\alpha $ may be negative, it exists by the boundedness of the distortion.
Likewise, let $\beta  \in \R$ be the supremum value of $f_{d_0}(m\cdot x, n)$ as $m$ ranges within
the set of values for which $m\cdot x \in M$, $\inf_r f_{d_0}(m\cdot x, r) < \alpha  + 1/3$, and $n$ ranges over all positive integers.
Fix now $a\in \Z$ and $n_0>0$ such that $a \cdot x \in M$,
and $f_{d_0}(a \cdot x, r)$ achieves values below $\alpha  + 1/3$ and above $\beta - 1/3$ for $r \in [0, n_0]$.
From our definitions, we have that $\alpha  \leq f_{d_0}(a \cdot x, n) \leq \beta $ for all $n \in \N$.
Let $I$ be the interval $[a, b]$ where $b\cdot x$
is the $n_0$th element of $M$ to the right of $a\cdot x$.

Let $J\supseteq I$ be an interval such that if $U_{x\res J}$
denotes the basic open set determined by $J$ and $x\res J$,
then for all $r \in [a,b]$ we have that $r\cdot U_{x\res J} \subseteq M$ or $r \cdot U_{x\res J}\subseteq 2^\Z\sms M$.
This is possible as $M \cap C $ is relatively clopen in $C$.
Let $n$ be such that $(I_n,J_n)=(I,J)$ and $J\subseteq \dom(U_n)$.
By definition of $U_{n+1}$, we have that $\dom(U_{n+1})$ includes  a disjoint copy
$J'_{n}$ of $J_n$, and if $U_{n+1}$ is determined by $p \in 2^{\dom(U_{n+1})}$,
then $p\res J_n\cong p\res J'_n$.

Let $I'_n=[a',b']$ be the corresponding subinterval of $J'_n$.
Recall that $I_n = I = [a, b]$ and note $b-a=b'-a'$.
We have that $a\cdot x \in M$ and $b\cdot x \in M$.
Also, from the definition of $J$ and the fact that $p\res J\cong p\res J'$ we have that
for all $0 \leq i <b-a$ that $(a+i)\cdot x \in M$ if and only if  $(a'+i)\cdot x \in M$.
It follows that the maps $r \mapsto f_{d_0}(a\cdot x,r)$ and
$r \mapsto f_{d_0}(a'\cdot x,r)$ for $r \in [0,n_0]$ are identical. In particular,
$f_{d_0}(a' \cdot x, r)$ achieves values below $\alpha + 1/3$ and above $\beta  - 1 / 3$ for $r \in [0,n_0]$.


Suppose $a'\cdot x$ is the $q$th point of $M$
to the right of $a\cdot x$. Then $f_{d_0}(a\cdot x, q)=\rho(a\cdot x, a'\cdot x)-qd_0$. Since $\rho(a\cdot x, a'\cdot x)=a'-a\in K$, we have that $|f_{d_0}(a\cdot x, q)|\geq 1/3$. Notice that
$$f_{d_0}(a \cdot x, q + r) = f_{d_0}(a \cdot x, q) + f_{d_0}(a' \cdot x, r).$$
By construction, $f_{d_0}(a' \cdot x, r)$ achieves values below $\alpha + 1/3$ and above $\beta - 1/3$ for $r \in [0,n_0]$.
If $f_{d_0}(a\cdot x,q)\leq  -1/3$, then for some $r \leq n_0$ we have that
$f_{d_0}(a\cdot x, q+r) < -1/3 + \alpha + 1/3 =\alpha$.
If $f_{d_0}(a\cdot x,q)\geq 1/3$, then we likewise get that
$f_{d_0}(a\cdot x, q+r) > 1/3 + \beta - 1/3 =\beta$ for some $r\leq n_0$.
In either case, this contradicts $\alpha \leq f_{d_0}(a \cdot x, r) \leq \beta$ for all $r \in \N$.
\end{proof}

The proof of Theorem~\ref{thm:mdb} will involve constructing a special hyper-aperiodic element
$x \in F(2^\Z)$ which contains copies of certain ``tiles'' $T_{w,p}$ for
$w, p \in \Z^+$. The existence of a hyper-aperiodic element with these tile structures will
allow us to deduce the theorem.

\begin{proof}[Proof of Theorem~\ref{thm:mdb}] Let $d>1$ be a real number and $M\subseteq F(2^\Z)$ be a clopen
complete section.
Again let $K$ be the set of integers $k$ satisfying $\inf_{n \in \Z} |k - n d| \geq 1/3$. Note that $K$ is infinite.
For $w,p \in \Z^+$, a {\em $T_{w,p}$ tile} is a function $t \colon [a,b]\to 2$
with $b-a=2w+p$ such that $t\res [a,a+2w]\cong t\res [b-2w,b]$. Thus, a $T_{w,p}$ tile
has a block $R$ of size $2w+1$ (meaning $|\dom(R)|=2w+1$), followed by a block $S$ of size $p-2w-1$, and then
followed by the block $R$ again. This is illustrated in Figure~\ref{fig:onedtile}.
Notice that the domain of a $T_{w,p}$ tile has a central interval $[a+w, a+w+p]$, which is of length $p$ (and size $p+1$), surrounded on each
side by a ``buffer'' interval of size $w$, as shown in Figure~\ref{fig:onedtile}.

\begin{figure}[h]
\begin{centering}
\begin{tikzpicture}[scale=0.05]
\pgfmathsetmacro{\wa}{10}
\pgfmathsetmacro{\wb}{15}
\pgfmathsetmacro{\ws}{2}

\draw (-100,0) to (100,0);
\foreach \i in {0,..., 100}
{
\draw (-100+\i* \ws,-3) to (-100+\i* \ws,3);
}

\draw[fill=lightgray] (-50,-3) rectangle (-30,3);
\draw[fill=gray] (-30,-3) rectangle (30,3);
\draw[fill=lightgray] (30,-3) rectangle (50,3);

\draw[fill,radius=1,black] (-40,0) circle;
\draw[fill,radius=1,black] (40,0) circle;

\draw[decorate, decoration=brace,thick] (-40,5) to  (40,5);
\node at (0, 12) {$p$};

\draw[decorate,decoration=brace,thick] (50,-5) to (40,-5);
\node at (45,-12) {$w$};

\end{tikzpicture}
\end{centering}
\caption{A $T_{w,p}$ tile\label{fig:onedtile}}
\end{figure}

We now construct a hyper-aperiodic element $x \in F(2^\Z)$ with the property that
for arbitrarily large $w$ and $p$, with $p \in K$,
there is an interval $[a,b]$ such that $x \res [a,b]$ is a
$T_{w,p}$ tile. Fix  an arbitrary hyper-aperiodic element $y \in M$. Such $y$ exists since $M$ is a complete section. Let $\{ (w_n,p_n)\}$
be any sequence with $\lim_n w_n=\lim_n (p_n - 2 w_n - 2) =\infty$, with $p_n - 2 w_n - 2 \geq 0$, and with $p_n \in K$.
Let $I_n=[a_n,b_n]$, with $0 < a_n<b_n < a_{n+1}$,
be a pairwise disjoint sequence of intervals in $\Z$ such that
$b_n-a_n=2w_n+p_n$, and the gap $a_{n+1}-b_n$ between $I_n$ and $I_{n+1}$ also
tends to $\infty$ with $n$. Each interval $I_n$ can be viewed as the disjoint
union of three subintervals $A_n$, $B_n$, $C_n$, where $A_n=
[a_n,a_n+ 2 w_n]$, $B_n=[a_n+ 2 w_n + 1, a_n+p_n-1]$, and $C_n=[a_n+ p_n,a_n+2w_n+p_n]$.
Fix any $x \in 2^\Z$ satisfying:

\begin{enumerate}
\item
For $i \notin \bigcup_n I_n$, $x(i)=y(i)$.
\item
For all $n \in \N$, $x \res B_n \cong  y\res [-\lfloor p_n/2\rfloor+w_n+1, \lceil p_n/2\rceil-w_n-1]$.
\item
For all $n \in \N$, $x\res A_n\cong x\res C_n \cong  y \res [-w_n,w_n]$.
\end{enumerate}

Now we check that $x$ is hyper-aperiodic. Fix $s \in \Z \setminus \{0\}$, and let $T'$ be a finite set
witnessing the hyper-aperiodicity of $s$ for $y$, that is, for every $g \in \Z$ there is $t \in T'$ with $y(g+t) \neq y(g+s+t)$.
Consider the partition $Q = \{\Z \setminus \bigcup_n I_n\} \cup \bigcup_n \{A_n, B_n, C_n\}$ of $\Z$.
By construction there is a finite set $U \subseteq \Z$ such that for all $g \in \Z$
there is $u \in U$ with $(g + u + T') \cup (g + s + u + T')$ contained in a single piece of $Q$.
Set $T = U + T'$. Now fix $g \in G$. Pick $u \in U$ with $L = (g + u + T') \cup (g + s + u + T')$
contained in a single piece of $Q$. If $L$ is contained in $\Z \setminus \bigcup_n I_n$ then $x \res L = y \res L$
and thus we are done since there is $t \in T'$ with
$$x(g + u + t) = y((g + u) + t) \neq y((g + u) + s + t) = y(g + s + u + t) = x(g + s + u + t).$$
On the other hand, if $L$ is contained in some $A_n$, $B_n$, or $C_n$, then there is $h \in \Z$
with $x \res L = (h \cdot y) \res L$. Again we are done because we can find $t \in T'$ with
$$x(g + u + t) = y(h + g + u + t) \neq y(h + g + s + u + t) = x(g + s + u + t).$$

Consider now the compact set $C=\overline{[x]} \subseteq F(2^\Z)$. Since $M$
is a clopen subset of $F(2^\Z)$, for each $\alpha\in F(2^\Z)$ there is an $m\in \Z^+$ such that,
letting $U_{\alpha\res[-m,m]}$ be the basic open set of elements extending $\alpha\res [-m,m]$,
we have either $U_{\alpha\res[-m,m]}\cap F(2^\Z)\subseteq M$ or $U_{\alpha\res[-m,m]}\cap F(2^\Z)\cap M=\varnothing$.
For each $\alpha\in F(2^\Z)$ let $m(\alpha)$ be the least such $m\in\Z^+$.
Then the function $\alpha\mapsto m(\alpha)$ is continuous on $F(2^\Z)$.
In particular, its restriction on $C$ is continuous. The compactness of $C$ now gives that there is an $m_0$  such that
$m(\alpha)\leq m_0$ for all $\alpha\in C$. In particular, for all $\alpha\in C$,
either $U_{\alpha\res[-m_0, m_0]}\cap F(2^\Z)\subseteq M$ or $U_{\alpha\res [-m_0,m_0]}\cap F(2^\Z)\cap M=\varnothing$.
Applying this to elements of $[x]$, we get that for any $a \in \Z$, $x\res [a-m_0,a+m_0]$ determines
whether $\alpha \in M$ for any $\alpha \in F(2^{\Z})$ with $\alpha\in U_{x\res [a-m_0, a+m_0]}$.

Now consider a large enough $n$ in the construction of $x$ where $w_n>m_0$ and $p_n>2w_n+2m_0+2$.
Then $x\res B_n\cong y\res I$ for an interval $I$ with $[-m_0, m_0]\subseteq I$.
Since $y\in M$ we deduce that there is $i\in B_n$ such that $i\cdot x\in M$.
For notational simplicity we denote $w=w_n$, $p=p_n$, and let $\tau$ denote the $T_{w,p}$ tile $x\res I_n$.

We construct an element $\tilde{x}\in F(2^\Z)$ as follows. First let $z$ be a 
periodic element of $2^\Z$ which is defined as an overlapping concatenation of the $T_{w,p}$ tile $\tau$. 
To be precise, letting $\tau=s\conc t\conc s$ where $|\dom(s)|=2w+1$ and $ |\dom(t)|=p-2w-1$, 
then $z$ is a bi-infinite concatenation of the form $\cdots s\conc t\conc s\conc t\conc s \cdots$. 
Without loss of generality, assume $z\res[0,p-2w-2]\cong t$. Next for all $k\geq 0$ 
let $L_k=[2^{k+1}p, 2^{k+1}p+p-2w-2]$ and $N_k=[2^{k+1}p+p-2w-1, 2^{k+2}p-1]$. 
Then $z\res L_k\cong t$, $N_k$ is the interval in between $L_k$ and $L_{k+1}$, 
and $z\res N_k\cong s\conc t\conc \cdots\conc t\conc s$ with $2^{k+1}-1$ many occurrences of $t$. 
Define $\tilde{x}$ by

$$
\tilde{x}(i)= 
\begin{cases}
z(i) & \text{ if } i\notin \{ 2^{k+1}p\ \colon  k\geq 0\}, \\
z(i) & \text{ if } i=2^{k+1}p \text{ and }  z(i)\neq z(i-k-1), \\ 
1-z(i) & \text { if } i=2^{k+1}p \text{ and } z(i)=z(i-k-1).
\end{cases}
$$


\noindent
Then $\tilde{x}\in F(2^\Z)$ because the definition of $\tilde{x}$ makes it explicit that any $k+1$ 
is not a period. Note also that $\tilde{x}$ and $z$ agree on all intervals except $\bigcup_{k\geq 0} L_k$. 
In particular, $\tilde{x}\res N_k=z\res N_k$ for all $k\geq 0$. The construction of $\tilde{x}$ 
is illustrated in Figure~\ref{fig:tilecon}.

\begin{figure}[h]
\begin{centering}
\begin{tikzpicture}[scale=0.05]
\pgfmathsetmacro{\wa}{5}
\pgfmathsetmacro{\wb}{10}
\pgfmathsetmacro{\ws}{2}

\draw (-110,0) to (110,0);
\foreach \i in {0,..., 110}
{
\draw (-110+\i* \ws,-3) to (-110+\i* \ws,3);
}

\foreach \i in {0,...,2}
{
\draw[fill=lightgray] (-100+\i*\wa+\i*\wb,-3) rectangle (-100+\i*\wa+\i*\wb+\wa,3);
\draw[fill=gray] (-100+\i*\wa+\i*\wb+\wa,-3) rectangle (-100+\i*\wa+\i*\wb+\wa+\wb,3);
\draw[fill=lightgray] (-100+ 3*\wa+3*\wb,-3) rectangle (-100+3*\wa+3*\wb+\wa,3);
}

\foreach \i in {0,...,8}
{
\draw[fill=lightgray] (-40+\i*\wa+\i*\wb,-3) rectangle (-40+\i*\wa+\i*\wb+\wa,3);
\draw[fill=gray] (-40+\i*\wa+\i*\wb+\wa,-3) rectangle (-40+\i*\wa+\i*\wb+\wa+\wb,3);
\draw[fill=lightgray] (-40+9*\wa+9*\wb,-3) rectangle (-40+9*\wa+9*\wb+\wa,3);
}

\draw[decorate, decoration=brace,thick] (-40+\wa/2,5) to  (-40+\wa+\wb+\wa/2,5);
\node at (-40+\wa+\wb/2, 12) {$p$};

\draw[decorate, decoration={brace, mirror}, thick] (-49,-5) to  (-41,-5);
\node at (-45, -13) {$L_k$};

\draw[decorate, decoration={brace, amplitude=5pt, mirror}, thick] (-39,-5) to  (-41+10*\wa+9*\wb,-5);
\node at (-40+3\wa+7*\wb/2, -13) {$N_k$};

\end{tikzpicture}
\end{centering}
\caption{The element $\tilde{x}$\label{fig:tilecon}}
\end{figure}

It follows from the construction of $\tau$ that for each interval $J$ for which $\tilde{x}\res J\cong \tau$, there is $i\in J$ such that $i\cdot \tilde{x}\in M$.
Moreover, for each $k\geq 0$, $i \mapsto \chi_{M}(i\cdot \tilde{x})$ has period $p$ for $i\in N_k$. The same is true for $i\in (-\infty, 2p-1]$. Consider any fixed $k\geq 0$. Let $i_0\in N_k$ be the least such that $i_0\cdot \tilde{x} \in M$.
Let $r_0\in\Z^+$ be such that $(i_0+p)\cdot \tilde{x}$ is the $r_0$-th element of $M$
to the right of $i_0\cdot \tilde{x}$. Let $c_0=f_d(i_0\cdot \tilde{x},r_0)$ and note that $|c_0| \geq 1/3$ since $p\in K$.
It follows that $f_d(i_0\cdot \tilde{x},\ell r_0)=\ell c_0 $ for $\ell< 2^{k+1}-1$. Thus there is a constant $\gamma>0$ such that for all $q<(2^{k+1}-1)r_0$, $$\frac{q}{r_0}c_0-\gamma\leq f_d(i_0\cdot x, q)\leq \frac{q}{r_0}c_0+\gamma.$$
Now let $j_0\in N_k$ be the largest such that $j_0\cdot \tilde{x}\in M$ and $i_1\in N_{k+1}$ be the least such that $i_1\cdot \tilde{x}\in M$. Then the absolute value of the total $d$-distortion from $j_0\cdot \tilde{x}$ to $i_1\cdot\tilde{x}$ is bounded as $k$ varies through all natural numbers. Let $\beta>0$ be such a bound. By a direct computation we get that for all $g\in\Z$ and for all $\ell$,
$$\frac{c_0}{r_0}\ell-\gamma-\beta\log \ell \leq f_d(g\cdot \tilde{x},\ell)\leq\frac{c_0}{r_0}\ell+\gamma+\beta\log\ell.$$  Thus for large enough $\ell$ we have $|f_d(g\cdot \tilde{x}, \ell)|> \frac{|c_0|}{2r_0}\ell$.

\end{proof}

\chapter{The Twelve Tiles Theorem} \label{sec:twodtiles}

In this chapter we prove a theorem which completely answers the question of when
there exists a continuous equivariant map from $F(2^{\Z^2})$ into a given subshift
of finite type. This type of question can also be formulated through the notion
of a ``continuous structuring'' on $F(2^{\Z^2})$ (to be defined in \S\ref{sec:continuous_structuring}) which means
a continuous way of associating a first-order structure to the equivalence classes of $F(2^{\Z^2})$.
We prove our main theorem, Theorem~\ref{thm:tilethm}, first in the terminology of subshifts of finite type
and then in \S\ref{sec:continuous_structuring} we present the theorem in terms of
continuous structurings. Although these versions of the theorem are
equivalent, we find both points of view useful.

The main theorem generalizes to $F(2^{\Z^n})$, but the case $n=2$
illustrates the general arguments and suffices for the applications we have.
As a warm-up, we prove an analogous result in \S\ref{sec:onedimtile}
for $F(2^\Z)$. This illustrates some of the main ideas, but the argument
is significantly easier, and the answer involves a graph built from only two tiles
instead of $12$ as in the case for $F(2^{\Z^2})$.
This ``two tiles'' theorem, Theorem~\ref{thm:twotilesthm},
gives a complete answer as to when there is a continuous equivariant map from $F(2^\Z)$
to a subshift of finite type $Y\subseteq \bsft^\Z$, just as the ``twelve tiles''
theorem, Theorem~\ref{thm:tilethm}, provides a complete answer for
$F(2^{\Z^2})$ (considering subshifts of finite type $Y\subseteq \bsft^{\Z^2}$).
We will also need the simpler Theorem~\ref{thm:twotilesthm}
for the arguments of \S\ref{sec:ssonedim}.

The twelve tiles theorem for $F(2^{\Z^2})$ is given in terms of certain finite graphs which will be denoted
$\gnpq$. The graphs $\gnpq$ will have a uniform definition from the three
parameters $n,p,q$ which will be positive integers. The theorem will state
that a map from $F(2^{\Z^2})$ into a subshift of finite type (or a continuous $\CL$-structuring)
exists if and only if there is a triple $n,p,q$ with $\gcd(p,q)=1$ such that there is a corresponding map
from $\gnpq$ into the subshift (or $\CL$-structuring). These notions will be made precise in the following sections.
The theorem will also state that this existence criterion is equivalent to
the condition that for {\em all} sufficiently large $n,p,q$ with $\gcd(p,q)=1$ we have
a map from $\gnpq$ into the subshift (or $\CL$-structures), a result which is not obvious
directly. Again, \S\ref{sec:onedimtile} will present an easier version
of this result for $F(2^\Z)$, using a simpler family of graphs $\Gamma^{(1)}_{n,p,q}$.

The graphs $\gnpq$, as we said above, will all be defined in the same manner. We will start
with a collection of $12$ rectangular ``tiles.'' These will be rectangular grid graphs $G^i_{n,p, q}$ for $i=1, \dots, 12$, each of which will be divided into certain ``blocks" of vertices, with ``block labels" given to some of them. Blocks with the same labels will eventually be identified with each other to form quotient graphs $T^i_{n,p, q}$. We will refer to both collections $G^i_{n,p, q}$ and $T^i_{n,p, q}$ as ``tile graphs." To construct $\Gamma_{n, p, q}$ we will form some disjoint unions of the twelve tiles $G^i_{n,p, q}$ which will be denoted $G_{n, p, q}$, and finally
$\gnpq$ will be a quotient graph of $G_{n,p,q}$ by identifying the vertices in similarly labeled blocks. Alternatively, one can also obtain $\gnpq$ by first taking the union of all $T^i_{n,p,q}$ and then again taking the quotient by identifying the vertices in similarly labeled blocks. We refer to $G_{n,p, q}$ and $\gnpq$ as the ``$12$-tiles graphs.''  The following diagram illustrates the constructions and the terminology.
The simpler graphs $\Gamma^{(1)}_{n,p,q}$ of \S\ref{sec:onedimtile} will be
a simplified version of this construction, starting from just two tiles.

 \begin{figure}[h]
\begin{tikzpicture}[scale=0.05]

\draw[dashed] (-110,0) rectangle (-10,-90);

\node at (-60, 6) {tile graphs};

\node at (-60, -15) {grid graphs $G^i_{n,p,q}$ with blocks};

\node at (-60, -80) {tiles $T^i_{n,p, q}$};

\draw[->] (-60,-25) -- (-60, -70);

\node at (-77, -45) {quotient};

\draw[dashed] (60, 0) rectangle (120, -90);

\node at (90, 6) {12-tiles graphs};

\node at (80, -15) {$G_{n,p,q}$};

\node at (80, -80) {$\gnpq$};

\draw[->] (80,-25) -- (80,-70);

\node at (97,-45) {quotient};

\draw[->] (-8,-15) -- (58, -15);

\node at (25,-10) {disjoint union};

\draw[->] (-8,-80) -- (58,-80);

\node at (25, -86) {disjoint union};
\node at (25, -93) {and quotient};

\end{tikzpicture}
\end{figure}



With the main theorem proved in this chapter we will be able to answer many questions
about $F(2^{\Z^2})$ concerning continuous functions on or clopen sets in $F(2^{\Z^2})$.
In the rest of the paper we will see that many natural questions such as proper coloring questions, perfect matching questions,
tiling problems, and questions about continuous graph homomorphisms can be
formulated in terms of equivariant maps into subshifts of finite type or $\CL$-structures, and thus solved by an (often simple) argument
about the finite graphs $\gnpq$. In particular, we will be able to derive $\chi_c(F(2^{\Z^2}))=4$ as a quick corollary of our main theorem, thus giving another proof of Theorem~\ref{threecoloring}.

It is somewhat amusing to note that  the twelve tiles theorem makes it possible
to approach some problems in the descriptive set theory of countable Borel
equivalence relations through a computer analysis. Indeed, some of the arguments of the
following chapters were motivated by computer studies.

\section{From proper colorings to subshifts of finite type}\label{section:2.1}

In the last chapter we already studied continuous proper colorings on $F(2^{\Z^n})$. In order to study other related continuous combinatorial questions about $F(2^{\Z^n})$, we note that the proper coloring problem is a special case of the more general question on subshifts of finite type. In this section we also fix some definitions and notations that will be used throughout the rest of the paper.

First, we will be working with a notion of a $G$-graph for a group $G$, which is a notion weaker than the Cayley graph or the Schreier graph of a free action.

\begin{defnn} \label{Ggraph}
Let $G$ be a group, $S$ be a generating set of $G$, and $\Gamma$ be a directed graph. We say that $\Gamma$ is a {\em $(G, S)$-graph} (or in short a {\em $G$-graph} if there is no danger of confusion) if each element of $E(\Gamma)$ is uniquely labeled with an element of $S$.
\end{defnn}

Later in the chapter we will introduce ``block labels" for vertices of $G$-graphs. To properly distinguish different kinds of labels, we refer to the above labels for the edges of a $G$-graph as {\em $G$-labels}.

Recall our convention from \S\ref{section:1.1} on page \pageref{con:S} that $S$ is always 
finite and minimal in this paper. In particular, the identity is not an element of $S$. 
Also, since $S$ is minimal, i.e., no proper subset of $S$ is a generating set of $G$, 
we have that if $s\in S$ then $s^{-1}\not\in S$. In the case of $\Z^n$, our standard generating set 
is $\{e_1, \dots, e_n\}$, where for $1\leq i\leq n$, $e_i=(a_1,\dots, a_n)$ with $a_i=1$ 
and $a_j=0$ for $j\neq i$. Under this convention, the Cayley graphs under our consideration 
will never have both an edge $(x, y)$ and its reverse edge $(y,x)$ simultaneously contained in the edge set. 
This sometimes causes inconvenience. To remedy this, we adopt another convention that, 
whenever $(x, y)$ is an edge in a Cayley graph with label $s$, 
we consider $(y, x)$ to have label $s^{-1}$.

The $G$-graphs $\Gamma$ under our consideration will be either a part of a Cayley graph 
or a combination of some parts of a Cayley graph, and thus they will have the 
same property that if $(x, y)\in E(\Gamma)$ then $(y,x)\not\in E(\Gamma)$, 
which causes inconvenience sometimes. We will therefore also make a convention 
that in a $G$-graph $\Gamma$, whenever $(x,y)\in E(\Gamma)$ and is labeled $s$, 
then we consider $(y, x)$ to be labeled by $s^{-1}$. With this additional convention, 
each $G$-graph we consider will always be symmetric, and corresponding uniquely to an undirected graph.

We will also be working with $G$-homomorphisms between $G$-graphs, which are defined below.

\begin{defnn} Let $G$ be a group with a generating set $S$, and $\Gamma_1$, $\Gamma_2$ be $G$-graphs. 
A map $\varphi \colon \Gamma_1\to \Gamma_2$ is called a {\em $G$-homomorphism} if whenever
$(x,y)\in E(\Gamma_1)$ is labeled with $s \in S$, then $(\varphi(x),\varphi(y))\in E(\Gamma_2)$
is also labeled with $s$.
\end{defnn}

Proper colorings and chromatic numbers are defined for $G$-graphs in the same manner as for 
general directed graphs. In the case that $\Gamma$ is a topological graph, we also define 
continuous chromatic number, Borel chromatic number, etc. in a standard manner.

For $n\geq 1$, by an {\em $n$-dimensional} ({\em rectangular}) {\em grid graph} we mean the 
Cayley graph of $\Z^n$ restricted to an $n$-dimensional rectangle $[a_1, b_1]\times \cdots \times[a_n, b_n]$ 
where $a_i\leq b_i\in \Z$ for all $1\leq i\leq n$. 
An $n$-dimensional grid graph is naturally a $\Z^n$-graph. 
If we do not specify the dimension $n$, then tacitly $n=2$.

Next we turn to the shift space $\bsft^{\Z^n}$, where $\bsft \geq 1$ is an integer, and its subshifts.

We first formalize some notions related to the topology of this space. If $g\in \Z^n$ and $p:\dom(p)\to\{0,1, \ldots, \bsft-1\}$ and $q:\dom(q)\to\{0,1, \ldots, \bsft-1\}$
are finite partial functions from $\Z^n$ to $\bsft=\{0,1, \ldots, \bsft-1\}$, i.e.,
$\dom(p), \dom(q)\subseteq \Z^n$ are finite, we make the following definitions:
\begin{itemize}
\item We write $g\cdot p= q$ if $\dom(p) - g = \dom(q)$ and for all $h\in \dom(q)$, $p(h + g)=q(h)$.
\item We write $p\cong q$ if there is $h\in \Z^n$ such that $h\cdot p=q$.
\item We say that $p$ and $q$ are {\em compatible} if for all $i\in \dom(p)\cap \dom(q)$, $p(i)=q(i)$; otherwise we say that $p$ and $q$ are {\em incompatible}.
\end{itemize}
For $a \leq b \in \Z$ we write $[a, b]$, $[a, b)$, $(a, b]$, $(a, b)$ for the
set of integers between $a$ and $b$, with the obvious conventions as to whether each of
the two inequalities is strict or not. For an interval $[a,b]$ in $\Z$, we will be consistently speaking of its {\em width}, which is $b-a$, and {\em size}, which is $b-a+1$. Intuitively, the size of the interval is the number of vertices in the 1-dimensional grid graph, and the width is the length of the path from one end to the other.

If $a_1,\dots,a_n$ are
positive integers, an  {\em $(a_1,\dots,a_n)$-pattern} is a map
\[
p \colon [0,a_1)\times \cdots \times [0,a_n) \to \{0,1, \ldots, \bsft-1\}.
\]
We think of $\dom(p)$ naturally as an $n$-dimensional grid graph, which is in particular a $\Z^n$-graph. The {\em width} of an $(a_1,\dots, a_n)$-pattern is defined as $\max\{a_i-1: 1\leq i\leq n\}$. We say an $(a_1,\dots,a_n)$-pattern $p$ {\em occurs} in $x \in \bsft^{\Z^n}$ if
\[
x\res [c_1,c_1+a_1)\times \cdots \times [c_n,c_n+a_n)\cong p
\]
for some $(c_1,\dots,c_n)\in \Z^n$. Note that this is an invariant notion, that is, if $p$ occurs in $x$ then $p$ occurs in any $g\cdot x$.
Of course, patterns give the topology on $\bsft^{\Z^n}$, the basic open sets
being
\[U_p=\{ x \in \bsft^{\Z^n} \colon x \res [-k,k]^n\cong p\}
\]
for some $(2k+1,\dots,2k+1)$-pattern $p$.

Recall that a {\em $\Z^n$-subshift} $Y$ is a closed, invariant subset of $\bsft^{\Z^n}$, where $Y$ inherits a $\Z^n$-action from $\bsft^{\Z^n}$. Some $\Z^n$-subshifts are said to be of finite type, which we define below.

\begin{defnn} \label{def:sft}
A {\em $\Z^n$-subshift of finite type} is a $Y\subseteq \bsft^{\Z^n}$
for which  there is a finite set $\{p_1,\dots,p_k\}$ of patterns
such that for any $x \in \bsft^{\Z^n}$, $x \in Y$ if and only if
none of the patterns $p_1,\dots,p_k$ occur in $x$.
\end{defnn}

It is easily verified that given any patterns $p_1, \dots, p_k$, the set
$$Y=\{x\in \bsft^{\Z^n}\,:\, \mbox{none of the patterns $p_1,\dots, p_k$ occur in $x$}\} $$
(which could be empty) defines a $\Z^n$-subshift of $\bsft^{\Z^n}$. Thus any finite collection of patterns determines a $\Z^n$-subshift of finite type. Naturally we describe the above $Y$ by the sequence $(\bsft; p_1,\dots, p_k)$. We refer to the patterns $p_1,\dots, p_k$ as {\em forbidden patterns}, and the {\em width} of the $\Z^n$-subshift of finite type $Y$ is defined as the maximum of the widths of $p_1,\dots, p_k$.

Note there is no loss of
generality in requiring all the forbidden patterns $p_i$ to have the same
domain. In this case, if all the $p_i$ have domain $[0,\ell)^n$,
we describe the subshift $Y$ by the sequence $(\bsft;\ell;p_1,\dots,p_k)$. In this case, the width of $Y$ is $\ell-1$.


A special case of a width $1$ $\Z^n$-subshift of finite type is what we call
an {\em edge $\Z^n$-subshift}, which we define below.  By an {\em edge pattern} $p$ we mean
a function $p \colon \dom(p)\to \bsft$ where $\dom(p)$ is the two point
set $\{ \vec{0}, e_i\}$ for some $1\leq i\leq n$ (here $\vec{0} = (0, 0, \ldots, 0)$). An {\em edge $\Z^n$-subshift of finite type} is a $\Z^n$-subshift of finite type determined by a finite collection of edge patterns.

\begin{ex}{\em
The set of $Y \subseteq \bsft^{\Z^n}$ which are proper $\bsft$-colorings of $\Z^n$
is an edge $\Z^n$-subshift of finite type. In this case,
$Y$ is described by $\bsft n$ many forbidden edge patterns $p: \{\vec{0}, e_i\}\to \bsft$ where $p(\vec{0})=p(e_i)$ for $1\leq i\leq n$.}
\end{ex}

In the case $n=1$, the above definition of edge $\Z$-subshift of finite type is sometimes taken in the literature as the definition of
a $\Z$-subshift of finite type. We note that any $\Z^n$-subshift of finite type is equivalent to an edge $\Z^n$-subshift of finite type in the following sense.

\begin{fctn}
Let $Y\subseteq \bsft^{\Z^n}$ be a $\Z^n$-subshift of finite type. Then there is a $\bsft'$
and an edge $\Z^n$-subshift of finite type $Y' \subseteq {\bsft'}^{\Z^n}$ such that there is an
equivariant isomorphism $\varphi \colon Y \to Y'$.
\end{fctn}

\begin{proof}
Let $Y$ be determined by the forbidden patterns $p_1,\dots,p_k$, and we may assume
without loss of generality that all the $p_i$ have domain
$[0,\ell)^n$, and thus are $(\ell, \ell, \dots, \ell)$-patterns. Let $\bsft'=\bsft^{\ell^n}$. Then $\bsft'$ represents also the set of all $(\ell, \ell,\dots, \ell)$-patterns. The set of forbidden
edge patterns $p' \colon \{ \vec 0, e_i\}\to \bsft'$ determining $Y'$
is the $p'$ such that the following holds for $p'(\vec{0})$ and $p'(e_i)$ as $(\ell, \ell,\dots, \ell)$-patterns:
\begin{quote}
either $e_i\cdot p'(\vec 0)$ is incompatible with $p'(e_i)$, or
at least one of $p'(\vec 0)$, $p'(e_i)$ occurs in
the list $p_1,\dots,p_k$.
\end{quote}
For $x \in Y \subseteq \bsft^{\Z^n}$,
let $\varphi(x)\in {\bsft'}^{\Z^n}$ be defined so that for every $\vec{c}\in\Z^n$, $\varphi(x)(\vec{c})$ is the unique $(\ell, \ell, \dots, \ell)$-pattern such that $\varphi(x)(\vec{c})\cong x\!\res\! (\vec{c} +[0,\ell)^n)$.  This is easily an equivariant isomorphism.
\end{proof}

\section{The two tiles theorem in one dimension} \label{sec:onedimtile}

In this section we prove a two tiles theorem for dimension one as a warm-up for the higher dimensional cases.

For all $n, p, q\geq 1$ with $p, q\geq n$ we define the 2-tiles graph $\Gamma^{(1)}_{n,p,q}$. First consider the 1-dimensional grid graph of size $p+n$, which can be identified as the Cayley graph of $\Z$ restricted to $[0, p+n)$. Decompose the vertex set of the graph into three blocks: $[0,n)$, $[n,p)$, and $[p, p+n)$. Assign a block label $R^{(1)}_\times$ to the vertices of the blocks $[0,n)$ and $[p, p+n)$, and leave the vertices of the block $[n,p)$ unlabeled. (The notation for the label $R^{(1)}_\times$ is chosen to be consistent with the notation for higher dimensions in the next section.) We give two notations to the resulting tile graph $G^{(1),1}_{n,p,q}=T_1(n,p,q)$, the first to be consistent with our notation of higher dimensional cases, and the second for notational simplicity in the rest of this section. Similarly, we define the second tile graph $G^{(1),2}_{n, p, q}=T_2(n,p,q)$, which will be the Cayley graph of $\Z$ restricted to $[0, q+n)$, with block label $R^{(1)}_\times$ assigned to blocks $[0,n)$ and $[q,q+n)$.

\begin{figure}[h]
\centering
\subfloat
{
\begin{tikzpicture}[scale=0.5]

\pgfmathsetmacro{\n}{3}
\pgfmathsetmacro{\p}{7}
\pgfmathsetmacro{\q}{9}
\pgfmathsetmacro{\a}{-7}
\pgfmathsetmacro{\b}{5}
\pgfmathsetmacro{\d}{4.5}
\pgfmathsetmacro{\e}{5.5}

\pgfmathsetmacro{\x}{\n+\p+\n-1}
\pgfmathsetmacro{\y}{\n+\q+\n-1}
\pgfmathsetmacro{\xa}{\n+\p-2}
\pgfmathsetmacro{\ya}{\n+\q-2}
\pgfmathsetmacro{\nm}{\n-1}
\pgfmathsetmacro{\pm}{\p-\n-1}
\pgfmathsetmacro{\qm}{\q-\n-1}
\foreach \i in {0,...,\nm}
{ \draw (\a+\i,0) circle (0.15);
}
\foreach \i in {0,...,\pm}
{ \draw[fill=lightgray] (\a+\n+\i,0) circle (0.15);
}
\foreach \i in {0,...,\nm}
{ \draw  (\a+\p+\i,0) circle (0.15);
}
\foreach \i in {0,...,\xa}
{ \draw[->,>=stealth] (\a+\i+0.1,0) -- (\a+\i+0.9,0);
}

\foreach \i in {0,...,\nm}
{ \draw  (\b+\i,0) circle (0.15);
}
\foreach \i in {0,...,\qm}
{ \draw[fill=black] (\b+\n+\i,0) circle (0.15);
}
\foreach \i in {0,...,\nm}
{ \draw  (\b+\q+\i,0) circle (0.15);
}
\foreach \i in {0,...,\ya}
{ \draw[->,>=stealth] (\b+\i+0.1,0) -- (\b+\i+0.9,0);
}

\draw[decorate, decoration={brace, mirror, amplitude=5pt},yshift=10pt] (\a+\n-1,0) -- (\a,0)
node [black,midway,yshift=15pt] {$R^{(1)}_\times$};

\node at (\a+\d, -1.5) {$T_1(3,7,9)$};

\draw[decorate, decoration={brace, mirror, amplitude=5pt},yshift=10pt] (\a+\p+\n-1,0) -- (\a+\p,0)
node [black,midway,yshift=15pt] {$R^{(1)}_\times$};

\draw[decorate, decoration={brace, mirror, amplitude=5pt},yshift=10pt] (\b+\n-1,0) -- (\b,0)
node [black,midway,yshift=15pt] {$R^{(1)}_\times$};

\node at (\b+\e,-1.5) {$T_2(3,7,9)$};

\draw[decorate, decoration={brace, mirror, amplitude=5pt},yshift=10pt] (\b+\q+\n-1,0) -- (\b+\q,0)
node [black,midway,yshift=15pt] {$R^{(1)}_\times$};

\end{tikzpicture}
}
\vspace{5pt}

\subfloat
{
\begin{tikzpicture}[scale=0.5]

\pgfmathsetmacro{\n}{3}
\pgfmathsetmacro{\p}{7}
\pgfmathsetmacro{\q}{9}
\pgfmathsetmacro{\a}{-10}
\pgfmathsetmacro{\b}{5}

\pgfmathsetmacro{\c}{-0.5}
\pgfmathsetmacro{\d}{2}
\pgfmathsetmacro{\e}{\p-\n-1}
\pgfmathsetmacro{\em}{\e-1}
\pgfmathsetmacro{\h}{0.1}

\pgfmathsetmacro{\f}{\q-\n-1}
\pgfmathsetmacro{\fm}{\q-\n-2}

\pgfmathsetmacro{\x}{\n+\p+\n-1}
\pgfmathsetmacro{\y}{\n+\q+\n-1}
\pgfmathsetmacro{\nm}{\n-1}
\pgfmathsetmacro{\nmm}{\n-2}
\pgfmathsetmacro{\pm}{\p-1}
\pgfmathsetmacro{\qm}{\q-1}

\foreach \i in {0,...,\em}
{ \draw[->,>=stealth] (\a+\c+\i+0.1,\d) -- (\a+\c+\i+0.9,\d);
}

\foreach \i in {0,...,\nmm}
{ \draw[->,>=stealth] (\a+\i+0.9,0) -- (\a+\i+0.1,0);
}

\foreach \i in {0,...,\fm}
{ \draw[->,>=stealth] (\a-1.5+\i+0.1,-\d) --  (\a-1.5+\i+0.9,-\d);
}

\draw[->,>=stealth]  (\a,\h) -- (\a+\c,\d-\h);

\draw[->,>=stealth] (\a+\c+\e,\d) -- (\a+\nm,\h);

\draw[->,>=stealth] (\a,-\h) -- (\a-1.5,-\d+\h);

\draw[->,>=stealth]  (\a-1.5+\f,-\d) -- (\a+\nm,-\h);

\foreach \i in {0,...,\e}
{ \draw[fill=lightgray] (\a+\c+\i,\d) circle (0.15);
}

\foreach \i in {0,...,\nm}
{ \draw (\a+\i,0) circle (0.15);
}

\foreach \i in {0,...,\f}
{ \draw[fill=black] (\a-1.5+\i,-\d) circle (0.15);
}

\end{tikzpicture}
}

\caption{\label{fig:onedimgamma}The 2-tiles graphs $G^{(1)}_{3, 7, 9}$ and $\Gamma^{(1)}_{3,7,9}$.
}
\end{figure}

Note that each of the $R^{(1)}_\times$-labeled block is a 1-dimensional grid graph of size $n$. When taking quotient graphs the corresponding vertices in these $R^{(1)}_\times$-labeled blocks will be identified. The 2-tiles graph $G^{(1)}_{n, p, q}$ is defined as the disjoint union of $T_1(n, p, q)$ and $T_2(n, p, q)$. To obtain $\Gamma^{(1)}_{n,p,q}$, we take the quotient of $G^{(1)}_{n, p, q}$ by identifying the corresponding vertices in all $R^{(1)}_\times$-labeled blocks. This finishes the definition of the graph $\Gamma^{(1)}_{n,p,q}$. Note that $\Gamma^{(1)}_{n, p, q}$ is a $\Z$-graph. Figure~\ref{fig:onedimgamma} illustrates the 2-tiles graphs $G^{(1)}_{n,p,q}$ and $\Gamma^{(1)}_{n, p, q}$ for $n=3$, $p=7$, and $q=9$. In the figure the top graph shows $G^{(1)}_{3,7,9}$, which is a disjoint union of $T_1(3,7,9)$ and $T_2(3,7,9)$, and the bottom graph shows $\Gamma^{(1)}_{3,7,9}$, a graph with $13$ vertices.

\begin{defnn} Let $Y\subseteq \bsft^\Z$ be a subshift of finite type described by the sequence
$(\bsft;p_1,\dots,p_k)$ and let $g \colon \Gamma^{(1)}_{n,p,q}
\to \bsft$ be a map. We say that $g$  {\em respects} $Y$ if for all $1\leq i\leq k$ and for any $\Z$-homomorphism $\varphi: \dom(p_i)\to \Gamma^{(1)}_{n,p,q}$, $g\circ \varphi: \dom(p_i)\to\bsft$ is not equal to $p_i$.
\end{defnn}

\begin{thmn} [Two tiles theorem for $F(2^\Z)$] \label{thm:twotilesthm}
Let $Y\subseteq \bsft^\Z$ be a subshift of finite type. Then the following are equivalent.
\begin{enumerate}[label=\rm{(\arabic*)}, ref=\arabic*]
\item\label{oda}
There is a continuous, equivariant map $f \colon F(2^\Z) \to Y$.
\item\label{odb}
There are $n,p,q$ with $n<p,q$, $\gcd(p,q)=1$, and
$n$ greater than or equal to the width of $Y$, and there is $g \colon \Gamma^{(1)}_{n,p,q}\to \bsft$
which respects $Y$.
\item\label{odc}
For all $n$ greater than or equal to the width of $Y$ and all sufficiently large $p,q$,
 there is $g \colon  \Gamma^{(1)}_{n,p,q}\to \bsft$ which respects $Y$.
\end{enumerate}
\end{thmn}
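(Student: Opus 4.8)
The implications $(\ref{odc}) \Rightarrow (\ref{odb})$ is trivial, so the work is in the cycle $(\ref{odb}) \Rightarrow (\ref{oda}) \Rightarrow (\ref{odc})$; I will actually prove $(\ref{odb}) \Rightarrow (\ref{oda})$ and $(\ref{oda}) \Rightarrow (\ref{odc})$, perhaps with a direct $(\ref{odb}) \Rightarrow (\ref{odc})$ if it is cleaner. The geometric content is that $F(2^\Z)$ can be ``tiled'' continuously by intervals of two commensurable lengths. The plan is to first establish this tiling fact: using the basic marker lemma (Fact~\ref{fact:bf}) one obtains a clopen complete section $M\subseteq F(2^\Z)$ whose consecutive gaps all lie in $\{N, N+1\}$ for a prescribed large $N$; by choosing $N$ appropriately and then locally adjusting (merging/splitting adjacent intervals in a clopen, equivariant way) one can arrange that every gap has length either $p$ or $q$, for any fixed coprime $p, q$ that are large relative to $N$ and to each other in the sense that all sufficiently large integers are nonnegative-integer combinations of $p$ and $q$ (Chicken McNugget / Frobenius). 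This is the one-dimensional analogue of the tile decomposition; it is where the hypothesis $(p,q)=1$ is used.

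For $(\ref{odb}) \Rightarrow (\ref{oda})$: given $g\colon \Gamma^1_{n,p,q}\to b$ respecting $Y$, fix the clopen tiling of $F(2^\Z)$ by $p$- and $q$-blocks just described, but arrange additionally that the tiling is ``padded'' so that consecutive tiles overlap in a buffer of $n$ coordinates — concretely, reserve the first and last $n$ coordinates of each conceptual tile $T_1$ (length $p+n$) or $T_2$ (length $q+n$) as the shared $R^1_\times$ pieces, exactly mirroring the identifications defining $\Gamma^1_{n,p,q}$. Then define $f\colon F(2^\Z)\to b^\Z$ by reading off, for each $x$ and each coordinate $j$, which tile position $j\cdot x$ occupies and applying $g$ to the corresponding vertex of $\Gamma^1_{n,p,q}$. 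Because adjacent tiles agree on their length-$n$ overlap and $g$ is single-valued on the quotient, $f$ is well defined; it is continuous because the tiling is clopen and locally determined, and equivariant by construction. Finally $f(x)\in Y$: any forbidden window $p_i$ has length $a_i\le n+1$, so it fits inside a single tile (since the overlap buffers have width $n$, every window of length $\le n+1$ is contained in one copy of $T_1$ or $T_2$), and $g$ respects $Y$ there, so $p_i$ does not occur in $f(x)$.

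For $(\ref{oda}) \Rightarrow (\ref{odc})$: suppose $f\colon F(2^\Z)\to Y$ is continuous and equivariant, and fix $n\ge \max\{a_i\}-1$ and $p,q$ large and coprime. Take a hyper-aperiodic element $x\in F(2^\Z)$ (Lemma~\ref{hypzn}) built to contain, for this specific $n,p,q$, long blocks realizing the two ``tile words'' — i.e. arrange that for some intervals $I$, $x\res I$ is a concatenation of a fixed word $R_\times$ of length $n$, a word of length $p-n$, and $R_\times$ again (and similarly with $q$), and that the whole $\Z$-periodic-in-$p$ and $\Z$-periodic-in-$q$ patterns appear with the overlaps identified consistently, exactly as in the proof strategy of Theorem~\ref{thm:mdb}. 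On $K=\overline{[x]}$, continuity of $f$ plus compactness yields $m_0$ such that $f(y)(0)$ depends only on $y\res[-m_0,m_0]$; choosing the buffer $n > m_0$ forces $f$ to be constant on the $R_\times$-overlaps and hence to factor through the quotient: reading $f$ off the $T_1$- and $T_2$-blocks of $x$ defines a map $g\colon\Gamma^1_{n,p,q}\to b$, well defined precisely because the overlaps force agreement. Since $f(x)\in Y$ avoids every forbidden pattern and each such pattern has width $\le n$, hence lies within a single tile block, $g$ respects $Y$. The main obstacle is the tiling construction in the first paragraph — getting a \emph{clopen} (not merely Borel) decomposition into exactly two prescribed coprime block-lengths with the required $n$-overlap buffers — and the bookkeeping in $(\ref{oda})\Rightarrow(\ref{odc})$ to build a single hyper-aperiodic $x$ simultaneously exhibiting both tile types with consistent overlaps; both are one-dimensional shadows of the much harder constructions in \S\ref{sec:twodtiles}.
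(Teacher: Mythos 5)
Your proof follows essentially the same route as the paper: the nontrivial directions are $(\ref{odb})\Rightarrow(\ref{oda})$ via clopen markers plus a Frobenius-number argument to lay overlapping copies of the two tiles, and $(\ref{oda})\Rightarrow(\ref{odc})$ via a specialized hyper-aperiodic element plus compactness. The $(\ref{odb})\Rightarrow(\ref{oda})$ sketch is fine (your ``merge/split to exact $p$- or $q$-gaps'' is the same move as the paper's ``tile each marker gap with overlapping $T_1$'s and $T_2$'s''; the Frobenius bound and the observation that a width-$(n+1)$ window lies in a single tile are both there).

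The gap is in $(\ref{oda})\Rightarrow(\ref{odc})$: you write ``choosing the buffer $n>m_0$,'' but $n$ is the parameter of $\Gamma^1_{n,p,q}$ and is \emph{fixed by the universal quantifier in $(\ref{odc})$} — it can be as small as $\max\{a_i\}-1$, which will typically be much less than the compactness modulus $m_0$ (which depends on $f$, not on $n,p,q$). The width of $R^1_\times$ therefore cannot serve as the buffer. To read off a well-defined $g$ on the quotient you need the entire $m_0$-window around \emph{every} vertex of $R^1_\times$ (including the edge vertices, whose windows stick out into $R^1_a$, $R^1_b$, and the surrounding background) to look identical across all four copies of $R^1_\times$ in $T_1$ and $T_2$. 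This forces a \emph{separate} padding parameter $w$ in the construction of the hyper-aperiodic element: one places a copy of $T_1(n,p,q)$ (resp.\ $T_2$) inside an interval of length $p+n+2w$ (resp.\ $q+n+2w$), arranges that the $w$-pads flanking each $R^1_\times$ carry the same fixed word, builds $x$ to be locally $p$-periodic (resp.\ $q$-periodic) on those intervals, and only \emph{then} takes $w\geq m_0$. Your citation of the $T_{w,p}$ tiles from Theorem~\ref{thm:mdb} shows you are aware that a padding parameter is what's needed, but the sketch as written uses the letter $n$ for it, which would break the argument if followed literally. Introducing the padding $w$ explicitly (and noting at the end that the compactness step fixes $m_0$ first and one then selects $w\geq m_0$, with $p,q$ taken large enough that $2w<\min\{p,q\}$) closes the gap and matches what the paper actually does.
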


\begin{proof} The implication (\ref{odc})$\Rightarrow$(\ref{odb}) is obvious. We show (\ref{oda})$\Rightarrow$(\ref{odc}) and (\ref{odb})$\Rightarrow$(\ref{oda}).

Suppose first that there is a continuous, equivariant map $f \colon
F(2^\Z) \to Y$. We construct a hyper-aperiodic element $x \in 2^\Z$ as follows. Let $y\in 2^\Z$ be any hyper-aperiodic element. Enumerate without repetition
all tuples $(n_i, p_i, q_i, w_i)$ of positive integers where $p_i,q_i>n_i+2w_i$.
On $\Z$ mark off a collection of disjoint intervals
$$\mathcal{I}=\{ I^{n_i,p_i,q_i,w_i}_1, I^{n_i,p_i,q_i,w_i}_2\,:\, i\geq 1\}$$
such that for each $i$, $I^{n_i,p_i,q_i,w_i}_1, I^{n_i,p_i,q_i,w_i}_2$ are of sizes $p_i+n_i+2w_i$, $q_i+n_i+2w_i$ respectively, and the distance between successive intervals in $\mathcal{I}$ goes to infinity. We
identify each $I^{n_i,p_i,q_i,w_i}_1$ with an interval of size $w_i$
followed by a copy of $T_1(n_i,p_i,q_i)$ and then followed by another interval of size $w_i$. That is, it is a copy
of $T_1(n_i,p_i,q_i)$ surrounded by two intervals of size $w_i$.
We likewise identify each $I^{n_i,p_i,q_i,w_i}_2$ with  a copy
of $T_2(n_i,p_i,q_i)$ surrounded by two intervals of size $w_i$.

For $m \notin \bigcup_i (I^{n_i,p_i,q_i,w_i}_1 \cup I^{n_i,p_i,q_i,w_i}_2)$, we set
$x(m)=y(m)$. Next, assuming $I^{n_i,p_i,q_i,w_i}_1=[a_i, a_i+p_i+n_i+2w_i)$, we set $x(k_i + m) = y(m)$ for each $m\in [0,p_i)$
and $x(k_i+p_i+m) = y(m)$ for each $m\in [0, n_i + 2 w_i)$.
We define $x$ on the intervals
$I^{n_i,p_i,q_i,w_i}_2$ similarly, using $q_i$ instead of $p_i$. The definition of $x$ on these intervals is shown
in Figure~\ref{fig:onedimhyp}.

\begin{figure}[htp]
\centering
\begin{tikzpicture}[scale=0.2]
\usetikzlibrary{math}

\pgfmathsetmacro{\n}{3}
\pgfmathsetmacro{\p}{17}
\pgfmathsetmacro{\q}{21}
\pgfmathsetmacro{\w}{3}
\pgfmathsetmacro{\a}{-70}
\pgfmathsetmacro{\b}{-40}
\pgfmathsetmacro{\h}{1.5}

\pgfmathsetmacro{\x}{\n+\p+\n-1}
\pgfmathsetmacro{\y}{\n+\q+\n-1}
\pgfmathsetmacro{\nm}{\n-1}
\pgfmathsetmacro{\pm}{\p-1}
\pgfmathsetmacro{\qm}{\q-1}


\draw[fill=yellow] (\a-\w,-0.2) rectangle (\a,0.2);
\draw[fill=red!50] (\a,-0.2) rectangle (\a+\n,0.2);
\draw[fill=green]  (\a+\n,-0.2) rectangle (\a+\n+\w,0.2);
\draw[fill=blue!50] (\a+\n+\w,-0.2) rectangle (\a+\p,0.2);
\draw[fill=yellow] (\a+\p-\w,-0.2) rectangle (\a+\p,0.2);
\draw[fill=red!50] (\a+\p,-0.2) rectangle (\a+\p+\n,0.2);
\draw[fill=green] (\a+\p+\n,-0.2) rectangle (\a+\p+\n+\w,0.2);

\draw[fill=yellow] (\b-\w,-0.2) rectangle (\b,0.2);
\draw[fill=red!50] (\b,-0.2) rectangle (\b+\n,0.2);
\draw[fill=green]  (\b+\n,-0.2) rectangle (\b+\n+\w,0.2);
\draw[fill=blue!80] (\b+\n+\w,-0.2) rectangle (\b+\q,0.2);
\draw[fill=yellow] (\b+\q-\w,-0.2) rectangle (\b+\q,0.2);
\draw[fill=red!50] (\b+\q,-0.2) rectangle (\b+\q+\n,0.2);
\draw[fill=green] (\b+\q+\n,-0.2) rectangle (\b+\q+\n+\w,0.2);

\draw[decorate, decoration={brace,amplitude=5pt},yshift=-90pt] (\a+\p+\n,-1) -- (\a,-1)
node [black,midway,yshift=-15pt] {$T_1(n_i,p_i,q_i)$};

\draw[decorate, decoration={brace,amplitude=5pt},yshift=-90pt] (\b+\q+\n,-1) -- (\b,-1)
node [black,midway,yshift=-15pt] {$T_2(n_i,p_i,q_i)$};



\node at (\a-\h,1) {$w_i$};
\node at (\a+\h,1) {$n_i$};
\node at (\a+\n+\h,1) {$w_i$};
\node at (\a+\p-\h,1){$w_i$};
\node at (\a+\p+\h, 1){$n_i$};
\node at (\a+\p+\n+\h, 1){$w_i$};

\draw (\a-\w,1) -- (\a-\w,3);
\draw (\a-\w+\p, 1) -- (\a-\w+\p, 3);
\node at (\a+8, 2) {$p_i$};
\draw[->] (\a+7,2) -- (\a-\w,2);
\draw[->] (\a+9,2) -- (\a-\w+\p,2);

\node at (\b-\h,1) {$w_i$};
\node at (\b+\h,1) {$n_i$};
\node at (\b+\n+\h,1) {$w_i$};
\node at (\b+\q-\h,1){$w_i$};
\node at (\b+\q+\h, 1){$n_i$};
\node at (\b+\q+\n+\h, 1){$w_i$};

\draw (\b-\w,1) -- (\b-\w,3);
\draw (\b-\w+\q, 1) -- (\b-\w+\q, 3);
\node at (\b+9, 2) {$q_i$};
\draw[->] (\b+8,2) -- (\b-\w,2);
\draw[->] (\b+10,2) -- (\b-\w+\q,2);

\draw[decorate, decoration={brace, mirror, amplitude=5pt},yshift=-12pt] (\a,0) -- (\a+\n,0)
node [black,midway,yshift=-14pt] {$R^{(1)}_\times$};

\draw[decorate, decoration={brace,mirror, amplitude=5pt},yshift=-12pt] (\a+\p,0) -- (\a+\p+\n,0)
node [black,midway,yshift=-14pt] {$R^{(1)}_\times$};






\draw[decorate, decoration={brace,mirror, amplitude=5pt},yshift=-12pt] (\b,0) -- (\b+\w,0)
node [black,midway,yshift=-14pt] {$R^{(1)}_\times$};

\draw[decorate, decoration={brace,mirror, amplitude=5pt},yshift=-12pt] (\b+\q,0) -- (\b+\q+\n,0)
node [black,midway,yshift=-14pt] {$R^{(1)}_\times$};

\end{tikzpicture}
\caption{The definition of $x$ on intervals $I^{n_i,p_i,q_i,w_i}_1$ and $I^{n_i,p_i,q_i,w_i}_2$.\label{fig:onedimhyp} }
\end{figure}

For each $i$ there are a total of four subintervals of $I^{n_i,p_i,q_i,w_i}_1$ and $I^{n_i,p_i,q_i,w_i}_2$ that are corresponding to the $R^{(1)}_\times$-labeled blocks of $T_1(n_i,p_i,q_i)$ and $T_2(n_i,p_i,q_i)$. Extending each of these intervals by an interval of size $w_i$ to the left and another interval of size $w_i$ to the right, we obtain four intervals of size $n_i+2w_i$ within $I^{n_i,p_i,q_i,w_i}_1$ and $I^{n_i,p_i,q_i,w_i}_2$, which happen to be also the initial segment and the final segment of each of $I^{n_i,p_i,q_i,w_i}_1$ and $I^{n_i,p_i,q_i,w_i}_2$ of size $n_i+2w_i$. The key observation is that $x$ looks the same on all these four intervals.

Since $y$ is hyper-aperiodic, the enumeration $(n_i, p_i, q_i, w_i)$ is non-repetitive, and the distance between successive intervals in the collection $\mathcal{I}$ tends to infinity, it is
straightforward to check that $x$ is also hyper-aperiodic. Let $K=\ocl{[x]}$,
so $K \subseteq F(2^\Z)$ is compact (and invariant), and define $f_0 : K \rightarrow \bsft$
by $f_0(z) = f(z)(0)$. Since $K$ is compact and $f_0$ is
continuous on $K$, there is an integer $w>0$ such that for all
$z \in K$, $f_0(z)$ is determined by $z \res [-w,w]$.

Now let $n$ be greater than or equal to the width of $Y$, and $p, q>n+2w$. Let $i$ be such that $n_i=n$, $p_i=p$, $q_i=q$, and $w_i=w$.
Consider the copy of $T_1(n_i,p_i,q_i)$ as a subinterval of $I^{n_i,p_i,q_i,w_i}_1$ and the copy of $T_2(n_i,p_i,q_i)$ as a subinterval of $I^{n_i,p_i,q_i,w_i}_2$. For any $a\in T_1(n_i,p_i,q_i)\cup T_2(n_i,p_i,q_i)$, $f_0(a\cdot x)$ is determined by $x\res [a-w, a+w]$. By the key observation, $f_0(a\cdot x)$, as a function of $a$, looks the same on all four $R^{(1)}_\times$-labeled blocks within $T_1(n_i,p_i,q_i)$ and $T_2(n_i,p_i,q_i)$. Thus, if we define
$g': G^{(1)}_{n_i,p_i,q_i}\to \bsft$ by $g'(a)=f_0(a\cdot x)$, then $g'$ gives rise to a well-defined map $g: \Gamma^{(1)}_{n_i,p_i,q_i}\to \bsft$. Since $f$ is an equivariant map into the subshift $Y$, it follows
that $g$ respects $Y$. This proves (\ref{oda})$\Rightarrow$(\ref{odc}).

Finally, we show that (\ref{odb}) implies (\ref{oda}). Fix $n,p,q$
with $n< p,q$, $\gcd(p,q)=1$, and $n$ greater than or equal to the width of $Y$,
and assume there is $g \colon \Gamma^{(1)}_{n,p,q}\to \bsft$ which respects $Y$.
Let $N$ be a positive integer large enough so that every $k \geq N$ is a
non-negative integral linear combination of $p$ and $q$. Arbitrarily fix a point $c$ in the
$R^{(1)}_\times$-labeled block in $\Gamma^{(1)}_{n,p,q}$. Then for every $k \geq N$ we can fix a positively-oriented
path $(v_0^k, v_1^k, \ldots, v_k^k)$ in $\Gamma^{(1)}_{n,p,q}$ (positively-oriented meaning that the directed path consists of only edges with $\Z$-label $e_1=+1$) that begins and ends at $v_0^k = v_k^k = c$.
From Lemma~\ref{bml}, fix a clopen set $M_N\subseteq F(2^\Z)$ which is an $N$-marker set
for $F(2^\Z)$ (i.e., the conclusions of Lemma~\ref{bml} hold).
We define $f_0 \colon F(2^\Z)\to \bsft$ as follows. For each pair $x, y \in M_N$ of consecutive points of $M_N$ with $x$
preceding $y$ (meaning there is $k>0$ with $k\cdot x=y$ but no $\ell\in (0,k)$ with $\ell\cdot x\in M_N$), let $k > 0$ satisfy $k \cdot x = y$ and
define $f_0(t \cdot x) = g(v_t^k)$ for each $0 \leq t \leq k$. The map $f_0 \colon F(2^\Z)\to \bsft$
we have just defined is easily seen to be continuous since the set $M_N$ is clopen
in $F(2^\Z)$. Now define $f \colon F(2^\Z) \to \bsft^{\Z}$ by extending $f_0$ equivariantly:
$f(x)(a) = f_0(a \cdot x)$.
The map $f$ takes values in $Y$ since $g$ respects $Y$
and since we have the following simple fact: if $T_a, T_b \in \{ T_1(n,p,q),T_2(n,p,q)\}$
and $T_a$, $T_b$ are laid down so that the final $R^{(1)}_\times$-labeled block in $T_a$
coincides with the initial $R^{(1)}_\times$-labeled block in $T_b$ to form a single interval
$I$, then any subinterval $J$ of $I$ of width $n$ lies entirely in the copy
of $T_a$ or in the copy of $T_b$.
\end{proof}

\section{Statement of the twelve tiles theorem}  \label{subsec:tilethm}
In this section we define the 12-tiles graphs $\gnpq$, and state the twelve tiles theorem,
Theorem~\ref{thm:tilethm}. This theorem is stated in terms of continuous, equivariant maps into
subshifts of finite type, but in \S\ref{sec:continuous_structuring} we give the alternate
formulation in terms of continuous structurings. The proof of Theorem~\ref{thm:tilethm} will be given in the next two sections.

We define the 12-tiles graph $\gnpq$ for any fixed $n,p,q$ with $p,q >n$ by following the outline given in the preamble of this chapter.
We begin with defining $12$ ($2$-dimensional) rectangular grid graphs with blocks. They will be denoted as $G^i_{n,p,q}$ for $1\leq i\leq 12$.
Since a grid graph is isomorphic to the Cayley graph of $\Z^2$ restricted to a rectangle $[0,k)\times [0,h)$ for some $k,h>0$, we only need to specify the ``dimensions" (in this case $k\times h$) of the rectangle in order to describe a grid graph.
In Table~\ref{table:12tiles} we give a summary of the dimensions of the 12 grid graphs $G^i_{n,p,q}$ for $1\leq i\leq 12$.

Each of the grid graphs $G^i_{n,p,q}$ will be decomposed into blocks of vertices, and some blocks (in fact all but one for each grid graph) will be assigned block labels. Each block, being a grid graph itself, also has dimensions. We will be working with five block labels $R_\times, R_a, R_b, R_c, R_d$, and their dimensions are as follows:
\begin{alignat*}{8}
R_\times: &\quad\quad &  n   &  &\;\times\;&&      &n  \\
R_a: &&             n& &\;\times\;&&      (p-&n)  \\
R_b: &&             n& &\;\times\;&&      (q-&n)  \\
R_c: &&               (p-n&)  &\;\times\;&& &n \\
R_d: &&              (q- n &)  &\;\times\;&& &n
\end{alignat*}
The specifics of the construction of $G^i_{n, p, q}$ will be given in the next few pages and mostly done by illustrations rather than formal definitions. Here we recall that the purpose of assigning the block labels is so that we can form quotient graphs by identifying all vertices that are in the same relative positions within similarly labeled blocks. We refer to these quotient graphs as obtained {\em modulo labeled blocks}.

To finish the construction of $\gnpq$, we take
$$ G_{n,p,q}=\bigcup_{i=1}^{12} G^i_{n,p,q}$$
as a disjoint union of the 12 grid graphs with blocks, and obtain $\gnpq$ as the quotient graph modulo labeled blocks.

As outlined before, an alternative construction of $\gnpq$ is as follows.  From each of $G^i_{n,p,q}$,
we obtain a tile graph $T^i_{n,p,q}$ as a quotient graph of $G^i_{n,p,q}$ modulo labeled blocks.
Then $\gnpq$ can be obtained by taking again the disjoint union of the tiles $T^i_{n,p,q}$ for $1\leq i\leq 12$,
and forming the quotient graph of the union again modulo labeled blocks.

The individual tiles $T^i_{n,p,q}$ will be convenient to use in some of the later proofs. An additional forewarning is that the grid graphs $G^i_{n,p,q}$ and the tile graphs $T^i_{n,p,q}$ will also be named in view of the homotopy they realize. These alternative names will be more convenient to use in later proofs. We summarize all these graphs and their names in Table~\ref{table:12tiles}.

We now turn to the specifics of the definition of $G^i_{n,p,q}$ for $1\leq i\leq 12$.

The construction of the first four of the 12 tile graphs are illustrated in Figure~\ref{fig:Gamma-npq-torus}. We use one example to elaborate the construction, the others being similar.
We first consider the grid graph $G^1_{n,p,q}$ of dimensions $(p+n)\times (p+n)$. Assume the vertex set of $G^1_{n,p, q}$ to be $[0,p+n)\times [0, q+n)$. We then decompose the vertex set into nine blocks by dividing the horizontal interval $[0,p+n)$ into three intervals $[0,n)$, $[n,p)$, $[p,p+n)$ and similarly dividing the vertical interval. Then we assign the block label $R_\times$ to the four ``corner" blocks $I\times J$ where $I, J\in \{[0,n), [p,p+n)\}$, assign the block label $R_a$ to the two blocks $I\times [n,p)$ where $I\in \{[0,n), [p, p+n)\}$, assign the block label $R_c$ to the two blocks $[n,p)\times J$ where $J\in \{[0,n), [p,p+n)\}$, and leave the remaining ``interior" block $[n,p)\times [n,p)$ unlabeled. This completes the definition of $G^1_{n,p,q}$. $G^1_{n,p, q}$ is also named $\Gcaca$ to signify the statement that the two (undirected) paths along the boundaries of the grid graph from the upper-left corner to the lower-right corner are homotopic to each other (in a sense that will be made precise later in the next chapter). Furthermore, the tile graph $T^1_{n,p,q}$ is obtained as a quotient graph of $G^1_{n,p,q}$ modulo labeled blocks. To be precise, for any $i, j\in [0,n)$, the four vertices $(i,j)$, $(p+i,j)$, $(i,p+j)$ and $(p+i,p+j)$ in $G^1_{n,p,q}$ are identified as one vertex in $T^1_{n,p,q}$. Similarly, for $i\in [0,n)$ and $k\in [n,p)$, the two vertices $(i,k)$ and $(p+i,k)$ are identified, and the two vertices $(k,i)$ and $(k,p+i)$ are identified. $T^1_{n,p,q}$ can be viewed as a grid graph on a 2-torus, and for this reason we refer to it as a {\em torus tile}. The other three grid graphs and torus tiles are constructed in a similar manner, the only difference being their dimensions.

\begin{figure}[htpb] 
\begin{tikzpicture}
\begin{scope}[scale=1,yscale=-1]
\begin{scope}[shift={(-5,2)}]  
  \foreach \loc in {(0,0),(0,1.8),(1.8,0),(1.8,1.8)} {
    \node[XBoxStyle] at \loc {$R_\times$};
  }
  \foreach \loc in {(0,1),(1.8,1)} {
    \node[ABoxStyle] at \loc {$R_a$};
  }
  \foreach \loc in {(1,0),(1,1.8)} {
    \node[CBoxStyle] at \loc {$R_c$};
  }

  \node[BoxLabelStyle] at (1.4,3.4) {\small $G^1_{n,p,q}$ or $\Gcaca$};
\end{scope}

\begin{scope}[shift={(-5,7)}]  
  \foreach \loc in {(0,0),(0,1.8),(2.2,0),(2.2,1.8)} {
    \node[XBoxStyle] at \loc {$R_\times$};
  }
  \foreach \loc in {(0,1),(2.2,1)} {
    \node[ABoxStyle] at \loc {$R_a$};
  }
  \foreach \loc in {(1,0),(1,1.8)} {
    \node[DBoxStyle] at \loc {$R_d$};
  }

  \node[BoxLabelStyle] at (1.6,3.4) {\small $G^3_{n,p,q}$ or $\Gdada$};
\end{scope}

\begin{scope}[shift={(0,2)}]  
  \foreach \loc in {(0,0),(0,2.2),(1.8,0),(1.8,2.2)} {
    \node[XBoxStyle] at \loc {$R_\times$};
  }
  \foreach \loc in {(0,1),(1.8,1)} {
    \node[BBoxStyle] at \loc {$R_b$};
  }
  \foreach \loc in {(1,0),(1,2.2)} {
    \node[CBoxStyle] at \loc {$R_c$};
  }

  \node[BoxLabelStyle] at (1.4,3.8) {\small $G^2_{n,p,q}$ or $\Gcbcb$};
\end{scope}

\begin{scope}[shift={(0,7)}]  
  \foreach \loc in {(0,0),(0,2.2),(2.2,0),(2.2,2.2)} {
    \node[XBoxStyle] at \loc {$R_\times$};
  }
  \foreach \loc in {(0,1),(2.2,1)} {
    \node[BBoxStyle] at \loc {$R_b$};
  }
  \foreach \loc in {(1,0),(1,2.2)} {
    \node[DBoxStyle] at \loc {$R_d$};
  }

  \node[BoxLabelStyle] at (1.6,3.8) {\small $G^4_{n,p,q}$ or $\Gdbdb$};
\end{scope}

\end{scope}
\end{tikzpicture}
\caption{\label{fig:Gamma-npq-torus}The torus tiles in $\Gamma_{n,p,q}$. }
\end{figure} 

The definition of the remaining $8$ grid graphs are illustrated in Figures~\ref{fig:Gamma-npq-commute},
\ref{fig:Gamma-npq-horiz}, and \ref{fig:Gamma-npq-vert}.
Again, tile graphs are obtained as quotient graphs modulo labeled blocks. Each of the 12 tiles contains a distinct interior block which is unlabeled.
This completes the definition of $G^i_{n,p,q}$ and $T^i_{n,p,q}$ for $1\leq i\leq 12$. The grid graphs are also named according to the homotopy between the two paths along their boundaries from the upper-left corner to the lower-right corner. The tile graphs are correspondingly named, and are classified as ``commutativity tiles", ``long horizontal tiles", and ``long vertical tiles" according to their unique features.

\begin{figure}[htpb] 
\begin{tikzpicture}
\begin{scope}[scale=1,yscale=-1]
\begin{scope}[shift={(5,2)}]  
  \foreach \loc in {(0,0),(2.2,0),(4,0),(0,1.8),(1.8,1.8),(4,1.8)} {
    \node[XBoxStyle] at \loc {$R_\times$};
  }
  \foreach \loc in {(0,1),(4,1)} {
    \node[ABoxStyle] at \loc {$R_a$};
  }
  \foreach \loc in {(1,0),(2.8,1.8)} {
    \node[DBoxStyle] at \loc {$R_d$};
  }
  \foreach \loc in {(1,1.8),(3.2,0)} {
    \node[CBoxStyle] at \loc {$R_c$};
  }

  \node[BoxLabelStyle] at (2.5,3.4) {\small $G^5_{n,p,q}$ or $\Gdcadca$};
\end{scope}

\begin{scope}[shift={(5,10)}]  
  \foreach \loc in {(0,0),(1.8,0),(4,0),(0,1.8),(2.2,1.8),(4,1.8)} {
    \node[XBoxStyle] at \loc {$R_\times$};
  }
  \foreach \loc in {(0,1),(4,1)} {
    \node[ABoxStyle] at \loc {$R_a$};
  }
  \foreach \loc in {(1,0),(3.2,1.8)} {
    \node[CBoxStyle] at \loc {$R_c$};
  }
  \foreach \loc in {(1,1.8),(2.8,0)} {
    \node[DBoxStyle] at \loc {$R_d$};
  }

  \node[BoxLabelStyle] at (2.5,3.4) {\small $G^7_{n,p,q}$ or $\Gcdacda$};
\end{scope}

\begin{scope}[shift={(12,1)}]  
  \foreach \loc in {(0,0),(0,1.8),(0,4),(1.8,0),(1.8,2.2),(1.8,4)} {
    \node[XBoxStyle] at \loc {$R_\times$};
  }
  \foreach \loc in {(1,0),(1,4)} {
    \node[CBoxStyle] at \loc {$R_c$};
  }
  \foreach \loc in {(0,1),(1.8,3.2)} {
    \node[ABoxStyle] at \loc {$R_a$};
  }
  \foreach \loc in {(1.8,1),(0,2.8)} {
    \node[BBoxStyle] at \loc {$R_b$};
  }

  \node[BoxLabelStyle] at (1.4,5.6) {\small $G^6_{n,p,q}$ or $\Gcbacba$};
\end{scope}

\begin{scope}[shift={(12,9)}]  
  \foreach \loc in {(0,0),(0,2.2),(0,4),(1.8,0),(1.8,1.8),(1.8,4)} {
    \node[XBoxStyle] at \loc {$R_\times$};
  }
  \foreach \loc in {(1,0),(1,4)} {
    \node[CBoxStyle] at \loc {$R_c$};
  }
  \foreach \loc in {(0,1),(1.8,2.8)} {
    \node[BBoxStyle] at \loc {$R_b$};
  }
  \foreach \loc in {(1.8,1),(0,3.2)} {
    \node[ABoxStyle] at \loc {$R_a$};
  }

  \node[BoxLabelStyle] at (1.4,5.6) {\small $G^8_{n,p,q}$ or $\Gcabcab$};
\end{scope}
\end{scope}
\end{tikzpicture}
\caption{\label{fig:Gamma-npq-commute}The commutativity tiles in $\Gamma_{n,p,q}$.
Tiles $\Gcabcab$ and $\Gcbacba$ commute $R_a$ with $R_b$ and
tiles $\Gcdacda$ and $\Gdcadca$ commute $R_c$ with $R_d$.}
\end{figure} 

\begin{figure}[htpb] 
\begin{tikzpicture}
\begin{scope}[scale=1,yscale=-1]
\begin{scope}[shift={(0,0)}]  
  \foreach \loc in {(0,0.0),(1.8,0.0),(3.6,0.0),(5.4,0.0),(7.2,0.0),(10,0.0),
                    (0,1.8),(2.2,1.8),(4.4,1.8),(6.6,1.8),(10,1.8)} {
    \node[XBoxStyle] at \loc {$R_\times$};
  }
  \foreach \loc in {(0,1),(10,1)} {
    \node[ABoxStyle] at \loc {$R_a$};
  }
  \foreach \loc in {(1,0),(2.8,0),(4.6,0),(6.4,0),(9.2,0)} {
    \node[CBoxStyle] at \loc {$R_c$};
  }
  \foreach \loc in {(1,1.8),(3.2,1.8),(5.4,1.8),(7.6,1.8),(8.8,1.8)} {
    \node[DBoxStyle] at \loc {$R_d$};
  }

  \fill[fill=white] (8,-.1) rectangle (9.6, 2.9);

  \node at (8.8, 1.4) {\Large$\cdots$};

  \draw [decorate,decoration={brace,amplitude=10pt}]
  (0,-0.5) -- (11,-0.5) node [scale=1, black,midway, above, yshift=20pt]
        {$q$ copies of $R_c$, $q+1$ copies of $R_\times$};

  \draw [decorate,decoration={brace,amplitude=10pt}]
  (11,3.3) -- (0,3.3) node [scale=1, black,midway, below, yshift=-20pt]
        {$p$ copies of $R_d$, $p+1$ copies of $R_\times$};

  \node[BoxLabelStyle] at (5.5,5.5) {\small $G^9_{n,p,q}$ or $\Gcqadpa$};
\end{scope}

\begin{scope}[shift={(0,10)}]  
  \foreach \loc in {(0,1.8),(1.8,1.8),(3.6,1.8),(5.4,1.8),(7.2,1.8),(10,1.8),
                    (0,0),(2.2,0),(4.4,0),(6.6,0),(10,0)} {
    \node[XBoxStyle] at \loc {$R_\times$};
  }
  \foreach \loc in {(0,1),(10,1)} {
    \node[ABoxStyle] at \loc {$R_a$};
  }
  \foreach \loc in {(1,1.8),(2.8,1.8),(4.6,1.8),(6.4,1.8),(9.2,1.8)} {
    \node[CBoxStyle] at \loc {$R_c$};
  }
  \foreach \loc in {(1,0),(3.2,0),(5.4,0),(7.6,0),(8.8,0)} {
    \node[DBoxStyle] at \loc {$R_d$};
  }

  \fill[fill=white] (8,-.1) rectangle (9.6, 2.9);

  \node at (8.8, 1.4) {\Large$\cdots$};

  \draw [decorate,decoration={brace,amplitude=10pt}]
  (0,-0.5) -- (11,-0.5) node [scale=1, black,midway, above, yshift=20pt]
        {$p$ copies of $R_d$, $p+1$ copies of $R_\times$};

  \draw [decorate,decoration={brace,amplitude=10pt}]
  (11,3.3) -- (0,3.3) node [scale=1, black,midway, below, yshift=-20pt]
        {$q$ copies of $R_c$, $q+1$ copies of $R_\times$};

  \node[BoxLabelStyle] at (5.5,5.5) {\small $G^{10}_{n,p,q}$ or $\Gdpacqa$};
\end{scope}

\end{scope}
\end{tikzpicture}
\caption{\label{fig:Gamma-npq-horiz} The long horizontal tiles in $\Gamma_{n,p,q}$.}
\end{figure} 

\begin{figure}[htpb] 
\begin{tikzpicture}
\begin{scope}[scale=1,yscale=-1]
\begin{scope}[shift={(0,0)}]  
  \foreach \loc in {(0.0,0),(0.0,1.8),(0.0,3.6),(0.0,5.4),(0.0,7.2),(0.0,10),
                    (1.8,0),(1.8,2.2),(1.8,4.4),(1.8,6.6),(1.8,10)} {
    \node[XBoxStyle] at \loc {$R_\times$};
  }
  \foreach \loc in {(1,0),(1,10)} {
    \node[CBoxStyle] at \loc {$R_c$};
  }
  \foreach \loc in {(0,1),(0,2.8),(0,4.6),(0,6.4),(0,9.2)} {
    \node[ABoxStyle] at \loc {$R_a$};
  }
  \foreach \loc in {(1.8,1),(1.8,3.2),(1.8,5.4),(1.8,7.6),(1.8,8.8)} {
    \node[BBoxStyle] at \loc {$R_b$};
  }

  \fill[fill=white] (-.1,8) rectangle (2.9,9.6);

  \node at (1.4,8.8) {\Large$\vdots$};

  \draw [decorate,decoration={brace,amplitude=10pt}]
  (-0.5,11) -- (-0.5,0) node [scale=1, black,midway, left, xshift=-30pt, rotate=-90, anchor=center]
        {\begin{tabular}{c}$q$ copies of $R_a$, \\ $q+1$ copies of $R_\times$\end{tabular}};

  \draw [decorate,decoration={brace,amplitude=10pt}]
  (3.3,0) -- (3.3,11) node [scale=1, black,midway, right, xshift=30pt, rotate=-90, anchor=center]
        {\begin{tabular}{c}$p$ copies of $R_b$, \\ $p+1$ copies of $R_\times$\end{tabular}};

  \node[BoxLabelStyle] at (1.4,12) {\small $G^{11}_{n,p,q}$ or $\Gcbpcaq$};
\end{scope}

\begin{scope}[shift={(6,0)}]  
  \foreach \loc in {(1.8,0),(1.8,1.8),(1.8,3.6),(1.8,5.4),(1.8,7.2),(1.8,10),
                    (0,0),(0,2.2),(0,4.4),(0,6.6),(0,10)} {
    \node[XBoxStyle] at \loc {$R_\times$};
  }
  \foreach \loc in {(1,0),(1,10)} {
    \node[CBoxStyle] at \loc {$R_c$};
  }
  \foreach \loc in {(1.8,1),(1.8,2.8),(1.8,4.6),(1.8,6.4),(1.8,9.2)} {
    \node[ABoxStyle] at \loc {$R_a$};
  }
  \foreach \loc in {(0,1),(0,3.2),(0,5.4),(0,7.6),(0,8.8)} {
    \node[BBoxStyle] at \loc {$R_b$};
  }

  \fill[fill=white] (-.1,8) rectangle (2.9,9.6);

  \node at (1.4,8.8) {\Large$\vdots$};

  \draw [decorate,decoration={brace,amplitude=10pt}]
  (-0.5,11) -- (-0.5,0) node [scale=1, black,midway, left, xshift=-30pt, rotate=-90, anchor=center]
        {};

  \draw [decorate,decoration={brace,amplitude=10pt}]
  (3.3,0) -- (3.3,11) node [scale=1, black,midway, right, xshift=30pt, rotate=-90, anchor=center]
        {\begin{tabular}{c}$q$ copies of $R_a$, \\ $q+1$ copies of $R_\times$\end{tabular}};

  \node[BoxLabelStyle] at (1.4,12) {\small $G^{12}_{n,p,q}$ or $\Gcaqcbp$};
\end{scope}

\end{scope}
\end{tikzpicture}

\caption{The long vertical tiles in $\Gamma_{n,p,q}$.}
\label{fig:Gamma-npq-vert}
\end{figure} 

We summarize the notation for and the basic features of the 12 tiles and their corresponding grid graphs in Table~\ref{table:12tiles}.

\begin{table}[htpb]
\centering
{\renewcommand{\arraystretch}{1.5}
\begin{tabular}{|c|c|c|c|c|l|}  \hline
\multirow{2}{*}{Category} & \multirow{2}{*}{Notation} & \multirow{2}{*}{Notation} & \multicolumn{3}{c|}{Corresponding grid graphs} \\ \cline{4-6}
 & &  & Notation & Notation & Dimensions  \\ \hline
& $\Tcaca$ & $T^1_{n,p,q}$ & $\Gcaca$ & $G^1_{n,p,q}$ & $(p+n)\times (p+n)$ \\ \cline{2-6}
Torus tiles &  $\Tcbcb$ & $T^2_{n,p,q}$ &  $\Gcbcb$ & $G^2_{n,p,q}$ & $(p+n)\times (q+n)$ \\ \cline{2-6}
 & $\Tdada$ & $T^3_{n,p,q}$ & $\Gdada$ & $G^3_{n,p,q}$ &$(q+n)\times (p+n)$ \\ \cline{2-6}
 & $\Tdbdb$ & $T^4_{n,p,q}$ & $\Gdbdb$ & $G^4_{n,p,q}$ &$(q+n)\times (q+n)$ \\ \hline
 & $\Tdcadca$ & $T^5_{n,p,q}$ & $\Gdcadca$ & $G^5_{n,p,q}$ &$(p+q+n)\times (p+n)$ \\ \cline{2-6}
Commutativity & $\Tcbacba$ & $T^6_{n,p,q}$ & $\Gcbacba$ & $G^6_{n,p,q}$ &$(p+n)\times (p+q+n)$ \\ \cline{2-6}
tiles  & $\Tcdacda$ & $T^7_{n,p,q}$ & $\Gcdacda$ & $G^7_{n,p,q}$ &$(p+q+n)\times(p+n)$ \\ \cline{2-6}
& $\Tcabcab$ & $T^8_{n,p,q}$ &$\Gcabcab$ & $G^8_{n,p,q}$ & $(p+n)\times (p+q+n)$ \\ \hline
Long horizontal & $\Tcqadpa$ & $T^9_{n,p,q}$ & $\Gcqadpa$ & $G^9_{n,p,q}$ &$(pq+n)\times (p+n)$ \\ \cline{2-6}
tiles & $\Tdpacqa$ & $T^{10}_{n,p,q}$ & $\Gdpacqa$ & $G^{10}_{n,p,q}$ &$(pq+n)\times (p+n)$ \\ \hline
Long vertical & $\Tcbpcaq$ & $T^{11}_{n,p,q}$ & $\Gcbpcaq$ & $G^{11}_{n,p,q}$ & $(p+n)\times (pq+n)$ \\ \cline{2-6}
tiles & $\Tcaqcbp$ & $T^{12}_{n,p,q}$ & $\Gcaqcbp$ & $G^{12}_{n,p,q}$ & $(p+n)\times (pq+n)$ \\
\hline
\end{tabular}
\vskip 12pt
}
\caption{The notation for the 12 tiles and their corresponding grid graphs.}
\label{table:12tiles}
\end{table}

This completes the definitions of the tile graphs $G^i_{n,p,q}$ and $T^i_{n,p,q}$ for $1\leq i\leq 12$ and the 12-tiles graphs $G_{n,p,q}$ and $\gnpq$.
Each vertex
in a quotient graph inherits the block label from the rectangular grid graphs. We make the trivial observation that the $\gnpq$ contains exactly one copy of a labeled block for each label type, which is isomorphic to the grid graph of appropriate dimensions.

We note that all of the graphs constructed are naturally $\Z^2$-graphs.
In fact, considering $\gnpq$, we note that every
directed edge in $\gnpq$ is labeled with a unique generator of $\Z^2$.
For example, suppose $(x,y)$ is an edge in $\gnpq$ and has $\Z^2$-label $e_1=(1,0)$
(the other cases being similar). If $x$, $y$ are in the same block
$R$ labeled with $R_\times, R_a, R_b, R_c$, or $R_d$, then $e_1$ is the unique $\Z^2$-label for $(x,y)$, as all
edges $(x',y')$ in one of the $12$ grid graphs in the same relative
position within a copy of $R$ have the same label. If at least one
of $x$, $y$ is in an interior block of a grid graph, then there is a unique
pair $(x',y')$ forming an edge in the union of the grid graphs which
represent $(x,y)$ in the quotient $\gnpq$, so the $\Z^2$-label is unique.
Finally, if $x$, $y$ are in different, non-interior, blocks
then in any of the $12$ grid graphs in which an edge $(x',y')$
appears which represents $(x,y)$ in the quotient, this edge has the same
$\Z^2$-label. This is because, by inspection, if $x \in R$ then two different generators cannot
both move $x$ to the same  block $R'$ where $R'\neq R$ either has a different label or is unlabeled.

The next lemma explains the significance of the parameter $n$
in $\gnpq$, relating it to a possible width of a subshift of finite type.

\begin{lemn} \label{lem:pbg}
Consider $\gnpq$ where $n <p,q$, and let $n_0\leq n+1$.
Let $A$ be the rectangular grid graph on $[0,n_0)\times [0,n_0)$.
Suppose $\varphi$ is a $\Z^2$-homomorphism from $A$ to $\gnpq$.
Then there is a $\Z^2$-homomorphism $\psi$ from $A$ to $G_{n,p,q}$
such that $\varphi= \pi\circ \psi$, where $\pi=\pi_{n,p,q} \colon G_{n,p,q}
\to \gnpq$ is the quotient map.
\end{lemn}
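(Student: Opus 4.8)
The statement is a lifting property: a $\Z^2$-graph homomorphism $\varphi$ from a small square $A = [0,n_0)^2$ (with $n_0 \le n+1$) into the quotient $\gnpq$ lifts through the quotient map $\pi \colon G_{n,p,q} \to \gnpq$. First I would note that since $A$ is connected and $\varphi$ is a graph homomorphism preserving the $\Z^2$-labels, $\varphi$ is determined by the image of a single vertex, say $\varphi(0,0) = v_0$, together with the combinatorial constraint that each unit step in $A$ is matched by the correspondingly labeled edge in $\gnpq$; so the whole problem reduces to finding, for a suitable choice of a preimage $\tilde v_0 \in \pi^{-1}(v_0)$ inside one of the twelve grid graphs $G^i_{n,p,q}$, enough ``room'' in that grid graph so that all $n_0^2$ vertices of the image can be traced back consistently. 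The key point is that $A$ fits inside an $(n+1)\times(n+1)$ square, and every one of the twelve grid graphs $G^i_{n,p,q}$ was built so that any two distinct labeled blocks bearing the same label are separated (in every one of its rectangular grid graphs) by a gap of at least the relevant interior dimension, which is $\ge \min\{p,q\} - n > 0$; more importantly, the diameter in any coordinate of an ``$R_\times$'' block is $n$, and the surrounding unlabeled interior blocks and the $R_a,R_b,R_c,R_d$ blocks are all at least $n$ wide — so a square of side $\le n+1$ cannot simultaneously straddle a block identification in a way that is not already realized inside a single grid graph.

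**Key steps, in order.** (1) Reduce to the vertex $\varphi(0,0)$: show that a lift $\psi$ is uniquely and consistently determined once we choose $\psi(0,0) \in \pi^{-1}(\varphi(0,0))$, provided each step stays inside $G_{n,p,q}$; this uses that $\pi$ is a local isomorphism away from the identified (labeled) vertices — precisely, every directed edge of $\gnpq$ has a \emph{unique} $\Z^2$-label (this is established in the paragraph just before the lemma), and every directed edge of $G_{n,p,q}$ maps to an edge of $\gnpq$ bijectively on a neighborhood of any interior vertex. (2) Case analysis on where the image square sits: if $\varphi(A)$ avoids all labeled blocks (lies entirely in interiors, across at most the width of one grid graph), the lift is immediate since $\pi$ restricted to the interior of a fixed $G^i$ is injective. (3) The substantive case: $\varphi(A)$ meets a labeled block $R$ (one of $R_\times, R_a, \dots, R_d$). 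Here I would use that $A$ has side $\le n+1$, that $R$ has the smaller of its two dimensions equal to $n$ (for $R_\times$) or $n$ (for $R_a,R_b,R_c,R_d$ in their ``thin'' direction), and that in each grid graph $G^i$ the block $R$ is adjacent only to interior blocks or to $R_\times$ blocks along its thin direction — so the $(n+1)$-square, once it enters $R$, is forced (by the edge-label rigidity) to extend into a determined configuration of neighboring blocks, and that configuration occurs as an actual subgraph of one of the twelve grid graphs. In other words: every ``local picture'' of size $(n+1)\times(n+1)$ in $\gnpq$ is the $\pi$-image of a genuine $(n+1)\times(n+1)$ block of some $G^i_{n,p,q}$. (4) Having identified which $G^i$ (and which translate within it) realizes the local picture around $\varphi(0,0)$, define $\psi$ by tracing $\varphi$ there; connectedness of $A$ and uniqueness from step (1) give that $\pi \circ \psi = \varphi$ globally.

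**The main obstacle.** The crux is step (3): verifying that no ``spurious'' local configuration is created by the vertex identifications — that is, that gluing the twelve grid graphs along equally-labeled blocks does not produce an $(n+1)$-square whose edge-pattern fails to lift. This is essentially a finite check driven by the geometry of Figures~\ref{fig:Gamma-npq-torus}--\ref{fig:Gamma-npq-vert}: one must confirm that for each label $R$, in \emph{every} grid graph $G^i$ where $R$ appears, the blocks surrounding $R$ (in all four directions, out to distance $n$) are consistent, so that the union of all these local neighborhoods is still ``locally a single $G^i$'' at scale $n+1$. The inequality $n_0 \le n+1$ together with the dimensions in Table~\ref{table:12tiles} (each non-$R_\times$ block has both dimensions $\ge n$, and $p,q > n$) is exactly what makes the square too small to detect any inconsistency; I expect this to be the step requiring care, though each individual verification is routine. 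Once this is in hand, the lemma follows by the determinacy argument of steps (1)--(4).
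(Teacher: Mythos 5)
Your overall strategy — lift through $\pi$ by choosing a suitable preimage of one vertex and exploit $n_0 \le n+1$ together with the block geometry — is sound, and your step~(2) is fine. But step~(3), the heart of the matter, rests on a premise that is false: the ``blocks surrounding $R$ (out to distance~$n$)'' are \emph{not} consistent across the twelve grid graphs, and the $(n+1)$-neighborhood of a labeled vertex of $\gnpq$ is \emph{not} isomorphic to the corresponding neighborhood in any single $G^i$. What the paper proves just before the lemma is only that each edge of $\gnpq$ carries a unique $\Z^2$-label, not that each vertex has a unique out-edge per label. In fact a labeled vertex of $\gnpq$ typically has \emph{several} out-edges bearing the same label, one for each interior block it abuts: for instance a vertex $v$ in the last column of $R_a$ has one $(1,0)$-out-edge into the interior of $\Gcaca$ and another, distinct, $(1,0)$-out-edge into the interior of $\Gdada$ (interiors are never identified). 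Similarly, $R_\times$ is adjacent to $R_c$ on the right in $\Gcaca$ but to $R_d$ on the right in $\Gdada$, so its neighborhood in $\gnpq$ branches. The ``finite consistency check'' you propose would therefore fail, and the ``once it enters $R$, the configuration is forced/determined'' claim is unsupported — the configuration is not determined, because there are genuine choices at every edge crossing from a labeled block into an interior.

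The paper avoids this by using a different dichotomy than yours — on whether $\ran(\varphi)$ meets an \emph{interior} (unlabeled) block, rather than on whether it meets a labeled block. If $\ran(\varphi)$ contains an interior vertex of some $G^i$, that vertex has a \emph{singleton} $\pi$-preimage; this single point pins down both the index $i$ and the translation $\psi$, and because every labeled block around an interior has thickness exactly~$n$ while $A$ has side at most $n+1$, the translated square stays inside $G^i$. If $\ran(\varphi)$ avoids all interiors, it lives entirely in labeled blocks; then the constraint $n_0 \le n+1$ together with the fact that two distinct non-$R_\times$ labeled blocks are never adjacent in $\gnpq$ forces $\ran(\varphi)$ into at most two block types, one of them $R_\times$, and this configuration occurs verbatim in any $G^i$ containing both types, so $\psi$ can be defined there directly. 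That case split, not the consistency of local neighborhoods, is what actually makes the lift exist.
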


\begin{proof}
Recall that in the construction of $\gnpq$ only the labeled blocks of the tiles $G^i_{n,p,q}$
are identified. Thus if $\ran(\varphi)$
contains a vertex of $\gnpq$ which is an element of an interior block
of some $G_i=G^i_{n,p,q}$ ($1\leq i\leq 12$), then this $i$ is unique and since $n_0\leq n+1$ there is a unique $\Z^2$-homomorphism
$\psi$ from $A$ to $G_i$ such that $\varphi=\pi\circ \psi$. So we can assume that
$\ran(\varphi)$ contains only vertices not in interior blocks of $\gnpq$.
If $\ran(\varphi)$ consists of vertices all with a single block label, then we may take any $G_i$ where that
block appears and define $\psi$ to map into that block of $G_i$
in the obvious manner. The other possibility is that $\ran(\varphi)$
contains vertices of exactly two block labels types
(by inspection, one can verify that more than two is not possible as $n_0\leq n+1$ and $n<p,q$), one of which is $R_\times$. We can then take
any $G_i$ where these two block types appear and define $\psi$ to have range in
two such blocks of $G_i$.
\end{proof}

The following related lemma analyzes $\Z^2$-homomorphisms from the Cayley graph of $\Z^2$ to $\gnpq$.
It shows that these correspond to consistent ``tilings'' of $\Z^2$ by
grid graphs each of which is isomorphic to one of the $12$ grid graphs $G^i_{n,p,q}$ with labeled blocks.

\begin{lemn} \label{zghom} Let $n< p, q$ be positive integers.
Suppose $\varphi \colon \Z^2 \to \gnpq$ is a $\Z^2$-homomorphism.
Then there is a collection $\{ C_i\}$ of rectangles in $\Z^2$ and a map $i\mapsto \alpha(i)\in[1,12]$ such that
the following hold:
\begin{enumerate}[label={\rm (\arabic*)}]
\item \label{zghome} For each $i$, $C_i$ is of the same dimensions as $G^{\alpha(i)}_{n,p,q}$; thus there is a unique isomorphism between $G^{\alpha(i)}_{n,p,q}$ and $C_i$ which makes $C_i$ a grid graph with labeled blocks;
\item \label{zghoma}
$\bigcup_i C_i=\Z^2$;
\item \label{zghomb}
If $x\in \Z^2$ is in $C_i \cap C_j$, then there is a labeled block $R$ of $C_i$ which contains $x$ as a vertex and a labeled block $R'$ of $C_j$ which contains $x$ as a vertex so that $R=R'$ as labeled blocks (meaning they are the same rectangles with the same labels);
\item \label{zghomc}
For any $n_0\leq n+1$ and any rectangle $A\subseteq \Z^2$ of dimensions $n_0 \times n_0$, there is $i$ such that
$A \subseteq C_i$.
\end{enumerate}

\end{lemn}

\begin{proof}
Suppose $\varphi \colon \Z^2 \to \gnpq$ is a $\Z^2$-homomorphism.
For each of the $12$ rectangular grid graphs $G^i_{n,p,q}$, $1 \leq i \leq 12$,
pick a distinguished vertex in the interior block of $G^i_{n,p,q}$, say
the upper-left vertex of the interior block. Let $D\subseteq \Z^2$
be the set of all points $x\in \Z^2$ such that $\varphi(x)$ is a distinguished vertex of one of the $G^i_{n,p,q}$.
For each $x \in D$, let $\alpha(x)\in[1,12]$ such that $\varphi(x)$ is a distinguished vertex of $G^{\alpha(x)}_{n,p,q}$, and let $C_x$ be the rectangular grid subgraph of $\Z^2$
obtained by placing a copy of $G^{\alpha(x)}_{n,p,q}$ such that $x$
corresponds with the position of the distinguished vertex of $G^{\alpha(x)}_{n,p,q}$.
It is easily checked that the collection $\{C_x\}_{x \in D}$ and the map $x\mapsto \alpha(x)$ satisfy \ref{zghome}, \ref{zghoma} and \ref{zghomb},
and it follows from Lemma \ref{lem:pbg} that it satisfies \ref{zghomc}.
\end{proof}

The following is a converse of the above lemma with a weaker condition.

\begin{lemn} \label{zghomconverse} Let $n< p, q$ be positive integers.
Suppose there is a collection $\{ C_i\}$ of rectangles in $\Z^2$ and a map $i\mapsto \alpha(i)\in[1,12]$ such that
the following hold:
\begin{enumerate}[label={\rm (\arabic*)}]
\item For each $i$, $C_i$ is of the same dimensions as $G^{\alpha(i)}_{n,p,q}$; thus there is a unique isomorphism between $G^{\alpha(i)}_{n,p,q}$ and $C_i$ which makes $C_i$ a grid graph with labeled blocks;
\item
$\bigcup_i C_i=\Z^2$;
\item
If $x\in \Z^2$ is in $C_i \cap C_j$, then there is a labeled block $R$ of $C_i$ which contains $x$ as a vertex and a labeled block $R'$ of $C_j$ which contains $x$ as a vertex so that $R=R'$ as labeled blocks;
\item[\rm (4')]
For any rectangle $A\subseteq \Z^2$ of dimensions $2 \times 2$, there is $i$ such that
$A \subseteq C_i$.
\end{enumerate}
Then there is a $\Z^2$-homomorphism $\varphi: \Z^2\to \gnpq$.
\end{lemn}

\begin{proof}
Suppose $\{C_i\}$ and the map $i\mapsto \alpha(i)\in[1, 12]$ are given and satisfy (1), (2), (3) and (4'). Let $\pi: G_{n,p, q}\to \gnpq$ be the quotient map. Given any $x\in \Z^2$, fix any $i$ with $x\in C_i$, fix the unique isomorphism $\psi: C_i\to G^{\alpha(i)}_{n,p,q}$ as grid graphs with labeled blocks, and define $\varphi(x)=\pi\circ\psi(x)$.
By \ref{zghome}, \ref{zghoma} and \ref{zghomb}, $\varphi$ is well-defined. To see $\varphi$ is a $\Z^2$-homomorphism,
consider say $x$, $y=e_1\cdot x \in \Z^2$. From (4'),
there is an $i$ such that $\{ x, y\} \subseteq C_i$. If we use this same $C_i$
for the definition of $\varphi(x)$ and $\varphi(y)$, it is then clear that
the edge $(\varphi(x), \varphi(y))$ has label $e_1$ in $\gnpq$.
\end{proof}

\begin{defnn} \label{def:resp}
Let $Y$ be a subshift of finite type described by $(\bsft; p_1,\dots,p_k)$.
Let $g\colon \gnpq \to \bsft$. We say $g$ {\em respects} $Y$
if for any $1\leq i\leq k$ and for any $\Z^2$-homomorphism $\varphi$ from
$\dom(p_i)$ to $\gnpq$, $g\circ \varphi \colon \dom(p_i)\to \bsft$ is not equal to $p_i$.
\end{defnn}

From Lemma~\ref{lem:pbg} it follows that if
$n$ is greater than or equal to the width of $Y$, then $g \colon \gnpq \to \bsft$ respects $Y$
if and only if for any forbidden pattern $p_i$ defining $Y$ and for any $\Z^2$-homomorphism $\psi\colon \dom(p_i) \to G_{n,p,q}$,
$g\circ \pi\circ \psi \colon A_i\to \bsft$ is not equal to $p_i$, where $\pi=\pi_{n,p,q}:G_{n,p,q}\to \Gamma_{n,p,q}$ is the quotient map.
Thus, for $n$ greater than or equal to the width of $Y$, the condition that $g \colon \gnpq\to \bsft$
respects $Y$ is a property of $\tilde{g} \colon G_{n,p,q}\to \bsft$, where
$\tilde{g}=g \circ \pi$ is the map on $G_{n,p,q}$ that $g$ induces.

We are now ready to state our main theorem.

\begin{thmn}[Twelve tiles theorem] \label{thm:tilethm}
Let $Y\subseteq \bsft^{\Z^2}$ be a subshift of finite type. Then the following are equivalent.
\begin{enumerate}[label={\rm (\arabic*)}, ref=\arabic*]
\item \label{mta}
There is a continuous, equivariant map $f\colon F(2^{\Z^2})\to
Y$.
\item \label{mtb}
There are positive integers $n,p,q$ with $n<p,q$, $\gcd(p,q)=1$,  and $n$ greater than or equal
to the width of $Y$, and there is $g \colon \gnpq \to \bsft$
which respects $Y$.
\item \label{mtc}
For all $n$ greater than or equal to the width of $Y$ and
for all sufficiently large $p,q$, there is $g \colon \gnpq
\to \bsft$ which respects $Y$.
\end{enumerate}
\end{thmn}

The implication (\ref{mtc})$\Rightarrow$(\ref{mtb}) is obvious. We will prove (\ref{mta})$\Rightarrow$(\ref{mtc}) in \S\ref{subsec:negtile}
and (\ref{mtb})$\Rightarrow$(\ref{mta}) in \S\ref{subsec:postile}.


\section{Proof of the negative direction} \label{subsec:negtile}

In this section we prove the negative direction of the twelve tiles theorem. That is,
we prove the implication (\ref{mta})$\Rightarrow$(\ref{mtc}) for
Theorem~\ref{thm:tilethm} (in \S\ref{sec:continuous_structuring} we give the
corresponding result, Theorem~\ref{thm:tilethmb}, in terms of continuous structurings).
We refer to this implication as the
negative direction as it is often used to show that various continuous structurings
of $F(2^{\Z^2})$ do not exist.

The idea of the proof is to construct a hyper-aperiodic element $x \in F(2^{\Z^2})$
which carries the structures $\gnpq$ embedded into it for all suitable $n, p, q$ in a way that is similar to the proof of the two tiles theorem.
We use this element
for both Theorems~\ref{thm:tilethm} and \ref{thm:tilethmb}. To construct $x$, we
will use a fixed hyper-aperiodic element $y \in F(2^{\Z^2})$ which we now fix.
The element $x$ will be constructed by copying various regions in $y$ to various
places in $x$.

Let $w$ be a positive integer and fix $n, p, q$ with $p, q>2n+4w$.
The integer $w$ will be the size of a ``buffer'' region
we will use to ``pad'' the construction of $G_{n,p,q}$ to produce a
modified version $G_{w,n,p,q}$. Just as $G_{n,p,q}$
is the disjoint union of the $12$ rectangular subgraphs $G^i_{n,p,q}$
($1\leq i \leq 12$), $G_{w,n,p,q}$ will be the disjoint union of the
rectangular subgraphs $G^i_{w,n,p,q}$. The element $x$ will then be built up from $y$
and the $G^i_{w,n,p,q}$.

Consider the grid graph $G^1_{n,p,q}$ for the first torus tile. Recall this is a rectangular grid graph of dimensions
$(p+n) \times (p+n)$, and is subdivided into blocks with labels
$R_\times$, $R_a$, and $R_c$, together with an unlabeled interior block as in Figure~\ref{fig:Gamma-npq-torus}.
$G^1_{w,n,p,q}$ will be a grid graph of dimensions $(p+n+2w)\times (p+n+2w)$ which is similarly
divided into blocks with new labels $R'_{\times}$, $R'_a$, and $R'_c$.
The definition of $G^1_{w,n,p,q}$ is shown in Figure~\ref{fig:hyper-aperiodic-point-tcaca}.
The solid lines indicate the new rectangular sub-graphs $R'_{\times}$, $R'_a$ and
$R'_c$. The dashed lines indicate the original $G^1_{n,p,q}$ along with the
original $R_\times$, $R_a$, and $R_c$. Note that the $R'_\times$ block has dimensions
$(n+2w)\times (n+2w)$, the $R'_a$ block has dimensions $(n+2w)\times (p-n-2w)$, and the $R'_c$ block
has dimensions $(p-n-2w)\times (n+2w)$. The other three padded grid graphs for torus tiles,
$G^2_{w,n,p,q}, G^3_{w,n, p, q}, G^4_{w,n,p,q}$, are defined in the same manner.

\begin{figure}[htbp]
\begin{tikzpicture}
\begin{scope}[scale=2,yscale=-1,shift={(-5,2)}]  
  \foreach \loc in {(0,0),(0,1.8),(1.8,0),(1.8,1.8)} {
    \node[XBoxStyle,scale=2,dotted] at \loc {};
    \node[BoxStyle,scale=2,minimum width=1.2cm, minimum height=1.2cm, shift={(-.1,.1)}] at \loc {};
    \node[shift={(1,-1)}] at \loc {$R'_\times$};
  }
  \foreach \loc in {(0,1),(1.8,1)} {
    \node[ABoxStyle,scale=2,dotted] at \loc {};
    \node[BoxStyle,scale=2,minimum width=1.2cm, minimum height=0.6cm, shift={(-.1,-.1)}] at \loc {};
    \node[shift={(1,-.8)}] at \loc {$R'_a$};
  }
  \foreach \loc in {(1,0),(1,1.8)} {
    \node[CBoxStyle,scale=2,dotted] at \loc {};
    \node[BoxStyle,scale=2,minimum width=0.6cm, minimum height=1.2cm, shift={(.1,.1)}] at \loc {};
    \node[shift={(.8,-1)}] at \loc {$R'_c$};
  }

  \draw [|-|] (3.1,-0.1) -- (3.1, 0.0) node[midway, right] {$\,w$};
  \draw [|-|] (2.8,-0.3) -- (2.9,-0.3) node[midway, above] {$\vphantom{q}w$};
\end{scope}
\end{tikzpicture}
\caption{Construction of the padded grid graph $G^1_{w,n,p,q}$. The boundary of
$G_{n,p,q}$ has been padded by $w$.}
\label{fig:hyper-aperiodic-point-tcaca}
\end{figure}

\begin{figure}[htbp]
\begin{tikzpicture}
\begin{scope}[scale=2,yscale=-1,shift={(-5,2)}]  
  \foreach \loc in {(0,0),(2.2,0),(4.0,0),(0,1.8),(1.8,1.8),(4.0,1.8)} {
    \node[XBoxStyle,scale=2,dotted] at \loc {};
    \node[BoxStyle,scale=2,minimum width=1.2cm, minimum height=1.2cm, shift={(-.1,.1)}] at \loc {};
    \node[shift={(1,-1)}] at \loc {$R'_\times$};
  }
  \foreach \loc in {(0,1),(4.0,1)} {
    \node[ABoxStyle,scale=2,dotted] at \loc {};
    \node[BoxStyle,scale=2,minimum width=1.2cm, minimum height=0.6cm, shift={(-.1,-.1)}] at \loc {};
    \node[shift={(1,-.8)}] at \loc {$R'_a$};
  }
  \foreach \loc in {(3.2,0),(1,1.8)} {
    \node[CBoxStyle,scale=2,dotted] at \loc {};
    \node[BoxStyle,scale=2,minimum width=0.6cm, minimum height=1.2cm, shift={(.1,.1)}] at \loc {};
    \node[shift={(.8,-1)}] at \loc {$R'_c$};
  }
  \foreach \loc in {(1,0),(2.8,1.8)} {
    \node[DBoxStyle,scale=2,dotted] at \loc {};
    \node[BoxStyle,scale=2,minimum width=1.0cm, minimum height=1.2cm, shift={(.1,.1)}] at \loc {};
    \node[shift={(1.2,-1)}] at \loc {$R'_d$};
  }
\end{scope}
\end{tikzpicture}
\caption{Constructing the rectangular grid graph $G^5_{w,n,p,q}$ corresponding to $\protect\Tdcadca$.
The solid lines represent the blocks $R'_\times$ etc.\ of $G^5_{w,n,p,q}$,
while the dashed lines indicate the original blocks of $G^5_{n,p,q}$.}
\label{fig:hyper-aperiodic-point-tdcadca}
\end{figure}

The remaining $G^i_{w,n,p,q}$ are defined similarly. In each case,
the block $R'_\times$ is expanded in each dimension by $2w$,
while the other blocks ($R'_a$, $R'_b$, $R'_c$, $R'_d$) are
expanded in one direction by $2w$ and contracted in the other direction by $2w$.
Figure~\ref{fig:hyper-aperiodic-point-tdcadca} shows the construction for
the grid graph $G^5_{w,n,p,q}$, which corresponds to the tile $\protect\Tdcadca$.
This completes the description of the rectangular grid graphs $G^i_{w,n,p,q}$.

To construct the hyper-aperiodic element $x$, let $(w_k,n_k,p_k,q_k)$ enumerate, without repetition, all
$4$-tuples of positive integers such that $p_k,q_k>2n_k+4w_k$. Fix a doubly-indexed sequence $(\ell^i_k)$ of integers for $1\leq i\leq 12$ and $k\in \Z^+$ such that for any $k, i, j$,
\begin{enumerate}
\item[(i)] $\ell^i_k<\ell^j_{k+1}$,
\item[(ii)] $\ell^i_k<\ell^j_k$ if and only if $i<j$,
\item[(iii)] $\ell^1_{k+1}-\ell^{12}_k>k+p_kq_k+n_k+2w_k$, and
\item[(iv)] $\ell^{i+1}_k-\ell^i_k>k+p_kq_k+n_k+2w_k$.
\end{enumerate}
By (i) and (ii), the $\ell^i_k$ are in lexicographic order of the pairs $(k,i)$. To continue our construction, we place a copy of the rectangular
grid graph $G^i_{w_k,n_k,p_k,q_k}$ in $\Z^2$ with the lower-left corner of
$G^i_{w_k,n_n,p_k,q_k}$ at the point $(\ell^i_k,0)$. By (iii) and (iv), the distance between any point
of the copy of $G^i_{w_k,n_k,p_k,q_k}$ and any other copy is at least $k$.
Formally, when we ``place a copy'' of $G^i_{w_k,n_k,p_k,q_k}$ in $\Z^2$ we mean
we have identified a rectangular grid subgraph of $\Z^2$ and have partitioned
its elements into labeled and unlabeled blocks isomorphically to $G^i_{w_k,n_k,p_k,q_k}$.
Figure~\ref{fig:xcon} illustrates the placement of the $G^i_{w_k,n_k,p_k,q_k}$
and the construction of $x$.

\begin{figure}[h]
\begin{tikzpicture}[scale=0.03]

\pgfmathsetmacro{\s}{0.03}

\pgfmathsetmacro{\n}{20}
\pgfmathsetmacro{\r}{0.05}

\draw[->] (-200,0) to (200,0);

\draw (-150,0) to (-150,5) to (-145,5) to (-145,0);
\draw (-140,0) to (-140,10) to (-135,10) to (-135,0);
\draw (-130,0) to (-130,5) to (-120,5) to (-120,0);
\draw (-115,0) to (-115,10) to (-105,10) to (-105,0);

\node at (-80,5) {$\dots\dots$};

\draw (-50,0) to (-50,30) to (-20,30) to (-20,0);
\draw (50,0) to (50,30) to (100,30) to (100,0);
\draw (0,0) to (0,50) to (30,50) to (30,0);
\draw (120,0) to (120,50) to (170,50) to (170,0);

\node at (195,10) {$\dots\dots$};

\end{tikzpicture}
\caption{Constructing the hyper-aperiodic element $x$} \label{fig:xcon}
\end{figure}

We now define $x$. For $(a,b)\in \Z^2$ not in any copy of $G^i_{w_k,n_k,p_k,q_k}$, we define $x(a,b)=y(a,b)$.
Suppose $(a,b)$ is in a copy of $G^i_{w_k,n_k,p_k,q_k}$ (note that the $i$ and $k$ are unique).
Say $(a,b)$ is in a labeled block $R'$, and the lower-left corner point of block $R'$ has coordinates $(\alpha, \beta)$. Write
$(a,b)=(\alpha,\beta)+(a',b')$. We then set $x(a,b)=y(a',b')$. In other words, we fill-in
the portion of $x$ in the block $R'$ by a partial copy of the first quadrant of $y$. Finally, if $(a, b)$ is in an unlabeled block, then set $x(a, b)=y(a, b)$.

This completes the definition of $x \in 2^{\Z^2}$. We next show that
$x$ is hyper-aperiodic.

\begin{lemn} \label{lem:xhyp}
The element $x\in 2^{\Z^2}$ is hyper-aperiodic.
\end{lemn}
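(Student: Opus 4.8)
The plan is to verify the hyper-aperiodicity of $x$ directly using the combinatorial characterization of Definition~\ref{def:twocol} and the Lemma following it: given $s = (s_1, s_2) \neq (0,0)$ in $\Z^2$, we must produce a finite set $T \subseteq \Z^2$ so that for every $g \in \Z^2$ there is $t \in T$ with $x(t + g) \neq x(t + s + g)$. The key structural observation is that $x$ was built by copying translated portions of the first quadrant of the single hyper-aperiodic element $y$ into finitely-described regions (the labeled blocks of the placed copies of $G^i_{w_k,n_k,p_k,q_k}$, plus the unlabeled ``background,'' which is literally $y$). So locally, around any point, $x$ agrees with a translate of $y$ — the issue is only that the translate may jump as one crosses a block boundary.

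First I would fix $s$ and let $T_0 \subseteq \Z^2$ be a finite set witnessing the hyper-aperiodicity of $y$ for the element $s$ (using the combinatorial characterization). The strategy mirrors the one-dimensional tile argument in the proof of Theorem~\ref{thm:mdb}: consider the partition $\CQ$ of $\Z^2$ into the background region and the individual labeled blocks of all the placed copies of $G^i_{w_k,n_k,p_k,q_k}$. I would argue there is a finite set $U \subseteq \Z^2$ such that for every $g \in \Z^2$ there is $u \in U$ for which $(g + u + T_0) \cup (g + s + u + T_0)$ is entirely contained in a single piece of $\CQ$. This is the crux and the main obstacle: it requires that no matter where $g$ lands — deep inside a huge block, near a boundary, in a tiny block, or in the background — a bounded translation $u$ can pull both the $T_0$-neighborhood of $g$ and of $g+s$ into one common piece. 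The reason this works is that the blocks, although of unbounded size, are laid out with interblock distances $d_k \to \infty$ along $\Z^2$ in a way that forces the $\ell^i_k$ to grow; combined with the buffer parameter $w_k$ (chosen with $n_k + 2w_k < \tfrac12 \min\{p_k,q_k\}$) this guarantees that whenever $g$ is within distance $\mathrm{diam}(T_0 \cup (s + T_0))$ of a block boundary, that block — and the adjacent background or block — is large enough (or the background is literally $y$) that a bounded shift of size depending only on $s$ and $T_0$ lands us in a single piece. The precise bookkeeping is where care is needed, but it is the two-dimensional analogue of the partition-and-shift lemma already used in the proof of Theorem~\ref{thm:mdb}, so I would cite that pattern.

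Then I would set $T = U + T_0$ and conclude: given $g$, pick $u \in U$ with $L := (g + u + T_0) \cup (g + s + u + T_0)$ inside one piece $P$ of $\CQ$. If $P$ is the background, then $x \restriction L = y \restriction L$, and since $x(a,b) = y(a,b)$ verbatim on the background we get, from the defining property of $T_0$, some $t \in T_0$ with $x(g + u + t) = y(g + u + t) \neq y(g + u + s + t) = x(g + u + s + t)$, so $u + t \in T$ works. If $P$ is a labeled block $R'$ with lower-left corner $(\alpha,\beta)$, then by construction $x(a,b) = y((a,b) - (\alpha,\beta))$ for all $(a,b) \in R'$; that is, $x \restriction L = (h \cdot y) \restriction L$ for the translation $h = (\alpha,\beta)$, and again there is $t \in T_0$ with $x(g + u + t) = y((g+u+t) - h) \neq y((g+u+s+t) - h) = x(g+u+s+t)$. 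In either case $u + t \in T$ witnesses what we need, so $T$ certifies hyper-aperiodicity of $x$ and the lemma follows. The only genuinely nontrivial step is establishing the existence of the finite set $U$, and I would present that as a short lemma about the geometry of the block placement before assembling the argument above.
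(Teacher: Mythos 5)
Your overall plan---choose $T_0$ witnessing hyper-aperiodicity of $y$ for $s$, partition $\Z^2$ into the background and the blocks of the placed copies of $G^i_{w_k,n_k,p_k,q_k}$, find a finite $U$ that pushes every $g$ and $g+s$ (together with their $T_0$-neighborhoods) into a common piece, and then set $T = U + T_0$---is exactly the structure of the paper's proof, which phrases the middle step as ``the set $S$ of good base points is syndetic.'' Your assembly of the witness once $U$ is in hand is also correct (aside from a sign on the translate: on a block with lower-left corner $(\alpha,\beta)$ one has $x = (-(\alpha,\beta))\cdot y$ under the right-shift action, not $(\alpha,\beta)\cdot y$, but that is cosmetic), and your partition $\CQ$ should include the unlabeled interior blocks as pieces alongside the labeled ones.

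The genuine gap is the existence of $U$, which you defer to ``a short lemma about the geometry of the block placement'' without carrying it out; in the paper that claim consumes essentially the whole proof, and your intuitive justification for it is not sound as stated. Two specific problems. First, you invoke ``interblock distances $d_k \to \infty$,'' importing notation from the one-dimensional warmup (Theorem~\ref{thm:mdb}); in the two-dimensional construction the separation is controlled by conditions (iii), (iv) on $\ell^i_k$, which guarantee only that the copy at level $k$ is distance at least $k$ from all other copies. That helps when $k > 2M$ (with $M$ bounding $\{(0,0),s\}\cup T_0 \cup (s+T_0)$), but for the finitely many copies with $k \leq 2M$ the blocks may be arbitrarily small and arbitrarily packed; the paper handles these separately by observing they all lie in a bounded horizontal strip of bounded height $H$, so the single vertical shift $(0,H+2M)$ pushes any nearby $g$ into the background. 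Second, even for $k > 2M$ your claim that a nearby block ``and the adjacent background or block is large enough'' is not automatic: the labeled blocks $R'_a, R'_b, R'_c, R'_d$ are deliberately thin in one direction (one side has length $n_k+2w_k$, which may be $\leq 2M$), and their neighbors may likewise be thin. The actual argument needs a case split on the relative sizes of $n_k+2w_k$, $\min\{p_k,q_k\}$, and $\max\{p_k,q_k\}$ versus $2M$, shifting into the large interior block or clear out of the tile, with the long horizontal and vertical tiles treated asymmetrically. Without this bookkeeping the ``short lemma'' is not established, so the proposal is incomplete precisely where the proof's content lies.
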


\begin{proof}
Let $s\neq (0,0)$ be a non-identity element of the group $\Z^2$.
Since $y$ is a hyper-aperiodic element, there is a finite $T_0\subseteq \Z^2$
such that for any $g \in \Z^2$ there is a $t\in T_0$ such that $y(g+t)\neq y(g+s+t)$.
Let $T_1=\{(0,0), s\}\cup T_0\cup(s+T_0)$ and $M\in \Z^+$ be such that $T_1\subseteq [-M,M]\times [-M,M]$.

Let $S$ be the set of all $g\in\Z^2$ such that either $\{g, g+s\}\cup
(g+T_0)\cup (g+s+T_0)$ does not intersect any copy of
$G^i_{w_k,n_k,p_k,q_k}$ in the construction of $x$ or there is a
labeled or unlabeled block $R'$ of some copy of
$G^i_{w_k,n_k,p_k,q_k}$ such that $\{g, g+s\}\cup (g+T_0)\cup
(g+s+T_0)\subseteq R'$. Intuitively, $S$ is the set of $g\in\Z^2$ such
that some $t\in T_0$ will witness the hyper-aperiodicity of $x$ at $g$
for $s$, i.e., there is $t\in T_0$ such that $x(g+t)\neq x(g+s+t)$.

To verify that $x$ satisfies the hyper-aperiodicity condition, it
suffices to show that $S$ is {\em syndetic}, i.e., there is a finite
set $T\subseteq \Z^2$ such that for any $g\in \Z^2$ there is $t\in T$
so that $g+t\in S$.

Let $S'$ be the set of all $g\in \Z^2$ such that $g+[-M,M]\times
[-M,M]$ does not intersect any copy of
$G^i_{w_k,n_k,p_k,q_k}$. Intuitively, $S'$ is the set of all
$g\in\Z^2$ that are of a distance $M$ away from any copy of
$G^i_{w_k,n_k,p_k,q_k}$. We have $S'\subseteq S$.

Let $T'$ be the set of all $g\in\Z^2$ such that $g$ is within distance
$M$ to a copy of $G^i_{w_k,n_k,p_k,q_k}$ for $k\leq 2M$. $T'$ is
finite. Assume that $T'\subseteq I\times \Z$ where $I$ is finite.  Let
$$ H=\max\{p_kq_k+n_k+2w_k\,:\, k\leq 2M\}. $$ Then $H$ is the maximum
height of the grid graphs $G^i_{w_k,n_k,p_k,q_k}$ for $k\leq 2M$. We
have that $T'\subseteq I\times [-M, H+M]$. Thus for any $g\in T'$,
$g+(0,H+2M)\in S'\subseteq S$.

Suppose $g\not\in S'\cup T'$, i.e., $g$ is within distance $M$ to a
copy of $G^i_{w_k,n_k,p_k,q_k}$ for some $k>2M$. Note that this $k$ is
unique, since the distance between this copy of
$G^i_{w_k,n_k,p_k,q_k}$ and the neighboring copies is greater than
$2M$. We consider several cases. Let $p'_k=p_k-n_k-2w_k$, $q'_k=q_k-n_k-2w_k$.
Note that $p'_k,q'_k \geq n_k+2w_k$.

Case 1: $n_k+2w_k>2M$. Since $n_k+2w_k$ is the smallest dimension of
any block within the copy of $G^i_{w_k,n_k,p_k,q_k}$, any interior
point of a block that is of distance $M$ away from its boundary is a
point of $S$. It follows that $g$ is within distance $2M$ of such a
point, i.e., $g$ is within distance $2M$ of a point of $S$.

Case 2: $n_k+2w_k\leq 2M<\min\{p'_k, q'_k\}$. In this case all the
labeled blocks within the copy of $G^i_{w_k,n_k,p_k,q_k}$ has one of
its dimensions $\leq 2M$, but the interior unlabeled block has
dimensions $>2M$, and any interior point of this block that is of
distance $M$ away from its boundary is a point of $S$. It follows that
$g$ is within distance $4M$ of such a point.

Case 3: $\max\{p'_k, q'_k\}\leq 2M$. In this case, if
$G^i_{w_k,n_k,p_k,q_k}$ is not correspondent to a long vertical tile,
$g$ is within a vertical distance of $4M$ to a point of $S'\subseteq
S$. If $G^i_{w_k,n_k,p_k,q_k}$ is correspondent to a long vertical
tile, then $g$ is within a horizontal distance of $4M$ to a point of
$S'$.

Case 4: $\min\{p'_k, q'_k\}\leq 2M< \max\{p'_k, q'_k\}$. This is broken up
into two subcases, depending on either $p'_k<q'_k$ or $q'_k<p'_k$.

Subcase 4.1: $p'_k\leq 2M< q'_k$. In this subcase, if
$G^i_{w_k,n_k,p_k,q_k}$ is not correspondent to the tile $\Tdbdb$, then
similar to Case 3, $g$ is within either vertical or horizontal
distance of $4M$ to a point of $S'$. If $G^i_{w_k,n_k,p_k,q_k}$ is
correspondent to the tile $\Tdbdb$, then similar to Case 2, $g$ is
within distance $4M$ of a point of $S$ which is an interior point of
the unlabeled interior block.

Subcase 4.2: $q'_k\leq 2M< p'_k$. This is opposite to Subcase 4.1. In
this subcase, if $G^i_{w_k,n_k,p_k,q_k}$ is not correspondent to one of the
tiles $\Tdbdb$, $\Tcbcb$, or $\Tdada$, then similar to Case 2, $g$ is within distance $4M$ of
a point of $S$ which is an interior point of the unlabeled interior
block. If $G^i_{w_k,n_k,p_k,q_k}$ is correspondent to one of these tiles,
then similar to Case 3, $g$ is within distance $4M$ to a
point of $S'$.

This completes the proof that $S$ is syndetic, and hence $x$ is
hyper-aperiodic.
\end{proof}

\begin{proof}[Proof of (\ref{mta})$\Rightarrow$(\ref{mtc}) of Theorem~\ref{thm:tilethm}]
Let $Y\subseteq \bsft^{\Z^2}$ be a subshift of finite type described by $(\bsft; p_1,\dots, p_m)$. Let $f\colon F(2^{\Z^2})\to Y$ be a continuous,
equivariant map. Fix $n$ greater or equal to the width of $Y$.
We need to show that for all large enough $p,q$
 there is a $g \colon \gnpq \to \bsft$ which respects $Y$.

Let $x \in F(2^{\Z^2})$ be the hyper-aperiodic element produced in
Lemma~\ref{lem:xhyp}. Let $K=\ocl{[x]}$. Since $x$ is hyper-aperiodic,
$K \subseteq F(2^{\Z^2})$, and $K$ is compact. Thus, $f$ is defined on $K$.
Define $f_0 : K \rightarrow \bsft$ by $f_0(z) = f(z)(0, 0)$.
Since $f_0$ is continuous and $K$ is compact, there is
a $w\in\Z^+$ such that for any $z \in K$, the value of $f_0(z)$ is determined by $z \res [-w,w]^2$. In particular, this holds for all $z$ in the orbit of $x$.
So, for any $(a,b)\in \Z^2$, $f_0((a,b)\cdot x)$ is determined by the values of $x$ on the $(2w+1)\times (2w+1)$ rectangle centered about $(a, b)$.

Now consider arbitrary $p, q>2n+4w$. Fix a $k$
such that $p_k=p$, $q_k=q$, $n_k=n$, and $w_k=w$. Consider the grid graphs $G^i_{w_k,n_k,p_k,q_k}=G^i_{w,n,p,q}$
along with the ``unpadded'' graph $G^i_{n,p,q}$ induced by $G^i_{w_k,n,p,q}$ (the region within the dotted lines in Figure~\ref{fig:negative}).
We first define $g' \colon G_{n,p,q}\to \bsft$ as follows.
If $(a,b) \in \Z^2$ is in a copy of $G^i_{n,p,q}$ within a copy of $G^i_{w_k, n, p, q}$, we let $g'(a,b)=f_0((a,b)\cdot x)$.
From the construction of $x$, it is easy to verify the following claims by inspection:
\begin{itemize}
\item If $(a,b)$ and $(a', b')$ are in a copy of $G^i_{n,p,q}$ within a copy of  $G^i_{w_k, n, p, q}$, and if $(a,b)$, $(a',b')$ are in the same positions
in labeled blocks with the same labels,
then $f_0((a,b)\cdot x)=f_0((a',b')\cdot x)$.
\item If $(a, b)$ is in a copy of $G^i_{n, p, q}$ within a copy of $G^i_{w_k, n, p, q}$ and $(a',b')$ is in a copy of $G^{i'}_{n, p, q}$ within a copy of $G^{i'}_{w_k, n, p, q}$, and if $(a,b)$, $(a',b')$ are in the same positions
in labeled blocks with the same labels,
then $f_0((a,b)\cdot x)=f_0((a',b')\cdot x)$.
\end{itemize}

\begin{figure}[htbp]
\begin{tikzpicture}
\begin{scope}[scale=2,yscale=-1,shift={(-5,2)}]  
  \foreach \loc in {(0,0),(2.2,0),(4.0,0),(0,1.8),(1.8,1.8),(4.0,1.8)} {
    \node[XBoxStyle,scale=2,dotted] at \loc {};
    \node[BoxStyle,scale=2,minimum width=1.2cm, minimum height=1.2cm, shift={(-.1,.1)}] at \loc {};
    \node[shift={(1,-1)}] at \loc {$R'_\times$};
\node[BoxStyle,scale=0.3,minimum width=1.2cm, minimum height=1.2cm, shift={(-.3,.5)}] at \loc {};

  }
  \foreach \loc in {(0,1),(4.0,1)} {
    \node[ABoxStyle,scale=2,dotted] at \loc {};
    \node[BoxStyle,scale=2,minimum width=1.2cm, minimum height=0.6cm, shift={(-.1,-.1)}] at \loc {};
    \node[shift={(1,-.8)}] at \loc {$R'_a$};
  }
  \foreach \loc in {(3.2,0),(1,1.8)} {
    \node[CBoxStyle,scale=2,dotted] at \loc {};
    \node[BoxStyle,scale=2,minimum width=0.6cm, minimum height=1.2cm, shift={(.1,.1)}] at \loc {};
    \node[shift={(.8,-1)}] at \loc {$R'_c$};
  }
  \foreach \loc in {(1,0),(2.8,1.8)} {
    \node[DBoxStyle,scale=2,dotted] at \loc {};
    \node[BoxStyle,scale=2,minimum width=1.0cm, minimum height=1.2cm, shift={(.1,.1)}] at \loc {};
    \node[shift={(1.2,-1)}] at \loc {$R'_d$};
  }
\end{scope}
\end{tikzpicture}
\caption{Verifying that if $(a, b)$ and $(a', b')$ are in the same positions in a labeled block of $G_{w_k,n,p,q}$, then $f((a,b)\cdot x)=f((a',b')\cdot x)$.}
\label{fig:negative}
\end{figure}

In fact, suppose, as an example, that $(a,b)$, $(a',b')$ are both in blocks with the label $R_\times$.
In this case, each of the $R_\times$ blocks is in a strictly larger
block $R'_\times$ of $G_{w_k,n,p,q}$ (see Figure~\ref{fig:negative}).
Since $w_k=w$, the $(2w+1) \times (2w+1)$ rectangle centered at $(a,b)$ lies
entirely within the block $R'_\times$, and likewise for $(a',b')$.
Since $x$ is identical on the various $R'_\times$ blocks, and the $(2w+1) \times (2w+1)$ rectangle
determines the value of $f_0$, if follows that $f_0((a,b)\cdot x)=f_0((a',b')\cdot x)$,
and so $g'(a,b)=g'(a',b')$.

Suppose next that $(a,b)$, $(a',b')$ are in blocks of $G_{n,p,q}$ with label $R_a$ (the other cases are identical). In this case,
the $R_a$ block containing $(a,b)$ is not contained in the corresponding
$R'_a$ block of $G_{w_k,n,p,q}$, as the $R'_a$ block is smaller in one direction by $2w_k$
than the $R_a$ block. However, every time an $R_a$ block appears in a $G^i_{n,p,q}$,
the block immediately above and below have the label $R_\times$.
Thus, for an $(a,b)$ in an $R_a$ block, the $(2w+1) \times (2w+1)$ rectangle about $(a,b)$
is contained in the union of two adjacent blocks with labels $R'_\times$ and $R'_a$.
Since $x$ is identical on the various $R'_\times$ blocks, and also
on the various $R'_a$ blocks, it again follows that $f_0((a,b)\cdot x)=f_0((a',b')\cdot x)$,
and so $g'(a,b)=g'(a',b')$.

In summary, if $(a, b)$ and $(a', b')$ are vertices in $G_{n,p,q}$ such that they are in the same
positions in blocks with the same label, then $g'(a, b)=g'(a', b')$. It follows that $g'$ induces a map
$g \colon \gnpq \to \bsft$. To see that $g$ respects $Y$, consider a forbidden pattern $p_h$ where $1\leq h\leq m$, and assume
$\dom(p_h)=[0,a)\times [0,b)$. By our assumption $a, b\leq n+1$.
Let $\varphi \colon \dom(p_h) \to \gnpq$ be a $\Z^2$-homomorphism. From Lemma~\ref{lem:pbg},
there is a $\Z^2$-homomorphism $\psi \colon \dom(p_h) \to G^i_{n,p,q}$
for some $1\leq i\leq 12$ such that $g\circ \varphi= g' \circ \psi$.
We need to see that $g\circ \varphi= g' \circ \psi$ is not equal to $p_h$. From the definition of $g'$, we have that
$$g'\circ \psi (a', b')=f_0(\psi(a', b')\cdot x)=f(x)(\psi(a', b'))$$ for all $a'\in [0,a)$ and $b'\in [0,b)$.
Since $f$ maps into $Y$, $f(x)\res \psi(\dom(p_h))$ is not equal to $p_h$, and hence $g'\circ \psi$ is not equal to $p_h$.
\end{proof}

\section{Proof of the positive direction} \label{subsec:postile}

In this section we show the (\ref{mtb})$\Rightarrow$(\ref{mta}) direction
of Theorem~\ref{thm:tilethm}. The proof
will follow easily from the main technical result below. In the following the finite graph $\gnpq$ is
viewed as having the discrete topology.

\begin{thmn} \label{thm:mtr}
Let $n <p,q$ with $\gcd(p,q)=1$. Then there is a continuous $\Z^2$-homomorphism
$\varphi \colon F(2^{\Z^2}) \to \gnpq$.
\end{thmn}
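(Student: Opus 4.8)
The plan is to build the continuous $\Z^2$-homomorphism $\varphi\colon F(2^{\Z^2})\to\gnpq$ by superimposing two clopen marker structures on $F(2^{\Z^2})$ — one with horizontal periods coming from the pair $(p,q)$ and one with vertical periods — and then reading off which tile (and which vertex of that tile) each point lies in. Concretely, first I would invoke the basic marker lemma (Lemma~\ref{bml}) and the standard refinements of it to produce, for a suitably large parameter $N$ (chosen so that every integer $\geq N$ is a nonnegative integer combination of $p$ and $q$, exactly as in the one-dimensional Theorem~\ref{thm:twotilesthm}), a clopen ``grid'' structure: a clopen complete section whose intersection with each orbit cuts $\Z^2$ into rectangular regions whose side lengths in the $e_1$-direction all lie in $\{p,q\}$ (up to an initial offset) and similarly in the $e_2$-direction. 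The coprimality $(p,q)=1$ is what guarantees, via the numerical-semigroup fact, that such a spacing can be realized clopen-ly; this is the two-dimensional analogue of the ``overlapping copies of $T_1$ and $T_2$'' argument in the proof of Theorem~\ref{thm:twotilesthm}.

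Next I would use this clopen grid to define $\varphi$ locally. Each point $x\in F(2^{\Z^2})$ sits at a definite position relative to the nearest grid lines to its left/right and below/above; the pair of horizontal gap-types ($p$ or $q$ on each side, or being inside an $R_\times$ overlap region) together with the vertical gap-types determines which of the twelve tiles $T^i_{n,p,q}$ the local picture matches, and the offsets determine which vertex of that tile to send $x$ to. One must check this assignment is consistent with the block-identifications defining $\gnpq$ (so that it genuinely lands in the quotient $\gnpq$ rather than just in $\bigsqcup_i T^i_{n,p,q}$): a point in an $R_\times$, $R_a$, $R_b$, $R_c$, or $R_d$ block must be sent to the same vertex regardless of which tile the local data is ``read through,'' and this is precisely why the twelve tiles were chosen to realize the commutativity and torus homotopies — the long horizontal/vertical and commutativity tiles force the various local readings to agree. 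Continuity is immediate since the whole construction factors through finitely many clopen conditions on a bounded window of $x$. Finally, verifying that $\varphi$ is a $\Z^2$-homomorphism amounts to checking that adjacent points $x$ and $e_i\cdot x$ are sent to $e_i$-adjacent vertices of $\gnpq$; within a single tile this is built in, and across block boundaries it follows from the identifications, using Lemma~\ref{zghom}'s description of $\Z^2$-homomorphisms into $\gnpq$ as consistent tilings — indeed the cleanest formulation is to produce, for each orbit, a tiling $\{C_j\}$ of $\Z^2$ by copies of the twelve tile grid graphs satisfying conditions (\ref{zghoma})–(\ref{zghomc}) of Lemma~\ref{zghom} (with $n$ replaced by $2$, or by $n$ to get the stronger window property), depending clopen-ly on $x$, and then let Lemma~\ref{zghom} hand back the homomorphism.

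The main obstacle, I expect, is the combinatorial bookkeeping needed to show the local tile-assignment is well-defined on $\gnpq$ and genuinely continuous/clopen — in other words, organizing the clopen grid so that the twelve tile types (torus, commutativity, long horizontal, long vertical) exactly exhaust the local configurations that can occur and so that the overlap/identification constraints are automatically satisfied. This is where the specific dimensions in Table~\ref{table:12tiles} and the coprimality hypothesis really get used: one needs, for example, that a horizontal run of $R_\times$-separated regions of widths in $\{p,q\}$ can always be ``resynchronized'' over a distance of $pq$ (hence the long tiles of horizontal extent $pq+n$), and that near a corner the horizontal and vertical resynchronizations can be performed independently (hence the commutativity tiles). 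I would set up the clopen grid with enough slack ($N$ large, and an $n$-sized buffer of $R_\times$ around everything) that each of these finitely many local patterns is forced, then check case by case that the induced map respects all block identifications. Once that is done, the two implications (\ref{mtb})$\Rightarrow$(\ref{mta}) of Theorems~\ref{thm:tilethm} and~\ref{thm:tilethmb} follow immediately: given $g\colon\gnpq\to b$ respecting the subshift $S$ (respectively an $\CL$-structuring of $\gnpq$ satisfying $\psi$), the composite $g\circ\varphi$ (respectively the pullback structuring along $\varphi$) is the desired continuous equivariant map $F(2^{\Z^2})\to S$ (respectively the desired continuous $\CL$-structuring of $F(2^{\Z^2})$), because ``respects $S$'' is exactly the pullback condition from Definition~\ref{def:resp} and $\varphi$ being a $\Z^2$-homomorphism transports it to $F(2^{\Z^2})$.
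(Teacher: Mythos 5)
Your first step---producing ``a clopen complete section whose intersection with each orbit cuts $\Z^2$ into rectangular regions whose side lengths in the $e_1$-direction all lie in $\{p,q\}$ \ldots\ and similarly in the $e_2$-direction''---is exactly where the argument breaks. If such a clopen grid existed, the set of points lying on a vertical grid line would be a clopen, complete, co-complete \emph{vertical} line section of $F(2^{\Z^2})$, and by symmetry of the argument given in \S\ref{sec:basicresults}, no Borel (let alone clopen) complete and co-complete horizontal or vertical line section exists. So the two-dimensional ``grid'' you are asking Lemma~\ref{bml} and its ``standard refinements'' to supply does not exist; the one-dimensional version in Theorem~\ref{thm:twotilesthm} does not lift to two dimensions in this way, and overcoming that fact is precisely the content of Theorem~\ref{thm:mtr}. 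Once the grid is assumed, the rest of your proposal collapses to ``read off the nearest row and column gap-types,'' which is why you find yourself effectively using only the four torus tiles and waving at the other eight.

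The paper's proof takes a genuinely different route. It does \emph{not} try to align $p$- and $q$-spacings into a grid. Instead it invokes the rectangular marker lemma (Lemma~\ref{lem:soml}) at a scale $d\gg p,q$, whose classes are big rectangles that do \emph{not} line up into rows and columns; it extracts from them an irregular network (the ``scaffolding'') of horizontal and vertical segments; and it tiles those segments by alternating $R_\times$ with $R_a/R_b$ or $R_c/R_d$ blocks, with a deliberate sparsity condition on $R_b$ and $R_d$. This is where $(p,q)=1$ enters---each segment has length $>d$ and can be written as a nonnegative combination of $p$ and $q$. The remaining work, and really the heart of the proof, is a finitary but delicate set of algorithms (I)--(III) for filling the big rectangular interiors so that the boundary prescriptions match: propagating $R_b$/$R_d$ instances downward and sideways using the torus tiles, repositioning them diagonally with the commutativity tiles ($\Tdcadca$, etc.), and gathering/converting runs of them with the long tiles ($\Tcqadpa$, $\Tdpacqa$, etc.). Your proposal does gesture at the right high-level picture (use Lemma~\ref{zghom}, so build a clopen tiling of each orbit by the twelve grid graphs), and the observation that continuity is automatic once the construction is local is correct, but the central combinatorial construction---what the long and commutativity tiles are \emph{for}, and why a non-grid marker structure suffices---is missing, and the step you propose to replace it with is provably impossible.
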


\begin{proof}
Fix $n<p,q$ with $\gcd(p,q)=1$. We will need the following lemma which is a simple special case of the
``orthogonal marker construction'' of \cite{gao_countable_2015} (see the proof of Theorem 3.1 in \cite{gao_countable_2015}).
For $s=(u,v)\in \Z^2$ let $\|s\|_1=|u|+|v|$ be the $\ell_1$-norm. For $x, y\in F(2^{\Z^2})$ which are equivalent, define $\rho_1(x,y)=\|s\|_1$, where $s\in \Z^2$ is the unique element such that $s\cdot x=y$.

\begin{lemn} \label{lem:soml}
Let $d>1$ be an integer.  Then there is a clopen subequivalence relation $E_d$
of $F(2^{\Z^2})$ satisfying:

\begin{enumerate}
\item \label{soml_a}
Each equivalence class of $E_d$ is rectangular, that is, isomorphic to a grid
subgraph of $\Z^2$.
\item \label{soml_b}
If $x$, $y$ are $F(2^{\Z^2})$ equivalent and both are corner points of their respective
$E_d$ classes, then either $\rho_1(x,y)\leq 1$ or $\rho_1(x,y)>d$.
\end{enumerate}

\end{lemn}

Fix $d$ large compared to $p,q$, say $d>p^{10}q^{10}$, and fix the clopen
subequivalence relation $E_d$ as in Lemma~\ref{lem:soml}. We say $x \in F(2^{\Z^2})$
is a {\em corner point} if it is a corner point of the rectangular grid graph isomorphic
to its $E_d$ class. Since the rectangular $E_d$ classes partition each $F(2^{\Z^2})$
class, it is easy to see that the corner points occur in groups of $2$ points,
such that the points in a group are within $\rho_1$ distance $1$ of each other.
Consider the set $C\subseteq F(2^{\Z^2})$ of {\em canonical corner points}, which
are the lower-left corner points of the rectangular grid-graph isomorphic
to its $E_d$ class. Each corner point occurs in a group of $4$ points, which we call
a {\em corner group}, consisting of a $2 \times 2$ square, of which two points are corner points and
the other two are boundary points of an $E_d$ class.
Figure~\ref{fig:pa} illustrates the possible arrangements involving
corner points, with the canonical corner points and corner groups  shown.

\begin{figure}[htbp]
\centering

\subfloat[][]
\centering
{
\begin{tikzpicture}[scale=0.03]
\begin{scope}
\draw (-30,0) to (30,0);
\draw (-30,5) to (0,5) to (0,30);
\draw (5,30) to (5,5) to (30,5);
\draw[fill,radius=1,black] (5,5) circle;
\draw[fill,radius=1,green] (0,0) circle;
\draw[fill,radius=1,green] (0,5) circle;
\draw[fill,radius=1,green] (5,0) circle;

\end{scope}
\end{tikzpicture}
}
\qquad
\subfloat[][]
\centering
{
\begin{tikzpicture}[scale=0.03]
\begin{scope}
\draw (0,-30) to (0,30);
\draw (5,30) to (5,5) to (30,5);
\draw (5,-30) to (5,0) to (30,0);
\draw[fill,radius=1,black] (5,5) circle;
\draw[fill,radius=1,green] (0,0) circle;
\draw[fill,radius=1,green] (0,5) circle;
\draw[fill,radius=1,green] (5,0) circle;

\end{scope}
\end{tikzpicture}
}
\qquad
\subfloat[][]
\centering
{
\begin{tikzpicture}[scale=0.03]
\begin{scope}
\draw (-30,0) to (0,0) to (0,-30);
\draw (5,-30) to (5,0) to (30,0);;
\draw (-30,5) to (30,5);;
\draw[fill,radius=1,green] (5,5) circle;
\draw[fill,radius=1,green] (0,0) circle;
\draw[fill,radius=1,green] (0,5) circle;
\draw[fill,radius=1,green] (5,0) circle;

\end{scope}
\end{tikzpicture}
}
\qquad
\subfloat[][]
\centering
{
\begin{tikzpicture}[scale=0.03]
\begin{scope}
\draw (-30,0) to (0,0) to (0,-30);
\draw (-30,5) to (0,5) to (0,30);
\draw (5,-30) to (5,30);
\draw[fill,radius=1,green] (5,5) circle;
\draw[fill,radius=1,green] (0,0) circle;
\draw[fill,radius=1,green] (0,5) circle;
\draw[fill,radius=1,green] (5,0) circle;

\end{scope}
\end{tikzpicture}
}

\caption{Arrangements involving corner points. The canonical corner points are shown in black.}
\label{fig:pa}
\end{figure}

For each $x \in F(2^{\Z^2})$, consider a rectangular $E_d$ class $R \subseteq [x]$. Say
$R$ is an $a \times b$ rectangle with lower-left corner at $y \in [x]$.
By the {\em enlargement} $R'$ of $R$ we mean the subset of $[x]$
isomorphic to the rectangular grid graph of dimensions $(a+1)\times (b+1)$
with lower-left corner point at $y$. Thus, $R'$ is obtained from $R$
by extending its right and top edges by one. The enlarged rectangles
are no longer disjoint, but have overlapping boundaries.

\begin{defnn} \label{def:scaf}
The {\em scaffolding} $S_x\subseteq [x]$, for $x \in F(2^{\Z^2})$,
is the set of points in the boundaries of the enlarged $E_d$ classes in $[x]$.
\end{defnn}

By considering the cases shown in Figure~\ref{fig:pa}, it is easy to see that the
scaffolding can also be obtained in the following manner. From each canonical
corner-point $y \in [x]$, draw a horizontal line to the right, and a vertical line
in the upwards direction, up to and including the first point which is not $E_d$
equivalent to $y$. The scaffolding is the set of points in $[x]$ which lie
on these lines. By definition, every canonical corner point is in the scaffolding.
The Cayley graph structure on $\Z^2$ induces a graph on the scaffolding. Each
point of the scaffolding has degree $2$ or $3$, with the points of degree $3$
being the upper-right points of the corner groups. The connected components
of $[x]-S_x$  are the $E_d$ classes minus the left column and bottom row of each
$E_d$ class. Figure~\ref{fig:scaffolding} gives a local view of the scaffolding in relation to the corner groups.

\begin{figure}[htbp]
\centering

\subfloat[][]
\centering
{
\begin{tikzpicture}[scale=0.03]
\begin{scope}
\draw (-30,0) to (30,0);
\draw (-30,5) to (0,5) to (0,30);
\draw (5,30) to (5,5) to (30,5);
\draw[line width=0.05cm,orange] (-30,5) to (30,5);
\draw[line width=0.05cm,orange] (5,5) to (5,30);
\draw[fill,radius=1,black] (5,5) circle;
\draw[fill,radius=1,green] (0,0) circle;
\draw[fill,radius=1,green] (0,5) circle;
\draw[fill,radius=1,green] (5,0) circle;
\end{scope}
\end{tikzpicture}
}
\qquad
\subfloat[][]
\centering
{
\begin{tikzpicture}[scale=0.03]
\begin{scope}
\draw (0,-30) to (0,30);
\draw (5,30) to (5,5) to (30,5);
\draw (5,-30) to (5,0) to (30,0);
\draw[line width=0.05cm,orange] (5, -30) to (5, 30);
\draw[line width=0.05cm,orange] (5,5) to (30,5);
\draw[fill,radius=1,black] (5,5) circle;
\draw[fill,radius=1,green] (0,0) circle;
\draw[fill,radius=1,green] (0,5) circle;
\draw[fill,radius=1,green] (5,0) circle;

\end{scope}
\end{tikzpicture}
}
\qquad
\subfloat[][]
\centering
{
\begin{tikzpicture}[scale=0.03]
\begin{scope}
\draw (-30,0) to (0,0) to (0,-30);
\draw (5,-30) to (5,0) to (30,0);;
\draw (-30,5) to (30,5);;
\draw[line width=0.05cm,orange] (-30,5) to (30,5);
\draw[line width=0.05cm,orange] (5,-30) to (5,5);
\draw[fill,radius=1,green] (5,5) circle;
\draw[fill,radius=1,green] (0,0) circle;
\draw[fill,radius=1,green] (0,5) circle;
\draw[fill,radius=1,green] (5,0) circle;

\end{scope}
\end{tikzpicture}
}
\qquad
\subfloat[][]
\centering
{
\begin{tikzpicture}[scale=0.03]
\begin{scope}
\draw (-30,0) to (0,0) to (0,-30);
\draw (-30,5) to (0,5) to (0,30);
\draw (5,-30) to (5,30);
\draw[line width=0.05cm,orange] (-30,5) to (5,5);
\draw[line width=0.05cm,orange] (5,-30) to (5,30);
\draw[fill,radius=1,green] (5,5) circle;
\draw[fill,radius=1,green] (0,0) circle;
\draw[fill,radius=1,green] (0,5) circle;
\draw[fill,radius=1,green] (5,0) circle;

\end{scope}
\end{tikzpicture}
}

\caption{A local view of the scaffolding $S_x$. Points in $S_x$ are in orange.}
\label{fig:scaffolding}
\end{figure}

According to Lemmas~\ref{zghom} and \ref{zghomconverse}, defining a continuous $\Z^2$-homomorphism $\varphi
\colon F(2^{\Z^2})\to \gnpq$ is the equivalent to describing a tiling $\{ C_k\}$
of each equivalence class
$[x]$ by the grid graphs $G^i_{n,p,q}$ (as in Lemma~\ref{zghomconverse}) which is clopen in the sense that
for each $i \in \{1,\dots,12\}$, the set of $x \in F(2^{\Z^2})$ which are lower-left corners
of a $C_k$ isomorphic to $G^i_{n,p,q}$ is clopen in $F(2^{\Z^2})$.

Let $x \in F(2^{\Z^2})$ and we define the tiling of $[x]$ by the grid graphs
$G^i_{n,p,q}$. First we use the scaffolding $S_x$ as follows. For each point $y \in [x]$
which is of degree $3$ in $S_x$, we put down a copy of the $R_\times$ block
``centered" at $y$, meaning we place the lower-left corner of the $R_\times$ block at $(-\frac{n}{2},-\frac{n}{2}) \cdot y$ if $n$ is even and at $(-\frac{n-1}{2}, -\frac{n-1}{2}) \cdot y$ if $n$ is odd.

Second, we tile the scaffolding between the $R_\times$ copies around the degree
three points. Suppose $y$ and $z$ in $[x]$ are degree three points of $S_x$
such that there is a horizontal or vertical segment of $S_x$ connecting $y$ and $z$ with no degree three points of $S_x$ on this segment other than $y$ and $z$.
Suppose that the segment of $S_x$ connecting $y$ and $z$ is vertical. We then tile the
vertical column between the two copies of $R_\times$ around $y$ and $z$ by alternating
copies of an $R_\times$ tile with a $R_a$ or $R_b$ tile. Recall that $\rho(y,z)
>p^{10}q^{10}$. Since $\gcd(p,q)=1$ and
\begin{align*}
\mathrm{height}(R_\times)+\mathrm{height}(R_a)&=p, \\
\mathrm{height}(R_\times)+\mathrm{height}(R_b)&=q,
\end{align*}
it is possible to tile the vertical column
in this manner. Furthermore, we require that the copies of $R_b$ are sparse
in the following sense: the distance between any two copies of $R_b$ block is
at least $p^5q^5$, and the distance from any $R_b$ copy to the starting and ending $R_\times$
blocks is also at least $p^5q^5$. This is possible since at most $p$ many
of the $R_b$ blocks are needed for the tiling, because a consideration of sizes gives that every $p$ many $R_b$ blocks can be replaced by $q$ many $R_a$ blocks. Similarly, if the segment of $S_x$ connecting $y$ and $z$ is horizontal, we tile the horizontal regions between
them in the same manner, using $R_\times$ alternating with
$R_c$ or $R_d$ blocks, and have the same sparsity condition on the $R_d$.
Figure~\ref{fig:scaftile} illustrates this step of the construction.

\pgfmathsetmacro{\xboxsize}{0.10cm}
\pgfmathsetmacro{\pqscale}{0.08cm}
\begin{figure}[htbp]
\begin{tikzpicture}[pqscaling]

\foreach \i/\j in {
    19/0,20/0,21/0,
    22/1,23/1,24/1,
    25/2,26/2,27/2,28/2,29/2,30/2,31/2,32/2,33/2,34/2,35/2,
    36/3,37/3,38/3,
    39/4,40/4,41/4,
    42/5,43/5,44/5,45/5,46/5,47/5,48/5,49/5,50/5,51/5,
    52/6,53/6
    } {
    \pic at (\i*\pnum + \j*\qnum, 0) {cFrame};
    \pic at (\i*\pnum + \j*\qnum, 0) {xFrame};
}
\foreach \i/\j in {22/0,25/1,36/2,39/3,42/4,52/5} {
    \pic at  (\i*\pnum + \j*\qnum, 0) {dFrame};
    \pic at  (\i*\pnum + \j*\qnum, 0) {xFrame};
    \node[scale=0.75] at (\i*\pnum + \j*\qnum+4, 3) {$R_d$};
}

\foreach \i/\j in {
    0/0,1/0,2/0,3/0,4/0,
    5/1,6/1
    } {
    \pic at (30*\pnum + 2*\qnum, \i*\pnum +         \j*\qnum) {aFrame};
    \pic at (30*\pnum + 2*\qnum, \i*\pnum + \pnum + \j*\qnum) {xFrame};
}
\pic at (30*\pnum + 2*\qnum, 5*\pnum) {bFrame};
\pic at (30*\pnum + 2*\qnum, 5*\pnum+\qnum) {xFrame};
\node[scale=0.75] at (30*\pnum + 2*\qnum+4, 5*\pnum+4) {$R_b$};

\foreach \i/\j in {
    0/0,1/0,2/0,3/0,4/0,
    5/1,6/1
    } {
    \pic at (47*\pnum + 5*\qnum, -\i*\pnum - \pnum - \j*\qnum) {aFrame};
    \pic at (47*\pnum + 5*\qnum, -\i*\pnum - \pnum - \j*\qnum) {xFrame};
}
\pic at (47*\pnum + 5*\qnum, -5*\pnum-\qnum) {bFrame};
\pic at (47*\pnum + 5*\qnum, -5*\pnum-\qnum) {xFrame};
\node[scale=0.75] at (47*\pnum + 5*\qnum+4, -5*\pnum-\qnum+4) {$R_b$};

\end{tikzpicture}
\caption[Edge boundaries for marker regions.]{Edge boundaries for marker regions.
$R_\times$ blocks alternate with $R_\ast$ blocks with $R_\ast\in\{R_a,R_b,R_c,R_d\}$.
Instances of $R_b$ and $R_d$ are labeled.  This figure differs from the actual construction
in that instances of $R_b$ and $R_d$ in the actual construction are much less
common than depicted: we require that no two copies of $R_b$ or $R_d$ blocks be within
distance $p^5q^5$ of one another.}
\label{fig:scaftile}
\end{figure} 

Let $S'_x \subseteq [x]$ denote the points in $[x]$ which have been covered by
tiles at this point in the construction. Note that $[x]-S'_x$ is a collection of
pairwise disjoint rectangular grid graphs, surrounded by alternating copies of
$R_\times$ and $R_a / R_b$ on the left and right edges, and alternating
$R_\times$ with $R_c/R_d$ on the top and bottom edges. The corners of the boundary tiling are
$R_\times$ copies, and the edges satisfy the sparsity conditions on $R_b$ and $R_d$.
The remainder of the construction is entirely finitary. We show that any such rectangular
region (including the boundary tiles) can be tiled with the grid graphs $G^i_{n,p,q}$
in a manner consistent with the boundary tiles.

Let $\CM$ denote a finite rectangular grid graph whose boundary has been tiled with
sub-blocks $R_\times$ and $R_*\in \{R_a,R_b,R_c,R_d\}$ as described above. We will describe three tiling algorithms we will use in combination to fill the interior of $\CM$.
Each algorithm is used to fill a rectangular grid graph with certain part of the boundary already specified.

The first algorithm, which we call algorithm (I), deals with a rectangular grid graph $\mathcal{N}$ with its boundary already tiled with alternating $R_\times$ blocks and $R_*$ blocks, where $R_*\in\{R_a, R_b, R_c, R_d\}$, such that only one kind of labeled blocks appear in the left and right columns and only one kind of blocks appear in the top and bottom rows. For instance, suppose $\mathcal{N}$ has a boundary devoid of copies of $R_b$ or $R_d$.
In this case we simply fill the entire grid graph with overlapping copies of tile~$\Gcaca$.
This is depicted in Figure~\ref{fig:algorithmI}. The other instances of algorithm (I) are similar, with the $\Gcaca$ tiles replaced by the appropriate tiles $\Gdada$, $\Gcbcb$, or $\Gdbdb$.

\pgfmathsetmacro{\xboxsize}{0.3cm}
\pgfmathsetmacro{\pqscale}{0.5cm}
\begin{figure}[htbp]
\begin{tikzpicture}[pqscaling]
\foreach \k in {0,1,2} {
    \foreach \i/\j in {0/0,1/0,2/1,3/1} {
        \pic at (\i*\pnum + \j*\pnum, \k*\pnum) {TcacaLabel};
    }
    \foreach \i/\j in {2/0} {
        \pic at (\i*\pnum + \j*\pnum, \k*\pnum) {TcacaLabel};
    }
    \pic at (4*\pnum + \pnum, \k*\pnum) {aFrame};
    \pic at (4*\pnum + \pnum, \k*\pnum) {xFrame};
}

\foreach \i/\j in {0/0,1/0,2/1,3/1} {
    \pic at (\i*\pnum + \j*\pnum, 3*\pnum) {cFrame};
    \pic at (\i*\pnum + \j*\pnum, 3*\pnum) {xFrame};
}
\foreach \i/\j in {2/0} {
    \pic at (\i*\pnum + \j*\pnum, 3*\pnum) {cFrame};
    \pic at (\i*\pnum + \j*\pnum, 3*\pnum) {xFrame};
}

\pic at (4*\pnum + \pnum, 3*\pnum) {xFrame};

\end{tikzpicture}
\caption{An instance of algorithm (I) with boundaries consisting of only $R_\times, R_a$ and $R_c$ blocks.}
\label{fig:algorithmI}
\end{figure} 

The next two algorithms deal with rectangular grid graphs where at least one of the $R_*$ blocks are missing. For the sake of definiteness, let us consider a rectangular grid graph $\mathcal{N}$ where $R_b$ blocks are missing, i.e., the left and right boundaries are filled with $R_\times$ blocks alternating with $R_a$ blocks. Moreover, we assume that only the top boundary is specified. In this case, algorithm (I) can still be used to achieve a propagation of the arrangement of the top boundary down through the grid graph. Figure~\ref{fig:main-positive-downward-propagation} illustrates this situation.

\pgfmathsetmacro{\xboxsize}{0.3cm}
\pgfmathsetmacro{\pqscale}{0.5cm}
\begin{figure}[htbp]
\begin{tikzpicture}[pqscaling]
\foreach \k in {0,1,2} {
    \foreach \i/\j in {0/0,1/0,2/1,3/1} {
        \pic at (\i*\pnum + \j*\qnum, \k*\pnum) {TcacaLabel};
    }
    \foreach \i/\j in {2/0} {
        \pic at (\i*\pnum + \j*\qnum, \k*\pnum) {TdadaLabel};
    }
    \pic at (4*\pnum + \qnum, \k*\pnum) {aFrame};
    \pic at (4*\pnum + \qnum, \k*\pnum) {xFrame};
}

\foreach \i/\j in {0/0,1/0,2/1,3/1} {
    \pic at (\i*\pnum + \j*\qnum, 3*\pnum) {cFrame};
    \pic at (\i*\pnum + \j*\qnum, 3*\pnum) {xFrame};
}
\foreach \i/\j in {2/0} {
    \pic at (\i*\pnum + \j*\qnum, 3*\pnum) {dFrame};
    \pic at (\i*\pnum + \j*\qnum, 3*\pnum) {xFrame};
}

\pic at (4*\pnum + \qnum, 3*\pnum) {xFrame};

\end{tikzpicture}
\caption{Using only algorithm (I), we achieve a downward propagation of an instance of $R_d$ using
tile $\Gdada$ through a region mostly tiled with $\Gcaca$.}
\label{fig:main-positive-downward-propagation}
\end{figure} 

It is obvious that a similar algorithm can achieve horizontal propagation of the left boundary tiling if one of $R_c$ or $R_d$ is missing from the top and bottom boundary.

Our next algorithm, algorithm (II), seeks to propagate the top edge tiling downward while moving the positions of the
$R_d$ tiles to the left or right. To do this we need to use the $\Gdcadca$ tiles as shown in
Figure~\ref{fig:main-positive-diagonal-propagation}.

\pgfmathsetmacro{\xboxsize}{0.1 cm}
\pgfmathsetmacro{\pqscale}{0.3cm}
\begin{figure}[htbp]
\begin{tikzpicture}[pqscaling]
\foreach \i/\j in {-1/0,0/0,1/1,2/1,3/1,4/1,5/1} {
    \pic at (\i*\pnum + \j*\qnum,   \pnum) {Tcaca};
    \pic at (\i*\pnum + \j*\qnum, 2*\pnum) {xFrame};
    \pic at (\i*\pnum + \j*\qnum, 2*\pnum) {cFrame};
}
\pic at (\pnum,   \pnum) {TdadaLabel};
\pic at (\pnum, 2*\pnum) {xFrame};
\pic at (\pnum, 2*\pnum) {dFrame};

\foreach \i/\j in {-1/0,0/0,1/0,2/0,3/0,4/1,5/1} {
    \pic at (\i*\pnum + \j*\qnum, -3*\pnum) {Tcaca};
}
\pic at (4*\pnum, -3*\pnum) {TdadaLabel};

\foreach \k in {0,1,2} {
    \foreach \i in {-1,...,\k} {
        \pic at (\i*\pnum, -\k*\pnum) {Tcaca};
    }
    \foreach \i in {\k,...,3} {
        \pic at (\i*\pnum + 2*\pnum + \qnum, -\k*\pnum) {Tcaca};
    }

    \pic at (\k*\pnum + \pnum, -\k*\pnum) {TdcadcaLabel};
}

\pic at (6*\pnum + \qnum, 2*\pnum) {xFrame};
\foreach \i in {-3,...,1} {
    \pic at (6*\pnum + \qnum, \i*\pnum) {xFrame};
    \pic at (6*\pnum + \qnum, \i*\pnum) {aFrame};
}

\end{tikzpicture}
\caption{Algorithm (II) achieves ``diagonal'' propagation of an instance of $R_d$
using tile~$\Gdcadca$, through a region mostly tiled with $\Gcaca$ (labels omitted to reduce clutter).}
\label{fig:main-positive-diagonal-propagation}
\end{figure} 

A similar version of algorithm (II) can achieve diagonal propagation
of an instance of $R_b$ though a region mostly tiled with $\Gcaca$.

We next describe the final algorithm, algorithm (III). Again the algorithm has various versions.
Here we concentrate on the version where the rectangular grid graph $\mathcal{N}$ has
exactly $q+3$ many $R_a$ blocks on the left and right boundaries and no $R_b$ blocks,
but we assume that the top boundary of $\mathcal{N}$ has been tiled with $R_c$ and $R_d$
blocks with the scarcity condition on $R_d$. The objective of this algorithm is to tile $\mathcal{N}$
so that the bottom boundary will be first tiled with less than $p$ many $R_d$ blocks and
then with $R_c$ blocks for the rest of the boundary. Note that the number of $R_d$ blocks
is uniquely determined by the top boundary. Suppose the top boundary contains $j$ many $R_c$ blocks
and $k$ many $R_d$ blocks. Then the size of the top boundary is $jp+kq+n$.
The number of $R_d$ blocks on the bottom boundary is then projected to be
$$ k-p\left\lfloor\frac{k}{p}\right\rfloor, $$
while there would be
$$ j+q\left\lfloor\frac{k}{p}\right\rfloor $$
many copies of $R_c$ on the bottom boundary. The reader should consult Figure~\ref{fig:main-positive-coprime-lemma}
in the following description of algorithm (III).

\pgfmathsetmacro{\xboxsize}{0.05cm}
\pgfmathsetmacro{\pqscale}{0.065cm}
\begin{figure}
\begin{tikzpicture}[pqscaling]
  \draw [decorate,decoration={brace,amplitude=7pt}]
  (0,9*\pnum) -- (8*\pnum*\qnum + 2*\qnum+2*\pnum,9*\pnum) node [scale=0.8, black,midway, above, yshift=10pt]
        {$j$ copies of $R_c$ and $k$ copies of $R_d$};

  \draw [decorate,decoration={brace,amplitude=7pt}]
  (8*\pnum*\qnum + 2*\qnum + 2*\pnum,-1*\pnum) -- (2*\qnum,-1*\pnum) node [scale=0.8, black, midway, below, yshift=-10pt]
        {$j+q\floorfrac{k}{p}$ copies of $R_c$};
  \draw [decorate,decoration={brace,amplitude=7pt}]
  (2*\qnum,-1*\pnum) -- (0,-1*\pnum) node [scale=0.8, black, midway, below, yshift=-10pt,xshift=30pt]
        {$k-p\floorfrac{k}{p}$ copies of $R_d$};

\foreach \i/\j in {5/0,15/1,24/2,33/3,42/4} {
    \pic at (\i*\pnum + \j*\qnum, 8*\pnum) {dFrame};
    \pic at (\i*\pnum + \j*\qnum, 8*\pnum) {xFrame};
}
\foreach \i/\j in {
     0/0, 1/0, 2/0, 3/0, 4/0,
     5/1, 6/1, 7/1, 8/1, 9/1,10/1,11/1,12/1,13/1,14/1,
    15/2,16/2,17/2,18/2,19/2,20/2,21/2,22/2,23/2,
    24/3,25/3,26/3,27/3,28/3,29/3,30/3,31/3,32/3,
    33/4,34/4,35/4,36/4,37/4,38/4,39/4,40/4,41/4,
    42/5,43/5,44/5,45/5,46/5,47/5,48/5
    } {
    \pic at (\i*\pnum + \j*\qnum, 8*\pnum) {cFrame};
    \pic at (\i*\pnum + \j*\qnum, 8*\pnum) {xFrame};
}

\foreach \i/\j in {5/0,24/2,33/3,42/4} {
    \pic at (\i*\pnum + \j*\qnum, 7*\pnum) {Tdada};
}
\foreach \i/\j in {15/1} {
    \pic at (\i*\pnum + \j*\qnum, 7*\pnum) {Tdcadca};
}
\foreach \i/\j in {
     0/0, 1/0, 2/0, 3/0, 4/0,
     5/1, 6/1, 7/1, 8/1, 9/1,10/1,11/1,12/1,13/1,14/1,
    16/2,17/2,18/2,19/2,20/2,21/2,22/2,23/2,
    24/3,25/3,26/3,27/3,28/3,29/3,30/3,31/3,32/3,
    33/4,34/4,35/4,36/4,37/4,38/4,39/4,40/4,41/4,
    42/5,43/5,44/5,45/5,46/5,47/5,48/5
    } {
    \pic at (\i*\pnum + \j*\qnum, 7*\pnum) {Tcaca};
}

\foreach \i/\j in {5/0,24/2,33/3,42/4} {
    \pic at (\i*\pnum + \j*\qnum, 6*\pnum) {Tdada};
}
\foreach \i/\j in {16/1} {
    \pic at (\i*\pnum + \j*\qnum, 6*\pnum) {Tdcadca};
}
\foreach \i/\j in {
     0/0, 1/0, 2/0, 3/0, 4/0,
     5/1, 6/1, 7/1, 8/1, 9/1,10/1,11/1,12/1,13/1,14/1,15/1,
    17/2,18/2,19/2,20/2,21/2,22/2,23/2,
    24/3,25/3,26/3,27/3,28/3,29/3,30/3,31/3,32/3,
    33/4,34/4,35/4,36/4,37/4,38/4,39/4,40/4,41/4,
    42/5,43/5,44/5,45/5,46/5,47/5,48/5
    } {
    \pic at (\i*\pnum + \j*\qnum, 6*\pnum) {Tcaca};
}

\foreach \i/\j in {5/0,33/3,42/4} {
    \pic at (\i*\pnum + \j*\qnum, 5*\pnum) {Tdada};
}
\foreach \i/\j in {17/1,24/2} {
    \pic at (\i*\pnum + \j*\qnum, 5*\pnum) {Tdcadca};
}
\foreach \i/\j in {
     0/0, 1/0, 2/0, 3/0, 4/0,
     5/1, 6/1, 7/1, 8/1, 9/1,10/1,11/1,12/1,13/1,14/1,15/1,16/1,
    18/2,19/2,20/2,21/2,22/2,23/2,
    25/3,26/3,27/3,28/3,29/3,30/3,31/3,32/3,
    33/4,34/4,35/4,36/4,37/4,38/4,39/4,40/4,41/4,
    42/5,43/5,44/5,45/5,46/5,47/5,48/5
    } {
    \pic at (\i*\pnum + \j*\qnum, 5*\pnum) {Tcaca};
}

\foreach \i/\j in {5/0,33/3,42/4} {
    \pic at (\i*\pnum + \j*\qnum, 4*\pnum) {Tdada};
}
\foreach \i/\j in {18/1,25/2} {
    \pic at (\i*\pnum + \j*\qnum, 4*\pnum) {Tdcadca};
}
\foreach \i/\j in {
     0/0, 1/0, 2/0, 3/0, 4/0,
     5/1, 6/1, 7/1, 8/1, 9/1,10/1,11/1,12/1,13/1,14/1,15/1,16/1,17/1,
    19/2,20/2,21/2,22/2,23/2,24/2,
    26/3,27/3,28/3,29/3,30/3,31/3,32/3,
    33/4,34/4,35/4,36/4,37/4,38/4,39/4,40/4,41/4,
    42/5,43/5,44/5,45/5,46/5,47/5,48/5
    } {
    \pic at (\i*\pnum + \j*\qnum, 4*\pnum) {Tcaca};
}

\foreach \i/\j in {42/4} {
    \pic at (\i*\pnum + \j*\qnum, 3*\pnum) {Tdada};
}
\foreach \i/\j in {5/0,19/1,26/2,33/3} {
    \pic at (\i*\pnum + \j*\qnum, 3*\pnum) {Tdcadca};
}
\foreach \i/\j in {
     0/0, 1/0, 2/0, 3/0, 4/0,
     6/1, 7/1, 8/1, 9/1,10/1,11/1,12/1,13/1,14/1,15/1,16/1,17/1,18/1,
    20/2,21/2,22/2,23/2,24/2,25/2,
    27/3,28/3,29/3,30/3,31/3,32/3,
    34/4,35/4,36/4,37/4,38/4,39/4,40/4,41/4,
    42/5,43/5,44/5,45/5,46/5,47/5,48/5
    } {
    \pic at (\i*\pnum + \j*\qnum, 3*\pnum) {Tcaca};
}

\foreach \i/\j in {42/4} {
    \pic at (\i*\pnum + \j*\qnum, 2*\pnum) {Tdada};
}
\foreach \i/\j in {6/0,20/1,27/2,34/3} {
    \pic at (\i*\pnum + \j*\qnum, 2*\pnum) {Tdcadca};
}
\foreach \i/\j in {
     0/0, 1/0, 2/0, 3/0, 4/0, 5/0,
     7/1, 8/1, 9/1,10/1,11/1,12/1,13/1,14/1,15/1,16/1,17/1,18/1,19/1,
    21/2,22/2,23/2,24/2,25/2,26/2,
    28/3,29/3,30/3,31/3,32/3,33/3,
    35/4,36/4,37/4,38/4,39/4,40/4,41/4,
    42/5,43/5,44/5,45/5,46/5,47/5,48/5
    } {
    \pic at (\i*\pnum + \j*\qnum, 2*\pnum) {Tcaca};
}

\foreach \i in {3,10,14,18,22} {
    \pic at (\i*\qnum,\pnum) {Tdada};
}
\foreach \i/\j in {0/0,1/1,2/1,3/2,4/3,5/4,6/5} {
    \pic at (\i*\pnum*\qnum + \j*\qnum,\pnum) {Tcqadpa};
}

\foreach \i in {0,...,7} {
    \pic at (\i*\pnum*\qnum+2*\qnum,0) {Tdpacqa};
}
\foreach \i in {0,1} {
    \pic at (\i*\qnum,0) {Tdada};
}


\foreach \i in {0,...,7} {
    \pic at (56*\pnum+2*\qnum, \i*\pnum) {Tcaca};
    \pic at (57*\pnum+2*\qnum, \i*\pnum) {Tcaca};
    \pic at (58*\pnum+2*\qnum, \i*\pnum) {aFrame};
    \pic at (58*\pnum+2*\qnum, \i*\pnum) {xFrame};
}
\pic at (56*\pnum+2*\qnum, 8*\pnum) {xFrame};
\pic at (56*\pnum+2*\qnum, 8*\pnum) {cFrame};
\pic at (57*\pnum+2*\qnum, 8*\pnum) {cFrame};
\pic at (58*\pnum+2*\qnum, 8*\pnum) {xFrame};

\pic at (57*\pnum + 2*\qnum, 8*\pnum) {xFrame};

\end{tikzpicture}

\caption{An illustration of algorithm (III).  The smallest tiles are $\Gcaca$.
The snaking paths of slightly larger tiles are $\Gdada$ and $\Gdcadca$ as in
Figure~\ref{fig:main-positive-diagonal-propagation}.  The widest tiles are
$\Gcqadpa$ and $\Gdpacqa$, with the former on the row above the latter.}
\label{fig:main-positive-coprime-lemma}
\end{figure} 

We fill $\mathcal{N}$ entirely with $\Gcaca$, $\Gdada$, $\Gdcadca$, $\Gcqadpa$, and $\Gdpacqa$.
Each such tile is of the height of $\Gcaca$, namely $p+n$, so we must specify $q+1$ many overlapping rows
of tiles in the interior of $\mathcal{N}$, since the top and bottom boundaries have been specified.

Beginning with the top row of $\mathcal{N}$, we propagate the $\Gdada$ tiles downward using algorithms (I) and (II), and at each step
either place a $\Gdada$ tile immediately below a $\Gdada$ tile from the row above, or
offset it to the right by a $\Gdcadca$ tile as shown in Figure~\ref{fig:main-positive-coprime-lemma}.
We do this so that the bottom of the $q+1$st row of tiles consists of
groups of alternating $R_\times$ and $R_c$ blocks followed by a single $R_d$
block. We require that each group of alternating $R_\times$ and $R_c$ blocks has a number of $R_c$ blocks
which is a multiple of $q$, except for the last group of alternating sequence of $R_\times$ and $R_c$ blocks
which follows the last copy of $R_d$ in this row.
We can meet this requirement since there are $q+1$ rows as we move downward, and the distance between
copies of $R_d$ in the top row is greater than $pq$.

The last two rows of $\mathcal{N}$ are tiled as follows. For the next to last row (i.e., the $q+2$nd row)
of tiles, we use copies of $\Gcqadpa$ to convert each group of $q$ many alternating
blocks of $R_\times$ and $R_c$ at the bottom of the $q+1$st row into an alternating sequence
of $R_\times$ and $R_d$ blocks (with $p$ many $R_d$ blocks). For the last
alternating sequence (whose number of $R_c$ blocks is not necessarily a multiple of $q$),
we might have an alternating sequence of $R_\times$ and $R_c$ left over, with $<q$
many copies of $R_c$. Thus, the bottom of the $q+2$nd row consists of an alternating sequence
of $R_\times$ and $R_d$, followed by a final alternating sequence of $R_\times$ and $R_c$.

For the final row in the tiling or $\mathcal{N}$, we first copy down the final alternating
sequence of $R_\times$ and $R_c$ from the row above. Then, starting from the right
and working left, we use the $\Gdpacqa$ tiles to convert each block
of $R_\times/R_d$ alternations with $p$ many copies of $R_d$, into an
alternating sequence of $R_\times/R_c$ with $q$ many copies of $R_c$.
As we approach the left edge, we will have a number, less than $p$, of
$R_d$ blocks (alternating with $R_\times$) left over, which we copy down. This completes our description of algorithm (III) in this setup.

Intuitively, what the construction accomplished is to gather copies of $R_d$ blocks on the top and pass them to the left of the bottom boundary.
This construction is readily adapted to gather copies of $R_d$ on the bottom as well,
and gather copies of $R_b$ on the left and right.

We are now ready to describe the tiling of $\CM$. Recall that the top and bottom boundary of $\CM$ are tiled with alternating copies of $R_\times$ and $R_c$ or $R_d$ tiles, with the sparsity condition for $R_d$ tiles, and the left and right boundary of $\CM$ are tiled with alternating copies of $R_\times$ and $R_a$ or $R_b$ tiles, with the sparsity condition for $R_b$ tiles. The reader should consult Figure~\ref{fig:positive} in the following description of the tiling.

\begin{figure}[h]
\begin{tikzpicture}[scale=0.05]

\pgfmathsetmacro{\s}{0.03}

\pgfmathsetmacro{\n}{20}
\pgfmathsetmacro{\r}{0.05}

\draw (-100,0) to (100,0) to (100,200) to (-100,200) to (-100,0);
\draw (-100,2) to (100,2);
\draw (-100,198) to (100,198);
\draw (-98,0) to (-98,200);
\draw (98,0) to (98,200);

\draw (-100,20) to (100,20);
\draw (-100,22) to (100, 22);
\draw (-100,180) to (100,180);
\draw (-100,178) to (100,178);
\draw (-80,0) to (-80,200);
\draw (-78,0) to (-78,200);
\draw (80, 0) to (80,200);
\draw (78,0) to (78,200);

\draw (-50,20) to (-50, 180);
\draw (-48,20) to (-48, 180);
\draw (-80,44) to (80, 44);
\draw (-80,46) to (80,46);

\node at (0,189) {$\mathcal{R}_{\text{top}}$};
\node at (0,11) {$\mathcal{R}_{\text{bottom}}$};
\node at (-89, 100) {$\mathcal{R}_{\text{left}}$};
\node at (89, 100) {$\mathcal{R}_{\text{right}}$};

\node at (-105, 11) {$R_a$};
\node at (-105, 189) {$R_a$};
\node at (105, 11) {$R_a$};
\node at (105, 189) {$R_a$};
\node at (-89, -5) {$R_c$};
\node at (89, -5) {$R_c$};
\node at (-89, 205) {$R_c$};
\node at (89, 205) {$R_c$};
\node at (-64, 25) {$R_d$};
\node at (15, 25) {$R_c$};
\node at (-73, 32) {$R_b$};
\node at (-73, 112) {$R_a$};
\node at (-64, 173) {$R_d$};
\node at (15, 173) {$R_c$};
\node at (73, 112) {$R_a$};
\node at (73, 32) {$R_b$};

\end{tikzpicture}
\caption{Tiling of any finite rectangular grid graph satisfying sparsity conditions for $R_b$ and $R_d$.} \label{fig:positive}
\end{figure}

We first introduce two full columns and two full rows in the $\CM$ region. These are placed so that they define four corner subregions whose dimensions are exactly $(q+3)p+n$. Note that the sparsity conditions guarantee that the specified boundaries of the four
corner regions consists of only $R_a$ and $R_c$ blocks and no $R_b$ or $R_d$ blocks. In Figure~\ref{fig:positive} we put these labels next to the relevant part of the boundary to indicate this fact. The full columns and rows introduced also create four regions along the boundaries of $\CM$, and we denote them by $\mathcal{R}_{\text{top}}$, $\mathcal{R}_{\text{bottom}}$, $\mathcal{R}_{\text{left}}$, and $\mathcal{R}_{\text{right}}$, respectively. We now apply algorithm (I) to each of the corner regions and the relevant version of algorithm (III) to each of the regions $\mathcal{R}_{\text{top}}$, $\mathcal{R}_{\text{bottom}}$, $\mathcal{R}_{\text{left}}$, and $\mathcal{R}_{\text{right}}$. This gives rise to an inner rectangular grid graph whose boundaries are tiled with $R_a, R_b, R_c, R_d$ tiles in a fashion specified in Figure~\ref{fig:positive}. Note that the number of $R_b$ blocks and the number of $R_d$ blocks on these boundaries are completely determined by the dimensions of $\CM$, and therefore the numbers of $R_b$ blocks on the left and right parts of the boundaries of the inner grid graph are the same, and the numbers of $R_d$ blocks on the top and bottom parts of the boundaries of the inner grid graph are also the same. At this point we can introduce a column and a row of the inner grid graph and a complete tiling of the graph with algorithm (I).

This completes the construction of the tilings of the equivalence classes
$[x]$ for $x \in F(2^{\Z^2})$ by the $G^i_{n,p,q}$,  and by Lemma~\ref{zghomconverse}
thus completes the definition of $\varphi \colon F(2^{\Z^2})\to \gnpq$. Since
the subequivalence relation $E_d$ was clopen in $F(2^{\Z^2})$, the local nature of the
constructions of the $S_x$ and the tilings of the $\CM$ show that $\varphi$ is continuous.
This completes the proof of Theorem~\ref{thm:mtr}.
\end{proof}

\begin{proof}[Proof of (\ref{mtb})$\Rightarrow$(\ref{mta}) of Theorem~\ref{thm:tilethm}]
Let $Y\subseteq {\bsft^{\Z^2}}$ be a subshift of finite type, $n$ greater than or equal to the width of $Y$, $p, q>n$, and $g: \gnpq \to\bsft$ which respects $Y$. Let $\varphi: F(2^{\Z^2})\to \gnpq$ be the continuous $\Z^2$-homomorphism in Theorem~\ref{thm:mtr}.
Define $f: F(2^{\Z^2})\to Y$ by letting $f(x)(a, b)=g\circ \varphi((a,b)\cdot x)$ for all $x\in F(2^{\Z^2})$ and $(a, b)\in \Z^2$. Then $f$ is a continuous, equivariant map.
\end{proof}

\section{Continuous structurings} \label{sec:continuous_structuring}
In  this section we present a reformulation of the main theorem, Theorem~\ref{thm:tilethm},
in terms of {\em continuous structurings} (defined below in Definition~\ref{def:lstruc}) instead of subshifts of finite type.
This makes precise the notion of a continuous assignment of a first-order structure to
an equivalence class. We show that the existence of continuous structurings can be
reformulated as the existence of continuous, equivariant maps into  certain subshifts of
finite type and conversely. We then state a version of Theorem~\ref{thm:tilethm} in terms of continuous structurings.
This alternate viewpoint is useful as some problems are naturally presented
as continuous structuring questions.
In fact, many questions of the form ``Does there exist a continuous assignment of a certain type of structure
to the classes of $F(2^{\Z^n})$?'' can be directly rephrased
in the terminology of continuous structurings.

We next make these notions precise by
introducing the notions of a continuous structuring and $\lp_1$
formulas.

\begin{defnn} \label{defn:L}
Let $\CL=(F_1,\dots,F_n,F_{n+1},\dots, F_{2n}, R_1,\dots,R_k)$ be a finite  first-order language,
where $F_1,\dots F_n, F_{n+1}, \dots, F_{2n}$ are unary function symbols
and $R_1,\dots, R_k$ are relation symbols of arity $\alpha_1, \dots, \alpha_k$ respectively.
By a {\em $\Z^n$-structure}  for $\CL$ we mean a structure of the form
$$\fA=(A; F^{\fA}_1,\dots,F^{\fA}_n, F^{\fA}_{n+1}, \dots, F^{\fA}_{2n}, R^{\fA}_1,\dots,R^{\fA}_k)$$
where $A$ is a set with an action of $\Z^n$ that is transitive and free, and for $1\leq i\leq n$,
$$ F^{\fA}_i(a)=e_i\cdot a,  \ \ \ F^{\fA}_{n+i}(a)=(-e_i)\cdot a $$
for $a\in A$, and for $1\leq j\leq k$, $R^{\fA}_j\subseteq A^{\alpha_j}$.
\end{defnn}

The interpretations of the function symbols in a $\Z^n$-structure for $\CL$ are always fixed and are given by some transitive, free action of $\Z^n$. An important case is $A=\Z^n$, where the action is understood to be the addition, and we denote the standard interpretations of the function symbols by $f_1, \dots, f_n, f_{n+1}, \dots, f_{2n}$. Similarly, if $A=[x]$ is an equivalence class in $F(2^{\Z^n})$, we also denote the standard interpretation of the function symbols by $f_1, \dots, f_n, f_{n+1}, \dots, f_{2n}$.

We will adopt the following definition of a $\lp_1$ sentence. This comes from the standard definition of $\lp_1$ formulas in logic,
except we restrict the quantifiers to a single variable.

\begin{defnn} Let $\CL=(F_1,\dots,F_n,F_{n+1},\dots, F_{2n}, R_1,\dots,R_k)$ and $\varphi$ be a sentence in $\CL$. We say that
$\varphi$ is $\lp_1$ if it is of the form
$$\varphi = \forall v \, \psi(v),$$
where $\psi$ is a boolean combination
of atomic formulas in the language $\CL$.
\end{defnn}

Note that if $\CL=(F_1,\dots,F_n,F_{n+1},\dots, F_{2n}, R_1,\dots,R_k)$ with $R_j$ of arity $\alpha_j$ for $1\leq j\leq k$, the atomic formulas in the language of $\CL$ are of the form
$$ R_j(t_1, \dots, t_{\alpha_j}) $$
for some $1\leq j\leq k$, where $t_1, \dots, t_{\alpha_j}$ are terms in the language $\CL$, or of the form $t_1=t_2$, where $t_1$ and $t_2$ are terms. Each term in the language $\CL$ is of the form
$$ F_{i_1}\dots F_{i_{\ell}}(v) $$
for a variable $v$, where $1\leq i_1,\dots, i_{\ell}\leq 2n$.

\begin{defn}
Let $\CL=(F_1,\dots,F_n,F_{n+1},\dots, F_{2n}, R_1,\dots,R_k)$, $t$ be a term in $\CL$, and $\psi$ be a formula in $\CL$. The {\em depth} of $t$ is defined as the number of function symbols appearing in $t$ (counted with repetitions).
The {\em depth } of $\psi$ is defined as the maximum depth of terms appearing in $\psi$.
\end{defn}

We will also consider {\em partial $\Z^n$-structures} as defined in the following. These are not technically $\CL$-structures in the sense of first-order logic. Nevertheless, some instantiations of $\lp_1$ formulas can be interpreted in such structures and have a truth value.

\begin{defnn}\label{def:partialLstructure} Let $\CL=(F_1,\dots,F_n,F_{n+1},\dots, F_{2n}, R_1,\dots,R_k)$ be a language as in Definition~\ref{defn:L}. Let $\ell$ be a non-negative integer. An {\em $\ell$-structure} for $\CL$ is a tuple
$$ \fA=(D_\ell; p_1, \dots, p_n, p_{n+1},\dots, p_{2n}, R^{\fA}_1, \dots, R^{\fA}_k) $$
where $D_\ell=[-\ell,\ell]^n$, $p_1, \dots, p_n, p_{n+1},\dots,p_{2n}$ are partial functions on $D_\ell$ defined as
$p_i(a)=a+e_i$ whenever $a, a+e_i\in D_\ell$, and $p_{n+i}(a)=a-e_i$ whenever $a, a-e_i\in D_\ell$
for $1\leq i\leq n$, and $R^{\fA}_j \subseteq (D_\ell)^{\alpha_j}$ for all $1\leq j\leq k$.
\end{defnn}

Since the $p_i$ are partial, the $\ell$-structure just defined is not technically an $\CL$-structure in the sense of first-order logic. Nevertheless, if $t(v)$ is a term (with only one variable $v$) in the language $\CL$ of depth $d$ and $a\in D_\ell$ is of distance at least $d$ from the boundary of $D_\ell$, then $t(a)$ is well-defined in $\fA$ with all function symbols interpreted by $p_i$. Moreover, if $\psi(v)$ is a boolean combination of atomic formulas (with only one variable $v$) in the language $\CL$ of depth $d$ and $a\in D_\ell$ is of distance at least $d$ from the boundary of $D_\ell$, then $\psi(a)$ can be interpreted in $\fA$ with all relation symbols interpreted by $R^{\fA}_j$ and has a truth value. In this case, we say $\fA$ {\em satisfies} $\psi(a)$ if $\psi(a)$ holds under the intended interpretations.

We next introduce the notion of an $\CL$-structuring of
$F(2^{\Z^n})$.

\begin{defnn} \label{def:lstruc}
Let $\CL$ be a language as in Definition~\ref{defn:L}.
An {\em $\CL$-structuring} of $F(2^{\Z^n})$ is a function $\Phi$ with domain $F(2^{\Z^n})$
which assigns to each $x\in F(2^{\Z^n})$ and each relation symbol $R_j$, $1\leq j\leq k$,
a set $\Phi(x,R_j) \subseteq [x]^{\alpha_j}$ in an invariant manner,
meaning that $\Phi(x, R_j) = \Phi(y, R_j)$ whenever $[x] = [y]$.
For each $x\in F(2^{\Z^n})$, this defines a $\Z^n$-structure for $\CL$
$$\fA_x=([x]; f_1,\dots,f_n,f_{n+1},\dots, f_{2n}, \Phi(x,R_1),\dots,\Phi(x,R_k)).$$ We say the
$\CL$-structuring $\Phi$ is {\em continuous} if for each $1\leq j\leq k$ and each
$(g_1,\dots,g_{\alpha_j})\in (\Z^n)^{\alpha_j}$, the map
$x \in F(2^{\Z^n})\mapsto \chi_{\Phi(x, R_j)}(g_1\cdot x,\dots,g_{\alpha_j}\cdot x)$
is continuous.
\end{defnn}

The next theorem says that the question of the existence of a continuous structuring of
$F(2^{\Z^n})$ can be reduced to the existence of a continuous equivariant map
to a certain $\Z^n$-subshift of finite type.

\begin{thmn} \label{thm:lsss}
For every language $\CL$ as in Definition~\ref{defn:L} and every $\lp_1$ sentence $\varphi$ in $\CL$,
there is a canonically constructed $\Z^n$-subshift of finite type $Y = Y(\varphi, \CL)$ such that the
following are equivalent:
\begin{enumerate}
\item
There is a continuous structuring $\Phi$ of
$F(2^{\Z^n})$ such that $\fA_x \models \varphi$ for all $x \in F(2^{\Z^n})$.
\item
There is a continuous, equivariant map $\pi$ from $F(2^{\Z^n})$ to $Y$.
\end{enumerate}
\end{thmn}

\begin{proof}
Let $\CL=(F_1,\dots,F_n,F_{n+1}, \dots, F_{2n}, R_1,\dots,R_k)$ and $\varphi=\forall v\, \psi(v)$ be a $\lp_1$ sentence.
Let $d$ denote the depth of $\psi$.
Let  $\bsft=\prod_{j=1}^k 2^{(2d+1)^{n \alpha_j}}$ where again $\alpha_j$ is the arity of $R_j$.
Consider the set of all $d$-structures for $\CL$ as defined in Definition~\ref{def:partialLstructure}. This set is naturally
identified with the integers in $\bsft=\{ 0,1,\dots, \bsft-1 \}$. We view each element of $\bsft$ as a {\em code} for a $d$-structure.

Note that, since $\psi$ has depth $d$, all the terms in $\psi(\vec 0)$ for $\vec 0=(0,\dots, 0)\in \Z^n$ are well-defined, and thus $\psi(\vec 0)$ has a truth value in any $d$-structure $\fA$. We need the following definition before defining $Y(\varphi, \CL)$.

If $\fA$ and $\fB$ are two $d$-structures and $1\leq i\leq n$, we say that $\fA$ is {\em $e_i$-consistent} with $\fB$
if for each relation symbol $R_j$, $1\leq j\leq k$, and all $a_1, \dots, a_{\alpha_j}\in D_d$ with also $a_1+e_i, \dots, a_{\alpha_j}+e_i \in D_d$
we have that $R_j^{\fA}(a_1+e_i, \dots, a_{\alpha_j}+e_i)$ if and only if $R_j^{\fB}(a_1,\dots, a_{\alpha_j})$.

Let $Y=Y(\varphi, \CL)$ be the set of all $y \in \bsft^{\Z^n}$ satisfying:
\begin{enumerate}

\item[(i)\label{lspa}]
For each $a \in \Z^n$ and $1 \leq i \leq n$, the $d$-structure
$\fA$ coded by $y( a)$ is $e_i$-consistent with the $d$-structure $\fB$ coded by
$y(a+e_i)$;
\item[(ii)\label{lspb}]
For each $a \in \Z^n$, the $d$-structure $\fA$ coded by $y(a)$
satisfies $\psi(\vec 0)$.
\end{enumerate}

Note $Y$ is in fact a $\Z^n$-subshift of finite type of width 1.

It is now straightforward to check that there is a continuous $\CL$-structuring $\Phi$
such that $\fA_x\models \varphi$ for all $x\in F(2^{\Z^n})$ if and only if there 
is a continuous, equivariant map into $Y$.
In fact, suppose there is such a continuous $\CL$-structuring $\Phi$.
Let $\pi \colon F(2^{\Z^n})\to Y$ be defined as follows. For $x \in F(2^{\Z^n})$
and $a \in \Z^n$, $\pi(x)(a)$ is the integer coding the $d$-structure
$\fA(x,a)$ which is given by
$$R_j^{\fA(x,a)}(a_1,\dots,a_{\alpha_j}) \leftrightarrow
\Phi(x,R_j)((a+a_1)\cdot x, \dots, (a+a_{\alpha_j})\cdot x)$$
for any $1\leq j\leq k$ and $a_1,\dots,a_{\alpha_j}
\in D_d$. The structures $\fA(x,a)$ satisfy the consistency
condition (i) for all $1\leq i\leq n$ since $\Phi(x,R_1),\dots, \Phi(x,R_k)$ defines a single
structure $\fA_x$, and (ii) holds since $\fA_x\models \varphi$ and thus $\psi(\vec a \cdot x)$.
The map $\pi$ is clearly continuous and equivariant.
Conversely, suppose $\pi$ is a continuous, equivariant map from $F(2^{\Z^n})$ to $Y$.
Then the consistency condition (i) will allow us to assemble a single structure $\fA_x$,
and condition (ii) guarantees that $\fA_x\models \varphi$.
\end{proof}

From now on we consider continuous $\CL$-structurings of $F(2^{\Z^2})$. 
To state a version of the main theorem~\ref{thm:tilethm} for continuous $\CL$-structurings
of $F(2^{\Z^2})$, we need to extend the notion of an $\CL$-structuring
to the graphs $\gnpq$. In the following definitions we will see that the interpretations of 
the relation symbols in $\gnpq$ is straightforward, but to interpret the function symbols, 
we need to work in $G_{n,p,q}$ and consider
only instantiations that are sufficiently far away from the boundary of $G_{n,p,q}$.

\begin{defnn} \label{def:lstrucgnpq}
Let $\CL=(F_1,\dots,F_4, R_1,\dots,R_k)$ be a language as in
Definition~\ref{defn:L}, where $R_j$ has arity $\alpha_j$ for $1\leq j\leq k$.
An {\em $\CL$-structuring} of $\gnpq$ is a map $\Phi$ which assigns to each relation symbol
$R_j$ a set $\Phi(R_j) \in (\gnpq)^{\alpha_j}$.
\end{defnn}

 Note that the relations $\Phi(R_j)$ on $\gnpq$  lift to relations
$\Phi'(R_j)$ on $G_{n,p,q}$ via the quotient map $\pi: G_{n,p,q}\to \gnpq$.

\begin{defnn}
Let $\Phi$ be a $\CL$-structuring of $\gnpq$, and let $\varphi=\forall v\, \psi(v)$ be a
$\Pi_1$ sentence in the language $\CL$ of depth $d$. We say $(\gnpq,\Phi)$ {\em satisfies}
$\varphi$ if for every $x \in \gnpq$ and every $a \in G_{n,p,q}$
with $\pi(a)=x$, say $a\in G_{n,p,q}^i$ for some $1\leq i\leq 12$, if $a$ is of distance at least $d$ from the boundary of $G^i_{n,p,q}$ then the formula $\psi(\vec 0)$ holds for $\vec 0=(0,0)\in \Z^2$ in the $d$-structure
$$ \fA=([-d,d]^2; p_1, \dots, p_4, R^{\fA}_1, \dots, R^{\fA}_k) $$
where for all $1\leq j\leq k$ and $a_1, \dots, a_{\alpha_j}\in \Z^2$, $R^{\fA}_j(a_1, \dots, a_{\alpha_j})$ holds if and only if $\Phi'(R_j)(a+a_1, \dots, a+a_{\alpha_j})$ holds in $G_{n,p,q}$.
\end{defnn}

The following is the $\CL$-structuring version of the main theorem.

\begin{thmn} \label{thm:tilethmb}
Let $\CL=(F_1,\dots,F_4, R_1,\dots,R_k)$ be a language as in Definition~\ref{defn:L}, and let
$\varphi=\forall v \, \psi$ be a $\Pi_1$ sentence of depth $d$. Then the following are equivalent.
\begin{enumerate}[label={\rm (\arabic*)}, ref=\arabic*]
\item
There is a continuous $\CL$-structuring of $F(2^{\Z^2})$ which satisfies $\varphi$.
\item
There are $n< p, q$ with $n \geq 2d$ and $\gcd(p,q)=1$, and there is an
$\CL$-structuring of $\gnpq$ which satisfies $\varphi$.
\item
For any $n \geq 2d$ and all large enough $p,q>n$, there
is an $\CL$-structuring of $\gnpq$ which satisfies $\varphi$.
\end{enumerate}
\end{thmn}

\begin{proof}
The proof of (\ref{mta})$\Rightarrow$(\ref{mtc}) is very similar to that
of the negative direction of Theorem~\ref{thm:tilethm} in \S\ref{subsec:negtile}. Here we sketch the changes.
Let $x$ again be the hyper-aperiodic element of Lemma~\ref{lem:xhyp}.
Given the language $\CL$ and the $\Pi_1$
formula $\varphi= \forall x\, \psi$ of depth $d$, fix $n \geq 2d$.
Let $(x,j)\mapsto \Phi(x,R_j)$ be a continuous $\CL$-structuring of
$F(2^{\Z^2})$ which satisfies $\varphi$. From the compactness of $K=\ocl{[x]}$ and the continuity of $\Phi$, there is a $w$ such that for all $y\in K$ and terms $t_1(v), \dots, t_{\alpha_j}(v)$ of depth at most $d$, whether
$\Phi(y, R_j)(t_1(y), \dots, t_{\alpha_j}(y))$ holds is determined by the value of $y$ restricted to the $(2w+1)\times (2w+1)$ rectangle centered about $(0,0)$. This implies that, for any $(a, b)\in \Z^2$, whether $\psi((a,b)\cdot x)$ holds in $\fA_{x}=\fA_{(a,b)\cdot x}$ is determined by the value of $x$ restricted to the $(2w+1)\times (2w+1)$ rectangle centered about $(a, b)$.

As in the proof in \S\ref{subsec:negtile}, let $p, q>n+2w$ and $k$ be such that
$n_k=n$, $p_k=q$, $q_k=q$, and $w_k=w$. To define the $\CL$-structuring $\Psi$
of $\gnpq$, consider the $\alpha_j$-arity relation symbol $R_j$ and points
$y_1,\dots, y_{\alpha_j}$ in $\gnpq$, and we need to define the truth value of $\Psi(R_j)(y_1, \dots, y_{\alpha_j})$.
For this we first stipulate that $\Psi(R_j)(y_1, \dots, y_{\alpha_j})$ will not hold unless there are $1\leq i\leq 12$ and $z, z_1, \dots z_{\alpha_j}\in G^i_{n, p, q}$ such that
$\pi(z_1)=y_1, \dots, \pi(z_{\alpha_j})=y_{\alpha_j}$ and $z_1, \dots, z_{\alpha_j}\in [-d, d]^2\cdot z$. In case these witnesses exist, we consider the grid graph $G^i_{w_k,n,p,q}$ in the construction of $x$, and
let $z, z_1, \dots, z_n$ continue to denote the corresponding points in $G^i_{w_k, n, p, q}$. We define that $\Psi(R_j)(y_1, \dots, y_{\alpha_j})$ holds if and only if $\Phi(x, R_j)(z_1\cdot x, \dots, z_n\cdot x)$ holds.

Since $n \geq 2d$, Lemma~\ref{lem:pbg} and the proof of Theorem~\ref{thm:tilethm}
ensure that this definition does not depend on the particular choice of $z, z_1,\dots,z_{\alpha_j}$ when such witnesses exist.
Since $\Phi$ satisfies $\varphi$ and $\varphi$ has depth $d$, it follows that the $\CL$-structuring $\Psi$ of $\gnpq$ satisfies $\varphi$.

For the proof of (\ref{mtb})$\Rightarrow$(\ref{mta}),
let $\Psi$ be an $\CL$-structuring of $\Gamma_{n,p,q}$ as in (\ref{mtb}).
Exactly as in the corresponding
proof in \S\ref{subsec:postile} of Theorem~\ref{thm:tilethm}, we first construct a continuous tiling
of $\fzs$ by partially overlapping copies of the grid graphs $G^i_{n,p,q}$, $1\leq i\leq 12$. Then $\Psi$ induces a continuous $\CL$-structuring $\Phi$ of $\fzs$ in the natural way:
$\Phi(x,R_j)(a_1\cdot x, \dots a_{\alpha_j}\cdot x)$ if and only if
there is a copy of some $G^i_{n,p,q}$ in the tiling of $[x]$ which contains the points
$a_1\cdot x, \dots, a_{\alpha_j}\cdot x$ and, letting $y_1,\dots,y_{\alpha_j}$ be the points in $G^i_{n,p,q}$
corresponding to $a_1\cdot x, \dots, a_{\alpha_j}\cdot x$, then $\Psi(R_j)(y_1,\dots,y_{\alpha_j})$ holds.
To see that $\fA_x\models\varphi$, it is enough to fix $(a,b)\in \Z^2$ and show that
$\fA_x\models\psi((a,b)\cdot x)$. Fix a copy of some $G^i_{n,p,q}$ in the tiling of $[x]$
which contains $([-d,d]^2 + (a,b))\cdot x$.
This can be done as $n \geq 2d$. Let $y$ be the point in $G^i_{n,p,q}$ corresponding to $(a,b)\cdot x$. Since $\Psi$ satisfies $\varphi$ and $y$ is of distance at least $d$ from the boundary of $G^i_{n,p,q}$, we have $\Psi$ satisfies $\psi(y)$.
It follows that $\fA_x\models\psi((a,b)\cdot x)$.
 \end{proof}

\begin{remn}
It is possible to deduce Theorem~\ref{thm:tilethmb} 
from Theorems~\ref{thm:tilethm} and \ref{thm:lsss}. For the reader willing to check a few details, we 
give an outline of this proof below. Suppose $\CL=(F_1,\dots,F_4, R_1,\dots,R_k)$ is a language and 
$\varphi=\forall v\, \psi(v)$ is a $\Pi_1$ formula. Let $Y\subseteq \bsft^{\Z^n}$
be the subshift of finite type from Theorem~\ref{thm:lsss}. Suppose first that 
there is a continuous structuring $\Phi$ with associated structures $\fA_x$ 
which satisfies $\varphi$ (that is, $\fA_x \models \varphi$ for all $x \in F(2^{\Z^n})$). 
Let $\pi \colon F(2^{\Z^n}) \to \bsft^{\Z^n}$ be the equivariant map given 
by Theorem~\ref{thm:lsss}. By Theorem~\ref{thm:tilethm} there is a map 
$g\colon \gnpq \to \bsft$ which respect $Y$. Let $g'\colon G_{n,p,q}\to \bsft$
be the lift of $g$ to the graph $G_{n,p,q}$. Define the relations 
$R'_j$ on $G_{n,p,q}^{\alpha_j}$ as follows. If $x_1,\dots, x_{\alpha_j}$ are in 
one of the $G^i_{n,p,q}$ and least one of these points is an interior point of 
this graph, then we set $R'_j(x_1,\dots,x_{\alpha_j})$ if there is a $D_d$ square 
contained in $G^i_{n,p,q}$, say centered at the point $z$, for which 
$g'(z)$ declares the points in $D_d$ corresponding to $x_1,\dots,x_{\alpha_j}$
to be in the relation $R_j$. Since interior points of the $12$ tiles are never
identified, there are no other points $y_1,\dots,y_{\alpha_j}$ in $G_{n,p,q}$
which are identified with the $x_1,\dots,x_{\alpha_j}$ in $\gnpq$ and which occur in the same relative
positions within some $D_d$ square in $G_{n,p,q}$. If none of the points 
$x_1,\dots,x_{\alpha_j}$ is an interior point of $G_{n,p,q}$, 
then we set $R'_j(x_1,\dots,x_{\alpha_j})$ if there is a $D_d$ square 
contained in $G^i_{n,p,q}$, say centered at the point $z$, with $D_d$ contained 
in the interior of some $G^i_{n,p,q}$, say centered say at $z$ 
(necessarily $z$ is not an interior point), where $g'(z)$ 
declares the corresponding points to be $R_j$ related. If 
$y_1,\dots,y_{\alpha_j}$ are equivalent to the $x_i$ points in $\gnpq$,
then it is easy to check that $R'_j(x_1,\dots,x_{\alpha_j})$ iff $R'_j(y_1,\dots,y_{\alpha_j})$. 
So, the relation $R'_j$ corresponds to a relation $R_j$ on $\gnpq$. 
It is straightforward to check that this structuring of $\gnpq $ satisfies $\varphi$.
The other direction is similar to the 
corresponding direction of the proof presented above. 
\end{remn}

\begin{remn}
The analog of Theorem~\ref{thm:tilethmb} holds for $F(2^\Z)$ as well. The proof is similar
(but easier) to that of Theorem~\ref{thm:tilethmb}, except we follow the proof of the
two tiles theorem (Theorem~\ref{thm:twotilesthm}) instead of Theorem~\ref{thm:tilethm}.
\end{remn}

\begin{remn}
In the definition of $\Pi_1$ formulas it is important that we restrict the formula $\psi(v)$
to having a single free variable. The statement of Theorem~\ref{thm:tilethmb} (or the
corresponding theorem for $F(2^\Z)$) fails for formulas of the form $\varphi=\forall u\ \forall v\ \psi(u, v)$
where $\psi$ is quantifier free. Consider, for example, the case of $F(2^\Z)$. There is a formula
$\varphi$ of the above form which asserts that the unary relations $R_1$, $R_2$, $R_3$ describe a
proper $3$-coloring of the $\Z$-graph, and that at most one vertex satisfies $R_3$.
This formula can easily be seen to hold in the two-tiles graphs $\Gamma^1_{n,p,q}$,
but there is no continuous structuring of $F(2^\Z)$ which satisfies $\varphi$.
\end{remn}

\section{Higher dimensions}  \label{subsec:hd}

The twelve tiles theorem has generalizations from the two-dimensional
case $F(2^{\Z^2})$ to the general $n$-dimensional case $F(2^{\Z^n})$. For the applications
of the current paper, however, it suffices to establish a weaker form of the negative
direction of the theorem. Namely, we show that a continuous, equivariant map from $F(2^{\Z^n})$
to a subshift of finite type implies that there is a corresponding map from
the $n$-dimensional $q_1 \times \cdots \times q_n$ torus tile to the subshift for all sufficiently
large $q_1,\dots,q_n$. This could be proved by the method of \S\ref{subsec:negtile},
but here we use instead Lemma~\ref{proaction}.

Recall Definition~\ref{def:sft} for the notion of a subshift of finite type $Y\subseteq \bsft^{\Z^n}$.
As before, a subshift $Y$ can be described by a sequence $(\bsft;p_1,\dots,p_k)$, where
$p_i\colon [0,a_{i,1})\times \cdots \times [0,a_{i,n}) \to \bsft$ are the forbidden patterns
for the subshift.

By the $q_1\times \cdots \times q_n$ {\em torus tile} $T_{q_1,\dots,q_n}$ we mean the quotient of $\Z^n$ by the
relation $(a_1,\dots,a_n)\sim (b_1,\dots,b_n)$ if and only if $a_1 \equiv b_1\!\mod q_1$, $\dots,
a_n \equiv b_n\!\mod q_n$. The tile inherits the $\Z^n$-graph structure from the $\Z^n$-graph
structure of $\Z^n$. The tile has size $q_1\cdots q_n$, and every vertex has degree
$2n$.

If $Y\subseteq \bsft^{\Z^n}$ is a subshift of finite type described by $(\bsft;p_1,\dots p_k)$, then we say that a map
$g \colon T_{q_1,\dots,q_n}\to \bsft$ {\em respects} $Y$ just as in Definition~\ref{def:resp},
using the $\Z^n$-graph $T_{q_1,\dots,q_n}$ instead of $\gnpq$. This is equivalent
to saying that the map $g \circ \pi \colon \Z^n \to \bsft$ (where $\pi \colon \Z^n \to T_{q_1,\dots,q_n}$
is the quotient map) respect $Y$ in that for any rectangle $R=[0,a_{i,1})\times \cdots
\times [0,a_{i,n}) \subseteq \Z^n$, $g \circ \pi \res R \neq p_i$.

\begin{thmn} \label{thm:weakneg}
Let $Y\subseteq \bsft^{\Z^n}$ be a subshift of finite type. If there is a continuous, equivariant map
$f \colon F(2^{\Z^n})\to Y$, then for all $q_1,\dots,q_n\geq 2$ and all sufficiently large $k$ there is
$g \colon T_{q^k_1,\dots,q^k_n}\to \bsft$ which respects $Y$.
\end{thmn}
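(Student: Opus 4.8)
The plan is to mimic the one-dimensional argument in the proof of Theorem~\ref{thm:twotilesthm} and the ``negative direction'' arguments of \S\ref{subsec:negtile}, but to use Lemma~\ref{proaction} in place of a hand-built specialized hyper-aperiodic element. Since $\Z^n$ is finitely generated abelian, it is residually finite, and the sequence $\G_k = (q_1^k \Z) \times \cdots \times (q_n^k \Z)$ is a decreasing sequence of finite-index (normal, since $\Z^n$ is abelian) subgroups with $\bigcap_k \G_k = \{0\}$. First I would invoke Lemma~\ref{proaction} to obtain a hyper-aperiodic element $x \in F(2^{\Z^n})$ such that for any clopen partition $\mathcal{P}$ of $F(2^{\Z^n})$ there is a finite $A \subseteq \Z^n$ with the restriction of $\mathcal{P}$ to $(\Z^n \setminus A\G_k)\cdot x$ being $\G_k$-invariant for every $k$.

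Next I would feed the continuous equivariant map $f \colon F(2^{\Z^n}) \to S$ into this. Let $K = \overline{[x]} \subseteq F(2^{\Z^n})$, which is compact. By continuity of $f$ on $K$ there is a finite window $W \subseteq \Z^n$ such that $f(z)$ depends only on $z \restriction W$ for all $z \in K$; composing $z \mapsto f(z)(\vec 0)$ with the shift by each $\vec c$ in a suitable finite set gives a clopen partition $\mathcal{P}$ of $F(2^{\Z^n})$ (two points in the same piece iff $f$ agrees on a neighborhood of $\vec 0$, equivalently $z\restriction W$ determines the relevant value). Apply Lemma~\ref{proaction} to this $\mathcal{P}$ to get the finite set $A$. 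Now fix $q_1,\dots,q_n \geq 2$ and choose $k$ large enough that: (a) $q_i^k > w(S)$, i.e.\ the torus is larger than any forbidden window, so that a pattern-occurrence in $T_{q_1^k,\dots,q_n^k}$ really is a genuine $\Z^n$-rectangle, and (b) some coset $\vec t + \G_k$ of $\G_k$ is disjoint from the finite set $A$ (possible since $A$ is finite and $\G_k$ has infinitely many cosets). Set $y = \vec t \cdot x$; then $y \in (\Z^n \setminus A\G_k)\cdot x$, and in fact the whole orbit of $y$ under $\G_k$ lies in this set, so $\mathcal{P}$ is $\G_k$-invariant along $\G_k \cdot y$.

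Finally I would define $g \colon T_{q_1^k,\dots,q_n^k} \to b$ by $g(\pi(\vec a)) = f(\vec a \cdot y)(\vec 0)$ for $\vec a \in \Z^n$. The key point is well-definedness: if $\vec a \equiv \vec a' \bmod \G_k$ then $\vec a \cdot y$ and $\vec a' \cdot y$ differ by an element of $\G_k$ and lie in the $\G_k$-invariant region, so by $\G_k$-invariance of $\mathcal{P}$ (which encodes exactly the value $f(\cdot)(\vec 0)$ read off the window $W$) we get $f(\vec a \cdot y)(\vec 0) = f(\vec a' \cdot y)(\vec 0)$. Here one must be a little careful: $\mathcal{P}$ records whether $f(\cdot)(\vec 0)$ agrees, but equivariance of $f$ gives $f(\vec a \cdot y)(\vec 0) = f(y)(\vec a)$, and one wants the full restriction $f(\cdot)\restriction W$ to be $\G_k$-periodic along the orbit; this is handled by taking $\mathcal{P}$ to be the common refinement of finitely many such partitions (one per element of $W$), which is still clopen, so Lemma~\ref{proaction} applies verbatim. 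Then $g$ is well-defined, and it respects $S$: a $\Z^n$-homomorphism from a rectangle $R = \dom(p_i)$ into $T_{q_1^k,\dots,q_n^k}$ lifts (since $q_i^k > w(S) \geq$ the side lengths of $R$) to a genuine translate $\vec c + R \subseteq \Z^n$, and $g \circ \pi \restriction (\vec c + R)$ equals $f((\vec c + \cdot)\cdot y)\restriction R$, which cannot be the forbidden pattern $p_i$ because $f$ maps into $S$. The main obstacle — and the only subtle point — is precisely the bookkeeping that ``$\mathcal{P}$ being $\G_k$-invariant'' transfers to ``the window-restriction of $f$, hence the whole map $g$, is $\G_k$-periodic along $\G_k\cdot y$''; once $\mathcal{P}$ is chosen as the refinement over all coordinates of $W$ this is immediate, and everything else is the routine lifting-of-rectangles argument already used in \S\ref{subsec:negtile}.
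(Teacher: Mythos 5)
Your high-level approach is exactly the paper's: invoke Lemma~\ref{proaction} with the congruence subgroups $\G_k = (q_1^k\Z)\times\cdots\times(q_n^k\Z)$ (normal of finite index, with trivial intersection), extract the finite exceptional set $A$ from the clopen partition determined by $f$, take $k$ large, and read off $f$ along a single orbit to produce the torus-map $g$. However, the way you then \emph{define} $g$ does not work.

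You set $y=\vec t\cdot x$ with $\vec t+\G_k$ disjoint from $A$, and declare $g(\pi(\vec a))=f(\vec a\cdot y)(\vec 0)=f(x)(\vec a+\vec t)$. For this to be well-defined on the whole torus you need $f(x)$ to be constant on \emph{every} coset $\vec a+\vec t+\G_k$. But Lemma~\ref{proaction} only gives $\G_k$-invariance of $\mathcal P$ away from $A\G_k$. Your choice of $\vec t$ ensures only that the single coset $\vec t+\G_k$ misses $A\G_k$; there are other cosets $\vec a+\vec t+\G_k$ which meet $A$, and for those, every representative $\vec a+h$ ($h\in\G_k$) satisfies $(\vec a+h)\cdot y\in A\G_k\cdot x$, so the $\G_k$-invariance gives you nothing and well-definedness of $g(\pi(\vec a))$ is simply not established. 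Your parenthetical claim ``in fact the whole orbit of $y$ under $\G_k$ lies in this set'' is true for $\G_k\cdot y$, but what you actually need is the analogous statement for $\G_k\cdot(\vec a\cdot y)$ for every $\vec a$, which fails.

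The paper sidesteps this entirely: it does not try to make $g$ well-defined ``for free.'' It fixes a fundamental domain — a translate $R$ of the box $[0,q_1^k)\times\cdots\times[0,q_n^k)$ — and defines $g$ by reading off $f(x)$ on $R$, so $g$ is a function by construction, with no coherence condition to verify. The whole point of the $\G_k$-invariance is then pushed into the ``respects $S$'' step: one chooses $R$ so that (a band around) $\partial R$ misses $A\G_k$ (possible since $k$ is taken large enough that $\operatorname{diam}(A)<q_i^k/2$), so that any small window on the torus that wraps around $\partial R$ still unrolls, via the $\G_k$-periodicity of $f(x)$ near the boundary, to a genuine window of $f(x)\in S$. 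Also, your worry about needing to refine $\mathcal P$ over all coordinates of $W$ is unnecessary: equivariance of $f$ means $f(\vec c\cdot x)(\vec 0)=f(x)(\vec c)$, so the single partition by $f(\cdot)(\vec 0)$ already recovers the full $\G_k$-periodicity of $f(x)$ on $\Z^n\setminus A\G_k$.
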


\begin{proof}
Let $f \colon F(2^{\Z^n})\to Y$ be a continuous and equivariant map into the subshift $Y$,
and define the map $f_0 \colon F(2^{\Z^n})\to \bsft$ by $f_0(x) = f(x)(\vec 0)$, where $\vec{0}=(0,\dots, 0)\in\Z^n$. Let $x \in F(2^{\Z^n})$
be the hyper-aperiodic element of Lemma~\ref{proaction} for $G=\Z^n$, constructed with respect to
the subgroups $G_k=q_1^k\Z\times\dots\times q_n^k\Z$.
Let $\mathcal{P}=\{P_1, \dots, P_\bsft\}$ be the clopen partition of $F(2^{\Z^n})$ determined
by $f_0$, that is, for all $1\leq m\leq \bsft$,
$y \in P_m$ if and only if $f_0(y)=m$. Let $A\subseteq \Z^n$ be the finite set
given by Lemma~\ref{proaction}. Let $w \geq 1$ be larger than the width of $Y$ and let $k$ be large enough so that the diameter of
$A$ (with respect to the  $\ell_1$-norm on $\Z^n$) plus $2w$ is less than $\frac{q_i^k}{2}$
for all $i$. Set $R_1 = [w,q_1^k)\times\dots\times [w,q_n^k)$ and $R_2 = [0,q_1^k+w)\times\dots\times [0,q_n^k+w)$.

Next, our choice of $k$ allows us to pick a point $y \in [x]$ satisfying $(R_2 \setminus R_1) \cdot y \cap A G_k = \emptyset$.
From  Lemma~\ref{proaction} we have that $\mathcal{P}$ is $G_k$-invariant on $G_k (R_2 \setminus R_1) \cdot y$
since $G_k (R_2 \setminus R_1) \subseteq G \setminus A G_k$. We now define $g: T_{q_1^k,\dots, q_n^k}\to \bsft$ as follows. For any $r+G_k\in T_{q_1^k,\dots, q_n^k}=\Z^n/G_k$, let $a\in (r+G_k)\cap R_2$ be arbitrary, and define
$g(r+G_k)=f_0(a\cdot y)$. To see that $g$ is well-defined, note that $(r + G_k) \cap R_2 = \{r\}$ for every $r \in R_1$, and for $a, a'\in R_2\setminus R_1$, if $a+G_k=a'+G_k$, then $f_0(a\cdot y)=f_0(a'\cdot y)$.

Lastly,
by our choice of $R_2$, every copy of $[0, w) \times \dots \times [0, w)$ in $T_{q_1^k,\dots,q_n^k}$ is the image under
the quotient map of a translate of $[0, w) \times \dots \times [0, w)$ contained in $R_2$. Therefore $g$ respects $Y$
since the image of $f$ is contained in $Y$.
\end{proof}

\begin{remn}
The construction of the hyper-aperiodic element of \S\ref{subsec:negtile} shows a little more,
namely that if $f\colon F(2^{\Z^n}) \to Y$ is continuous and equivariant, then for all
sufficiently large $q_1,\dots,q_n$ there is $g \colon T_{q_1,\dots,q_n}\to \bsft$
which respects $Y$. Theorem~\ref{thm:weakneg} is sufficient for our purposes, however.
\end{remn}

\chapter{Applications of the Twelve Tiles Theorem}\label{chapter3}

\section{Basic applications of the Twelve Tiles Theorem} \label{sec:tileapp}

In this  section we prove several results which follow easily from the
Twelve Tiles Theorem, by which we mean either of Theorems~\ref{thm:tilethm} or  \ref{thm:tilethmb}.

For our first application, we give another proof of Theorem~\ref{threecoloring} stating that the continuous chromatic number of
$F(2^{\Z^2})$ is $4$. This includes a new proof
of Theorem~\ref{threecoloring} for the lower bound as well as a proof
different from \cite{gao_countable_2015} for the upper bound. These are included in the next two results.

\begin{thmn} \label{thm:pcn}
For any $n$ and $p, q>n$ with $\gcd(p,q)=1$, neither of the long tiles $\Tcqadpa, \Tdpacqa, \Tcbpcaq, \Tcaqcbp$ can be properly $3$-colored.
\end{thmn}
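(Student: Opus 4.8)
The plan is to adapt the potential-function argument from the proof of Theorem~\ref{threecoloring}. Fix $n<p,q$ with $(p,q)=1$ and suppose, toward a contradiction, that $c$ is a proper $3$-coloring of one of the long tiles, say $\Tcqadpa$. Since $\Tcqadpa$ is the quotient of the rectangular grid graph $\Gcqadpa$ obtained by identifying all blocks carrying a common label, $c$ lifts to a proper $3$-coloring $c'$ of $\Gcqadpa$ which is \emph{block-periodic}: its values on any two blocks with the same label agree position by position. The grid graph $\Gcqadpa$ is a solid rectangle, hence simply connected, and $K_3$ has no $4$-cycles, so --- exactly as in Theorem~\ref{threecoloring} --- the potential $P(x,y)=\sum_i\rho(x_i,x_{i+1})$ (where $\rho(x,y)=+1$ if $c'(y)\equiv c'(x)+1\pmod 3$ and $-1$ if $c'(y)\equiv c'(x)-1\pmod 3$) is independent of the path chosen from $x$ to $y$ in $\Gcqadpa$.

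The first step is to read off from block-periodicity how $P$ behaves along the two ``long'' boundary edges. The top edge of $\Gcqadpa$ is an alternating string $R_\times,R_c,R_\times,\dots,R_c,R_\times$ with $q$ copies of $R_c$; let $\alpha$ be the potential accumulated along one period of it, i.e.\ across one $R_\times$ block followed by one $R_c$ block, which is a path of length exactly $p$. Block-periodicity forces every period to contribute the same $\alpha$, so the potential accumulated over all $q$ periods of the top edge is $q\alpha$. Dually, the bottom edge is $R_\times,R_d,R_\times,\dots,R_\times$ with $p$ copies of $R_d$; one period (an $R_\times$ followed by an $R_d$) has length $q$ and contributes a fixed value $\beta$, so the potential over all $p$ periods of the bottom edge is $p\beta$. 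The left boundary column of $\Gcqadpa$ and the vertical line running down the left columns of the two right-hand corner blocks are colored identically, again by block-periodicity (the relevant $R_\times$ and $R_a$ blocks are pairwise identified in $\Tcqadpa$), so they accumulate equal potential. These four segments bound a sub-rectangle of $\Gcqadpa$, and path-independence of $P$ around it gives the key identity $q\alpha=p\beta$.

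The proof then concludes with elementary arithmetic. From $q\alpha=p\beta$ and $(p,q)=1$ we get $\alpha=pm$ and $\beta=qm$ for some $m\in\Z$; and since the top period is a path of length $p$ whose $\rho$-values are each $\pm1$, $|\alpha|\le p$, so $m\in\{-1,0,1\}$. Two further observations pin this down. First, in the tile $\Tcqadpa$ the two endpoints of a top period are identified (they are the top-left corners of two copies of $R_\times$), so the period is a \emph{closed loop}, and traversing it returns $c$ to its starting value, forcing $\alpha\equiv0\pmod 3$; the same holds for $\beta$ with a bottom period. If $m=\pm1$ then $\alpha=\pm p\equiv0\pmod3$, so $3\mid p$, and symmetrically $3\mid q$. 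If $m=0$ then $\alpha=0$, so the length-$p$ alternating sum of $\pm1$'s vanishes, hence $p$ is even, and symmetrically $q$ is even. In either case $(p,q)=1$ is violated, so $\Tcqadpa$ has no $3$-coloring. The other three long tiles $\Tdpacqa$, $\Tcbpcaq$, $\Tcaqcbp$ are handled by the same argument after transposing the picture and/or interchanging the roles of $p$ and $q$: each has a pair of parallel boundary edges whose periods have lengths $p$ and $q$, together with a matching pair of perpendicular edges that cancel in the loop identity.

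I expect the only real care to be needed in two places, both bookkeeping: (i) verifying that a single period of a boundary edge does close up into a loop in the quotient $\Tcqadpa$ --- this is what makes the $\bmod 3$ constraint available; and (ii) checking that the two perpendicular segments used to close the rectangular loop carry identical colorings --- this is what makes them cancel. Both are immediate from the description of $\Tcqadpa$ as the quotient of $\Gcqadpa$ by the block identifications, but they must be stated explicitly. It is also worth emphasizing that $P$ is path-independent only on the simply connected grid graph $\Gcqadpa$, not on the tile itself, which is exactly why one lifts $c$ to $c'$ before computing.
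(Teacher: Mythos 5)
Your proof is correct and follows essentially the same route as the paper's: both derive the identity $q\alpha=p\beta$ from the fact that the boundary rectangle of the long tile is null-homotopic (the paper phrases this via winding numbers of the period cycles $\gamma,\delta$ in $K_3$, so its $w(\varphi(\gamma))$ is exactly your $\alpha/3$), and both close by combining coprimality with the size bound on the potential accumulated over a length-$p$ path. The paper's endgame is slightly more direct --- it observes that an odd-length cycle in $K_3$ has nonzero winding number and that $p\mid w(\varphi(\gamma))$ is incompatible with $|w(\varphi(\gamma))|\le p/3$ --- whereas you reach the same contradiction via a case split on $m\in\{-1,0,1\}$; these are interchangeable.
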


\begin{proof}
Fix $n, p, q$ with $p, q>n$ and $\gcd(p, q)=1$. We only consider the long horizontal tile $\Tcqadpa$, which is shown in
Figure~\ref{fig:Gamma-npq-horiz-3}. The proof for the other long tiles is similar.
Assume $\varphi \colon \Tcqadpa \to \{ 0,1,2\}$ is a proper
$3$-coloring. We identify $\{ 0,1,2\}$ with the vertices of $K_3$, the complete
graph on three vertices. For any closed walk $\alpha$ in $K_3$, let $w(\alpha)$
be its winding number in $K_3$, with the convention that the $3$-cycle $(0,1,2,0)$
has winding number $+1$. If $\beta$ is a cycle in $\Tcqadpa$,  then $\varphi(\beta)$ is a cycle in $K_3$.

\begin{figure}[htpb] 
\begin{tikzpicture}
\begin{scope}[scale=1,yscale=-1]
\begin{scope}[shift={(0,0)}]  
  \foreach \loc in {(0,0.0),(1.8,0.0),(3.6,0.0),(5.4,0.0),(7.2,0.0),(10,0.0),
                    (0,1.8),(2.2,1.8),(4.4,1.8),(6.6,1.8),(10,1.8)} {
    \node[XBoxStyle] at \loc {$R_\times$};
  }
  \foreach \loc in {(0,1),(10,1)} {
    \node[ABoxStyle] at \loc {$R_a$};
  }
  \foreach \loc in {(1,0),(2.8,0),(4.6,0),(6.4,0),(9.2,0)} {
    \node[CBoxStyle] at \loc {$R_c$};
  }
  \foreach \loc in {(1,1.8),(3.2,1.8),(5.4,1.8),(7.6,1.8),(8.8,1.8)} {
    \node[DBoxStyle] at \loc {$R_d$};
  }

  \node at (8.8, 1.4) {\Large$\cdots$};

\node at (0.9,-0.2) {$\gamma$};
\node at (-0.2, 0.9) {$\alpha$};
\node at (3.3, 1.5) {$\delta$};

\draw[fill,radius=0.05,black] (0,0) circle;
\draw[fill,radius=0.05,black] (1.8,0) circle;
\draw[fill,radius=0.05,black] (3.6,0) circle;
\draw[fill,radius=0.05,black] (5.4,0) circle;
\draw[fill,radius=0.05,black] (7.2,0) circle;
\draw[fill,radius=0.05,black] (10,0) circle;
\draw[fill,radius=0.05,black] (0,1.80) circle;
\draw[fill,radius=0.05,black] (2.20,1.80) circle;
\draw[fill,radius=0.05,black] (4.40,1.80) circle;
\draw[fill,radius=0.05,black] (6.60,1.80) circle;
\draw[fill,radius=0.05,black] (10,1.80) circle;

 \draw[draw=black, ultra thick] (0,0) to (10,0);
 \draw[draw=black, ultra thick] (0,1.80) to (10,1.80);
 \draw[draw=black, ultra thick] (0,0) to (0,1.80);
 \draw[draw=black, ultra thick] (10,0) to (10,1.80);

 \fill[fill=white] (8,-.1) rectangle (9.6, 2.9);



\end{scope}

\end{scope}
\end{tikzpicture}
\caption{\label{fig:Gamma-npq-horiz-3} The long horizontal tile $\Tcqadpa$ is not properly $3$-colorable. Consider the cycle indicated by the thick line.}
\end{figure} 

Consider the cycle in $\Tcqadpa$ illustrated in Figure~\ref{fig:Gamma-npq-horiz-3}. The cycle is constructed as follows. First identify the upper-left corner vertex of each $R_\times$ block. We name three cycles that start and end at this vertex. The cycle along the left edges of $R_\times$ and $R_a$ blocks, going downward in the figure, is denoted $\alpha$.
The cycle along the top edges of $R_\times$ and $R_c$ blocks, going to the right, is denoted $\gamma$.
The cycle along the top edges of  $R_\times$ and $R_d$ blocks, going to the right, is denoted $\delta$.
Note that $\alpha$ and $\gamma$ have length $p$ and $\delta$ has length $q$. For any cycle $\beta$ we also use $\beta^{-1}$ to denote the cycle obtained from $\beta$ by reversing its direction, that is, if we list out the nodes in $\beta$ as $v_1\dots v_n$, then $\beta^{-1}$ can be represented as $v_n\dots v_1$. In this notation, the cycle we are considering is $\gamma^q\alpha\delta^{-p}\alpha^{-1}$. This cycle is depicted in Figure~\ref{fig:Gamma-npq-horiz-3} by the thick line and has clockwise orientation.

The cycle $\gamma^q\alpha\delta^{-p}\alpha^{-1}$ encompasses a region in $\Tcqadpa$ which is filled by a
grid of cycles of length $4$. It follows that the winding number of $\varphi(\gamma^q\alpha\delta^{-p}\alpha^{-1})$ is the sum of the winding numbers of several $4$-cycles in $K_3$. However, it is easy to check that any $4$-cycle in $K_3$ has winding number zero. Therefore
$$q \cdot w(\varphi(\gamma)) + w(\varphi(\alpha)) - p \cdot w(\varphi(\delta)) - w(\varphi(\alpha)) = w(\varphi(\gamma^q\alpha\delta^{-p}\alpha^{-1}))=0$$
which implies that
$$q \cdot w(\varphi(\gamma))=p \cdot w(\varphi(\delta)).$$
Since $\gcd(p,q)=1$, at least one of $p,q$ must be odd. An odd cycle in $K_3$ must have non-zero winding number,
so at least one of $w(\varphi(\gamma))$, $w(\varphi(\delta))$ must be non-zero.
The above equation then shows that both are non-zero and since $\gcd(p,q)=1$,
$p | w(\varphi(\gamma))$. But, $\varphi(\gamma)$ is a $p$-cycle in $K_3$, and so can have
winding number at most $\frac{p}{3}$, a contradiction.
\end{proof}

Using either Theorem~\ref{thm:tilethm} or \ref{thm:tilethmb}, Theorem~\ref{thm:pcn} immediately implies that the continuous chromatic number of $F(2^{\Z^2})$ is greater than $3$. The proof of Theorem~\ref{threecoloring} can also be adapted to give an alternative proof of Theorem~\ref{thm:pcn}.

We next turn to the other direction, namely showing that there is a continuous proper $4$-coloring of $F(2^{\Z^2})$. By Theorem~\ref{thm:tilethm} it suffices to show that $\Gamma_{n,p,q}$ can be properly $4$-colored for some $1\leq n<p, q$ with $\gcd(p,q)=1$. We can, in fact, take $n=1, p=2, q=3$.

\begin{thmn}\label{thm:pcn2}
There is a proper $4$-coloring of $\Gamma_{1,2,3}$.
\end{thmn}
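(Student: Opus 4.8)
The statement is entirely finitary: we must produce a proper $4$-coloring of the finite graph $\Gamma_{1,2,3}$, which by Theorem~\ref{thm:tilethm} (equivalently Theorem~\ref{thm:tilethmb}) is exactly what is needed to conclude $\chi_c(F(2^{\Z^2}))=4$. The plan is first to unfold $\Gamma_{1,2,3}$ concretely. With $n=1$ the block types $R_\times,R_a,R_c$ are single vertices, $R_b$ is a vertical edge and $R_d$ a horizontal edge, so gluing identically-labelled blocks across the twelve grid graphs $G^i_{1,2,3}$ yields seven ``frame'' vertices --- call them $x$ (from $R_\times$), $a$, $c$, the endpoints $b_0,b_1$ of the $R_b$-edge (with $b_0$ below $b_1$), and the endpoints $d_0,d_1$ of the $R_d$-edge (with $d_0$ left of $d_1$) --- together with the interior vertices of the twelve tiles, which are pairwise disjoint and form, respectively, a single vertex ($\Tcaca$), a domino ($\Tcbcb$, $\Tdada$), a $2\times2$ square ($\Tdbdb$), and thin $1\times4$ or $1\times5$ strips (the commutativity tiles $\Tdcadca,\Tcdacda,\Tcbacba,\Tcabcab$ and the long tiles $\Tcqadpa,\Tdpacqa,\Tcbpcaq,\Tcaqcbp$). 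Reading off the twelve figures, the only edges of $\Gamma_{1,2,3}$ among the frame vertices are the six edges out of $x$ together with $b_0b_1$ and $d_0d_1$, since in every $G^i_{1,2,3}$ each copy of $R_a,R_b,R_c,R_d$ is flanked only by $R_\times$-blocks and interior blocks.

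Next I would color the frame by $x\mapsto 0$, $a\mapsto 1$, $c\mapsto 1$, $b_0\mapsto 2$, $d_0\mapsto 2$, $b_1\mapsto 3$, $d_1\mapsto 3$. This is consistent (there is no edge between $a$ and $c$, nor between $b_i$ and $d_i$) and proper on the frame, and --- this is the point of collapsing $\{a,c\}$ and $\{b_0,d_0\}$ and $\{b_1,d_1\}$ into single color classes --- it keeps the palette of colors forbidden at any interior vertex small: every interior vertex is adjacent only to $x$, to other interior vertices, and to frame vertices of color $1$, $2$ or $3$. It then remains to extend over the twelve interior blocks. For the $2\times2$ interior of $\Tdbdb$, which is a $4$-cycle whose four vertices are each forbidden a two-element subset of $\{2,3\}$ coming from the adjacent $R_d$- and $R_b$-vertices, one writes down an explicit proper $4$-cycle coloring (for instance $3,0,2,1$ around the square). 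For each thin interior strip $u^0\!-\!u^1\!-\!\cdots$ one notes that an interior vertex of the strip is forbidden at most the two colors of the frame vertices directly above and below it, while each of the two endpoints additionally sees color $1$ from the flanking $R_a$/$R_c$-vertex; a short propagation (left to right, or inward from the two ends) then produces a proper extension. The $1\times1$ and domino interiors are immediate. Assembling these gives a total proper $4$-coloring of $\Gamma_{1,2,3}$.

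The conceptual content here is slight; the real work, and the main obstacle, is bookkeeping: correctly unfolding the commutativity and long tiles for the specific parameters $n=1,p=2,q=3$ so as to know exactly which frame vertex sits above/below (resp. to the left/right of) each interior vertex, and then checking that the chosen frame colors never leave an interior vertex with all four colors blocked --- it is precisely the identification of $a$ with $c$, and of $b_0$ with $d_0$ and $b_1$ with $d_1$, in the palette that prevents this. Since $\Gamma_{1,2,3}$ has on the order of fifty vertices, I would present the finished coloring on a single diagram of the twelve colored tiles and observe that the verification that no edge is monochromatic is then a direct finite check (it can equally well be confirmed by computer). Finally, I would remark that this is optimal: applying Theorem~\ref{thm:pcn} to the long tile $\Tcqadpa$, which occurs inside $\Gamma_{1,2,3}$, shows $\chi(\Gamma_{1,2,3})\ge 4$, so in fact $\chi(\Gamma_{1,2,3})=4$.
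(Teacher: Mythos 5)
Your proposal is correct and takes essentially the same route as the paper: both reduce to exhibiting an explicit proper $4$-coloring of the finite graph $\Gamma_{1,2,3}$ (the paper does this directly via Figure~\ref{fig:four-coloring}), and your frame coloring $x\mapsto 0$, $a=c\mapsto 1$, $b_0=d_0\mapsto 2$, $b_1=d_1\mapsto 3$ indeed extends over every interior block, which I have checked tile by tile. One small inaccuracy worth flagging: in the $\Tdbdb$ interior only the two interior corners adjacent to $b_0$-and-$d_1$ or $b_1$-and-$d_0$ are forbidden both of $\{2,3\}$ (the other two see just $\{2\}$ or just $\{3\}$ since $b_0=d_0$ and $b_1=d_1$), so the constraints are even weaker than you state, and the cyclic pattern $3,0,2,1$ works only if started at the right corner -- in your final write-up you should fix an orientation when presenting it.
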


\begin{proof} We describe a proper $4$-coloring on $\Gamma_{1,2,3}$ by giving the coloring on the grid graphs $G^i_{1,2,3}$ for $1\leq i\leq 12$ so that similarly labeled blocks are colored the same way.
Because proper colorings remain proper after reflections and rotations, it
suffices to color the tiles~$\Tcaca$, $\Tdada$, $\Tdbdb$, $\Tcqadpa$ and
$\Tcdacda$.  We give this coloring in Figure~\ref{fig:four-coloring}. For better readability of the colors the  grid graphs are represented as boards, with square boxes representing vertices and adjacency of squares representing edges.
The figure also shows the coloring on the $R_\times, R_a, R_b, R_c$ blocks by themselves for reference, so that it is easy to check the consistency of the coloring across tiles.
\end{proof}

\begin{figure}[htbp]
    \newcommand{\figurescale}{.6}
    \newcommand{\subfigurespacing}{0.5cm}
   \colorlet{color1}{red!50}
   \colorlet{color2}{black!25}
   \colorlet{color3}{black!50}
   \colorlet{color0}{white}
    \begin{minipage}{1\textwidth}
        \centering
        \begin{tikzpicture}[scale=\figurescale]
            \begin{scope}[yscale=-1,local bounding box=bound]
                \foreach \row [count=\i] in {
                    {3}} {
                    \foreach \clrNum [count=\j] in \row {
                        \fill[fill=color\clrNum] (\j,\i) rectangle ++(-1,-1);
                    }
                }
                \draw[draw=black, ultra thick] (0,0) rectangle (1,1);
            \end{scope}
            \node[below=0.1cm of bound.south] {$\varphi\restrict{R_\times}$};
        \end{tikzpicture}   
        \hspace{\subfigurespacing}
        \begin{tikzpicture}[scale=\figurescale]
            \begin{scope}[yscale=-1,local bounding box=bound]
                \foreach \row [count=\i] in {
                    {0}} {
                    \foreach \clrNum [count=\j] in \row {
                        \fill[fill=color\clrNum] (\j,\i) rectangle ++(-1,-1);
                    }
                }
                \draw[draw=black, ultra thick] (0,0) rectangle (1,1);
            \end{scope}
            \node[below=0.1cm of bound.south] {$\varphi\restrict{R_a}$};
        \end{tikzpicture}   
        \hspace{\subfigurespacing}
        \begin{tikzpicture}[scale=\figurescale]
            \begin{scope}[yscale=-1,local bounding box=bound]
                \foreach \row [count=\i] in {
                    {0},
                    {2}} {
                    \foreach \clrNum [count=\j] in \row {
                        \fill[fill=color\clrNum] (\j,\i) rectangle ++(-1,-1);
                    }
                }
                \draw[draw=black, ultra thick] (0,0) rectangle (1,2);
            \end{scope}
            \node[below=0.1cm of bound.south] {$\varphi\restrict{R_b}$};
        \end{tikzpicture}   
        \hspace{\subfigurespacing}
        \begin{tikzpicture}[scale=\figurescale]
            \begin{scope}[yscale=-1,local bounding box=bound]
                \foreach \row [count=\i] in {
                    {0}} {
                    \foreach \clrNum [count=\j] in \row {
                        \fill[fill=color\clrNum] (\j,\i) rectangle ++(-1,-1);
                    }
                }
                \draw[draw=black, ultra thick] (0,0) rectangle (1,1);
            \end{scope}
            \node[below=0.1cm of bound.south] {$\varphi\restrict{R_c}$};
        \end{tikzpicture}   
        \hspace{\subfigurespacing}
        \begin{tikzpicture}[scale=\figurescale]
            \begin{scope}[yscale=-1,local bounding box=bound]
                \foreach \row [count=\i] in {
                    {0,2}} {
                    \foreach \clrNum [count=\j] in \row {
                        \fill[fill=color\clrNum] (\j,\i) rectangle ++(-1,-1);
                    }
                }
                \draw[draw=black, ultra thick] (0,0) rectangle (2,1);
            \end{scope}
            \node[below=0.1cm of bound.south] {$\varphi\restrict{R_d}$};
        \end{tikzpicture}   
    \end{minipage} \\
    \vspace{\subfigurespacing}
    \begin{minipage}{1\textwidth}
        \centering
        \begin{tikzpicture}[scale=\figurescale]
            \begin{scope}[yscale=-1,local bounding box=bound]
                \foreach \row [count=\i] in {
                    {3,0,3},
                    {0,3,0},
                    {3,0,3}} {
                    \foreach \clrNum [count=\j] in \row {
                        \fill[fill=color\clrNum] (\j,\i) rectangle ++(-1,-1);
                    }
                }
                \draw[draw=black, ultra thick] (0,0) rectangle (1,3);
                \draw[draw=black, ultra thick] (2,0) rectangle (3,3);
                \draw[draw=black, ultra thick] (0,0) rectangle (3,1);
                \draw[draw=black, ultra thick] (0,2) rectangle (3,3);
            \end{scope}
            \node[below=0.1cm of bound.south] {$\varphi\restrict{\Tcaca}$};
        \end{tikzpicture}   
        \hspace{\subfigurespacing}
        \begin{tikzpicture}[scale=\figurescale]
            \begin{scope}[yscale=-1,local bounding box=bound]
                \foreach \row [count=\i] in {
                    {3,0,2,3},
                    {0,2,3,0},
                    {3,0,2,3}} {
                    \foreach \clrNum [count=\j] in \row {
                        \fill[fill=color\clrNum] (\j,\i) rectangle ++(-1,-1);
                    }
                }
                \draw[draw=black, ultra thick] (0,0) rectangle (1,3);
                \draw[draw=black, ultra thick] (3,0) rectangle (4,3);
                \draw[draw=black, ultra thick] (0,0) rectangle (4,1);
                \draw[draw=black, ultra thick] (0,2) rectangle (4,3);
            \end{scope}
            \node[below=0.1cm of bound.south] {$\varphi\restrict{\Tdada}$};
        \end{tikzpicture}   
        \hspace{\subfigurespacing}
        \begin{tikzpicture}[scale=\figurescale]
            \begin{scope}[yscale=-1,local bounding box=bound]
                \foreach \row [count=\i] in {
                    {3,0,2,3},
                    {0,2,3,0},
                    {2,3,0,2},
                    {3,0,2,3}} {
                    \foreach \clrNum [count=\j] in \row {
                        \fill[fill=color\clrNum] (\j,\i) rectangle ++(-1,-1);
                    }
                }
                \draw[draw=black, ultra thick] (0,0) rectangle (1,4);
                \draw[draw=black, ultra thick] (3,0) rectangle (4,4);
                \draw[draw=black, ultra thick] (0,0) rectangle (4,1);
                \draw[draw=black, ultra thick] (0,3) rectangle (4,4);
            \end{scope}
            \node[below=0.1cm of bound.south] {$\varphi\restrict{\Tdbdb}$};
        \end{tikzpicture}   
    \end{minipage} \\
    \vspace{\subfigurespacing}
    \begin{minipage}{1\textwidth}
        \centering
        \begin{tikzpicture}[scale=\figurescale]
            \begin{scope}[yscale=-1,local bounding box=bound]
                \foreach \row [count=\i] in {
                    {3,0,3,0,3,0,3},
                    {0,3,0,2,0,3,0},
                    {3,0,2,3,0,2,3}} {
                    \foreach \clrNum [count=\j] in \row {
                        \fill[fill=color\clrNum] (\j,\i) rectangle ++(-1,-1);
                    }
                }
                \fill[pattern=dots, pattern color=black] (4, 1) rectangle (5,2);
                \draw[draw=black, ultra thick] (0,0) rectangle (3,1);
                \draw[draw=black, ultra thick] (2,0) rectangle (5,1);
                \draw[draw=black, ultra thick] (4,0) rectangle (7,1);
                \draw[draw=black, ultra thick] (0,0) rectangle (1,3);
                \draw[draw=black, ultra thick] (6,0) rectangle (7,3);
                \draw[draw=black, ultra thick] (0,2) rectangle (4,3);
                \draw[draw=black, ultra thick] (3,2) rectangle (7,3);

            \end{scope}
            \node[below=0.1cm of bound.south] {$\varphi\restrict{\Tcqadpa}$};
        \end{tikzpicture}   
        \hspace{\subfigurespacing}
        \begin{tikzpicture}[scale=\figurescale]
            \begin{scope}[yscale=-1,local bounding box=bound]
                \foreach \row [count=\i] in {
                    {3,0,2,3,0,3},
                    {0,3,0,2,3,0},
                    {3,0,3,0,2,3}} {
                    \foreach \clrNum [count=\j] in \row {
                        \fill[fill=color\clrNum] (\j,\i) rectangle ++(-1,-1);
                    }
                }
                \draw[draw=black, ultra thick] (0,0) rectangle (4,1);
                \draw[draw=black, ultra thick] (3,0) rectangle (6,1);
                \draw[draw=black, ultra thick] (0,2) rectangle (3,3);
                \draw[draw=black, ultra thick] (2,2) rectangle (6,3);

                \draw[draw=black, ultra thick] (0,0) rectangle (1,3);
                \draw[draw=black, ultra thick] (5,0) rectangle (6,3);
            \end{scope}
            \node[below=0.1cm of bound.south] {$\varphi\restrict{\Tcdacda}$};
        \end{tikzpicture}   
    \end{minipage}
    \caption[A proper $4$-coloring of $\Gamma_{1,2,3}$.]{\label{fig:four-coloring}A proper $4$-coloring
of $\Gamma_{1,2,3}$.  Vertices are represented by square boxes and edges are represented by adjacency. Thick lines separate
labeled blocks.}
\end{figure} 

For our next application we show that there is no continuous perfect matching
in the Schreier graph of $F(2^{\Z^n})$. Here we think of the Schreier graph of $F(2^{\Z^n})$ as an undirected graph.
Recall that a {\em  matching}
in an undirected graph is a subset $M$ of the edges such that every vertex is
incident with at most  one of the edges of $M$. A matching $M$ is
a {\em perfect matching} if every vertex is incident with exactly
one edge in $M$. For a countable group $G$ with a minimal generating set $S$, a perfect matching $M$ in
the Schreier graph $\Gamma(F(2^G), G, S)$ (see the Definition in \S\ref{section:1.1}) is given by a map $\eta\colon F(2^G) \to S\cup S^{-1}$ such that, for every $x\in F(2^G)$, $(x, \eta(x)\cdot x)\in M$. In order to give rise to a perfect matching, such $\eta$ must satisfy $\eta(\eta(x)\cdot x)=\eta(x)^{-1}$ for all $x\in F(2^G)$.
When $\eta$ is continuous, we say that $M$ is continuous.

\begin{thmn} \label{thm:npm}
There is no continuous perfect matching in $F(2^{\Z^n})$
for any $n \geq 1$.
\end{thmn}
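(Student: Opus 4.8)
The plan is to use the Twelve Tiles Theorem, Theorem~\ref{thm:tilethm}, to reduce the question to a finitary statement about the torus tiles $\Tcaca$, and then to observe that a torus tile of odd size admits no perfect matching by a parity argument. First I would formulate ``having a continuous perfect matching'' as the existence of a continuous equivariant map $f\colon F(2^{\Z^n})\to S$ into an appropriate subshift of finite type $S\subseteq b^{\Z^n}$: encode a matching by recording, at each vertex $v$, which generator direction (or its inverse) the matched edge at $v$ points in, so $b=2n+1$ (the extra symbol being unnecessary in fact, but harmless). The forbidden patterns are local: a $2$-vertex window forbidding the configuration where $v$ points toward $e_i\cdot v$ but $e_i\cdot v$ does not point back toward $v$, together with the (vacuously $1$-vertex) requirement that every vertex does point somewhere. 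This is a width-$1$ subshift of finite type. By Theorem~\ref{thm:tilethm} (or its $n$-dimensional analogue, or for $n\geq 2$ simply Theorem~\ref{thm:weakneg} applied with the torus tiles $T_{q_1,\dots,q_n}$), a continuous perfect matching of $F(2^{\Z^n})$ exists only if some torus tile admits a map respecting $S$; but a map respecting $S$ on the torus tile $T_{q_1,\dots,q_n}$ is precisely a perfect matching of that finite graph.

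The main point is then the elementary fact that the torus graph $T_{q_1,\dots,q_n}$ has $q_1\cdots q_n$ vertices, each of even degree $2n$, and is vertex-transitive and connected, but more to the point: if at least one $q_i$ is odd we can choose all $q_i$ odd (in the reduction we are free to take $q_i$ from an infinite set, e.g.\ all sufficiently large values, and for $n=1$ we use the simpler two-tiles theorem, Theorem~\ref{thm:twotilesthm}, where $\Gamma^1_{n,p,q}$ contains the $p$-cycle and $q$-cycle with $(p,q)=1$ so one of $p,q$ is odd). A graph with an odd number of vertices has no perfect matching, since a perfect matching partitions the vertex set into pairs. Thus no torus tile of odd total size can be perfectly matched, and for $n\geq 2$ we invoke Theorem~\ref{thm:weakneg} with, say, $q_1=\dots=q_n=3$ (or any odd values $\geq 2$) to conclude that for all sufficiently large $k$ there would have to be a perfect matching of $T_{3^k,\dots,3^k}$, which has $3^{nk}$ vertices — an odd number — a contradiction.

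The only genuine subtlety is the case $n=1$, which is not covered by Theorem~\ref{thm:weakneg} (stated for torus tiles in $\Z^n$ with the quotient structure) but is covered by the one-dimensional two-tiles theorem, Theorem~\ref{thm:twotilesthm}: a continuous perfect matching of $F(2^\Z)$ yields, for some $n<p,q$ with $(p,q)=1$, a map $g\colon \Gamma^1_{n,p,q}\to b$ respecting $S$, i.e.\ a perfect matching of the finite graph $\Gamma^1_{n,p,q}$. Since $(p,q)=1$, at least one of $p,q$ is odd; one then checks directly from the construction of $\Gamma^1_{n,p,q}$ (Figure~\ref{fig:onedimgamma}) that it has an odd number of vertices whenever one of $p,q$ is odd, or more robustly, that it contains an odd cycle as an induced connected component-free obstruction — actually the cleanest route is just to note $\Gamma^1_{n,p,q}$ has $n + (p-n) + (q-1) = p+q-1$ vertices, which is odd exactly when one of $p,q$ is even, so this counting needs a small adjustment: instead use that the cycle structure forces, along the $T_1$ tile, a path of odd length closing up, i.e.\ we directly argue that the $T_1$ and $T_2$ tiles glued at the $R^1_\times$ copies form a graph in which the $R^1_\times$ vertices each have degree forcing a contradiction via the odd cycle. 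I would write out this $n=1$ parity bookkeeping carefully; it is routine but is the one place where the ``obvious'' vertex count does not immediately give odd cardinality, and the honest argument is that the map $g$ restricted to the (odd) cycle of length $p$ or $q$ obtained by identifying endpoints is a perfect matching of an odd cycle, which is impossible. This is the step I expect to require the most care, though it remains elementary.

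\emph{Remark.} For $n\ge 2$ one may alternatively bypass the full Twelve Tiles Theorem entirely and use only the torus-tile version, Theorem~\ref{thm:weakneg}, which is exactly the ``torus tiles suffice'' phenomenon advertised in the introduction for the perfect matching problem.
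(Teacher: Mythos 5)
Your $n \geq 2$ argument is the paper's argument: encode the matching as a width-$1$ subshift of finite type and invoke Theorem~\ref{thm:weakneg} with all $q_i$ odd to force a perfect matching of a torus having an odd number of vertices, a contradiction.

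For $n=1$ the paper dispenses with the tile machinery entirely: a continuous perfect matching of $F(2^\Z)$ gives a clopen $M$ with $1\cdot M = F(2^\Z)\setminus M$ (a ``parity''), and this is ruled out by a short ergodicity argument. Your route through Theorem~\ref{thm:twotilesthm} is genuinely different and is in fact viable, but your execution contains errors. You are free to take $n=1$ (the matching subshift has width $1$) and, using clause~(3) of that theorem, to take $p,q$ both odd (large coprime pairs of this kind abound); then $\Gamma^1_{1,p,q}$ has $1 + (p-1) + (q-1) = p+q-1$ vertices, which is \emph{odd}, and you are done. Your claim that ``$p+q-1$ \ldots is odd exactly when one of $p,q$ is even'' has the parity backwards: $p+q-1$ is odd iff $p$ and $q$ have the same parity, hence both odd since they are coprime. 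That mistake sent you looking for a workaround, and the one you propose --- that $g$ restricted to the length-$p$ (or length-$q$) cycle inside $\Gamma^1_{1,p,q}$ is a perfect matching of that cycle --- is not justified: the $R^1_\times$ vertex lies on both cycles, and $g$ may match it along the \emph{other} cycle, so the restriction to one cycle need not be a matching of that cycle at all. The correct version of your idea is that the subshift constraint forces $g$ to be a proper $2$-coloring of $\Gamma^1_{1,p,q}$ (a matched pair must see opposite symbols, so adjacent vertices get distinct values), while $\Gamma^1_{1,p,q}$ contains a cycle of odd length $p$ or $q$ and hence is not bipartite. Either repair closes the gap; as written, your $n=1$ case is not proved. (Incidentally, the paper's definition of $R^1_b$ as the ``$(q-1)\times 1$'' grid-graph is a typo for $(q-n)\times 1$, which may have contributed to your confusion; when $n=1$ the two coincide.)
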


\begin{proof} Assume a continuous perfect matching in $F(2^{\Z^n})$ exists and is given by a continuous function
$$ \eta: F(2^{\Z^n})\to \{\pm e_1, \pm e_2, \dots, \pm e_n\}. $$
The condition $\eta(\eta(x)\cdot x)=\eta(x)^{-1}=-\eta(x)$ can be described by a number of forbidden patterns of width 1, and thus gives rise to a subshift of finite type $Y\subseteq (2n)^{\Z^n}$. Let $q_1,\dots,q_n$ be odd. From
Theorem~\ref{thm:weakneg} there is $k$ such that there is $g\colon
T_{q_1^k,\dots,q_n^k}\to 2n$ which respects $Y$.
As $g$ respects $Y$, it gives a perfect matching in the torus $T_{q_1^k,\dots,q_n^k}$.
This is a contradiction as this torus has an odd number of vertices.
\end{proof}

Next we compute the continuous edge chromatic number of
$F(2^{\Z^2})$. Again we think of the Schreier graph of $F(2^{\Z^2})$ as an undirected graph. Recall that in an undirected graph two edges are {\em adjacent} if they are incident to a common vertex. An {\em edge coloring} is an assignment of colors to the edges so that adjacent edges are assigned different colors. The {\em edge chromatic number} of a graph is the smallest number of colors one can use in an edge coloring. In the Cayley graph of $\Z^2$ each edge is adjacent to 6 other edges, and it is easy to see that the edge chromatic number is $4$. In the next result we again need only consider the torus tiles.

\begin{figure}
    \noindent\centering
    \newcommand{\subfigurespacing}{0.5cm}

    \colorlet{color5}{black!65}
    \colorlet{color4}{green!50}
    \colorlet{color2}{black!25}
    \colorlet{color3}{blue!50}
    \colorlet{color1}{white}
    \tikzset{
    edgecoloringscale/.style={x=1.7em,y=1.7em,yscale=-1},
    blackboundary/.style={ultra thick},
    pics/edgecoloringnode/.style args={#1/#2/#3/#4}{code={
        \draw[color#1,fill=color#1] (0,0) -- (0.5,0.5) -- (0,1) -- (0,0); 
        \draw[color#2,fill=color#2] (0,0) -- (0.5,0.5) -- (1,0) -- (0,0); 
        \draw[color#3,fill=color#3] (1,1) -- (0.5,0.5) -- (0,1) -- (1,1); 
        \draw[color#4,fill=color#4] (1,1) -- (0.5,0.5) -- (1,0) -- (1,1); 
        \draw[black!50] (0,0) rectangle (1,1);
    }}}

    \newcommand{\edgecoloringtile}[3]{
        \begin{tikzpicture}[edgecoloringscale]
            \begin{scope}[local bounding box=bound]
                \foreach \row[count=\i] in {#2} {
                    \foreach \l/\t/\b/\r [count=\j] in \row  {
                        \draw pic at (\j,\i) {edgecoloringnode=\l/\t/\b/\r};

                    }
                }
            \end{scope}
            \node[below=0.1cm of bound.south] {#1};
            \begin{scope}[shift={(1,0)}]
                \foreach \r\s in {#3} {
                    \draw[blackboundary] \r rectangle \s;
                }
            \end{scope}
        \end{tikzpicture}
    }

    %
    %
    \begin{minipage}{1\textwidth}
        \centering\noindent
        \edgecoloringtile{$\varphi\restrict{R_\times}$}{{1/3/4/2}}{{(0,0)}/{(1,1)}}
        \hspace{\subfigurespacing}
        \edgecoloringtile{$\varphi\restrict{R_a}$}{{1/4/3/2}}{{(0,0)}/{(1,1)}}
        \hspace{\subfigurespacing}
        \edgecoloringtile{$\varphi\restrict{R_b}$}{
            {1/5/3/2},
            {1/4/5/2}}{{(0,0)}/{(1,2)}}
        \hspace{\subfigurespacing}
        \edgecoloringtile{$\varphi\restrict{R_c}$}{{2/3/4/1}}{{(0,0)}/{(1,1)}}
        \hspace{\subfigurespacing}
        \edgecoloringtile{$\varphi\restrict{R_d}$}{{2/3/4/5,5/3/4/1}}{{(0,0)}/{(2,1)}}
    \end{minipage}
    \vspace{\subfigurespacing}

    %
    %
    \begin{minipage}{1\textwidth}
        \centering\noindent
        \edgecoloringtile{$\varphi\restrict{\Tcaca}$}{
            {1/3/4/2,2/3/4/1,1/3/4/2},
            {1/4/3/2,2/4/3/1,1/4/3/2},
            {1/3/4/2,2/3/4/1,1/3/4/2}}{
            {(0,0)}/{(1,3)},
            {(0,0)}/{(3,1)},
            {(3,3)}/{(0,2)},
            {(3,3)}/{(2,0)}}
        \hspace{\subfigurespacing}
        \edgecoloringtile{$\varphi\restrict{\Tdada}$}{
            {1/3/4/2,2/3/4/5,5/3/4/1,1/3/4/2},
            {1/4/3/2,2/4/3/5,5/4/3/1,1/4/3/2},
            {1/3/4/2,2/3/4/5,5/3/4/1,1/3/4/2}}{
            {(0,0)}/{(1,3)},
            {(0,0)}/{(4,1)},
            {(4,3)}/{(0,2)},
            {(4,3)}/{(3,0)}}
        \hspace{\subfigurespacing}
        \edgecoloringtile{$\varphi\restrict{\Tcbcb}$}{
            {1/3/4/2,2/3/4/1,1/3/4/2},
            {1/5/3/2,2/5/3/1,1/5/3/2},
            {1/4/5/2,2/4/5/1,1/4/5/2},
            {1/3/4/2,2/3/4/1,1/3/4/2}}{
            {(0,0)}/{(1,4)},
            {(0,0)}/{(3,1)},
            {(3,4)}/{(0,3)},
            {(3,4)}/{(2,0)}}
        \hspace{\subfigurespacing}
        \edgecoloringtile{$\varphi\restrict{\Tdbdb}$}{
            {1/3/4/2,2/3/4/5,5/3/4/1,1/3/4/2},
            {1/5/3/2,2/1/3/4,4/2/3/1,1/5/3/2},
            {1/4/5/2,2/4/1/3,3/4/2/1,1/4/5/2},
            {1/3/4/2,2/3/4/5,5/3/4/1,1/3/4/2}}{
            {(0,0)}/{(1,4)},
            {(0,0)}/{(4,1)},
            {(4,4)}/{(0,3)},
            {(4,4)}/{(3,0)}}
    \end{minipage}
    \vspace{\subfigurespacing}

    \begin{minipage}{0.4\textwidth}
        \centering\noindent
        \edgecoloringtile{$\varphi\restrict{\Tcbacba}$}{
            {1/3/4/2,2/3/4/1,1/3/4/2},
            {1/4/3/2,2/4/3/1,1/5/3/2},
            {1/3/4/2,2/3/4/1,1/4/5/2},
            {1/5/3/2,2/5/3/1,1/3/4/2},
            {1/4/5/2,2/4/5/1,1/4/3/2},
            {1/3/4/2,2/3/4/1,1/3/4/2}}{
            {(0,0)}/{(3,1)},
            {(0,5)}/{(3,6)},
            {(0,0)}/{(1,3)},
            {(0,2)}/{(1,6)},
            {(2,0)}/{(3,4)},
            {(2,3)}/{(3,6)}}
        \hspace{\subfigurespacing}
        \edgecoloringtile{$\varphi\restrict{\Tcabcab}$}{
            {1/3/4/2,2/3/4/1,1/3/4/2},
            {1/5/3/2,2/5/3/1,1/4/3/2},
            {1/4/5/2,2/4/5/1,1/3/4/2},
            {1/3/4/2,2/3/4/1,1/5/3/2},
            {1/4/3/2,2/4/3/1,1/4/5/2},
            {1/3/4/2,2/3/4/1,1/3/4/2}}{
            {(0,0)}/{(3,1)},
            {(0,5)}/{(3,6)},
            {(0,0)}/{(1,4)},
            {(0,3)}/{(1,6)},
            {(2,0)}/{(3,3)},
            {(2,2)}/{(3,6)}}
    \end{minipage}
    \hspace{\subfigurespacing}
    \begin{minipage}{0.4\textwidth}
        \centering\noindent
        \edgecoloringtile{$\varphi\restrict{\Tcdacda}$}{
            {1/3/4/2,2/3/4/1,1/3/4/2,2/3/4/5,5/3/4/1,1/3/4/2},
            {1/4/3/2,2/4/3/1,1/4/3/2,2/4/3/5,5/4/3/1,1/4/3/2},
            {1/3/4/2,2/3/4/5,5/3/4/1,1/3/4/2,2/3/4/1,1/3/4/2}}{
            {(0,0)}/{(1,3)},
            {(5,0)}/{(6,3)},
            {(0,0)}/{(3,1)},
            {(2,0)}/{(6,1)},
            {(0,2)}/{(4,3)},
            {(3,2)}/{(6,3)}}
        \vspace{\subfigurespacing}

        \edgecoloringtile{$\varphi\restrict{\Tdcadca}$}{
            {1/3/4/2,2/3/4/5,5/3/4/1,1/3/4/2,2/3/4/1,1/3/4/2},
            {1/4/3/2,2/4/3/5,5/4/3/1,1/4/3/2,2/4/3/1,1/4/3/2},
            {1/3/4/2,2/3/4/1,1/3/4/2,2/3/4/5,5/3/4/1,1/3/4/2}}{
            {(0,0)}/{(1,3)},
            {(5,0)}/{(6,3)},
            {(0,0)}/{(4,1)},
            {(3,0)}/{(6,1)},
            {(0,2)}/{(3,3)},
            {(2,2)}/{(6,3)}}
    \end{minipage}
    \vspace{\subfigurespacing}

    \begin{minipage}{0.4\textwidth}
        \centering\noindent
        \edgecoloringtile{$\varphi\restrict{\Tcaqcbp}$}{
            {1/3/4/2,2/3/4/1,1/3/4/2},
            {1/5/3/2,2/4/3/1,1/4/3/2},
            {1/4/5/2,2/3/4/1,1/3/4/2},
            {1/3/4/2,2/4/3/1,1/4/3/2},
            {1/5/3/2,2/3/4/1,1/3/4/2},
            {1/4/5/2,2/4/3/1,1/4/3/2},
            {1/3/4/2,2/3/4/1,1/3/4/2}}{
            {(0,0)}/{(3,1)},
            {(0,6)}/{(3,7)},
            {(2,0)}/{(3,3)},
            {(2,2)}/{(3,5)},
            {(2,4)}/{(3,7)},
            {(0,0)}/{(1,4)},
            {(0,3)}/{(1,7)}}
        \hspace{\subfigurespacing}
        \edgecoloringtile{$\varphi\restrict{\Tcbpcaq}$}{
            {1/3/4/2,2/3/4/1,1/3/4/2},
            {1/4/3/2,2/4/3/1,1/5/3/2},
            {1/3/4/2,2/3/4/1,1/4/5/2},
            {1/4/3/2,2/4/3/1,1/3/4/2},
            {1/3/4/2,2/3/4/1,1/5/3/2},
            {1/4/3/2,2/4/3/1,1/4/5/2},
            {1/3/4/2,2/3/4/1,1/3/4/2}}{
            {(0,0)}/{(3,1)},
            {(0,6)}/{(3,7)},
            {(0,0)}/{(1,3)},
            {(0,2)}/{(1,5)},
            {(0,4)}/{(1,7)},
            {(2,0)}/{(3,4)},
            {(2,3)}/{(3,7)}}
    \end{minipage}
    \hspace{\subfigurespacing}
    \begin{minipage}{0.4\textwidth}
        \centering\noindent
        \edgecoloringtile{$\varphi\restrict{\Tcqadpa}$}{
            {1/3/4/2,2/3/4/5,5/3/4/1,1/3/4/2,2/3/4/5,5/3/4/1,1/3/4/2},
            {1/4/3/2,2/4/3/1,1/4/3/2,2/4/3/1,1/4/3/2,2/4/3/1,1/4/3/2},
            {1/3/4/2,2/3/4/1,1/3/4/2,2/3/4/1,1/3/4/2,2/3/4/1,1/3/4/2}}{
            {(0,0)}/{(1,3)},
            {(6,0)}/{(7,3)},
            {(0,2)}/{(3,3)},
            {(2,2)}/{(5,3)},
            {(4,2)}/{(7,3)},
            {(0,0)}/{(4,1)},
            {(3,0)}/{(7,1)}}
        \vspace{\subfigurespacing}

        \edgecoloringtile{$\varphi\restrict{\Tdpacqa}$}{
            {1/3/4/2,2/3/4/1,1/3/4/2,2/3/4/1,1/3/4/2,2/3/4/1,1/3/4/2},
            {1/4/3/2,2/4/3/1,1/4/3/2,2/4/3/1,1/4/3/2,2/4/3/1,1/4/3/2},
            {1/3/4/2,2/3/4/5,5/3/4/1,1/3/4/2,2/3/4/5,5/3/4/1,1/3/4/2}}{
            {(0,0)}/{(1,3)},
            {(6,0)}/{(7,3)},
            {(0,0)}/{(3,1)},
            {(2,0)}/{(5,1)},
            {(4,0)}/{(7,1)},
            {(0,2)}/{(4,3)},
            {(3,2)}/{(7,3)}}
    \end{minipage}
\caption[A 5-edge-coloring of $\Gamma_{1,2,3}$.]{A 5-edge-coloring of $\Gamma_{1,2,3}$.
Each node is assigned four colors representing the colors of its adjacent edges.
Also observe that the boundary nodes of each tile are colored in a consistent way,
e.g., $R_d$ is colored the same way everywhere it appears in the figure.}
\label{fig:edge-coloring}
\end{figure}  

\begin{thmn} \label{thm:ecn}
The continuous edge chromatic number of $F(2^{\Z^2})$ is $5$.
\end{thmn}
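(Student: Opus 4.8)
The plan is to apply the Twelve Tiles Theorem (Theorem~\ref{thm:tilethm}), or rather its one-dimensional-window special case as used in Theorem~\ref{thm:npm}, to reduce the question to a finitary statement about the torus tiles, and then separately produce a continuous $5$-edge-coloring via Theorem~\ref{thm:pcn2}-style explicit construction. An edge coloring assigns to each edge of the Cayley graph one of $5$ colors, and the constraint ``adjacent edges get different colors'' is a local condition: it can be checked on each $2\times 2$ box, since any two adjacent edges lie in a common $2\times 2$ box. Hence a continuous edge $k$-coloring of $F(2^{\Z^2})$ is exactly a continuous equivariant map into a suitable width-$1$ subshift of finite type $S_k \subseteq b^{\Z^2}$ (where we encode at each vertex the colors of its incident $e_1$- and $e_2$-edges, so $b$ is a constant depending on $k$), subject to forbidden patterns on $2\times 2$ windows. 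First I would make this encoding precise and invoke Theorem~\ref{thm:tilethm}: a continuous edge $k$-coloring exists iff for all sufficiently large $p,q$ with $(p,q)=1$ there is a $g\colon\Gamma_{1,p,q}\to b$ respecting $S_k$, equivalently a consistent edge $k$-coloring of all twelve tiles $T^i_{1,p,q}$ simultaneously.

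For the lower bound (no continuous edge $4$-coloring), the main work is to show that for some arbitrarily large $p$ (or for suitable $p,q$), at least one tile — I expect the torus tile $\Tcaca$, which is the $p\times p$ torus graph — admits no edge $4$-coloring consistent with the identifications, or more robustly that the gluing of tiles is obstructed. Here the clean argument is a parity/counting one in the style of Theorem~\ref{thm:npm}: in a $4$-regular graph, a proper $4$-edge-coloring partitions the edges into $4$ perfect matchings (each color class is a perfect matching, since $4$ is both the chromatic index and the degree), so an edge $4$-coloring of the $p\times p$ torus would in particular yield a perfect matching; taking $p$ odd makes $|V|=p^2$ odd, so no perfect matching exists, contradiction. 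Thus no continuous edge $4$-coloring of $F(2^{\Z^2})$ exists, and combined with the trivial fact that the (non-definable) edge chromatic number is $4$ we get that the continuous edge chromatic number is at least $5$.

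For the upper bound I would exhibit an explicit continuous edge $5$-coloring. By Theorem~\ref{thm:tilethm} (or Theorem~\ref{thm:tilethmb}) it suffices to edge-$5$-color $\Gamma_{n,p,q}$ for some $1\le n<p,q$ with $(p,q)=1$; as in Theorem~\ref{thm:pcn2} I would try small parameters such as $n=1$, $p=2$, $q=3$, presenting the edge coloring block-by-block on the grid graphs $G^i_{1,2,3}$ so that similarly labeled blocks receive the same edge colors, and exploiting reflection/rotation symmetry to reduce to a handful of tiles ($\Tcaca,\Tdada,\Tdbdb,\Tcqadpa,\Tcdacda$). Verifying that a $5$-coloring of each tile's edges extends consistently across shared blocks is the bookkeeping step; since $5$ exceeds the degree $4$ there is slack, so a direct search (as was apparently done by computer for related results) will succeed. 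The main obstacle I anticipate is not the lower bound (the parity argument is immediate) but rather the upper bound's bookkeeping: finding small $p,q$ and an actual $5$-edge-coloring of all twelve grid graphs $G^i_{n,p,q}$ that agrees on the identified $R_\times, R_a, R_b, R_c, R_d$ blocks. If $n=1,p=2,q=3$ proves too cramped, I would increase $n$ or $p,q$ slightly; Theorem~\ref{thm:tilethm} guarantees that once it works for one valid triple it works in general, so only a single successful finite configuration is needed.
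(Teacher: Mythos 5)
Your proposal matches the paper's proof essentially exactly: the lower bound is the same parity argument (a color class in a $4$-edge-coloring of the $p\times p$ torus tile $\Tcaca$ is a perfect matching, impossible when $p$ is odd since $p^2$ is odd), and the upper bound is an explicit edge $5$-coloring of $\Gamma_{1,2,3}$ via Theorem~\ref{thm:tilethm}, which is precisely what the paper exhibits in Figure~\ref{fig:edge-coloring}. One small imprecision: two colinear adjacent edges do not lie in a common $2\times 2$ box, but this does not matter because (as you then correctly say, and as the paper does) the encoding at each vertex already carries all four incident edge colors, so the properness condition is width $0$ and only the cross-vertex consistency is width $1$, giving $n=1$.
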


\begin{proof}

We first show that it is at least $5$. This, in fact, follows immediately from
Theorem~\ref{thm:npm}. For if there were a $4$-edge-coloring of
$F(2^{\Z^2})$, then every vertex of the Schreier graph has exactly
one edge of each color incident with it. Thus, for any fixed color, the
set of edges of that color gives a perfect matching in $F(2^{\Z^2})$.


We next show there is a continuous $5$-edge-coloring of $F(2^{\Z^2})$. Note that the condition for an edge coloring
can be described by forbidden patterns of width 1, thus we may apply Theorem~\ref{thm:tilethm} with $n=1$.
It suffices to find $p,q>1$ with $\gcd(p,q)=1$ such that $\gopq$ can be
$5$-edge-colored. We take $p=2$, $q=3$. A $5$-edge-coloring of
$\Gamma_{1,2,3}$ is shown in Figure~\ref{fig:edge-coloring}.
\end{proof}

The Borel edge chromatic number for $F(2^{\Z^2})$ is either $4$ or $5$ (as
the Borel number is less than or equal to the continuous number). We do not
know the actual answer, so we state this as a question.

\begin{quesn}
Is the Borel edge chromatic number for $F(2^{\Z^2})$ equal to $4$ or $5$?
\end{quesn}

{\it Remark.} Since the completion of an earlier version of this manuscript the above question has been answered independently by Greb\'{i}k--Rozho\v{n} \cite{GR}, Bencs--Hru\v{s}kov\'{a}--T\'oth \cite{BHT} and Weilacher \cite{Weilacher}. The Borel edge chromatic number for $F(2^{\Z^2})$ is $4$.

We also do not know the continuous chromatic edge number of $F(2^{\Z^n})$ for $n\geq 2$. Using Theorem~\ref{thm:npm}, a similar argument as above shows that it is at least $2n+1$.

In recent work Marks \cite{marks_mon} has shown, using techniques from
games and determinacy, that every Borel function
$\varphi \colon F(\omega^{F_2})\to 2$, where $F_2$ is the
free group with two generators, has an infinitely large monochromatic graph component.
In the case of $\FofZZ$, the situation is quite different. The proof
of the following result uses just the
positive direction of Theorem~\ref{thm:tilethm}.

\begin{thmn} \label{thm:moncomp}
There exists a continuous function $\varphi \colon \FofZZ \to 2$ such that
each monochromatic component is not only finite, it has size at most $3$.
\end{thmn}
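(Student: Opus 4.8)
The plan is to apply the positive direction of the Twelve Tiles Theorem (Theorem~\ref{thm:tilethm}) with $n=1$, so that it suffices to produce, for some $1<p,q$ with $(p,q)=1$, a map $\varphi\colon\Gamma_{1,p,q}\to 2$ such that every monochromatic connected component of $\varphi$ has size at most $3$. Observe first that the size-$\leq 3$ condition on monochromatic components is \emph{local}: a map $\psi\colon\Gamma\to 2$ has a monochromatic component of size $\geq 4$ if and only if it has a monochromatic component of size exactly $4$ (since a connected subgraph of size $\geq 4$ contains a connected subgraph of size $4$), and in $\Z^2$ a connected set of $4$ vertices fits inside a $2\times 2$ window... more carefully, inside a $3\times 3$ window. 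So the property ``no monochromatic component of size $\geq 4$'' is a $\Pi_1$ condition of bounded window size, which is exactly the hypothesis needed for the $n=1$ case of Theorem~\ref{thm:tilethm} (equivalently, it is a continuous equivariant map into a subshift of finite type $S\subseteq 2^{\Z^2}$ of width $2$). Hence the whole problem reduces to a finite combinatorial check on the twelve tiles $G^i_{1,p,q}$.

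Next I would choose the specific parameters $p=2$, $q=3$, as in Theorems~\ref{thm:pcn2} and \ref{thm:ecn}, and exhibit an explicit $2$-coloring of $\Gamma_{1,2,3}$. The natural candidate is a ``diagonal stripe'' pattern on $\Z^2$: color $(a,b)$ by $\lfloor (a+b)/2\rfloor \bmod 2$, or more simply use the pattern with period $4$ in the direction $(1,-1)$ given by blocks of two $0$'s followed by two $1$'s along each anti-diagonal. In such a pattern every maximal monochromatic region is a ``staircase'' of width $2$, which is infinite in $\Z^2$ itself --- so that exact pattern does \emph{not} directly work on $\Z^2$, but the point of the tiles theorem is precisely that it only needs to work on the finite tiles $G^i_{1,2,3}$, where the staircases are cut off by the block boundaries. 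The real work is therefore to define $\varphi$ on each of the five representative tiles $\Tcaca,\Tdada,\Tdbdb,\Tcqadpa,\Tcdacda$ (the remaining seven being obtained by reflection/rotation, which preserve the monochromatic-component-size property), consistently on the labeled blocks $R_\times,R_a,R_b,R_c,R_d$, and then verify by inspection that no monochromatic component within any tile reaches size $4$. I expect one can simply give a table of colorings analogous to Figure~\ref{fig:four-coloring}, perhaps using a pattern like coloring $R_\times$ by $0$, alternating $0$'s and $1$'s so that each monochromatic run along any row, column, or the anti-diagonal inside a block has length at most $2$, and length at most $3$ when runs from two adjacent blocks meet.

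The main obstacle will be getting the colorings of the labeled blocks to be simultaneously consistent across all twelve tiles while keeping every monochromatic component below size $4$ --- the constraint is tight (components of size exactly $3$ are unavoidable and must be tolerated), so there is little slack, and the long tiles $\Tcqadpa$, $\Tdpacqa$, $\Tcbpcaq$, $\Tcaqcbp$, which involve many alternating copies of $R_\times$ with $R_c$ or $R_d$, are where a bad long monochromatic path is most likely to sneak in. I would handle this by making the coloring of each block depend only on the coordinates $(a,b)$ of a vertex \emph{relative to the block}, via a fixed formula such as a function of $(a+2b)\bmod 4$ or of $(a\bmod 2, b\bmod 2)$ chosen so that: (i) within a block every monochromatic run in any of the three relevant directions has length $\leq 2$; (ii) at a boundary between two horizontally (resp.\ vertically) adjacent blocks the colors on the two sides of the seam never produce a monochromatic run of length $\geq 4$ crossing the seam; and (iii) since the labeled blocks all have one dimension equal to $n=1$, a vertex of the ``interior'' unlabeled block of any tile is adjacent to labeled-block vertices on at most a bounded portion of its neighborhood, so the interior can be colored by the same global formula and the boundary interactions remain controlled. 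Once (i)--(iii) are verified on the five representative tiles, rotation/reflection invariance finishes all twelve, Theorem~\ref{thm:tilethm} (positive direction) lifts the coloring to a continuous $\varphi\colon F(2^{\Z^2})\to 2$ on the free part, and the same local condition guarantees every monochromatic component of $\varphi$ on $F(2^{\Z^2})$ has size at most $3$, completing the proof.
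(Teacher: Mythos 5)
Your high-level plan matches the paper's: use the positive direction of the tiles theorem with $n=1$ to reduce the problem to a finite combinatorial check on $\Gamma_{1,p,q}$. However, there are two significant gaps.

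\textbf{First, the window-size analysis is wrong, and with it the justification for $n=1$.} A connected monochromatic $4$-set in $\Z^2$ can be a straight line of $4$ consecutive vertices, which fits neither in a $2\times 2$ nor in a $3\times 3$ window --- the relevant forbidden pattern is $4\times 1$, giving the subshift width $3$. The hypothesis of Theorem~\ref{thm:tilethm}(\ref{mtb}) requires $n \geq \text{width}$, so a black-box application would need $n \geq 3$, not $n=1$. The paper is aware of this and deliberately does \emph{not} invoke Theorem~\ref{thm:tilethm} as a black box: it invokes the more primitive Theorem~\ref{thm:mtr} to obtain a continuous $\Z^2$-homomorphism $\varphi'\colon F(2^{\Z^2})\to\Gamma_{1,5,2}$ (this has no width hypothesis), composes with a map $\psi\colon\Gamma_{1,5,2}\to 2$, and then verifies the monochromatic-component bound \emph{directly}, by a structural argument: $\psi$ is chosen so that every boundary vertex of every tile $G^i_{1,5,2}$ has its adjacent interior (unlabeled-block) vertices colored differently. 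Consequently a monochromatic component is either confined to the labeled-block region (where it has size $\leq 2$ by inspection) or confined to the interior of a single tile, where it has size $\leq 3$ by inspection of the finite picture. That ``barrier'' property is precisely what compensates for $n=1$ being too small to guarantee that $4\times 1$ pullbacks factor through a single tile (cf.\ Lemma~\ref{lem:pbg}, which only handles windows up to $(n+1)\times(n+1)$).

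\textbf{Second, and more seriously, you never produce the coloring.} You candidly flag this (``the constraint is tight,'' ``the main obstacle''), but the explicit table of colors on the twelve tiles \emph{is} the proof here; the rest is routine. Your tentative diagonal-stripe or ``function of $(a\bmod 2, b\bmod 2)$'' patterns do not work as stated: width-$2$ diagonal stripes yield unbounded monochromatic staircases inside the interior blocks (the long tiles of $\Gamma_{1,p,q}$ have interiors of size $\Theta(pq)$, and such a stripe runs arbitrarily far before being cut off), and the usual translation-invariant $2\times 2$-periodic patterns fail to be consistent with the block identifications in $\Gamma_{1,p,q}$ for the specific $p,q$ one must choose. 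The paper's coloring (Figure~\ref{fig:finite-monochromatic-components}, with $p=5$, $q=2$) is not given by a simple closed-form formula; it is engineered tile-by-tile precisely so that (a) labeled-block components have size $\leq 2$, (b) boundary-interior adjacencies always change color, and (c) no interior component reaches size $4$. To repair your proposal, you would need to either exhibit such a coloring and verify (a)--(c), or pass to $n\geq 3$ (to legitimize the black-box use of Theorem~\ref{thm:tilethm}) and then still exhibit a size-$\leq 3$ coloring of the now-larger tiles.
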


\begin{proof}
We build the function $\varphi \colon F(2^{\Z^2})\to 2$ by constructing a function
$\psi\colon \gnpq \to 2$ for $n=1$, $p=5$, $q=2$. The function $\psi$ is constructed as
in Figure~\ref{fig:finite-monochromatic-components}. Theorem~\ref{thm:mtr}
produces a continuous $\Z^2$-homomorphism $\varphi' \colon F(2^{\Z^2})\to \Gamma_{1,5,2}$.
Let $\varphi= \psi \circ \varphi' \colon F(2^{\Z^2})\to 2$.

\begin{figure}[htbp]
    \colorlet{color8}{black!40}
    \colorlet{color9}{black!0}

    \newcommand{\figurescale}{0.4}
    \newcommand{\subfigurespacing}{.5cm}
    \begin{minipage}{1\textwidth}
        \centering
        \begin{tikzpicture}[scale=\figurescale]
            \begin{scope}[yscale=-1,local bounding box=bound]
                \foreach \row [count=\i] in {
                    {8}} {
                    \foreach \clrNum [count=\j] in \row {
                        \filldraw[fill=color\clrNum, draw=black] (\j,\i) rectangle ++(-1,-1);
                    }
                }
                \draw[draw=black, ultra thick] (0,0) rectangle (1,1);
            \end{scope}
            \node[below=0.1cm of bound.south] {$\psi\restrict{R_\times}$};
        \end{tikzpicture}   
        \hspace{\subfigurespacing}
        \begin{tikzpicture}[scale=\figurescale]
            \begin{scope}[yscale=-1,local bounding box=bound]
                \foreach \row [count=\i] in {
                    {9},
                    {8},
                    {9},
                    {9}} {
                    \foreach \clrNum [count=\j] in \row {
                        \filldraw[fill=color\clrNum, draw=black] (\j,\i) rectangle ++(-1,-1);
                    }
                }
                \draw[draw=black, ultra thick] (0,0) rectangle (1,4);
            \end{scope}
            \node[below=0.1cm of bound.south] {$\psi\restrict{R_a}$};
        \end{tikzpicture}   
        \hspace{\subfigurespacing}
        \begin{tikzpicture}[scale=\figurescale]
            \begin{scope}[yscale=-1,local bounding box=bound]
                \foreach \row [count=\i] in {
                    {9}} {
                    \foreach \clrNum [count=\j] in \row {
                        \filldraw[fill=color\clrNum, draw=black] (\j,\i) rectangle ++(-1,-1);
                    }
                }
                \draw[draw=black, ultra thick] (0,0) rectangle (1,1);
            \end{scope}
            \node[below=0.1cm of bound.south] {$\psi\restrict{R_b}$};
        \end{tikzpicture}   
        \hspace{\subfigurespacing}
        \begin{tikzpicture}[scale=\figurescale]
            \begin{scope}[yscale=-1,local bounding box=bound]
                \foreach \row [count=\i] in {
                    {9,8,9,9}} {
                    \foreach \clrNum [count=\j] in \row {
                        \filldraw[fill=color\clrNum, draw=black] (\j,\i) rectangle ++(-1,-1);
                    }
                }
                \draw[draw=black, ultra thick] (0,0) rectangle (4,1);
            \end{scope}
            \node[below=0.1cm of bound.south] {$\psi\restrict{R_c}$};
        \end{tikzpicture}   
        \hspace{\subfigurespacing}
        \begin{tikzpicture}[scale=\figurescale]
            \begin{scope}[yscale=-1,local bounding box=bound]
                \foreach \row [count=\i] in {
                    {9}} {
                    \foreach \clrNum [count=\j] in \row {
                        \filldraw[fill=color\clrNum, draw=black] (\j,\i) rectangle ++(-1,-1);
                    }
                }
                \draw[draw=black, ultra thick] (0,0) rectangle (1,1);
            \end{scope}
            \node[below=0.1cm of bound.south] {$\psi\restrict{R_d}$};
        \end{tikzpicture}   
    \end{minipage} \\
    \vspace{\subfigurespacing}
    \begin{minipage}{1\textwidth}
        \centering
        \begin{tikzpicture}[scale=\figurescale]

            \begin{scope}[yscale=-1,local bounding box=bound]
                \foreach \row [count=\i] in {
                    {8,9,8,9,9,8},
                    {9,8,9,8,8,9},
                    {8,9,9,8,9,8},
                    {9,8,8,9,8,9},
                    {9,8,9,8,8,9},
                    {8,9,8,9,9,8}} {
                    \foreach \clrNum [count=\j] in \row {
                        \filldraw[fill=color\clrNum, draw=black] (\j,\i) rectangle ++(-1,-1);
                    }
                }
                \draw[draw=black, ultra thick] (0,0) rectangle (1,6);
                \draw[draw=black, ultra thick] (5,0) rectangle (6,6);
                \draw[draw=black, ultra thick] (0,0) rectangle (6,1);
                \draw[draw=black, ultra thick] (0,5) rectangle (6,6);
            \end{scope}
            \node[below=0.1cm of bound.south] {$\psi\restrict{\Tcaca}$};
        \end{tikzpicture}   
        \hspace{\subfigurespacing}
        \begin{tikzpicture}[scale=\figurescale]
            \begin{scope}[yscale=-1,local bounding box=bound]
                \foreach \row [count=\i] in {
                    {8,9,8,9,9,8},
                    {9,8,9,8,8,9},
                    {8,9,8,9,9,8}} {
                    \foreach \clrNum [count=\j] in \row {
                        \filldraw[fill=color\clrNum, draw=black] (\j,\i) rectangle ++(-1,-1);
                    }
                }
                \draw[draw=black, ultra thick] (0,0) rectangle (1,3);
                \draw[draw=black, ultra thick] (5,0) rectangle (6,3);
                \draw[draw=black, ultra thick] (0,0) rectangle (6,1);
                \draw[draw=black, ultra thick] (0,2) rectangle (6,3);
            \end{scope}
            \node[below=0.1cm of bound.south] {$\psi\restrict{\Tcbcb}$};
        \end{tikzpicture}   
        \hspace{\subfigurespacing}
        \begin{tikzpicture}[scale=\figurescale]
            \begin{scope}[yscale=-1,local bounding box=bound]
                \foreach \row [count=\i] in {
                    {8,9,8},
                    {9,8,9},
                    {8,9,8}} {
                    \foreach \clrNum [count=\j] in \row {
                        \filldraw[fill=color\clrNum, draw=black] (\j,\i) rectangle ++(-1,-1);
                    }
                }
                \draw[draw=black, ultra thick] (0,0) rectangle (1,3);
                \draw[draw=black, ultra thick] (2,0) rectangle (3,3);
                \draw[draw=black, ultra thick] (0,0) rectangle (3,1);
                \draw[draw=black, ultra thick] (0,2) rectangle (3,3);
            \end{scope}
            \node[below=0.1cm of bound.south] {$\psi\restrict{\Tdbdb}$};
        \end{tikzpicture}   
    \end{minipage} \\
    \vspace{\subfigurespacing}
    \begin{minipage}{1\textwidth}
        \centering
        \begin{tikzpicture}[scale=\figurescale]
            \begin{scope}[yscale=-1,local bounding box=bound]
                \foreach \row [count=\i] in {
                    {8,9,8,9,9,8,9,8,9,9,8},
                    {9,8,9,8,8,9,8,9,8,8,9},
                    {8,9,8,9,9,8,9,8,9,9,8},
                    {9,8,9,8,8,9,8,9,8,8,9},
                    {9,8,9,8,9,8,9,8,9,8,9},
                    {8,9,8,9,8,9,8,9,8,9,8}} {
                    \foreach \clrNum [count=\j] in \row {
                        \filldraw[fill=color\clrNum, draw=black] (\j,\i) rectangle ++(-1,-1);
                    }
                }
                \draw[draw=black, ultra thick] ( 0,0) rectangle ( 1,6);
                \draw[draw=black, ultra thick] (10,0) rectangle (11,6);
                \draw[draw=black, ultra thick] ( 0,0) rectangle (11,1);
                \draw[draw=black, ultra thick] ( 0,5) rectangle (11,6);

                \draw[draw=black, ultra thick] (5,0) rectangle ++(1,1);
                \draw[draw=black, ultra thick] (2,5) rectangle ++(1,1);
                \draw[draw=black, ultra thick] (4,5) rectangle ++(1,1);
                \draw[draw=black, ultra thick] (6,5) rectangle ++(1,1);
                \draw[draw=black, ultra thick] (8,5) rectangle ++(1,1);
            \end{scope}
            \node[below=0.1cm of bound.south] {$\psi\restrict{\Tcqadpa}$};
        \end{tikzpicture}   
        \hspace{\subfigurespacing}
        \begin{tikzpicture}[scale=\figurescale]
            \begin{scope}[yscale=-1,local bounding box=bound]
                \foreach \row [count=\i] in {
                    {8,9,8,9,9,8,9,8},
                    {9,8,9,8,8,9,8,9},
                    {8,9,8,9,9,8,9,8},
                    {9,8,9,8,8,9,8,9},
                    {9,8,9,8,9,8,8,9},
                    {8,9,8,9,8,9,9,8}} {
                    \foreach \clrNum [count=\j] in \row {
                        \filldraw[fill=color\clrNum, draw=black] (\j,\i) rectangle ++(-1,-1);
                    }
                }
                \draw[draw=black, ultra thick] (0,0) rectangle (1,6);
                \draw[draw=black, ultra thick] (7,0) rectangle (8,6);
                \draw[draw=black, ultra thick] (0,0) rectangle (8,1);
                \draw[draw=black, ultra thick] (0,5) rectangle (8,6);

                \draw[draw=black, ultra thick] (5,0) rectangle ++(1,1);
                \draw[draw=black, ultra thick] (2,5) rectangle ++(1,1);
            \end{scope}
            \node[below=0.1cm of bound.south] {$\psi\restrict{\Tcdacda}$};
        \end{tikzpicture}   
    \end{minipage}
    \caption[A function of $\psi \colon\Gamma_{1,5,2}\to 2$ inducing a continuous
function $\varphi \colon \FofZZ\to 2$ which has no infinite monochromatic graph component.]
{\label{fig:finite-monochromatic-components}
A function  $\psi\colon\Gamma_{1,5,2}\to 2$
inducing a continuous function $\varphi \colon \FofZZ\to 2$ which has no infinite monochromatic
graph component.  Five of the twelve tiles are given; the other seven are
rotations and/or reflections of these five.}
\label{fig:finite-monochromatic-components}
\end{figure}  

We need to observe that each monochromatic component of $\varphi$ has size at most $3$.
From Figure~\ref{fig:finite-monochromatic-components} we see that each boundary point of
one of the $G^i_{1,5,2}$ (which are illustrated by boundary boxes in the figure)
is adjacent to interior points (i.e., points in unlabeled blocks) of a different color.
Also, the largest monochromatic component of the boundary of any $G^i_{1,5,2}$
is at most $2$ by inspection. Thus, if $\varphi'(x)$ is within some labeled block, then the monochromatic component containing $x$ has size at most $2$.
If $\varphi'(x)$ is an interior point of one of the $G^i_{1,5,2}$, then the monochromatic component
of $x$ must consist of points $y$ with $\varphi'(y)$ in the interior of the same $G^i_{1,5,2}$.
By inspection of Figure~\ref{fig:finite-monochromatic-components} we see that
this component has size at most $3$.
\end{proof}

Next we consider tiling questions about $F(2^{\Z^2})$. The general problem can be formulated
as follows. Let $T_1,T_2,\dots$ be a finite or countably infinite collection of
finite subsets of $\Z^2$ (which we call the ``tiles'').

\begin{defnn} \label{def:tiling}
We say $F(2^{\Z^2})$ has a continuous (or Borel) tiling by $\{ T_i\}$ if there is a clopen
(or Borel) subequivalence relation $E$ of $F(2^{\Z^2})$ such that every $E$ class is
isomorphic to one of the $T_i$. More precisely, for every $x \in F(2^{\Z^2})$,
$\{ g \in \Z^2 \colon g \cdot x\, E x\}$ is a translate of some $T_i$.
\end{defnn}

This leads to the following general question.

\begin{quesn} \label{ques:tiling}
For which sets of tiles $\{ T_i\}$ is there a continuous tiling
of $F(2^{\Z^2})$? For which sets of tiles is there a Borel tiling?
\end{quesn}

For finite sets of tiles, Theorems~\ref{thm:tilethm}, \ref{thm:tilethmb}  give a theoretical
answer to the continuous version of the question.
Namely, if we take $n$ larger than the maximum diameter of
a tile $T_i$, then there is a continuous tiling of $F(2^{\Z^2})$ by the $\{ T_i\}$
if and only if there are $p,q >n$ with $\gcd(p,q)=1$ such that there is a tiling of
$\gnpq$ by the $\{ T_i\}$ (as $n$ is greater than any tile diameter,
the notion of a tiling of $\gnpq$ by the $T_i$ is naturally defined).
This does not give a decision procedure for the continuous tiling question, however,
and even in some specific fairly simple cases we do not have an answer.

Let us consider one specific instance of the continuous tiling question
which is connected with the theory of marker structures.
In \cite{gao_countable_2015} it is shown that for every $d>1$ there is a tiling
of $F(2^{\Z^n})$ by rectangular regions, each of which has side lengths in
$\{ d, d+1\}$. In particular, $F(2^{\Z^2})$ has a continuous tiling
using the four rectangular tiles of dimensions $\{ d\times d, d\times (d+1),
(d+1)\times d, (d+1)\times (d+1) \}$.
We call this set of four tiles the $d,d+1$ tiles.
A natural question is to ask
for which subsets of this set can we get a continuous tiling of $F(2^{\Z^2})$.

The next result shows that if we omit the ``small''  $d \times d$ tile
or the ``large'' $(d+1)\times (d+1)$ tile, then a continuous tiling
is impossible.

\begin{thmn} \label{thm:negthreetiles}
There is no continuous tiling of $F(2^{\Z^2})$ by the $d,d+1$ tiles such that
every equivalence class omits either the $d\times d$ tile or the $(d+1)\times (d+1)$
tile. In particular, if we omit the $d\times d$ or the $(d+1)\times (d+1)$
tile from the set of $d, d+1$ tiles, then there is no continuous tiling of
$F(2^{\Z^2})$ by this set of tiles.
\end{thmn}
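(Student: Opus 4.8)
The plan is to combine the negative direction of the Twelve Tiles Theorem with a simple area/divisibility obstruction on a single torus tile, much as the nonexistence of a continuous perfect matching (Theorem~\ref{thm:npm}) is deduced from the fact that a $p\times p$ torus with $p$ odd has no perfect matching. As in that argument, a continuous tiling of $F(2^{\Z^2})$ by a finite family of tiles of diameter $\le d+1$ is exactly a continuous equivariant map $\varphi\colon F(2^{\Z^2})\to S$ into a suitable width $1$ subshift of finite type $S\subseteq b^{\Z^2}$: the alphabet records, at each point, which tile type contains it together with its coordinates inside that tile, and the (window size $2$) forbidden patterns force these data to fit together into a genuine partition into tiles. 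Here $b$ and $S$ depend only on $d$, so Theorem~\ref{thm:tilethm} applies with $n=1$.

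Suppose toward a contradiction that $\varphi$ is such a map for the $d,d+1$ tiles, with every class omitting the $d\times d$ tile or the $(d+1)\times(d+1)$ tile. Let $x$ be the hyper-aperiodic element constructed in \S\ref{subsec:negtile} (the one carrying copies of all the padded grid graphs $G^i_{w',n',p',q'}$; see Lemma~\ref{lem:xhyp}), and put $K=\overline{[x]}$, a compact invariant subset of $F(2^{\Z^2})$. Since ``$[y]$ contains a translate of the $d\times d$ tile'' is a clopen condition on $y$ and $[x]$ is dense in $K$, if $[x]$ omits the $d\times d$ tile then every class in $K$ does, and likewise for the $(d+1)\times(d+1)$ tile. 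As $[x]$ omits at least one of the two, we are in one of the cases: (Case 1) $\varphi$ uses no $d\times d$ tile on $K$, or (Case 2) $\varphi$ uses no $(d+1)\times(d+1)$ tile on $K$. Since $\varphi\restriction K$ is continuous on a compact set, it is determined by some window $[-w,w]^2$. Now pick distinct primes $p\ne q$ both larger than $\max\{d+1,4w+2\}$; then $(p,q)=1$, $(p,d)=(p,d+1)=(q,d)=(q,d+1)=1$, $p,q>n=1$, and the $4$-tuple $(1,p,q,w)$ occurs among the tuples $(n_k,p_k,q_k,w_k)$ enumerated in \S\ref{subsec:negtile}.

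We treat Case 1; Case 2 is identical with the roles of $d$ and $d+1$ exchanged. Following the proof of (\ref{mta})$\Rightarrow$(\ref{mtc}) of Theorem~\ref{thm:tilethm} in \S\ref{subsec:negtile} with this $k$, the restriction $\varphi\restriction[x]$ induces a map $g\colon\Gamma_{1,p,q}\to b$ that respects $S$; and as in Theorem~\ref{thm:npm}, $g$ restricts along the torus quotient of $G^1_{1,p,q}$ to a map $\bar g$ on the torus tile $\Tcaca$ (the Cayley graph of $\mathbb{Z}_p\times\mathbb{Z}_p$) which respects $S$, hence encodes an honest tiling of this $p\times p$ torus by the $d,d+1$ tiles. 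The extra point beyond the perfect matching argument is that this induced torus tiling still uses no $d\times d$ tile: every tile of it has all sides $\le d+1<p$, so it is a genuine $d,d+1$ tile, and since the encoding $\bar g$ agrees locally with $\varphi$, following such a tile cell by cell inside $[x]$ (using that $\varphi$ is a valid tiling of $[x]\subseteq K$) exhibits a translate of it in $\varphi$'s tiling of $[x]$; as $\varphi$ uses no $d\times d$ tile on $K$, neither does the torus tiling. But then the $p^2$ vertices of $\Tcaca$ are partitioned into rectangles of area $d(d+1)$ or $(d+1)^2$, each divisible by $d+1$, so $(d+1)\mid p^2$, contradicting $(p,d+1)=1$ together with $d+1>1$. (In Case 2 the areas are $d^2$ or $d(d+1)$, each divisible by $d$, giving $d\mid p^2$, which contradicts $(p,d)=1$ and $d>1$.) This proves the first assertion, and the ``in particular'' is immediate: dropping the $d\times d$ tile (or the $(d+1)\times(d+1)$ tile) from the allowed set forces every class to omit it, so the first part applies.

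The step I expect to be the main obstacle is the ``extra point'' above — verifying that the property of using no $d\times d$ tile really passes from $\varphi$ on $K$ to the tiling encoded on the torus tile, i.e., that folding the finite grid graph $G^1_{1,p,q}$ into a torus creates no tile shapes that were not already present in $\varphi$. This uses both that the tiles are strictly smaller than $p$ (so no tile wraps around the torus) and that the grid graphs in \S\ref{subsec:negtile} are padded by more than the window radius $w$, so that $g$ coincides with $\varphi$ on a neighborhood large enough to reconstruct every tile intact.
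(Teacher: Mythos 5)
Your proof is correct and follows essentially the same route as the paper's: reduce via the negative direction of the Twelve Tiles Theorem to a tiling of the $p\times p$ torus tile $T_{ca=ac}$, use the fact that the induced tiling of $\Gamma_{n,p,q}$ comes from the single class $[x]$ of the hyper-aperiodic element and therefore inherits the omission of the small (resp.\ large) tile, and then conclude by the divisibility obstruction $(d+1)\nmid p^2$ (resp.\ $d\nmid p^2$) for $p$ a prime exceeding $d+1$. The paper phrases the reduction slightly differently — it takes $n>d+1$ so that a tiling of $\Gamma_{n,p,q}$ by the rectangles restricts directly to a tiling of the torus without passing through the width-$1$ subshift encoding, and it spells out the ``omit large'' case where your write-up spells out ``omit small'' — but these are cosmetic. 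One small imprecision: ``$[y]$ contains a translate of the $d\times d$ tile'' is an open orbit-invariant condition, not a clopen one; what you actually use is that its complement is closed, so $[x]\subseteq A$ implies $\overline{[x]}\subseteq A$ where $A$ is the (closed, invariant) set of classes omitting the small tile. Also, your ``extra point'' can be shortened: since the subshift alphabet records the tile type at each cell, the avoidance of all $d\times d$ labels is immediately inherited by $g$ and hence by the torus restriction, with no need to trace tiles cell by cell.
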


\begin{proof}
Suppose there is a continuous tiling of $F(2^{\Z^2})$ by the $d,d+1$ tiles,
with each class omitting either the small or large tile.
The proof of Theorem~\ref{thm:tilethm} shows that the tiling of $\gnpq$
(we assume $n>d+1$) produced comes from a single equivalence class,
that of the hyper-aperiodic element $x$ constructed. Thus, we get a tiling of
$\gnpq$ for all sufficiently large $p,q$ without using the small tile,
or without using the large tile. Say we tile $\gnpq$ without using the large tile.
Since $n>d+1$, this gives a tiling of the torus tile $\Tcaca$
without the large tile. Note that the available tiles all have a number
of vertices which is divisible by $d$. However, if we take $p$ to
be relatively prime to $d$, then $\Tcaca$ has $p^2$ vertices which is
also relatively prime to $d$, so not divisible by $d$, a contradiction. If, on the other hand, the small tile is omitted, then the number of vertices in each available tile is divisible by $d+1$. This leads to a contradiction if we similarly consider $\Tcaca$ and take $p$ to be relatively prime to $d+1$.
\end{proof}

\section{Continuous subactions and regular tilings}\label{section:3.2}
In this section we provide a negative answer to the continuous version of Problem~10.6 of \cite{gao_countable_2015}.
Specifically, we will show in Corollary~\ref{nlum} that there does not exist a clopen
rectangular marker structure on $F(2^{\Z^2})$ which is too regular. We will make this notion of regularity precise later in the section. To prove this we will first demonstrate a result
concerning continuous subactions of $\fzs$ on clopen complete sections.
We make this notion of subaction precise in the following definition.

\begin{defnn} \label{def:continuous-action}
Let $X$ be a space with a group action $(G,\cdot)$.  Let $Y\subseteq X$, and $(H,\star)$ an
action of $H$ on $Y$.  We say $(Y, H,\star)$ is a {\em subaction} of $(X, G, \cdot)$ if for all
$y_1, y_2\in Y$, there is an $h \in H$ with $h \star y_1=y_2$ if and only if there is a $g \in G$
such that $g \cdot y_1=y_2$. If $X$ is a topological space and $G, H$ are topological groups, then we say the subaction is {\em continuous} if there is a continuous $\varphi \colon
H \times Y \to G$, where the topology on $Y$ is the subspace topology inherited from $X$, such that $h\star y=\varphi(h,y) \cdot y$ for all $y\in Y$ and $h \in H$.
\end{defnn}

Note that if $(X, G, \cdot)$ is a continuous action and the subaction $(Y, H, \star)$ is continuous then as an action $(Y, H, \star)$ is continuous.
If $H=G$ in Definition~\ref{def:continuous-action}, we just write $(Y,\star)$
for the continuous subaction.
The next theorem says that there are no proper continuous subactions on clopen
complete, co-complete sections of $\fzs$.

\begin{thmn} \label{thm:no-sub-Z2-action}
There does not exist a clopen, complete, co-complete section  $M\subseteq \fzs$ and
a continuous free subaction $(M,\star)$ of $\fzs$.
\end{thmn}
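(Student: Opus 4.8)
The plan is to derive a contradiction by using the Twelve Tiles Theorem (Theorem~\ref{thm:tilethm}, or more precisely its $\CL$-structuring version Theorem~\ref{thm:tilethmb}) to reduce the existence of a clopen complete, co-complete section $M$ carrying a continuous free subaction $(M,\star)$ of $\Z^2$ to a combinatorial statement about some finite graph $\gopq$, and then to show that statement fails for a well-chosen torus tile $\Tcaca$. The first step is to encode the data $(M,\star)$ as a continuous equivariant map into a subshift of finite type. A point $x\in F(2^{\Z^2})$ belongs to $M$ or not (one bit), and if $x\in M$ then the generators $e_1,e_2$ of the subaction-copy of $\Z^2$ act on $x$ by sending it to some other element of $M$; since $M$ is a \emph{clopen} complete section, by compactness there is a uniform bound $d$ so that the location (within $\rho$-distance $d$) of $e_1\star x$ and $e_2\star x$ is determined by $x\!\restriction\![-d,d]^2$. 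Thus the subaction is coded by a continuous function attaching to each $x$ a finite amount of local data (membership in $M$, and the two displacement vectors of the $\star$-action when $x\in M$), subject to finitely many consistency requirements: that $\star$ is a free $\Z^2$-action on $M$ (commutativity of the two generators, $e_1\star(e_2\star x)=e_2\star(e_1\star x)$, and freeness), and that the orbit equivalence relation of $\star$ on $M$ agrees with the restriction of the $F(2^{\Z^2})$-equivalence relation to $M$ — the latter ``agrees with'' condition is exactly what co-completeness plus completeness makes into a \emph{local} condition once one also records, for non-$M$ points, a pointer to a nearby $M$ point. These are all $\Pi_1$ conditions in an appropriate finite relational language $\CL$, so Theorem~\ref{thm:lsss}/\ref{thm:tilethmb} applies.

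The second step is the combinatorial contradiction. By Theorem~\ref{thm:tilethmb}, a continuous such structuring of $F(2^{\Z^2})$ exists iff for some (equivalently, all sufficiently large) $p,q$ with $(p,q)=1$ there is an $\CL$-structuring of $\gopq$ satisfying $\psi$; in particular this would yield, on each torus tile $\Tcaca$ (a $p\times p$ torus graph, the Cayley graph of $\Z_p\times\Z_p$), a subset $M'$ together with a free $\Z^2$-action $\star$ on $M'$ whose orbits coincide with the connected components of $M'$ inside the $p\times p$ torus graph, and with $M'$ both nonempty and a proper subset (completeness and co-completeness on the tile). But a free $\Z^2$-action has all orbits infinite, whereas $M'$ is a subset of the finite set $\Z_p\times\Z_p$, so each orbit would be a finite subset of the torus on which $\Z^2$ acts freely — impossible. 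Hence no such $M'$ exists on $\Tcaca$ for any $p$, contradicting the conclusion of Theorem~\ref{thm:tilethmb}. (One can alternatively phrase this directly with Theorem~\ref{thm:tilethm} and a subshift of finite type $S\subseteq b^{\Z^2}$ coding ``$M$ with a free $\Z^2$-subaction whose orbits are the $M$-components,'' using that a map $g\colon\Tcaca\to b$ respecting $S$ would force a free $\Z^2$-action on a finite set.)

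The delicate point, and the main obstacle, is making precise that co-completeness lets us turn the global requirement ``the $\star$-orbit of $x\in M$ equals the full $F(2^{\Z^2})$-orbit's intersection with $M$'' into a genuinely \emph{local}, hence $\Pi_1$, condition — and conversely that a structuring of $\gopq$ satisfying the local conditions really does produce a \emph{free} $\Z^2$-action on $M'$ with the orbit-versus-component identification. The resolution is that we do not need the $\star$-orbits to be \emph{all} of the ambient orbit; we only need that (i) $\star$ is a well-defined free $\Z^2$-action on $M$ (all local, as above: domain-of-definition, commutativity, and $g\star x\neq x$ for $1_{\Z^2}\neq g$ in a fixed large finite ball suffices by freeness bootstrapping exactly as in Lemma~\ref{lem:goodtwocol}-style arguments — and in fact freeness of a $\Z^2$-action on a finite set already fails even at a single nonzero generator), (ii) $M$ is a complete section (each point of $[x]$ is within $\rho$-distance $d$ of an $M$-point — local, and on a torus this forces $M'\neq\varnothing$), and (iii) $M$ is co-complete (each point of $[x]$ is within $\rho$-distance $d$ of a non-$M$-point — local, and on the torus forces $M'\neq\Z_p\times\Z_p$, though this is not even needed for the contradiction). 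Conditions (i)--(iii) are all expressible by a $\Pi_1$ sentence of bounded depth $\leq d+2$, so for $n\geq d+2$, $p,q>n$, $(p,q)=1$, we would get such a structure on $\gopq$, and in particular the free-$\Z^2$-action-on-$M'\subseteq\Tcaca$ already contradicts finiteness of $\Tcaca$. Thus the only subtlety is bookkeeping the window size $d$ against the depth parameter in Theorem~\ref{thm:tilethmb}, which is routine given the compactness argument producing $d$ from clopenness of $M$ and continuity of $\varphi$.
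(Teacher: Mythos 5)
The central gap is your claim that the torus tile $\Tcaca$ by itself produces a contradiction because ``a free $\Z^2$-action has all orbits infinite.'' After pushing $(M,\star)$ through Theorem~\ref{thm:tilethmb}, what one would obtain on the $p\times p$ torus is not a free $\Z^2$-action on a finite set but only local data --- membership in $M'$, displacement vectors for the two generators --- satisfying consistency and freeness constraints of depth bounded by the fixed parameter $d$ of the $\Pi_1$ sentence. Freeness at \emph{all} nonzero $g\in\Z^2$ is not a bounded-depth $\Pi_1$ condition, so it cannot be encoded, and ``local freeness'' on a finite set is perfectly possible. Concretely, take $M'=\{(i,j)\in\Z_p\times\Z_p : j\neq 0\}$ with $e_1\star(i,j)=(i+1,j)$ and $e_2\star(i,j)=(i,j+1)$, skipping $j=0$; this is a transitive, locally free $\Z^2$-action (really a free $\Z_p\times\Z_{p-1}$-action) on a proper, $1$-dense subset of the torus that satisfies every bounded-depth constraint you could impose, so your asserted contradiction does not materialize. (The parenthetical claim that ``freeness of a $\Z^2$-action on a finite set already fails even at a single nonzero generator'' is also false: both standard generators act freely on $\Z_p\times\Z_p$.) This is exactly why the paper's proof does much more after extracting a finite structure from the torus tile. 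It builds from $(M,\star)$ a \emph{free} action of the finite quotient group $G=\Z^2/\langle h,h'\rangle$ on $M_T$, giving $|M_T|=|G|=|\det P|$, and the contradiction requires $|\det P|\geq p^2 > |M_T|$; the inequality $|\det P|\geq p^2$ rests on showing that the coordinates of $h$ and $h'$ are divisible by $p$, which is established by a homotopy-style argument inside the long tiles $\Gcqadpa$, $\Gdpacqa$. That essential step has no analogue in your proposal.

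There is a secondary but real problem with the encoding itself. You write ``by compactness there is a uniform bound $d$'' on the displacements $\varphi(e_i,x)$ and on the spacing of $M$, but $F(2^{\Z^2})$ and its clopen subset $M$ are not compact, so continuity of $\varphi$ gives no global bound, and a clopen complete section of $F(2^{\Z^2})$ need not have bounded spacing. All of the finite bounds in the paper's proof ($n_0,\dots,n_6,N_0,N_1$) come from first restricting to the compact subflow $K=\overline{[x]}$ of a hyper-aperiodic element and only then invoking compactness. Consequently one cannot first encode $(M,\star)$ as a continuous equivariant map $F(2^{\Z^2})\to b^{\Z^2}$ with $b$ finite and then cite Theorem~\ref{thm:tilethm}/\ref{thm:tilethmb} as a black box; the encoding is simply not available on all of $F(2^{\Z^2})$. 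This is part of why the paper's argument works directly with the hyper-aperiodic element and the tile grid-graphs rather than reducing to the Twelve Tiles Theorem.
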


\begin{proof}

Suppose towards a contradiction that $M\subseteq \fzs$ is a clopen complete, co-complete section
and $\star$ is a continuous free subaction of $\Z^2$ on $M$.
Let $x \in \fzs$ be the hyper-aperiodic element constructed in \S\ref{subsec:negtile}.
Let $K=\ocl{[x]} \subseteq \fzs$. Then $K$ is compact. As $M$ is  a complete section,
$[y] \cap M\neq \emptyset$ for any $y\in K$. As $M$ is co-complete, we also have $[y] \cap (2^{\Z^2}-M)
\neq \emptyset$ for any $y\in K$.

In the rest of the proof we will use the following terminology. 
Given $y\in F(2^{\Z^2})$ and $n>0$, the {\em $n$-ball about $y$} is the set $\{ z\in [y]\,:\, \rho(y,z)\leq n\}$, where $\rho$ is the distance function defined in \S\ref{section:1.1}. Given any $y\in F(2^{\Z^2})$, the {\em first quadrant} from $y$ is the set of all points $(a_1, a_2)\cdot y$ where $a_1, a_2> 0$. Similarly one can define the second, third, and fourth quadrants from a point $y\in F(2^{\Z^2})$.

By the compactness of $K$ and the clopen-ness of $M$, there is an $n_0\in\Z^+$ such that for any $y\in K$, whether $y\in M$ is completely determined by $y\!\upharpoonright\! [-n_0, n_0]^2$. Suppose $\varphi: \Z^2\times M\to \Z^2$ witnesses the continuity of the subaction $(M,\star)$. Since $M\cap K$ is compact and $\varphi$ is continuous, the set $E=\{\varphi(\pm e_1, y), \varphi(\pm e_2, y)\,:\, y\in M\cap K\}$ is finite. This means that  there is an $n_1\in\Z^+$ such that for any $y\in M\cap K$, $\rho(y, \pm e_1\star y), \rho(y, \pm e_2\star y)\leq n_1$; in other words, for all $y\in M\cap K$ and all $\zeta\in\{\pm e_1, \pm e_2\}$, $\zeta\star y$ is within the $n_1$-ball about $y$. In addition, we claim there is an $n_2\in\Z^+$ such that
\begin{enumerate}
\item the values $\varphi(\zeta, y)$ for all $\zeta\in\{\pm e_1, \pm e_2\}$ and $y\in M\cap K$ are completely determined by $y\!\upharpoonright\![-n_2, n_2]^2$, and
\item for any $\zeta\in\{\pm e_1, \pm e_2\}$, $y\in M\cap K$ and $\bar{y}\in M$, if $\bar{y}\!\upharpoonright\![-n_2, n_2]^2=y\!\upharpoonright\![-n_2, n_2]^2$, then $\varphi(\zeta,\bar{y})=\varphi(\zeta,y)$.
\end{enumerate}
To see this claim, we fix without loss of generality a $\zeta\in\{\pm e_1, \pm e_2\}$ and an element $g\in E$, and consider the set $W=\{ z\in M\,:\, \varphi(\zeta, z)=g\}$. $W$ is clopen in $F(2^{\Z^2})$. We write $W$ as a disjoint union of basic open neighborhoods
$$ W=\bigcup_{i\in \Z^+} N_{s_i}, $$
where each $s_i: [-m_i, m_i]^2\to\{0,1\}$ for some $m_i\in\Z^+$, and $N_{s_i}=\{u\in F(2^{\Z^2})\,:\, u\!\upharpoonright\![-m_i, m_i]^2=s_i\}$. Note that $W\cap K$ is compact, and therefore there is $k\in\Z^+$ such that
$$ W\cap K=\bigcup_{i\leq k} (N_{s_i}\cap K). $$
Without loss of generality, assume $N_{s_i}\cap K\neq\emptyset$ for all $i\leq k$. Then for all $y\in M\cap K$, $\varphi(\zeta, y)=g$ if and only if $y\in \bigcup_{i\leq k} N_{s_i}$. Moreover, for any $\bar{y}\in M$, if $\bar{y}\in \bigcup_{i\leq k} N_{s_i}$, we also have $\varphi(\zeta, \bar{y})=g$. This proves the claim.

Since $M$ is a complete section, we have that
$$ \bigcup_{n>0} [-n,n]^2\cdot M=F(2^{\Z^2}). $$
By the openness of $M$ and the compactness of $K$, there is an $n_3\in \Z^+$ such that $[-n_3, n_3]^2\cdot (M\cap K)=K$. In other words, for any $y\in K$ there is a $u\in M$ with $\rho(y, u)\leq n_3$, or the $n_3$-ball about $y$ contains a point $u\in M$. Similarly, since $M$ is also a co-complete section and $M$ is closed, there is an $n_4\in\Z^+$ such that for any $y\in K$ the $n_4$-ball about $y$ contains a point $v\notin M$. Let $$N_0=5\max\{n_0, n_1, n_2, n_3, n_4\}.$$
Since $N_0>2\max\{n_3, n_4\}+1$, it follows that the $N_0$-ball about any $y\in K$ contains points $u \in M$, $v\notin M$ in each of the
four quadrants from $y$.

Let
$$A=\{(u, g\cdot u)\,:\, u\in M, g\in [-2N_0, 2N_0]^2, g\cdot u\in M\}$$
and for each $n>0$
$$ B_n=\{(u, h\star u)\,:\, u\in M, h\in [-n, n]^2\}. $$
Note that $A\subseteq \bigcup_{n>0} B_n$. Moreover $A\cap K^2$ is compact and, since $\star$ is a continuous action, each $B_n$ is open. Thus there is an $n_5\in\Z^+$ such that $A\cap K^2\subseteq \bigcup_{n\leq n_5} B_n$. Since $\star$ is also a free action, we get that for each pair $(u, v)\in (M\cap K)^2$ with $v$ being in the $2N_0$-ball about $u$, $\varphi(g, u)\in [-n_5, n_5]^2$, where $g\in [-2N_0,2N_0]^2$ is the unique group element with $g\cdot u=v$. Using the above finite set $E$, we get  an $n_6\in\Z^+$ such that for each pair $(u, v)\in (M\cap K)^2$ with $v$ being in the $2N_0$-ball about $u$, there is a $\star$-path from $u$ to $v$ which
stays inside the $n_6$-ball about $u$. By a {\em $\star$-path} we mean a sequence $u=w_0,w_1,\dots,w_l=v$
such that for each $0\leq i<l$, $w_{i+1}= \zeta \star w_i$ for some $\zeta\in\{\pm e_1, \pm e_2\}$. Let
$$ N_1=N_0+n_6. $$
Then for any $y \in K$ and any $u,v \in M$ within the $N_0$-ball about $y$, there is a $\star$-path from $u$ to $v$ which
stays inside the $N_1$-ball about $y$.

Consider now the hyper-aperiodic element $x\in K$. Recall that for any
$n$ and sufficiently large $p, q$, there is a copy of $G^i_{n, p, q}$
for each $1\leq i\leq 12$ in the domain of $x$. We fix $n\geq
2N_1+4N_0$ and sufficiently large distinct primes $p, q>n$, and
consider a particular copy of $G^1_{n, p, q}$ in the domain of
$x$. The alternative notation for this copy of grid graph is
$\Gcaca$. We think of $\Gcaca$ both as a sub-grid-graph of $\Z^2$ and
as a subset of $[x]$ that corresponds to the points $\Gcaca\cdot
x$. Consider the $R_\times$ block in the upper-left corner of this
copy of $\Gcaca$. Let $C$ be the $N_0\times N_0$ rectangle at the
center of this $R_\times$ block. From the definition of $N_0$, there
is a $g\in C$ such that $g\cdot x\in M$. We fix this $g$ and let
$m=g\cdot x\in M$. Let $m'=(p, 0)\cdot m$ and $m''=(0,-p)\cdot
m$. Then $m'$ corresponds to the point in the same position within the
$R_\times$ block in the upper-right corner of this copy of $\Gcaca$,
and $m''$ corresponds to the point in the same position within the
$R_\times$ block in the lower-left corner of this copy of
$\Gcaca$. The top part of this copy of $\Gcaca$ along with the
elements $m$ and $m'$ are illustrated in
Figure~\ref{fig:subaction-finding-c}.

\begin{figure}[htbp]
\scalebox{0.8}{
\begin{tikzpicture}[x=1.2cm,y=1.2cm]
\tikzset{
    pics/RXZ2/.style={
        code={
            \node[anchor = north west] at (-1.5,1.5) {\tikzpictext};
            \node[rectangle, draw, minimum width=3.6cm, minimum height=3.6cm] at (0,0) {};
        }
    }
}
\pic[pic text={$R_\times$}] at (0,0) {RXZ2};
\pic[pic text={$R_\times$}] at (9,0) {RXZ2};

\node[anchor = north west] at (1.5,1.5) {$R_c$};
\node[rectangle, draw, minimum width=7.2cm, minimum height=3.6cm] at (4.5,0) {};

\node[anchor = north west] at (-.75,.75) {$R$};
\node[rectangle, draw, minimum width=12.6cm, minimum height=1.8cm] at (4.5,0) {};

\draw[dotted, every node/.style={nodeStyle, solid, thin, minimum size=3pt}]
(0.01,-0.03)   node[fill=black] (m2) {} --
(0.81,0.1)   node[fill=black]  (m3) {} --
(1.58,0.14)   node[fill=black]  (m4) {} --
(2.28,0.17)   node[fill=black]  (m5) {} --
(3.09,0.31)   node[fill=black]  (m6) {} --
(3.09,0.31)   node[fill=black] (m7) {} --
(2.75,-0.15)   node (m8) {} --
(2.47,-0.41)   node (m9) {} --
(2.22,-0.92)   node (m10) {} --
(2.5,-1.15)   node (m13) {} --
(3.02,-1.1)   node (m14) {} --
(3.42,-1.2)   node (m15) {} --
(3.9,-1.05)   node (m17) {} --
(4.1, -.85)   node (m18) {}--
(3.8, -.55) node (m19) {}--
(3.4, -.2) node (m20) {}--
(3.38,0.11)   node[fill=black]  (m21) {} --
(3.85,-0.24)   node[fill=black]  (m22) {} --
(4.4,-0.09)   node (m23)[fill=black] {} --
(4.8,0.06)   node (m24)[fill=black]  {} --
(4.6, 0.5) node (m25) {}--
(4.8, 0.9) node (m26) {}--
(5.0, 1.1) node (m27) {}--
(5.1, 0.8) node (m28) {}--
(5.2, 0.3) node (m29) {}--
(5.31,0.11)   node (m34)[fill=black]  {} --
(5.64,0.33)   node (m33)[fill=black]  {} --
(6.04,0.39)   node (m32)[fill=black]  {} --
(6.55,0.5)   node (m31)[fill=black]  {} --
(7.06,0.2)   node[fill=black]  (m45) {} --
(7.5,0.21)   node[fill=black]  (m46) {} --
(8.25,0.36)   node[fill=black]  (m47) {} --
(8.73,0.38)   node[fill=black]  (m48) {} --
(9.48,0.49)   node (m49) {} --
(10,0.46)   node (m50) {} --
(10,0.46)   node (m51) {} --
(9.56,0.28)   node (m52) {} --
(9.01,-0.03)   node (m53)[fill=black]  {};


\node[anchor = north] at (m2) {$m$};
\node[anchor = north] at (m53) {$m'$};
\end{tikzpicture}
}
\caption[The top part of $\Gcaca$. The outer rectangle has dimensions $(p+n)\times n$ with $n>2N_1+4N_0$;
the inner rectangle is positioned in the center and has dimensions $2N_0\times p$. The $\star$-path from $m$ to $m'$
stays inside the outer rectangle.]{\label{fig:subaction-finding-c}
The top part of $\Gcaca$. The outer rectangle has dimensions $n\times (p+n)$ with $n>2N_1+4N_0$;
the inner rectangle is positioned in the center and has dimensions $(p+2N_0)\times 2N_0$.
The $\star$-path from $m$ to $m'$
stays inside the outer rectangle.}
\end{figure}

In Figure~\ref{fig:subaction-finding-c}, the outer rectangle shows the
top part of $\Gcaca$ and has dimensions $(p+n)\times n $. We also
consider an inner rectangle of dimensions $(p+2N_0)\times 2N_0$ that is
positioned in the center of the outer rectangle. This region is
denoted as $R$. We see that $m, m'\in R$.

Consider the point $m$. By the definition of $N_0$, in the first or
fourth quadrant from $m$ there is another point $m_1$ in $M$ within
the $N_0$-ball of $m$, and with $x$-coordinate strictly greater than that of $m$.
From the definition of $N_1$, there is a
$\star$-path from $m$ to $m_1$ that stays within the $N_1$-ball of
$m$, and therefore stays within the outer rectangle. Continue this
construction with $m_1$ replacing $m$, and we find $m_2\in R$ to the
right of $m_1$ and a $\star$-path from $m_1$ to $m_2$ that stays
within the outer rectangle. Note that the intersection of the $N_0$-ball about $m_1$ with either the first quadrant from $m_1$ or the fourth quadrant from $m_1$ is entirely contained in the region $R$, thus we can guarantee that $m_2\in R$. This construction can be repeated until
the $\star$-path extends to $m'$. We have thus found a $\star$-path
from $m$ to $m'$ with intermediate endpoints in $R$ and the entire
path staying within the outer rectangle. This is illustrated in
Figure~\ref{fig:subaction-finding-c}.

Let $h$ be the unique element of $\Z^2$ such that $h \star m=m'$. Thus,
$h=\zeta_k\cdot \zeta_{k-1} \cdots \zeta_0$, where each $\zeta_i\in \{ \pm e_1, \pm e_2\}$
and $m, \zeta_0\star m, \zeta_1\star \zeta_0\star m, \dots, \zeta_k \star\dots \star \zeta_0 \star m=m'$ is the
$\star$-path from $m$ to $m'$ which stays inside the outer rectangle.
Likewise, there is an element $h' \in \Z^2$ such that $h' \star m=m''$ and a $\star$-path from $m$ to $m''$ that also stays inside the left part of $\Gcaca$. Let $H=\langle h,h'\rangle \leq \Z^2$, and let $G=\Z^2/H$.

Let $T=([0, p-1]\times [-p+1,0])\cdot m$ and $M_T=M\cap T$. Then $T$ corresponds to the $p\times p$ rectangular region with $m$ at the upper-left corner, and $M_T$ is the set of all points of $M$ in $T$. In the following we will define an action $\ast$ of $G$ on $M_T$.

Before defining $\ast$ we define another action $\star'$ of $\Z^2$ on $M_T$. We first define this action for the standard generators of $\Z^2$. Suppose $\zeta\in\{\pm e_1, \pm e_2\}$. Let $u\in M_T$. If $\zeta\star u\in M_T$ then let $\zeta\star' u=\zeta\star u$. Assume $\zeta\star u\notin M_T$.  Then by the definition of $N_0$, both $u, \zeta\star u$ must be within $\rho$ distance $N_0$ to the boundary of $T$, and therefore each of $u, \zeta\star u$ must be a point in one of the labeled blocks of $\Gcaca$. Consider another block of $\Gcaca$ with the same label and let $u'$ be the point within this block in the same position as $u$. Then $u'=(a_1, a_2)\cdot u$ with some $a_1, a_2\equiv 0\mod p$. By the definition of $N_0$, we have that
$\zeta\star u'=(a_1, a_2)\cdot (\zeta\star u)$. Note that exactly one of such $\zeta\star u'$ is contained in $T$. We let $\zeta\star' u$ be this unique $\zeta\star u'\in M_T$. This finishes the definition of $\zeta\star' u$ for all $u\in M_T$.

In general, suppose $g\in\Z^2$ and $u\in M_T$, we write $g=\zeta_k\zeta_{k-1}\dots \zeta_0$ where $\zeta_i\in\{\pm e_1, \pm e_2\}$, and define
$$ g\star' u=\zeta_k\star' \zeta_{k-1}\dots \star' \zeta_0\star' u. $$
We must verify that the definition does not depend on the decomposition of $g$ into a product of standard generators. For this we show that if $\zeta_1, \dots, \zeta_k$ is a sequence of standard generators of $\Z^2$ such that $\sum \zeta_i=(0,0)$, then $\zeta_k\star'\zeta_{k-1}\dots\star'\zeta_1\star' u=u$ for all $u\in M_T$. Fix such a sequence of standard generators and $u\in M_T$. We construct an element $\bar{x}\in F(2^{\Z^2})$ as follows. In the domain of $\bar{x}$ first lay down a copy of $\Gcaca$ in exactly the same position as the copy of $\Gcaca$ in the domain of $x$. Then we put down a tessellation of copies of $\Gcaca$ with overlapping boundaries as illustrated in Figure~\ref{fig:algorithmI}, with the first copy of $\Gcaca$ in the center of the tessellation, and with $2k+1$ many copies of $\Gcaca$ horizontally and $2k+1$ many copies vertically. So the total number of copies of $\Gcaca$ is $(2k+1)^2$. We then define $\bar{x}$ so that the values of $\bar{x}$ on any copy of $\Gcaca$ is the same as the values of $x$ on $\Gcaca$, and outside the tessellation of copies of $\Gcaca$ the values of $\bar{x}$ are defined in any way to make it an element of $F(2^{\Z^2})$. Let $\bar{u}\in[\bar{x}]$ be the corresponding point to $u\in[x]$. By the definition of $N_0$, we have that $\bar{u}\in M$ and that for any standard generator $\zeta$, $\varphi(\zeta, u)=\varphi(\zeta, \bar{u})$ since $u\!\upharpoonright\![-N_0, N_0]^2=\bar{u}\!\upharpoonright\![-N_0,N_0]^2$. By the same argument we can see that for all $v\in[x]\cap M$ and $\bar{v}\in [\bar{x}]$, if $v$ is within $\Gcaca$ in the definition of $x$, $\bar{v}$ is within any one of the $(2k-1)^2$ many copies of $\Gcaca$ in the definition of $\bar{x}$ that are not on the boundary of the tessellation, and $v$ and $\bar{v}$ are in corresponding positions, then $\bar{v}\in M$ and $\varphi(\zeta, v)=\varphi(\zeta, \bar{v})$. As a consequence, if we compare the two sequences
$$ u, \zeta_1\star' u, \zeta_2\star' \zeta_1\star' u, \dots, \zeta_k\star' \dots\star'\zeta_1\star' u $$
and
$$ \bar{u}, \zeta_1\star \bar{u}, \zeta_2\star\zeta_1\star \bar{u}, \dots, \zeta_k\star\dots\star\zeta_1\star \bar{u}, $$
we conclude that the corresponding terms are points in corresponding positions in some copies of $\Gcaca$. Since each sequence has $k+1$ terms, the second sequence of points is entirely contained in the tessellation of copies of $\Gcaca$ in the construction of $\bar{x}$. Since $\star$ is a free action and $\sum \zeta_i=(0,0)$, we see that $\zeta_k\star\dots\star\zeta_1\star \bar{u}=\bar{u}$. By the above comparison, we have $\zeta_k\star' \dots\star'\zeta_1\star' u=u$ as required.

We have thus defined an action $\star'$ of $\Z^2$ on $M_T$. Next we
show that this action induces an action $*$ of $G$ on $M_T$. First we
claim that $h\star' m=m$ and $h'\star' m=m$. To see this, consider the
$\star$-path from $m$ to $m'$ that stays within the top part of
$\Gcaca$. We now make a stronger observation that every point on this
$\star$-path is in fact of $\rho$-distance at least $N_0$ away from the
boundary of the top part of $\Gcaca$. This is because $n>2N_1+4N_0$
and that the intermediate endpoints used in the construction of the
$\star$-path are all within the $2N_0\times p$ region $R$. Write the
sequence of points on the $\star$-path as
$$ m, \zeta_1\star m, \dots, \zeta_k\star\dots\star\zeta_1\star m, $$
where $h=\sum_{i\leq k} \zeta_i$. Then the $N_0$-ball about each of
these points are still contained in the top part of $\Gcaca$. Compare
it with another sequence
$$ m, \zeta_1\star' m, \dots, \zeta_k\star'\dots\star'\zeta_1\star'
m, $$ which by definition is contained in $M_T$. We conclude that the
corresponding terms are in corresponding positions within some labeled
blocks of $\Gcaca$. Since $h\star m=m'$, this comparison gives that
$h\star' m=m$. Similarly $h'\star' m=m$ because $h'\star m=m''$. Next
we claim that for any $u\in M_T$, $h\star' u=u$ and $h'\star'
u=u$. Fix $u\in M_T$. We construct a $\star$-path from $m$ to $u$
which stays entirely within $\Gcaca$ and so that every point on the
$\star$-path is of $\rho$ distance $N_0$ away from the boundary of
$\Gcaca$. This $\star$-path is constructed in a similar fashion as the
$\star$-path from $m$ to $m'$, making use of the fact that from any
point $v$ of $M$ there are other points of $M$ within the $N_0$-ball
of $v$ in each of the four quadrants from $v$. Let $t\in\Z^2$ be the
unique element such that $t\star m=u$. Then the $\star$-path
corresponds to a decomposition of $t$ as $\sum_{i\leq k}\zeta_i$ for
some sequence $\zeta_1, \dots, \zeta_k$ of standard generators of
$\Z^2$. A similar comparison argument as above gives that $t\star'
m=u$. Now commutativity of $\Z^2$ gives
$$h\star' u=(tht^{-1})\star' u=t\star' h\star' t^{-1}\star' u=u. $$
Similarly $h'\star' u=u$ as well. These claims guarantee that for any
$g_1, g_2\in \Z^2$ with $g_1-g_2\in H$, and for any $u\in M_T$,
$g_1\star' u=g_2\star' u$. In other words, if we define $(g+H)*
u=g\star' u$, we obtain an action of $G$ on $M_T$.

We next claim that this action $*$ of $G$ on $M_T$ is free. For this
suppose $u\in M_T$ and $g\in \Z^2$ are such that $g\star' u=u$. We
need to show that $g\in H$. Note that we have $g\star' m=m$. Write $g$
as $\sum_{i\leq k} \zeta_i$ where $\zeta_1, \dots, \zeta_k$ are
standard generators of $\Z^2$. Consider the element $\bar{x}\in
F(2^{\Z^2})$ constructed above, with $(2k+1)^2$ many copies of
$\Gcaca$ in its domain. Let $\bar{m}\in[\bar{x}]$ correspond to
$m\in[x]$. Then the comparison argument gives that $g\star \bar{m}$
and $g\star' m$ are in corresponding positions in some copies of
$\Gcaca$. This implies that there are integers $a_1, a_2$ such that
$g\star\bar{m}=(a_1p, -a_2p)\cdot \bar{m}$. The comparison argument
also gives that $h\star \bar{m}=(p,0)\cdot \bar{m}$ and $h'\star
\bar{m}=(0,-p)\cdot \bar{m}$. This can be applied to all copies of
$\Gcaca$ to conclude that, if $\tilde{m}\in [\bar{x}]$ is in the
position corresponding to $m$ within that copy of $\Gcaca$, then
$h\star \tilde{m}=(p,0)\cdot \tilde{m}$ and $h'\star
\tilde{m}=(0,-p)\cdot \tilde{m}$. Repeatedly applying this fact, we
get that $g\star\bar{m}=(a_1h+a_2h')\star\bar{m}$. Since $\star$ is a
free action, we conclude that $g=a_1h+a_2h'\in H$ as required.

Thus we have shown that $*$ is a free action of $G$ on $M_T$. This in particular implies that $G$ is finite, and
that $|M_T|= |G|$.
Also, since $G=\Z^2/\langle h, h'\rangle$, $|G|$ is also the absolute value of the determinant
of the $2\times 2$ matrix $P$ whose rows are the components of $h$ and $h'$, as this represents
the volume of the fundamental region of the lattice in $\Z^2$ generated by the
vectors $h$ and $h'$. Finally, we show that the components of $h$ and $h'$ are all divisible by
$p$, which shows that $|\det(P)|\geq p^2$. Since $|M_T| \leq |T|=p^2$, it follows that
$M_T=T$, a contradiction to the fact that $T$ contains points not in $M$.

To see that the components of $h$ and $h'$ are divisible by $p$, we use the long tiles.
Again we concentrate on $h$ and assume $h=(h_1,h_2)$, the argument for $h'$ being similar. Consider the long tile grid graph
$\Gcqadpa$ (see Figure~\ref{fig:Gamma-npq-horiz}).  Let $m_0\in M$ be near the center (to be exact, within the $N_0$-ball of the center) of the upper-left copy of $R_\times$ block in $\Gcqadpa$. Let $m_1=(p,0)\cdot m_0$, $m_q=(pq,0)\cdot m_0$, $u_0=(0,-p)\cdot m_0$, $u_1=(q, 0)\cdot u_0$, and $u_p=(pq, 0)\cdot u_0$. Then $m_0, m_1, m_q, u_0, u_1, u_p$ are all in the same position within an $R_\times$ block. From the definition of $\Gcqadpa$ we have that $h\star m_0=m_1$, $qh\star m_0=m_q$, $h'\star m_0=u_0$, and $h'\star m_q=u_p$ because all of these relations are witnessed by points in $\Gcaca$.
Now since $n>2N_1+4N_0$, we can find a $\star$-path from $u_0$ to $u_1$ that stays within the bottom portion of $\Gcqadpa$ consisting of only the first and second $R_\times$ blocks and the $R_d$ block in between them. This gives rise to an $\ell \in \Z^2$
such that $\ell \star u_0=u_1$ and furthermore $p\ell \star u_0=u_p$.
Thus, $h'\star (q h) \star m_0=u_p= (p\ell)\star h'\star m_0$. Since $\star$ is a free action,
we must have $h'+ qh= p\ell+h'$ in $\Z^2$, so $qh=p\ell$. Since $\gcd(p,q)=1$, this shows
that $h_1,h_2$ are divisible by $p$.
\end{proof}

As a consequence of Theorem~\ref{thm:no-sub-Z2-action} we can provide a partial answer to Problem 10.6
of \cite{gao_countable_2015} concerning ``almost lined-up'' marker regions.
Recall that by a ``rectangular tiling'' of $\fzs$ we mean a
finite subequivalence relation $E$ of $\fzs$ such that
each $E$-class $R$ is a finite subset of a $\fzs$ class $[x]$, and
corresponds to a rectangle in that it is equal to $R=([a,b]\times[c,d])\cdot x$
for some $a<b$, $c<d$ in $\Z$. We have the natural notions of the ``top-edge'' of $R$,
etc.\ If $R_1$, $R_2$ are rectangular $E$-classes in the same $\fzs$ class $[x]$,
we say $R_1$, $R_2$ are {\em adjacent} if there are $y \in R_1$ and $z \in R_2$
such that $\zeta\cdot y=z$ for some $\zeta\in\{\pm e_1, \pm e_2\}$. Note that if $R_1,R_2$ are adjacent, then from the ordered pair $(R_1,R_2)$ the generator is
uniquely determined, and so it makes sense to say ``$R_2$ is above $R_1$,'' etc.\
If $R_1$ and $R_2$ are adjacent, it also makes sense to define the ``overlap'' of
$R_1$ and $R_2$. For example, if $R_2$ is above $R_1$, this is the number of points
$y\in R_1$ such that $e_2 \cdot y \in R_2$.

\begin{defnn}
Suppose $E$ is a rectangular tiling of $\fzs$ and suppose all the rectangles in the tiling
$E$ have side lengths bounded above by $M$. We say the rectangular tiling $E$ is
{\em almost lined-up} if for every rectangle $R_1$ in $E$,
there is a rectangle $R_2$ in $E$ which is adjacent to $R_1$, and to the top of $R_1$,
with the overlap of $R_1$ and $R_2$ being strictly greater than $\frac{M}{2}$. Similarly if ``top''
is replaced with ``bottom'', ``left'', or ``right.''
\end{defnn}

Problem 10.6 of \cite{gao_countable_2015} asks if there exist Borel rectangular tilings of $F(2^{\Z^2})$ that are almost lined-up.
The next result rules out the existence of clopen, almost lined-up rectangular
tilings.

\begin{corn} \label{nlum}
There does not exist a clopen  rectangular tiling of $\fzs$ which is
almost lined-up.
\end{corn}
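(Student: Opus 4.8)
The plan is to derive the Corollary from Theorem~\ref{thm:no-sub-Z2-action} by showing that a clopen, almost lined-up rectangular tiling $E$ of $\fzs$ would give rise to a clopen, complete, co-complete section $M$ together with a continuous free subaction of $\Z^2$ on $M$. The natural candidate for $M$ is the set of ``distinguished corner points'' of the tiling: for each rectangle $R=([a,b]\times[c,d])\cdot x$ in $[x]$, mark the lower-left corner point $a\cdot(e_1\text{-coordinate})\ldots$, i.e.\ the point corresponding to $(a,c)$ in the coordinatization of $[x]$. Since $E$ is clopen and the rectangle side lengths are bounded by $M$, membership in $M$ is determined by a bounded window, so $M$ is clopen. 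It is a complete section since every class contains such corner points, and it is co-complete since the rectangles have bounded size (so each class contains non-corner points, indeed points not in $M$).

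The crux is to define the subaction. Given a corner point $m\in M$ at lattice position $(a,c)$ of its rectangle $R$, I would define $e_1\star m$ to be the lower-left corner of the rectangle immediately to the right of $R$ that ``lines up'' with $R$: because $E$ is almost lined-up, there is a rectangle $R'$ adjacent to the right of $R$ whose overlap with $R$ exceeds $M/2$, and since both side lengths are $\le M$, this overlap condition pins down a \emph{unique} such $R'$ (two distinct right-neighbors of $R$ cannot both overlap $R$ in more than $M/2$ rows). Set $e_1\star m$ to be the lower-left corner of $R'$; define $e_2\star m$ analogously using the uniquely determined rectangle above $R$, and $e_i^{-1}\star m$ using the rectangle to the left / below. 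One must check: (i) $e_i^{-1}\star(e_i\star m)=m$ and $e_i\star(e_i^{-1}\star m)=m$ — this follows from uniqueness of the overlapping neighbor and symmetry of the overlap relation; (ii) commutativity $e_1\star e_2\star m = e_2\star e_1\star m$, which is the step that actually uses the rectangular (grid) geometry: going right-then-up and up-then-right from a corner point both land on the lower-left corner of the rectangle occupying the ``up-right diagonal'' region, and the bounded-overlap argument shows these agree; (iii) freeness — if $g\star m = m$ for $g\in\Z^2$, then tracking lattice displacements shows $g=0$, since each application of a generator moves the base point by a vector whose horizontal (resp.\ vertical) component has a definite sign, and a nontrivial word in a fixed direction strictly increases displacement in that direction. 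Continuity of the map $(g,m)\mapsto \varphi(g,m)\in\Z^2$ with $g\star m=\varphi(g,m)\cdot m$ is immediate from clopenness of $E$ and the bounded window: for generators, $\varphi(e_i,m)$ depends only on a bounded window around $m$, and a general $g$ is a bounded word in generators.

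Having produced such $(M,\star)$, Theorem~\ref{thm:no-sub-Z2-action} gives a contradiction, proving the Corollary.

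\textbf{Main obstacle.} The delicate point is verifying that $\star$ is genuinely a $\Z^2$-\emph{action} — i.e.\ well-definedness and especially commutativity of $e_1\star$ and $e_2\star$ — using only the ``overlap $>M/2$'' hypothesis. One has to argue carefully that the almost-lined-up condition forces the diagonal ``move right then up'' and ``move up then right'' to reach the same rectangle; this requires showing that the $>M/2$ overlaps compose correctly, which I expect to handle by a short case analysis on how the relevant rectangles' edges can be positioned, using the bound $M$ on side lengths. Everything else (clopenness, completeness, co-completeness, freeness, continuity) is routine given the setup.
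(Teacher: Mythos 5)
Your proof is correct and is essentially the paper's own argument: the paper also derives the Corollary from Theorem~\ref{thm:no-sub-Z2-action} by taking a clopen selector $S$ for the clopen tiling $E$ (your set of lower-left corners is exactly such a selector) and defining $\star$ on each generator by passing to the unique neighboring rectangle with overlap $>M/2$. You are slightly more explicit than the paper about which verifications are needed (the paper simply asserts that $\star$ ``is easy to check'' to be a continuous free action, leaving commutativity implicit), but the construction and the appeal to Theorem~\ref{thm:no-sub-Z2-action} are the same.
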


\begin{proof}
Suppose $E$ were a clopen, almost lined-up, rectangular tiling of $\fzs$.
Since the $E$ classes are finite, and $E$ is a clopen tiling, there is a clopen set
$S\subseteq \fzs$ which is a selector for $E$. We define a free action $\star$ of $\Z^2$
on $S$ as follows. If $s \in R_1$ ($R_1$ a class of $E$), then $(0,1)\star s$ is the unique
point of $S \cap R_2$, where $R_2$ is the class of $E$ which is adjacent to $R_1$,
to the top of $R_1$, and has overlap with $R_1$  which is $>\frac{M}{2}$. By assumption,
such a class $R_2$ exists, and it is easy to see it is unique. The action of the other
generators is defined similarly. Since $E$ is clopen, it is easy to check that $\star$
is a continuous action on $S$, and it is also clearly a free action. This
contradicts Theorem~\ref{thm:no-sub-Z2-action}.
\end{proof}

\section[Undirected graph homomorphsims]{Undirected graph homomorphisms and the reduced homotopy group}
\label{sec:undirected-graphs}

In the rest of this chapter we apply the Twelve Tiles Theorem to study the existence of
continuous graph homomorphisms $\varphi : F(2^{\Z^2}) \rightarrow
\Gamma$ for various graphs $\Gamma$. For this purpose, all our graphs will be assumed to be undirected and without any assigned
$\Z^2$-labeling of their edges.

In the setting of continuous graph homomorphisms, our main theorem,
Theorem \ref{thm:tilethm}, takes the following form.

\begin{corn} \label{cor:graphthm}
Let $\Gamma$ be a graph. Then the following are equivalent.
\begin{enumerate}
\item[\rm (1)] There is a continuous graph homomorphism $\varphi \colon F(2^{\Z^2}) \rightarrow \Gamma$.
\item[\rm (2)] There is a continuous graph homomorphism $\varphi\colon F(2^{\Z^2})\rightarrow \Gamma_0$ where $\Gamma_0$ is a connected component of $\Gamma$.
\item[\rm (3)] There are positive integers $n$, $p$, $q$ with $n < p, q$ and $\gcd(p,q) = 1$ such that
$\Gamma_{n,p,q}$ admits a graph homomorphism to $\Gamma$.
\item[\rm (4)] For all $n \geq 1$ and for all sufficiently large $p, q$,
$\Gamma_{n,p,q}$ admits a graph homomorphism to $\Gamma$.
\end{enumerate}
\end{corn}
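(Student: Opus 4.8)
\textbf{Proof proposal for Corollary~\ref{cor:graphthm}.}

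The plan is to obtain this corollary by a direct application of the Twelve Tiles Theorem, Theorem~\ref{thm:tilethm}, after recasting the notion of a graph homomorphism $\varphi\colon F(2^{\Z^2})\to\Gamma$ as a continuous equivariant map into an appropriate subshift of finite type $S=S(\Gamma)$. First I would fix an enumeration $V(\Gamma)=\{0,1,\dots,b-1\}$ of the vertex set of $\Gamma$ (here $\Gamma$ is finite, or at least we may reduce to the finite case since a homomorphism from the connected graph $F(2^{\Z^2})$ has image in a single connected component, and the relevant arguments will only involve finitely many vertices at a time). Define $S\subseteq b^{\Z^2}$ to be the set of those $x\in b^{\Z^2}$ such that for every $\vec c\in\Z^2$ and every standard generator $e_i$, the pair $(x(\vec c),x(\vec c+e_i))$ is an edge of $\Gamma$. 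This is an edge $\Z^2$-subshift of finite type in the sense of the excerpt: the forbidden patterns are exactly the edge patterns $p\colon\{\vec 0,e_i\}\to b$ whose value pair is a non-edge of $\Gamma$ (including, since $\Gamma$ has no loops, the constant pairs $(v,v)$, which forces the coloring-type condition). Thus $S$ has width $1$, i.e. in the notation of Theorem~\ref{thm:tilethm} we have $\max\{a_i,b_i\colon\dom(p_i)=[0,a_i)\times[0,b_i)\}-1=1$.

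Next I would check the two translations: (a) a continuous equivariant map $f\colon F(2^{\Z^2})\to S$ is the same thing as a continuous graph homomorphism $\varphi\colon F(2^{\Z^2})\to\Gamma$, and (b) a map $g\colon\Gamma_{n,p,q}\to b$ respects $S$ (Definition~\ref{def:resp}) if and only if $g$, viewed as a map into $V(\Gamma)$, is a graph homomorphism $\Gamma_{n,p,q}\to\Gamma$. For (a): given $f$, set $\varphi(x)=f(x)(\vec 0)\in V(\Gamma)$; equivariance of $f$ gives $\varphi(e_i\cdot x)=f(e_i\cdot x)(\vec 0)=f(x)(e_i)$, and membership of $f(x)$ in $S$ says precisely that $(\varphi(x),\varphi(e_i\cdot x))=(f(x)(\vec 0),f(x)(e_i))\in E(\Gamma)$, so $\varphi$ is a homomorphism; continuity of $\varphi$ is immediate from continuity of $f$, and conversely from a continuous homomorphism $\varphi$ one defines $f(x)(\vec c)=\varphi(\vec c\cdot x)$, which is continuous, equivariant, and lands in $S$ because $\varphi$ respects edges and every edge of the Cayley graph of $\Z^2$ is a translate of some $(\vec c,\vec c+e_i)$. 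For (b): by Definition~\ref{def:resp}, $g$ respects $S$ iff for every $\Z^2$-homomorphism $\psi$ from an edge $\{\vec 0,e_i\}$ into $\Gamma_{n,p,q}$ — that is, for every directed edge of $\Gamma_{n,p,q}$ with generator label $e_i$ — the pair $(g(\psi(\vec 0)),g(\psi(e_i)))$ is not a forbidden (non-edge) pair, i.e. is an edge of $\Gamma$; since every edge of $\Gamma_{n,p,q}$ arises this way (each directed edge of $\Gamma_{n,p,q}$ carries a unique generator label, as established in the excerpt before Lemma~\ref{lem:pbg}) and $\Gamma$ is symmetric, this is exactly the assertion that $g$ is a graph homomorphism $\Gamma_{n,p,q}\to\Gamma$.

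With these identifications in place, the corollary is literally Theorem~\ref{thm:tilethm} applied to $S=S(\Gamma)$: condition (1) of the corollary matches (\ref{mta}) of the theorem via (a); the theorem's requirement ``$n\geq\max\{a_i,b_i\}-1$'' becomes ``$n\geq 1$'', i.e. just ``$n$ a positive integer with $n<p,q$'', so (\ref{mtb}) and (\ref{mtc}) of the theorem become conditions (2) and (3) of the corollary via (b). I should remark on one small cosmetic discrepancy to handle cleanly: the theorem's clause~(\ref{mtc}) reads ``for all $n\geq 1$'' once the width is $1$, which is exactly condition (3) as stated, so no adjustment is needed; if one preferred to state (2) with an explicit lower bound on $n$ matching the width, that is subsumed since any $n\geq1$ works. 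I do not expect a serious obstacle here — the content is entirely in Theorem~\ref{thm:tilethm}, which is assumed; the only thing requiring care is the bookkeeping in translation (b), specifically invoking the fact (proved in the excerpt) that every directed edge of $\Gamma_{n,p,q}$ has a well-defined unique $\Z^2$-generator label, so that ``respecting the width-$1$ subshift $S$'' and ``being a graph homomorphism'' coincide with no gap coming from un-labeled or ambiguously-labeled edges.
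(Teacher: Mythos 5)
Your proposal is correct and is essentially the proof given in the paper: define the edge subshift $S(\Gamma)\subseteq V(\Gamma)^{\Z^2}$ of $\Z^2$-graph homomorphisms into $\Gamma$, verify the two translations identifying continuous equivariant maps into $S$ with continuous graph homomorphisms and maps respecting $S$ with graph homomorphisms on $\Gamma_{n,p,q}$, and invoke Theorem~\ref{thm:tilethm} with the width-$1$ bound becoming $n\geq 1$. One parenthetical slip: $F(2^{\Z^2})$ is not a connected graph (each orbit is its own component), so the reduction to a finite alphabet for infinite $\Gamma$ should instead rest on compactness of $\ocl{[x]}$ (a continuous map from a compact set into the discrete space $V(\Gamma)$ has finite image) on the negative side, and on the finiteness of $\Gamma_{n,p,q}$ on the positive side.
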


\begin{proof}
Let $S$ be the set of all functions $x \in V(\Gamma)^{\Z^2}$ such that $x
\colon \Z^2 \rightarrow V(\Gamma)$ is a graph homomorphism. It is easily
seen that $S$ is a subshift of finite type. In fact $S$ is defined by a
condition on the edges and so has width $1$ (the notion of width is defined before Definition~\ref{def:sft}).
The existence of a
continuous graph homomorphism from $F(2^{\Z^2})$ to $\Gamma$ is
equivalent to the existence of a continuous equivariant map from
$F(2^{\Z^2})$ to $S$. Similarly, the existence of a graph homomorphism
from $\Gamma_{n,p,q}$ to $\Gamma$ is equivalent to the existence of a
map $g \colon \Gamma_{n,p,q} \rightarrow V(\Gamma)$ which respects
$S$. Having noted these equivalences, the equivalence of (1), (3) and (4) follows
immediately from Theorem \ref{thm:tilethm}. 

We next argue that (1) and (2) are equivalent. It is obvious that (2) implies (1). For the other direction, assume (1) holds. By the above equivalence, we have (3) holds, and consider a graph homomorphism $\varphi$ from $\Gamma_{n,p,q}$ to $\Gamma$, where $n<p, q$ and $\gcd(p,q)=1$. Since $\Gamma_{n,p,q}$ is connected, we know that the image of $\varphi$ is contained in a connected component $\Gamma_0$ of $\Gamma$. Thus we have (3) for $\Gamma_0$ instead of $\Gamma$. Applying the equivalence between (3) and (1) for $\Gamma_0$ instead of $\Gamma$, we get that (2) holds.
\end{proof}

By the above corollary, the continuous homomorphism problem is essentially equivalent to the instances of the problem in which the graph $\Gamma$ is connected. Thus in the remainder of this chapter, we tacitly assume that $\Gamma$ is connected unless explicitly stated otherwise. 

In some sense the above corollary has provided a solution to the continuous graph homomorphism problem, but the conditions are not easy to apply in practice, and so we still seek to improve them.

We prove two main theorems on this topic. The first one, Theorem~\ref{thm:neghom},
gives a sufficient condition
on the graph $\Gamma$ to establish that there does not exist a continuous graph homomorphism
from $\fzs$ to $\Gamma$. Similarly, the second theorem, Theorem~\ref{thm:odd-length-cycle},
gives a sufficient condition on
$\Gamma$ to establish the existence of a continuous homomorphism from $\fzs$ to $\Gamma$.
We refer to these theorems as the ``negative'' and ``positive'' theorems,
and to the conditions in these two theorems as the ``negative'' and ``positive''
conditions. Both of these conditions are phrased in terms of a certain ``reduced homotopy
group'' $\pi_1^*(\Gamma)$ associated to the graph $\Gamma$. Roughly speaking,
$\pi_1^*(\Gamma)$ will be the quotient of the first homotopy group $\pi_1(\Gamma)$
of $\Gamma$ by the normal subgroup generated by the $4$-cycles in $\Gamma$
(we give the precise definitions below). The negative theorem has a number of
corollaries which give easier-to-apply, but less general, conditions sufficient
to guarantee the non-existence of a continuous graph homomorphism from
$\fzs$ to $\Gamma$.

We will illustrate each of these theorems by considering a number of specific examples
of graphs for which these theorems provide an answer. We note that even in the case of
simple graphs such as $K_3$ or $K_4$ (the complete graphs on $3$ and $4$
vertices respectively), the question is not trivial (Theorems~\ref{threecoloring}
and \ref{thm:pcn} show there is no continuous homomorphism for $\Gamma=K_3$,
while Theorem~\ref{thm:pcn2} shows there is a continuous homomorphism for $\Gamma=K_4$).
We will give some simpler versions of the negative conditions, as mentioned above,
but also give some examples of graphs $\Gamma$ where these simpler conditions do not suffice to
answer the continuous homomorphism question for $\Gamma$, but nevertheless we
can show a negative answer.

In \S\ref{sec:gh} we will return to the graph homomorphism problem. Building on
the results of this chapter, we will show in Theorem~\ref{thm:ghnr} that the continuous
graph homomorphism problem for $\fzs$ is not decidable. That is, the set of
finite graphs $\Gamma$ for which  there exists
a continuous graph homomorphism from
$\fzs$ to $\Gamma$ is not a computable set (we will observe that it is a
computably enumerable set, however).

The arguments in the next two sections will be based on homotopy theory. In the rest of this section, we review the homotopy notions we will
need within the specific setting we will use them.

Fix an undirected graph
$\Gamma=(V(\Gamma), E(\Gamma))$. For simplicity we still write edges of $\Gamma$ as pairs $(u, v)\in E(\Gamma)$ for $u, v\in V(\Gamma)$. A path is a finite sequence of vertices $\sigma = (v_0,
\ldots, v_n)$ where $(v_i, v_{i+1}) \in E(\Gamma)$ for every $0 \leq i
< n$ (we allow paths to visit the same vertex multiple times).
The length of a path is the length of the sequence minus one
(i.e. the number of edges traversed). A trivial path consists of one
vertex and has length $0$. A {\em back-tracking} path is a path
$\sigma = (v_0, \ldots, v_{2n})$ such that $v_{n-i} = v_{n+i}$ for all
$0 \leq i \leq n$. If $\sigma$ is a path from $u$ to $v$, then we
write $\sigma^{-1}$ for the reverse path from $v$ to $u$. If $\sigma$
ends at $v$ and $\tau$ begins at $v$ then we write $\sigma \tau$ for
the concatenated path which first follows $\sigma$ and then follows
$\tau$. Concatenation is done without removing instances of
back-tracking, so the length of $\sigma \tau$ is the sum of the
lengths of $\sigma$ and $\tau$. Removing instances of back-tracking is
done by passing to the homotopy equivalence class of a path. {\em
Homotopy equivalence} is an equivalence relation on paths defined as
follows: it is the smallest equivalence relation with the property
that if $\sigma$ is a path ending at $v$, $\tau$ is a path beginning
at $v$, and $\theta$ is a back-tracking path beginning and ending at
$v$, then $\sigma \theta \tau$ is homotopy equivalent to $\sigma
\tau$. Notice that $\sigma \sigma^{-1}$ is homotopy equivalent to the
trivial path.

A path which begins and ends at the same vertex will be called a {\em
cycle}, and when it has length $p$ it will be called a {\em $p$-cycle}. If
$\sigma$ is a $4$-cycle beginning at $u$ and $\tau$ is any path ending
at $u$, then $\tau \sigma \tau^{-1}$ is a cycle and we call it a {\em
square-cycle}. Notice that, for any fixed vertex $v \in
V(\Gamma)$ serving as a base point, the set of homotopy equivalence classes of cycles
beginning at $v$ forms a group under concatenation. This is called the
{\em first fundamental group of $\Gamma$} and is denoted
$\pi_1(\Gamma)$. It is a basic exercise to show that up to isomorphism
the group $\pi_1(\Gamma)$ does not depend upon the choice of the base point $v$,
assuming $\Gamma$ is connected. We
will always implicitly assume without mention that a fixed base point $v$
has been chosen, and that all cycles being discussed are cycles which
begin and end at $v$. For a cycle $\gamma$ we write $\pi_1(\gamma)$
for its homotopy equivalence class. The set of $\pi_1$-images of
square-cycles is conjugation invariant and thus generates a normal
subgroup $N$. We write $\pi_1^*(\Gamma)=\pi_1(\Gamma)/N$ for the quotient of
$\pi_1(\Gamma)$ by the subgroup $N$. We refer to $\pi_1^*(\Gamma)$
as the {\em reduced homotopy group} of $\Gamma$, and to $N$ as the {\em kernel} of $\pi^*_1$.
For a cycle $\gamma$ we write $\pi_1^*(\gamma)$ for the image of $\gamma$ in
this group. If $\pi_1^*(\gamma)=\pi_1^*(\eta)$ we say that $\gamma$ and $\eta$ are {\em reduced homotopy equivalent}.

Notice that if two paths are homotopy equivalent then
their lengths differ by an even number. Since square-cycles also have
even length, the lengths of reduced homotopy equivalent cycles also differ by an even number. It follows that $\pi_1^*(\gamma)$ is non-trivial whenever
$\gamma$ is an odd-length cycle.

Note that if $\gamma$ is a cycle in $\Gamma$ then it is well-defined,
without mentioning the base point,  to say that
$\pi_1^*(\gamma)$ satisfies a particular group-theoretic statement $\varphi$
in the group $\pi_1^*(\Gamma)$. This is because choosing a different base
point for computing the homotopy groups results in considering a conjugate
$\gamma'=\delta \gamma \delta^{-1}$ of $\gamma$, and since conjugation
is an isomorphism we have that $\pi_1^*(\Gamma)$ satisfies
$\varphi(\gamma)$ if and only if $\delta \pi_1^*(\Gamma) \delta^{-1}$ satisfies $\varphi(\gamma')$.
For example, if $\gamma$ is a cycle in $\Gamma$, it is well-defined
to say that $\pi_1^*(\gamma)$ has order $p$ in $\pi_1^*(\Gamma)$.

We mention an alternative view of $\pi_1^*(\Gamma)$ which requires a
bit of homotopy theory and fundamental groups in a slightly more
general setting. One could view $\Gamma$ as a topological space by
viewing each edge as a copy of $[0, 1]$. We can form a larger
topological space $\Gamma^*$ by attaching a copy of the unit-square
$[0, 1] \times [0, 1]$ to each $4$-cycle in $\Gamma$, identifying the
boundary of the square with the edges and vertices used by the
$4$-cycle. This is an instance of a so-called CW complex (see
\cite{Hatcher} for the definition and discussion. In \cite{Lyndon}
it is shown how a CW complex is associated to a group presentation).
In this setting, a path is any continuous function from
$[0, 1]$ to $\Gamma^*$, and two paths are homotopy equivalent if they
have the same endpoints and, while maintaining the same endpoints, one
path can be continuously morphed into the other. Again, one can fix a
point $v$ and consider the set of homotopy equivalence classes of
paths which begin and end at $v$. This forms a group denoted
$\pi_1(\Gamma^*)$. It is not difficult to check that $\pi_1^*(\Gamma)
= \pi_1(\Gamma^*)$. We will only briefly use this identity in one
example, but the reader may find it intuitively helpful.

We point out that any graph homomorphism $\Gamma \rightarrow \Gamma'$
induces a group homomorphism $\pi_1^*(\Gamma) \rightarrow
\pi_1^*(\Gamma')$ (and similarly with each $\pi_1^*$ replaced by
$\pi_1$).

As an example, we calculate the reduced homotopy group for the 12-tiles graph $\Gamma_{n,p,q}$ below. We will not need this result in the rest of the paper.

\begin{propn} \label{prop:tilehom}
Let $p, q \geq n$ with $\gcd(p,q) = 1$. Then $\pi_1^*(\Gamma_{n,p,q}) \cong \Z^2$.
\end{propn}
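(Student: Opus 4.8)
The plan is to compute $\pi_1^*(\gnpq)$ via the identity $\pi_1^*(\gnpq)=\pi_1\bigl((\gnpq)^*\bigr)$ recorded above, where $(\gnpq)^*$ is the CW complex obtained by attaching a $2$-cell along every $4$-cycle of $\gnpq$. The first step is to identify these $2$-cells. Since $\gnpq$ is a $\Z^2$-graph in which every directed edge carries a unique generator label, a closed edge-path of length $4$ has generator labels summing to $0$, and the only cyclic word of length $4$ in $\{e_1^{\pm1},e_2^{\pm1}\}$ with zero sum that is not backtracking is a cyclic rotation of $e_1e_2e_1^{-1}e_2^{-1}$; hence the non-backtracking $4$-cycles are exactly the unit grid squares occurring inside the twelve rectangular grid graphs. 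Therefore $(\gnpq)^* = \bigl(\bigsqcup_{i=1}^{12}(T^i_{n,p,q})^*\bigr)/{\sim}$, where $(T^i_{n,p,q})^*$ is the tile $T^i_{n,p,q}$ with all of its unit squares filled in and ${\sim}$ identifies the labeled blocks; each filled labeled block $R_\times,R_a,R_b,R_c,R_d$ and each filled interior block is a filled rectangle, hence contractible (indeed a disk).

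The second step is a van Kampen / CW-presentation computation. Collapsing the five pairwise-disjoint contractible labeled blocks to points $*_\times,*_a,*_b,*_c,*_d$ is a homotopy equivalence, and in the resulting $2$-complex each interior disk of a tile, being a disk whose boundary now runs along the collapsed blocks, contributes a single $2$-cell attached along the boundary word of that tile, while the unit-square relations inside each interior disk let one contract it onto its boundary. Since each interior block is a disk and the four torus tiles have fundamental group generated by two of the obvious loops, the only surviving generators are the four loops $\alpha,\beta,\gamma,\delta$ based at $*_\times$ passing once through $R_a,R_b,R_c,R_d$ respectively, and the relations are precisely the boundary words of the twelve tiles. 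Reading these off: the torus tiles $\Tcaca,\Tcbcb,\Tdada,\Tdbdb$ yield $[\alpha,\gamma]=[\gamma,\beta]=[\delta,\alpha]=[\delta,\beta]=1$; the commutativity tiles $\Tcabcab,\Tcbacba$ yield (after using the torus relations) $[\alpha,\beta]=1$ and $\Tcdacda,\Tdcadca$ yield $[\gamma,\delta]=1$; the long horizontal tiles $\Tcqadpa,\Tdpacqa$ yield $\alpha\gamma^q\alpha^{-1}=\delta^p$, i.e.\ $\gamma^q=\delta^p$; and the long vertical tiles $\Tcbpcaq,\Tcaqcbp$ yield $\alpha^q=\beta^p$. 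Thus
\[
\pi_1^*(\gnpq)\;\cong\;\langle\,\alpha,\beta,\gamma,\delta \mid \text{all four generators commute},\ \gamma^q=\delta^p,\ \alpha^q=\beta^p\,\rangle\;\cong\;\Z^4/\langle\, qe_\gamma-pe_\delta,\ qe_\alpha-pe_\beta\,\rangle.
\]

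Finally, because $(p,q)=1$ the vector $(q,-p)$ is primitive in $\Z^2$, so $\Z^2/\langle(q,-p)\rangle\cong\Z$; splitting the right-hand side into its $\{\alpha,\beta\}$ and $\{\gamma,\delta\}$ summands gives $\pi_1^*(\gnpq)\cong\Z\oplus\Z\cong\Z^2$. (As a check, the total-displacement homomorphism $\pi_1^*(\gnpq)\to\Z^2$ — well defined since unit squares have displacement $0$ — sends $\gamma\mapsto(p,0),\ \delta\mapsto(q,0),\ \alpha\mapsto(0,p),\ \beta\mapsto(0,q)$, and $\gcd(p,q)=1$ makes it both surjective and injective.) The step I expect to be the main obstacle is the second one: turning $(\gnpq)^*$ into an honest group presentation, in particular coping with the fact that a pair of tiles may share more than one labeled block (so naive van Kampen pieces have disconnected intersections) and verifying that the twelve boundary words are exactly as listed and contribute no further generators or relations.
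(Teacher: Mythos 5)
Your argument reaches the same presentation $\langle\alpha,\beta,\gamma,\delta\mid R\rangle$ and the same conclusion $\Z^2$ as the paper, but by a more explicitly topological route. The paper's sketch works combinatorially inside the graph: it adjusts an arbitrary cycle modulo $4$-cycles so as to stay in the labeled boundary blocks, expresses the result as a word in $\alpha,\beta,\gamma,\delta$, reads off the twelve tile relations, and then proves separately that the kernel of the natural surjection from the free group on $\alpha,\beta,\gamma,\delta$ onto $\pi_1^*(\gnpq)$ is exactly the normal subgroup generated by $R$, by tracking a null-homotopy through $4$-cycle insertions and deletions (using, as you do, that every $4$-cycle of $\gnpq$ lies in a single tile). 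You instead invoke $\pi_1^*(\gnpq)\cong\pi_1\bigl((\gnpq)^*\bigr)$, decompose $(\gnpq)^*$ into twelve filled tiles meeting in five contractible labeled blocks, collapse those blocks, and read off a CW presentation; the two proofs trade their hard step (the paper's is the kernel argument, yours is the collapse bookkeeping, which you rightly flag). Two points where your one-paragraph collapse description is loose: the boundary of a tile's interior disk does not literally run through the collapsed blocks --- one must also account for the ring of unit squares between the interior block and the surrounding frame; and after collapsing, there are \emph{two} homotopy classes of edge joining $*_\times$ to each of $*_a,*_b,*_c,*_d$ (one per side of that block), with $\alpha$ being the ratio of the two $*_\times$-to-$*_a$ classes rather than a single surviving edge, so a maximal tree must be chosen and the surplus generators seen to be killed by the unit-square $2$-cells. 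These are repairable gaps, not errors, and the relations you extract match the paper's. For the final algebra the paper exhibits explicit generators $u=\alpha^m\beta^k$, $v=\gamma^m\delta^k$ with $mp+kq=1$ rather than running Smith normal form, which is purely cosmetic; your displacement-homomorphism sanity check is a nice addition not present in the paper.
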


\begin{proof}
We simply sketch the argument as we will not need this fact,
and the argument is similar to one given later in Lemma~\ref{prestograph}.
In computing $\pi_1^*(\Gamma_{n,p,q})$ we choose our base point $v$ to
lie in the $R_\times$ block. Any cycle $\sigma$ beginning at $v$ can be viewed
as a collection of path segments on the $12$ tiles. Each of the $12$
tiles forming $\gnpq$ consists of an interior region (these points are not
identified with any others in $\gnpq$) surrounded by rectangular blocks
labeled by $R_\times$, $R_a$, $R_b$, $R_c$, or $R_d$. For the purposes of this proof we refer to these
surrounding blocks as the boundary blocks of the tile.
Since every block on the boundary either is an $R_\times$ block or has copies of
$R_\times$ blocks adjacent to it, it is possible to adjust $\sigma$ modulo the kernel of
$\pi_1^*$ so that $\sigma$ always stays in the boundary blocks and
enters and leaves tiles through a
copy of $R_\times$ (in fact through $v$). With this adjustment, each path segment
within the blocks of a given tile
represents a cycle and can be expressed as a product in $\alpha$,
$\beta$, $\gamma$, $\delta$, where $\alpha$ is the cycle at $v$
passing vertically through $R_a$ once, and similarly $\beta$,
$\gamma$, and $\delta$ pass through $R_b$, $R_c$, and $R_d$ once,
respectively. This implies $\pi_1^*(\Gamma_{n,p,q})$ is generated by
$\pi_1^*(\{\alpha, \beta, \gamma, \delta\})$. In fact, we have named each of the 12 tiles with
an equation in its subscript, and this equation describes a relation
which is satisfied in $\pi_1^*(\Gamma_{n,p,q})$. Let $R$
be the set of these $12$ relations. So,
in the group $\pi_1^*(\Gamma_{n,p,q})$ we have that each of
$\alpha$, $\beta$ commutes with each of $\gamma$, $\delta$,
$\alpha \beta=\beta\alpha$, $\gamma\delta=\delta\gamma$, and
$\alpha^q=\beta^p$, $\gamma^q=\delta^p$. Let $m,k$ be integers such that
$mp+kq=1$. Let $u=\alpha^m\beta^{k}$ and $v= \gamma^m\delta^{k}$.
Then $u^q=\alpha^{mq} \beta^{kq}=\beta^{mp+kq}=\beta$ in $\pi_1^*(\Gamma_{n,p,q})$,
and similarly $\alpha=u^p$, $\gamma=v^p$, $\delta=v^q$. So,
$\pi_1^*(\Gamma_{n,p,q})$ is generated by the commuting elements $u, v$.
It is clear then that the group $\langle \alpha,\beta,\gamma,\delta \mid R\rangle$
is isomorphic to $\Z^2$, the free abelian group generated by $u$ and $v$.

It remains to show that in the group $\pi_1^*(\Gamma_{n,p,q})$ there are no further
relations than those generated by $R$, that is,
the kernel of the map
$\varphi \colon \langle \alpha,\beta,\gamma,\delta\rangle \to
\pi_1^*(\Gamma_{n,p,q})$ is the normal subgroup generated by the words in $R$.
Suppose $\sigma=\eta_1^{\epsilon_1}\cdots \eta_\ell^{\epsilon_\ell}$
is null-homotopic in $\gnpq$, where each $\eta_i\in \{ \alpha,\beta,\gamma, \delta\}$ and $\epsilon_i\in\{\pm 1\}$.
Let $\sigma_0=\sigma, \sigma_1,\dots,\sigma_k$ be a sequence of cycles witnessing this, i.e., each $\sigma_{i+1}$
differs from $\sigma_i$ by the insertion/deletion of a four-cycle in $\gnpq$, or a
two-cycle, and $\sigma_k$ is the trivial cycle at $v$. Let $\sigma_i^*$ be the result of applying the above algorithm of adjustment
to express $\sigma_i$ as a word in $\{ \alpha,\beta,\gamma,\delta\}$. Thus,
every part $u_0,u_1,\dots,u_m$ of the path $\sigma_i$ for which
$u_0,u_m$ are in the boundary blocks of one of the $12$ tiles, but
$u_1,\dots, u_{m-1}$ are in the interior of this tile, is replaced by a path
in this tile which stays in the boundary blocks. $\sigma_i^*$
is the path in $\langle \alpha,\beta,\gamma,\delta\rangle$ which is
equivalent to this boundary-block path in the natural manner. Since
$\sigma_i^*$ differs from $\sigma_i$ by a combination of four-cycles (or two-cycles), and since
it is easy to check that all four-cycles in $\gnpq$ are part of a single tile,
it is straightforward to check that $\sigma_{i+1}^*$ and $\sigma_i^*$
differ by either the insertion/deletion of a canceling pair such as $\alpha \alpha^{-1}$,
or the insertion/deletion of one of the words in $R$. Since $\sigma_k$
is the trivial word, we must have that $\sigma_0$ is in the normal subgroup
generated by $R$.
\end{proof}

\section{Negative conditions for graph homomorphisms} \label{sec:nc}

The following result is the general ``negative condition'' sufficient
to give the non-existence of a continuous graph homomorphism from
$\fzs$ to the graph $\Gamma$.

\begin{thmn}[Homotopy-based negative condition for graph homomorphisms] \label{thm:neghom}
Let $\Gamma$ be a graph. Suppose that for every $N > 0$ there are relatively prime integers $p,
q > N$ such that for every $p$-cycle $\gamma$ of $\Gamma$,
$\pi_1^*(\gamma^q)$ is not a $p^{\text{th}}$ power in
$\pi_1^*(\Gamma)$. Then there does not exist a continuous graph homomorphism
$\varphi \colon F(2^{\Z^2}) \rightarrow \Gamma$.
\end{thmn}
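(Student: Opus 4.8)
The plan is to apply the negative direction of the Twelve Tiles Theorem (Corollary~\ref{cor:graphthm}, part (2)$\Rightarrow$(1) contrapositive). It suffices to show that under the stated hypothesis there are relatively prime $p, q$ (arbitrarily large) such that $\gnpq$ admits \emph{no} graph homomorphism into $\Gamma$; taking $n = 1$ this rules out (2) of Corollary~\ref{cor:graphthm} and hence rules out the continuous graph homomorphism. So fix $N$, let $p, q > N$ be the relatively prime pair provided by the hypothesis, and suppose toward a contradiction that $g \colon \gnpq \to \Gamma$ is a graph homomorphism.

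The key step is to identify three distinguished cycles in $\gnpq$ and track their images under the induced map on reduced homotopy groups. As in the proof of Theorem~\ref{thm:pcn}, fix the base vertex $v$ to be (a corner vertex of) the $R_\times$ block, and let $\gamma$ be the cycle at $v$ running along the top edges of $R_\times$ and $R_c$ (length $p$), let $\delta$ be the cycle at $v$ running along the top edges of $R_\times$ and $R_d$ (length $q$), and let $\alpha$ be the cycle at $v$ running down the left edges of $R_\times$ and $R_a$ (length $p$). In the long horizontal tile $\Tcqadpa$ the boundary cycle $\gamma^q \alpha \delta^{-p} \alpha^{-1}$ bounds a region filled by a grid of $4$-cycles, so $\pi_1^*$ of this boundary cycle is trivial in $\pi_1^*(\gnpq)$; applying $g_* \colon \pi_1^*(\gnpq) \to \pi_1^*(\Gamma)$ and using that $g_*$ is a group homomorphism, we get $\pi_1^*(g(\gamma)^q) = \pi_1^*(g(\alpha)^p \cdot g(\alpha)^{-p} \cdot \text{(conjugate)})$ — more carefully, $\pi_1^*(g(\gamma)^q)$ and $\pi_1^*(g(\delta)^p)$ are conjugate, hence (since we only care whether it is a $p$-th power, and being a $p$-th power is conjugation-stable) it suffices to note $\pi_1^*(g(\delta)^p)$ is visibly a $p$-th power in $\pi_1^*(\Gamma)$. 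But $g(\gamma)$ is a $p$-cycle of $\Gamma$ (the image of the $p$-cycle $\gamma$), so by hypothesis $\pi_1^*(g(\gamma)^q)$ is \emph{not} a $p$-th power in $\pi_1^*(\Gamma)$ — contradiction.

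I would organize the writeup as: (i) reduce to showing $\gnpq$ has no homomorphism into $\Gamma$ for suitable $p,q$, via Corollary~\ref{cor:graphthm}; (ii) recall the cycles $\alpha, \gamma, \delta$ and the relation $\pi_1^*(\gamma^q \alpha \delta^{-p} \alpha^{-1}) = 1$ holding in $\pi_1^*(\gnpq)$, which follows from the tile $\Tcqadpa$ being a planar region tiled by $4$-cycles (this is exactly the homotopy bookkeeping already done in Theorem~\ref{thm:pcn} and Proposition~\ref{prop:tilehom}); (iii) push forward along $g_*$ to obtain that $\pi_1^*(g(\gamma))^q$ is conjugate in $\pi_1^*(\Gamma)$ to the $p$-th power $\pi_1^*(g(\delta))^p$, hence is itself a $p$-th power up to conjugacy; (iv) invoke that being a $p$-th power is preserved under conjugation and derive the contradiction with the hypothesis applied to the $p$-cycle $g(\gamma)$.

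The main obstacle is making precise the claim that the boundary relation $\gamma^q \sim \alpha \delta^p \alpha^{-1}$ holds modulo square-cycles in $\gnpq$ — i.e. that the long horizontal tile $\Tcqadpa$ is, homotopically, just a disc whose boundary word is $\gamma^q \alpha \delta^{-p} \alpha^{-1}$ and whose interior $2$-cells are all $4$-cycles. This requires checking that all the internal $4$-cycles of the grid graph $\Gcqadpa$ descend to $4$-cycles of $\gnpq$ (so they lie in the normal subgroup $N$ defining $\pi_1^*$) and that successively contracting them reduces the boundary word to the trivial word; this is a routine but slightly delicate induction on the grid, entirely analogous to the winding-number computation in the proof of Theorem~\ref{thm:pcn}. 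A secondary point to be careful about is the conjugacy: $\pi_1^*(g(\gamma)^q)$ is only conjugate, not equal, to $\pi_1^*(g(\delta)^p)$, so the hypothesis must be (and is) stated so that the conclusion "not a $p$-th power" is conjugation-invariant, which it is since $x = y^p \Leftrightarrow cxc^{-1} = (cyc^{-1})^p$.
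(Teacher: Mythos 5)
Your proposal is correct and follows essentially the same route as the paper: reduce via Corollary~\ref{cor:graphthm} to the non-existence of a homomorphism $\gnpq \to \Gamma$, use the boundary cycle $\gamma^q \alpha \delta^{-p} \alpha^{-1}$ of the long horizontal tile $\Tcqadpa$ (which is null-homotopic because the tile is a grid of $4$-cycles), and deduce that $\pi_1^*(\gamma^q)$ is a $p$-th power in $\pi_1^*(\Gamma)$, contradicting the hypothesis applied to the $p$-cycle $\gamma$. The only cosmetic difference is your detour through conjugation-invariance of the ``is a $p$-th power'' property; the paper skips this by writing $\pi_1^*(\gamma^q) = \pi_1^*(\alpha\delta^p\alpha^{-1}) = \pi_1^*(\alpha\delta\alpha^{-1})^p$ directly, which exhibits $\pi_1^*(\gamma^q)$ as a literal $p$-th power (of the conjugate $\pi_1^*(\alpha\delta\alpha^{-1})$) rather than an element merely conjugate to one. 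Both give the same conclusion, and the paper likewise does not belabor the ``bounded region of $4$-cycles implies trivial in $\pi_1^*$'' step you flag as the main obstacle, since it is the same bookkeeping already used in the proof of Theorem~\ref{thm:pcn}.
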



\begin{proof}
We prove the contrapositive. So let $\varphi : F(2^{\Z^2}) \rightarrow
\Gamma$ be a continuous graph homomorphism. Fix any $n \geq 1$. By
Corollary \ref{cor:graphthm} there is $N$ so that for all
integers $p, q > N$ there is a graph homomorphism $\theta :
\Gamma_{n,p,q} \rightarrow \Gamma$. Fix such a pair $(p, q)$ with $p,q$ relatively prime. Consider
the rectangular grid graph given by the long tile $T_{c^q a = a d^p}$
(see Figure \ref{fig:Gamma-npq-horiz}). Recall that this tile has $q$
many copies of $R_c$ blocks on its top boundary with each $R_c$ block occurring
between two copies of $R_\times$ bocks, a single $R_a$ block on its left and right
boundaries, and has $p$ many copies of $R_d$ blocks along its bottom boundary
with each $R_d$ block occurring between two copies of $R_\times$ blocks. Let
$\sigma$ be the cycle in $T_{c^q a = a d^p}$ which begins and ends
at the vertex $v$ which is the upper-left vertex  of the upper-left
copy of $R_\times$, and
goes clock-wise through all of the
upper left corners of the labeled blocks, and which traces out a
rectangle (see Figure~\ref{fig:Gamma-npq-horiz-3}). Let $\sigma'=\theta\circ \sigma$ be the corresponding
cycle in $\Gamma$.

The arrangement of the boundary blocks tells us that $\sigma'$ has the form $\gamma^q \alpha
\delta^{-p} \alpha^{-1}$, where $\gamma$ and $\delta$ are the images
of the horizontal paths from (the upper-left vertices of)
$R_\times$ to $R_\times$ going through
$R_c$ and $R_d$, respectively, and $\alpha$ is the image of the
vertical path from $R_\times$ to $R_\times$ going through $R_a$ (see Figure~\ref{fig:Gamma-npq-horiz-3}). In
particular, $\gamma$ is a $p$-cycle. Clearly $\pi_1^*(\sigma) = 1$ so
$\pi_1^*(\sigma') = 1$ as well and thus $\pi_1^*(\gamma^q) =
\pi_1^*(\alpha \delta^p \alpha^{-1}) = \pi_1^*(\alpha \delta
\alpha^{-1})^p$.
\end{proof}

The next theorem, which is a consequence of Theorem~\ref{thm:neghom},
also gives a ``negative condition'' for the non-existence of
continuous graph homomorphisms into $\Gamma$.
This result is frequently easier to apply than
Theorem~\ref{thm:neghom}, and has the advantage of
referring directly to the graph $\Gamma$ and not the homotopy
group $\pi_1^*(\Gamma)$. A {\em weighting} of $\Gamma$, or a {\em weight function} for $\Gamma$, is
a function $w: \tilde{E}(\Gamma)\to \Z$, where
$$ \tilde{E}(\Gamma)=\{(u,v) \colon \{u,v\}\in E(\Gamma)\}, $$
such that for all $(u,v)\in \tilde{E}(\Gamma)$, $w(u,v)+w(v,u)=0$. For a weighting $w \colon \tilde{E}(\Gamma) \rightarrow \Z$ and a
path $\gamma = (v_0, \ldots, v_n)$ in $\Gamma$, we can extend the function $w$ by setting 
$$w(\gamma) = \sum_{i=0}^{n-1} w(v_i, v_{i+1}).$$ 

\begin{thmn}[Weighting-based negative condition for graph homomorphisms]
\label{thm:negweight}
Let $\Gamma$ be a graph.  Assume that for infinitely many integers $p$ and every
$p$-cycle $\gamma$ there exists a weighting $w$ such that $w(\gamma)$ is
not divisible by $p$ and $w(\sigma) = 0$ for all $4$-cycles
$\sigma$. Then there does not exist a
continuous graph homomorphism $\varphi \colon F(2^{\Z^2})
\rightarrow \Gamma$.
\end{thmn}

\begin{proof}
We again prove the contrapositive. So suppose there is a continuous
graph homomorphism. By Theorem \ref{thm:neghom}, there is $N > 0$ such
that for all relatively prime $p, q > N$ there is a $p$-cycle $\gamma$ in $\Gamma$
such that $\pi_1^*(\gamma^q)$ is a $p^{\text{th}}$ power in
$\pi_1^*(\Gamma)$. Fix such a pair $(p, q)$, let $\gamma$ be a
$p$-cycle, and let $\tau$ be a cycle satisfying $\pi_1^*(\gamma^q) =
\pi_1^*(\tau^p)$. Fix any weight $w$ such that $w(\sigma) = 0$ for all
$4$-cycles $\sigma$. Then $w(\sigma) = 0$ for all square-cycles. This
implies that $w$ induces a group homomorphism $w : \pi_1^*(\Gamma)
\rightarrow \Z$. Therefore $q \cdot w(\gamma) = w(\gamma^q) =
w(\tau^p) = p \cdot w(\tau)$. Since $\gcd(p,q) = 1$, we conclude
$w(\gamma)$ is divisible by $p$.
\end{proof}

\begin{remn}
Theorem~\ref{thm:negweight}  generalizes to weight functions
$w \colon \tilde{E}(\Gamma) \to A$ for any abelian group $A$.
In applying Theorem~\ref{thm:negweight} we frequently
use a fixed weighting $w$ on the graph $\Gamma$ (that is,
$w$ does not depend on $p$ and the $p$-cycle $\gamma$).
\end{remn}

The next example shows that we may use Theorem~\ref{thm:negweight}
to give yet another proof that the continuous chromatic number
of $\fzs$ is greater than $3$. Note that a proper $3$-coloring of $\fzs$
is equivalent to a graph homomorphism from $\fzs$ to $K_3$.

\begin{exn} \label{ex:Kthree}
There is no continuous graph homomorphism $\varphi \signatureSep F(2^{\Z^2}) \to K_3$,
where $K_3$ is the complete graph of size 3.
\end{exn}
\begin{figure}[htbp] 
\begin{center}
\begin{tikzpicture}
\begin{scope}[every node/.style={nodeStyle, minimum size=3.5ex}]
    \node (n1) at (90:1) {};
    \node (n2) at (330:1) {};
    \node (n3) at (210:1) {};
\end{scope}

\draw[-stealth] (n1) -- (n2) node[midway, shift={(.2,.2)}] {0};
\draw[-stealth] (n2) -- (n3) node[midway, shift={(0,-.3)}] {0};
\draw[-stealth] (n3) -- (n1) node[midway, shift={(-.2,.2)}] {1};
\end{tikzpicture}
\end{center}
\caption{\label{fig:K3-labeling}The complete graph of order 3, $K_3$, with a weighting function.}
\end{figure}

\todo{Define trivial cycle}

\begin{proof}
We apply Theorem \ref{thm:negweight} with the weight function $w$
given in Figure~\ref{fig:K3-labeling}. Since $K_3$ contains no
non-trivial $4$-cycles, we trivially have that $w$ assigns $0$ to
every $4$-cycle. Now consider an odd integer $p > 1$ and consider a
$p$-cycle $\gamma$. Between any two consecutive instances of the
weight $1$ edge, there must be an even number of weight $0$
edges. Since $p$ is odd, it follows $w(\gamma) \neq 0$. Clearly also
$|w(\gamma)| < p$ since $\gamma$ has length $p$ and must use at least
one edge of weight $0$. Thus $w(\gamma)$ is not divisible by $p$. We
conclude there is no continuous homomorphism to $K_3$.
\end{proof}

The next theorem, extending the above argument slightly, gives a simple (in particular, computable)
sufficient condition on the graph $\Gamma$ for there not
to exist a continuous graph homomorphism from $\fzs$ to $\Gamma$.

\begin{thmn} [Simple negative condition] \label{thm:simpleneg}
There does not exist a continuous graph homomorphism from $\fzs$ to
the graph $\Gamma$ if $\Gamma$ satisfies either of the following:
\begin{enumerate}
\item \label{sna}
$\Gamma$ is properly $3$-colorable.
\item \label{snb}
$\Gamma$ has no non-trivial $4$-cycles.
\end{enumerate}
\end{thmn}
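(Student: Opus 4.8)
The plan is to derive both parts from Theorem~\ref{thm:negweight} (the weighting-based negative condition), since each hypothesis on $\Gamma$ yields, for infinitely many $p$, a weight function assigning $0$ to all $4$-cycles and a non-$p$-divisible value to every $p$-cycle. For part~(\ref{sna}): if $\Gamma$ is chromatically $3$-colorable, fix a graph homomorphism $c \colon \Gamma \to K_3$. The idea is to pull back the weighting $w_0$ on $K_3$ from Example~\ref{ex:Kthree} (Figure~\ref{fig:K3-labeling}). However, pulling back a directed weighting requires care about edge orientations, so instead I would argue directly: for any $p$-cycle $\gamma = (v_0, \ldots, v_p = v_0)$ in $\Gamma$, the image $c(\gamma)$ is a closed walk of length $p$ in $K_3$. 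As shown in Example~\ref{ex:Kthree}, for $p$ odd the $K_3$-weight $w_0(c(\gamma))$ is nonzero with $|w_0(c(\gamma))| < p$, hence not divisible by $p$. Define $w \colon E(\Gamma) \to \Z$ by choosing directions on $E(\Gamma)$ and setting $w(u,v) = \pm w_0(c(u), c(v))$ with the sign matching the chosen orientation relative to the (arbitrary but fixed) orientation of the corresponding $K_3$-edge; one checks $w(\gamma) = w_0(c(\gamma))$ for every cycle $\gamma$, and $w(\sigma) = w_0(c(\sigma)) = 0$ for every $4$-cycle $\sigma$ since $K_3$ has no nontrivial $4$-cycles and back-tracking contributes $0$. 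Applying Theorem~\ref{thm:negweight} with the infinite set of odd primes $p$ completes this case.

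For part~(\ref{snb}): if $\Gamma$ has no non-trivial $4$-cycles, then \emph{every} $4$-cycle is back-tracking, so \emph{any} weight function $w$ automatically satisfies $w(\sigma) = 0$ for all $4$-cycles. Thus it suffices to produce, for infinitely many $p$, a weight $w$ with $w(\gamma)$ not divisible by $p$ for every $p$-cycle $\gamma$. The clean choice is $w \equiv 1$ on every (oriented) edge; then for a closed walk $\gamma$ of length $p$, $w(\gamma)$ is the signed edge count, which has the same parity as $p$. Taking $p$ odd makes $w(\gamma)$ odd, hence nonzero; and $|w(\gamma)| \le p$ with equality only if $\gamma$ never back-tracks in direction — but even if $w(\gamma) = \pm p$ we are fine as long as we instead argue $w(\gamma) \ne 0$ and not divisible by $p$ for a \emph{different} reason. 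Actually the cleanest route here: if $\Gamma$ is bipartite it has no odd cycles at all, and the map $\varphi$ composed with any non-trivial element forces an odd cycle in $\Gamma_{n,p,q}$ (the long tile argument of Theorem~\ref{thm:pcn}) — but to stay within the stated framework I would instead just invoke Theorem~\ref{thm:negweight} directly with $w\equiv 1$: for odd $p$, every $p$-cycle has odd $w$-value, hence not divisible by the odd number $p$ \emph{unless} $w(\gamma) = \pm p$, which happens exactly when $\gamma$ is a non-back-tracking directed path around a $p$-cycle; to rule this out, perturb $w$ by assigning weight $1$ to one fixed edge $e_0$ lying on no $p$-cycle of length exactly $p$ with full orientation — this last point is the delicate bit.

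The main obstacle is precisely this orientation bookkeeping in part~(\ref{snb}): showing that for infinitely many odd $p$ one can choose a weight $w$ on $\Gamma$ (no non-trivial $4$-cycles) such that $p \nmid w(\gamma)$ for \emph{every} $p$-cycle $\gamma$ simultaneously. The resolution I would pursue: since $\Gamma$ has no non-trivial $4$-cycles, consider the reduced homotopy group $\pi_1^*(\Gamma) = \pi_1(\Gamma)$ (the normal subgroup $N$ generated by square-cycles is trivial, as all $4$-cycles are back-tracking). A $p$-cycle $\gamma$ has odd length $p$, so $\pi_1^*(\gamma)$ is non-trivial. If $\pi_1^*(\gamma^q)$ were a $p$-th power for relatively prime $p,q$, then in particular $\pi_1(\Gamma)$ would contain a suitable divisibility relation; abelianizing via any homomorphism $\pi_1(\Gamma) \to \Z$ that is nonzero on $\gamma$ (which exists whenever $\gamma$ represents a non-torsion class, and otherwise one uses a homomorphism to $\Z/\ell\Z$ for a suitable prime $\ell$) gives $q \cdot w(\gamma) = p \cdot w(\tau)$, forcing $p \mid w(\gamma)$, contradicting the choice. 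So the real content is: \emph{every} odd cycle in a graph with no non-trivial $4$-cycles represents a homology class detected by some weighting that vanishes on $4$-cycles. This follows because $H_1(\Gamma; \Z) = H_1(\Gamma^*; \Z)$ (attaching no new $2$-cells since $4$-cycles are trivial) is a free abelian group on which odd cycles have odd "length" under the augmentation-type weight, and one can always find an $\Z$- or $\Z/\ell\Z$-valued functional separating a given non-trivial class; I would phrase the final argument through Theorem~\ref{thm:neghom} rather than Theorem~\ref{thm:negweight} to sidestep the edge-orientation subtlety entirely, observing that $\pi_1^*(\gamma^q)$ has odd order-type obstruction while $p$-th powers in $\pi_1^*(\Gamma)$ — after mapping to $\Z$ via the length-mod-$2$ weight composed with abelianization — must be even, a contradiction for $p,q$ both odd.
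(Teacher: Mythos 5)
Your proof of part~(\ref{sna}) is correct, though it is more elaborate than necessary: since a chromatic $3$-coloring of $\Gamma$ is exactly a graph homomorphism $c \colon \Gamma \to K_3$, any continuous graph homomorphism $\fzs \to \Gamma$ composes with $c$ to give a continuous graph homomorphism $\fzs \to K_3$, which is already ruled out by Example~\ref{ex:Kthree}. There is no need to pull back the weighting and re-verify the hypotheses of Theorem~\ref{thm:negweight}; that is the approach the paper takes, and it avoids the orientation bookkeeping you flagged.

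Part~(\ref{snb}) has a genuine gap, and it stems from a misreading of the quantifier order in Theorem~\ref{thm:negweight}. The theorem requires: for infinitely many $p$, \emph{for every} $p$-cycle $\gamma$, \emph{there exists} a weight $w$ with $w(\gamma)$ not divisible by $p$ and $w$ vanishing on $4$-cycles. The weight is allowed to depend on $\gamma$. You instead try to produce a single weight $w$ working for all $p$-cycles at once (``a weight $w$ with $w(\gamma)$ not divisible by $p$ for every $p$-cycle $\gamma$''), and this stronger requirement is indeed unobtainable with $w \equiv 1$, since a simple $p$-cycle has $w(\gamma) = \pm p$. Your subsequent perturbation idea and the homotopy/abelianization sketch do not resolve this: the final parity argument (``$\pi_1^*(\gamma^q)$ has odd order-type obstruction while $p$-th powers $\ldots$ must be even'') breaks because $|\tau^p| \equiv |\tau| \pmod 2$ for odd $p$, so there is no parity contradiction if $\tau$ is itself odd. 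Once the quantifiers are read correctly, the argument becomes short: for a given odd $p$ and $p$-cycle $\gamma$, some edge $e_0$ must be traversed an odd number of times (otherwise $|\gamma| = p$ would be even); take $w$ to be $\pm 1$ on $e_0$ and $0$ elsewhere. Then $w(\gamma)$ is odd (a sum of an odd number of $\pm 1$'s) and $|w(\gamma)| < p$ since $\gamma$ must use at least two distinct edges (a closed walk of odd length cannot alternate on a single edge), so $p \nmid w(\gamma)$. The vanishing on $4$-cycles is automatic because all $4$-cycles in $\Gamma$ are back-tracking.
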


\begin{proof}
The first statement follows immediately from Example~\ref{ex:Kthree}.
Namely, note that $\Gamma$ is properly $3$-colorable if and only if there
is a graph homomorphism from $\Gamma$ to $K_3$.
If $\Gamma$ is properly $3$-colorable, then any continuous graph homomorphism
from $\fzs$ to $\Gamma$ would immediately give a continuous graph homomorphism
from $\fzs$ to $K_3$, a contradiction to Example~\ref{ex:Kthree}.
Suppose now $\Gamma$ has no non-trivial $4$-cycles.
Fix an arbitrary direction for each edge in the graph $\Gamma$.
Let $p>1$ be odd,
and let $\gamma$ be any $p$-cycle in $\Gamma$, say beginning and ending
at $v$. There must be some edge in $\Gamma$ which occurs in $\gamma$
an odd number of times, as otherwise the total length of $\gamma$, which is $p$,
would be even. Say the edge $e_0=(u,v)$ occurs an odd number of times in $\gamma$.
Let $w \colon \tilde{E}(\Gamma)\to \Z$ be the weighting function
where $w(u,v)=1$ (so $w(v,u)=-1$) and $w(e)=0$ for all other edges $e$ of $\Gamma$.
Clearly $w(\gamma) \neq 0$, as $e_0$ is traversed an odd number of times by $\gamma$.
Also, $|w(\gamma)| <p$ as $\gamma$ contains an edge $e$ with $w(e)=0$
(since $p$ is odd, $\gamma$ must at some point use an edge other than $e_0$).
So, $w(\gamma)$ is not divisible by $p$. As there are no
non-trivial $4$-cycles in $\Gamma$, the hypotheses of Theorem~\ref{thm:negweight}
are satisfied.
\end{proof}

The simple conditions of Theorem~\ref{thm:simpleneg} suffice
to rule out the existence of continuous graph homomorphisms from
$\fzs$ to $\Gamma$ for many graphs $\Gamma$. For example,
the next example considers the {\em Petersen} graph from graph theory.

\begin{figure}[htbp] 
\begin{tikzpicture}
    \begin{scope}[every node/.style={nodeStyle, minimum size=3.5ex},scale=1]
        \node (n0) at (90 :2) {0};
        \node (n1) at (18 :2) {1};
        \node (n2) at (306:2) {2};
        \node (n3) at (234:2) {0};
        \node (n4) at (162:2) {1};
        \node (n5) at (90 :1) {1};
        \node (n6) at (18 :1) {0};
        \node (n7) at (306:1) {0};
        \node (n8) at (234:1) {2};
        \node (n9) at (162:1) {2};
    \end{scope}

    \draw (n0) -- (n1);
    \draw (n1) -- (n2);
    \draw (n2) -- (n3);
    \draw (n3) -- (n4);
    \draw (n4) -- (n0);

    \draw (n0) -- (n5);
    \draw (n1) -- (n6);
    \draw (n2) -- (n7);
    \draw (n3) -- (n8);
    \draw (n4) -- (n9);

    \draw (n5) -- (n7);
    \draw (n7) -- (n9);
    \draw (n9) -- (n6);
    \draw (n6) -- (n8);
    \draw (n8) -- (n5);

\end{tikzpicture}
\caption[The Petersen graph $\Gamma_P$.]
{The Petersen graph $\Gamma_P$, with a proper 3-coloring of its nodes.}
\label{fig:petersen-graph}
\end{figure}
\begin{exn}\label{ex:petersen-graph}
There is no continuous graph homomorphism
$\varphi \colon F(2^{\Z^2}) \rightarrow \Gamma_P$, where $\Gamma_P$ is the Petersen graph.
\end{exn}
\begin{proof}
The Petersen graph is given in Figure~\ref{fig:petersen-graph}, along
with a proper 3-coloring (i.e a homomorphism into $K_3$). There can be no
continuous homomorphism into $\Gamma_P$ by (\ref{sna}) of Theorem~\ref{thm:simpleneg}.
\end{proof}

A less immediate example is given by the following ``clamshell'' graph
$\Gamma_C$ depicted in Figure~\ref{fig:jacksons-graph}.
This graph was chosen so as to have chromatic number strictly
greater than $3$ and to have several nontrivial $4$-cycles.

\begin{exn}
There is no continuous graph homomorphism $\varphi \colon F(2^{\Z^2}) \rightarrow \Gamma_C$.
\end{exn}

\begin{figure}[hbtp] 
\begin{tikzpicture}
    \begin{scope}[every node/.style={nodeStyle, minimum size=3.5ex},scale=1.8]
        \node (x)  at (0,0) {$x$};
        \node (x') at (6,0) {$x'$};
        \node (v0) at (1,0) {$v_0$};
        \node (v1) at (2,0) {$v_1$};
        \node (v2) at (3,0) {$v_2$};
        \node (v3) at (4,0) {$v_3$};
        \node (v4) at (5,0) {$v_4$};
        \node (u0) at (1,1) {$u_0$};
        \node (u1) at (2,2) {$u_1$};
        \node (u2) at (3,3) {$u_2$};
        \node (u3) at (4,4) {$u_3$};
        \node (u4) at (5,5) {$u_4$};
    \end{scope}

    \node[shift={(-.6,.2)}] at (u0) { 1};
    \node[shift={(-.8,.2)}] at (u1) {-1};
    \node[shift={(-.8,.2)}] at (u2) {-1};
    \node[shift={(-.8,.2)}] at (u3) {-1};
    \node[shift={(-.8,.2)}] at (u4) {-1};

    \node[shift={(.8,.2)}] at (u0) { 1};
    \node[shift={(.8,.2)}] at (u1) {-1};
    \node[shift={(.8,.2)}] at (u2) {-1};
    \node[shift={(.8,.2)}] at (u3) {-1};
    \node[shift={(.8,.2)}] at (u4) {-1};

    \node[shift={(0,-2)}] at (v2) {-1};

    \draw[-stealth]              (x) to[out=-45,in=225] (x');

    \draw[-stealth] (x) to[out=70,in=180] (u0);
    \draw[-stealth] (x) to[out=75,in=180] (u1);
    \draw[-stealth] (x) to[out=80,in=180] (u2);
    \draw[-stealth] (x) to[out=85,in=180] (u3);
    \draw[-stealth] (x) to[out=90,in=180] (u4);

    \draw[stealth-]              (u0) to[out=0,in=110,distance=1.6cm] (x');
    \draw[stealth-]              (u1) to[out=0,in=105,distance=1.7cm] (x');
    \draw[stealth-]              (u2) to[out=0,in=100,distance=1.8cm] (x');
    \draw[stealth-]              (u3) to[out=0,in=95, distance=1.9cm] (x');
    \draw[stealth-] (u4) to[out=0,in=90, distance=2.0cm] (x');

    \draw[stealth-]              (u0) -- (v0) node[pos=0, shift={(.2, -.6)}] {1};
    \draw[stealth-]              (u1) -- (v1) node[pos=0, shift={(.2, -.6)}] {1};
    \draw[stealth-]              (u2) -- (v2) node[pos=0, shift={(.2, -.6)}] {1};
    \draw[stealth-]              (u3) -- (v3) node[pos=0, shift={(.2, -.6)}] {1};
    \draw[stealth-]              (u4) -- (v4) node[pos=0, shift={(.2, -.6)}] {1};

    \draw[-stealth] (x)  -- (v0) node[midway, shift={(0, .2)}] {-1};
    \draw[-stealth] (v0) -- (v1) node[midway, shift={(0, .2)}] {-1};
    \draw[-stealth] (v1) -- (v2) node[midway, shift={(0, .2)}] {-1};
    \draw[-stealth] (v2) -- (v3) node[midway, shift={(0, .2)}] {1};
    \draw[-stealth] (v3) -- (v4) node[midway, shift={(0, .2)}] {1};
    \draw[-stealth]              (v4) -- (x') node[midway, shift={(0, .2)}] {1};

\end{tikzpicture}

\caption[The ``Clamshell'' graph $\Gamma_C$.]{The ``Clamshell'' graph $\Gamma_C$,
together with a weight function.
Observe the subtlety that the edges $x\to u_0$ and $x'\to u_0$ are labeled \emph{positive} 1,
unlike the edges from $x$ and $x'$ to the other $u_i$'s.}
\label{fig:jacksons-graph}
\end{figure}

\begin{proof}
A weighting $w$ of the graph is
shown in Figure~\ref{fig:jacksons-graph}.
We first check that $w(\sigma) = 0$ for all $4$-cycles. Note that
there is only one non-trivial $4$-cycle not containing $x$, namely
$(x', u_3, v_3, v_4)$, which has weight $0$. All other non-trivial
$4$-cycles must contain $x$ and thus must contain two vertices
adjacent to $x$. We quickly see the $4$-cycles $(x, u_m, x', u_k)$ for
$0 \leq m \neq k \leq 4$, all of which have weight $0$. The only edges
leaving $x$ we have not considered are $(x, v_0)$ and $(x, x')$. The
only non-trivial $4$-cycle containing $\{x, v_0\}$ and not $\{ x,x'\}$ is $(x, v_0, v_1,
u_1)$, and the only non-trivial $4$-cycles containing $\{x, x'\}$ are
$(x, x', u_0, v_0)$ and $(x, x', v_4, u_4)$. All of these have weight
$0$, proving the claim.

Now let $p$ be odd and let $\gamma$ be a $p$-cycle. Since all edges
have weight $\pm 1$ and $p$ is odd, $w(\gamma) \neq 0$. We claim that
$|w(\gamma)| < p$. Towards a contradiction suppose not.
Since $|w(e)|=1$ for all edges $e$ in $\Gamma_C$, we have that
$|w(\gamma)|=p$. By replacing $\gamma$ with $\gamma^{-1}$ if necessary,
we can assume $w(\gamma) = p$, meaning each edge traversed by $\gamma$
contributes $+1$ to $w(\gamma)$. Notice that since $\gamma$ is a cycle,
every vertex used by $\gamma$ must have a $+1$ incoming edge and a $+1$
outgoing edge. So $\gamma$ cannot use the vertex $v_2$ as no edge entering $v_2$ has weight $+1$.
Similarly, it cannot use $v_1$ or $v_3$ since the only edges with weight $+1$
entering these vertices emanate from $v_2$. Similarly, it cannot use $v_0$
or $v_4$, as their only $+1$ entering edges come from $v_1$ and $v_3$.
It also cannot use any $u_i$ for $1 \leq i \leq 4$ since the only $+1$
edge entering $u_i$ comes from $v_i$. Similarly, $\gamma$ cannot use $x'$
because every $+1$ edge entering $x'$ comes from a $u_i$
with $1 \leq i \leq 4$ or from $v_4$. Lastly, $\gamma$ cannot use $x$
because every $+1$ edge entering $x$ comes from a $u_i$ with $1 \leq i \leq 4$ or from $v_0$ or from $x'$. This means that $u_0$ is the only vertex
used by $\gamma$, a contradiction. We conclude that $w(\gamma)$ is
not divisible by $p$, so by Theorem \ref{thm:negweight} there is no
continuous graph homomorphism into $\Gamma_C$.
\end{proof}

While simpler to use, Theorem \ref{thm:negweight} is strictly weaker
than Theorem \ref{thm:neghom}. This will follow from considering the
``Klein bottle'' graph $\Gamma_K$ which is a quotient of the graph shown in
Figure~\ref{fig:klein-bottle-graph}. In that figure we have labeled
several vertices. $\Gamma_K$ is obtained by identifying vertices of
the same label (and identifying edges whose endpoints have identical
labels). Unlabeled vertices are not identified with one
another. Notice that $\Gamma_K$ would be a $5\times 5$ torus graph,
i.e., the Cayley graph of
$\scriptstyle{\left(\faktor{\Z}{5\Z}\right)^2}$, except that the
progression of nodes $a_i$ is downward ascending on the left side of
the graph and upward ascending on the right side of the graph.

\newcommand{\vph}{\vphantom{a_4}}
\begin{figure}[htbp]
\begin{tikzpicture}
        \matrix (m) [matrix of math nodes, row sep={5ex,between origins}, column sep={5ex,between origins}, nodes={nodeStyle, minimum size=3.5ex}] {
            x\vph & c_1  & c_2  & c_3  & c_4  & x\vph\\
            a_1   & \vph & \vph & \vph & \vph & a_4   \\
            a_2   & \vph & \vph & \vph & \vph & a_3   \\
            a_3   & \vph & \vph & \vph & \vph & a_2   \\
            a_4   & \vph & \vph & \vph & \vph & a_1   \\
            x\vph & c_1  & c_2  & c_3  & c_4  & x\vph \\
        };

    \foreach \i in {1,...,6} {
        \begin{scope} [start chain, every on chain/.style={join}]
            \foreach \j in {1,...,6} {
                \chainin (m-\j-\i);
            }
        \end{scope}

        \begin{scope} [start chain, every on chain/.style={join}]
            \foreach \j in {1,...,6} {
                \chainin (m-\i-\j);
            }
        \end{scope}
    }
\end{tikzpicture}
\caption[A graph $\Gamma_K$ such that $\Gamma^*_K$ is a cell-complex
homeomorphic to the Klein
bottle.]{\label{fig:klein-bottle-graph}$\Gamma_K$ is obtained as a
quotient of the above graph by identifying labeled vertices having the
same label (and by identifying edges whose endpoints have the same
labels). Unlabeled vertices are not identified. The $2$-dimensional
cell complex $\Gamma^*_K$ is homeomorphic to the Klein bottle.}
\end{figure}

\begin{exn}\label{ex:klein-bottle-graph}
There is no continuous graph homomorphism $\varphi \colon F(2^{\Z^2}) \rightarrow \Gamma_K$.
This follows from Theorem \ref{thm:neghom} but not Theorem \ref{thm:negweight}.
\end{exn}

\begin{proof}
Define the cycles $\alpha = (x, a_1, a_2, a_3, a_4, x)$ and
$\beta = (x, c_1, c_2, c_3, c_4, x)$.
Let $\pi_1^*(\Gamma_K)=\pi_1(\Gamma_K)/N$ be the reduced homotopy
group of $\Gamma_K$ with respect to the base point $x$.
We will use the identification $\pi_1^*(\Gamma_K) =
\pi_1(\Gamma_K^*)$. The $2$-dimensional complex $\Gamma_K^*$ can be
obtained by adding a $2$-cell for every $4$-cycle in the graph in
Figure~\ref{fig:klein-bottle-graph} and then taking the quotient. It
follows from this that $\Gamma_K^*$ is homeomorphic to the Klein
bottle. So $\pi_1^*(\Gamma_K) \cong \pi_1(\Gamma_K^*) \cong \G$ where
$\G = \langle g, h | g h = h g^{-1} \rangle$ and where the isomorphism
identifies $\pi_1^*(\alpha)$ with $g$ and $\pi_1^*(\beta)$ with
$h$. Notice that $g$ and $h$ have infinite order, that $\langle g
\rangle$ is normal in $\G$, and that $\G / \langle g \rangle = \{h^i
\langle g \rangle \colon i \in \Z\} \cong \Z$. It is also possible to show directly,
without appeal to $\Gamma_K^*$, that $\pi_1^*(\Gamma_K)\cong G$, by using an argument
similar to that of Proposition~\ref{prop:tilehom} to show that there are no more
relations in $\pi_1^*(\Gamma_K)$ than those of $G$.

We will apply Theorem \ref{thm:neghom}.
Fix any pair of relatively
prime integers $(p, q)$ with $p$ odd, and let $\gamma$ be a $p$-cycle
in $\Gamma_K$, say starting and ending at $v$. Note that any node in
$\Gamma_K$ is no more than four steps away from the node $x$.
Let $\delta$ be path of length $|\delta|\leq 4$ from $v$ to $x$.
Consider $\delta^{-1} \gamma \delta \in \pi_1(\Gamma_K)$ with
$|\delta^{-1} \gamma \delta|\leq |\gamma|+8$.

Notice that in the quotient graph $\Gamma_K$, the edge set can be
unambiguously partitioned into horizontal edges and vertical edges,
and every $4$-cycle has two horizontal edges and two vertical
edges. Also, it is straightforward to check that starting
at any vertex $v \in \Gamma_K$, following a vertical edge and then a
horizontal edge is equivalent modulo a $4$-cycle to following
a horizontal edge and then a vertical edge. It follows that there
is a cycle $\gamma'$ starting and ending at $x$ with $|\gamma'| =
|\delta^{-1} \gamma \delta|$,
$\pi_1^*(\gamma') = \pi_1^*(\delta^{-1} \gamma \delta)$
(recall every square-cycle is in $N$),
and with $\gamma' = \rho_1 \rho_2$ where $\rho_1$ consists of just
horizontal edge movements and $\rho_2$ just vertical edge movements.
Each of $\rho_1,\rho_2$ must be a cycle, since if $\rho_1$ did not end at $x$,
$\rho_1\rho_2$ would not be a cycle. It follows that there are $i,j\in \Z$
and  $\gamma''=\beta^i\alpha^j$
with $|\gamma''|\leq |\gamma'|$ and $\pi_1(\gamma'')=\pi_1(\gamma')$.
Note that in going from $\gamma'$ to $\gamma''$ we eliminate backtracking
which may reduce the cycle length. Since $p > 2$ and
$5|i| + 5|j| = |\gamma''| \leq |\gamma| + 8 = p+8$,
we must have $|i|, |j| < p$.

Towards a contradiction,
suppose there is a cycle $\tau$, starting and ending with $v$,
with $\gamma^q$ equal to $\tau^p$ in the reduced homotopy group of $\Gamma_K$
with base vertex $v$. It follows that
$\pi_1^*(\delta^{-1}\gamma^q \delta)=\pi_1^*(\delta^{-1}\tau^p \delta)=
\pi_1^*((\delta^{-1}\tau \delta)^p)$. Let $i', j'$ be such that
$\pi_1^*(\delta^{-1}\tau \delta)=\pi_1^*(\beta^{i'}\alpha^{j'})$.
Thus we have $(h^ig^j)^q=(h^{i'} g^{j'})^p$ in $G$.
In $G/\langle g \rangle$ we have
$h^{qi}\langle g \rangle = h^{pi'}\langle g \rangle$. Since $h$ has infinite order
in $G/\langle g \rangle$, this gives $p | (qi)$, and so $p|i$. Since $|i|<p$, this implies $i=0$,
which then gives $i'=0$ as well. So, $g^{qj}=g^{pj'}$ in $G$. Since $g$
has infinite order in $G$, the same argument now gives that $j=j'=0$.
Thus, $\pi_1^*(\delta^{-1} \gamma \delta)$ is the identity in $\pi_1^*(\Gamma_K)$.
This is a contradiction as $\delta^{-1} \gamma \delta$ is an odd-length cycle, and
$N$ consists of only even length cycles.
We conclude, via Theorem \ref{thm:neghom}, that there is no
continuous graph homomorphism from $F(2^{\Z^2})$ into $\Gamma_K$.

Finally, we claim that Theorem \ref{thm:negweight} cannot be applied to
$\Gamma_K$. Let $w$ be any weight which assigns weight $0$ to all
$4$-cycles. Then $w$ induces a group homomorphism $w :
\pi_1^*(\Gamma_K) \rightarrow \Z$. Since $\Z$ is abelian, $w$ must
nullify the commutator subgroup. As $\pi_1^*(\beta^{-1} \alpha \beta
\alpha^{-1}) = \pi_1^*(\alpha^{-2})$ (i.e. $h^{-1} g h g^{-1} =
g^{-2}$ in $\G$), we must have $- 2 w(\alpha) = w(\alpha^{-2}) = 0$
and thus $w(\alpha) = 0$. Now consider any $p \geq 5$. If $p$ is even
then choose a $p$-cycle $\gamma$ which simply alternates between two
adjacent vertices. Then $w(\gamma) = 0$ is divisible by $p$. If $p$ is
odd then let $\gamma$ be the $p$-cycle which first follows $\alpha$
once and then alternates between two adjacent vertices. Then
$w(\gamma) = w(\alpha) = 0$ is divisible by $p$.
\end{proof}

\section{A positive condition for graph homomorphisms} \label{sec:pc}

In this section we present a ``positive condition'' on the graph $\Gamma$
which is sufficient to guarantee the existence of a continuous graph homomorphism
from $\fzs$ to $\Gamma$. The positive condition, as was the negative condition,  is stated
in terms of the reduced homotopy group $\pi_1^*(\Gamma)$.

\begin{thmn}\label{thm:odd-length-cycle}
Let $\Gamma$ be a graph. Suppose $\Gamma$ contains an odd-length cycle $\gamma$ such that
$\pi_1^*(\gamma)$ has finite order in $\pi_1^*(\Gamma)$. Then there is a continuous graph
homomorphism $\varphi \colon F(2^{\Z^2}) \rightarrow \Gamma$.
\end{thmn}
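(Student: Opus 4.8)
The plan is to apply the positive direction of the Twelve Tiles Theorem in the form of Corollary~\ref{cor:graphthm}: it suffices to exhibit, for some $n<p,q$ with $(p,q)=1$, a graph homomorphism $g\colon\gnpq\to\Gamma$. I would take $n=1$, so that every $R_\times$ block is a single vertex; fixing its image $\star\in V(\Gamma)$, each $R_a$ or $R_c$ block together with its two bounding $R_\times$-vertices becomes under $g$ a closed walk of length $p$ based at $\star$, and each $R_b$ or $R_d$ block likewise a closed walk of length $q$. Recording these as $A,C$ (length $p$) and $B,D$ (length $q$), the twelve tiles are consistently handled once $A,B,C,D$ are chosen and each tile's unlabeled interior rectangle is filled compatibly with the surrounding blocks.

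First I would extract two consequences of the hypothesis. Write $L$ for the length of $\gamma_0$ and $r$ for the order of $\pi_1^*(\gamma_0)$ in $\pi_1^*(\Gamma)$; note $r\geq 2$ since $\pi_1^*(\gamma_0)\neq 1$ (homotopy equivalence and square-cycles preserve parity of length, and $L$ is odd). (i) For every odd $m\geq L$ there is a closed walk $\eta_m$ based at the root with $|\eta_m|=m$ and $\pi_1^*(\eta_m)=\pi_1^*(\gamma_0)$: concatenate $\gamma_0$ with $(m-L)/2$ copies of a back-and-forth walk along one edge of $\gamma_0$; in particular $\eta_m$ equals $\gamma_0$ modulo back-tracking. (ii) Since $\pi_1^*(\gamma_0^r)=1$, the closed walk $\gamma_0^r$ bounds a disk in $\Gamma^*$: there is a graph homomorphism from a fixed finite grid-tiled topological disk into $\Gamma$ whose boundary walk is $\gamma_0^r$; equivalently $\gamma_0^r$ is a product of conjugates of $4$-cycles and back-tracking $2$-cycles of $\Gamma$. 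Now choose $p$ and $q$ to be two distinct primes, both $\equiv 1\pmod{2r}$ and both larger than $L$ (and than a bound needed for the layout step below); such primes exist by Dirichlet's theorem, and they are odd, relatively prime, and satisfy $p\equiv q\equiv 1\pmod r$. Color $\gnpq$ by setting $g\equiv\star$ on the $R_\times$ vertices, coloring every $R_a$ and every $R_c$ block by the interior of $\eta_p$, and every $R_b$ and every $R_d$ block by the interior of $\eta_q$. Then $A=C=\eta_p$, $B=D=\eta_q$, and all of $\pi_1^*(A),\pi_1^*(B),\pi_1^*(C),\pi_1^*(D)$ equal $\pi_1^*(\gamma_0)$.

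It remains to fill the twelve tile interiors; by rotation/reflection invariance of graph homomorphisms it suffices to treat one tile from each category, say $\Tcaca$, $\Tdbdb$, $\Tdada$, $\Tcdacda$, and $\Tcqadpa$ as in Theorem~\ref{thm:pcn2}. The equal-parameter torus tiles $\Tcaca$ and $\Tdbdb$ are filled outright by the diagonal coloring $(i,j)\mapsto(\eta_p)_{(i+j)\bmod p}$ (resp.\ with $q$): this is automatically a graph homomorphism of the $p\times p$ (resp.\ $q\times q$) torus into $\Gamma$, and it restricts on the bottom row and left column exactly to the prescribed block colorings. For the mixed torus tile $\Tdada$ and the commutativity tiles, the boundary walk of the interior rectangle becomes, under $g$, a commutator of two of $A,B,C,D$; by (i) each such commutator reduces modulo back-tracking alone to $\gamma_0\gamma_0\gamma_0^{-1}\gamma_0^{-1}$, hence to the trivial walk, so these interiors admit a ``free'' filling (using only back-tracking, i.e.\ degenerate $4$-cycles). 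Finally, for the long tile $\Tcqadpa$ the interior boundary walk becomes $C^qAD^{-p}A^{-1}$, which by (i) equals $\gamma_0^{\,q-p}$ modulo back-tracking; this is \emph{not} back-tracking-trivial, but it is null in $\pi_1^*(\Gamma)$ since $r\mid(q-p)$, and it is filled by inserting $|q-p|/r$ copies of the disk of (ii) together with degenerate $4$-cycles elsewhere.

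The main obstacle I expect is this last fill: one must arrange the diagram witnessing $\pi_1^*$-triviality so that it occupies the \emph{specific} rectangular interior of the long tile (with dimensions controlled by $p$ and $q$) rather than an abstract disk. Concretely one propagates the top boundary $C^q$ (organized in blocks of period $p$) downward, row by row, to the bottom boundary $D^p$ (organized in blocks of period $q$), using the disk of (ii) to ``unwind'' one factor of $\gamma_0^r$ per few rows — a $\Gamma$-coloring analogue of algorithm~(III) in the proof of Theorem~\ref{thm:mtr}. Taking $p$ and $q$ large makes this feasible for dimensional reasons: the disk of (ii) has size bounded independently of $p,q$, while only $O(|q-p|/r)$ of them must be placed in a region of area $\sim p^2q$. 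Since the whole construction of $g$ is finitary, composing $g$ with the continuous $\Z^2$-homomorphism $F(2^{\Z^2})\to\gnpq$ of Theorem~\ref{thm:mtr} yields the required continuous graph homomorphism $\varphi\colon F(2^{\Z^2})\to\Gamma$.
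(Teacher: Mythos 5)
Your proposal follows essentially the same strategy as the paper's proof: apply the positive direction of the Twelve Tiles Theorem with $n=1$, send every $R_\times$ vertex to a fixed vertex $v_0$ on $\gamma$, and pad $\gamma$ with back-and-forth edges to closed walks of lengths $p$ and $q$ that color the $R_a,R_c$ and $R_b,R_d$ blocks. The differences are worth noting. First, the paper chooses a single large prime $p$ and sets $q=p+m$ where $m=|\pi_1^*(\gamma)|$; since $p>m$ is prime, $(p,q)=1$ holds automatically, and $m$ is forced to be even (odd-length cycles are never null in $\pi_1^*$), so $q$ is odd. This makes $q-p=m$ exactly, so only \emph{one} disk witnessing $\pi_1^*(\gamma^m)=1$ need be planted in each long tile. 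Your choice $p,q\equiv 1\pmod{2r}$ only gives $r\mid(q-p)$, so you must place $(q-p)/r$ disks, a number that can grow with $p,q$ — you correctly flag the resulting dimension count, but the paper's choice sidesteps it entirely.

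Second, your claim that the mixed torus and commutativity tiles admit a ``free filling'' because the boundary walk collapses to $[\gamma,\gamma]=e$ under back-tracking alone is the right intuition, but it is exactly the nontrivial part that the paper turns into an algorithm: the boundary of each of these tiles has equal numbers of forward and backward copies of $\gamma$ with alternating parity; these are paired and connected by winding paths of width $\ell+1$ (Figure~\ref{fig:parity-change-routing}), and the remaining region is filled by alternating $v_0,v_1$. Triviality of the boundary modulo back-tracking does not by itself yield a filling of a \emph{specified rectangle}; what makes it work is the explicit parity/routing argument. Similarly, for the long tiles the paper concentrates $\gamma^m$ into a single small ``hub'' rectangle and fills it by one row of $4$-cycle elimination followed by spur elimination (Figure~\ref{fig:hub-construction}) — this is the concrete realization of the disk-in-a-rectangle step you identify as the main obstacle. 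So your proposal is correct in outline and identifies the right difficulties, but the places you describe as ``free'' or as ``expected obstacles'' are precisely where the paper's proof supplies the work; the key simplification you missed is the choice $q=p+m$.
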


\begin{proof}
By Corollary~\ref{cor:graphthm}, it
will suffice to find a graph homomorphism
$\varphi\colon \Gamma_{1,p,q}\to \Gamma$ for some $p, q>1$ with $\gcd(p,q)=1$.
Note that $\varphi$ can be viewed as a ``coloring'' of the vertices
of $\Gamma_{1,p,q}$ by colors in $V(\Gamma)$ (with
adjacent vertices in $\Gamma_{1,p,q}$ getting colors which
are adjacent in $\Gamma$).
Let $\ell$ be the length of $\gamma$, and let $m=|\pi_1^*(\gamma)|$
be the order of $\pi_1^*(\gamma)$ in $\pi_1^*(\Gamma)$.
Let $p\gg m\ell$
be prime, and let $q=p+m$.
Because $p> m$, $p$
does not divide $m$, so $p$ does not divide $q$.  Thus, $\gcd(p,q)=1$.
This gives us our parameters $p,q$.

Before we proceed to the coloring of $\Gamma_{1,p,q}$, we make the additional observation
that $q$ must be odd.  Indeed, $\pi_1^*(\Gamma)$ is just
$\pi_1(\Gamma)$ modded out by conjugates of four-cycles in $\Gamma$,
and each such conjugate is a cycle of even length.  Thus, no odd
length cycle is null-homotopic.  Since $\gamma$ has odd length and
$\gamma^m$ (with length $m\ell$) is null-homotopic, $m$ must be even,
hence $q$ is odd.  So we have that both $p,q$ are odd.

Let $v_0, v_1 \in V(\Gamma)$ be the first two vertices in $\gamma$.
Extend $\gamma$ with alternating instances of $v_0,v_1$
until it is length $p$, and call this extension $\alpha$; we can do
this because $p$ is odd and $\gamma$ is of odd length. Similarly, extend
$\gamma$ with alternating instances of $v_0,v_1$ until it is of
length $q$, and call this extension $\beta$.  In our coloring,
$v_0$ will correspond to the $R_\times$ block (note that now the $R_\times$ block has dimensions $1\times 1$), $\alpha$ to $R_a,R_c$ blocks, and
$\beta$ to $R_b,R_d$ blocks. More precisely, we define the coloring first
on the boundaries of the $12$ tiles comprising $\Gamma_{1,p,q}$
as follows. Each occurrence of an $R_\times$ block is labeled with the vertex $v_0$.
Each occurrence of adjacent blocks of the form $R_\times R_a R_\times$ is colored
by the vertices in the cycle $\alpha$, with the order of the vertices occurring in $\alpha$ matching the upward order of the blocks $R_\times R_a R_\times$.
Similarly, the adjacent $R_\times R_b R_\times$ blocks are also
colored by vertices of $\beta$ in the same order.
Likewise, the adjacent $R_\times R_c R_\times$ and  $R_\times R_d R_\times$ blocks
are colored by the vertices in $\alpha$, $\beta$ respectively,
in the left-to-right order. This defines the coloring
$\varphi$ on the boundaries of the $12$ tiles in $\Gamma_{1,p,q}$, and
we have clearly respected the vertex identifications in $\Gamma_{1,p,q}$.

It is also instructive to note the following fact.
Consider, for the moment, the boundary of the long
tile~$\Tcqadpa$ in $\Gamma_{1,p,q}$ (see Figure~\ref{fig:Gamma-npq-horiz}).
Moving counterclockwise from the bottom-left corner and
disregarding the null-homotopic ``filler'' between copies of $\gamma$ consisting of two-cycles $(v_0,v_1,v_0)$,
we encounter $p$ copies of $\gamma$ corresponding to $R_d$, one copy
of $\gamma$ corresponding to $R_a$, $q$ \emph{backwards} copies of
$\gamma$ corresponding to $R_c$, and one final backwards copy of
$\gamma$ corresponding to $R_a$ again.  Thus, the boundary of this
tile is homotopic to $\gamma^{p-q}$.  But we have arranged that
$m=p-q$, so the boundary is actually null-homotopic.  Similarly, all
twelve tiles have null-homotopic boundaries, so our scheme has, so far,
avoided homotopy obstructions.

Having colored the boundaries of the tiles, it suffices to color each tile in
isolation.  The general strategy to get started will be to connect pairs of forwards and backwards copies of
$\gamma$ at the boundary of a tile with special paths, and then fill
the remaining space with alternating $v_0,v_1$.  This
process is illustrated in Figure~\ref{fig:parity-change-routing}. Note that in order for this strategy to work, we need the occurrences of the coloring by $\gamma$ to be of the same parity, as illustrated in Figure~\ref{fig:parity-change-routing}. More precisely, for each occurrence of the coloring by $\gamma$, whether it is a forwards or backwards copy, we concentrate on the position of the leading $v_0$ (one adjacent to $v_1$) and regard its parity to be the parity of the occurrence of $\gamma$. To apply this strategy, we assign the label of $v_0$ to the points in some chosen diagonally-moving path (the diagonal movement is why parity is important) that connects the $v_0$'s of the two instances of $\gamma$. This diagonally-moving path divides the tile into two pieces. We can fill one side by alternating $v_0$ and $v_1$. For the other side, the region consisting of points of distance at most $\ell$ to the diagonally-moving path can be filled in using $\gamma$, and the remaining portion is filled in by alternating $v_0$ and $v_1$. Thus the process in Figure~\ref{fig:parity-change-routing} works whenever the two occurrences of $\gamma$ are of the same parity but of the opposite orientation. Aside from that constraint, this process is quite flexible, as shown in Figure~\ref{fig:parity-change-routing}; a path of width $\ell+1$ and almost
arbitrary shape can connect two copies of $\gamma$ satisfying the above conditions. For future reference we refer to this connection algorithm as Algorithm {\sc (c)} and the trivial algorithm to fill in alternating $v_0$ and $v_1$ as Algorithm {\sc (t)}.

\begin{figure}[htbp]
\begin{tikzpicture}[scale=0.3]
    \colorlet{color8}{black!0}
    \colorlet{color9}{black!20}
    \colorlet{color3}{black!100}
    \colorlet{color5}{black!70}
    \colorlet{color1}{red!50}
    \colorlet{color2}{blue!30}
    \colorlet{color4}{blue!60}

\foreach \i/\j/\clrNum in {0/0/8, 0/1/9, 0/2/3, 0/3/5, 0/4/1, 0/5/2, 0/6/4, 0/7/8, 0/8/9, 0/9/8, 0/10/9, 0/11/8, 0/12/9, 0/13/8, 0/14/9, 0/15/8, 0/16/9, 0/17/8, 0/18/9, 0/19/8, 0/20/9, 0/21/8, 0/22/9, 0/23/8, 0/24/9, 0/25/8, 0/26/9, 0/27/8, 0/28/9, 0/29/8, 0/30/9, 0/31/8, 0/32/9, 0/33/8, 0/34/9, 0/35/8, 0/36/9, 0/37/8, 0/38/9, 0/39/8, 1/0/9, 1/1/3, 1/2/5, 1/3/1, 1/4/2, 1/5/4, 1/6/8, 1/7/9, 1/8/8, 1/9/9, 1/10/8, 1/11/9, 1/12/8, 1/13/9, 1/14/8, 1/15/9, 1/16/8, 1/17/9, 1/18/8, 1/19/9, 1/20/8, 1/21/9, 1/22/8, 1/23/9, 1/24/8, 1/25/9, 1/26/8, 1/27/9, 1/28/8, 1/29/9, 1/30/8, 1/31/9, 1/32/8, 1/33/9, 1/34/8, 1/35/9, 1/36/8, 1/37/9, 1/38/8, 1/39/9, 2/0/8, 2/1/9, 2/2/3, 2/3/5, 2/4/1, 2/5/2, 2/6/4, 2/7/8, 2/8/9, 2/9/8, 2/10/9, 2/11/8, 2/12/4, 2/13/8, 2/14/9, 2/15/8, 2/16/4, 2/17/8, 2/18/4, 2/19/8, 2/20/4, 2/21/8, 2/22/9, 2/23/8, 2/24/9, 2/25/8, 2/26/9, 2/27/8, 2/28/9, 2/29/8, 2/30/9, 2/31/8, 2/32/9, 2/33/8, 2/34/9, 2/35/8, 2/36/9, 2/37/8, 2/38/9, 2/39/8, 3/0/9, 3/1/3, 3/2/5, 3/3/1, 3/4/2, 3/5/4, 3/6/8, 3/7/9, 3/8/8, 3/9/9, 3/10/8, 3/11/4, 3/12/2, 3/13/4, 3/14/8, 3/15/4, 3/16/2, 3/17/4, 3/18/2, 3/19/4, 3/20/2, 3/21/4, 3/22/8, 3/23/9, 3/24/8, 3/25/9, 3/26/8, 3/27/9, 3/28/8, 3/29/9, 3/30/8, 3/31/9, 3/32/8, 3/33/9, 3/34/8, 3/35/9, 3/36/8, 3/37/9, 3/38/8, 3/39/9, 4/0/8, 4/1/9, 4/2/3, 4/3/5, 4/4/1, 4/5/2, 4/6/4, 4/7/8, 4/8/9, 4/9/8, 4/10/4, 4/11/2, 4/12/1, 4/13/2, 4/14/4, 4/15/2, 4/16/1, 4/17/2, 4/18/1, 4/19/2, 4/20/1, 4/21/2, 4/22/4, 4/23/8, 4/24/9, 4/25/8, 4/26/9, 4/27/8, 4/28/9, 4/29/8, 4/30/9, 4/31/8, 4/32/9, 4/33/8, 4/34/9, 4/35/8, 4/36/9, 4/37/8, 4/38/9, 4/39/8, 5/0/9, 5/1/8, 5/2/9, 5/3/3, 5/4/5, 5/5/1, 5/6/2, 5/7/4, 5/8/8, 5/9/4, 5/10/2, 5/11/1, 5/12/5, 5/13/1, 5/14/2, 5/15/1, 5/16/5, 5/17/1, 5/18/5, 5/19/1, 5/20/5, 5/21/1, 5/22/2, 5/23/4, 5/24/8, 5/25/9, 5/26/8, 5/27/9, 5/28/8, 5/29/9, 5/30/8, 5/31/9, 5/32/8, 5/33/9, 5/34/8, 5/35/9, 5/36/8, 5/37/9, 5/38/8, 5/39/9, 6/0/8, 6/1/9, 6/2/8, 6/3/9, 6/4/3, 6/5/5, 6/6/1, 6/7/2, 6/8/4, 6/9/2, 6/10/1, 6/11/5, 6/12/3, 6/13/5, 6/14/1, 6/15/5, 6/16/3, 6/17/5, 6/18/3, 6/19/5, 6/20/3, 6/21/5, 6/22/1, 6/23/2, 6/24/4, 6/25/8, 6/26/9, 6/27/8, 6/28/9, 6/29/8, 6/30/9, 6/31/8, 6/32/9, 6/33/8, 6/34/9, 6/35/8, 6/36/9, 6/37/8, 6/38/9, 6/39/8, 7/0/9, 7/1/8, 7/2/9, 7/3/8, 7/4/9, 7/5/3, 7/6/5, 7/7/1, 7/8/2, 7/9/1, 7/10/5, 7/11/3, 7/12/9, 7/13/3, 7/14/5, 7/15/3, 7/16/9, 7/17/3, 7/18/9, 7/19/3, 7/20/9, 7/21/3, 7/22/5, 7/23/1, 7/24/2, 7/25/4, 7/26/8, 7/27/9, 7/28/8, 7/29/9, 7/30/8, 7/31/9, 7/32/8, 7/33/9, 7/34/8, 7/35/9, 7/36/8, 7/37/9, 7/38/8, 7/39/9, 8/0/8, 8/1/9, 8/2/8, 8/3/9, 8/4/8, 8/5/9, 8/6/3, 8/7/5, 8/8/1, 8/9/5, 8/10/3, 8/11/9, 8/12/8, 8/13/9, 8/14/3, 8/15/9, 8/16/8, 8/17/9, 8/18/8, 8/19/9, 8/20/8, 8/21/9, 8/22/3, 8/23/5, 8/24/1, 8/25/2, 8/26/4, 8/27/8, 8/28/9, 8/29/8, 8/30/9, 8/31/8, 8/32/9, 8/33/8, 8/34/9, 8/35/8, 8/36/9, 8/37/8, 8/38/9, 8/39/8, 9/0/9, 9/1/8, 9/2/9, 9/3/8, 9/4/9, 9/5/8, 9/6/9, 9/7/3, 9/8/5, 9/9/3, 9/10/9, 9/11/8, 9/12/9, 9/13/8, 9/14/9, 9/15/8, 9/16/9, 9/17/8, 9/18/9, 9/19/8, 9/20/9, 9/21/8, 9/22/9, 9/23/3, 9/24/5, 9/25/1, 9/26/2, 9/27/4, 9/28/8, 9/29/9, 9/30/8, 9/31/9, 9/32/8, 9/33/9, 9/34/8, 9/35/9, 9/36/8, 9/37/9, 9/38/8, 9/39/9, 10/0/8, 10/1/9, 10/2/8, 10/3/9, 10/4/8, 10/5/9, 10/6/8, 10/7/9, 10/8/3, 10/9/9, 10/10/8, 10/11/9, 10/12/8, 10/13/9, 10/14/8, 10/15/9, 10/16/8, 10/17/9, 10/18/8, 10/19/9, 10/20/8, 10/21/9, 10/22/8, 10/23/9, 10/24/3, 10/25/5, 10/26/1, 10/27/2, 10/28/4, 10/29/8, 10/30/9, 10/31/8, 10/32/9, 10/33/8, 10/34/9, 10/35/8, 10/36/9, 10/37/8, 10/38/9, 10/39/8, 11/0/9, 11/1/8, 11/2/9, 11/3/8, 11/4/9, 11/5/8, 11/6/9, 11/7/8, 11/8/9, 11/9/8, 11/10/9, 11/11/8, 11/12/9, 11/13/8, 11/14/9, 11/15/8, 11/16/9, 11/17/8, 11/18/9, 11/19/8, 11/20/9, 11/21/8, 11/22/9, 11/23/8, 11/24/9, 11/25/3, 11/26/5, 11/27/1, 11/28/2, 11/29/4, 11/30/8, 11/31/9, 11/32/8, 11/33/9, 11/34/8, 11/35/9, 11/36/8, 11/37/9, 11/38/8, 11/39/9, 12/0/8, 12/1/9, 12/2/8, 12/3/9, 12/4/8, 12/5/9, 12/6/8, 12/7/9, 12/8/8, 12/9/9, 12/10/8, 12/11/9, 12/12/8, 12/13/9, 12/14/8, 12/15/9, 12/16/8, 12/17/9, 12/18/8, 12/19/9, 12/20/8, 12/21/9, 12/22/8, 12/23/9, 12/24/8, 12/25/9, 12/26/3, 12/27/5, 12/28/1, 12/29/2, 12/30/4, 12/31/8, 12/32/9, 12/33/8, 12/34/9, 12/35/8, 12/36/9, 12/37/8, 12/38/9, 12/39/8, 13/0/9, 13/1/8, 13/2/9, 13/3/8, 13/4/9, 13/5/8, 13/6/9, 13/7/8, 13/8/9, 13/9/8, 13/10/9, 13/11/8, 13/12/9, 13/13/8, 13/14/9, 13/15/8, 13/16/9, 13/17/8, 13/18/9, 13/19/8, 13/20/9, 13/21/8, 13/22/9, 13/23/8, 13/24/9, 13/25/8, 13/26/9, 13/27/3, 13/28/5, 13/29/1, 13/30/2, 13/31/4, 13/32/8, 13/33/9, 13/34/8, 13/35/9, 13/36/8, 13/37/9, 13/38/8, 13/39/9, 14/0/8, 14/1/9, 14/2/8, 14/3/9, 14/4/8, 14/5/9, 14/6/8, 14/7/9, 14/8/8, 14/9/9, 14/10/8, 14/11/9, 14/12/8, 14/13/9, 14/14/8, 14/15/9, 14/16/8, 14/17/9, 14/18/8, 14/19/9, 14/20/8, 14/21/9, 14/22/8, 14/23/9, 14/24/8, 14/25/9, 14/26/8, 14/27/9, 14/28/3, 14/29/5, 14/30/1, 14/31/2, 14/32/4, 14/33/8, 14/34/9, 14/35/8, 14/36/9, 14/37/8, 14/38/9, 14/39/8, 15/0/9, 15/1/8, 15/2/9, 15/3/8, 15/4/9, 15/5/8, 15/6/9, 15/7/8, 15/8/9, 15/9/8, 15/10/9, 15/11/8, 15/12/9, 15/13/8, 15/14/9, 15/15/8, 15/16/9, 15/17/8, 15/18/9, 15/19/8, 15/20/9, 15/21/8, 15/22/9, 15/23/8, 15/24/9, 15/25/8, 15/26/9, 15/27/8, 15/28/9, 15/29/3, 15/30/5, 15/31/1, 15/32/2, 15/33/4, 15/34/8, 15/35/9, 15/36/8, 15/37/9, 15/38/8, 15/39/9, 16/0/8, 16/1/9, 16/2/8, 16/3/9, 16/4/8, 16/5/9, 16/6/8, 16/7/9, 16/8/8, 16/9/9, 16/10/8, 16/11/9, 16/12/8, 16/13/9, 16/14/8, 16/15/9, 16/16/8, 16/17/9, 16/18/3, 16/19/9, 16/20/8, 16/21/9, 16/22/8, 16/23/9, 16/24/8, 16/25/9, 16/26/8, 16/27/9, 16/28/8, 16/29/9, 16/30/3, 16/31/5, 16/32/1, 16/33/2, 16/34/4, 16/35/8, 16/36/9, 16/37/8, 16/38/9, 16/39/8, 17/0/9, 17/1/8, 17/2/9, 17/3/8, 17/4/9, 17/5/8, 17/6/9, 17/7/8, 17/8/9, 17/9/8, 17/10/9, 17/11/8, 17/12/9, 17/13/8, 17/14/9, 17/15/8, 17/16/9, 17/17/3, 17/18/5, 17/19/3, 17/20/9, 17/21/8, 17/22/9, 17/23/8, 17/24/9, 17/25/8, 17/26/9, 17/27/8, 17/28/9, 17/29/8, 17/30/9, 17/31/3, 17/32/5, 17/33/1, 17/34/2, 17/35/4, 17/36/8, 17/37/9, 17/38/8, 17/39/9, 18/0/8, 18/1/9, 18/2/8, 18/3/9, 18/4/8, 18/5/9, 18/6/8, 18/7/9, 18/8/8, 18/9/9, 18/10/8, 18/11/9, 18/12/8, 18/13/9, 18/14/8, 18/15/9, 18/16/3, 18/17/5, 18/18/1, 18/19/5, 18/20/3, 18/21/9, 18/22/8, 18/23/9, 18/24/8, 18/25/9, 18/26/8, 18/27/9, 18/28/8, 18/29/9, 18/30/8, 18/31/9, 18/32/3, 18/33/5, 18/34/1, 18/35/2, 18/36/4, 18/37/8, 18/38/9, 18/39/8, 19/0/9, 19/1/8, 19/2/9, 19/3/8, 19/4/9, 19/5/8, 19/6/9, 19/7/8, 19/8/9, 19/9/8, 19/10/9, 19/11/8, 19/12/9, 19/13/8, 19/14/9, 19/15/3, 19/16/5, 19/17/1, 19/18/2, 19/19/1, 19/20/5, 19/21/3, 19/22/9, 19/23/8, 19/24/9, 19/25/8, 19/26/9, 19/27/8, 19/28/9, 19/29/8, 19/30/9, 19/31/8, 19/32/9, 19/33/3, 19/34/5, 19/35/1, 19/36/2, 19/37/4, 19/38/8, 19/39/9, 20/0/8, 20/1/9, 20/2/8, 20/3/9, 20/4/8, 20/5/9, 20/6/8, 20/7/9, 20/8/8, 20/9/9, 20/10/8, 20/11/9, 20/12/8, 20/13/9, 20/14/3, 20/15/5, 20/16/1, 20/17/2, 20/18/4, 20/19/2, 20/20/1, 20/21/5, 20/22/3, 20/23/9, 20/24/8, 20/25/9, 20/26/8, 20/27/9, 20/28/8, 20/29/9, 20/30/8, 20/31/9, 20/32/8, 20/33/9, 20/34/3, 20/35/5, 20/36/1, 20/37/2, 20/38/4, 20/39/8, 21/0/9, 21/1/8, 21/2/9, 21/3/8, 21/4/9, 21/5/8, 21/6/9, 21/7/8, 21/8/9, 21/9/8, 21/10/9, 21/11/8, 21/12/9, 21/13/3, 21/14/5, 21/15/1, 21/16/2, 21/17/4, 21/18/8, 21/19/4, 21/20/2, 21/21/1, 21/22/5, 21/23/3, 21/24/9, 21/25/8, 21/26/9, 21/27/8, 21/28/9, 21/29/8, 21/30/9, 21/31/8, 21/32/9, 21/33/3, 21/34/5, 21/35/1, 21/36/2, 21/37/4, 21/38/8, 21/39/9, 22/0/8, 22/1/9, 22/2/8, 22/3/9, 22/4/8, 22/5/9, 22/6/8, 22/7/9, 22/8/8, 22/9/9, 22/10/8, 22/11/9, 22/12/3, 22/13/5, 22/14/1, 22/15/2, 22/16/4, 22/17/8, 22/18/9, 22/19/8, 22/20/4, 22/21/2, 22/22/1, 22/23/5, 22/24/3, 22/25/9, 22/26/8, 22/27/9, 22/28/8, 22/29/9, 22/30/8, 22/31/9, 22/32/3, 22/33/5, 22/34/1, 22/35/2, 22/36/4, 22/37/8, 22/38/9, 22/39/8, 23/0/9, 23/1/8, 23/2/9, 23/3/8, 23/4/9, 23/5/8, 23/6/9, 23/7/8, 23/8/9, 23/9/8, 23/10/9, 23/11/8, 23/12/9, 23/13/3, 23/14/5, 23/15/1, 23/16/2, 23/17/4, 23/18/8, 23/19/9, 23/20/8, 23/21/4, 23/22/2, 23/23/1, 23/24/5, 23/25/3, 23/26/9, 23/27/3, 23/28/9, 23/29/3, 23/30/9, 23/31/3, 23/32/5, 23/33/1, 23/34/2, 23/35/4, 23/36/8, 23/37/9, 23/38/8, 23/39/9, 24/0/8, 24/1/9, 24/2/8, 24/3/9, 24/4/8, 24/5/9, 24/6/8, 24/7/9, 24/8/8, 24/9/9, 24/10/8, 24/11/9, 24/12/3, 24/13/5, 24/14/1, 24/15/2, 24/16/4, 24/17/8, 24/18/9, 24/19/8, 24/20/9, 24/21/8, 24/22/4, 24/23/2, 24/24/1, 24/25/5, 24/26/3, 24/27/5, 24/28/3, 24/29/5, 24/30/3, 24/31/5, 24/32/1, 24/33/2, 24/34/4, 24/35/8, 24/36/9, 24/37/8, 24/38/9, 24/39/8, 25/0/9, 25/1/8, 25/2/9, 25/3/8, 25/4/9, 25/5/8, 25/6/9, 25/7/8, 25/8/9, 25/9/8, 25/10/9, 25/11/8, 25/12/9, 25/13/3, 25/14/5, 25/15/1, 25/16/2, 25/17/4, 25/18/8, 25/19/9, 25/20/8, 25/21/9, 25/22/8, 25/23/4, 25/24/2, 25/25/1, 25/26/5, 25/27/1, 25/28/5, 25/29/1, 25/30/5, 25/31/1, 25/32/2, 25/33/4, 25/34/8, 25/35/9, 25/36/8, 25/37/9, 25/38/8, 25/39/9, 26/0/8, 26/1/9, 26/2/8, 26/3/9, 26/4/8, 26/5/9, 26/6/8, 26/7/9, 26/8/8, 26/9/9, 26/10/8, 26/11/9, 26/12/3, 26/13/5, 26/14/1, 26/15/2, 26/16/4, 26/17/8, 26/18/9, 26/19/8, 26/20/9, 26/21/8, 26/22/9, 26/23/8, 26/24/4, 26/25/2, 26/26/1, 26/27/2, 26/28/1, 26/29/2, 26/30/1, 26/31/2, 26/32/4, 26/33/8, 26/34/9, 26/35/8, 26/36/9, 26/37/8, 26/38/9, 26/39/8, 27/0/9, 27/1/8, 27/2/9, 27/3/8, 27/4/9, 27/5/8, 27/6/9, 27/7/8, 27/8/9, 27/9/8, 27/10/9, 27/11/8, 27/12/9, 27/13/3, 27/14/5, 27/15/1, 27/16/2, 27/17/4, 27/18/8, 27/19/9, 27/20/8, 27/21/9, 27/22/8, 27/23/9, 27/24/8, 27/25/4, 27/26/2, 27/27/4, 27/28/2, 27/29/4, 27/30/2, 27/31/4, 27/32/8, 27/33/9, 27/34/8, 27/35/9, 27/36/8, 27/37/9, 27/38/8, 27/39/9, 28/0/8, 28/1/9, 28/2/8, 28/3/9, 28/4/8, 28/5/9, 28/6/8, 28/7/9, 28/8/8, 28/9/9, 28/10/8, 28/11/9, 28/12/3, 28/13/5, 28/14/1, 28/15/2, 28/16/4, 28/17/8, 28/18/9, 28/19/8, 28/20/9, 28/21/8, 28/22/9, 28/23/8, 28/24/9, 28/25/8, 28/26/4, 28/27/8, 28/28/4, 28/29/8, 28/30/4, 28/31/8, 28/32/9, 28/33/8, 28/34/9, 28/35/8, 28/36/9, 28/37/8, 28/38/9, 28/39/8, 29/0/9, 29/1/8, 29/2/9, 29/3/8, 29/4/9, 29/5/8, 29/6/9, 29/7/8, 29/8/9, 29/9/8, 29/10/9, 29/11/8, 29/12/9, 29/13/3, 29/14/5, 29/15/1, 29/16/2, 29/17/4, 29/18/8, 29/19/9, 29/20/8, 29/21/9, 29/22/8, 29/23/9, 29/24/8, 29/25/9, 29/26/8, 29/27/9, 29/28/8, 29/29/9, 29/30/8, 29/31/9, 29/32/8, 29/33/9, 29/34/8, 29/35/9, 29/36/8, 29/37/9, 29/38/8, 29/39/9, -3/39/4, -3/38/2, -3/37/1, -3/36/5, -3/35/3, -3/34/9, -3/33/8} {
    \filldraw[fill=color\clrNum, draw=black] (\j,\i) rectangle ++(1,1);
}
\node[label=left:$\gamma\signatureSep\vphantom{pM}$, minimum size=0mm] at (33,-2.5) {};

\end{tikzpicture}
\caption{\label{fig:parity-change-routing}An illustration of how to ``connect''
two copies of $\gamma$ using a winding path.}
\end{figure} 

We now analyze the parity of occurrences of $\gamma$ on the boundaries
of the 12 tiles. For each tile, if we set the bottom-left corner to be
the origin, then going around the boundaries in the counter-clockwise
direction, we will see forwards copies of $\gamma$ along the bottom
and right boundaries and then backwards copies of $\gamma$ along the
top and left boundaries. As $p, q$ are odd, the parity of the forwards copies of $\gamma$
alternate, starting with the even parity for the first copy. The
parity of the backwards copies of $\gamma$ also alternate, ending with
the even parity for the last copy. Thus, if we pair up the first
forwards copy of $\gamma$ with the last backwards copy of $\gamma$,
and then the second forwards copy of $\gamma$ with the penultimate
backwards copy of $\gamma$, etc., we will be able to apply Algorithm {\sc (c)} to all these pairs. Obviously, we have to make
sure that the connecting paths will not intersect each other in order
for the algorithm to work, but this can be guaranteed by the fact that
$p, q\gg m\ell>\ell+1$, which guarantees enough room in between these
connecting paths.

For the first eight tiles, this will result in all copies of $\gamma$ being accounted for, and therefore these tiles will be completely colored with just nodes on $\gamma$.
However,
for the long tiles $\Tcqadpa$, $\Tdpacqa$, $\Tcaqcbp$, and
$\Tcbpcaq$, there will be $m$ unmatched copies of $\gamma$ in the same orientation, with alternating parities. We will describe a different algorithm to handle these copies of $\gamma$.

The algorithm starts by collecting the remaining copies of $\gamma$ and applying Algorithm {\sc (c)} to make a single copy of $\gamma^m$. Next, we identify a rectangular region in the uncolored part of the long tile and connect this single copy of $\gamma^m$ to a part of its boundary. It will be clear once we finish describing the algorithm that the dimensions of the rectangular region needed to perform the algorithm are much smaller than $p, q$ and therefore such a region can be easily accommodated in the uncolored part of the long tile. We will call this rectangular region the \emph{hub}. To be clear, the boundary of the hub consists of a copy of $\gamma^m$ together with alternating
$v_0,v_1$. The hub will be completely filled to concretely manifest the fact that $\gamma^m$ is null-homotopic.

It remains to describe the coloring within the hub. For definiteness we assume that the copy of $\gamma^m$ occurs on the top boundary of the hub in the backwards orientation.

To describe the algorithm for filling the hub we need to define two auxiliary algorithms. The first one is called the {\em spur elimination} algorithm, or Algorithm {\sc (s)}, and the second one is the {\em $4$-cycle elimination} algorithm, or Algorithm {\sc (f)}. A {\em spur} is just a $2$-cycle. Suppose that $(u_0, u_1, u_0)$ is a $2$-cycle in $\Gamma$, and three adjacent vertices $x_1, x_2=(1,0)\cdot x_1, x_3=(2,0)\cdot x_1$ in a grid graph have been colored by $u_0, u_1, u_0$, in this order. Then to ``eliminate" the $2$-cycle $(u_0, u_1, u_0)$ we just color the vertex above $x_2$ by $u_0$. In general, if a cycle $\eta$ is equivalent to the identity cycle in $\pi_1(\Gamma)$, then we can obtain the identity cycle from $\eta$ by successively eliminating the spurs that occur in $\eta$. As an example, suppose a row of the grid graph has been colored by the cycle $\sigma=(u_0, u_1, u_2, u_3, u_2, u_4, u_2, u_1. u_0)$, which is equivalent to the identity cycle in $\pi_1(\Gamma)$. We can repeat the spur elimination scheme to accomplish the following coloring:
\begin{center}
\begin{tikzpicture}
\matrix [matrix of math nodes, row sep={2em,between origins}, column sep={2em,between origins}] {
          &      &        &            &    u_0 &       &     &       &      \\
          &      &        &     (u_0 & u_1   &u_0) &     &       & \\
          &      &u_0&(u_1&u_2  &u_1)&u_0  &\\
          &u_0&u_1&(u_2&u_4    &u_2)&u_1&u_0&  \\
    u_0&u_1&(u_2&u_3   &u_2)  &u_4&u_2&u_1&u_0  \\
};
\end{tikzpicture}
\end{center}
In each step we marked out the spur that was eliminated.  In general, suppose we have a rectangular region whose boundary consists of such an $\eta$ plus alternating
$v_0,v_1$, where $v_0, v_1$ are the first two nodes in $\eta$. Then working with one spur at a time, we can successively produce a coloring of the row above by nodes in $\eta$ until all nodes of $\eta$ other than $v_0, v_1$ are no longer needed. This is the spur elimination algorithm or Algorithm {\sc (s)}. An example of the application of Algorithm {\sc (s)} to a much longer cycle equivalent to identity is shown in
Figure~\ref{fig:spur-elimination}.  Each spur eliminated shortens the
cycle by one vertex, but because we only ever shorten by a single
vertex at a time, we can always situate the shortened cycle above its
predecessor.
\begin{figure}[hbtp]
\begin{tikzpicture}[scale=0.27]
    \colorlet{color8}{black!0}
    \colorlet{color9}{black!20}
    \colorlet{color3}{black!100}
    \colorlet{color5}{black!70}
    \colorlet{color1}{red!50}
    \colorlet{color2}{blue!30}
    \colorlet{color4}{blue!60}
\foreach \i/\j/\clrNum in {0/0/8, 0/1/9, 0/2/3, 0/3/5, 0/4/1, 0/5/2, 0/6/4, 0/7/2, 0/8/1, 0/9/5, 0/10/3, 0/11/9, 0/12/3, 0/13/5, 0/14/1, 0/15/5, 0/16/3, 0/17/9, 0/18/3, 0/19/5, 0/20/1, 0/21/2, 0/22/4, 0/23/8, 0/24/9, 0/25/3, 0/26/5, 0/27/1, 0/28/2, 0/29/4, 0/30/8, 0/31/9, 0/32/8, 0/33/4, 0/34/2, 0/35/1, 0/36/5, 0/37/3, 0/38/9, 0/39/8, 0/40/4, 0/41/2, 0/42/1, 0/43/5, 0/44/3, 0/45/9, 0/46/8, 1/0/9, 1/1/8, 1/2/9, 1/3/3, 1/4/5, 1/5/1, 1/6/2, 1/7/1, 1/8/5, 1/9/3, 1/10/9, 1/11/3, 1/12/5, 1/13/1, 1/14/5, 1/15/3, 1/16/9, 1/17/3, 1/18/5, 1/19/1, 1/20/2, 1/21/4, 1/22/8, 1/23/9, 1/24/3, 1/25/5, 1/26/1, 1/27/2, 1/28/4, 1/29/8, 1/30/9, 1/31/8, 1/32/4, 1/33/2, 1/34/1, 1/35/5, 1/36/3, 1/37/9, 1/38/8, 1/39/4, 1/40/2, 1/41/1, 1/42/5, 1/43/3, 1/44/9, 1/45/8, 1/46/9, 2/0/8, 2/1/9, 2/2/8, 2/3/9, 2/4/3, 2/5/5, 2/6/1, 2/7/5, 2/8/3, 2/9/9, 2/10/3, 2/11/5, 2/12/1, 2/13/5, 2/14/3, 2/15/9, 2/16/3, 2/17/5, 2/18/1, 2/19/2, 2/20/4, 2/21/8, 2/22/9, 2/23/3, 2/24/5, 2/25/1, 2/26/2, 2/27/4, 2/28/8, 2/29/9, 2/30/8, 2/31/4, 2/32/2, 2/33/1, 2/34/5, 2/35/3, 2/36/9, 2/37/8, 2/38/4, 2/39/2, 2/40/1, 2/41/5, 2/42/3, 2/43/9, 2/44/8, 2/45/9, 2/46/8, 3/0/9, 3/1/8, 3/2/9, 3/3/8, 3/4/9, 3/5/3, 3/6/5, 3/7/3, 3/8/9, 3/9/3, 3/10/5, 3/11/1, 3/12/5, 3/13/3, 3/14/9, 3/15/3, 3/16/5, 3/17/1, 3/18/2, 3/19/4, 3/20/8, 3/21/9, 3/22/3, 3/23/5, 3/24/1, 3/25/2, 3/26/4, 3/27/8, 3/28/9, 3/29/8, 3/30/4, 3/31/2, 3/32/1, 3/33/5, 3/34/3, 3/35/9, 3/36/8, 3/37/4, 3/38/2, 3/39/1, 3/40/5, 3/41/3, 3/42/9, 3/43/8, 3/44/9, 3/45/8, 3/46/9, 4/0/8, 4/1/9, 4/2/8, 4/3/9, 4/4/8, 4/5/9, 4/6/3, 4/7/9, 4/8/3, 4/9/5, 4/10/1, 4/11/5, 4/12/3, 4/13/9, 4/14/3, 4/15/5, 4/16/1, 4/17/2, 4/18/4, 4/19/8, 4/20/9, 4/21/3, 4/22/5, 4/23/1, 4/24/2, 4/25/4, 4/26/8, 4/27/9, 4/28/8, 4/29/4, 4/30/2, 4/31/1, 4/32/5, 4/33/3, 4/34/9, 4/35/8, 4/36/4, 4/37/2, 4/38/1, 4/39/5, 4/40/3, 4/41/9, 4/42/8, 4/43/9, 4/44/8, 4/45/9, 4/46/8, 5/0/9, 5/1/8, 5/2/9, 5/3/8, 5/4/9, 5/5/8, 5/6/9, 5/7/3, 5/8/5, 5/9/1, 5/10/5, 5/11/3, 5/12/9, 5/13/3, 5/14/5, 5/15/1, 5/16/2, 5/17/4, 5/18/8, 5/19/9, 5/20/3, 5/21/5, 5/22/1, 5/23/2, 5/24/4, 5/25/8, 5/26/9, 5/27/8, 5/28/4, 5/29/2, 5/30/1, 5/31/5, 5/32/3, 5/33/9, 5/34/8, 5/35/4, 5/36/2, 5/37/1, 5/38/5, 5/39/3, 5/40/9, 5/41/8, 5/42/9, 5/43/8, 5/44/9, 5/45/8, 5/46/9, 6/0/8, 6/1/9, 6/2/8, 6/3/9, 6/4/8, 6/5/9, 6/6/8, 6/7/9, 6/8/3, 6/9/5, 6/10/3, 6/11/9, 6/12/3, 6/13/5, 6/14/1, 6/15/2, 6/16/4, 6/17/8, 6/18/9, 6/19/3, 6/20/5, 6/21/1, 6/22/2, 6/23/4, 6/24/8, 6/25/9, 6/26/8, 6/27/4, 6/28/2, 6/29/1, 6/30/5, 6/31/3, 6/32/9, 6/33/8, 6/34/4, 6/35/2, 6/36/1, 6/37/5, 6/38/3, 6/39/9, 6/40/8, 6/41/9, 6/42/8, 6/43/9, 6/44/8, 6/45/9, 6/46/8, 7/0/9, 7/1/8, 7/2/9, 7/3/8, 7/4/9, 7/5/8, 7/6/9, 7/7/8, 7/8/9, 7/9/3, 7/10/9, 7/11/3, 7/12/5, 7/13/1, 7/14/2, 7/15/4, 7/16/8, 7/17/9, 7/18/3, 7/19/5, 7/20/1, 7/21/2, 7/22/4, 7/23/8, 7/24/9, 7/25/8, 7/26/4, 7/27/2, 7/28/1, 7/29/5, 7/30/3, 7/31/9, 7/32/8, 7/33/4, 7/34/2, 7/35/1, 7/36/5, 7/37/3, 7/38/9, 7/39/8, 7/40/9, 7/41/8, 7/42/9, 7/43/8, 7/44/9, 7/45/8, 7/46/9, 8/0/8, 8/1/9, 8/2/8, 8/3/9, 8/4/8, 8/5/9, 8/6/8, 8/7/9, 8/8/8, 8/9/9, 8/10/3, 8/11/5, 8/12/1, 8/13/2, 8/14/4, 8/15/8, 8/16/9, 8/17/3, 8/18/5, 8/19/1, 8/20/2, 8/21/4, 8/22/8, 8/23/9, 8/24/8, 8/25/4, 8/26/2, 8/27/1, 8/28/5, 8/29/3, 8/30/9, 8/31/8, 8/32/4, 8/33/2, 8/34/1, 8/35/5, 8/36/3, 8/37/9, 8/38/8, 8/39/9, 8/40/8, 8/41/9, 8/42/8, 8/43/9, 8/44/8, 8/45/9, 8/46/8, 9/0/9, 9/1/8, 9/2/9, 9/3/8, 9/4/9, 9/5/8, 9/6/9, 9/7/8, 9/8/9, 9/9/8, 9/10/9, 9/11/3, 9/12/5, 9/13/1, 9/14/2, 9/15/4, 9/16/8, 9/17/9, 9/18/3, 9/19/5, 9/20/1, 9/21/2, 9/22/4, 9/23/8, 9/24/4, 9/25/2, 9/26/1, 9/27/5, 9/28/3, 9/29/9, 9/30/8, 9/31/4, 9/32/2, 9/33/1, 9/34/5, 9/35/3, 9/36/9, 9/37/8, 9/38/9, 9/39/8, 9/40/9, 9/41/8, 9/42/9, 9/43/8, 9/44/9, 9/45/8, 9/46/9, 10/0/8, 10/1/9, 10/2/8, 10/3/9, 10/4/8, 10/5/9, 10/6/8, 10/7/9, 10/8/8, 10/9/9, 10/10/8, 10/11/9, 10/12/3, 10/13/5, 10/14/1, 10/15/2, 10/16/4, 10/17/8, 10/18/9, 10/19/3, 10/20/5, 10/21/1, 10/22/2, 10/23/4, 10/24/2, 10/25/1, 10/26/5, 10/27/3, 10/28/9, 10/29/8, 10/30/4, 10/31/2, 10/32/1, 10/33/5, 10/34/3, 10/35/9, 10/36/8, 10/37/9, 10/38/8, 10/39/9, 10/40/8, 10/41/9, 10/42/8, 10/43/9, 10/44/8, 10/45/9, 10/46/8, 11/0/9, 11/1/8, 11/2/9, 11/3/8, 11/4/9, 11/5/8, 11/6/9, 11/7/8, 11/8/9, 11/9/8, 11/10/9, 11/11/8, 11/12/9, 11/13/3, 11/14/5, 11/15/1, 11/16/2, 11/17/4, 11/18/8, 11/19/9, 11/20/3, 11/21/5, 11/22/1, 11/23/2, 11/24/1, 11/25/5, 11/26/3, 11/27/9, 11/28/8, 11/29/4, 11/30/2, 11/31/1, 11/32/5, 11/33/3, 11/34/9, 11/35/8, 11/36/9, 11/37/8, 11/38/9, 11/39/8, 11/40/9, 11/41/8, 11/42/9, 11/43/8, 11/44/9, 11/45/8, 11/46/9, 12/0/8, 12/1/9, 12/2/8, 12/3/9, 12/4/8, 12/5/9, 12/6/8, 12/7/9, 12/8/8, 12/9/9, 12/10/8, 12/11/9, 12/12/8, 12/13/9, 12/14/3, 12/15/5, 12/16/1, 12/17/2, 12/18/4, 12/19/8, 12/20/9, 12/21/3, 12/22/5, 12/23/1, 12/24/5, 12/25/3, 12/26/9, 12/27/8, 12/28/4, 12/29/2, 12/30/1, 12/31/5, 12/32/3, 12/33/9, 12/34/8, 12/35/9, 12/36/8, 12/37/9, 12/38/8, 12/39/9, 12/40/8, 12/41/9, 12/42/8, 12/43/9, 12/44/8, 12/45/9, 12/46/8, 13/0/9, 13/1/8, 13/2/9, 13/3/8, 13/4/9, 13/5/8, 13/6/9, 13/7/8, 13/8/9, 13/9/8, 13/10/9, 13/11/8, 13/12/9, 13/13/8, 13/14/9, 13/15/3, 13/16/5, 13/17/1, 13/18/2, 13/19/4, 13/20/8, 13/21/9, 13/22/3, 13/23/5, 13/24/3, 13/25/9, 13/26/8, 13/27/4, 13/28/2, 13/29/1, 13/30/5, 13/31/3, 13/32/9, 13/33/8, 13/34/9, 13/35/8, 13/36/9, 13/37/8, 13/38/9, 13/39/8, 13/40/9, 13/41/8, 13/42/9, 13/43/8, 13/44/9, 13/45/8, 13/46/9, 14/0/8, 14/1/9, 14/2/8, 14/3/9, 14/4/8, 14/5/9, 14/6/8, 14/7/9, 14/8/8, 14/9/9, 14/10/8, 14/11/9, 14/12/8, 14/13/9, 14/14/8, 14/15/9, 14/16/3, 14/17/5, 14/18/1, 14/19/2, 14/20/4, 14/21/8, 14/22/9, 14/23/3, 14/24/9, 14/25/8, 14/26/4, 14/27/2, 14/28/1, 14/29/5, 14/30/3, 14/31/9, 14/32/8, 14/33/9, 14/34/8, 14/35/9, 14/36/8, 14/37/9, 14/38/8, 14/39/9, 14/40/8, 14/41/9, 14/42/8, 14/43/9, 14/44/8, 14/45/9, 14/46/8, 15/0/9, 15/1/8, 15/2/9, 15/3/8, 15/4/9, 15/5/8, 15/6/9, 15/7/8, 15/8/9, 15/9/8, 15/10/9, 15/11/8, 15/12/9, 15/13/8, 15/14/9, 15/15/8, 15/16/9, 15/17/3, 15/18/5, 15/19/1, 15/20/2, 15/21/4, 15/22/8, 15/23/9, 15/24/8, 15/25/4, 15/26/2, 15/27/1, 15/28/5, 15/29/3, 15/30/9, 15/31/8, 15/32/9, 15/33/8, 15/34/9, 15/35/8, 15/36/9, 15/37/8, 15/38/9, 15/39/8, 15/40/9, 15/41/8, 15/42/9, 15/43/8, 15/44/9, 15/45/8, 15/46/9, 16/0/8, 16/1/9, 16/2/8, 16/3/9, 16/4/8, 16/5/9, 16/6/8, 16/7/9, 16/8/8, 16/9/9, 16/10/8, 16/11/9, 16/12/8, 16/13/9, 16/14/8, 16/15/9, 16/16/8, 16/17/9, 16/18/3, 16/19/5, 16/20/1, 16/21/2, 16/22/4, 16/23/8, 16/24/4, 16/25/2, 16/26/1, 16/27/5, 16/28/3, 16/29/9, 16/30/8, 16/31/9, 16/32/8, 16/33/9, 16/34/8, 16/35/9, 16/36/8, 16/37/9, 16/38/8, 16/39/9, 16/40/8, 16/41/9, 16/42/8, 16/43/9, 16/44/8, 16/45/9, 16/46/8, 17/0/9, 17/1/8, 17/2/9, 17/3/8, 17/4/9, 17/5/8, 17/6/9, 17/7/8, 17/8/9, 17/9/8, 17/10/9, 17/11/8, 17/12/9, 17/13/8, 17/14/9, 17/15/8, 17/16/9, 17/17/8, 17/18/9, 17/19/3, 17/20/5, 17/21/1, 17/22/2, 17/23/4, 17/24/2, 17/25/1, 17/26/5, 17/27/3, 17/28/9, 17/29/8, 17/30/9, 17/31/8, 17/32/9, 17/33/8, 17/34/9, 17/35/8, 17/36/9, 17/37/8, 17/38/9, 17/39/8, 17/40/9, 17/41/8, 17/42/9, 17/43/8, 17/44/9, 17/45/8, 17/46/9, 18/0/8, 18/1/9, 18/2/8, 18/3/9, 18/4/8, 18/5/9, 18/6/8, 18/7/9, 18/8/8, 18/9/9, 18/10/8, 18/11/9, 18/12/8, 18/13/9, 18/14/8, 18/15/9, 18/16/8, 18/17/9, 18/18/8, 18/19/9, 18/20/3, 18/21/5, 18/22/1, 18/23/2, 18/24/1, 18/25/5, 18/26/3, 18/27/9, 18/28/8, 18/29/9, 18/30/8, 18/31/9, 18/32/8, 18/33/9, 18/34/8, 18/35/9, 18/36/8, 18/37/9, 18/38/8, 18/39/9, 18/40/8, 18/41/9, 18/42/8, 18/43/9, 18/44/8, 18/45/9, 18/46/8, 19/0/9, 19/1/8, 19/2/9, 19/3/8, 19/4/9, 19/5/8, 19/6/9, 19/7/8, 19/8/9, 19/9/8, 19/10/9, 19/11/8, 19/12/9, 19/13/8, 19/14/9, 19/15/8, 19/16/9, 19/17/8, 19/18/9, 19/19/8, 19/20/9, 19/21/3, 19/22/5, 19/23/1, 19/24/5, 19/25/3, 19/26/9, 19/27/8, 19/28/9, 19/29/8, 19/30/9, 19/31/8, 19/32/9, 19/33/8, 19/34/9, 19/35/8, 19/36/9, 19/37/8, 19/38/9, 19/39/8, 19/40/9, 19/41/8, 19/42/9, 19/43/8, 19/44/9, 19/45/8, 19/46/9, 20/0/8, 20/1/9, 20/2/8, 20/3/9, 20/4/8, 20/5/9, 20/6/8, 20/7/9, 20/8/8, 20/9/9, 20/10/8, 20/11/9, 20/12/8, 20/13/9, 20/14/8, 20/15/9, 20/16/8, 20/17/9, 20/18/8, 20/19/9, 20/20/8, 20/21/9, 20/22/3, 20/23/5, 20/24/3, 20/25/9, 20/26/8, 20/27/9, 20/28/8, 20/29/9, 20/30/8, 20/31/9, 20/32/8, 20/33/9, 20/34/8, 20/35/9, 20/36/8, 20/37/9, 20/38/8, 20/39/9, 20/40/8, 20/41/9, 20/42/8, 20/43/9, 20/44/8, 20/45/9, 20/46/8, 21/0/9, 21/1/8, 21/2/9, 21/3/8, 21/4/9, 21/5/8, 21/6/9, 21/7/8, 21/8/9, 21/9/8, 21/10/9, 21/11/8, 21/12/9, 21/13/8, 21/14/9, 21/15/8, 21/16/9, 21/17/8, 21/18/9, 21/19/8, 21/20/9, 21/21/8, 21/22/9, 21/23/3, 21/24/9, 21/25/8, 21/26/9, 21/27/8, 21/28/9, 21/29/8, 21/30/9, 21/31/8, 21/32/9, 21/33/8, 21/34/9, 21/35/8, 21/36/9, 21/37/8, 21/38/9, 21/39/8, 21/40/9, 21/41/8, 21/42/9, 21/43/8, 21/44/9, 21/45/8, 21/46/9, 22/0/8, 22/1/9, 22/2/8, 22/3/9, 22/4/8, 22/5/9, 22/6/8, 22/7/9, 22/8/8, 22/9/9, 22/10/8, 22/11/9, 22/12/8, 22/13/9, 22/14/8, 22/15/9, 22/16/8, 22/17/9, 22/18/8, 22/19/9, 22/20/8, 22/21/9, 22/22/8, 22/23/9, 22/24/8, 22/25/9, 22/26/8, 22/27/9, 22/28/8, 22/29/9, 22/30/8, 22/31/9, 22/32/8, 22/33/9, 22/34/8, 22/35/9, 22/36/8, 22/37/9, 22/38/8, 22/39/9, 22/40/8, 22/41/9, 22/42/8, 22/43/9, 22/44/8, 22/45/9, 22/46/8, 23/0/9, 23/1/8, 23/2/9, 23/3/8, 23/4/9, 23/5/8, 23/6/9, 23/7/8, 23/8/9, 23/9/8, 23/10/9, 23/11/8, 23/12/9, 23/13/8, 23/14/9, 23/15/8, 23/16/9, 23/17/8, 23/18/9, 23/19/8, 23/20/9, 23/21/8, 23/22/9, 23/23/8, 23/24/9, 23/25/8, 23/26/9, 23/27/8, 23/28/9, 23/29/8, 23/30/9, 23/31/8, 23/32/9, 23/33/8, 23/34/9, 23/35/8, 23/36/9, 23/37/8, 23/38/9, 23/39/8, 23/40/9, 23/41/8, 23/42/9, 23/43/8, 23/44/9, 23/45/8, 23/46/9} {
    \filldraw[fill=color\clrNum, draw=black] (\j,\i) rectangle ++(1,1);
}

\end{tikzpicture}
\caption{\label{fig:spur-elimination}A process for eliminating spurs in a cycle one at a time.}
\end{figure} 

Next we turn to the $4$-cycle elimination algorithm, or Algorithm {\sc (f)}. Suppose $\eta$ is conjugate to a $4$-cycle in $\pi_1(\Gamma)$, we may assume $\eta$ is of the form
\[
\eta=(w_0,w_1,\ldots,w_j,u_0,u_1,u_2,u_3,u_0,w_j,\ldots,w_1, w_0)
\]
where $\theta=(u_0,u_1,u_2,u_3, u_0)$ is the $4$-cycle that $\eta$ is conjugate to.  Now, if $\eta$ appears horizontally as colors assigned to some vertices in a grid graph, we can eliminate $\theta$ by the following coloring scheme:
\begin{center}
\begin{tikzpicture}
\matrix [matrix of math nodes, row sep={2.8em,between origins}, column sep={2.8em,between origins}] {
          &      &\ldots&w_{j-2}&w_{j-1}&w_j  &u_0&w_j  &w_{j-1}&w_{j-2}&\ldots & &\\
          &w_0&\ldots&w_{j-1}&w_j    &u_0&u_1&u_0&w_j    &w_{j-1}&\ldots&w_0& &\\
    w_0&w_1&\ldots&w_j    &u_0  &u_1&u_2&u_3&u_0  &w_j    &\ldots&w_1&w_0 \\
};
\end{tikzpicture}
\end{center}
We then proceed to eliminate the entire cycle $\eta$ by
spur eliminations after eliminating the $4$-cycle.

We are now ready to describe the coloring of the hub. First we note a
simple fact about the homotopy of cycles. Suppose $\eta=\alpha\beta$
is a cycle and $\eta'=\alpha\gamma\theta\gamma^{-1}\beta$ is a cycle
obtained from $\eta$ by adding a conjugate of a $4$-cycle
$\theta$. Then
$$ \eta'=(\alpha\gamma)\theta(\alpha\gamma)^{-1}\alpha\beta $$ is in
fact the product of a conjugate of a $4$-cycle with $\eta$. Now
consider the cycle $\gamma^m$. Because $\gamma^m$ is null-homotopic in
$\pi_1^*(\Gamma)$, there are 
conjugates of $4$-cycles $\lambda_1,\dots, \lambda_k$, all begin and end at $v_0$, such that
$\lambda_k\dots\lambda_1\gamma^m$ is equivalent to the identity cycle in $\pi_1(\Gamma)$.

Inside the hub, we will color a row with nodes in
$\lambda_k\cdots\lambda_2\lambda_1\gamma^m$, along with fillers.
Note that since $m$ is even, the total parity change
along $\lambda_k\cdots\lambda_2\lambda_1\gamma^m$ is $0$.
Then above
the row we perform the $4$-cycle elimination algorithm for each of
$\lambda_k,\dots, \lambda_1$, and connect $\gamma^m$ to the
boundary. Below the row we perform the spur elimination algorithm. The
result of the coloring is depicted in
Figure~\ref{fig:hub-construction}.

 \begin{figure}[h]
\begin{tikzpicture}[scale=0.06]

\draw (-100,0) rectangle (100,-110);
\draw (50,2) -- (50,-2);
\draw (85,2) --(85,-2);
\node at (70, 5) {$\gamma^m$};

\draw (-80, -20) -- (85,-20);
\draw (-80, -22) -- (-80, -18);
\draw (-50, -22) -- (-50, -18);
\draw (-10, -22) -- (-10, -18);
\draw (20, -22) -- (20, -18);
\draw (50, -22) -- (50, -18);
\draw (85, -22) -- (85, -18);

\node at (-65, -25) {$\lambda_k$};
\node at (-30, -25) {$\ldots$};
\node at (5, -25) {$\lambda_2$};
\node at (35, -25) {$\lambda_1$};
\node at (70, -25) {$\gamma^m$};

\draw[dotted] (-80,-20) -- (-65, -5) -- (-50, -20);
\draw[dotted] (-10,-20) -- (5, -5) -- (20, -20) -- (35,-5) -- (50, -20);
\draw[dotted] (50,0) -- (50,-20);
\draw[dotted] (85,0) -- (85,-20) --(0,-100) -- (-80, -20);

\node at (-65, -14) {\sc (f)};
\node at (5,-14) {\sc (f)};
\node at (35,-14) {\sc (f)};
\node at (68, -10) {\sc (c)};
\node at (0, -50) {\sc (s)};
\node at (-70, -70) {\sc (t)};
\node at (70, -70) {\sc (t)};

\end{tikzpicture}
\caption[Construction of a hub.]{\label{fig:hub-construction}Coloring within a hub.
We use conjugates of the four-cycles $\lambda_1,\lambda_2,\ldots \lambda_k$
to eliminate $\gamma^m$ when $\lambda_k\dots\lambda_1\gamma^{m}$ is equivalent to the identity in $\pi_1(\Gamma)$. The algorithm performed in each region is shown.}
\end{figure}

After performing all of these algorithms we fill the rest of the
tiles by alternating $v_0, v_1$. It is clear that we have described a
complete coloring of the 12 tiles by nodes in $\Gamma$ in a way that
adjacent vertices are colored by adjacent nodes in $\Gamma$. This
finishes the definition of a homomorphism $\Gamma_{1,p,q}\to \Gamma$.
\end{proof}

The sufficient condition of Theorem~\ref{thm:odd-length-cycle}
for the existence of a continuous homomorphism from $\FofZZ$ to a graph $\Gamma$
is a search problem suitable for a computer.
For example, Theorem~\ref{thm:odd-length-cycle} applies to two
well-studied and well-known graphs, the Chv\'atal graph and the
Gr\"otsch graph.

\begin{exn}
Let $\Gamma_C$ be the Chv\'atal graph and $\Gamma_G$ the Gr\"otsch graph.
There exist continuous graph homomorphisms
$\varphi_C\colon \FofZZ\to \Gamma_C$ and $\varphi_G\colon \FofZZ\to\Gamma_G$.
\end{exn}

\begin{proof}
We can establish that each graph $\Gamma\in \{\Gamma_{C}, \Gamma_G\}$ has a cycle $\gamma$ with
$\pi_1^*(\gamma)$ of order $2$ by coloring the nodes of a grid graph
with nodes of $\Gamma$, placing
$\gamma$ at the top of the grid graph and $\gamma^{-1}$ at the bottom, and
leaving the sides alternating between two colors.  The alternation
between two colors will contribute nothing to homotopy class of the
boundary of the box, whence $\gamma^2$ will be null-homotopic in $\pi_1^*(\Gamma)$.
The Chv\'atal and Gr\"otsch graphs are given along with their solutions in
Figures~\ref{fig:chvatal-graph} and \ref{fig:grotsch-graph}
respectively.
\begin{figure}[htbp]
\begin{tikzpicture}
    \begin{scope}[every node/.style={nodeStyle, minimum size=3.5ex}]
        \node (n0) at (-2, 2) {0};
        \node (n1) at (2, 2) {1};
        \node (n2) at (2, -2) {2};
        \node (n3) at (-2, -2) {3};
        \node (n4) at (-0.500000000000000, 1) {4};
        \node (n5) at (0.500000000000000, 1) {5};
        \node (n6) at (1, 0.500000000000000) {6};
        \node (n7) at (1, -0.500000000000000) {7};
        \node (n8) at (0.500000000000000, -1) {8};
        \node (n9) at (-0.500000000000000, -1) {9};
        \node (n10) at (-1, -0.500000000000000) {10};
        \node (n11) at (-1, 0.500000000000000) {11};
    \end{scope}

    \draw (n0) -- (n1);
    \draw (n0) -- (n3);
    \draw (n0) -- (n4);
    \draw (n0) -- (n11);
    \draw (n1) -- (n2);
    \draw (n1) -- (n5);
    \draw (n1) -- (n6);
    \draw (n2) -- (n3);
    \draw (n2) -- (n7);
    \draw (n2) -- (n8);
    \draw (n3) -- (n9);
    \draw (n3) -- (n10);
    \draw (n4) -- (n5);
    \draw (n4) -- (n7);
    \draw (n4) -- (n8);
    \draw (n5) -- (n9);
    \draw (n5) -- (n10);
    \draw (n6) -- (n7);
    \draw (n6) -- (n9);
    \draw (n6) -- (n10);
    \draw (n7) -- (n11);
    \draw (n8) -- (n9);
    \draw (n8) -- (n11);
    \draw (n10) -- (n11);

\matrix (m) [matrix of math nodes, row sep={4ex,between origins},
column sep={4ex,between origins}] at (5,0) {
         0 & 11 &  7 &  2 &  1 &  0 \\
         3 & 10 &  6 &  1 &  0 &  3 \\
         0 &  3 &  9 &  5 &  4 &  0 \\
         3 &  2 &  8 &  4 &  0 &  3 \\
         0 &  1 &  2 &  7 & 11 &  0 \\
    };
\end{tikzpicture}
\caption[The Chv\'atal graph.]{The Chv\'atal graph, given at left.
The odd cycle $\gamma=(0,11,7,2,1,0)$ has order 2 in this graph,
because there is a (necessarily null-homotopic) graph homomorphism from a
grid graph to the Chv\'atal graph with boundary homotopic to $\gamma^2$, given at right.}
\label{fig:chvatal-graph}   
\end{figure}  
\begin{figure}[htbp]
\begin{tikzpicture}
    \begin{scope}[every node/.style={nodeStyle, minimum size=3.5ex}]
        \node (n0) at (-1.90211303259031, 0.618033988749894) {0};
        \node (n1) at (0.000000000000000, 2.00000000000000) {1};
        \node (n2) at (1.90211303259031, 0.618033988749895) {2};
        \node (n3) at (1.17557050458495, -1.61803398874989) {3};
        \node (n4) at (-1.17557050458495, -1.61803398874990) {4};
        \node (n5) at (-0.951056516295154, 0.309016994374947) {5};
        \node (n6) at (0.000000000000000, 1.00000000000000) {6};
        \node (n7) at (0.951056516295153, 0.309016994374947) {7};
        \node (n8) at (0.587785252292473, -0.809016994374947) {8};
        \node (n9) at (-0.587785252292473, -0.809016994374948) {9};
        \node (n10) at (0.000000000000000, 0.000000000000000) {10};
    \end{scope}

    \draw (n0) -- (n1);
    \draw (n0) -- (n4);
    \draw (n0) -- (n6);
    \draw (n0) -- (n9);
    \draw (n1) -- (n2);
    \draw (n1) -- (n5);
    \draw (n1) -- (n7);
    \draw (n2) -- (n3);
    \draw (n2) -- (n6);
    \draw (n2) -- (n8);
    \draw (n3) -- (n4);
    \draw (n3) -- (n7);
    \draw (n3) -- (n9);
    \draw (n4) -- (n5);
    \draw (n4) -- (n8);
    \draw (n5) -- (n10);
    \draw (n6) -- (n10);
    \draw (n7) -- (n10);
    \draw (n8) -- (n10);
    \draw (n9) -- (n10);

\matrix (m) [matrix of math nodes, row sep={4ex,between origins},
column sep={4ex,between origins}] at (5,0) {
         0 &  1 &  2 &  3 &  9 &  0 \\
         6 &  2 &  1 &  7 & 10 &  6 \\
         0 &  1 &  5 & 10 &  6 &  0 \\
         6 &  0 &  4 &  8 &  2 &  6 \\
         0 &  9 &  3 &  2 &  1 &  0 \\
    };
\end{tikzpicture}
\caption[The Gr\"otzsch graph.]{The Gr\"otzsch graph.
The odd cycle $\gamma=(0,1,2,3,9,0)$ has order 2 for similar
reasons as with the Chv\'atal graph.}
\label{fig:grotsch-graph}
\end{figure}   
\end{proof}

\todo{Mention that none of our graphs contain loop edges}
\todo{Degenerate cycles are actually zero-cycles.
One-cycles would be loops, which never exist.  How do I handle this?}

\todo{all lists start indexing at zero; last index is always presented as $k-1$
so that $k$ is the length.  Pure computer science style.}

It follows immediately from Theorem~\ref{thm:pcn2} that a
simple condition sufficient to guarantee existence of continuous homomorphisms from $\FofZZ$ to $\Gamma$
is that $\Gamma$ contains a copy of $K_4$, the complete graph with $4$-vertices.
It is also easy to check directly with the same method as above that any $3$-cycle in $K_4$ has order $2$.
Thus Theorem~\ref{thm:odd-length-cycle} applies also in this case.
Note, however,  that none of the above graphs contains a copy of $K_4$.
Therefore the positive condition in Theorem~\ref{thm:odd-length-cycle} is strictly weaker.

\begin{rem}
According to Theorems~\ref{thm:neghom} and \ref{thm:odd-length-cycle},
the negative and positive conditions for a graph $\Gamma$ must necessarily
be mutually exclusive. It may be reassuring to the reader that
this mutual exclusiveness can be established by a simple direct argument,
similar to what we noted in the proof of Theorem~\ref{thm:odd-length-cycle}.

To see this,
suppose $\Gamma$ satisfies the positive condition of Theorem~\ref{thm:odd-length-cycle}, and let $\gamma$ be an odd-length
cycle in $\Gamma$ such that $\gamma$ has finite order $m$ in $\pi_1^*(\Gamma)$.
Note that $m$ must be even, as an odd-length cycle cannot be the identity
element in $\pi_1^*(\Gamma)$. By raising $\gamma$ to a suitable odd power,
we may assume that $m=2^{m_0}$. Let $\gamma=(v_0,v_1,\dots, v_\ell)$.
We violate the negative condition of Theorem~\ref{thm:neghom} in a strong way by showing that for all sufficiently large
$p,q>m\ell$ with $\gcd(p,q)=1$ there is a $p$-cycle $\alpha$ and a $q$-cycle $\beta$
such that $\pi_1^*(\alpha^q)=\pi_1^*(\beta^p)$ in $\pi_1^*(\Gamma)$.
We can assume without loss of generality that $p$ is odd.

Suppose first that $q$ is also odd.
Let $0<a,b<m$ be such that $a\equiv p\mod m$ and $b\equiv q\mod m$. Then $a, b$ are odd and $aq-bp \equiv 0 \mod m$.
Let $\alpha$
be the cycle obtained by following $\gamma^a$ with alternations of
$v_0,v_1$ so that $\alpha$ has length $p$. This is possible
since $p>a\ell$ and both are odd. Let $\beta$ be obtained in a
similar manner, extending $\gamma^b$ by alternations of $v_0,v_1$
to have length $q$. Then $\pi_1^*(\alpha^q)=\pi_1^*(\gamma^{aq})=
\pi_1^*(\gamma^{bp})=\pi_1^*(\beta^p)$ since $m| (aq-bp)$.

Next assume $p$ is odd and $q$ is even. Let $2^k$ be the highest power of $2$
which divides $q$. If $k\geq m_0$ then we can take $\alpha$ to be the extension
of $\gamma$  by alternations of $v_0,v_1$ so that $\alpha$ has length $p$. Then $\pi_1^*(\alpha^q)$ is the identity,
and the result is immediate if we let $\beta$ be a $q$-cycle consisting only of alternations of $v_0, v_1$.
So assume $1 \leq k <m_0$. Let $a,b$ be such that $0<a<2^{m_0-k}$ is odd,
$0<b<2^{m_0}$ is an odd multiple of $2^k$, and
$a\frac{q}{2^k}-\frac{b}{2^k}p \equiv 0 \mod 2^{m_0-k}$. Then let $\alpha$ be the extension of
$\gamma^a$ by alternations of $v_0,v_1$ to have length $p$, and $\beta$ the extension
of $\gamma^b$ to have length $q$. Then $\pi_1^*(\alpha^q)=\pi_1^*(\gamma^{aq})=\pi_1^*(\gamma^{bp})
=\pi_1^*(\beta^p)$ since $2^{m_0}| (aq-bp)$.
\end{rem}

\chapter{Applications to Decidability Problems} \label{sec:otherapp}

\section{Three general problems} \label{sec:problems}

We formulate three general problems for $\fzn$. These problems can be considered at both the continuous
and Borel levels, and are interesting in both contexts. We will, however, confine our attention to
the continuous versions in this paper.

Recall from Definition~\ref{def:sft} that a $\Z^n$-subshift of finite type is a
closed invariant set $Y\subseteq \bsft^{\Z^n}$ consisting of all $x \in \bsft^{\Z^n}$
avoiding a prescribed set of patterns $\{p_1,\dots,p_k\}$. Thus, a subshift is determined
by the tuple $\vec p=(\bsft;p_1,\dots,p_k)$ which can be viewed as a tuple of integers via some coding. More explicitly, each pattern $p$ as an $n$-dimensional matrix of integers can be coded by a single integer, which we still denote as $p$, via a computable procedure. Then the entire tuple $\vec{p}=(\bsft; p_1,\dots, p_k)$ can be encoded as a single integer
$m=\langle \bsft, p_1,\dots,p_k\rangle$, where $\langle\cdot,\cdots,\cdot\rangle: \omega^{<\omega}\to\omega$ is a fixed computable bijection. We let
$Y_{\vec p}$ denote the subshift determined by $\vec p$. We also use $Y_m$ to denote this subshift,
where $m$ is the integer $m=\langle \bsft, p_1,\dots,p_k\rangle$ coding the subshift.
The general subshift problem is the following.

\begin{prob}[Subshift problem] \label{prob:subs}
For which $(\bsft;p_1,\dots,p_k)$ does there exist a continuous (or Borel) equivariant map
$\pi \colon \fzn \to Y_{\vec p}$? What is the complexity of the set of integers
$m=\langle \bsft,p_1,\dots,p_k\rangle$ coding subshifts $Y_m$ for which there is such a continuous
(or Borel) equivariant map? In particular, is this set computable?
\end{prob}

A finite graph $\Gamma$ can be coded by an integer, say by viewing the vertex set as
$\{0,\dots,k\}$ for some $k$. This then suggests the general graph homomorphism problem.

\begin{prob}[Graph homomorphism problem] \label{prob:gh}
For which finite graphs $\Gamma$ does there exist a continuous (or Borel)
graph homomorphism from $\fzn$ to $\Gamma$? What is the complexity of the set of integers
$m$ coding finite graphs $\Gamma_m$ for which there is such a continuous (or Borel)
graph homomorphism? In particular, is this set computable?
\end{prob}

Recall that in Definition~\ref{def:tiling} we gave the definition of a clopen (or Borel)
tiling of $\fzn$ by a set of tiles $\{T_1,\dots,T_k\}$, where each $T_i$ is a finite subset
of $\Z^n$. We assume here that $k$ is finite, and so the sequence of tiles can again
be coded by a integer $m$. Partly restating Question~\ref{ques:tiling} we have the following
tiling problem.

\begin{prob}[Tiling problem] For which sets of tiles $\{T_1,\dots,T_k\}$
does there exist a clopen (or Borel) tiling of $\fzn$? What is the complexity of the set
of integers $m$ which code finite sets of tiles for which there is a clopen (or Borel) tiling
of $\fzn$? In particular, is this set computable?
\end{prob}

In each of these three problems, when discussing the complexity of the set of integers $m$
we of course must use a ``lightface'' notion of complexity as we are dealing with a set of integers.
In each of the three cases, it is not immediately clear from the definitions
that the complexity of the relevant set of integers is even $\Delta^1_1$ (hyperarithmetical).
For example, the set of integers $m=\langle \bsft,p_1,\dots,p_k\rangle$ coding subshifts $Y_m$
for which there is a continuous equivariant map from $\fzn$ to $Y_m$ seems to involve
quantifying over reals coding continuous functions from $\fzn$ to $\bsft^{\Z^n}$. Superficially at least,
this seems to compute the set as a $\Sigma^1_2$ set of integers.

Our first observation is that Theorem~\ref{thm:tilethm} shows that in all
of the three problems, the relevant set of integers is actually $\Sigma^0_1$ (or computably enumerable, or semi-computable).

\begin{thmn} \label{thm:sr}
For each of the subshift, graph homomorphism, and
tiling problems for $F(2^{\Z^n})$, the set of integers
$m$ for which the continuous version of the problem has a positive answer is $\Sigma^0_1$.
\end{thmn}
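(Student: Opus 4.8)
\textbf{Proof proposal for Theorem~\ref{thm:sr}.}

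The plan is to use the Twelve Tiles Theorem (Theorem~\ref{thm:tilethm}) together with its one-dimensional and higher-dimensional analogues (Theorems~\ref{thm:twotilesthm} and \ref{thm:weakneg}) to reduce each of the three problems to an effectively checkable existential condition over finite combinatorial data, and then verify that the resulting predicate is $\Sigma^0_1$. The key structural point is that each of the three problems is an instance of the subshift problem: a continuous graph homomorphism from $F(2^{\Z^n})$ to a finite graph $\Gamma_m$ is the same as a continuous equivariant map from $F(2^{\Z^n})$ to the edge $\Z^n$-subshift of finite type whose allowed patterns are the graph-homomorphism constraints (as noted in the proof of Corollary~\ref{cor:graphthm}), and a continuous tiling of $F(2^{\Z^n})$ by a finite set of tiles $\{T_1,\dots,T_k\}$ is likewise a continuous equivariant map into an explicitly describable subshift of finite type (one records, at each site, the translate of some $T_i$ containing it, together with consistency constraints between neighbouring sites, exactly as in Definition~\ref{def:tiling} and the discussion following Theorem~\ref{thm:lsss}). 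Moreover, the passage from the integer $m$ coding a graph or a set of tiles to the integer $\langle b; p_1,\dots,p_\ell\rangle$ coding the associated subshift is visibly computable. So it suffices to prove the statement for the subshift problem.

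For the subshift problem, first I would treat the two-dimensional case $n=2$, which is the cleanest. Given $m = \langle b; p_1,\dots,p_k\rangle$, by Theorem~\ref{thm:tilethm} (specifically the equivalence of (\ref{mta}) and (\ref{mtb})) there is a continuous equivariant map $F(2^{\Z^2})\to Y_m$ if and only if there exist positive integers $n,p,q$ with $n<p,q$, $(p,q)=1$, $n$ at least the width of $Y_m$, and a function $g\colon \Gamma_{n,p,q}\to b$ which respects $Y_m$. Now the point is that for fixed $n,p,q$ the graph $\Gamma_{n,p,q}$ is a finite object whose description (its vertex set, its edges, the block labels) is computed uniformly from $n,p,q$ by an explicit algorithm — this is exactly what the construction in \S\ref{subsec:tilethm} provides. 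Likewise, ``$g$ respects $Y_m$'' is, by Definition~\ref{def:resp}, a finitely checkable condition: one enumerates all $\Z^2$-homomorphisms $\varphi$ from each $\dom(p_i)$ into $\Gamma_{n,p,q}$ (finitely many, since $\dom(p_i)$ and $\Gamma_{n,p,q}$ are finite) and checks that $g\circ\varphi\neq p_i$ for each. Hence the predicate ``$\exists n,p,q,g$ as above'' is a single existential quantifier over integers (coding the tuple $(n,p,q,g)$) in front of a decidable matrix, i.e. $\Sigma^0_1$. Concretely: to semi-decide whether $m$ is in the set, dovetail over all tuples $(n,p,q,g)$, build $\Gamma_{n,p,q}$, and test whether $g$ is a legitimate map respecting $Y_m$; if some tuple succeeds, halt and accept. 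This is a computably enumerable procedure.

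For general $n\geq 1$ the argument is the same in spirit, invoking the appropriate tiles theorem. For $n=1$ one uses Theorem~\ref{thm:twotilesthm}, with the finite graphs $\Gamma^1_{n,p,q}$ in place of $\Gamma_{n,p,q}$; these too are computed uniformly from their parameters, and ``respects $Y$'' is again finitely checkable, so the same single-existential-quantifier analysis gives $\Sigma^0_1$. For $n\geq 3$ one needs the $n$-dimensional version of the Twelve Tiles Theorem alluded to in \S\ref{subsec:hd}; in fact for the applications recorded there (e.g. perfect matchings) Theorem~\ref{thm:weakneg} already captures the content we need, and in any case the general-dimensional tile analysis yields a uniformly computable family of finite graphs $\Gamma^{(n)}_k$ playing the role of $\Gamma_{n,p,q}$, with the existence of a continuous equivariant map equivalent to the existence, for some $k$, of a map $\Gamma^{(n)}_k\to b$ respecting $Y_m$ — once more a $\Sigma^0_1$ condition. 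The only mild subtlety I anticipate is bookkeeping: being careful that the reduction $m\mapsto$ (subshift code) for the graph-homomorphism and tiling problems is genuinely computable, in particular that the width bound and the finitely many forbidden patterns are produced algorithmically from $m$ — but this is routine and presents no real obstacle. The substantive input is entirely the earlier tiles theorems; Theorem~\ref{thm:sr} is essentially the observation that those theorems convert a $\Sigma^1_2$-looking statement into a $\Sigma^0_1$ one.
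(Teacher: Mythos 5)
Your proposal is correct and follows essentially the same route as the paper: invoke the Twelve Tiles Theorem (and its one-dimensional and $n$-dimensional analogues) to characterize the existence of the continuous object by the existence of a suitable map on a finite graph $\gnpq$ (or $\Gamma^1_{n,p,q}$, etc.), then observe that this existential is over a decidable matrix. Your treatment is actually more careful than the paper's terse proof: you explicitly handle the reductions of graph homomorphism and tiling to subshifts, and you flag the $n=1$ and $n\geq 3$ cases separately. One small slip worth noting: you write that Theorem~\ref{thm:weakneg} ``already captures the content we need'' for higher dimensions — but Theorem~\ref{thm:weakneg} gives only the forward (negative) direction, which is insufficient for a $\Sigma^0_1$ characterization; you do need both directions of the higher-dimensional tiles equivalence, and you correctly pivot to that in the same sentence. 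Since the paper itself only sketches the higher-dimensional tiles theorem in \S\ref{subsec:hd} without a full proof, the paper's own argument has the same implicit reliance, so your proof is not at a disadvantage here.
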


\begin{proof}
In each case, Theorem~\ref{thm:tilethm} shows that the continuous object exists
(e.g., for the subshift problem, a continuous equivariant map into the subshift)
if and only if a corresponding object exists for the graph $\gnpq$ for some $n<p,q$ where $\gcd(p,q)=1$
(e.g., for the subshift problem, there is a map $g \colon \gnpq \to \bsft$ which respects the
subshift, see Definition~\ref{def:resp}). For given $n,p,q$, it is a finite
problem to determine if such a $g$ from $\gnpq$ exists. Thus, the set of $m$
coding subshifts for which there is a continuous equivariant map from $\fzs$ to the subshift
is a $\Sigma^0_1$ set.
\end{proof}

In the remaining sections of this chapter, we show that the one-dimensional subshift problem corresponds to a computable set, and that for the general subshift problem and the graph homomorphism problem, the corresponding sets are $\Sigma^0_1$-complete, and in particular not computable. The exact degree of computability of the set corresponding to the tiling problem is unknown.

\section{The subshift problem for $F(2^\Z)$} \label{sec:ssonedim}

We show first that the general subshift problem in
dimension one, that is for $F(2^{\Z})$, is decidable. It follows immediately that the one-dimensional graph homomorphism problem and tiling problem are also decidable.
In contrast, we will show in \S\ref{sec:sstwodim} that the subshift problem
in dimensions two or higher is not decidable. In fact, we will show in
\S\ref{sec:gh} that even the specific instance of the graph homomorphism
problem is not decidable for $F(2^{\Z^n})$ for $n \geq 2$. Recall a
subshift of finite type $Y\subseteq \bsft^\Z$ is determined by
a sequence $(\bsft;\ell;p_1,\dots,p_k)$ where $\ell \geq 1$ and
$p_i\in \bsft^\ell$ are the forbidden patterns of length $\ell$ (width $\ell-1$).

\begin{thmn} \label{thm:ssonedim}
The set of integers $m$ coding subshifts $Y_m\subseteq \bsft^{\Z}$ for which there is a continuous,
equivariant map from $F(2^\Z)$ to $Y_m$ is a $\Delta^0_1$ (i.e., computable) set.
\end{thmn}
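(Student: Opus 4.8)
The plan is to show that the $\Sigma^0_1$ set from Theorem~\ref{thm:sr} is in fact decidable by producing a matching $\Sigma^0_1$ (semi-decision) procedure for the \emph{complement}, i.e.\ for the set of codes $m$ of subshifts $Y_m \subseteq b^\Z$ for which there is \emph{no} continuous equivariant map $F(2^\Z) \to Y_m$. Combining the two semi-decision procedures yields a decision procedure. By Theorem~\ref{thm:twotilesthm}, a continuous equivariant map exists iff for some $n \geq \ell - 1$ and some $p,q > n$ with $(p,q)=1$ there is a map $g\colon \Gamma^1_{n,p,q} \to b$ respecting $Y_m$; equivalently (by clause~(\ref{odc}) of that theorem) iff for \emph{all} sufficiently large $p,q$ with $(p,q)=1$ such a $g$ exists. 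So the non-existence of a continuous map is equivalent to: for cofinally many coprime pairs $(p,q)$, there is no map $\Gamma^1_{n,p,q} \to b$ respecting $Y_m$. The difficulty is that this is a priori a $\Pi^0_1$ (co-c.e.) statement — a universal quantifier over $(p,q)$ — so the real content is to find a \emph{finitary witness} to non-existence that can be searched for effectively.

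First I would reformulate ``$g\colon \Gamma^1_{n,p,q} \to b$ respects $Y_m$'' purely in terms of finite words over $b$. Fixing $n = \ell - 1$ (the width of $Y_m$), a map respecting $Y_m$ amounts to: a word $u$ of length $n$ (the common value on the four copies of $R^1_\times$), a word $w_a$ filling $R^1_a$, and a word $w_b$ filling $R^1_b$, such that the concatenations $u \cdot w_a \cdot u$ and $u \cdot w_b \cdot u$ (these are the tiles $T_1$, $T_2$) both avoid all forbidden patterns $p_i$ of length $\ell$. Thus such a $g$ exists for the pair $(p,q)$ iff there exist an ``anchor'' word $u$ of length $n$ and words $v_1$ (of length $p-n$), $v_2$ (of length $q-n$) with $u v_1 u$ and $u v_2 u$ both $Y_m$-admissible. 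This in turn is equivalent to: there is a word $u$ of length $n$ such that $u$ occurs as both a prefix and a suffix of some admissible word of length $p$, and also as both a prefix and a suffix of some admissible word of length $q$. Whether a given $u$ can be so ``looped'' at length $p$ is a reachability question in the de~Bruijn-style transition graph $D$ of $Y_m$ (vertices = admissible words of length $n$, edges = admissible extensions by one symbol): it asks for a walk of length exactly $p - n$ from $u$ to $u$ in $D$.

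The key step is then a pumping/periodicity argument on walk lengths in the finite graph $D$. For each anchor $u$, let $L_u \subseteq \N$ be the set of lengths $m$ of closed walks from $u$ to $u$ in $D$. Standard facts about lengths of cycles through a fixed vertex in a finite digraph give: $L_u$ is eventually periodic, with period and threshold bounded computably in the size of $D$ (hence in $m$). Moreover $L_u \neq \varnothing$ iff $u$ lies on some cycle of $D$. A continuous equivariant map exists iff there is a single $u$ with $L_u$ containing two values $p, q$ that are coprime and both exceed $n$ — and using eventual periodicity of $L_u$, the existence of such a coprime pair in $L_u$ is itself decidable: if $L_u$ is eventually periodic with period $d$, it contains cofinally many coprime pairs iff it contains two elements whose difference is coprime-compatible, which reduces to a finite gcd computation on the (finitely many) residues realized by $L_u$ together with the threshold. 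Concretely: $L_u$ contains a coprime pair of large elements iff $\gcd$ of the set of ``gaps'' realized in the periodic part is $1$, plus finitely many small cases to check directly. Carrying this out for each of the (finitely many) candidate anchors $u$ of length $n$ gives a terminating algorithm: compute $D$ from $m$, for each $u$ compute (a finite description of) $L_u$, test each for a large coprime pair, and answer ``yes'' iff some $u$ passes.

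The main obstacle I anticipate is the last reduction — turning ``$L_u$ contains infinitely many coprime pairs'' into a finite, effective gcd-type criterion, and in particular correctly handling the interaction between the \emph{threshold} before $L_u$ becomes periodic and the constraint $p, q > n$. One must be careful that the small ``pre-periodic'' exceptions do not accidentally supply the only coprime pair, or conversely that a spuriously small $\gcd$ in the pre-periodic part does not create a false positive; the clean way is to note that by Theorem~\ref{thm:twotilesthm}(\ref{odc}) it suffices to decide whether $L_u$ contains \emph{arbitrarily large} coprime pairs, which depends only on the purely periodic tail of $L_u$, i.e.\ on the subgroup of $\Z$ generated by the differences of elements in one period — a single gcd. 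Once that is pinned down, everything else (constructing $D$, computing periods via e.g.\ powers of the adjacency matrix or BFS on $\Z/d$-labeled layers, bounding the threshold) is routine and manifestly computable, so the algorithm terminates with the correct answer and the set is $\Delta^0_1$.
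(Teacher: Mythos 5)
Your proposal takes essentially the same route as the paper: both reduce via Theorem~\ref{thm:twotilesthm} to the de Bruijn-style transition graph of $Y_m$ and decide existence by checking whether some strongly connected component has cycle-length $\gcd$ equal to $1$ (the paper packages the finite check via primitive cycles and Lemma~\ref{prim}, you via eventual periodicity of $L_u$; but since $L_u$ is additively closed it lies inside $\gcd(L_u)\cdot\N$, so it contains a coprime pair iff $\gcd(L_u)=1$, and the pre-periodic worries you raise at the end do not actually arise). One small arithmetic slip: the tile $T_1 = u\,w_a\,u$ has total length $p+n$, so the corresponding closed walk at $u$ in $D$ has length $p$, not $p-n$; your final criterion (that $L_u$ contain coprime $p,q>n$) is already stated in terms of $p$ and $q$ directly, so nothing downstream breaks.
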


The rest of this section is devoted to a proof of Theorem~\ref{thm:ssonedim}.
We recall the one-dimensional version of the tile analysis, which was
given in Theorem~\ref{thm:twotilesthm}. Recall the graph $\Gamma^{(1)}_{n,p,q}$
from \S\ref{sec:onedimtile} which was used in that theorem. We use also
the other notation of \S\ref{sec:onedimtile}, for example the tile graphs $T_1(n, p, q), T_2(n, p, q)$ and the block label $R^{(1)}_\times$ associated to certain parts of
$\Gamma^{(1)}_{n,p, q}$.

Given a subshift $Y\subseteq \bsft^\Z$ determined by $(\bsft; \ell; p_1,\dots,p_k)$,
we define a directed graph
$\Lambda=\Lambda(Y)$ as follows.
Let the vertex set $V(\Lambda)$ of $\Lambda$ be the $v\in \bsft^\ell$
which are not in $\{ p_1,\dots,p_k\}$. If $u,v\in V(\Lambda)$, we put a directed edge
from $u$ to $v$ if $(v(0),\dots, v(\ell-2))=(u(1),\dots,u(\ell-1))$.
Let $(V(\Lambda),E(\Lambda))$ be the resulting directed graph.
The vertices of $\Lambda$ correspond to $\ell$-size window states (that is
possible values of $y\res [k,k+\ell)$ for $y\in Y$) and the edge relation corresponds to a shift
of the window one unit to the right.

We say $f \colon F(2^\Z)\to V(\Lambda)$ is a {\em directed graph homomorphism}
if for all $x \in F(2^\Z)$ we have that $(f(x), f(1\cdot x))\in E(\Lambda)$, where
$1$ is the generator for the action of $\Z$ on $F(2^\Z)$.
We likewise define the notion of $g \colon \Gamma^1_{n,p,q} \to V(\Lambda)$
being a directed graph homomorphism. The following fact is now immediate from the
definitions.

\begin{lemn}\label{lem:dgh}
There is a continuous, equivariant map $f \colon F(2^\Z) \to Y$, where $Y$ is the subshift
determined by $(\bsft;\ell;p_1,\dots,p_k)$, if and only if there is a continuous directed graph homomorphism
$f' \colon F(2^\Z) \to \Lambda(Y)$.
\end{lemn}

A directed graph homomorphism $f' \colon F(2^\Z) \to \Lambda(Y)$ is, in turn,
a type of equivariant subshift map where the subshift has width $1$. So, Lemma~\ref{lem:dgh} and Theorem~\ref{thm:twotilesthm} immediately
give the following.

\begin{lemn}
There is a continuous, equivariant map $f \colon F(2^\Z)\to Y$,
where $Y\subseteq \bsft^\Z$ is the subshift given by $(\bsft;\ell;p_1,\dots,p_k)$,
if and only if there are $p,q>1$ with $\gcd(p,q)=1$ and a
directed graph homomorphism
$g \colon \Gamma^{(1)}_{1,p,q} \to \Lambda(Y)$.
\end{lemn}

It remains to show that it is computable in the integer $m$ coding
the subshift determined by $(\bsft;\ell;p_1,\dots,p_k)$ whether there  are $p,q>1$ with
$\gcd(p,q)=1$ and a directed graph
homomorphism $g:\Gamma^{(1)}_{1,p,q} \to \Lambda(Y)$.

Consider the directed graph $\Lambda=\Lambda(Y)$. Let $V=V(\Lambda)$
be the vertex set of $\Lambda$, and let $N=|V|$. For $u, v \in V$ say $u \sim v$ if there
is a directed path in $\Lambda$ from $u$ to $v$ and also a directed path from $v$ to $u$.
This is clearly an equivalence relation, and we call the equivalence classes
$C_1,\dots,C_t$ the {\em directed components} of $\Lambda$.
Note that all the vertices in a directed cycle lie in the same directed
component of $\Lambda$.

We say a directed cycle $\gamma=(u_0,u_1,\dots, u_k,u_0)$
in $\Lambda$ is {\em primitive} if no subsegment $(u_i, u_{i+1},\dots, u_j)$
is a cycle. Clearly a primitive cycle in $\Lambda$ has length at most
$N\leq \bsft^\ell$.

\begin{lemn} \label{prim}
Let $C$ be a directed component of $\Lambda$. Then the following are equivalent:
\begin{enumerate}[label={\rm (\arabic*)}, ref=\arabic*]
\item \label{cla}
The $\gcd$ of the primitive cycle lengths in $C$ is $1$.
\item \label{clb}
The $\gcd$ of the (directed) cycle lengths in $C$ is $1$.
\item \label{clc}
The $\gcd$ of the lengths of the cycles beginning and ending at $v_0$
(a fixed vertex in $C$) is $1$.
\end{enumerate}
\end{lemn}

\begin{proof}
Clearly (\ref{cla}) implies (\ref{clb}). To see (\ref{clb})  implies (\ref{cla}),
suppose $d>1$ divides all the lengths of primitive cycles in $C$.
We show $d$ divides the length of all cycles in $C$, a contradiction. This is done by an induction on the length of the cycle.
Let $\gamma=(u_0,\dots,u_k,u_0)$
be a cycle in $C$ of length $|\gamma|=k+1$. Either $\gamma$ is primitive, and we are done, or there
is a subsegment $\delta=(u_i,\dots,u_j)$, $i<j$, of $\gamma$ which is a primitive cycle
(so $u_j=u_i$). Then $\gamma'=(u_0,\dots,u_i, u_{j+1},\dots,u_k, u_0)$
is a cycle of length $|\gamma|-(j-i)$. Since $\delta$ is primitive,
by assumption $d$ divides  $|\delta|=j-i$.
By induction $d$ divides $|\gamma'|$, and so $d$ divides $|\gamma|$.

Clearly (\ref{clc}) implies (\ref{clb}) and so it remains to show that
(\ref{cla}) implies (\ref{clc}). Assume (\ref{cla}) and assume towards a contradiction
that $d>1$ divides all the lengths of the cycles which begin and end at $v_0$.
Let $a>1$ be such that $d \nmid a$ and $a$ is the length of a primitive cycle $\eta$
in $C$, say starting and ending at $v_1 \in C$. Let $\alpha$ be a directed path
in $C$ from $v_0$ to $v_1$, and $\beta$ a directed path in $C$ from $v_1$
to $v_0$. Then the cycle $\rho=\alpha \eta \beta$ is a cycle starting and ending at $v_0$
with length $|\alpha|+ |\eta|+|\beta| =|\eta|+|\delta|$ where $\delta$
is the cycle $\alpha \beta$ (which starts and ends at $v_0$). By assumption,
$d$ divides $|\delta|$ and $|\rho|$, and so $d$ divides $|\eta|=a$, a contradiction.
\end{proof}

We now have the following algorithm to determine if there is a directed graph
homomorphism from $\Gamma^{(1)}_{1,p,q}$ to $\Lambda$. First, compute the
directed components $C_1,\dots,C_t$ of $\Lambda$. Next, for each of these components
$C$, compute the $\gcd$ of the lengths of the primitive cycles in $C$. This is
computable as these cycles have length $\leq N$. The following lemma
finishes the algorithm.

\begin{lemn} The following are equivalent:
\begin{enumerate}[label={\rm (\alph*)}]
\item There are $p, q>1$ with $\gcd(p,q)=1$ and a directed graph homomorphism $g:\Gamma^{(1)}_{1,p,q}\to \Lambda$.
\item There is a directed component $C$ of $\Lambda$ such that the $\gcd$
of the lengths of the primitive cycles in $C$ is $1$.
\end{enumerate}
\end{lemn}

\begin{proof}
Suppose first that $g \colon \Gamma^{(1)}_{1,p,q}\to \Lambda$ is a directed graph
homomorphism, where $\gcd(p,q)=1$. Since $n=1$, there is a unique vertex $r$ in $\Gamma^{(1)}_{1, p, q}$ with label $R^{(1)}_\times$. Let $v=g(r)$.
Let $C$ be the directed component of
$v$ in $\Lambda$. Note that $T_1(1, p, q)$ has length $p+1$ and $T_2(1, p, q)$ has length $q+1$.
As we move from the left to the right in $T_1(n, p, q)$,
applying $g$ gives a directed path in $C$
of length $p$, starting and ending at $v$. Likewise, considering $T_2$
gives a directed path starting and ending at $v$ of length $q$. Since
$\gcd(p,q)=1$, the $\gcd$ of all the cycle lengths in $C$ is $1$. By
Lemma~\ref{prim}, the $\gcd$ of the lengths of the primitive cycles in $C$ is $1$.

Suppose next that the $\gcd$ of the lengths of the primitive cycles
in the component $C$ is $1$. From Lemma~\ref{prim} we may fix a vertex
$v_0\in C$ and directed cycles $\eta_1,\dots,\eta_m$ starting and ending at $v_0$
such that $\gcd ( |\eta_1|,\dots,|\eta_m|)=1$. Any sufficiently large integer
is a non-negative integral linear combination of the $|\eta_i|$.
So, for any sufficiently large $p,q$ (and we may take $\gcd(p,q)=1$)
there are directed cycles $\gamma$ and $\delta$ of lengths $p$ and $q$ respectively
which come from combinations of $\eta_1, \dots, \eta_m$ and therefore start and end at $v_0$. Using $\gamma$ and $\delta$ we define $g$
on $T_1$ and $T_2$ respectively in the obvious manner, namely, set $g(r)=v_0$, and
follow $\gamma$ to color the points of $T_1$, and likewise follow $\delta$ for $T_2$. This gives a directed graph homomorphism $g:\Gamma^{(1)}_{1,p,q}\to \Lambda$.
\end{proof}

This completes the proof of Theorem~\ref{thm:ssonedim}.

\section{The subshift problem for $F(2^{\Z^2})$} \label{sec:sstwodim}

In this section we show that the general subshift problem for dimension at least $2$ is $\Sigma^0_1$-complete, and in particular undecidable. In \S\ref{sec:gh} we will show a similar result for the graph homomorphism problem. Since the graph homomorphism problem can be viewed as a special case of the subshift problem, the main theorem of \S\ref{sec:gh} will imply the main result of this section. However, we present a detailed proof in this section for two reasons. The first is that our proof here will be self-contained in the sense that we will construct a computable reduction from the halting problem of Turing machines to the subshift problem. In contrast, the proof in \S\ref{sec:gh} uses the $\Sigma^0_1$-completeness of a special case of the word problem as a black box. The second reason is that our proof here establishes the undecidability of the subshift problem for subshifts $Y\subseteq \bsft^{\Z^2}$ whenever $\bsft\geq 4$. In contrast, the proof in \S\ref{sec:gh} will require a much larger $\bsft$.

\begin{thmn} \label{thm:subsnr}
Let $\bsft\geq 4$. The set of integers $m$ coding subshifts $Y_m \subseteq \bsft^{\Z^2}$ for which there is a continuous,
equivariant map from $\fzs$ to $Y_m$ is a $\Sigma^0_1$-complete set.
\end{thmn}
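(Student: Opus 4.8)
The plan is to prove that the set of (codes of) subshifts $Y \subseteq b^{\Z^2}$ admitting a continuous equivariant map from $F(2^{\Z^2})$ is $\Sigma^0_1$-complete. Theorem~\ref{thm:sr} already gives that the set is $\Sigma^0_1$ (the Twelve Tiles Theorem reduces existence of a continuous equivariant map to the existence, for some $n<p,q$ with $(p,q)=1$, of a map $g \colon \gnpq \to b$ which respects $Y$, and checking this for fixed $n,p,q$ is a finite computation). So the content of the theorem is the hardness direction: I would exhibit a computable reduction from a known $\Sigma^0_1$-complete set to this set. The natural candidate is (the complement of) the classical undecidable Wang tiling problem, or more precisely the halting-type problem ``the Turing machine $M$ does \emph{not} halt on empty input.'' The key observation is that for $\Z^2$ subshifts of finite type, the existence of \emph{any} bi-infinite configuration avoiding the forbidden patterns is already undecidable (Berger's theorem on the undecidability of the domino problem), and moreover one can arrange that when configurations exist, the subshift is ``nice enough'' that $F(2^{\Z^2})$ maps continuously into it.

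The first step is to recall the standard fact that from a Turing machine $M$ one can computably produce a Wang tile set $\tau_M$ (equivalently, a width-$2$ $\Z^2$ subshift of finite type $Y_M \subseteq b^{\Z^2}$) such that: if $M$ does not halt on empty input, then $Y_M$ is nonempty; and in fact in that case $Y_M$ contains a configuration with strong structural regularity. The second and crucial step is to upgrade ``nonempty'' to ``admits a continuous equivariant map from $F(2^{\Z^2})$.'' Here I would use one of two strategies. Strategy (a): design $\tau_M$ so that the non-halting computation produces a \emph{doubly periodic} (or at least, block-structured) tiling; by the positive direction of the Twelve Tiles Theorem (or directly, Theorem~\ref{thm:mtr} together with Lemma~\ref{zghom}), if $Y_M$ contains a configuration that is a consistent tiling by finitely many fixed rectangular blocks, then for suitable $n,p,q$ one gets a map $\gnpq \to b$ respecting $Y_M$, hence a continuous equivariant map $F(2^{\Z^2}) \to Y_M$. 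Strategy (b): more robustly, use the ``hierarchical/fractal'' structure of Berger-style or Robinson-style aperiodic tile sets, where a non-halting computation is coded inside arbitrarily large but \emph{finite} square regions arranged in a self-similar pattern; one then checks that the tiling of $\gnpq$ produced in the proof of Theorem~\ref{thm:mtr} (which tiles each class $[x]$ by the $12$ rectangular tiles using the orthogonal-marker scaffolding) can be decorated by the Wang tiles $\tau_M$ precisely because the marker regions are large and the Wang constraints are local.

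The third step is to verify the converse: if $M$ \emph{does} halt on empty input, then $Y_M$ is empty (or at least, contains no configuration that can be continuously assigned over $F(2^{\Z^2})$). For the emptiness version this is immediate from the construction of $\tau_M$: a halting computation forces a contradiction in the tiling constraints on all sufficiently large squares, hence there is no valid $\Z^2$-configuration at all, so trivially no continuous equivariant map. Combining: the map $M \mapsto (\text{code of } Y_M)$ is computable, and $M$ does not halt on empty input $\iff$ there is a continuous equivariant map $F(2^{\Z^2}) \to Y_M$. Since the non-halting set is $\Pi^0_1$-complete, equivalently the halting set is $\Sigma^0_1$-complete, and we want our set to be $\Sigma^0_1$-complete, I would instead run the reduction from the \emph{halting} set itself: design $\tau_M$ so that $M$ \emph{halts} $\iff$ $Y_M$ admits a continuous equivariant map. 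This can be done by the complementary trick of making the tile set code ``either $M$ halts, or the configuration is forced to be a benign periodic pattern'': one builds $\tau_M$ so that every configuration either witnesses a halting computation of $M$ (in a block structure permitting a $\gnpq$ decoration) or degenerates to a fixed monochromatic/periodic configuration — and then arrange the constraints so that the degenerate fallback is \emph{forbidden} unless $M$ halts. The exact bookkeeping here is the main obstacle.

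The hard part will be this last piece of combinatorial engineering: ensuring simultaneously that (i) the subshift $Y_M$ is always nonempty as a bare $\Z^2$-SFT (so the ``easy'' obstruction of emptiness is not what distinguishes the two cases — otherwise one only gets undecidability of the weaker ``nonempty'' question, which is not literally our set), and (ii) $Y_M$ admits a continuous equivariant map from $F(2^{\Z^2})$ precisely when $M$ halts. Concretely, I expect to use a two-layer construction: one layer carries a ``sparse'' Berger-type hierarchical structure guaranteeing $Y_M \neq \emptyset$ always, and a second layer runs the computation of $M$ inside the hierarchy; the continuous-map criterion of the Twelve Tiles Theorem then detects whether the hierarchy can be ``closed up'' into a block-structured (effectively periodic-at-all-scales) pattern, which happens iff the computation terminates. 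Once this $\tau_M$ is built, the forward direction uses the positive direction of Theorem~\ref{thm:tilethm} (via the tiling of $\gnpq$ and Theorem~\ref{thm:mtr}) and the reverse direction uses the negative direction of Theorem~\ref{thm:tilethm} (a continuous map would yield a map on $\gnpq$, whose restriction to the torus tiles or long tiles would contradict the nontermination-coding local rules, exactly in the spirit of Theorems~\ref{thm:pcn} and \ref{thm:negthreetiles}). I would then remark that the same construction, pushed through Theorem~\ref{thm:weakneg}, shows the analogous result for $F(2^{\Z^n})$, $n \geq 2$.
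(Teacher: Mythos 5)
Your reduction of the upper bound to Theorem~\ref{thm:sr} is correct, and the high-level plan --- a computable reduction from the halting problem, using the two directions of the Twelve Tiles Theorem --- matches the paper. But the content of the theorem is entirely in the construction of the subshift $Y_M$, and that is exactly the piece you leave open, calling it ``bookkeeping'' and ``the main obstacle.'' It is not bookkeeping; it is the proof, and the Berger/Robinson-style scheme you propose would not produce it.

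Here is why the two-layer Berger/Robinson approach fails to connect to the criterion you need. The Twelve Tiles criterion asks for a map $g \colon \gnpq \to b$ respecting $Y$, which is a question about labelings of specific finite quotient graphs obtained by identifying block boundaries across twelve rectangles. Berger/Robinson tile sets are designed so that \emph{every} valid configuration is aperiodic and carries an infinite self-similar hierarchy; there is no sense in which such a hierarchy ``closes up into a block-structured, effectively periodic-at-all-scales pattern when the computation halts,'' and decorating the marker rectangles of Theorem~\ref{thm:mtr} with Wang tiles would not make the boundary identifications in $\gnpq$ consistent --- the hierarchy places constraints at all scales, while the identifications in $\gnpq$ force a finite-periodicity that the hierarchy is built to forbid. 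So your sketch does not yield a map when $M$ halts, and you also do not exhibit an obstruction to such a map when $M$ does not halt.

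The paper's construction uses a different and much more specific mechanism. The subshift has alphabet $\{0,1,2,3\}$. Lines alternately code tape contents and head/state information of a ``good'' Turing machine $M$, with a padding trick that forces all $1$'s within any connected non-$2$ region to lie at a single parity. Color $2$ forms non-adjacent separating ``threads'' between parity regions, and these threads may only begin or end inside a $10\times 10$ block of color $3$, which in turn may only appear directly above a coded halting configuration. If $M$ does not halt, no $3$'s can appear, and Lemma~\ref{ttc}/\ref{ttd} show that a $\{0,1,2\}$-valued respecting map on the union of long tiles would yield a chromatic $3$-coloring of those tiles, contradicting Theorem~\ref{thm:pcn}; stacking a copy of $\Gcqadpa$ below $\Gdpacqa$ below $\Gcqadpa$ then rules out the presence of $3$'s as well, since the coded computation cannot both proceed downward forever and terminate at a matching $3$-block. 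If $M$ halts, the $3$-blocks absorb the parity-changing threads needed to color $\gnpq$, giving the positive direction. None of this --- the parity accounting, the thread/sink structure, and crucially the reuse of the earlier no-continuous-$3$-coloring result as the obstruction --- appears in your proposal. Without it, the reduction does not go through.
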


We note that we do not have a similar result for the subshift problem for $\bsft=2$ or $\bsft=3$. In particular, it is unknown if the subshift problem for $\bsft=2$ is decidable.

The rest of this section is devoted to a proof of Theorem~\ref{thm:subsnr}.
Let $\ssc \subseteq \omega$ be the set of integers $m=\langle \bsft,p_1,\dots,p_k\rangle$ coding subshifts
$Y_m$ such that there is a continuous, equivariant map from $\fzs$ to $Y_m$.
We show $\ssc$ is $\Sigma^0_1$-complete by defining a computable reduction from the halting problem to $\ssc$.

\subsection{Good Turing machines}
We adopt some conventions regarding Turing machines. Our Turing
machines operate on a bi-infinite memory tape divided into discrete
cells. In a typical step, the machine positions its head over a cell
and reads the symbol there.  Then, according to the symbol and its
present state, the machine looks up a table of instructions and takes
action following the instruction applicable to the symbol read and the
state. A typical action involves the machine writing a symbol in the
cell followed by positioning its head either at the present cell or
one cell to the left or one cell to the right. Upon completion of an
action the machine changes its state to a new state following the
instruction. After this the machine is ready to proceed to the next
instruction or it can halt.  The Turing machines we work with will
have some additional features. First, they will all have the alphabet
$$ \Sigma=\{B, S, E, 0, 1\}. $$ The symbol $B$ indicates that the cell
is blank.  The symbol $S$ is intended to mark the start of a region on
the tape (the ``data region'') where the machine will do its work.  The symbol $E$ marks the
end of the working region. The symbols $0$ and $1$ are used by the
Turing machine for its computations. Each of our Turing machines will
have a set of states $Q=\{q_0,\dots, q_t\}$, with two special states
$q_0$ and $q_t$. The state $q_0$ is the initial state of the machine
at the beginning of the computation. The state $q_t$ is the halting
state. We think of each instruction of a Turing machine to be a
quadruple
$$ I=(q, \sigma, \alpha, q') $$ where $q, q'\in Q$, $\sigma\in
\Sigma$, and $\alpha$ is a pair $(\tau, P)$ where $\tau\in\Sigma$ and
$P\in\{-1,0,1\}$. In the end, a Turing machine is written as a finite
set of instructions
$$ M=\{ (q_i, \sigma_i, \alpha_i, q'_i)\,:\,1\leq i\leq s\}. $$
Figure~\ref{fig:tm} illustrates a typical step in the computation of a Turing machine.

\begin{figure}[h]
\begin{tikzpicture}[scale=0.03]

\pgfmathsetmacro{\s}{0.03}

\pgfmathsetmacro{\n}{20}
\pgfmathsetmacro{\r}{0.05}

\draw (-150,0) to (150,0);
\draw (-150,\n) to (150, \n);

\foreach \i in {1,...,14}
{
\draw ({-150+\i*20},0) to ({-150+\i*20},20);
}

\node[scale=0.75] at (-150, 10) {$\cdots\cdots$};
\node[scale=0.75] at (150, 10) {$\cdots\cdots$};
\node[scale=0.75] at (-120, 10) {$B$};
\node[scale=0.75] at (-100, 10) {$B$};
\node[scale=0.75] at (-80, 10) {$S$};
\node[scale=0.75] at (-60, 10) {$0$};
\node[scale=0.75] at (-40, 10) {$1$};
\node[scale=0.75] at (-20, 10) {$1$};
\node[scale=0.75] at (0, 10) {$0$};
\node[scale=0.75] at (20, 10) {$E$};
\node[scale=0.75] at (40, 10) {$B$};
\node[scale=0.75] at (60, 10) {$B$};
\node[scale=0.75] at (80, 10) {$B$};

\draw (-40,20) to (-45,25);
\draw (-40,20) to (-35,25);
\draw (-45,25) to (-65,25);
\draw (-35,25) to (-15,25);
\draw (-65,25) to (-65, 45);
\draw (-15,25) to (-15, 45);
\draw (-65,45) to (-15,45);

\node[scale=0.75] at (-25, 35) {$q$};

\end{tikzpicture}
\caption{A typical step in the computation of a Turing machine} \label{fig:tm}
\end{figure}

A {\em tape configuration} is an element of $\Sigma^\Z$ representing the content of the tape. The following definition imposes more restrictions on the Turing machines we would like to work with.

\begin{defnn} \label{goodm}
Let $M$ be a Turing machine.
Let $p(i) \in \Z$ denote the position of the reading head of $M$ on the tape at step $i$. Let $z_i$ be the tape configuration of $M$ at step $i$. We say that $M$ is {\em good} if it has the following properties:
\begin{enumerate}
\item \label{g0} $p(0)=0$; $z_0(0)=S$, $z_0(1)=E$, and $z_0(k)=B$ for all $k\neq 0,1$.
\item \label{g1}
For any $i\geq 0$, $p(i)\geq 0$; $z_i(0)=S$, $z_i(k)=B$ for all $k<0$, and for any $k\neq 0$, $z_i(k)\neq S$.
\item \label{g2}
For any $i\geq 0$, there is a unique $K>0$ such that $z_i(K)=E$ and $z_i(k)=B$ for all $k>K$. We refer to this cell as the {\em ending cell} of the working region.
\item \label{g3}
For any $i\geq 0$, if $K$ is the ending cell of the working region, then $p(i)\leq K+1$.
\item \label{g4}
For any $i\geq 0$, if $K$ is the ending cell of the working region, then $z_i(k)\in \{0,1,E\}$ for all $0<k<K$. Moreover, if $z_i(k)=E$ for any $0<k<K$, then $k=K-1$.
\item \label{g5}
If $M$ halts at step $i$, then $p(i)=0$, $z_i(1)=E$, and $z_i(k)=B$ for all $k\neq 0,1$.
\end{enumerate}
\end{defnn}

In English, a good machine only uses the non-negative portion of the
tape, and always maintains a working region with a starting cell with
symbol $S$ and an ending cell with symbol $E$. The symbol in the cell
immediately preceding the ending cell could have a symbol $E$; this
is intended to address the situation where the machine needs to extend
its working region and therefore needs to temporarily write a symbol
$E$ next to the previous ending cell. The point is that at any time
during the computation, the working region should always end with an
ending cell with symbol $E$. Except for this convention, the working
region should always consist of $0$s and $1$s. During the computation,
the reading head stays in the work region, with the only exception
that it might be positioned one cell to the right of the ending cell
(again, to accommodate extension of the working region). If a good
machine ever halts, it halts in the state $q_t$ with its head over the
beginning cell, with the tape configuration identical to the initial
configuration of the computation.

Although good Turing machines operate in a very restricted way, it is
a standard fact that they are able to simulate any computation of
Turing machines. For programming techniques for Turing machines the
reader can consult standard references such as Chapter 8 of
\cite{HMD}. For example, the technique to keep all computations on a
semi-infinite tape while requiring that the machine never prints the
blank symbol $B$ is presented in \S 8.5.1 of \cite{HMD}.

We use the usual G\"{o}del numbering for Turing machines to assign a
natural number to each Turing machine. Conversely, for each natural
number $n\in\omega$ there is a corresponding Turing machine $M_n$. The
standard programming techniques for Turing machines give that there is
a total computable function $f \colon \omega \to \omega$ such that for
all $n \in\omega$, $M_{f(n)}$ is a good Turing machine, and $M_n$
halts if and only if $M_{f(n)}$ halts. We fix this computable function
$f$ for the rest of the proof. Our halting set is $$H=\{ n \colon
M_{n} \text{ halts}\}=\{ n \colon M_{f(n)} \text{ halts}\}.$$ This is
a $\Sigma^0_1$-complete set of integers.

We will define another computable function $h \colon \omega \to \omega$ such that
if $n$ codes a good Turing machine $M_n$, then $h(n)=\langle \bsft,p_1,\dots,p_k\rangle$
codes a subshift $Y_{h(n)}$ of finite type, and $M_n$ halts if and only if $h(n) \in \ssc$.
Then $h\circ f$ gives a computable reduction of $H$ to $\ssc$, and so
$\ssc$ is $\Sigma^0_1$-complete.

\subsection{The canonical allowable pattern} \label{cap}
For the rest of the proof we fix a good Turing machine $M=M_n$ and describe the subshift $Y=Y_{h(n)}$.
It will be clear from the construction that the map $h$ being implicitly defined is computable.
We describe the subshift by giving the local rules an element $x \in \bsft^{\Z^2}$ must satisfy
in order to be an element of $Y$ (we are thereby defining the set of patterns $p_i$ for the subshift).

The idea is that, for an element $x\in \bsft^{\Z^2}$ to be in the
subshift, $x$ must appear to be coding a valid computation sequence
following the instructions of the Turing machine $M$. To describe the
set of patterns for the subshift, we first define the  {\em canonical allowable
pattern} $\pi(M)\in \{ 0,1,3\}^{\Z^2}$ associated  to $M$.
Each horizontal line $\ell_j=\{ (i,j)\in \Z^2 \colon i\in\Z\}$,
for an even $j \geq 0$,  of $\pi(M)$ will code a valid tape
configuration of $M$ at a particular step of the computation, and the line above, $\ell_{j+1}$, will contain a
code for the state and the head position of $M$ at this step.
When two lines $\ell_j, \ell_{j+1}$, where $j\geq 0$ is even, in $\pi(M)$ code the information of a step during the
computation, the two lines above, $\ell_{j+2}, \ell_{j+3}$, will code the information of the
next step obtained by following the applicable instruction of $M$.

To implement this idea, we use the following coding scheme for the alphabet of $M$:
$$ \begin{array}{rcl}
B&\mapsto & 11111 \\
S& \mapsto & 01110 \\
0&\mapsto & 01000 \\
1& \mapsto & 01100 \\
E &\mapsto & 11110
\end{array}
$$
It is easy to see that a bi-infinite string that codes a tape configuration of a good Turing machine this way is uniquely readable. To code the head position and the state of the Turing machine, we assume without loss of generality that the set of states $Q=\{ q_0=1, q_1=2, \dots, q_t=|Q|\}$ consists of integers. We then use the string
$$ 000001^q00000 $$
to code the state $q\in Q$, where $1^q$ denotes $q$ many consecutive 1s. The obvious way to code the position is to line up the leading 5 consecutive 0s in the above code with the code for the symbol of the cell the head is reading. However, we will postpone this to address another important consideration.

The consideration is regarding the parities of positions of $1$s in $\pi(M)$. To be precise, an element $(i,j)\in \Z^2$ has {\em even parity} if $i+j$ is even, and {\em odd parity} if $i+j$ is odd. The consideration is that we would like to make sure
that in our definition of the canonical allowable pattern $\pi(M)$,
all 1s appear in positions of the
same parity.  For this we use the method of {\em padding}, which is to
append a $0$ to each digit of the code, no matter whether it is 0 or
1. Equivalently, we apply the replacement rule
$$ 0\mapsto 00, \ \ 1\mapsto 10 $$ to the results of above coding
schemes. Now all lines of $\pi(M)$ coding tape configurations have 1s
occurring only in even positions. It only remains to determine the
exact position for the (padded) string coding the state of the
machine. For this we first align the beginning of the string with the
beginning position of the code for the cell the head is reading and
then shift one unit to the right. Thus in all lines of $\pi(M)$ coding
the state and the head position, all 1s occur in odd
positions. Figure~\ref{fig:cap} illustrates the construction of
$\pi(M)$; some spaces are added for readability.

\begin{figure}[htbp]
\begin{tikzpicture}[scale=0.03]

\pgfmathsetmacro{\s}{0.03}

\pgfmathsetmacro{\n}{80}
\pgfmathsetmacro{\r}{0.05}

\draw (-150,60) to (150,60);
\draw (-150,\n) to (150, \n);

\foreach \i in {1,...,14}
{
\draw ({-150+\i*20},60) to ({-150+\i*20},80);
}

\node[scale=0.75] at (-150, 70) {$\cdots\cdots$};
\node[scale=0.75] at (150, 70) {$\cdots\cdots$};
\node[scale=0.75] at (-120, 70) {$B$};
\node[scale=0.75] at (-100, 70) {$B$};
\node[scale=0.75] at (-80, 70) {$S$};
\node[scale=0.75] at (-60, 70) {$0$};
\node[scale=0.75] at (-40, 70) {$1$};
\node[scale=0.75] at (-20, 70) {$1$};
\node[scale=0.75] at (0, 70) {$0$};
\node[scale=0.75] at (20, 70) {$E$};
\node[scale=0.75] at (40, 70) {$B$};
\node[scale=0.75] at (60, 70) {$B$};
\node[scale=0.75] at (80, 70) {$B$};

\draw (-40,80) to (-45,85);
\draw (-40,80) to (-35,85);
\draw (-45,85) to (-65,85);
\draw (-35,85) to (-15,85);
\draw (-65,85) to (-65, 105);
\draw (-15,85) to (-15, 105);
\draw (-65,105) to (-15,105);

\node[scale=0.75] at (-25, 95) {$q_2$};

\draw (-10,55) to (10,55);
\draw (-10,55) to (-10,45);
\draw (10,55) to (10, 45);
\draw (-10,45) to (-20,45);
\draw (10,45) to (20,45);
\draw (20,45) to (0,35);
\draw (-20,45) to (0,35);

\node[scale=0.75] at (0, 30) {$\cdots\cdots$};
\node[scale=0.75] at (0, 0) {$\cdots\cdots$};
\node[scale=0.75] at (19,20) {$000000000\ 0101010000\ 0000000$};
\node[scale=0.75] at (0, 10) {$0010101000\ 0010000000\ 0010100000\ 0010100000\ 0010000000\ 1010101000$};

\end{tikzpicture}
\caption{From the computation of $M$ to the canonical allowable pattern $\pi(M)$} \label{fig:cap}
\end{figure}

Thus the canonical allowable pattern $\pi(M)$ will start with two
lines $\ell_0, \ell_1$ coding the initial configuration (a single $S$ followed by a
single $E$), the initial state ($q_0=1$), and the initial head
position (over the starting cell). Then $\pi(M)$ will extend upward as
the computation of $M$ proceeds, with every two lines representing a
valid transition from the two lines below according to the applicable
instructions of the machine.
For lines below the starting configuration line, $\pi(M)$ will consist
entirely of codes for the symbol $B$, with every other line shifted to the right
by $1$ to maintain the same parity of $1$s.
Note that so far $\pi(M)$ is a pattern of
0s and 1s, with all 1s occurring in positions of the same parity. If
$M$ does not halt, then the region of $\pi(M)$ not coding $B$s will extend
upwards forever and will be
infinite. In case $M$ halts, this extension process will stop, at
which time we will introduce a new symbol 3 (we reserve the symbol 2
for another purpose below) and add a $10\times 10$ block of the symbol 3
directly above the 10 leading 0s in the code for the halting state
$q_t$. For all regions that have not been covered by this definition, $\pi(M)$ will consist of
alternating $0$s and $1$s so that the same parity of $1$s is maintained.
This finishes the definition of the canonical allowable pattern
$\pi(M)$. Thus in case $M$ halts, the part of $\pi(M)$ not coding $B$s
will be a finite pattern
with symbols 0, 1, and 3. Note that every line of the domain of
$\pi(M)$ has  a finite interval of non-blank codes, and that the non-blank portion of $\pi(M)$ is
connected. Figure~\ref{fig:cannhalt} illustrates the  canonical allowable pattern $\pi(M)$ in case $M$ halts.

\begin{figure}[h]

\begin{tikzpicture}[scale=0.05]

\draw[fill, color=darkgray] (0,0) rectangle (5,90);
\draw[fill, pattern=north east lines] (0,92) rectangle (5,97);
\draw[fill, color=darkgray] (7,85) rectangle (12,90);
\draw[fill,color=darkgray] (7,5) to (12,5) to (50,60) to (12,85) to (7,85) to (45,60) to (7,5);
\draw[fill, color=darkgray] (7,0) rectangle (12,5);
\draw[fill,color=lightgray] (5,5) to (7,5) to (45,60) to (7,85) to (5,85) to (5,5);

\draw (5,50) to (39,50);
\draw (5,45) to (36,45);

\draw[fill,color=blue] (30,45) rectangle (32, 50);

\draw[draw=none,fill=lightgray] (5,0) rectangle (7,5);
\draw[draw=none,fill=lightgray] (5,85) rectangle (7,90);


\node at (-50,95) (sb) {\small{stopping block}};
\draw[->] (-25,95) to (-1,95);

\node at (-60,30) (sb) {\small{state and position}};
\draw[->] (-30,30) to (29,47);

\end{tikzpicture}
\caption{A bird's eye view of the canonical allowable pattern $\pi(M)$ for a halting computation.
Black areas represent codes for starting cells and ending cells.
The gray area consists of $0$s and $1$s  and the lined area consists of $3$s.} \label{fig:cannhalt}
\end{figure}

Here is the intuition behind the introduction of the block of 3s for
halting computations. In a general element of the subshift (to be defined), there
might be different areas appearing to code valid computations of the
machine. For each of these areas the 1s occur in positions with the
same parity, but the parities of 1s for different areas do not
necessarily match.
These computation regions of (possibly) different parity will be separated
by ``threads'' of the symbol $2$.
The block of 3s is introduced to allow these parity
changing threads to emanate/terminate, thus
allowing any potential parity conflicts to be resolved.  In case $M$
halts, this will allow us to show, using Theorem~\ref{thm:tilethm},
that there is a continuous, equivariant map into the
subshift. Conversely, if there is a continuous equivariant map into
the subshift, Theorem~\ref{thm:tilethm} will imply that halting
configurations must exist.

\subsection{Prototypical elements and the subshift of finite type}
We now define the notion of a {\em prototypical element} for $M$. A prototypical
element will be an element of $\{ 0,1,2,3\}^{ \Z^2}=4^{\Z^2}$.
These elements will be the basis for our definition of the subshift
$Y$ associated to $M$. The idea is that within regions separated by $2$s,
we appear to see a valid (coded and padded) Turing machine computation
(with alternating rows giving the tape and state/position information
as described previously).

\begin{defnn} \label{def:accep}
Let $\ell=\ell(M)=100 |Q|$.
We say that $x \in 4^{\Z^2}$ is {\em prototypical} if
for any $\ell \times \ell$ rectangle $R\subseteq \Z^2$, $x \res R$
satisfies the following:

\begin{enumerate}
\item(no parity violations) \label{acc_a}
There do not exist points $a,b\in R$
of different parities in $\Z^2$
with $x(a)=x(b)=1$,
and a path $p$ from $a$ to $b$ in the undirected Cayley graph on $R$
such that $x \res p$ takes values in $\{ 0,1\}$.

\item(no adjacent $2$ colors) \label{acc_b}
If $a,b \in R$ and $x(a)=x(b)=2$,
then $a, b$ are not related by a generator of $\Z^2$.
Furthermore, if $x(a)=2$, $j \leq 15$, and $x(a+(i,0))\notin \{2,3\}$
for $i \leq j$, then the values $x(a+(1,0)),\dots,x(a+(j,0))$
are alternating $0$s and $1$s (starting with either a $0$ or a $1$).
Similarly, if we look to the left of $a$ up to $15$ positions
we see an alternating pattern of $0$s and $1s$ until we see a $3$.

\item(no Turing machine violations) \label{acc_c}
If $D\subseteq R$ is the set of $a\in R$ such that $x(a)\in \{0,1,3\}$,
then $x\res D$ has no violations of the rules for the Turing machine $M$.
Specifically, we mean the following.

\begin{enumerate}
\item
If $I$ is a horizontal segment of $x \res D$,
then $I$ is consistent with being a coded and padded sequence
of a portion of the data line of a Turing machine,
or the coded and padded
sequence of a portion of the state/position line of a Turing machine, or
consistent with being a portion of a $10 \times 10$ region of $3$s surrounded to the left
and right by blanks.
\item
If $I=(a,b)\times \{ j\}$ is a horizontal segment of $x \res D$ coding $0$s and $1$s,
then  $S=(a-3,b+3)\times [j-3,j+3] \cap R$ does not contain any $2$s and
is consistent with the rules of a coded and padded Turing Machine
(including the rule about a $10 \times 10$ region of $3$s after a halting line).
Also, if $I$ is an interval of $3$s, then the two lines below this is a properly aligned
halting configuration.

\end{enumerate}
\end{enumerate}
\end{defnn}

Thus,  a prototypical $x \in 4^{\Z^2}$ has the property that in each
$\ell \times \ell$ window, each of the regions bounded by the
$2$s appears to be a (coded and padded) instance of a computation according to $M$.
Note that in (\ref{acc_c}) of Definition~\ref{def:accep} we only require that
$x$ satisfies the rules of the Turing machine $M$, we
do {\em not} require that it is part of the actual canonical allowable pattern $\pi(M)$.


We are now ready to define the subshift $Y$ associated to a good Turing machine $M$.

\begin{defnn} \label{def:tursubs}
Let $\ell=\ell(M)=100 |Q|$ as before.
Let $W$ be the set of all $\ell \times \ell$ patterns which occur in some prototypical element of $4^{\Z^2}$.
Define $Y$ to be the subshift of $4^{\Z^2}$ determined by $(p_1,\dots,p_k)$,
where the $p_i$ are all $\ell\times\ell$ patterns not in $W$.
\end{defnn}

The prototypical elements $x \in \bsft^{\Z^2}$ are precisely the elements of the subshift $Y$.
It is obvious from the definition that $Y$ has width $\ell-1$.

In the following we prove some basic facts about prototypical elements.

\begin{lemn} \label{ttc}
Let $x \in 4^{\Z^2}$ be a prototypical element for the Turing machine $M$
and suppose that $x(a)\neq 3$ for all $a \in \Z^2$. Then there
is a proper $3$-coloring $x' \in 3^{\Z^2}=\{0,1,2\}^{\Z^2}$ such that if $x'(a)\neq x(a)$, then $x(a)=0$ and $x'(a)=1$.
Moreover, for every $a\in \Z^2$, whether $x'(a)\neq x(a)$ is determined by the values of $x$
in the region $\{b\in \Z^2 \colon \|a-b\|\leq 20\}$.
\end{lemn}

\begin{proof}
Let $D=\{ a \in \Z^2 \colon x(a) \in \{ 0,1\} \}$. We claim that
in any connected component $C$
of $D$ that the positions of $1$s all have the same parity.
To see this claim, let $a,b \in \Z^2$ be in the same component $C$ of $D$.
Let $p$ be a path in $C$ connecting $a$ and $b$.
So, $x(c)\in \{ 0,1\}$ for all $c \in p$.

We show a subclaim that for any $c \in p$
there is $d \in \Z^2$ with $\| c-d\| \leq 20$ such that $x(d)=1$
and furthermore there is a path from $c$ to $d$ staying within
$C\cap B(c,20)$ (the points of distance $\leq 20$ from $c$).
Let $c_1$ be the first point to the left of $c$ for which $x(c_1)=2$
(if this does not exist,  the point $d$ can be taken to the
left of $c$ and within distance $11$ of $c$ since the maximum number of consecutive
$0$s in a horizontal line of a prototypical element is $11$ and there are no $3$s in $x$).
Likewise define $c_2$ to the
right of $c$.
If $\| c_1-c_2\|\geq 13$, then the above fact that the maximum number of consecutive
$0$s in a horizontal line of a prototypical element is $11$ gives the result.
If $3 \leq\| c_1-c_2\| \leq 12$, then
condition (\ref{acc_b}) of Definition~\ref{def:accep} implies there is a $1$ in the horizontal interval
between $c_1$ and $c_2$ and we are done.
Finally, suppose $c_1$ and $c_2$ are the points immediately to the left and right of $c$.
The path $p$ must move from $c$ to the point immediately above or below $c$.
Say, for example, $p$ moves from $c$ to $c'$, the point immediately
above $c$. Let $c'_1$, $c'_2$ be the points immediately above $c_1$
and $c_2$. By (\ref{acc_b}) of Definition~\ref{def:accep}, $x(c'_1)$ and
$x(c'_2)$ are not equal to $2$. Repeating the argument given for $c_1$ and $c_2$,
shows that there is a $1$ on the horizontal segment
between $c'_1$ and $c'_2$, which proves the subclaim.

For each point $c\in p$, let $d(c)$ be a point of $C$ connected to $c$ by
a path in $C$ of length $\leq 20$ with $x(d(c))=1$. Since $\ell >40$, an easy induction
using (\ref{acc_a}) of Definition~\ref{def:accep} shows that
all of the points $d(c)$ have the same parity. In particular, $a$ and $b$
have the same parity. This proves the claim.

We define $x'$ as follows. Let $x'(a)=x(a)$ if $x(a)=1$ or $2$. If $x(a)=0$,
then consider the connected component $C$ of $D$ containing $a$.
If $a$ has the same parity as the positions of $1$s in $C$, then let $x'(a)=1$; otherwise let $x'(a)=x(a)=0$.
Using condition~(\ref{acc_b}) of Definition~\ref{def:accep} and the above claim, it is easy to see that $x'$ is a
proper $3$-coloring.

To see the ``moreover" part of the lemma, note that by the subclaim, for
every $a\in \Z^2$ such that there is $b$ in the connected component $C$
of $a$ in the subgraph induced by $D$ with $x(b)=1$, there is such a $b$ with $\|a-b\|\leq 20$ and
a path $p$ from $a$ to $b$ such that $x\res p$ takes
values in $\{0, 1\}$.
Thus $x'(a)\neq x(a)$ exactly when $a$ and $b$
are of the same parity and $x(a)=0$. By our claim this does not depend
on the choice of the point $b$ and the path $p$. Thus the values of
$x$ in the region $\{b\in\Z^2 \colon \|a-b\|\leq 20\}$ completely determine
whether $x'(a)\neq x(a)$.
\end{proof}

Fix for the moment $n,p,q$ with $p,q>n\geq \ell-1$, where $\ell$ is as in
Definition~\ref{def:accep} for the good Turing machine $M$.
Let $H_1=\Gdpacqa$, $H_2=\Gcqadpa$ be the labeled grid graphs for the two
horizontal long tiles used in defining $\gnpq$ as shown in Figure~\ref{fig:Gamma-npq-horiz}.
Let $H=H_{n,p,q}$ be the quotient graph of the disjoint union of $H_1$ and $H_2$
by identifying corresponding vertices of the blocks having the same label,
as in the definition of $\gnpq$ (see \S\ref{subsec:tilethm}). Thus, $H$ is an induced
subgraph of the graph $\gnpq$. Note that we may also obtain $H$ by first stacking
$H_1$ on top of $H_2$ so that the labeled blocks along the bottom side  of $H_1$ coincide with the
corresponding labeled blocks along the top side of $H_2$, and then identifying corresponding vertices
as in $\gnpq$.
If $z$ is a map from the vertex set $V(H)$ of $H$
to $4=\{ 0,1,2,3\}$, we say $z$ is prototypical if for any $\ell\times \ell$ rectangle $R$ contained in one of the grid graphs
$H_1$ or $H_2$, $z\res R$ satisfies (\ref{acc_a})--(\ref{acc_c}) of Definition~\ref{def:accep}.

From Lemma~\ref{ttc} and Theorem~\ref{thm:pcn} we have the following.

\begin{lemn} \label{ttd}
Let $p,q>n \geq \ell-1$, where $\ell$ is as in
Definition~\ref{def:accep} for the good Turing machine $M$, and assume $\gcd(p,q)=1$.
Let $H_{n,p,q}$ be as defined above. If $z\colon V(H_{n,p,q}) \to 4$
is prototypical, then $3 \in \ran(z)$.
\end{lemn}

\begin{proof}
Suppose $z\colon V(H_{n,p,q}) \to 4$ is prototypical and $z(c)\neq 3$
for all vertices $c$ of $H_{n,p,q}$. Let $\bar{x}$ be the element of
$3^{\Z^2}$ obtained by tiling $\Z^2$ with copies of $H_1$ and $H_2$ in
the manner illustrated in Figure~\ref{fig:incon1}. That is, we stack
$H_1$ and $H_2$ alternatively, with the labeled blocks along the
bottom side of a copy of $H_1$ coinciding with the labeled blocks
along the top side of the copy of $H_2$ below it, and similarly the
labeled blocks along the bottom side of a copy of $H_2$ coinciding
with the labeled blocks along the top side of the copy of $H_1$ below
it. Also, neighboring columns share their labeled blocks on the sides.

\begin{figure}[h]
\begin{tikzpicture}[scale=0.01]

\pgfmathsetmacro{\s}{0.03}

\pgfmathsetmacro{\n}{20}
\pgfmathsetmacro{\r}{0.05}

\tikzset{
pics/myshapec/.style={
code={
\draw (-150,0) rectangle (150,80);
\draw (-150,\n) to (150,\n);
\draw (-150,80-\n) to (150,80-\n);
\draw (-150+\n,0) to (-150+\n, 80);
\draw (150-\n,0) to (150-\n,80);

\foreach \i in {1,...,2}
{
\draw ({-150+\n/2+\i*(300-\n)/3-\n/2},0) to ({-150+\n/2+\i*(300-\n)/3-\n/2},\n);
\draw ({-150+\n/2+\i*(300-\n)/3+\n/2},0) to ({-150+\n/2+\i*(300-\n)/3+\n/2},\n);
}

\foreach \i in {1,...,4}
{
\draw ({-150+\n/2+\i*(300-\n)/5-\n/2},80-\n) to ({-150+\n/2+\i*(300-\n)/5-\n/2},80);
\draw ({-150+\n/2+\i*(300-\n)/5+\n/2},80-\n) to ({-150+\n/2+\i*(300-\n)/5+\n/2},80);
}

}}}

\tikzset{
pics/myshaped/.style={
code={
\draw (-150,0) rectangle (150,80);
\draw (-150,\n) to (150,\n);
\draw (-150,80-\n) to (150,80-\n);
\draw (-150+\n,0) to (-150+\n, 80);
\draw (150-\n,0) to (150-\n,80);

\foreach \i in {1,...,2}
{
\draw ({-150+\n/2+\i*(300-\n)/3-\n/2},80-\n) to ({-150+\n/2+\i*(300-\n)/3-\n/2},80);
\draw ({-150+\n/2+\i*(300-\n)/3+\n/2},80-\n) to ({-150+\n/2+\i*(300-\n)/3+\n/2},80);
}

\foreach \i in {1,...,4}
{
\draw ({-150+\n/2+\i*(300-\n)/5-\n/2},0) to ({-150+\n/2+\i*(300-\n)/5-\n/2},\n);
\draw ({-150+\n/2+\i*(300-\n)/5+\n/2},0) to ({-150+\n/2+\i*(300-\n)/5+\n/2},\n);
}

}}}

\foreach \j in {-1,0,1}
{
\pic[xscale=0.01,yscale=0.01] at (0-\j*280,0) {myshapec};
\pic[xscale=0.01,yscale=0.01] at (0-\j*280,-80+\n) {myshaped};
\pic[xscale=0.01,yscale=0.01] at (0-\j*280,-160+2*\n) {myshapec};
}

\end{tikzpicture}
\caption{Tiling of $\Z^2$ with $H_1$ and $H_2$.} \label{fig:incon1}
\end{figure}

Since $n\geq \ell-1$, any $\ell\times \ell$ region is properly contained in
a copy of $H_1$ or $H_2$, and it follows that $\bar{x}$ is a
prototypical element of $3^{\Z^2}$. Let $x'$ be obtained from
$\bar{x}$ as in Lemma~\ref{ttc}. So $x'$ is a proper $3$-coloring of
$\Z^2$ and $x'$ is obtained from $\bar{x}$ by only changing some $0$
values of $x$ to $1$. Moreover, for any $a\in \Z^2$ whether $x'(a)\neq
\bar{x}(a)$ is determined by the values of $x$ in the region
$\{b\in\Z^2 \colon \|a-b\|\leq 20\}$. Since $n\geq \ell-1>2\cdot 20$, from the periodic
nature of $\bar{x}$ it follows that $x'$ also respects the identified
vertices in the tiling (e.g., for two elements $a,a'$ of $\Z^2$ in the
same position of similarly labeled blocks we have
$x'(a)=x'(a')$). Thus, $x'$ induces a proper $3$-coloring of the graph
$H_{n,p,q}$. In particular, this gives a proper $3$-coloring for each
of the long horizontal tiles $H_1=\Gdpacqa$ and $H_2=\Gcqadpa$.  This
contradicts Theorem~\ref{thm:pcn}.
\end{proof}

\subsection{Proof of Theorem~\ref{thm:subsnr}}

Let $M$ be a good Turing machine and $Y$ the subshift associated to $M$
as in Definition~\ref{def:tursubs}. To finish the proof of Theorem~\ref{thm:subsnr}
it suffices to show that  $M$ halts if and only if there is a continuous, equivariant map from
$\fzs$ to $Y$.

First suppose that $M$ halts in $N$ steps. Fix $n> \ell$ and $p,q>\max\{2n, 100N\}$ with $\gcd(p,q)=1$.
We may also assume that $n,p,q$ are odd, and $q= p+2$.
From Theorem~\ref{thm:tilethm} it suffices to
find $g \colon \gnpq \to 4$ which respects the subshift $Y$.
We describe such a $g$ as a coloring of $\gnpq$ with colors $\{0,1,2,3\}$.

We use the following terminology. For a subset $D\subseteq \Z^2$,
a {\em checkerboard pattern} on $D$ is a proper 2-coloring with colors $\{0,1\}$.
If $D$ is a region connected by generators of $\Z^2$, there are only
two checkerboard patterns possible on $D$. For each of them the positions of $1$s are of the same parity, and
the parities of 1s for the two patterns are the opposite.
By a {\em zig-zag path} in $\Z^2$ we mean a sequence $u_0, u_1,\dots$ of points
such that $u_{i+1}= (\pm 1, \pm 1) +u_i$ for all $i$. Note that all points on such a path have the same
parity, in particular no two points of the path are connected by a generator of $\Z^2$.

We describe the map $g$ by first describing the values of $g$
on the regions with labels $R_\times$, $R_a$, $R_b$, $R_c$, and $R_d$.
For $R_\times$, we use a zig-zag path connecting the
corner points of the region to each other as shown in Figure~\ref{fig:grx}.
This is possible as $n$ is odd. The points on the path
are given color $2$. As demonstrated in Figure~\ref{fig:grx}, the path created an interior region.
This region is given a checkerboard pattern. Finally, the points on the
top and right edges of $R_\times$ are given color $0$, and the
points on the other two edges are given color $1$.

\begin{figure}[h]

\begin{tikzpicture}[scale=0.05]

\pgfmathsetmacro{\d}{10}

\definecolor{darkgreen}{rgb}{0.2,1,0.5}


\foreach \i in {0,...,2}
{
\foreach \j in {0,...,3}
{
\draw (\d*2*\i, \d*2*\j) circle (0.75);
}
\foreach \j in {0,...,2}
{
\draw[fill] (\d*2*\i, \d*2*\j+\d) circle (0.75);
}
}

\foreach \i in {0,...,2}
{
\foreach \j in {0,...,2}
{
\draw[fill] (\d*2*\i+\d, \d*2*\j) circle (0.75);
\draw (\d*2*\i+\d,\d*6) circle (0.75);
}
\foreach \j in {0,...,2}
{
\draw (\d*2*\i+\d, \d*2*\j+\d) circle (0.75);
}
}

\foreach \j in {0,...,3}
{
\draw[fill,color=darkgreen] (0,2*\j*\d) circle (0.75);
}
\foreach \j in {0,...,2}
{
\draw[fill,color=darkgreen] (\d,2*\j*\d+\d) circle (0.75);
}
\draw[fill,color=darkgreen] (2*\d,0) circle (0.75);
\draw[fill,color=darkgreen] (2*\d,6*\d) circle (0.75);
\draw[fill,color=darkgreen] (3*\d,\d) circle (0.75);
\draw[fill,color=darkgreen] (3*\d,5*\d) circle (0.75);
\draw[fill,color=darkgreen] (4*\d,0) circle (0.75);
\draw[fill,color=darkgreen] (4*\d,6*\d) circle (0.75);
\foreach \j in {0,...,2}
{
\draw[fill,color=darkgreen] (5*\d,2*\j*\d+\d) circle (0.75);
}
\foreach \j in {0,...,3}
{
\draw[fill,color=darkgreen] (6*\d,2*\j*\d) circle (0.75);
}
\foreach \j in {0,...,2}
{
\draw (6*\d,2*\j*\d+\d) circle (0.75);
}

\draw[thick, color=darkgreen] (0,0) to (\d,\d) to (2*\d,0) to (3*\d, \d) to (4*\d,0) to (5*\d,\d) to (6*\d,0);
\draw[thick, color=darkgreen] (0,0) to (\d,\d) to (0,2*\d) to (\d, 3*\d) to (0,4*\d) to (\d,5*\d) to (0,6*\d);
\draw[thick, color=darkgreen] (0,6*\d) to (\d,5*\d) to (2*\d,6*\d) to (3*\d, 5*\d) to (4*\d,6*\d) to
(5*\d,5*\d) to (6*\d,6*\d);
\draw[thick, color=darkgreen] (6*\d,0) to (5*\d,\d) to (6*\d,2*\d) to (5*\d, 3*\d) to (6*\d, 4*\d)
to (5*\d,5*\d) to (6*\d,6*\d);

\end{tikzpicture}
\caption{The map $g$ on the $R_\times$-labeled region.} \label{fig:grx}
\end{figure}

Each of the $R_a$-labeled and $R_b$-labeled regions is given a
checkerboard pattern with the upper-left corner point getting a color
1. It is easy to check that this is consistent. Because $n, p, q$ are
odd, points in the top row of the $R_a$-labeled region get different
colors than their adjacent points in the bottom row of the
$R_\times$-labeled region, and points in the bottom row of the
$R_a$-labeled region get different colors than their adjacent points
in the top row of the $R_\times$-labeled region. Similarly for the
$R_b$-labeled regions.

We color the $R_c$-labeled and $R_d$-labeled regions in two steps. In
the first step, each of these regions is given a background
checkerboard pattern with the upper-left corner point getting a color
0. As above, the coloring of the $R_\times$-labeled region and the
fact that $p$, $q$ are odd give that this is a consistent proper
two-coloring of these regions. Next, we override the background
coloring by inserting in each $R_c$-labeled and $R_d$-labeled region a
portion of the canonical allowable pattern $\pi(M)$ that corresponds
to the beginning steps of the computation of $M$, with the positions
of 1s in the inserted piece having the same parity as the positions of
1s in the background. For example, we may insert the first two lines
of $\pi(M)$ corresponding to the initial step of the computation into
the top two rows of $R_c$ and $R_d$. Since $p-n, q-n>n\geq \ell-1$, there is
enough room in $R_c$ and $R_d$ for the insertion. We will make sure
that the code for the starting cell appears near the horizontal center
of the region. Note that the width of the codes for the two initial
steps of the computation is no more than $10N$ (since each step can
access only one more cell and each symbol is coded by a 0,1-string
with ten digits). By placing the code for the starting cell around the
center, we ensure that the codes for the entire computation, when
extended beyond the $R_c$-labeled or $R_d$-labeled regions, will stay
in a region directly above the $R_c$-labeled or $R_d$-labeled
region. This is illustrated in Figure~\ref{fig:glt}.

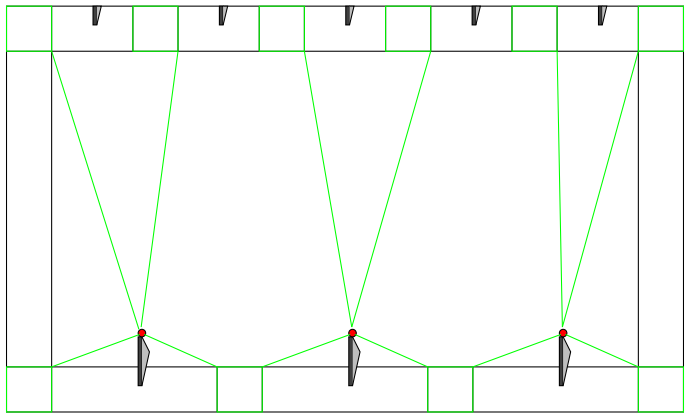
\begin{figure}[htbp]
\begin{tikzpicture}[scale=0.03]

\pgfmathsetmacro{\s}{0.03}

\pgfmathsetmacro{\n}{20}
\pgfmathsetmacro{\r}{0.05}

\definecolor{darkgreen}{rgb}{0.2,1,0.5}

\tikzset{
pics/myshape/.style={
code={
\draw[fill=darkgray] (-\r,\r) rectangle (0,0.7);
\draw[fill=lightgray] (0,\r) to (0.1,0.5) to (0,0.7);
\draw[fill=white] (0,0.7+\r) circle (\r);
}}}

\tikzset{
pics/myshapeb/.style={
code={
\draw[fill=darkgray] (-\r,\r) rectangle (0,\n*\s/2);
\draw[fill=lightgray] (0,\r) to (0.2*\n*\s/2,\n*\s/2) to (0,\n*\s/2);
}}}

\draw (-150,0) rectangle (150,180);
\draw (-150,\n) to (150,\n);
\draw (-150,180-\n) to (150,180-\n);
\draw (-150+\n,0) to (-150+\n, 180);
\draw (150-\n,0) to (150-\n,180);

\foreach \i in {1,...,2}
{
\draw ({-150+\n/2+\i*(300-\n)/3-\n/2},0) to ({-150+\n/2+\i*(300-\n)/3-\n/2},\n);
\draw ({-150+\n/2+\i*(300-\n)/3+\n/2},0) to ({-150+\n/2+\i*(300-\n)/3+\n/2},\n);
}

\foreach \i in {1,...,4}
{
\draw ({-150+\n/2+\i*(300-\n)/5-\n/2},180-\n) to ({-150+\n/2+\i*(300-\n)/5-\n/2},180);
\draw ({-150+\n/2+\i*(300-\n)/5+\n/2},180-\n) to ({-150+\n/2+\i*(300-\n)/5+\n/2},180);
}

\foreach \i in {0,...,3}
{
\draw[very thick, color=darkgreen] ({-150+\i*(300-\n)/3},0) rectangle ({-150+\i*(300-\n)/3+\n},\n);
}

\foreach \i in {0,...,5}
{
\draw[very thick, color=darkgreen] ({-150+\i*(300-\n)/5},180-\n) rectangle ({-150+\i*(300-\n)/5+\n},180);
}

\pic  foreach \i in {0,...,2} at ({-150+\n/2+300/6+\i*(300-\n)/3},{\n/2}) {myshape};

\pic  foreach \i in {0,...,4} at ({-150+\n/2+300/10+\i*(300-\n)/5},{180-\n/2}) {myshapeb};

\foreach \i in {0,...,2}
{
\draw[thick, color=darkgreen] ({-150+\n+\i*(300-\n)/3)},\n) to
({-150+ \n/2+300/6-1+\i*(300-\n)/3)}, {\n/2+0.7/\s+\r/(\s)-0.6});
}

\foreach \i in {1,...,3}
{
\draw[thick, color=darkgreen] ({-150+\i*(300-\n)/3}, \n) to ({-150+ \n/2+300/6-1+(\i-1)*(300-\n)/3+2},
{\n/2+0.7/\s+\r/(\s)-0.6});
}

\draw[thick, color=darkgreen] ({-150+\n/2+ (300-\n)/6+2},{\n/2+(0.7+2*\r)/\s}) to
(-150+\n,180-\n);

\draw[thick, color=darkgreen] ({-150+\n/2+ (300-\n)/6+2+1},{\n/2+(0.7+2*\r)/\s+1}) to
({-150+\n+(300-\n)/5},180-\n);

\draw[thick, color=darkgreen] ({-150+\n/2+ (300-\n)/6+(300-\n)/3+2+1},{\n/2+(0.7+2*\r)/\s+1}) to
({-150+\n+(300-\n)/5+(300-\n)/5},180-\n);

\draw[thick, color=darkgreen] ({-150+\n/2+ (300-\n)/6+(300-\n)/3+2+1},{\n/2+(0.7+2*\r)/\s+1}) to
({-150+\n+(300-\n)/5+2*(300-\n)/5},180-\n);

\draw[thick, color=darkgreen] ({-150+\n/2+ (300-\n)/6+2*(300-\n)/3+2+1},{\n/2+(0.7+2*\r)/\s+1}) to
({-150+\n+(300-\n)/5+3*(300-\n)/5},180-\n);

\draw[thick, color=darkgreen] ({-150+\n/2+ (300-\n)/6+2*(300-\n)/3+2+1},{\n/2+(0.7+2*\r)/\s+1}) to
(150-\n,180-\n);

\end{tikzpicture}
\caption{Mapping $\Gcqadpa$ into the subshift when $M$ halts.} \label{fig:glt}
\end{figure}

Note that the coloring we have defined so far respects the subshift $Y$, since each $\ell\times\ell$ pattern appearing in the coloring can be extended to a prototypical element of $4^{\Z^2}$.

Finally, it remains to color the ``interior'' parts of the tiles of $\gnpq$.
As these points are not identified with any other points in forming the quotient
$\gnpq$, we only need to make sure that the resulting coloring of each tile respects the subshift $Y$ separately.
We consider the long tile $\Gcqadpa$, the other cases being similar. Figure~\ref{fig:glt}
shows the coloring of the long tile which defines $g \res \Gcqadpa$.

In $\Gcqadpa$ we extend each copy of the codes for the entire computation from the
$R_d$-labeled regions along the bottom side into the interior. This is possible as we noted that $p,q>100N$,
whereas the width of the codes for each step of the computation is no more than $10N$.
Recall that at the top of each copy of the codes for the entire computation there is a $10\times 10$ rectangular region
consisting of color-$3$ points (shown by circles in Figure~\ref{fig:glt}). We run
zig-zag color-$2$ paths (shown in green in Figure~\ref{fig:glt}) as shown in the figure.
One end of each of these paths terminates in a corner point of an $R_\times$-labeled region,
which sets its parity. The other end terminates in one of the color-$3$ rectangular regions.
Thus, we maintain the subshift conditions required by $Y$. Note that since
$q=p+2$, there are at most $4$ color-$2$ paths needing to terminate in any given
color-$3$ region, and so the $10 \times 10$ size of the color-$3$ regions is
enough to accommodate them. Note that the color-$2$ paths isolate regions of different
parity in $\Gcqadpa$, and there is no problem filling in each of the isolated
regions by a checkerboard pattern of $0$s and $1$s.

The key point in the construction is that the color-$2$ paths, which are needed to
isolate regions of different parity in the tile, are free to terminate arbitrarily in the
special color-$3$ regions. From the construction we easily see that the map
$g \colon \gnpq \to 4$ defined above respects the subshift $Y$.

Next suppose that $M$ does not halt. From Theorem~\ref{thm:tilethm} it is enough to
fix arbitrary $n\geq \ell-1$ and $p, q>n$ and show that there does not exist $g \colon \gnpq \to 4$
which respects the subshift $Y$. Suppose that such a $g$ exists.
Restricting $G$ to the horizontal long tiles gives a $g'\colon H_{n,p,q}\to 4$
which respects the subshift, that is, $g'$ is a prototypical element in the
sense of Lemma~\ref{ttd}. From Lemma~\ref{ttd} we have that $3 \in \ran(g')$.
In particular, $g$ restricted to some horizontal long tile must attain the
color $3$.


Say $g \res \Gcqadpa$ takes the value $3$ (the other case is similar).
The rules of the subshift require that
a point of color $3$ occurs in a rectangular region of color $3$ points which is immediately
above a line representing a halting configuration of $M$. Consider the diagram shown in
Figure~\ref{fig:incon}.

\begin{figure}[h]
\begin{tikzpicture}[scale=0.01]

\pgfmathsetmacro{\s}{0.03}

\pgfmathsetmacro{\n}{20}
\pgfmathsetmacro{\r}{0.05}

\tikzset{
pics/myshapec/.style={
code={
\draw (-150,0) rectangle (150,180);
\draw (-150,\n) to (150,\n);
\draw (-150,180-\n) to (150,180-\n);
\draw (-150+\n,0) to (-150+\n, 180);
\draw (150-\n,0) to (150-\n,180);

\foreach \i in {1,...,2}
{
\draw ({-150+\n/2+\i*(300-\n)/3-\n/2},0) to ({-150+\n/2+\i*(300-\n)/3-\n/2},\n);
\draw ({-150+\n/2+\i*(300-\n)/3+\n/2},0) to ({-150+\n/2+\i*(300-\n)/3+\n/2},\n);
}

\foreach \i in {1,...,4}
{
\draw ({-150+\n/2+\i*(300-\n)/5-\n/2},180-\n) to ({-150+\n/2+\i*(300-\n)/5-\n/2},180);
\draw ({-150+\n/2+\i*(300-\n)/5+\n/2},180-\n) to ({-150+\n/2+\i*(300-\n)/5+\n/2},180);
}

}}}

\tikzset{
pics/myshaped/.style={
code={
\draw (-150,0) rectangle (150,180);
\draw (-150,\n) to (150,\n);
\draw (-150,180-\n) to (150,180-\n);
\draw (-150+\n,0) to (-150+\n, 180);
\draw (150-\n,0) to (150-\n,180);

\foreach \i in {1,...,2}
{
\draw ({-150+\n/2+\i*(300-\n)/3-\n/2},180-\n) to ({-150+\n/2+\i*(300-\n)/3-\n/2},180);
\draw ({-150+\n/2+\i*(300-\n)/3+\n/2},180-\n) to ({-150+\n/2+\i*(300-\n)/3+\n/2},180);
}

\foreach \i in {1,...,4}
{
\draw ({-150+\n/2+\i*(300-\n)/5-\n/2},0) to ({-150+\n/2+\i*(300-\n)/5-\n/2},\n);
\draw ({-150+\n/2+\i*(300-\n)/5+\n/2},0) to ({-150+\n/2+\i*(300-\n)/5+\n/2},\n);
}

}}}

\pic[xscale=0.01,yscale=0.01] at (0,0) {myshapec};

\pic[xscale=0.01,yscale=0.01] at (0,-180+\n) {myshaped};

\pic[xscale=0.01,yscale=0.01] at (0,-360+2*\n) {myshapec};

\draw (-20, 50) circle (4);
\draw (-20, 50-2*180+2*\n) circle (4);

\draw[fill, color=darkgray]  (-20-4, 50-2*180+2*\n+4) rectangle (-20, 50-4);

\draw[fill, color=lightgray]  (-20, 50-2*180+2*\n+4) to  (-20, 50-4)
to (0, 5) to (-15, -140) to (10,50-2*180+2*\n+4);

\node[anchor=east] at (-250,50) {stop configuration};
\draw[->] (-250,50) to (-20-6,50);

\node[anchor=east] at (-250,50-2*180+2*\n+10) {inconsistent};
\draw[->]  (-250,50-2*180+2*\n+10) to  (-20-6,50-2*180+2*\n);

\end{tikzpicture}
\caption{Problem with having map into subshift when $M$ does not halt.} \label{fig:incon}
\end{figure}
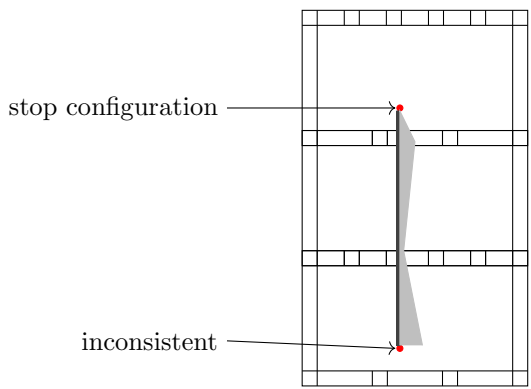

This diagram consists of the tile $\Gcqadpa$ with a copy of
$\Gdpacqa$ below, and another copy of $\Gcqadpa$ below that.
Both copies of $\Gcqadpa$ have a rectangular grid of color $3$ points
at the same location, as shown in the figure. The rules of the subshift
require that every other line below the top region of color $3$
represents a valid tape configuration consisting of a starting cell and an ending cell. The codes for the starting cells
are vertically aligned as shown in the figure. Each of these lines is obtained from two lines below by an applicable instruction of the machine $M$. Since $M$ does not halt, it cannot be the case that as we move
down from the top region of color $3$ we reach a line corresponding to an initial configuration
(as this would give a valid computation of $M$ which halted). The rules of the subshift then imply
that two lines below each line correspondent to a tape configuration
is another line correspondent to a tape configuration, with starting and ending cells.
This, however, contradicts that the region of color $3$ points occurs in the
bottom copy of $\Gcqadpa$ in the same location, since it violates the subshift requirements.

This contradiction shows that when $M$ does not halt that there is no continuous,
equivariant map into the subshift $Y$, which completes the proof of
Theorem~\ref{thm:subsnr}.

\section{The graph homomorphism problem}  \label{sec:gh}

Note that the subshift problem (stated in \S\ref{sec:problems}) is equivalent to a problem regarding
the existence of $\Z^n$-homomorphisms from $F(2^{\Z^n})$ to a finite
$\Z^n$-graph. In fact, for each subshift of finite type $Y_{\vec{p}}$, where $\vec{p}=(\bsft; p_1,\dots, p_k)$, 
we can associate a finite $\Z^n$-graph $\Gamma$ as follows. For a sufficiently large positive integer $a$ (we can take 
any $a$ larger than the width of $Y$, see Definition~\ref{def:sft} and the preceding discussion), 
let $V(\Gamma)$ be the set of all patterns $p$ with $\dom(p)=[0,a)^n$ in which none of $p_1,\dots, p_k$ occurs. 
For a generator $s$ of $\Z^n$ and patterns $p, q\in V(\Gamma)$, let $(p, q)\in E(\Gamma)$ with label $s$ 
if $s\cdot p$ is compatible with $q$. This defines a $\Z^n$-graph $\Gamma$, and it is 
straightforward to check that there is a continuous equivariant map from  $F(2^{\Z^n})$ to $Y_{\vec{p}}$ 
if and only if there is a continuous $\Z^n$-graph homomorphism from $F(2^{\Z^n})$ to $\Gamma$. 
Thus Theorem~\ref{thm:subsnr} implies that the existence
of a continuous $\Z^2$-homomorphism from $F(2^{\Z^2})$ to a finite
$\Z^2$-graph is $\Sigma^0_1$-complete, and in particular
undecidable. 

In this subsection we prove a result which answers the
graph homomorphism problem, Problem~\ref{prob:gh}. As with the
subshift problem, Problem~\ref{prob:subs} and
Theorem~\ref{thm:subsnr}, we show that the graph homomorphism problem
is not decidable. This is done via an undecidable word problem for
finitely presented groups.

We view finite graphs as coded by integers in a standard manner.

\begin{thmn} \label{thm:ghnr}
The set of finite graphs $\Gamma$ such that there is a continuous graph homomorphism from
$\fzs$ to $\Gamma$ is a $\Sigma^0_1$-complete set.
\end{thmn}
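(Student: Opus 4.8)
First I would note that one direction is already available: Theorem~\ref{thm:sr} shows that the set of (codes for) finite graphs $\Gamma$ admitting a continuous graph homomorphism from $\fzs$ is $\Sigma^0_1$, so it remains only to prove $\Sigma^0_1$-hardness. The plan is to reduce the word problem for finitely presented groups to this set. Fix a finitely presented group $G = \langle S \mid R\rangle$ with undecidable word problem (a Novikov--Boone group). For each word $w$ in the generators, I would computably construct a finite graph $\Gamma_w$ with the property that there is a continuous graph homomorphism $\fzs \to \Gamma_w$ if and only if $w =_G 1$. By Corollary~\ref{cor:graphthm} this is equivalent to asking whether some $\gnpq$ admits a graph homomorphism into $\Gamma_w$, and I will choose $\Gamma_w$ so that, via the homotopy machinery of \S\ref{sec:undirected-graphs}, this combinatorial condition is controlled by $\pi_1^*(\Gamma_w)$ and the element represented by $w$.

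The key construction is a graph-theoretic analog of the ``cooked up graph'' sketched (in the commented-out lemma) earlier: given a group presentation $\langle S \mid R\rangle$ together with a homomorphism $f\colon G \to \Z/2\Z$, one builds a graph $\Gamma$ with $\pi_1^*(\Gamma)\cong G$ in which a cycle $\gamma_s$ is attached for each generator $s\in S$ (with length parity dictated by $f(s)$), and a mesh of $4$-cycles — one ``face'' per relator — forces exactly the relations in $R$ and no unintended $4$-cycles. The crucial extra ingredient for the reduction is to then adjoin to $\Gamma$ one more generator-cycle $\gamma_0$ of \emph{odd} length representing the word $w$, i.e.\ forming $\Gamma_w$ by gluing in a cycle whose $\pi_1^*$-class equals the image of $w$ in $\pi_1^*(\Gamma) \cong G$, arranged so that $\gamma_0$ has odd length. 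If $w =_G 1$, then $\pi_1^*(\gamma_0)$ is trivial, hence certainly of finite order, so $\gamma_0$ is an odd-length cycle of finite order in $\pi_1^*(\Gamma_w)$ and Theorem~\ref{thm:odd-length-cycle} yields a continuous homomorphism $\fzs \to \Gamma_w$. Conversely, if $w \neq_G 1$, I must show no continuous homomorphism exists; for this I would use the homotopy-based negative condition, Theorem~\ref{thm:neghom} (or a direct winding-number argument as in Theorem~\ref{thm:pcn} run inside the long tiles of $\gnpq$): any continuous homomorphism would, applied to the boundary cycle $\gamma^q\alpha\delta^{-p}\alpha^{-1}$ of the long tile $T_{c^qa=ad^p}$, force $\pi_1^*(\gamma^q)$ to be a $p$-th power, and one shows that since $w$ has infinite order (or nontrivial order obstruction) in the explicitly understood group $G$, this fails for suitably chosen coprime $p,q$ — exploiting that $G$ and the distinguished cyclic/abelian quotients in which $w$ is detected are effectively analyzable, in analogy with the Klein bottle computation in Example~\ref{ex:klein-bottle-graph}.

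The main obstacle, and where most of the work lies, is the graph construction itself: verifying that gluing the generator-cycles and relator-meshes along labeled vertices introduces \emph{no unintended $4$-cycles}, so that $\pi_1^*(\Gamma_w)$ really is $G$ (with the $w$-cycle mapping to the right element) and not some quotient. This is exactly the delicate bookkeeping flagged in the commented lemma sketch — every ``new'' $4$-cycle created by the vertex identifications must be shown to already be a product of existing $4$-cycles — and it must be done uniformly and computably in the presentation and in $w$. A secondary obstacle is pinning down the negative direction: $w\neq_G 1$ only guarantees $\pi_1^*(\gamma_0)\neq 1$, not that it has infinite order, so to apply Theorem~\ref{thm:neghom} cleanly I would either (i) build $G$ so that nontrivial elements in the relevant subgroup provably have infinite order or are detected in a $\Z$ or $\Z/n\Z$ quotient (possible since Novikov--Boone constructions can be taken with torsion-free, or otherwise transparent, ambient structure), or (ii) strengthen the argument to handle finite-order obstructions by passing to a power, checking that no odd power of $\gamma_0$ acquires finite order in $\pi_1^*(\Gamma_w)$ unless $w=_G 1$. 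Once these two points are secured, $w \mapsto \Gamma_w$ is a computable many-one reduction of the word problem to the graph homomorphism set, establishing $\Sigma^0_1$-completeness.
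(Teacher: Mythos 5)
Your framing is right: the $\Sigma^0_1$ upper bound is Theorem~\ref{thm:sr}, and the hardness direction does proceed by a computable reduction from a word problem, using Lemma~\ref{prestograph} (the ``cooked-up graph'' construction you allude to, which is stated and proved in the paper, not merely sketched in a comment) together with Theorems~\ref{thm:neghom} and~\ref{thm:odd-length-cycle}. But the specific reduction you propose has a parity obstruction that makes it unworkable as stated.

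You want to adjoin a cycle $\gamma_0$ of \emph{odd} length whose $\pi_1^*$-class equals the image of $w$, and then argue that if $w =_G 1$ then $\pi_1^*(\gamma_0)$ is trivial, hence of finite order, so Theorem~\ref{thm:odd-length-cycle} applies. The problem is that an odd-length cycle can never be null-homotopic in $\pi_1^*$: the kernel of $\pi_1 \to \pi_1^*$ is generated by square-cycles, all of even length, so homotopy-equivalent cycles have lengths of the same parity, and any odd cycle thus has nontrivial $\pi_1^*$-class. So if you impose the relation $\gamma_0 = w$ in $\pi_1^*(\Gamma_w)$ and $w =_G 1$, you would be forcing an odd cycle to be trivial, which is impossible; what actually happens is that the graph you build cannot encode the relation you want with that parity assignment. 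The point you identified as a ``secondary obstacle'' (finite versus infinite order) masks this more basic inconsistency.

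The paper's resolution is precisely to decouple the odd cycle from $w$ itself. Starting from a presentation $\CG_n$ of a \emph{torsion-free} group $G_n$ with distinguished word $w$ (the word problem for torsion-free groups being undecidable), one passes to
$\CG'_n = \langle a_1,\dots,a_k, z \mid r_1,\dots,r_l,\ z^2 w^{-1}\rangle$,
introducing a new generator $z$ with $z^2 = w$. Now $z$ is the odd-length cycle and $w$ is a word in the even-length generators $a_i$, so there is no parity clash. If $w =_G 1$ then $z^2 = 1$ in $\pi_1^*(\Gamma(\CG'_n))$, giving an odd cycle of finite order~$2$ and hence the positive condition. If $w \neq 1$, the group $G'_n$ is an amalgamated free product $G_n *_{H,K} \Z$ with $H = \langle w\rangle$, $K=\langle z^2\rangle$; the torsion-freeness of $G_n$ guarantees $H\cong\Z$, and a normal-form analysis (together with a linear growth bound on the length of $w^m$, arranged by first replacing $G_n$ by $G_n * G_n$) shows that $\gamma^q$ is never a $p$-th power in $\pi_1^*$ for large coprime $p,q$, so the negative condition of Theorem~\ref{thm:neghom} holds. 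Your workaround~(i) (controlling torsion) points in the right direction, but the full argument needs both the $z^2 = w$ trick to fix the parity and the amalgamated-product/length-growth machinery to close the negative case.
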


The rest of this section is devoted to a proof of Theorem~\ref{thm:ghnr}.

We will define a computable reduction from a variation of the word problem for finitely presented groups to the graph homomorphism problem.
Specifically, we use the word problem for torsion-free groups, which is known to be undecidable
(see Lemma 1 of \cite{BaumslagMiller}). So, there is a $\Sigma^0_1$-complete set $A \subseteq \omega$
and a computable
function $f \colon \omega \to \omega$ such that for all $n$, $f(n)$ is an integer coding
\begin{itemize}
\item a finite group presentation $\CG_n=\langle a_1,\dots, a_k| r_1,\dots,r_l\rangle$
for a torsion-free group $G_n$, and
\item a distinguished word
$w=w(a_1,\dots,a_k)$
\end{itemize}
(in our notation we suppress the dependence of $k$, $l$, the $r_i$, and $w$ on $n$),
such that $n \in A$ if and only if $w$ is the identity $e$ in $G_n=\langle a_1,\dots, a_k| r_1,\dots,r_l \rangle$.
We will need one extra property of the groups $G_n$:
\begin{quote}
\noindent $(*)$\hskip 20pt There is (a lower bound) $\alpha>0$
such that, if the distinguished word $w$ is not the identity in $G_n$, then for all integers $m\geq 1$, $w^m$ is not equal in $G_n$ to any word of length at most $\alpha m$.
\end{quote}

The property $(*)$ for the groups $G_n$ is easy to arrange. If
$\CG=\langle a_1,\dots,a_k| r_1,\dots,r_l\rangle$ is a finite presentation
of a group $G$ with distinguished word $w=w(\vec a)$, let
\[
\tilde{\CG}
=\langle a_1,\dots,a_k, b_1,\dots,b_k| r_1(\vec a),\dots, r_l(\vec a),
r_1(\vec b),\dots, r_l(\vec b)\rangle,
\]
and let $\tilde{w}=w(\vec a) w(\vec b)$. $\tilde{G}$ is just the free product of $G$
with itself. If $w=e$ in $G$, then clearly $\tilde{w}=e$ in $\tilde{G}$.
If $w \neq e$ in $G$, we also have that $\tilde{w}^m=
w(\vec a) w(\vec b)\cdots w(\vec a)w(\vec b)$
is not equal to any word of length $< 2m$ by the normal form theorem for
free products (see Theorem~2.6 of Chapter IV of \cite{Lyndon}). So,
$(*)$ is satisfied with any $\alpha<2$. Note that if $G$ is torsion-free,
then so is $\tilde{G}$. So, without loss of generality we henceforth
assume that all of the groups $G_n$ are torsion-free and satisfy $(*)$.

The following technical theorem gives an outline for the rest of the proof.

\begin{thmn}\label{thm:tech}
Let $\CG=\langle a_1, \dots, a_k| r_1, \dots, r_l\rangle$ be a presentation of a torsion-free group $G$ and let $w=w(a_1, \dots, a_k)$ be a word. Suppose that the pair $(\CG, w)$ satisfies $(*)$. Then there is a  finite connected graph $\Gamma$ with the following properties:
\begin{enumerate}[label={\rm (\roman*)}]
\item \label{techi} $\pi^*(\Gamma)\cong \langle a_1, \dots, a_k, z| r_1, \dots, r_l, z^2w^{-1}\rangle$, where $z$ is a new letter distinct from $a_1, \dots, a_k$;
\item \label{techii} if $w=e$ in $G$, then $\Gamma$ satisfies the conditions of Theorem~\ref{thm:odd-length-cycle}, and hence there is a continuous homomorphism from $F(2^{\Z^2})$ to $\Gamma$;
\item \label{techiii} if $w\neq e$ in $G$, then $\Gamma$ satisfies the conditions of Theorem~\ref{thm:neghom}, and hence there is no continuous homomorphism from $F(2^{\Z^2})$ to $\Gamma$.
\end{enumerate}
Furthermore, one can construct $\Gamma$ from $\CG$ using a computable procedure.
\end{thmn}

\subsection{Realizing a given reduced homotopy group}\label{subsec:rhg}
In \ref{techi} of Theorem~\ref{thm:tech} we need to construct a finite connected graph with a prescribed reduced homotopy group. In this subsection we give the procedure to do this as well as make some observations about the procedure that we need to use later in the proof. Our main objective is to show the following general fact.

\begin{lemn} \label{prestograph}
Let $\CG= \langle b_1,\dots, b_k|s_1,\dots, s_l\rangle$ be a finite presentation for a group $G$.
Then there is a finite connected graph $\Gamma(\CG)$ such that $\pi^*_1(\Gamma(\CG)) \cong G$. Moreover, the map
$\CG \mapsto \Gamma(\CG)$ is computable.
\end{lemn}

\begin{proof}
This fact can be seen as an instance of a general result about CW complexes associated
to group presentations, as for example in Proposition~2.3 of \cite{Lyndon}.
For the sake of completeness, and since we need to record a few extra points, we
briefly sketch a proof.

The graph $\Gamma(\CG)$ is constructed as follows.
$\Gamma(\CG)$ will have a distinguished vertex  $v_0$.
For each of the generators $b_i$, $1 \leq i \leq k$,
we add a cycle $\beta_i$ of some length $\lambda_i>20$ that starts and ends at the vertex $v_0$. Other than the common vertex $v_0$, we make the vertex sets of these
cycles pairwise disjoint. This gives a natural notion of length $\lambda(b_i)=\lambda_i$
which extends in the obvious manner to reduced words in the free group generated by the $b_i$.
For each word $s_j, 1 \leq j \leq l$, from the presentation $\CG$, we add to $\Gamma(\CG)$
a rectangular grid graph $R_j$ whose length and width are both greater than $4$ and whose perimeter $\per(R_j)$ is equal to $\lambda(s_j)$. This is possible because each $\lambda(b_i)$ is a large enough even number.
The vertices in the interiors of these grid graphs $R_j$ are pairwise disjoint, and
are disjoint from the vertices of $\beta_i$. The vertices on the sides of each $R_j$ are assigned so that $v_0$ occurs as the upper-left corner of each $R_j$ and, going clockwise, the boundary of $R_j$ coincides with the concatenation of the paths corresponding to the generators in the word $s_j$. This completes the definition of the graph $\Gamma(\CG)$.

It is obvious from this definition that $\Gamma(\CG)$ is a finite connected graph and that the map $\CG\mapsto \Gamma(\CG)$ is computable.


Next we verify that $\pi_1^*(\Gamma(\CG))\cong G$. For this we first introduce a ``normal form" for each cycle in $\Gamma(\CG)$ that starts and ends at $v_0$. Consider a cycle $\chi$ in $\Gamma(\CG)$ starting and ending at $v_0$.
Say $\chi=(v_0, v_1, \dots,v_M)$, where $v_0=v_M$ and the $v_m$ are vertices of $\Gamma(\CG)$. Consider a segment $\sigma=(v_{m_0}, \dots,v_{m_1})$
of this path where $v_{m_0}$, $v_{m_1}$ are vertices in the union of the $\beta_i$, and
for $m_0<m<m_1$, $v_m$ is an interior vertex of some $R_j$. Note that the value of $j$ is uniquely determined
by $\sigma$. Let $\bar{\sigma}$ be the result of replacing in $\sigma$ the segment $(v_{m_0+1},\dots,v_{m_1-1})$
with a path along the perimeter of $R_j$ (say, choose the shortest path, and if there are two of equal length,
choose one randomly). Doing this on each such segment  $\sigma$ of $\chi$ gives a new path $\bar{\chi}$.
Clearly $\chi$ is reduced homotopy equivalent to $\bar{\chi}$ in $\pi_1^*(\Gamma(\CG))$. Note that $\bar{\chi}$ corresponds to a word in the $b_i$, which we still denote by $\bar{\chi}$.
It is not difficult to check that if $\chi$ differs from another cycle $\psi$ by the insertion/deletion of a $4$-cycle in
$\Gamma(\CG)$, then $\bar{\chi}$, as a word, is equivalent to $\bar{\psi}$ in $G$. We use here the fact that if all
$\lambda_i>20$ and each $R_j$ has width and height greater than $4$,
then the only $4$-cycles in $\Gamma(\CG)$ are the ones coming from individual $R_j$. It follows that, if $\chi$ is reduced homotopy equivalent to $e$ in $\pi_1^*(\Gamma(\CG))$, then $\bar{\chi}$, as a word in $G$, is either trivial or a combination of $s_j$.

We are now ready to argue that $\pi_1^*(\Gamma(\CG))\cong G$. If $F$
is the free group generated by the $b_i$, then there is a natural
homomorphism $\varphi$ from $F$ onto $\pi_1^*(\Gamma(\CG))$ obtained
by sending $b_i$ to the equivalence class $[\beta_i]$ of the cycle
$\beta_i$ in $\pi_1^*(\Gamma(\CG))$. Clearly each $s_j$ is in the
kernel of $\varphi$, and so $\varphi$ induces a homomorphism
$\varphi'$ from $G$ onto $\pi_1^*(\Gamma(\CG))$.  To see that the
kernel of $\varphi'$ is trivial, it suffices to show that the kernel
of $\varphi$ is generated by $s_j$ in $F$.  Suppose $u$ is a word in
the $b_i$ and $\varphi(u)=e$. Since $u$ gives a path $\chi$ starting
and ending at $v_0$ in $\Gamma(\CG)$, $\varphi(u)=e$ means that $\chi$
is reduced homotopy equivalent to the trivial path in
$\pi_1^*(\Gamma(\CG))$. By the argument in the preceding paragraph,
this implies that $\bar{\chi}$ is either trivial or a combination of
$s_j$. However, since $\chi$ does not contain any interior vertices of
$R_j$, we have $\chi=\bar{\chi}$. This means that $u$ is in the
subgroup generated by $s_j$. This completes the proof of
Lemma~\ref{prestograph}.
\end{proof}

Applying Lemma~\ref{prestograph} to the presentation demonstrated in
\ref{techi} of Theorem~\ref{thm:tech} would establish that
clause. However, in order to continue with our proof we need to use some
details of the construction of $\Gamma(\CG)$ as well as some
additional properties. We specify these additional properties below.

Given a presentation $\CG_n=\langle a_1, \dots, a_k| r_1, \dots,
r_l\rangle$ for a group $G_n$, let $G'_n$ be the finitely presented
group with presentation given by
\[
\CG'_n=\langle a_1,\dots,a_k, z| r_1,\dots,r_l, z^2w^{-1}\rangle,
\]
that is, we add the extra generator $z$ and the extra relation $z^2=w$
to the presentation for $G_n$. For notational convenience, we denote
$a_0=z$ and $r_0=z^2w^{-1}$.

Since every word in the normal subgroup of $G'_n$ generated by the
$r_0,r_1,\dots,r_\ell$ has an even number of occurrences of $z$,
the parity of the number of occurrences of $z$ in an element of $G'_n$
is well-defined. We accordingly refer to an element $g \in G'_n$ as being
either odd or even.

When associating a finite connected graph $\Gamma(\CG'_n)$ to the
presentation $\CG'_n$, we deviate slightly from the construction in
the above proof. The deviation is that, when assigning a cycle
$\beta_0$ to the generator $z$, we let $\lambda_0=\lambda(z)>10$ be an
{\em odd} number and ensure that the length of $\beta_0$ is
$\lambda_0$. One can easily check that this does not invalidate the
proof as the only occurrences of $z$ in the relations are in
$r_0=z^2w^{-1}$, and in order to carry out the proof we only needed
all $\lambda(r_j)$ to be sufficiently large and even.

In addition, we note that there is no loss of generality to require
that the $\lambda_i$ are large and that the aspect ratios of $R_j$ are
less than $2$. This latter property can be achieved by choosing the
width of $R_j$, denoted $\text{\tt w}(R_j)$, and the height of $R_j$,
denoted $\text{\tt h}(R_j)$ to be close to $\frac{1}{4} \per(R_j)$,
subject to the requirement that $\text{\tt w}(R_j)+\text{\tt
  h}(R_j)=\frac{1}{2}\per(R_j)$, which is easy to arrange. Both these
requirements have no effect on the validity of the above proof.

Let the length of the word $w$ in $G_n$ be $L$. Let $\alpha$ be the constant given by property $(*)$ of $G_n$.
We henceforth assume that $\Gamma(\CG'_n)$ has all the following
properties.
\begin{enumerate}
\item
$\lambda_i=\lambda(a_i)>\max(20, \frac{12}{\alpha})$ are even for all $1\leq i\leq k$.
\item
$\lambda_0=\lambda(z)>\max(10, \frac{12}{\alpha})$ is odd.
\item
For each $0\leq j\leq l$, we have $\text{\tt w}(R_j), \text{\tt h}(R_j)>4$.
\item
For each $0\leq j\leq l$, the perimeter length $\per(R_j)$ is equal to $\lambda(r_j)$.
\item
For each $0\leq j\leq l$, we have $\frac{1}{2}\leq \frac{\text{\tt w}(R_j)}{\text{\tt h}(R_j)} \leq 2$.
\end{enumerate}

Condition (5) gives the following lemma which we will need in the argument later. For any cycle $\chi$ in $\Gamma(\CG'_n)$ let $|\chi|$ denote the length of $\chi$.

\begin{lemn} \label{lm} For any cycle $\chi$ in $\Gamma(\CG'_n)$, there is a cycle $\bar{\chi}$ in
$\Gamma(\CG'_n)$ such that
\begin{enumerate}
\item[\rm (a)] $\bar{\chi}$ is in the subgroup of $\pi_1(\Gamma(\CG'_n))$ generated by the cycles corresponding to $a_1,\dots, a_k$ and $z$;
\item[\rm (b)] $\bar{\chi}$ is equivalent to $\chi$ in $\pi^*_1(\Gamma(\CG'_n))$; and
\item[\rm(c)] $|\bar{\chi}|\leq 3|\chi|$.
\end{enumerate}
\end{lemn}

\begin{proof}
As in the proof of Lemma~\ref{prestograph}, each segment of the cycle $\chi$ with two endpoints on the boundary of
a grid graph $R_j$ can be replaced
by a path which stays on the boundary of $R_j$. Replacing all such segments of $\chi$
in this manner gives $\bar{\chi}$. If $\rho$ is the maximum aspect ratio of all $R_j$, then it is easily seen that $|\bar{\chi}|\leq (\rho+1)|\chi|$. By condition (5) above, $\rho\leq 2$ and hence $|\bar{\chi}|\leq 3|\chi|$.
\end{proof}

\subsection{Standard forms of elements of $G'_n$}

In this subsection we give an algebraic analysis of various standard forms of elements of $G'_n$. The standard forms will be instrumental in the proof of Theorem~\ref{thm:tech} in the next subsection.

The group $G'_n$ can be described as the amalgamated free product $$G'_n
=G_n *_{H,K} \Z$$ where $H=\langle w \rangle \leq G_n$ and
$K=\langle z^2 \rangle \leq \langle z\rangle \cong \Z$. Note that $H\cong K\cong \Z$ as $G_n$ is torsion-free.

\begin{defnn} \label{reducedform}
A {\em reduced form} for an element $g\in G'_n$ is a word of the form
$$ g=g_1\dots g_m $$
where for any $1\leq i<m$, either $g_i=z$ and $g_{i+1}\in G_n$ or $g_i\in G_n$ and $g_{i+1}=z$, and for any $1<i\leq m$, if $g_i\in G_n$ then $g_i\not\in H$.
\end{defnn}

Note that a reduced form may start with a power of $w$. In general, a {\em word} in $G'_n$ is a sequence $g_1\dots g_m$, where each {\em term} $g_i$ is either an element of $G_n$ or an element of $\langle z\rangle$. For any word in $G_n'$ that is not in reduced form, we can apply the following procedure, and repeat it if necessary, to arrive at a reduced form. First, if the word contains any term that is an odd power of $z$, say $z^{2k+1}$ with $k\neq 0$, we replace the term by two terms $w^{k}z$. Then, if it contains any term that is an even power of $z$, say $z^{2k}$, we replace the term by the term $w^k$. (Note that after these two steps, in the resulting word every term  that contains $z$ is just $z$ itself.) Next, if the resulting word contains adjacent terms that are both in $G_n$, we obtain a shorter word by combining the two terms. If the resulting word contains adjacent terms that are both $z$, we replace the two terms by a single $w$. If the resulting word contains three consecutive terms of the form $z w^k z$, we replace them by a single $w^{k+1}$. At any moment a term is obtained which is the identity of $G_n$, we delete the term. These steps are repeated as much as necessary. (Note that these steps only reduce the length of the word, and therefore the process must terminate.) If the last two terms of the resulting word are of the form $zw^k$, we replace them by $w^kz$ and repeat the length-reducing steps again as much as necessary. It is easy to see that, by following the procedure, we will eventually arrive at a reduced form for the original element. Also note that the number of occurrences of $z$ in the original word (including those appearing in either the positive or negative powers of $z$) does not increase through this reduction procedure. No power of $z$ other than $z$ itself appears as a term in a reduced form.

We next define a normal form for elements of $G'_n$. Let $U$ be a set of non-identity right coset representatives for the subgroup $H=\langle w\rangle$ of $G_n$.
Clearly $z$ is the unique coset representative for $K$ in $\Z$.

\begin{defnn} \label{normalform}
The {\em normal form} of an element $g\in G'_n$ is one of the following
\begin{align}
g&= w^ku_1zu_2z  \cdots z u_m  \label{c1}\\
g&= w^k zu_1z u_2 \cdots z u_m \label{c2}\\
g&= w^k u_1 z u_2 z \cdots u_m z \label{c3}\\
g&= w^k z u_1 z u_2 \cdots u_m z \label{c4}\\
g&= w^kz \label{c5} \\
g&= w^k \label{c6}
\end{align}
where $m\geq 1$, $u_i \in U$ and $k\in\Z$.
\end{defnn}

The normal form theorem for amalgamated free products (see Theorem~2.6 of \cite{Lyndon})
implies that every element $g$ of $G'_n$ can be written uniquely in a
normal form specified in the above definition. There is also a natural
procedure to turn a reduced form into a normal form. For the
convenience of the reader we describe this procedure below. Suppose
$$ g=g_1\dots g_m$$ is in reduced form. The procedure is defined
inductively on the length $m$ of the reduced form for $g$. If $m=1$
then either $g=z$, which is already in normal form (5), or $g\in G_n$,
in which case either $g\in H$, which is in normal form (6), or $g=w^k
u_1$ for some $u_1\in U$, which is in normal form (1). Next assume
$m=2$. Then either $g=w^k z$, or $g=g_1 z$ with $g_1\in G_n\setminus
H$, or $g=z g_2$ with $g_2\in G_n\setminus H$. The first case is
already in normal form (5). In the second case, write $g_1=w^k u_1$
for $u_1\in U$, and $g=w^k u_1 z$ is in normal form (3). In the third
case, write $g_2=w^k u_1$, and $g= w^k z u_1$ is in normal form
(2). In general, we assume $m>2$. Then either $g_m=z$ or $g_m\in
G_n\setminus H$. In case $g_m=z$ we write $g=g'z$ where $g'=g_1\dots
g_{m-1}$. Note that $g'$ is in reduced form but of length $m-1$. We
then inductively obtain the normal form of $g'$, and the normal form
of $g$ is the normal form of $g'$ followed by $z$. Suppose $g_m\in
G_n\setminus H$. Let $g_m=w^k u_m$ for $u_m\in U$, and
$$ g'=g_1\dots g_{m-3}(g_{m-2}w^k). $$ Then $g'$ is in reduced form
and of length $m-2$. By induction we can obtain the normal form of
$g'$. Then the normal form of $g$ is the normal form of $g'$ followed
by $z u_m$. This finishes the formal definition of the
procedure. Informally, given a reduced form, we obtain the normal form
by successively ``passing" any power of $w$ to the left, leaving
behind the coset representatives in place of general elements of
$G_n\setminus H$.

We next define another standard form for elements of $G'_n$.

\begin{defnn} \label{mc}
Let $g\in G'_n$. Among all representations of $g$ as a product of the form
$g= x y x^{-1}$ where $y$ is in normal form, we let $i(g)$ be the minimum number of
occurrences of $z$ in the normal form $y$. A {\em minimal conjugate form} of $g$ is
$$ g=xyx^{-1}$$
where $x, y\in G'_n$ and $y$ is in normal form with exactly $i(g)$ many occurrences of $z$.
\end{defnn}

The following fact is immediate from Definition~\ref{mc}.

\begin{fctn} \label{mcvs}
Let $g \in G'_n$ and $g=xyx^{-1}$ be in minimal conjugate form.
Then the normal form of $g$ has
at least $i(g)$ many occurrences of $z$.
\end{fctn}

\begin{proof}
Since $g=eg'e^{-1}$, where $g'$ is the normal form for $g$, it follows
that the number of occurrences of $z$ in $g'$ is at least $i(g)$.
\end{proof}

Note that if $g=xyx^{-1}$ is in minimal conjugate form, then $g$ is odd if and only if $y$ is odd.
In particular if $g$ is odd then $i(g)>0$.

\begin{lemn} \label{lif}
Let $g \in G'_n$ and $g=xyx^{-1}$ be in minimal conjugate form.
Suppose $y \notin \langle z \rangle$. Then for any $N>0$ we have
that $g^N=x y^N x^{-1}$ where the normal form for $y^N$ has exactly $N\cdot i(g)$ many occurrences of $z$.
\end{lemn}

\begin{proof}
Since $y\notin\langle z\rangle$, $y$ is not of normal form~$(\ref{c5})$ or $(\ref{c6})$.
Note that $y$ cannot be of normal form~(\ref{c4}) as then
$y= z^{-1} (w^kz^2) (u_1z \cdots u_m)z$ and
$$g= (xz^{-1}) (w^{k+1}u_1 z \cdots u_m) (zx^{-1}).$$
This gives a conjugate form for $g$ with a smaller number of occurrences of $z$
in the normal form of the middle element, which violates the
fact that $y$ attained the minimum value $i(g)$.

Assume next that $y$ is in normal form either~(\ref{c2}) or (\ref{c3}). The cases are similar,
so assume $y$ is as in normal form~(\ref{c2}), namely
$y=w^k z u_1 z u_2 \cdots u_m$. Then
$g^N= x (w^k z u_1 \cdots u_m)^N x^{-1}$. Passing each of the $w^k$ terms to the left
to put $(w^k z u_1 \cdots u_m)^N$ in normal form gives
\[
g^N= x (w^{\ell} z u'_1 z u'_2 \cdots u'_m z u'_{m+1} z \cdots u'_{2m} \cdots z u'_{(N-1)m+1} z
\cdots u'_{Nm}) x^{-1}
\]
for some $\ell\in\Z$. Here the coset representatives in $U$ have changed (compared to the
concatenation $(w^ku_1z\dots u_m)^N$), but the number of them is still equal to $Nm$.
So, in this case the conclusion of the lemma is immediate.

Finally, assume $y$ is in normal form~(\ref{c1}). Consider $y^N=(w^k u_1z \cdots u_m)^N$.
To show the normal form for $y^N$ has $Nm$ many coset terms, it suffices to show that
$u_m w^k u_1 \notin H$. Suppose $u_m w^k u_1 = h\in H$. Then
\[
\begin{array}{rcl}y&=& (w^ku_1)(z u_2 \cdots u_{m-1}z)(h)(w^ku_1)^{-1} \\
&=& (w^ku_1z)(u_2 \cdots u_{m-1})(hw)(w^ku_1z)^{-1} \\
&=& (w^ku_1z)(w^\ell u'_2 \cdots u'_{m-1})(w^ku_1z)^{-1}
\end{array}
\]
for some $\ell\in \Z$. Thus
$$ g=(xw^ku_1z)(w^\ell u'_2\cdots u'_{m-1})(xw^ku_1z)^{-1} $$
and the normal form for the middle element contains fewer occurrences of $z$ than $y$.
This violates the choice of $y$.
\end{proof}

\subsection{Proof of Theorem~\ref{thm:tech}} In this subsection we prove \ref{techii} and \ref{techiii} of Theorem~\ref{thm:tech}, and thus complete the proof of Theorem~\ref{thm:ghnr}. Recall from \S\ref{subsec:rhg} that we have obtained a finite connected graph $\Gamma(\CG'_n)$ via a computable procedure so that $\pi_1^*(\Gamma(\CG'_n))\cong G'_n$.

To show \ref{techii} of Theorem~\ref{thm:tech}, suppose $n \in A$. Then $w=e$ in $G'_n$. Recall from condition (2) in \S\ref{subsec:rhg} that the generator $z$ in $G'_n$ corresponds to an odd-length cycle $\zeta$ in $\Gamma(\CG'_n)$.
However $z^2=w$ in $G'_n$, and so $z^2=e$ in $G'_n$. Thus $\zeta$ is an odd-length cycle with $\pi_1^*(\zeta)$ having finite order in $\pi_1^*(\Gamma(\CG'_n))$, which
shows $\Gamma(\CG'_n)$ satisfies the positive condition of Theorem~\ref{thm:odd-length-cycle}.

It remains to verify \ref{techiii} of Theorem~\ref{thm:tech}. For the rest of the argument, assume $n \notin A$ and thus $w\neq e$ in $G'_n$. We show that
$\pi^*_1(\Gamma(\CG'_n))\cong G'_n$ satisfies the negative condition of Theorem~\ref{thm:neghom}. For this, let $p$, $q$ be large odd primes. Toward a contradiction, suppose $\gamma$ is a $p$-cycle in $\Gamma(\CG'_n)$ and assume
$\pi_1^*(\gamma^q)$ is a $p^\text{th}$ power in $\pi^*_1(\Gamma(\CG'_n))$.

To ease notation we use the isomorphism between $\pi^*_1(\Gamma(\CG'_n))$ and $G'_n$ tacitly and, instead of writing $\pi_1^*(\gamma)$, we simply write $\gamma$ as an element of $\pi_1^*(\Gamma(\CG'_n))$. Thus, suppose $\gamma^q=\delta^p$ in $G'_n$. In the group $\G'_n$ express $\gamma$ and $\delta$ in their respective minimal conjugate forms, say $\gamma= x v x^{-1}$ and
$\delta= y u y^{-1}$, where $i(\gamma)$, $i(\delta)$ are the number of occurrences
of $z$ in $v$, $u$, respectively. For slight notational simplicity let $i_v=i(\gamma)$ and $i_u=i(\delta)$.

We claim that $i_v, i_u\neq 0$. Assume $i_v=0$. Then $v\in G_n$, the subgroup of $G'_n$ generated by $a_1, \dots, a_k$. By condition (1) of \S\ref{subsec:rhg}, in $\Gamma(\CG'_n)$ the cycle corresponding to $xvx^{-1}$ has even length. But the lengths of reduced homotopy equivalent cycles differ by an even number, thus $\gamma$ has even length, contradicting that $p$ is odd. Since $p$, $q$ are both odd, $\delta$ is also an odd-length cycle in $\Gamma(\CG'_n)$. By a similar argument, $i_u \neq 0$.

Continuing our proof, assume first that $v \notin \langle z\rangle$. We have
\[
\gamma^q= xv^q x^{-1}= y u^p y^{-1}= \delta^p
\]
in $\G'_n$. By Lemma~\ref{lif}, the normal form of $v^q$ has exactly
$qi_v$ many occurrences of $z$.  We first claim that $u\notin \langle
z\rangle$ as well. If $u=z^a$, then we would have that $xv^{qN}
x^{-1}=yz^{apN} y^{-1}= y w^{\frac{apN}{2}} y^{-1}$ for all even
$N$. Thus, $v^{qN}=x^{-1}y w^{\frac{apN}{2}} y^{-1} x$ for all even
$N$. From Lemma~\ref{lif}, the normal form of $v^{qN}$ has exactly
$qNi_v$ many occurrences of $z$. However, we may put $x^{-1}y
w^{\frac{apN}{2}} y^{-1} x$ into reduced form which involves no more
occurrences of $z$ than those in the reduced forms of $x^{-1}y$ plus
those in $y^{-1}x$, which is a constant that does not depend on
$N$. It follows that the normal form of $x^{-1}y w^{\frac{apN}{2}}
y^{-1} x$ has a constant number of occurrences of $z$ that does not
depend on $N$. Because the normal form for an element of $G'_n$ is
unique, we have a contradiction for large enough even $N$. So, $u
\notin \langle z\rangle$. By Lemma~\ref{lif} we also have that the
normal form of $u^p$ has $pi_u$ many occurrences of $z$.

Now, for any $N$ we have $x v^{qN}x^{-1}= y u^{pN}y^{-1}$, where the
normal forms of $v^{qN}$ and $u^{pN}$ have $ qNi_v$ and $ pNi_u$
occurrences of $z$ respectively.  Next we claim that $p | i_v$.
Otherwise, $p$ is not a divisor of $qi_v$ and so $qi_v \neq
pi_u$. Thus $| qNi_v-pNi_u| \geq N$. Without loss of generality assume
$qNi_v-pNi_u\geq N$. Then $v^{qN}=x^{-1}y u^{pN} y^{-1} x$.  Comparing
the number of occurrences of $z$ for the normal forms of both sides,
we get a contradiction when $N$ is large. So, we have $p| i_v$.

Next, we let $\bar{\gamma}$ be a cycle in $\Gamma(\CG'_n)$ given by
Lemma~\ref{lm} for the $p$-cycle $\gamma$. By Lemma~\ref{lm} (c) we have
$|\bar{\gamma}|\leq 3|\gamma|=3p$. By Lemma~\ref{lm} (b),
$\bar{\gamma}=\gamma=xvx^{-1}$ as elements of $G'_n$. By
Fact~\ref{mcvs}, the canonical form of $\bar{\gamma}$ has at least
$i_v$ many occurrences of $z$. Since $p|i_v$, we conclude that the
canonical form of $\bar{\gamma}$ has at least $p$ many occurrences of
$z$. Finally, by Lemma~\ref{lm} (a), $\bar{\gamma}$ is a cycle that is
a combination of cycles corresponding to the generators $a_1,\dots,
a_k, z$, which in particular gives a representation of $\bar{\gamma}$
in $G'_n$ as a product of the generators $a_1,\dots, a_k, z$. For
definiteness suppose
\begin{equation}\label{cyclerep} \bar{\gamma}= g_1\dots g_m
\end{equation}
is this representation. Since the normal form of $\bar{\gamma}$
contains at least $p$ many occurrences of $z$, this representation
also contains at least $p$ many occurrences of $z$, because the
procedures to produce the normal form from any representation does not
increase the number of occurrences of $z$. Thus, as a cycle in
$\Gamma(\CG'_n)$, $\bar{\gamma}$ contains at least $p$ many
occurrences of the cycle corresponding to $z$. It follows that
$|\bar{\gamma}|\geq p\lambda_0$ where $\lambda_0=\lambda(z)>10$. This contradicts
$|\bar{\gamma}|\leq 3p$.

This finishes the proof for the case $v\not\in\langle z\rangle$. Note that the above argument to show $u\not\in\langle z\rangle$ from assuming $v\not\in\langle z\rangle$ is symmetric for $u$ and $v$. Thus the remaining case is in fact $u, v\in\langle z\rangle$.

Suppose $u=z^a$ and $v=z^b$. Thus $\gamma=x z^a x^{-1}$ and $\delta=y z^b y^{-1}$. Note that $a$ is odd, since otherwise the cycle corresponding to $xz^ax^{-1}$ in $\Gamma(\CG'_n)$ would have even length and therefore not reduced homotopy equivalent to the odd-length cycle $\gamma$. Similarly, $b$ is also odd.
So,
\[
\gamma^q=x w^{\frac{aq-1}{2}} z x^{-1} = y w^{\frac{bp-1}{2}} z y^{-1}=\delta^p.
\]

\begin{lemn} \label{tfg}
For any integers $c, d$ and $x, y\in G'_n$, if $x w^c z x^{-1}= y w^d z y^{-1}$
then $c=d$ and $x^{-1}y\in \langle z \rangle$.
\end{lemn}

\begin{proof} By conjugating both sides of the identity by $x$, the lemma is reduced to the case $x=e$ in $G'_n$, which we prove below.
Note that $y\in \langle z \rangle$ implies $c=d$, and so the conclusion of the lemma holds.
Suppose the conclusion fails. Let $y\not\in\langle z\rangle$ be of minimal length in normal form
such that for some $c, d$ we have
\[
w^c z = y w^d z y^{-1}.
\]

We claim $y$ cannot end with the $z$ term.
To see this, let $y=y'z$ be in normal form
(so $y'$ ends with a $\G_n$ term), then
$$ w^c z = y' z w^d z z^{-1} {y'}^{-1}= y' w^d z {y'}^{-1},$$
and $y'=yz^{-1}\not\in\langle z\rangle$. This violates the minimality assumption on the length of $y$.
So, $y$ ends with a $\G_n\setminus H$ term.
It follows that the normal form
of $y w^d z y^{-1}$ obtained from the procedure starting with a reduced form must contain a $\G_n\setminus H$ term (that is, some $u\in U$). But $w^cz$ is already in its normal form. This violates the uniqueness of the normal forms guaranteed by the normal form theorem.
\end{proof}

From the lemma we have that $aq=bp$ and $x^{-1}y\in \langle z \rangle$. In
particular, $p | a$. Let $a=pc$ for some odd integer $c$. Then
$aq=pcq=bp$. In $G'_n$ we have
$$\gamma=xz^{pc}x^{-1}=x(x^{-1}y) z^{pc} (y^{-1}x)x^{-1}=yz^{pc}y^{-1} $$ and $\delta=y z^{qc}
y^{-1}$. Consider again a cycle $\bar{\gamma}$ in $\Gamma(\CG'_n)$ given by
Lemma~\ref{lm} for the $p$-cycle $\gamma$. By Lemma~\ref{lm} (c), $|\bar{\gamma}|\leq
3|\gamma|=3p$. By Lemma~\ref{lm} (a), $\bar{\gamma}$ is a combination
of the cycles corresponding to the generators $a_1, \dots, a_k,
z$. This gives a representation of $\bar{\gamma}$ in $G'_n$ as a
product $g_1\dots g_m$. Let $\ell$ be the minimum of $\lambda_0=\lambda(z)$ and
$\lambda(a_i)$ for all $1\leq i\leq k$. By conditions (1) and (2)
in \S\ref{subsec:rhg}, $\ell>\frac{12}{\alpha}$. We have that
$$3p\geq |\bar{\gamma}|\geq m\ell>m\frac{12}{\alpha}. $$ Thus $m\leq
\frac{\alpha p}{4}$.

For any integer $N$ we have $y z^{2pcN} y^{-1}=\gamma^{2N}=\bar{\gamma}^{2N}$, and so
$$w^{pcN}=z^{2pcN}=y^{-1}\bar{\gamma}^{2N}y. $$
Let $Y$ be the length of $y$ as a word in the $a_i$ and $z$.
Then as a word in the $a_i$ and $z$, the length of the word
$y^{-1}\bar{\gamma}^{2N}y$ is at most $2Y+N\frac{\alpha p}{2}$.
On the other hand, from the definition of $\alpha$ we have that any
representation of $w^{pcN}$ in the $a_i$ and $z$ must have length
at least $\alpha pcN \geq \alpha pN$. For $N$ large enough, this is a contradiction.

Thus, in all cases we have a contradiction from assuming that there is a
$p$-cycle in $\Gamma(\G'_n)$ such that $\gamma^q$ is a $p^\text{th}$ power in
$\pi^*_1(\Gamma(\G'_n))$, assuming $p$, $q$ are sufficiently large odd primes.
So, $\Gamma(\G'_n)$ satisfies the negative condition of Theorem~\ref{thm:neghom}.
This completes the proof of Theorems~\ref{thm:tech} and~\ref{thm:ghnr}.

\section{The tiling problem} \label{sec:tiling}

Recall from Definition~\ref{def:tiling} that a continuous tiling of $\fzs$
by tiles $T_i \subseteq \Z^2$
is a clopen subequivalence relation $E$ of $\fzs$ such that every $E$
class is isomorphic to one of the tiles $T_i$. We consider the continuous tiling
problem for $\fzs$ with a given finite set of finite tiles $T_1,\dots,T_n$.
In \S\ref{sec:problems} we stated the general tiling problem, which asks for which
sets of tiles $\{ T_i\}$ there is a continuous (or Borel) tiling of $\fzs$.
Of course, this problem could be asked for more general group actions, but we confine
our remarks here to continuous tilings of $\fzs$ by finite sets of finite tiles.

As we pointed out in \S\ref{sec:problems}, the continuous tiling problem for
finite sets of finite tiles (which can be coded as integers) is a $\Sigma^0_1$
set. That is, the set $A$ of integers $n$ coding a finite set of finite tiles for which there
is a continuous tiling of $\fzs$ is a $\Sigma^0_1$ set of integers
(this was Theorem~\ref{thm:sr}). However, unlike the subshift and graph-homomorphism
problems for which we know the corresponding sets are $\Sigma^0_1$-complete
(Theorems~\ref{thm:subsnr} and \ref{thm:ghnr}), we do not know if the
tiling problem is $\Sigma^0_1$-complete (i.e., the whether the set $A$ is
$\Sigma^0_1$-complete). This question seems to be difficult even for specific
rather simple sets of tiles. We restate the tiling problem:

\begin{probn} \label{prob:tiling}
Is the set of integers $n$ coding a finite set of finite tiles
\[
T_1(n),\dots,T_i(n)\subseteq \Z^2
\]
($i$ also depends on $n$)
for which there is a continuous tiling of $\fzs$ a $\Sigma^0_1$-complete set?
\end{probn}

In Theorem~\ref{thm:negthreetiles} and the discussion before it  we showed that
for the four rectangular tiles of dimensions $d\times d$, $d \times (d+1)$,
$(d+1)\times d$, and $(d+1)\times (d+1)$, a continuous tiling of $\fzs$ was possible, but
if either the ``small" tile $T_s$ of dimensions $d \times d$, or the ``large''
tile $T_\ell$ of dimensions $(d+1)\times (d+1)$ is omitted from the set, then there is
no continuous tiling of $\fzs$ by the set of three remaining tiles. This argument
(say in the case of omitting the large tile) proceeded by showing that, for
$p$ relatively prime to $d$, the $p \times p$ torus tile $\Tcaca$
could not be tiled by these three tiles. This argument does not work
if both the small tile and the large tile are present but one of the other tiles are omitted.
In particular, we can ask:

\begin{probn} \label{prob:sl}
Is there a continuous tiling of $\fzs$ by the tiles $\{ T_s,T_\ell\}$,
where $T_s$ is the $d \times d$ rectangle, and $T_\ell$ is the $(d+1)\times (d+1)$
rectangle (for $d \geq 2$)?
\end{probn}

It seems possible at the moment that the answer to Problem~\ref{prob:sl}
could depend on the value of $d$. We currently do not know the answer for any
$d\geq 2$.

To illustrate the nature of the problem, consider the $d=2$ instance of
Problem~\ref{prob:sl}. Here we have the $2\times 2$ tile $T_s$ and the
$3 \times 3$ tile $T_\ell$. Theorem~\ref{thm:tilethm} gives a way, in principle,
to answer the question. Namely, we must see if for all sufficiently large
$p,q$ with $\gcd(p,q)=1$, whether we can tile (in the natural sense) the graph $\gnpq$.
A necessary condition for doing this is that we be able to tile each of the $12$
graphs $T^1_{n,p,q}, \dots, T^{12}_{n,p,q}$ individually.

First, unlike the case where we omit the small or large tile, we can now show
that $T_s$ and $T_\ell$ suffice to tile the $4$ torus tiles (e.g., $\Tcaca$)
for $n=1$ and $p,q$ large enough with $\gcd(p,q)=1$:

\begin{fact} \label{sltorus}
Let $p,q\geq 60$ with $\gcd(p,q)=1$. Then the $p \times q$ torus $\Tcaca$
can be tiled by $2 \times 2$ and $3 \times 3$ tiles.
\end{fact}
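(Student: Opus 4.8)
The plan is to reduce a torus tiling to a bounded ``core'' that can be filled by hand, after first cataloguing enough tileable shapes. The basic building blocks I would establish are: (a) a $6\times k$ strip (and by symmetry a $k\times 6$ strip) is tileable for every $k\ge 2$ --- write $k=2a+3b$ with $a,b\ge 0$ and split it into $6\times 2$ pieces (three $2\times 2$ tiles each) and $6\times 3$ pieces (two $3\times 3$ tiles each); (b) an $m\times n$ rectangle with $m,n$ both even is tileable by $2\times 2$ tiles; (c) an $m\times n$ rectangle with $m,n$ both divisible by $3$ is tileable by $3\times 3$ tiles. I would also record the small obstructions that must be avoided: $3\times 4$, $5\times 3$, and $7\times 7$ are \emph{not} tileable (for $7\times 7$, each of the seven columns must be split as $7=2\cdot 2+3$, forcing the number of $3\times 3$ tiles to equal $7/3$), and more generally a $w\times 2$ strip needs $w$ even and a $3\times w$ strip needs $3\mid w$.

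Next I would set up the case analysis. Since $\gcd(p,q)=1$, $p$ and $q$ are not both even and not both divisible by $3$; transposing the torus if necessary, assume $q$ is odd. If $6\mid p$ (or, symmetrically, $6\mid q$), this is the easy case: cut the circle $\mathbb Z_q$ into arcs of length $2$ and $3$ (possible as $q\ge 2$), so the torus becomes a union of cylinders $\mathbb Z_p\times(\text{length-}2\text{ arc})$ and $\mathbb Z_p\times(\text{length-}3\text{ arc})$; the first type is tiled by $2\times 2$ tiles (using $2\mid p$) and the second by $3\times 3$ tiles (using $3\mid p$, which follows from $6\mid p$). This already disposes of a good fraction of the residue classes of $(p\bmod 6,q\bmod 6)$.

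The hard case is $6\nmid p$ and $6\nmid q$, and I expect this to be the main obstacle. Here a pure band decomposition provably cannot work: an odd-height band over an even-$p$ cylinder, or a $p\times 3$ band over a cylinder with $3\nmid p$, is not tileable, and when both $p,q$ are odd \emph{every} product-grid decomposition produces a non-tileable $w\times 2$ block. So one needs a genuinely two-dimensional construction in the spirit of ``algorithm (III)'' of \S\ref{subsec:postile}: tile the bulk of the torus with $6\times 6$ blocks together with $6\times k$ and $k\times 6$ strips (this is where the hypothesis $p,q\ge 60$ is spent --- it guarantees enough room to isolate a core whose two dimensions lie in a bounded window), reducing the problem to tiling one bounded core region. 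To finish, one uses $\gcd(p,q)=1$ exactly as in the positive direction of the Twelve Tiles Theorem, to ``balance'' the number of width-$3$ against width-$2$ segments (and to route $2\times 2$ ``seams'' that wrap around the torus and absorb parity defects), so that the leftover core decomposes into shapes covered by (a)--(c) and never degenerates to a $3\times 4$, $5\times 3$, or $7\times 7$. The delicate point --- and the step I would expect to occupy most of the work --- is the finite but fiddly verification that, for every residue pair $(p\bmod 6,q\bmod 6)$ with $\gcd(p,q)=1$ and $6\nmid p$, $6\nmid q$, one can choose the remainder arcs (possibly two remainder arcs in one or both circular directions) and the seam placements so that the core always splits into tileable pieces.
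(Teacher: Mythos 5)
Your overall strategy --- peel off large tileable strips and reduce to a bounded core, then finish with a finite case check --- is a plausible way in, but you leave the crux unresolved, and the reduction you sketch does not obviously close. In the hard case (neither $p$ nor $q$ divisible by $6$), if you tile $[0,6\lfloor p/6\rfloor)\times[0,6\lfloor q/6\rfloor)$ by $6\times 6$ blocks, the leftover on the torus is an L-shaped ring of widths $p\bmod 6$ and $q\bmod 6$; whenever either residue is $1$ that ring is too thin to contain any tile at all, so the peeling step breaks down and the ``seams that wrap around and absorb parity defects'' you invoke are not an application of a known lemma but exactly the construction you have not supplied. You are right that a genuinely two-dimensional pattern is needed here, and right about the small obstructions ($7\times 7$, etc.), but the ``finite but fiddly verification'' you defer is the entire content of the proof.

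The paper takes a different and more direct route. It exhibits a concrete doubly-periodic tiling of all of $\Z^2$ by $T_s$ and $T_\ell$ whose period lattice is generated by $(3,2)$ and $(2,-3)$, so with a $13\times 13$ fundamental domain; restricting gives a tiling of any $13a\times 13b$ torus. It then gives a ``stretching'' move: cut the torus along a vertical line, translate one side by $6c$, and fill the gap row-by-row with horizontal runs of $T_s$ or $T_\ell$ (possible since $6$ is a multiple of both $2$ and $3$), turning an $a\times b$ torus tiling into an $(a+6c)\times b$ one, and symmetrically in the other direction. Together with the easy observation that every $6a\times b$ rectangle ($b\geq 2$) is tileable, this covers all tori of dimensions $13a+6b$; since $\gcd(13,6)=1$, the Frobenius bound $13\cdot 6-13-6=59$ shows every $p,q\geq 60$ is of this form, which is where the constant $60$ comes from. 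Note also that the paper's argument never uses $(p,q)=1$ --- that hypothesis is carried along only because it is what is needed where the Fact is applied. The single ingredient your proposal is missing is a periodic seed tiling with period coprime to $6$ (such as the $13\times 13$ pattern); without one, the mod-$6$ case analysis runs straight into the obstruction you yourself identify.
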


\begin{proof}
First of all, it is easy to see that for any positive integers $a$ and $b>1$, the $6a\times b$ and $b\times 6a$ grid graphs can be tiled by $T_s$ and $T_\ell$, the $2\times 2$ and $3\times 3$ tiles.

Let $\Lambda \subseteq \Z^2$ be the lattice generated by the vectors
$v_1=(3,2)$ and $v_2=(2,-3)$. $\Lambda$ has horizontal and vertical periods
of
$$\displaystyle\left| \det \begin{pmatrix} 3 & 2 \\ 2 & -3\end{pmatrix}\right|=13.$$
There is a tiling of $\Z^2$ by $T_s, T_\ell$ using the points of $L$
for the centers of the $T_\ell$ tiles and the points of $(2,0)+\Lambda$
for the upper-left corners of the $T_s$ tiles.
This is illustrated in Figure~\ref{fig:pt}. From the periodicity,
we can use restrictions of this tiling to tile any torus
of dimensions $13a\times 13b$, for positive integers $a, b$.

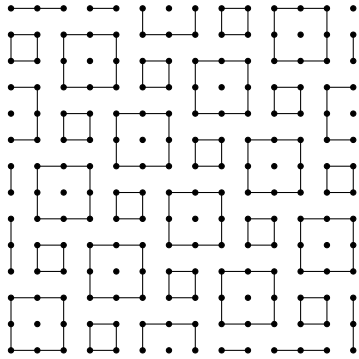
\begin{figure}[ht]
\centering
\begin{tikzpicture}[scale=0.35]

\tikzset{
lr/.pic={
\draw (0,0) rectangle (2,2);
}
}

\tikzset{
sr/.pic={
\draw (0,0) rectangle (1,1);
}
}

\foreach \i in {0,...,13}
{
\foreach \j in {0,...,13}
{
\draw[fill,radius=0.1,black] (1*\i,1*\j) circle;
}
}
\pic at  (0,0) [transform shape]  {lr};
\pic at (3,2) [transform shape] {lr};
\pic at (6,4) [transform shape] {lr};
\pic at (9,6) [transform shape] {lr};

\pic at (1,5) [transform shape] {lr};
\pic at (4,7) [transform shape] {lr};
\pic at (7,9) [transform shape] {lr};
\pic at (10,11) [transform shape] {lr};

\pic at (2,10) [transform shape] {lr};
\draw (5,13) to (5,12) to (7,12) to (7,13);

\pic at (8,1) [transform shape] {lr};
\pic at (11,3) [transform shape] {lr};

\pic at (3,0) [transform shape] {sr};
\pic at (6,2) [transform shape] {sr};
\pic at (9,4) [transform shape] {sr};
\pic at (12,6) [transform shape] {sr};
\pic at (1,3) [transform shape] {sr};
\pic at (4,5) [transform shape] {sr};
\pic at (7,7) [transform shape] {sr};
\pic at (10,9) [transform shape] {sr};
\pic at (2,8) [transform shape] {sr};
\pic at (5,10) [transform shape] {sr};
\pic at (8,12) [transform shape] {sr};
\pic at (11,1) [transform shape] {sr};
\pic at (0,11) [transform shape] {sr};

\draw (5,0) to (5,1) to (7,1) to (7,0);
\draw (8,0) to (9,0);
\draw (10,0) to (12,0);
\draw (13,8) to (12,8) to (12,10) to (13,10);
\draw (13,0) to (13,2);
\draw (13,11) to (13,12);
\draw (0,3) to (0,5);
\draw (0,6) to (0,7);
\draw (0,8) to (1,8) to (1,10) to (0,10);
\draw (0,13) to (2,13);
\draw (3,13) to (4,13);

\end{tikzpicture}
\caption{A periodic tiling by $T_s$, $T_\ell$.} \label{fig:pt}
\end{figure}

Next, we define a procedure by which we can stretch any torus tiling by a multiple
of $6$ in either direction. Consider the horizontal direction, and suppose we have a tiling
of a torus of dimensions $a \times b$. We describe a tiling of the torus of dimensions $(a+6c)\times b$. Draw a vertical line $L_0$ through
the torus. The line $L_0$ will be partitioned into segments where it intersects
various copies of the $T_s,T_\ell$ tiles (it may intersect the middle of a $T_\ell$
tile, or along one of its edges). Place another vertical line, $L_1$, $6c$ units
to the right of the line $L_0$. We place copies of $T_s$ and $T_\ell$ so that they intersect
$L_1$ in the same way that $L_0$ intersected copies of these tiles. To the right of
$L_1$ we will copy the original tiling of the $a\times b$ torus. Since $6$ is a multiple
of both $2$ and $3$, we may fill in the region between $L_0$ and $L_1$ as follows.
For each segment $w$ of $L_0$ which intersects a $T_\ell$ tile,
we add $2c-1$ copies of  $T_\ell$ tiles horizontally to its right until we reach
the corresponding tile intersecting $L_1$. We likewise proceed for each segment
of $L_0$ which intersects a $T_s$ tile, adding $3c-1$ copies of $T_s$ horizontally
to its right. Figure~\ref{fig:st} illustrates the stretching procedure.

\begin{figure}[ht]
\centering
\begin{tikzpicture}[scale=0.35]

\tikzset{
lr/.pic={
\draw (0,0) rectangle (2,2);
}
}

\tikzset{
sr/.pic={
\draw (0,0) rectangle (1,1);
}
}

\tikzset{
lrf/.pic={
\draw[fill=gray] (0,0) rectangle (2,2);
}
}

\tikzset{
srf/.pic={
\draw[fill=gray] (0,0) rectangle (1,1);
}
}

\foreach \i in {0,...,19}
{
\foreach \j in {0,...,13}
{
\draw[fill,radius=0.1,black] (1*\i,1*\j) circle;
}
}

\draw[color=red,thick] (7,0) to (7,13);
\draw[color=red,thick] (13,0) to (13,13);

\pic at  (0,0) [transform shape]  {lr};
\pic at (3,2) [transform shape] {lr};
\pic at (6,4) [transform shape] {lr};
\pic at (15,6) [transform shape] {lr}; 

\pic at (1,5) [transform shape] {lr};
\pic at (4,7) [transform shape] {lr};
\pic at (7,9) [transform shape] {lr};
\pic at (16,11) [transform shape] {lr}; 

\pic at (2,10) [transform shape] {lr};
\draw (5,13) to (5,12) to (7,12) to (7,13);

\pic at (14,1) [transform shape] {lr}; 
\pic at (17,3) [transform shape] {lr}; 

\pic at (3,0) [transform shape] {sr};
\pic at (6,2) [transform shape] {sr};
\pic at (15,4) [transform shape] {sr}; 
\pic at (18,6) [transform shape] {sr}; 
\pic at (1,3) [transform shape] {sr};
\pic at (4,5) [transform shape] {sr};
\pic at (7,7) [transform shape] {sr};
\pic at (16,9) [transform shape] {sr}; 
\pic at (2,8) [transform shape] {sr};
\pic at (5,10) [transform shape] {sr};
\pic at (14,12) [transform shape] {sr}; 
\pic at (17,1) [transform shape] {sr}; 
\pic at (0,11) [transform shape] {sr};

\draw (5,0) to (5,1) to (7,1) to (7,0);
\draw (14,0) to (15,0); 
\draw (16,0) to (18,0); 
\draw (19,8) to (18,8) to (18,10) to (19,10); 
\draw (13,0) to (13,1);
\draw (13,11) to (13,12);
\draw (0,3) to (0,5);
\draw (0,6) to (0,7);
\draw (0,8) to (1,8) to (1,10) to (0,10);
\draw (0,13) to (2,13);
\draw (3,13) to (4,13);

\pic at (13,9) [transform shape] {lr};
\pic at (13,7) [transform shape] {sr};
\pic  at (12,4) [transform shape] {lr};
\pic at (12,2) [transform shape] {sr};
\draw (11,0) to (11,1) to (13,1) to (13,0);
\draw (11,13) to (11,12) to (13,12) to (13,13);
\draw (19,11) to (19,12);
\draw (19,0) to (19,2);

\draw[fill=gray] (8,13) to (8,12) to (10,12) to (10,13);
\pic at (10,9) [transform shape] {lrf};
\pic at (9,7) [transform shape] {srf};
\pic at (11,7) [transform shape] {srf};
\pic at (9,4) [transform shape] {lrf};
\pic at (8,2) [transform shape] {srf};
\pic at (10,2) [transform shape] {srf};
\draw[fill=gray] (8,0) to (8,1) to (10,1) to (10,0);

\end{tikzpicture}
\caption{Stretching the torus tiling.} \label{fig:st}
\end{figure}
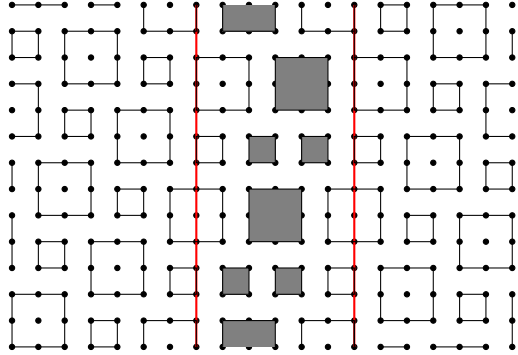

The above algorithm, together with the observation at the beginning of this proof, allow us to tile any torus of dimensions
$(13a+6b)\times (13c+6d)$ for nonnegative integers $a,b,c,d$ where $a+b, c+d>0$. Since $\gcd(13,6)=1$,
this shows any $p \times q$ torus tile for $p,q$ large enough ($p,q \geq 60$) can be tiled by
$T_s$ and $T_\ell$, which proves the fact.
\end{proof}

We note that some smaller values of
$p,q$ can also work for different reasons. For example, the $17 \times 19$ torus can be tiled by $2\times 2$ and
$3\times 3$ tiles as shown in Figure~\ref{fig:et} (this tiling was discovered by a computer program),
even though $17$ is not of the form $13a+6b$.

It seems likely that the tiling problem for continuous tilings of $\fzs$ by
$T_s$ and $T_\ell$ hinges on whether the ``long tiles'' such as
$\Tcqadpa$ of Figure~\ref{fig:Gamma-npq-horiz} can be tiled by $T_s, T_\ell$
(for large enough $p,q$). We do not know the answer to this question, but some
computer experiments done by the authors for values of $p,q \leq 50$ were unable
to find a tiling of the long tiles corresponding to these values of $p,q$.

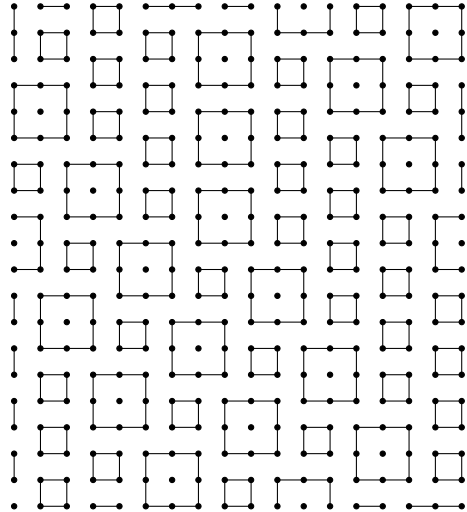
\begin{figure}[htbp]
\centering
\begin{tikzpicture}[scale=0.35]

\tikzset{
lr/.pic={
\draw (0,0) rectangle (2,2);
}
}

\tikzset{
sr/.pic={
\draw (0,0) rectangle (1,1);
}
}

\foreach \i in {0,...,17}
{
\foreach \j in {0,...,19}
{
\draw[fill,radius=0.1,black] (1*\i,1*\j) circle;
}
}

\pic at  (5,0) [transform shape]  {lr};
\pic at  (13,1) [transform shape]  {lr};
\pic at  (8,2) [transform shape]  {lr};
\pic at  (3,3) [transform shape]  {lr};
\pic at  (11,4) [transform shape]  {lr};
\pic at  (6,5) [transform shape]  {lr};
\pic at  (1,6) [transform shape]  {lr};
\pic at  (9,7) [transform shape]  {lr};
\pic at  (4,8) [transform shape]  {lr};
\pic at  (7,10) [transform shape]  {lr};
\pic at  (2,11) [transform shape]  {lr};
\pic at  (14,12) [transform shape]  {lr};
\pic at  (7,13) [transform shape]  {lr};
\pic at  (0,14) [transform shape]  {lr};
\pic at  (12,15) [transform shape]  {lr};
\pic at  (7,16) [transform shape]  {lr};
\pic at  (15,17) [transform shape]  {lr};

\pic at (1,0) [transform shape] {sr};
\pic at (8,0) [transform shape] {sr};
\pic at (3,1) [transform shape] {sr};
\pic at (16,1) [transform shape] {sr};
\pic at (1,2) [transform shape] {sr};
\pic at (11,2) [transform shape] {sr};
\pic at (16,3) [transform shape] {sr};
\pic at (6,3) [transform shape] {sr};
\pic at (1,4) [transform shape] {sr};
\pic at (14,4) [transform shape] {sr};
\pic at (9,5) [transform shape] {sr};
\pic at (16,5) [transform shape] {sr};
\pic at (4,6) [transform shape] {sr};
\pic at (14,6) [transform shape] {sr};
\pic at (12,7) [transform shape] {sr};
\pic at (16,7) [transform shape] {sr};
\pic at (7,8) [transform shape] {sr};
\pic at (14,8) [transform shape] {sr};
\pic at (2,9) [transform shape] {sr};
\pic at (12,9) [transform shape] {sr};
\pic at (10,10) [transform shape] {sr};
\pic at (14,10) [transform shape] {sr};
\pic at (5,11) [transform shape] {sr};
\pic at (12,11) [transform shape] {sr};
\pic at (0,12) [transform shape] {sr};
\pic at (10,12) [transform shape] {sr};
\pic at (5,13) [transform shape] {sr};
\pic at (12,13) [transform shape] {sr};
\pic at (3,14) [transform shape] {sr};
\pic at (10,14) [transform shape] {sr};
\pic at (5,15) [transform shape] {sr};
\pic at (15,15) [transform shape] {sr};
\pic at (3,16) [transform shape] {sr};
\pic at (10,16) [transform shape] {sr};
\pic at (1,17) [transform shape] {sr};
\pic at (5,17) [transform shape] {sr};
\pic at (3,18) [transform shape] {sr};
\pic at (13,18) [transform shape] {sr};

\draw (0,1) to (0,2);
\draw (13,0) to (14,0);
\draw (15,0) to (17,0);
\draw (0,3) to (0,4);
\draw (0,5) to (0,6);
\draw (0,7) to (0,8);
\draw (0,9) to (1,9) to (1,11) to (0,11);
\draw (17,9) to (16,9) to (16,11) to (17,11);
\draw (10,0) to (10,1) to (12,1) to (12,0);
\draw (10,19) to (10,18) to (12,18) to (12,19);
\draw (0,17) to (0,19);
\draw (1,19) to (2,19);
\draw (3,0) to (4,0);
\draw (5,19) to (7,19);
\draw (8,19) to (9,19);
\draw (17,12) to (17,13);
\draw (17,14) to (17,16);

\end{tikzpicture}
\caption{A tiling of the $17 \times 19$ torus tile.} \label{fig:et}
\end{figure}

\bibliographystyle{plain}
\bibliography{continuous}

\end{document}

\begin{exn}
Consider the case of $\varphi\signatureSep \Gamma\to S$ and
$\psi\signatureSep \Delta\to S$ with $\psi\le\varphi$, where $\Gamma$
is the Cayley graph of $\scriptstyle{\left(\faktor{\Z}{5\Z}\right)}^2$
(i.e., a $5\times 5$ torus graph) and $\Delta$ is the Cayley graph of
$\Z^2$, both as $\Z^2$-graphs. Here, unlike the case in Example~\ref{ex:pullbacks-restriction},
$\Delta$ is much larger than $\Gamma$.  Because of the $\Z^2$-graph
structure, if $\varphi\signatureSep \Gamma\to S$ is a map and
$\psi\signatureSep \Delta\to S$ is a pullback of $\Gamma$, then $\psi$
is a doubly periodic tiling of $\Z^2$ by $\varphi$.  Indeed, if
$\psi=\varphi\circ\sigma$, where
$\sigma\signatureSep\Z^2\to\scriptstyle{\left(\faktor{\Z}{5\Z}\right)}^2$,
then $\sigma$ is determined entirely by $\sigma(g)$ for any one
$g\in\Z^2$, thus there are exactly 25 possible such $\sigma$.  This
phenomenon, where $\Z^2$-pullbacks of certain graphs are simply
tilings, will be exploited later when building tilings.
\end{exn}

\end{comment}